\newcommand{\eps}{\varepsilon}
\newcommand{\ov}{\overline}
\newcommand{\id}{\textnormal{id}}
\newcommand{\mc}{\mathcal}
\newcommand{\mscr}{\mathscr}
\newcommand{\mf}{\mathfrak}
\newcommand{\msf}{\mathsf}
\newcommand{\I}{\mathbbm{1}}
\newcommand{\vp}{\varphi}
\newcommand{\Lvp}{\Lambda_{\vp}}
\newcommand{\Lvps}{\Lambda_{\psi}}
\newcommand{\Lhvps}{\Lambda_{\wh{\psi}}}
\newcommand{\md}{\operatorname{d}\!}
\newcommand{\cst}{\ifmmode \mathrm{C}^* \else $\mathrm{C}^*$\fi}
\newcommand{\NN}{\mathbb{N}}
\newcommand{\RR}{\mathbb{R}}
\newcommand{\CC}{\mathbb{C}}
\newcommand{\ZZ}{\mathbb{Z}}
\newcommand{\GG}{\mathbb{G}}
\newcommand{\HH}{\mathbb{H}}
\newcommand{\is}[2]{\left\langle#1\,\vline\,#2\right\rangle}
\newcommand{\ismaa}[2]{\langle#1\,|\,#2\rangle}
\newcommand{\wot}{\ifmmode \textsc{wot} \else \textsc{wot}\fi}
\newcommand{\sot}{\ifmmode \textsc{sot} \else \textsc{sot}\fi}
\newcommand{\sots}{\ifmmode \textsc{sot}^* \else \textsc{sot}$^*$\fi}
\newcommand{\ssot}{\ifmmode \sigma\textsc{-sot} \else $\sigma$-\textsc{sot }\fi}
\newcommand{\ssots}{\ifmmode \sigma\textsc{-sot}^* \else $\sigma$-\textsc{sot }$^*$\fi}
\newcommand{\swot}{\ifmmode \sigma\textsc{-wot} \else $\sigma$-\textsc{wot}\fi}
\newcommand{\Linf}{\operatorname{L}^{\infty}(\GG)}
\newcommand{\Linfd}{\operatorname{L}^{\infty}(\whG)}
\newcommand{\Lj}{\operatorname{L}^{1}(\GG)}
\newcommand{\Ljd}{\operatorname{L}^{1}(\whG)}
\newcommand{\Ljsharp}{\operatorname{L}^{1}_{\sharp}(\GG)}
\newcommand{\CG}{\mathrm{C}_0(\GG)}
\newcommand{\CGD}{\mathrm{C}_0(\whG)}
\newcommand{\CGDu}{\mathrm{C}_0^{u}(\whG)}
\newcommand{\vect}[2]{\begin{bmatrix}
#1 \\ 
#2 \end{bmatrix}}
\newcommand{\wh}{\widehat}
\newcommand{\whG}{\widehat{\GG}}
\newcommand{\hvp}{\widehat{\vp}}
\newcommand{\Lhvp}{\Lambda_{\hvp}}
\newcommand{\LdG}{\operatorname{L}^{2}(\GG)}
\newcommand{\LdIrr}{\operatorname{L}^{2}(\IrrG)}
\newcommand{\IrrG}{\Irr(\GG)}
\newcommand{\hpsi}{\widehat{\psi}}
\newcommand{\oxx}{\bar{\otimes}}
\newcommand{\lec}{\preccurlyeq}
\newcommand{\lecq}{\preccurlyeq_q}
\newcommand{\gec}{\succcurlyeq}
\newcommand{\gecq}{\mathbin{_q\!\gec}}
\renewcommand{\restriction}{\mathord{\upharpoonright}}
\newcommand{\rest}{\restriction}
\newcommand{\tp}{\!\!
{\scriptstyle
\text{
\raisebox{0.8pt}{
\textcircled{\raisebox{-1.7pt}{$\top$}}
} 
} 
} 
\!\!}
\newcommand{\stp}{\!\!\!
{\scriptscriptstyle
\text{
\raisebox{0.5pt}{
\textcircled{\raisebox{-1.2pt}{$\top$}}
} 
} 
} 
\!\!\!}
\newcommand{\WW}{{\reflectbox{$\mathrm{V}\!\!\mathrm{V}$} \!\!\!\;\!\!\!\raisebox{2.7pt}{\reflectbox{\smaller{\smaller{$\mathrm{V}$}}}}\:}
}
\newcommand{\WWl}{{\mathrm{V}\!\!\mathrm{V} \!\!\!\!\!\,\!\!\raisebox{2.7pt}{\reflectbox{\smaller{\smaller{$\mathrm{V}$}}}}\:\;
}}
\newcommand{\WWd}{{\mathrm{V}\!\!\mathrm{V} \!\!\!\!\!\,\!\!\raisebox{2.7pt}{\reflectbox{\smaller{\smaller{$\mathrm{V}$}}}}\:\;
\!\!\!\;\!\!\!\raisebox{2.7pt}{\reflectbox{\smaller{\smaller{$\mathrm{V}$}}}}\:
}}
\newcommand{\mrW}{\mathrm{W}}
\newcommand{\mrV}{\mathrm{V}}
\DeclareMathOperator{\lin}{span}
\DeclareMathOperator{\Irr}{Irr}
\DeclareMathOperator{\supp}{supp}
\DeclareMathOperator{\Tr}{Tr}
\DeclareMathOperator{\B}{B}
\DeclareMathOperator{\M}{M}
\DeclareMathOperator{\Dom}{Dom}
\DeclareMathOperator{\Mor}{Mor}
\DeclareMathOperator*{\esssup}{ess\,sup}
\DeclareMathOperator{\E}{E}
\DeclareMathOperator{\F}{F}
\DeclareMathOperator{\LL}{L}
\DeclareMathOperator{\HS}{HS}
\DeclareMathOperator{\Diag}{Diag}
\DeclareMathOperator{\Dec}{Dec}
\DeclareMathOperator{\Ind}{Ind}
\DeclareMathOperator{\Rep}{Rep}
\newtheorem{theorem}{Theorem}[section]
\newtheorem*{theoremb}{Theorem}
\newtheorem{proposition}[theorem]{Proposition}
\newtheorem{lemma}[theorem]{Lemma}
\theoremstyle{definition}
\newtheorem{corollary}[theorem]{Corollary}
\newtheorem*{remark}{Remark}
\newtheorem{definition}[theorem]{Definition}
\numberwithin{equation}{section}
\title{Coamenability of type I locally compact quantum groups}
\author{Jacek Krajczok\thanks{Email adress: jkrajczok@impan.pl}\\ Institute of Mathematics, Polish Academy of Sciences}
\date{}
\begin{document}
\maketitle
\begin{abstract}
We establish two conditions equivalent to coamenability for type I locally compact quantum groups. The first condition is concerned with the spectra of certain convolution operators on the space $\LL^2(\IrrG)$ of functions square integrable with respect to the Plancherel measure. The second condition involves spectra of character-like operators associated with direct integrals of irreducible representations. As examples we study special classes of quantum groups: classical, dual to classical, compact or given by a certain bicrossed product construction. 
\end{abstract}
\tableofcontents
\newpage

\section{Introduction}
Theory of topological quantum groups has begun with a paper of Woronowicz (\cite{Woronowiczsu2}) where he has constructed a deformation of the $SU(2)$ group. Later on, theory of general compact quantum groups was introduced in papers \cite{WoronowiczqLor, Woronowiczcqg}. A remarkable feature of compact quantum groups is the fact that they admit a representation theory closely resembling representation theory of classical compact groups: every representation decomposes as a direct sum of irreducible representations which are necessarily finite dimensional (we recommend \cite{NeshTu} as an introduction to the theory of compact quantum groups).\\
Locally compact quantum groups (with the starting point in the von Neumann algebraic world) were introduced by Kustermans and Vaes in \cite{KustermansVaes1} (see also \cite{Kustermans, KustermansVaes, Daele}). At present, their theory has reached a very satisfactory level and various authors have studied locally compact quantum groups focusing on questions coming from group theory, harmonic analysis or operator theory.\\
An intermediate step between compact and general locally compact quantum groups is formed by type I locally compact quantum groups. These are quantum groups whose full \cst-algebra (which is equal to $\CGDu$, see Section \ref{conventions}) is of type I. This class of quantum groups was studied previously in \cite{Caspers, Desmedt} (see also \cite{CaspersKoelink}) and more recently in \cite{VoigtYuncken2, VoigtYuncken}. Their distinguishing feature is a tractable representation theory: since there is a one to one correspondence between representations of a locally compact quantum group $\GG$ and these of its \cst-algebra $\CGDu$ (see \cite{Kustermans}), every representation of a type I locally compact quantum group decomposes as a direct sum of direct integrals of irreducible representations in a unique way (\cite{DixmierC}). Using slightly weaker assumption (namely that $\Linfd$ is a von Neumann algebra of type I) Desmedt was able to deduce an existence of the Plancherel measure for this class of quantum groups. A Plancherel measure is a measure on $\IrrG$, the spectrum of $\CGDu$, which together with certain fields of positive operators $(D_\pi)_{\pi\in \IrrG},(E_\pi)_{\pi\in \IrrG}$, allows us to express the Haar integrals for $\whG$. The need to include these fields of operators in the formulation of the Desmedt's theorem corresponds to the fact that $\whG$ can be non-unimodular, and its Haar integrals may be non-tracial.\\
Our main results are concerned with the notion of coamenability. It is an important property of locally compact quantum groups which has various equivalent formulations (see \cite{Brannan} for a nice survey). The one that is most convenient for us is the existence of a net of unit vectors $(\xi_i)_{i\in \mc{I}}$ in $\LdG$ such that 
\[
\|\mrW^{\GG} (\xi_i\otimes\eta)-\xi_i\otimes\eta\|\xrightarrow[i\in\mc{I}]{}0
\]
for all $\eta\in\LdG$, where $\mrW^{\GG}$ is the Kac-Takesaki operator (see Section \ref{conventions} or \cite{KustermansVaes, Daele}). We also would like to mention that coamenability of $\GG$ implies amenability of $\whG$, whereas the converse statement is an important open problem.\\
When $\GG$ is compact, the coamenability of $\GG$ can be expressed as a property of spectra of characters of representions of $\GG$ (\cite{Banica,NeshTu}) or of certain operators on $\ell^2(\IrrG)$ (\cite{HiaiIzumi, NeshTu}). Let us describe our main theorems which generalize these results; choose $\mu$, a Plancherel measure for $\GG$ and a measurable subset $\Omega\subseteq \IrrG$. Denote by $\mu_\Omega$ the measure $\mu$ restricted to $\Omega$. With the subset $\Omega$ we associate the representation $\sigma_\Omega=\int_{\Omega}^{\oplus} \pi \md\mu_{\Omega}(\pi)$. If $\int_\Omega \dim \md\mu<+\infty$ then we define the \emph{integral character} of $\sigma_\Omega$ as $\chi^{\int}(\int_{\Omega}^{\oplus} \pi \md\mu_{\Omega}(\pi))=\int_{\Omega} \chi(U^\pi) \md\mu_{\Omega}(\pi)\in\Linf$ (note that the integral character is not invariant under unitary equivalence and depends on the structure of a measurable field of representations -- see Definition \ref{defintrep}). Our first main theorem states that coamenability of $\GG$ is equivalent to a property of spectra of integral characters (see Theorem \ref{tw2} for the full statement).

\begin{theoremb}
Let $\GG$ be a second countable locally compact quantum group. Assume moreover that $\GG$ is type I and has only finite dimensional irreducible representations. Consider the following conditions:
\begin{enumerate}[label=\arabic*)]
\item $\GG$ is coamenable.
\item For any Plancherel measure $\mu$ and any measurable subset $\Omega\subseteq \IrrG$ such that $\int_{\Omega}\dim\md\mu<+\infty$ we have $\int_{\Omega}\dim\md\mu\in \sigma(\chi^{\int}(\int_{\Omega}^{\oplus}\pi\md\mu_{\Omega}(\pi)))$.
\end{enumerate}
We have $1)\Rightarrow 2)$. If all irreducible representations of $\GG$ are admissible then also $2)\Rightarrow 1)$.
\end{theoremb}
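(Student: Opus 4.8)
The plan is to read $N:=\int_{\Omega}\dim\md\mu$ as the value of the counit on the integral character, and to exploit that $T:=\chi^{\int}(\int_{\Omega}^{\oplus}\pi\md\mu_{\Omega}(\pi))\in\Linf$ always satisfies $\|T\|\le N$. Indeed, fixing an orthonormal basis $(e_j)$ of $H_\pi$ and writing $\omega_{\eta,\zeta}(x)=\isma{\eta}{x\zeta}$, the character decomposes as $\chi(U^\pi)=\sum_{j=1}^{\dim\pi}u^\pi_{jj}$ into $\dim\pi$ matrix coefficients $u^\pi_{jk}=(\id\otimes\omega_{e_j,e_k})(U^\pi)$ of the unitary $U^\pi$, each of norm $\le1$; hence $\|\chi(U^\pi)\|\le\dim\pi$ and the triangle inequality for the integral gives $\|T\|\le\int_{\Omega}\dim\md\mu=N$. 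Consequently $N\in\sigma(T)$ is equivalent to $N$ being a boundary point of $\sigma(T)$, hence to $N$ lying in the approximate point spectrum $\sigma_{\mathrm{ap}}(T)$ (recall $\partial\sigma(T)\subseteq\sigma_{\mathrm{ap}}(T)$); this reformulation is what links the spectral condition to almost invariant vectors and drives both implications.

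For $1)\Rightarrow2)$ I would use the standard characterisation of coamenability by boundedness of the counit. Coamenability provides a bounded character $\eps$ (the counit) on $\CG$, extending to its multiplier algebra; the defining approximately invariant vectors $(\xi_i)$ induce states $\isma{\xi_i}{\,\cdot\,\xi_i}$ converging weak${}^*$ to $\eps$, since $\isma{\xi_i}{(\id\otimes\omega_{\eta,\zeta})(\mrW^{\GG})\,\xi_i}=\isma{\xi_i\otimes\eta}{\mrW^{\GG}(\xi_i\otimes\zeta)}\to\isma{\eta}{\zeta}=\eps\bigl((\id\otimes\omega_{\eta,\zeta})(\mrW^{\GG})\bigr)$ on the dense subalgebra of matrix coefficients of $\mrW^{\GG}$. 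The corepresentation identity $(\eps\otimes\id)(U^\pi)=1_{H_\pi}$ yields $\eps(\chi(U^\pi))=\Tr(1_{H_\pi})=\dim\pi$, and since $\eps$ is bounded it commutes with the integral, so $\eps(T)=\int_{\Omega}\eps(\chi(U^\pi))\md\mu=\int_{\Omega}\dim\md\mu=N$. As $\eps$ is multiplicative, $N=\eps(T)\in\sigma(T)$. Equivalently one checks directly that $\isma{\xi_i}{T\,\xi_i}\to N$ and combines it with $\|T\|\le N$ via the elementary estimate $\|(T-N)\xi_i\|^2=\|T\xi_i\|^2-2N\,\Re\isma{\xi_i}{T\,\xi_i}+N^2\to0$, giving $N\in\sigma_{\mathrm{ap}}(T)$. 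No admissibility is needed here.

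For the converse $2)\Rightarrow1)$ I would run this estimate backwards. Since $N\in\sigma(T)$ and $\|T\|\le N$, $N$ is a boundary point of the spectrum, hence $N\in\sigma_{\mathrm{ap}}(T)$: there are unit vectors $v_n\in\LdG$ with $\|(T-N)v_n\|\to0$. Expanding the square as above forces $\|Tv_n\|\to N$ and $\isma{v_n}{T\,v_n}\to N$, i.e. $\int_{\Omega}\isma{v_n}{\chi(U^\pi)\,v_n}\md\mu\to\int_{\Omega}\dim\md\mu$. Since the integrands obey $\Re\isma{v_n}{\chi(U^\pi)\,v_n}\le\dim\pi$ pointwise and $\int_{\Omega}\dim\md\mu<+\infty$, a standard $\operatorname{L}^1$ argument upgrades this to $\isma{v_n}{\chi(U^\pi)\,v_n}\to\dim\pi$ for $\mu_\Omega$-almost every $\pi$ along a subsequence. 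A weak${}^*$-limit point $\omega$ of the states $\isma{v_n}{\,\cdot\,v_n}$ is then a state on $\CG$ with $\omega(\chi(U^\pi))=\dim\pi$ for almost every $\pi$; equality in $\|\chi(U^\pi)\|\le\dim\pi$ forces $\omega(u^\pi_{jj})=1$ for each diagonal coefficient, so equality in Cauchy--Schwarz places these coefficients in the multiplicative domain of $\omega$, whence $\omega$ agrees with the counit on all matrix coefficients of such $\pi$. Letting $\Omega$ range over a countable exhausting family of $\IrrG$ and diagonalising produces a single bounded state agreeing with the counit on a dense subalgebra of $\CG$, i.e. a bounded counit, which characterises coamenability.

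The main obstacle is exactly this last step of $2)\Rightarrow1)$: converting almost-everywhere, integral-averaged information into a genuine, globally defined bounded counit. Two points require care. First, the integral character records only the averaged quantity $\int_{\Omega}\chi(U^\pi)\md\mu$, so individual irreducibles are seen only up to $\mu$-null sets; controlling these null sets while exhausting $\IrrG$ is where \emph{admissibility} of the irreducible representations is essential, guaranteeing that the value $\dim\pi$ of the character genuinely propagates to the individual coefficients $u^\pi_{jk}$ and that these assemble into the counit. Second, one must ensure that $\chi(U^\pi)$, equivalently its matrix coefficients, actually lie in $\CG$, which is where the type I and finite-dimensionality hypotheses enter through the Plancherel decomposition. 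I expect the measure-theoretic localisation together with the admissibility input to form the technical heart of the argument.
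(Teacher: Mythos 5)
Your direction $1)\Rightarrow 2)$ is, in its ``direct'' variant, essentially the paper's proof: take the unit vectors $(\xi_n)$ from Definition \ref{defcoamenable}, show $\ismaa{\xi_n}{T\xi_n}\to N$ for $T=\chi^{\int}(\int_\Omega^\oplus\pi\md\mu_\Omega(\pi))$ and $N=\int_\Omega\dim\md\mu$, and combine with $\|T\|\le N$ exactly as in your last display. Two caveats. First, your preferred variant via a bounded counit is not salvageable: $T$ is a $\sigma$-weakly convergent integral of elements of $\M(\CG)$ and lives in $\Linf$, while the counit is only norm continuous on $\CG$, so ``$\eps$ commutes with the integral'' has no meaning -- this is exactly the obstruction the paper records after Theorem \ref{tw3}. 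Second, $\ismaa{\xi_n}{T\xi_n}\to N$ does not follow formally from invariance against coefficients of $\mrW$: one must transport the defining property of $(\xi_n)$ through the Plancherel unitary $\mc{Q}_L$ (which intertwines $(\omega\otimes\id)\mrW$ with $\int^{\oplus}(\omega\otimes\id)U^\pi\otimes\I_{\ov{\msf{H}_\pi}}\md\mu(\pi)$) and truncate $\Omega$ to representations of dimension $\le K$ to interchange the limits; the paper does this with an $\eps/3$ argument. These gaps are fillable; the skeleton is right.

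The genuine gap is the endgame of $2)\Rightarrow 1)$. Up to the extraction of approximate eigenvectors $v_n$ (your remark that $N\in\partial\sigma(T)\subseteq\sigma_{\mathrm{ap}}(T)$ needs no self-adjointness is correct) and the $\LL^1$ argument giving $\ismaa{v_n}{\chi(U^\pi)v_n}\to\dim\pi$ almost everywhere along a subsequence, the argument is sound; the pigeonhole and multiplicative-domain steps are also fine. But the conclusion fails. A weak${}^*$ cluster point $\omega$ of the vector states agrees with the formal counit only on matrix coefficients of \emph{individual} irreducible representations, and these lie in $\M(\CG)$, not in $\CG$: when $\GG$ is not compact, unitarity of $U^\pi$ prevents its coefficients from all vanishing at infinity (for $\GG=\RR$ they are the characters $t\mapsto e^{ixt}\in\mathrm{C}_b(\RR)\setminus\mathrm{C}_0(\RR)$). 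So there is no ``dense subalgebra of $\CG$'' on which $\omega$ is known to behave like the counit, and your own proposed remedy -- that type I plus finite dimensionality pushes the coefficients into $\CG$ -- is false. Genuine elements of $\CG$ are $\sigma$-weak integrals of such coefficients over $\IrrG$, and to evaluate $\omega$ on them you would have to interchange $\omega$ with these integrals; but $\omega$, being a weak${}^*$ limit of vector states, is in general not normal, so the interchange is illegitimate, and for the same reason $(\omega\otimes\id)\mrW=\I$ cannot be verified. Your route thus produces a counit-like functional on the wrong algebra, with no way down to $\CG$ and no way back to almost invariant vectors.

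The paper's proof avoids limit functionals altogether, and this is the idea you are missing. Keep the vectors: writing $T_{\Omega'}$ for the integral character of $\Omega'\subseteq\Omega$, split $T_\Omega=T_{\Omega'}+T_{\Omega\setminus\Omega'}$ and use $|\ismaa{v_n}{T_{\Omega'}v_n}|\le\int_{\Omega'}\dim\md\mu$ to localize: the approximate eigenvectors for $\Omega$ automatically satisfy $\ismaa{v_n}{T_{\Omega'}v_n}\to\int_{\Omega'}\dim\md\mu$ for \emph{every} measurable $\Omega'\subseteq\Omega$ (this is \eqref{eq4}). Then, for $\eta\in\LdG$ whose Plancherel transform is $\int^{\oplus}\xi_\pi\otimes\ov{\zeta_\pi}\md\mu(\pi)$ supported in $\Omega'$ with $\|\xi_\pi\|=1$ and $\|\zeta_\pi\|\le M$, the fibrewise operators $A^n_\pi=\I_{\msf{H}_\pi}-\tfrac12(\omega_{v_n}\otimes\id)\bigl((U^\pi)^*+U^\pi\bigr)$ are positive, hence $\ismaa{\xi_\pi}{A^n_\pi\xi_\pi}\le\|A^n_\pi\|\le\Tr(A^n_\pi)$, and therefore
\[
\tfrac12\,\|\mrW(v_n\otimes\eta)-v_n\otimes\eta\|^2=-\Re\ismaa{v_n\otimes\eta}{\mrW(v_n\otimes\eta)-v_n\otimes\eta}\le M^2\int_{\Omega'}\bigl(\dim\pi-\Re\ismaa{v_n}{\chi(U^\pi)v_n}\bigr)\md\mu(\pi)\xrightarrow[n\to\infty]{}0.
\]
This verifies Definition \ref{defcoamenable} along the net indexed by pairs $(\Omega,n)$, with no a.e.\ extraction and no cluster state: the trace domination of positive fibre operators converts integrated, character-level smallness into smallness against arbitrary vectors, which is exactly what you were trying to extract from normality that you do not have. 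Note finally that admissibility does not enter where you place it -- propagating $\omega(\chi(U^\pi))=\dim\pi$ to individual coefficients is just pigeonhole and Cauchy--Schwarz -- but in the paper's intermediate condition $3)$, where only symmetric $\Omega$ and conjugation-invariant $\mu$ are available and one needs $\chi(U^{\ov{\pi}})=\chi(U^\pi)^*$ to make the integral characters self-adjoint.
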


Let us now describe our second main theorem; take a measurable subset $\Omega\subseteq\IrrG$ and a finite dimensional nondegenerate representation $\kappa \colon \CGDu\rightarrow \B(\msf{H}_{\kappa})$. Assume that $\kappa$ is weakly contained in $\Lambda_{\whG}$ and that $\CGDu$ is separable and of type I. Consider the representation $\kappa\tp \sigma_\Omega\colon \CGDu\rightarrow \B( \msf{H}_{\kappa} \otimes \int_{\Omega}^{\oplus} \msf{H}_{\pi} \md\mu_{\Omega}(\pi))$. We can find a unique (up to measure zero) decreasing sequence of measurable subsets $\F^{n}_{\kappa\stp \sigma_\Omega}\subseteq\IrrG (n\in\NN)$ such that $\kappa\tp\sigma_\Omega$ is unitarily equivalent to $\bigoplus_{n=1}^{\infty} \int_{\F^{n}_{\kappa\stp \sigma_\Omega}}^{\oplus} \pi \md \mu_{\F^{n}_{\kappa\stp \sigma_\Omega}}(\pi)$. In Section \ref{secdecomp} we introduce a measurable function $\varpi^{\kappa,\Omega,\mu}\colon \F^1_{\kappa\stp \sigma_{\Omega}}\rightarrow \RR_{>0}$ such that we have equality of the integral characters
\[
\chi(U^{\kappa}) \chi^{\int}( \int_{\Omega}^{\oplus} \pi \md\mu_{\Omega}(\pi))=
\sum_{n=1}^{\infty} \chi^{\int}( \int_{\F^n_{\kappa\stp\sigma_\Omega}}^{\oplus} \zeta \md\, (\varpi^{\kappa,\Omega,\mu} \mu_{\F^{n}_{\kappa\stp \sigma_\Omega}})(\zeta)).
\]
Next, in Section \ref{secconv} we show that there exists a bounded operator $\mc{L}_{\kappa}$ which is given by
\[
\mc{L}_{\kappa} \colon \LL^2(\IrrG)\ni
\Tr(E^{2}_{\bullet})^{\frac{1}{2}} \chi_{\Omega} \mapsto
\Tr(E^{2}_{\bullet})^{\frac{1}{2}} \varpi^{\kappa,\Omega,\mu} \sum_{n=1}^{\infty} \chi_{\F^n_{\kappa\stp\sigma_{\Omega}}} \in \LL^2(\IrrG)
\]
for suitable subsets $\Omega\subseteq\IrrG$ -- roughly speaking, $\mc{L}_\kappa$ is the operator of tensoring by $\kappa$ on the level of $\LL^2(\IrrG)$. However, in order for this operator to be well defined, we need to introduce functions $\Tr(E^2_{\bullet})^{\frac{1}{2}}$ and $\varpi^{\kappa,\Omega,\mu}$ in the definition. For any positive function $\nu\in\LL^1(\IrrG)$ define $\mc{L}_{\nu}=\int_{\IrrG} \tfrac{\nu(\kappa)}{\dim(\kappa)} \mc{L}_{\kappa} \md\mu(\kappa)$. Our second main theorem relates spectra of operators $\mc{L}_{\nu}$ to the coamenability of $\GG$ (see Theorem \ref{tw4} for the full statement).
\begin{theoremb}
Let $\GG$ be a second countable locally compact quantum group. Assume moreover that $\GG$ is type I and has only finite dimensional irreducible representations.
Consider the following conditions:
\begin{enumerate}[label=\arabic*)]
\item $\GG$ is coamenable.
\item Let $\mu$ be any Plancherel measure and $\Omega\subseteq\IrrG$ a measurable subset such that $\int_\Omega\dim\md\mu<+\infty$. Define $\nu=\dim \chi_\Omega$. Then $\int_\Omega\dim\md\mu\in \sigma(\mc{L}_\nu)$.
\end{enumerate}
We have $1)\Rightarrow 2)$. If all irreducible representations of $\GG$ are admissible then also $2)\Rightarrow 1)$.
\end{theoremb}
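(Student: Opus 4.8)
The plan is to reduce this statement to the integral-character theorem (Theorem \ref{tw2}) by producing a unitary that intertwines $\mc{L}_\nu$ with the operator of left multiplication by the integral character on $\LdG$. Fix $\mu$ and $\Omega$, abbreviate $X_\Omega:=\chi^{\int}(\int_\Omega^\oplus\pi\md\mu_\Omega(\pi))\in\Linf$ and $c:=\int_\Omega\dim\md\mu$. First I would define a map $T$ sending the generating vector $\Tr(E^2_\bullet)^{1/2}\chi_{\Omega'}\in\LdIrr$ to $\Lvp(\chi^{\int}(\sigma_{\Omega'}))\in\LdG$; the weight $\Tr(E^2_\bullet)^{1/2}$ built into the definition of $\mc{L}_\kappa$ is exactly what makes $T$ isometric, so by the Plancherel formula $T$ extends to a unitary from $\LdIrr$ onto the closed subspace $\mc{K}:=\ov{\lin}\{\Lvp(\chi^{\int}(\sigma_{\Omega'})):\Omega'\}\subseteq\LdG$. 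The multiplication formula for integral characters recalled before the statement, integrated in $\kappa$ over $\Omega$ against $\md\mu$ and specialised to $\nu=\dim\chi_\Omega$, becomes the intertwining identity $T\,\mc{L}_\nu=M_{X_\Omega}\,T$, where $M_{X_\Omega}$ is left multiplication by $X_\Omega$. Hence $\mc{L}_\nu$ is unitarily equivalent to the compression $M_{X_\Omega}\rest_{\mc{K}}$.

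Next I would analyse $\mc{K}$. Since the product of two integral characters is again a countable sum of integral characters, and $X_\Omega^*$ is itself the integral character of the conjugate field, both $M_{X_\Omega}$ and $M_{X_\Omega}^*=M_{X_\Omega^*}$ leave $\mc{K}$ invariant, so $\mc{K}$ reduces $M_{X_\Omega}$; conceptually, the norm closure $\mc{A}$ of the span of all integral characters is a \cst-subalgebra of $\Linf$ and $\mc{K}=\ov{\Lvp(\mc{A})}$ is the left-multiplication module. Because $\mc{K}$ reduces $M_{X_\Omega}$ we get $\sigma(\mc{L}_\nu)=\sigma(M_{X_\Omega}\rest_\mc{K})\subseteq\sigma(M_{X_\Omega})=\sigma(X_\Omega)$, the last equality by faithfulness of the $\Linf$-action on $\LdG$. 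This already gives $2)\Rightarrow1)$: if $c\in\sigma(\mc{L}_\nu)$ for every admissible $\Omega$ then $c\in\sigma(X_\Omega)$ for every such $\Omega$, and Theorem \ref{tw2} (whose converse needs admissibility of all irreducibles) yields coamenability.

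For $1)\Rightarrow2)$ I would place $c$ in the spectrum of the restricted operator directly. Coamenability makes the counit $\eps$ extend to a bounded character of the reduced \cst-algebra, hence to a bounded multiplicative functional on $\mc{A}$; evaluating on $X_\Omega$ gives $\eps(X_\Omega)=\int_\Omega\dim\md\mu=c$, and a bounded character always takes values in the spectrum, so $c\in\sigma_{\mc{A}}(X_\Omega)$. It then suffices that the left-multiplication representation of $\mc{A}$ on $\mc{K}$ be faithful, for then $\sigma(M_{X_\Omega}\rest_{\mc{K}})=\sigma_{\mc{A}}(X_\Omega)\ni c$, i.e.\ $c\in\sigma(\mc{L}_\nu)$; faithfulness holds because integral characters of increasing $\Omega'$ furnish an approximate unit for $\mc{A}$, so the action on $\mc{K}$ is nondegenerate. (Equivalently, spectral permanence gives $\sigma_{\mc{A}}(X_\Omega)=\sigma(X_\Omega)$ and even $\sigma(\mc{L}_\nu)=\sigma(X_\Omega)$ outright, rendering the whole equivalence symmetric.)

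The step I expect to be the main obstacle is the first one: checking that $T$ is genuinely isometric and that $T\mc{L}_\nu=M_{X_\Omega}T$ holds on the nose. This is precisely where non-unimodularity of $\whG$ enters through the operators $E_\pi$ and the Radon--Nikodym function $\varpi^{\kappa,\Omega,\mu}$: one must evaluate $\|\Lvp(\chi^{\int}(\sigma_{\Omega'}))\|_{\LdG}^2$ by the Plancherel formula and match it with $\int_{\Omega'}\Tr(E^2_\pi)\md\mu(\pi)$, thereby verifying that the two weights $\Tr(E^2_\bullet)^{1/2}$ and $\varpi^{\kappa,\Omega,\mu}$ in the definition of $\mc{L}_\kappa$ are exactly those forced by unitarity of $T$. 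Secondary subtleties are justifying the interchange of $\int_\Omega\cdots\md\mu(\kappa)$ with the limits defining $\mc{L}_\nu$ and $M_{X_\Omega}$, and confirming that $\mc{A}$ is closed under the adjoint so that it is a genuine \cst-algebra; both reduce to the measurable-field bookkeeping set up in Sections \ref{secdecomp} and \ref{secconv}.
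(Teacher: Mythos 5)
Your overall architecture coincides with the paper's own proof on most of the ground it covers: your $T$ is the paper's isometry from Lemma \ref{lemat34}, your intertwining $T\mc{L}_\nu=M_{X_\Omega}T$ is Theorem \ref{tw1} combined with Corollary \ref{wniosek2}, and your treatment of $2)\Rightarrow 1)$ (the subspace $\mc{K}=\mc{H}$ reduces multiplication by $X_\Omega$, hence $\sigma(\mc{L}_\nu)=\sigma_{\B(\mc{H})}(X_\Omega\rest_{\mc{H}})\subseteq\sigma_{\Linf}(X_\Omega)$, then invoke Theorem \ref{tw2}) is exactly the paper's route via Lemma \ref{lemat12}. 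Two corrections on this part: the GNS map must be $\Lambda_\psi$, not $\Lvp$ --- the normalisation $\Tr(E^2_\bullet)^{\frac{1}{2}}$ is tied to the \emph{right} Haar weight and the field $(E_\pi)_{\pi\in\IrrG}$ through the orthogonality relations (Proposition \ref{Casp}, Proposition \ref{stw9}); with $\Lvp$ the matching quantity would involve $D_\pi$ and your $T$ would not be isometric. Also, adjoint-invariance of $\mc{K}$ rests on $\chi(U^{\kappa^c})=\chi(U^\kappa)^*$, i.e.\ on admissibility (Lemma \ref{lemat32}); this is harmless since that direction assumes admissibility anyway.

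The genuine gap is in your $1)\Rightarrow 2)$. First, the counit argument does not work as stated: coamenability gives a \emph{norm-continuous} character on $\mathrm{C}_0(\GG)$, whereas $X_\Omega=\chi^{\int}(\int_\Omega^{\oplus}\pi\md\mu_\Omega(\pi))$ is only a \swot-convergent integral in $\Linf$; it need not lie in $\mathrm{C}_0(\GG)$, and the evaluation $\eps(X_\Omega)=\int_\Omega\dim\md\mu$ requires interchanging a norm-continuous functional with a \swot-integral, which nothing justifies. The paper flags exactly this obstruction in the remark following Theorem \ref{tw3}, and instead constructs the character $\delta$ on $\mf{A}$ as a weak$^*$ limit of the vector states $\omega_{\xi_m}$ coming from the coamenability net, verifying $\delta(\chi^{\int}(\pi_X))=\int_X\dim\md\mu_X$ by rewriting $\omega(\chi^{\int}(\pi_X))=\sum_{n,k}(\omega\otimes\omega_{\eta_{n,k}})\mrW$ and applying dominated convergence --- that computation is the technical heart of this direction, and your proposal assumes it away. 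Second, your justification of faithfulness of $\phi\colon\mf{A}\to\B(\mc{H})$ fails: nondegeneracy does not imply faithfulness, and the claim that the $X_{\Omega'}$ form an approximate unit of $\mf{A}$ is unsupported (their norms grow like $\int_{\Omega'}\dim\md\mu$, there is no reason that $X_{\Omega'}a\to a$, and the paper even exhibits $\I\notin\ov{\mf{A}}^{\wot}$ for $\RR\rtimes\ZZ_2$). Faithfulness is genuinely needed --- both to push your character down to $\phi(\mf{A})$ and for your claimed equality $\sigma(\mc{L}_\nu)=\sigma(X_\Omega)$ --- and the paper proves it by a different mechanism: $a\in\ker\phi$ forces $a\chi(U^\kappa)=0$ for almost all $\kappa$, then admissibility together with conjugation-invariance of the Plancherel measure class gives $\chi(U^\kappa)a^*=0$, and approximating $a$ by sums of integral characters produces a contradiction. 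Until these two points are supplied, the implication $1)\Rightarrow 2)$ is not proved.
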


The paper is organised as follows: in Section \ref{secPlancherel} we recall a theorem of Desmedt stating the existence and properties of the Plancherel measure. We also derive a more precise form of its uniqueness.\\
In Section \ref{secintrep} we introduce a notion of the integral representation: it is a representation of $\CGDu$ given by a direct integral $\int_X^{\oplus} \pi_x\md\mu_X(x)$, where $(\pi_x)_{x\in X}$ are representations of $\CGDu$ satisfying some additional assumptions. Moreover, in this section with an integral representation we associate the integral weight (given by a restriction of $\int_X^{\oplus} \Tr_x \md\mu_X(x)$) and the integral character $\chi^{\int}(\int_X^{\oplus} \pi_x\md\mu_X(x))$, which is an operator in $\Linf$ equal to $\int_X \chi(U^{\pi_x})\md\mu_X(x)$. We also discuss basic operations on integral representations.\\
In Section \ref{secintweight} we derive an important technical result which says that having (suitable) unitarily equivalent integral representations, we can twist a measure on one of them, so that their integral weights are transformed one to the other. Section \ref{secweightchar} is devoted to the connection between the integral character and the integral weight.\\
In Section \ref{secintwhG} we use the left (resp. right) invariance of $\hvp$ (resp. $\hpsi$), the Haar integrals of $\whG$, to deduce properties of $(D_\pi)_{\pi\in \IrrG}$ (resp. $(E_\pi)_{\pi\in \IrrG}$). In Section \ref{secdecomp} we start discussing a decomposition of the representations of the form $\kappa\tp\int_{\Omega}^{\oplus} \zeta\md\mu_{\Omega}(\zeta)$, where $\Omega\subseteq\IrrG$ is a measurable subset.\\
In the next section we establish properties related to the condition that the integral character of a given integral representation is square integrable with respect to $\psi$, i.e. $\chi^{\int}(\int_X^{\oplus}\pi_x\md\mu_X(x))\in\mf{N}_{\psi}$. In this section we also introduce an important isometry \\$T\colon \LL^2(\IrrG)\rightarrow \LdG$.\\
In Section \ref{secconv} we introduce operators $\mc{L}_\kappa$, which loosely speaking are given by tensoring with $\kappa$: $\int_{\Omega}^{\oplus} \zeta\md\mu_\Omega(\zeta)\mapsto \kappa\tp \int_{\Omega}^{\oplus} \zeta\md\mu_\Omega(\zeta)$ on the level of $\LL^2(\IrrG)$. However, for this operation to be well defined, we need to take into consideration results from Section \ref{secintweight}. We also show that operator $\mc{L}_{\kappa}$ is related to the character $\chi(U^{\kappa})$ via the isometry $T$.\\
In Section \ref{secconj} we consider properties of taking the conjugate representation, considered as a map $\IrrG\rightarrow \IrrG$. We also derive a theorem which relates coamenability of $\GG$ to the spectra of integral characters.\\
In the penultimate section we introduce $\mf{A}$, a \cst-subalgebra of $\Linf$ which is generated by (suitable) integral characters. We also deduce a result which says that coamenability of $\GG$ is equivalent to properties of the spectra of operators $\mc{L}_{\nu}=\int_{\IrrG} \tfrac{\nu(\kappa)}{\dim (\kappa)} \mc{L}_{\kappa}\md\mu(\kappa)$.\\
The remainder of the paper is devoted to discussing examples: we consider situation when $\GG$ is compact, classical, dual to classical or given by a special bicrossed product. In these examples we calculate a Plancherel measure and derive formulas for the actions of $\mc{L}_\kappa$. Moreover, we show that when $\GG$ is compact, our results on coamenability of $\GG$ corresponds to previously known results, and when $\whG$ is classical we arrive at a variation of the Kesten criterion.

\section{Conventions}\label{conventions}
Throughout the paper, $\GG$ stands for a locally compact quantum group in the sense of Kustermans and Vaes and $\whG$ is its dual. This means in particular that we have a von Neumann algebra $\Linf$ which is represented on the Hilbert space $\LdG$, together with a coproduct which is a normal faithful $\star$-homomorphism $\Delta_{\GG}\colon \Linf\rightarrow\Linf\oxx\Linf$ (we will write $\Delta$ instead of $\Delta_{\GG}$ if there is no risk of confusion). We denote by $\vp,\psi,(\sigma^{\vp}_t)_{t\in\RR},$ $(\sigma^{\psi}_t)_{t\in\RR},\Lvp,\Lambda_{\psi}$ the left and right Haar weights of $\GG$ together with their modular groups and the GNS maps. The scaling constant, the scaling group and the unitary antipode of $\GG$ will be denoted as usual by $\nu, \,(\tau_t)_{t\in\RR}$ and $R$. We remark that the GNS Hilbert spaces for $\vp$ and $\psi$ may be identified -- we will denote the resulting space by $\LdG$. The modular conjugations related to $\vp$, $\psi$ are denoted respectively by $J$ and $J^{\psi}$. The modular element of $\GG$ will be denoted by $\delta$. We remark that the same convention will be used also for classical groups: if $\GG$ is a classical locally compact quantum group with the left and the right Haar measure denoted by respectively $\mu_L$ and $\mu_R$, then $\delta=\tfrac{\md\mu_R}{\md\mu_L}$. Objects related to $\whG$ will be accordingly decorated with hats.\\
We will frequently use the Kac-Takesaki operator $\mrW^{\GG}\in \Linf\bar\otimes \LL^\infty(\whG)$, which is a unitary on $\LdG\otimes\LdG$ satisfying
\[
(\mrW^{\GG})^* (\Lvp(x)\otimes\Lvp(y))=(\Lvp\otimes\Lvp)(\Delta(y)(x\otimes\I))\quad(x,y\in\mf{N}_\vp).
\]
We will also several times use the unitary operator related to the right Haar weight: $\mrV^{\GG}\in \LL^\infty(\whG)'\bar\otimes \Linf$ given by
\[
\mrV^{\GG} (\Lvps(x)\otimes\Lvps(y))=(\Lvps\otimes\Lvps)(\Delta(x)(\I\otimes y))\quad(x,y\in\mf{N}_\psi).
\]
We think of $\Linf$ as a space of essentialy bounded measurable functions on the quantum group $\GG$. Consequently, the predual of $\Linf$ will be denoted by $\Lj$. It is a Banach algebra together with a convolution product given by $\omega\star\nu=(\omega\otimes\nu)\circ\Delta\,(\omega,\nu\in\Lj)$. One introduces the space of functions on $\GG$ which are continuous and vanish at infinity via
\[
\CG=\{(\id\otimes\omega)\mrW^{\GG}\,|\, \omega\in \Ljd\}^{-\|\cdot\|}.
\]
This is a \cst-algebra which is weakly dense in $\Linf$ and one can check that the coproduct restricts to a nondegenerate morphism $\CG\rightarrow \M(\CG\otimes\CG)$. Moreover, the operator $\mrW^{\GG}$ belongs to the \cst-algebra $\M(\CG\otimes \mathrm{C}_0(\whG))$ and satisfies
\[
(\Delta_{\GG}\otimes\id)\mrW^{\GG}=\mrW^{\GG}_{13} \mrW^{\GG}_{23},\quad
(\id\otimes\Delta_{\whG})\mrW^{\GG}=\mrW^{\GG}_{13} \mrW^{\GG}_{12}.
\]
There are plenty of relations that connect the above objects. For the convenience of the reader, we gather here these which we use in the paper, and refer to \cite{KustermansVaes1, KustermansVaes, Daele} for their proofs; for $x\in \Linf$ and $t\in\RR$ we have
\[
\begin{split}
\tau(x)=\nabla_{\hvp}^{it} x \nabla_{\hvp}^{-it},\quad
\chi(\mrV^{\GG})&=(\hat{J}\otimes\hat{J}) {\mrW^{\GG}}^* (\hat{J}\otimes\hat{J}),\quad
R(x)=\hat{J}x^*\hat{J},\\
\vp\circ \sigma^{\psi}_t=\nu^t \vp,\quad
\psi\circ \sigma^{\vp}_t&=\nu^{-t}\psi,\quad
\psi\circ R=\vp,\quad
\tau_t(\delta)=\delta\\
\hat{J}\Lambda_{\vp}(x)=\Lambda_{\psi}(R(x^*)),&\quad
\nabla_{\hvp}^{it}\Lvp(x)=\Lvp(\tau_t(x) \delta^{-it})\\
(R\otimes \hat{R})\mrW^{\GG}=\mrW^{\GG},\quad
(\tau_t&\otimes \hat{\tau}_t)\mrW^{\GG}=\mrW^{\GG},\quad
\mrW^{\whG}=\chi(\mrW^{\GG})^*,
\end{split}
\]
moreover the groups of automorphisms $(\sigma^{\vp}_t)_{t\in\RR},(\sigma^{\psi}_t)_{t\in\RR},(\tau_t)_{t\in\RR}$ commute.\\
We also have $\CGDu$, the universal version of the \cst-algebra $\CGD$. It is equipped with a surjective morphism $\Lambda_{\GG}\in\Mor(\CGDu,\CGD)$ and a unitary ${\WW}^{\GG}\in\M(\CG\otimes\CGDu)$ which satisfies $(\id\otimes\Lambda_{\whG}){\WW}^{\GG}=\mrW^{\GG}$ (we will write $\mrW,\mrV,\WW$ etc.~if there is no risk of confusion). An important property of this \cst-algebra is the fact that there is a one to one correspondence between representations of $\GG$ and nondegenerate representations of $\CGDu$ given by $(\id\otimes\pi){\WW}^{\GG}\leftrightarrow \pi$. The \cst-algebra $\CGDu$ comes together with its universal unitary antipode $\hat{R}^{u}$ which satisfies relations similar to those satisfied by $\hat{R}$.\\
For a normal functional $\omega\in\Lj$ we define elements $\lambda(\omega)=(\omega\otimes\id)\mrW^{\GG}\in \CGD$ and $\lambda^u(\omega)=(\omega\otimes\id){\WW}^{\GG}\in\CGDu$. Define the following subspace in $\Lj$:
\[
\begin{split}
\LL^1_{\sharp}(\GG)
&=\{\omega\in\LL^1(\GG)\,|\, \exists\, \omega^{\sharp}\in\LL^1(\GG):
\lambda(\omega)^*=\lambda(\omega^{\sharp})\}.
\end{split}
\]
One can check that $\LL^1_{\sharp}(\GG)\ni\omega\mapsto \omega^{\sharp}\in \LL^1_{\sharp}(\GG)$ is a well defined antilinear involution together with which $\LL^1_{\sharp}(\GG)$ becomes a $\star$-algebra.
For $\omega\in\Lj$ we define a normal functional $\ov{\omega}\in\Lj$ via $\ov{\omega}(x)=\ov{\omega(x^*)}\;(x\in\Linf)$.\\
For any Hilbert space $\msf{H}$, we define its conjugate space $\ov{\msf{H}}=\{\ov{\xi}\,|\,\xi\in\msf{H}\}$ which is a Hilbert space with inner product given by $\ismaa{\ov{\xi}}{\ov{\eta}}=\ismaa{\eta}{\xi}\,(\xi,\eta\in\msf{H})$. We denote by $\jmath_{\msf{H}}$ the canonical linear, antimultiplicative map $\B(\msf{H})\rightarrow\B(\ov{\msf{H}})$ given by $\jmath_{\msf{H}}(T)\ov{\xi}=\ov{T^*\xi}\,(\xi\in\msf{H},T\in\B(\msf{H}))$.\\
For any $\pi\colon \CGDu\rightarrow \B(\msf{H}_\pi)$, a nondegenerate representation of $\CGDu$ we define a conjugate representation $\pi^c\colon \CGDu\rightarrow \B(\msf{H}_\pi)$ via
\[
\pi^c=\jmath_{\msf{H}_\pi}\circ \pi\circ \hat{R}^{u}
\]
(see e.g. \cite{SoltanWoronowicz}).\\
To ease the notation, we will write $\sup$ rather than $\esssup$ whenever we take an essential supremum over a measure space. Similarly, $\supp$ will stand for the essential support of a function. Moreover, whenever we have a measure space $(X,\mf{M}_X,\mu)$ and a measurable subset $Y\subseteq X$, we will write $\mu_Y$ for the restricted measure on $Y$.\\
We say that a measure $\mu_X$ on $X$ is standard if there exists a measurable subset $Y\subseteq X$ such that $\mu_X(Y^{c})=0$ and $(Y,\mf{M}_Y,\mu_Y)$ is a standard measure space. For any \cst-algebra $\msf{A}$, we will denote by $\Irr(\msf{A})$ its spectrum.\\
If $\pi,\sigma$ are (nondegenerate) representations of some \cst-algebra, then we write $\pi\approx\sigma,\pi\approx_q \sigma,\pi\simeq \sigma$ for respectively: weak equivalence, quasi-equivalence and unitary equivalence. We will also write $\pi\lec \sigma,\pi\lec_q\sigma,\pi\subseteq \sigma$ for the corresponding relations of containment. Necessary information about quasi-equivalence is gathered in the appendix.\\
For the theory of direct integrals we refer the reader to \cite{DixmierC, DixmiervNA, Lance}. We will use the following terminology (see \cite[Definition 2, page 182]{DixmiervNA} and ): for a measurable field of Hilbert spaces $(\msf{H}_x)_{x\in X}$, operators of the form $\int_X^{\oplus} T_x\md\mu_X(x)$ are called \emph{decomposable}. The set of decomposable operators form a von Neumann algebra which will be denoted $\Dec(\int_X^{\oplus} \msf{H}_x\md\mu_X(x))$. Decomposable operators $\int_X^{\oplus} T_x\md\mu_X(x)$ with $T_x\in \CC \I_{\msf{H}_x}\,(x\in X)$ are called \emph{diagonalisable}. Similarly, they form a von Neumann algebra which will be denoted by $\Diag(\int_X^{\oplus} \msf{H}_x\md\mu_X(x))$.\\
Several times we will use the following useful notation: if $x,y$ are elements of some metric space $(X,d)$ and $\eps>0$ is a positive number then $x\approx_{\eps} y$ means $d(x,y)\le \eps$.

\section{The Plancherel measure}\label{secPlancherel}
In this section we present a theorem of P. Desmedt which establishes the existence of the Plancherel measure for locally compact quantum group under the assumption that $\Linfd$ is of type I. We will also prove a more precise version of the uniqueness result.\\
Recall that for a direct integral of Hilbert spaces $\int_{X}^{\oplus}\msf{H}_x\md\mu_X(x)$ an operator of the form $\int_{X}^{\oplus} f(x) \I_{\msf{H}_x}\md\mu_X(x)$, where $f$ is a scalar valued function, is called \emph{diagonalisable}. Existence of the Plancherel measure follows from the following more general result (\cite[Theorem 3.4.5]{Desmedt}).

\begin{theorem}\label{Desmedtcstar}
Let $A$ be a separable \cst-algebra. Let $\phi$ be a lower semi-continuous densely defined approximately KMS-weight on $A$ such that $\pi_\phi(A)''$ is a von Neumann algebra of type I. We equip $\Irr(A)$, the spectrum of $A$, with the Mackey-Borel $\sigma$-algebra. There exists $\mu$, a standard measure on $\Irr(A)$, a measurable field of Hilbert spaces $(K_\sigma)_{\sigma\in\operatorname{Irr}(A)}$, a measurable field of representations $(\pi_\sigma)_{\sigma\in \operatorname{Irr}(A)}$ of $A$ on $K_\sigma$ such that $\pi_\sigma\in\sigma$ for every $\sigma\in\Irr(A)$, a measurable field $(D_\sigma)_{\sigma\in \operatorname{Irr}(A)}$ of self-adjoint, strictly positive operators and a unitary operator $\mc{P}\colon \msf{H}_\phi\rightarrow \int_{\operatorname{Irr}(A)}^{\oplus} K_\sigma\otimes \ov{K_\sigma}\md\mu(\sigma)$ with the following properties:
\begin{enumerate}[label=\arabic*)]
\item For every $x\in \mf{N}_\phi$ and $\mu$-almost every $\sigma\in\Irr(A)$ operator $\pi_\sigma(x) D_{\sigma}^{-1}$ is bounded and its closure $\pi_\sigma(x)\cdot D_{\sigma}^{-1}$ is Hilbert-Schmidt.
\item\label{item1} For all $a,b\in\mf{N}_\phi$ we have the Parseval formula:
\[
\is{\Lambda_\phi(a)}{\Lambda_\phi(b)}=
\int_{\Irr(A)} \Tr\bigl(
\bigl(\pi_\sigma(a)\cdot D_{\sigma}^{-1}\bigr)^{*}
\bigl(\pi_\sigma(b)\cdot D_{\sigma}^{-1}\bigr)
\bigr)\md\mu(\sigma),
\]
and $\mc{P}$ is the isometric extension of
\[
\Lambda_\phi(\mf{N}_\phi)\ni \Lambda_\phi(x)\mapsto
\int_{\Irr(A)}^{\oplus} 
\pi_\sigma(x)\cdot D_{\sigma}^{-1}\md\mu(\sigma)
\in
\int_{\Irr(A)}^{\oplus} \HS(K_\sigma)\md\mu(\sigma),
\]
\item Denote by $J_\phi$ the modular conjugation of $\phi$ and by $J_\sigma$ the map $K_\sigma\otimes \ov{K_\sigma}\ni \xi\otimes\ov{\eta}\mapsto \eta\otimes\ov{\xi}\in K_\sigma\otimes\ov{K_\sigma}$. Denote by $\rho_\phi$ the representation of $A^{op}$ on $H_\phi$: $\rho_\phi(x)=J_\phi \pi_\phi(x^*) J_\phi\,(x\in A)$ and by $\check{\pi}_\sigma$ the representation of $A^{op}$ on $\ov{\msf{H}_\sigma}$ given by $\check{\pi}_\sigma(a)=\jmath_{\msf{H}_\sigma}(\pi_\sigma(a))$. Operator $\mc{P}$ transforms $J_\phi$ into $\int_{\Irr(A)}^{\oplus} J_\sigma \md\mu(\sigma)$, $\pi_\phi$ into $\int_{\Irr(A)}^{\oplus} (\pi_\sigma\otimes\I)\md\mu(\sigma)$, $\rho_\phi$ into $\int_{\Irr(A)}^{\oplus} (\I\otimes\check{\pi}_\sigma)\md\mu(\sigma)$, $\pi_\phi(A)''$ into $\int_{\Irr(A)}^{\oplus} \B(K_\sigma)\otimes\CC \md\mu(\sigma)$, $\pi_\phi(A)'$ into $\int_{\Irr(A)}^{\oplus} \CC\otimes\B(\ov{K_\sigma}) \md\mu(\sigma)$ and $\pi_\phi(A)''\cap \pi_\pi(A)'$ into the algebra of diagonalisable operators.
\item
For $x\in A_+$ the function $\Irr(A)\ni \sigma \mapsto \Tr(\pi_\sigma(x)\cdot {D_{\sigma}}^{-2})$ is lower semi-continuous and we have
\[
\phi(x)=
\int_{\Irr(A)} \Tr(\pi_\sigma(x) \cdot {D_{\sigma}}^{-2})
\md\mu(\sigma).
\]
\item
The weight $\tilde{\phi}$, lift of $\phi$ to $\pi_\phi(A)''$, is tracial if and only if almost all $D_{\sigma}$ are multiples of the identity.
\item\label{item2}
Suppose that there exists a standard measure $\mu'$ and $\mu'$-measurable fields $(K'_\sigma)_{\sigma\in\Irr(A)}$, $(\pi'_\sigma)_{\sigma\in\Irr(A)}$, $(D'_\sigma)_{\sigma\in\Irr(A)}$ having the same properties as fields without a prime. Then $\mu$ and $\mu'$ are equivalent, and we have for $\mu$-almost all $\sigma\in \Irr(A)$ that
\[
D'_\sigma=\sqrt{\frac{\md\mu'}{\md\mu}(\sigma)} T_\sigma D_\sigma {T_\sigma}^{-1},
\]
where $T_\sigma$ is an intertwiner between $\pi_\sigma$ and $\pi'_\sigma$ for almost all $\sigma$.
\end{enumerate}
\end{theorem}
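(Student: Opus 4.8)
The plan is to extract the whole decomposition from the \emph{central} (direct integral) decomposition of the GNS representation $\pi_\phi$, and then to superimpose on it the disintegration of the lifted weight $\tilde\phi$; the operators $D_\sigma$ will be precisely the densities of the fibre weights. First I would pass to the von Neumann algebra $M=\pi_\phi(A)''$ together with its center $Z=M\cap M'$. Since $A$ is separable, $M$ acts on a separable Hilbert space and $Z$ is a separable abelian von Neumann algebra, so $Z\cong \LL^\infty(X,\mu_0)$ for a standard measure space $(X,\mu_0)$. Diagonalising $Z$ yields $\msf{H}_\phi\cong \int_X^\oplus \msf{H}_x\md\mu_0(x)$ with $M=\int_X^\oplus M_x\md\mu_0(x)$, each $M_x$ a factor. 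Because $M$ is of type I, almost every $M_x$ is a type I factor, hence $M_x\cong \B(K_x)$ for some Hilbert space $K_x$ and correspondingly $\msf{H}_x\cong K_x\otimes\ov{K_x}$ with $M'=\int_X^\oplus \CC\otimes\B(\ov{K_x})\md\mu_0(x)$; this already furnishes the claims of item 3) about $M$, $M'$ and the diagonalisable operators.

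Next I would identify the base $X$ with $\Irr(A)$. As $A$ is separable and type I, Glimm's theorem ensures that $\Irr(A)$ is a standard Borel space for the Mackey--Borel structure and that the central decomposition is a decomposition into irreducibles: each fibre $a\mapsto \pi_x(a)\otimes\I$ (where $\pi_x$ is the action on $K_x$) is irreducible, hence determines a point $\sigma\in\Irr(A)$, and the resulting map $X\to\Irr(A)$ is a Borel isomorphism after deleting a null set. Pushing $\mu_0$ forward defines the Plancherel measure $\mu$, and a measurable selection supplies the measurable field $(\pi_\sigma)_{\sigma}$ with $\pi_\sigma\in\sigma$; this realises $\pi_\phi$ as $\int_{\Irr(A)}^\oplus (\pi_\sigma\otimes\I)\md\mu(\sigma)$.

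The operators $D_\sigma$ enter through the weight. The lift $\tilde\phi$ is a normal semifinite faithful weight on $M=\int^\oplus \B(K_\sigma)\md\mu$, and such a weight disintegrates into a measurable field of normal semifinite weights $\tilde\phi_\sigma$ on $\B(K_\sigma)$; every one of these has the form $\tilde\phi_\sigma(\cdot)=\Tr(\cdot\, D_\sigma^{-2})$ for a uniquely determined self-adjoint strictly positive operator $D_\sigma^{-2}$, which gives item 4). The GNS space of $\Tr(\cdot\, D_\sigma^{-2})$ is naturally $\HS(K_\sigma)$ with GNS map $x\mapsto \pi_\sigma(x)\cdot D_\sigma^{-1}$ — the closure being Hilbert--Schmidt being exactly the defining condition for $\mf{N}_{\tilde\phi_\sigma}$ — which yields item 1), and integrating the fibrewise Hilbert--Schmidt inner product produces the Parseval formula of item 2). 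Under the identification $\HS(K_\sigma)\cong K_\sigma\otimes\ov{K_\sigma}$, where $\xi\otimes\ov{\eta}$ corresponds to the rank-one operator, the Hilbert--Schmidt adjoint becomes the flip $J_\sigma$ and right multiplication by $A^{op}$ becomes $\check\pi_\sigma=\jmath_{K_\sigma}\circ\pi_\sigma$, which (together with the KMS hypothesis controlling the modular data) finishes item 3); item 5) is then immediate, since $\tilde\phi$ is tracial exactly when each $\tilde\phi_\sigma$ is a trace, i.e. each $D_\sigma$ is scalar.

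I expect the principal obstacle to be the measurability bookkeeping running through every step: one must produce genuinely \emph{measurable} fields $(K_\sigma)$, $(\pi_\sigma)$ and $(D_\sigma)$, and verify that the pointwise constructions (the choice of representative in each class $\sigma$, the density of the fibre weight, and the GNS identification with $\HS(K_\sigma)$) can be carried out jointly measurably. This rests on the standardness of the Mackey--Borel structure on $\Irr(A)$ for separable type I $A$ and on a measurable version of the structure theory of weights on type I factors. The uniqueness statement in item 6) is then a consequence of the essential uniqueness of the central decomposition: two admissible decompositions give equivalent base measures $\mu\sim\mu'$ and a measurable field of intertwiners $T_\sigma$ between $\pi_\sigma$ and $\pi'_\sigma$, and the transformation rule $D'_\sigma=\sqrt{\tfrac{\md\mu'}{\md\mu}(\sigma)}\,T_\sigma D_\sigma T_\sigma^{-1}$ follows by comparing the two disintegrations of the single weight $\tilde\phi$, the Radon--Nikodym factor $\sqrt{\md\mu'/\md\mu}$ absorbing the rescaling of the measure.
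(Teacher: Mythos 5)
The paper itself does not prove Theorem \ref{Desmedtcstar}: it is imported wholesale from Desmedt's thesis \cite[Theorem 3.4.5]{Desmedt}, and what the paper actually proves are the refinement of point 6) (Lemma \ref{lemat13}) and the quantum-group versions (Theorems \ref{PlancherelL}, \ref{PlancherelR}), which use Theorem \ref{Desmedtcstar} as a black box. So your proposal can only be compared with Desmedt's argument and with the paper's related proofs. That said, your skeleton --- central disintegration of $M=\pi_\phi(A)''$ over its centre, identification of the base with (a subset of) $\Irr(A)$, and a fibrewise Radon--Nikodym analysis of the lifted weight producing the operators $D_\sigma$ --- is the correct architecture, and your sketch of point 6) runs parallel to the paper's Lemma \ref{lemat13} (compare two direct-integral decompositions, extract fibre intertwiners by Schur's lemma, compare GNS images to get the transformation rule with the factor $\sqrt{\md\mu'/\md\mu}$).

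However, three steps have genuine gaps. (i) You invoke Glimm's theorem for ``separable type I $A$'', but the hypothesis is only that $\pi_\phi(A)''$ is of type I; $A$ itself need not be type I, and then the Mackey--Borel structure on $\Irr(A)$ is \emph{not} standard and Glimm's theorem is unavailable. The tool you need is the disintegration theory of type I \emph{representations} of separable \cst-algebras (\cite[8.4.2, 8.6.6]{DixmierC}); the standardness in the conclusion is a property of the measure $\mu$ (concentrated on a standard subset), not of $\Irr(A)$. (ii) From ``$M_x$ is a type I factor'' you conclude $\msf{H}_x\cong K_x\otimes\ov{K_x}$; this is a non sequitur, since a type I factor representation can have arbitrary multiplicity, $\msf{H}_x\cong K_x\otimes\msf{L}_x$ with $M_x=\B(K_x)\otimes\CC$. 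What forces $\msf{L}_x\cong\ov{K_x}$ is that $\msf{H}_\phi$ is the GNS space of the n.s.f.\ lift $\tilde\phi$ --- this is exactly where the approximately KMS hypothesis is needed, to guarantee the lift is n.s.f.\ --- hence a standard form of $M$, and standard forms disintegrate into the standard forms $\HS(K_x)=K_x\otimes\ov{K_x}$ of the fibres; you mention the KMS hypothesis only parenthetically and after the fact. (iii) The sentence ``such a weight disintegrates into a measurable field of normal semifinite weights'' is the crux of points 1), 2), 4), 5) and is not an off-the-shelf fact for weights (unlike for states). The clean argument --- the very one the paper runs in the proof of Proposition \ref{treq} --- is to apply the Pedersen--Takesaki/Vaes Radon--Nikodym theorem (\cite{PedersenTakesaki}, \cite[Proposition 5.2]{VaesRN}) to $\tilde\phi$ against the canonical trace $\int^{\oplus}_{\Irr(A)}\Tr_\sigma\md\mu(\sigma)$: the derivative is a positive self-adjoint operator affiliated with $M$, hence decomposable \cite[Theorem 1.8]{Lance}, and its fibres are the $D_\sigma^{-2}$; this step is also what delivers strict positivity and joint measurability of $(D_\sigma)_\sigma$. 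Without (iii) carried out, points 1), 2), 4), 5) and the $D$-transformation rule in 6) remain assertions rather than proofs.
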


\begin{remark}$ $
\begin{itemize}
\item
In Desmedt's paper in point \ref{item2} a slightly different equation appears, but it seems that it is not the correct one.
\item 
It follows from the construction that $\mu$ is standard and we can choose a measurable field of Hilbert spaces $(K_\sigma)_{\sigma\in \operatorname{Irr}(A)}$ to be canonical, i.e.~to reduce to $\CC^n$ on each component $\{\pi\in \Irr(A)\,|\, \dim(\pi)=n\}$ (see \cite[Section 8.6.1]{DixmierC}).
\item
We can get a more precise form of point \ref{item2}, which is stated in the next lemma. For the sake of simplicity we will assume that irreducible representations of $A$ are finite dimensional.
\item
Let us remark that we will use this result only in the case of type I \cst-algebras, for which the proof simplifies.
\end{itemize}
\end{remark}

\begin{lemma}\label{lemat13}
In the situation from the previous theorem, assume that all irreducible representations of $A$ are finite dimensional, $\mu'$ is a standard measure on $\Irr(A)$, $(K'_\sigma)_{\sigma\in\Irr(A)}$ is a measurable family of Hilbert spaces, $(\pi'_{\sigma})_{\sigma\in\Irr(A)}$ is a measurable family of representations such that $\pi'_\sigma\in \sigma$ for $\mu'$-every $\sigma$. Assume moreover that there exists a unitary operator $\mc{P}'\colon \msf{H}_\phi\rightarrow\int_{\Irr(A)}^{\oplus} K'_\sigma\otimes \ov{K'_\sigma} \md\mu'(\sigma)$. If
\begin{enumerate}[label=\arabic*)]
\item operator $\mc{P}'$ transforms $\pi_\phi$ into $\int_{\Irr(A)}^{\oplus} (\pi'_\sigma\otimes\msf{1})\md\mu'(\sigma)$,
\item operator $\mc{P}'$ transforms $\pi_\phi(A)''\cap\pi_\phi(A)'$ into the algebra of diagonalisable operators
\end{enumerate}
then measures $\mu,\mu'$ are equivalent. If moreover there exists a measurable family of strictly positive self-adjoint operators $(D'_\sigma)_{\sigma\in \Irr(A)}$ and 
\begin{enumerate}[label=\arabic*)]
\item[3)] operator $\mc{P}'$ transforms $\rho_\phi$ into $\int_{\Irr(A)}^{\oplus} (\I\otimes\check{\pi}'_\sigma)\md\mu(\sigma)$,
\item[4)] we have the equality
\[
\mc{P}'\Lambda_{\phi}(x)=\int_{\Irr(A)}^{\oplus} \pi'_{\sigma}(x) {D'}_{\sigma}^{-1} \md\mu'(\sigma)
\] 
for all $x$ in a subspace $X\subseteq\mf{N}_\phi$ such that the unit operator belongs to the \wot -- sequential closure of $X$,
\end{enumerate}
then for $\mu$-almost all $\sigma\in\Irr(A)$ there exists a unitary intertwiner $T_\sigma\colon K_\sigma\rightarrow K'_\sigma$ such that
\[
D'_\sigma=\sqrt{\tfrac{\md\mu'}{\md\mu}(\sigma)} T_\sigma D_\sigma T_\sigma^{-1}.
\]
\end{lemma}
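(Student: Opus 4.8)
The plan is to run everything through the single unitary $U=\mc{P}'\mc{P}^{-1}$, which maps $\int_{\Irr(A)}^{\oplus} K_\sigma\otimes\ov{K_\sigma}\md\mu(\sigma)$ onto $\int_{\Irr(A)}^{\oplus} K'_\sigma\otimes\ov{K'_\sigma}\md\mu'(\sigma)$, and to read off the claims fibrewise. For the \emph{equivalence of measures} I would first note that both decompositions realise the central decomposition of the type I von Neumann algebra $\pi_\phi(A)''$. Indeed, by point 3) of Theorem~\ref{Desmedtcstar} the operator $\mc{P}$ carries $\pi_\phi(A)''$ onto $\int_{\Irr(A)}^{\oplus}\B(K_\sigma)\otimes\CC\,\md\mu(\sigma)$ and its centre onto the diagonalisable operators; on the other hand, applying $\mc{P}'(\cdot)\mc{P}'^{*}$ to assumption~1) and taking bicommutants (here irreducibility of $\pi'_\sigma$ gives $\pi'_\sigma(A)''=\B(K'_\sigma)$), $\mc{P}'$ carries $\pi_\phi(A)''$ onto $\int_{\Irr(A)}^{\oplus}\B(K'_\sigma)\otimes\CC\,\md\mu'(\sigma)$, and by assumption~2) the centre onto the diagonalisable operators. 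By the uniqueness of reduction theory (\cite{DixmiervNA}) there is then a Borel isomorphism $\theta$ of $\Irr(A)$ with $\theta_*\mu\sim\mu'$ and $\pi_\sigma\simeq\pi'_{\theta(\sigma)}$ for $\mu$-almost every $\sigma$. Since $\pi_\sigma\in\sigma$ and $\pi'_{\theta(\sigma)}\in\theta(\sigma)$, this unitary equivalence forces $\sigma=\theta(\sigma)$, so $\theta=\id$ almost everywhere and $\mu\sim\mu'$.

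Next I would decompose $U$. Writing $g=\tfrac{\md\mu'}{\md\mu}$, composition with the canonical scaling $(\eta_\sigma)\mapsto(g(\sigma)^{1/2}\eta_\sigma)$ turns $U$ into a unitary between two direct integrals over the \emph{same} measure $\mu$ which intertwines the diagonalisable algebras; hence it is decomposable and $(U\xi)_\sigma=g(\sigma)^{-1/2}u_\sigma\xi_\sigma$ for a measurable field of unitaries $u_\sigma\colon K_\sigma\otimes\ov{K_\sigma}\to K'_\sigma\otimes\ov{K'_\sigma}$. Assumptions~1) and~3) say that $U$ intertwines $\int^{\oplus}(\pi_\sigma\otimes\I)$ with $\int^{\oplus}(\pi'_\sigma\otimes\I)$ and $\int^{\oplus}(\I\otimes\check{\pi}_\sigma)$ with $\int^{\oplus}(\I\otimes\check{\pi}'_\sigma)$, so fibrewise $u_\sigma$ intertwines $\pi_\sigma\otimes\I\to\pi'_\sigma\otimes\I$ and $\I\otimes\check{\pi}_\sigma\to\I\otimes\check{\pi}'_\sigma$. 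Since $\pi_\sigma,\pi'_\sigma$ are irreducible, the first condition gives $u_\sigma=T_\sigma\otimes W_\sigma$ with $T_\sigma\colon K_\sigma\to K'_\sigma$ a unitary intertwiner of $\pi_\sigma$ and $\pi'_\sigma$; feeding this into the second condition and using Schur's lemma for $\check{\pi}_\sigma,\check{\pi}'_\sigma$ forces $W_\sigma=c_\sigma\ov{T_\sigma}$ with $|c_\sigma|=1$. Measurability of $(T_\sigma)$ follows from that of $(u_\sigma)$ by measurable selection.

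To pin down $D'_\sigma$ I would combine the GNS formulas. Under the identification $K_\sigma\otimes\ov{K_\sigma}\cong\HS(K_\sigma)$ the map $u_\sigma$ acts by $S\mapsto c_\sigma\,T_\sigma S T_\sigma^{*}$. By point~\ref{item1} of Theorem~\ref{Desmedtcstar} the fibre of $\mc{P}\Lambda_\phi(x)$ is $\pi_\sigma(x)\cdot D_\sigma^{-1}$, while assumption~4) gives the fibre of $\mc{P}'\Lambda_\phi(x)=U\mc{P}\Lambda_\phi(x)$ as $\pi'_\sigma(x){D'}_\sigma^{-1}$; equating them yields, for every $x\in X$ and almost every $\sigma$,
\[
\pi'_\sigma(x){D'}_\sigma^{-1}=g(\sigma)^{-1/2}\,c_\sigma\,T_\sigma\pi_\sigma(x)D_\sigma^{-1}T_\sigma^{*}.
\]
Substituting $\pi'_\sigma(x)=T_\sigma\pi_\sigma(x)T_\sigma^{*}$ and cancelling $T_\sigma$ gives $\pi_\sigma(x)B_\sigma=0$, where $B_\sigma=T_\sigma^{*}{D'}_\sigma^{-1}-g(\sigma)^{-1/2}c_\sigma D_\sigma^{-1}T_\sigma^{*}$ (a genuine matrix, as all $K_\sigma$ are finite dimensional).

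Finally I would exploit the density hypothesis on $X$. Choosing $x_n\in X$ with $\pi_\phi(x_n)\to\I$ in \wot, and testing the equation $\pi_\sigma(x_n)B_\sigma=0$ against measurable bounded vector fields by setting $\xi_\sigma=B_\sigma\zeta_\sigma$, the \wot\ convergence forces $\int\is{B_\sigma\zeta_\sigma}{\eta_\sigma}\md\mu(\sigma)=0$ for all $\zeta,\eta$, whence $B_\sigma=0$ almost everywhere. Thus ${D'}_\sigma^{-1}=g(\sigma)^{-1/2}c_\sigma\,T_\sigma D_\sigma^{-1}T_\sigma^{*}$; since $D'_\sigma,D_\sigma>0$ and $T_\sigma$ is unitary, positivity of both sides forces $c_\sigma=1$, and inverting gives $D'_\sigma=\sqrt{\tfrac{\md\mu'}{\md\mu}(\sigma)}\,T_\sigma D_\sigma T_\sigma^{-1}$, as required. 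I expect the genuinely delicate points to be the first paragraph---the uniqueness of the central decomposition together with the identification of the base map as the identity---and the clean fibre decomposition of $U$ across the change of measure; the rest is bookkeeping made harmless by the finite dimensionality of the $K_\sigma$ and by the pairing trick that converts the \wot-density of $X$ into a pointwise vanishing statement.
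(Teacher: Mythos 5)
Your proposal is correct and follows essentially the same route as the paper's proof: form $U=\mc{P}'\mc{P}^{-1}$, invoke Dixmier's uniqueness theorem for direct-integral decompositions to obtain a Borel point isomorphism together with fibrewise unitaries, use irreducibility and $\pi_\sigma\in\sigma$, $\pi'_{\theta(\sigma)}\in\theta(\sigma)$ to force the point map to be the identity (hence $\mu\sim\mu'$), decompose the fibre unitaries via Schur's lemma applied to both $\pi_\sigma$ and its conjugate to get $T_\sigma\otimes c_\sigma\jmath(T_\sigma^{-1})$ with $|c_\sigma|=1$, compare the two GNS formulas on $X$, and eliminate the phase $c_\sigma$ by positivity before inverting. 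The only (harmless) variation is at the \wot-limit step, where you test $\pi_\sigma(x_n)B_\sigma=0$ against square-integrable vector fields to get $B_\sigma=0$ almost everywhere, whereas the paper passes to the limit directly in each fibre; your version is if anything slightly more careful about the almost-everywhere bookkeeping.
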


\begin{proof}
Let us define a unitary operator
\[
\mc{U}=\mc{P}'\circ\mc{P}^{-1}\colon 
\int_{\Irr(A)}^{\oplus} K_\sigma\otimes \ov{K_\sigma} \md\mu(\sigma)
\rightarrow
\int_{\Irr(A)}^{\oplus} K'_\sigma\otimes \ov{K'_\sigma} \md\mu'(\sigma).
\]
It transforms diagonalisable operators into diagonalisable operators. Consider the following representations of $A$:
\[
\int_{\Irr(A)}^{\oplus} \pi_\sigma \otimes \I \md\mu(\sigma),\quad
\int_{\Irr(A)}^{\oplus}\pi'_\sigma \otimes \I\md\mu'(\sigma).
\]
We would like to use \cite[Proposition 8.2.4]{DixmierC}. In order to do that, we need to check that $\mc{U}$ is a morphism between these representations. Let $a \in A$. Thanks to properties of $\mc{P},\mc{P}'$ we have
\[
\begin{split}
&\quad\;\mc{U}\bigl(\int_{\Irr(A)}^{\oplus} \pi_\sigma \otimes \I \md\mu(\sigma)(a)\bigr)\mc{U}^{-1}=
\mc{P}'\mc{P}^{-1}\bigl(\int_{\Irr(A)}^{\oplus} \pi_\sigma(a) \otimes \I\md\mu(\sigma)\bigr)\mc{P}\mc{P}'^{-1}\\
&=
\mc{P}' \pi_{\phi}(a) \mc{P}'^{-1}=
\bigl(\int_{\Irr(A)}^{\oplus} \pi'_\sigma \otimes \I \md\mu'(\sigma)\bigr)(a).
\end{split}
\]
Now, \cite[Proposition 8.2.4]{DixmierC} gives us subsets $N$, $N'\subseteq\Irr(A)$ which are correspondingly of $\mu$ and $\mu'$-measure $0$, Borel isomorphism $\eta\colon \Irr(A)\setminus N\rightarrow\Irr(A)\setminus N'$ which maps $\mu$ into a measure $\tilde{\mu}'$ equivalent to $\mu'$ and a family $\sigma\mapsto V(\sigma)$ such that for each $\sigma$, $V(\sigma) \colon K_\sigma\otimes\ov{K_\sigma}\rightarrow K'_{\eta(\sigma)}\otimes\ov{K'_{\eta(\sigma)}}$ is a unitary map and a vector field $(\xi_\sigma)_{\sigma\in \Irr(A)\setminus N}$ is measurable with respect to $(K_\sigma\otimes \ov{K_\sigma})_{\sigma\in \Irr(A)\setminus N}$ if and only if $(V(\sigma)\xi_\sigma)_{\eta(\sigma)\in \Irr(A)\setminus N'}$ is measurable with respect to $(K'_{\eta(\sigma)}\otimes \ov{K'_{\eta(\sigma)}})_{\eta(\sigma)\in \Irr(A)\setminus N'}$. Such a family is called $\eta$-isomorphism \cite[A 70]{DixmierC}. For $\sigma\in \Irr(A)\setminus N$ operator $V(\sigma)$ is a unitary morphism between $\pi_\sigma\otimes\I$ and $\pi'_{\eta(\sigma)}\otimes\I$, moreover
\[
\mc{U}=\bigl(\int_{\Irr(A)}^{\oplus} K_\sigma'\otimes \ov{K'_\sigma}\md\tilde{\mu}'(\sigma)\rightarrow
\int_{\Irr(A)}^{\oplus} K_\sigma'\otimes \ov{K'_\sigma}\md\mu'(\sigma)
\bigr)\circ \int_{\Irr(A)}^{\oplus} V(\sigma)\md\mu(\sigma).
\]
Fix $\ov{\zeta}\in \ov{K_\sigma},\ov{\zeta}'\in\ov{K'_{\eta(\sigma)}}$ and define a bounded operator $S^\sigma_{\ov{\zeta}',\ov{\zeta}}\in \B(K_\sigma,K'_{\eta(\sigma)})$ via equality
\[
\ismaa{\xi'}{S^\sigma_{\ov{\zeta}',\ov{\zeta}} \xi}=
\ismaa{\xi'\otimes \ov{\zeta}'}{V(\sigma) \xi\otimes\ov{\zeta}}\quad
(\xi\in K_\sigma,\,\xi'\in K'_\sigma).
\]
For $a\in A$ and arbitrary $\xi,\xi'$ we have
\[\begin{split}
&\quad\;
\ismaa{\xi'}{S^{\sigma}_{\ov{\zeta}',\ov{\zeta}}\pi_\sigma(a)\xi}=
\ismaa{\xi'\otimes \ov{\zeta}'}{V(\sigma)( \pi_\sigma(a)\xi\otimes \ov{\zeta})}\\
&=
\ismaa{\pi'_{\eta(\sigma)}(a^*)\xi'\otimes \ov{\zeta}'}{V(\sigma) (\xi\otimes\ov{\zeta})}=
\ismaa{\pi'_{\eta(\sigma)}(a^*)\xi'}{
S^{\sigma}_{\ov{\zeta}',\ov{\zeta}}\xi}=
\ismaa{\xi'}{\pi'_{\eta(\sigma)}(a)
S^{\sigma}_{\ov{\zeta}',\ov{\zeta}}\xi}.
\end{split}\]
This means that $S^\sigma_{\ov{\zeta}',\ov{\zeta}}$ is a morphism between $\pi_{\sigma}$ and $\pi'_{\eta(\sigma)}$. It is clear that there exist $\ov{\zeta},\,\ov{\zeta}'$ for which $S^{\sigma}_{\ov{\zeta}',\ov\zeta}$ is non-zero. Consequently, as there are no nontrivial morphisms between nonequivalent irreducible representations, $\eta$ needs to be identity on $\Irr(A)\setminus (N\cup N')$. Therefore $\mu=\tilde{\mu}'$ on this set. This proves the first part of the lemma.\\
Assume now that $\mc{P}'$ transforms $\rho_\phi$ into $\int_{\Irr(A)}^{\oplus} (\I\otimes\check{\pi}'_\sigma)\md\mu(\sigma)$, and we have a family $(D'_\sigma)_{\sigma\in\Irr(A)}$ which meets conditions stated in the lemma. Thanks to the Schur's lemma we have
\[
S^\sigma_{\ov{\zeta}',\ov{\zeta}}=q(\ov{\zeta}',\ov{\zeta}) T_\sigma
\]
for a unitary intertwiner $T_\sigma\in \B(K_\sigma,K'_\sigma)$ and a bounded sesquilinear form $q$. We know how forms like this looks: there exists an operator $\tilde{T}_\sigma\in \B(\ov{K}_\sigma,\ov{K'_\sigma})$ such that
\[
\ismaa{\xi'\otimes \ov{\zeta}'}{V(\sigma) \xi\otimes\ov{\zeta}}=
\ismaa{\xi'}{S^\sigma_{\ov{\zeta}',\ov{\zeta}} \xi}=
\ismaa{\xi'\otimes \ov{\zeta}'}{
(T_\sigma\otimes \tilde{T}_\sigma)(\xi\otimes\ov{\zeta})}.
\]
Operator $\tilde{T}_\sigma$ is a morphism between $\check{\pi}_\sigma$ and $\check{\pi}'_\sigma$. Indeed, take $a,b\in A$. Then we have
\[\begin{split}
&\quad\;
\ismaa{\xi'\otimes \ov{\zeta}'}{
(T_\sigma\otimes \tilde{T}_\sigma)(\pi_\sigma(a)\xi\otimes\check{\pi}_\sigma(b)\ov{\zeta})}=
\ismaa{\xi'\otimes \ov{\zeta}'}{
V(\sigma)(\pi_\sigma(a)\xi\otimes\check{\pi}_\sigma(b)\ov{\zeta})}\\
&=
\ismaa{\xi'\otimes \ov{\zeta}'}{
(\pi_\sigma(a)\otimes \check{\pi}_\sigma(b))V(\sigma)(\xi\otimes\ov{\zeta})}=
\ismaa{\xi'\otimes\ov{\zeta}'}{
(\pi'_\sigma(a)\otimes \check{\pi}'_\sigma(b))
(T_\sigma\otimes\tilde{T}_\sigma)
(\xi\otimes\ov{\zeta})}.
\end{split}\]
Taking $a$ to be an approximate identity shows that $\tilde{T}_\sigma$ is morphism between $\check{\pi}_\sigma$ and $\check{\pi}'_\sigma$. The calculation
\[
\jmath(T_\sigma)\check{\pi}'_\sigma(a)\ov{\xi}=\jmath(T_\sigma) \ov{\pi'_\sigma(a^*)\xi}=\ov{T_\sigma^* \pi'_\sigma(a^*)\xi}=
\ov{\pi_\sigma(a^*) T_\sigma^*\xi}=
\check{\pi}(a) \jmath(T_\sigma)\ov{\xi}\quad(\ov{\xi}\in \ov{K'_\sigma},
\; a\in A)
\]
implies that $\jmath(T_\sigma)$ is a unitary morphism $\check{\pi}'_\sigma\rightarrow \check{\pi}_\sigma$. Schur's lemma shows that $\tilde{T}_\sigma=z_\sigma\jmath(T^{-1}_\sigma)$ for a certain $z_\sigma\in \CC$. Since
\[
1=\|V(\sigma)\|=\|T_\sigma\otimes\tilde{T}_\sigma\|=\|\tilde{T}_\sigma\|
=|z_\sigma|
\]
we know that $\tilde{T}_\sigma=z_\sigma \jmath(T^{-1}_\sigma)$ is a unitary operator. Let us see how $V(\sigma)$ acts on $\HS(K_\sigma)=K_\sigma\otimes\ov{K_\sigma}$. For every $\xi\otimes \ov{\zeta}\in K_\sigma\otimes\ov{K_\sigma}$ we have
\[
\begin{split}
&\quad\;V(\sigma) (|\xi\rangle\langle \zeta|)=
V(\sigma)(\xi\otimes\ov{\zeta})=
(T_\sigma \xi)\otimes (z_\sigma\jmath(T^{-1}_\sigma) \ov{\zeta})=
z_\sigma(T_\sigma\xi\otimes \ov{ T_\sigma \zeta})\\
&=
z_\sigma|T_\sigma \xi\rangle\langle T_\sigma \zeta |=
z_\sigma T_\sigma (|\xi\rangle\langle \zeta|) T_\sigma^{-1}
\end{split}
\]
Let us make use of our knowledge about operator $\mc{P}'$. For $a$ in a subspace $X\subseteq \mf{N}_\phi$ we have
\[
\begin{split}
&\quad\;\int_{\Irr(A)}^{\oplus}\pi'_\sigma(a) {D'}_\sigma^{-1}\md\mu'(\sigma)=
\mc{U} \int_{\Irr(A)}^{\oplus}\pi_\sigma(a) D_\sigma^{-1}\md\mu(\sigma)\\
&=
\int_{\Irr(A)}^{\oplus} \sqrt{\tfrac{\md\mu}{\md\mu'}(\sigma)} V(\sigma)(\pi_\sigma(a) D_\sigma^{-1})\md\mu'(\sigma),
\end{split}
\]
which implies
\[
\pi'_\sigma(a) {D'}_\sigma^{-1}=
\sqrt{\tfrac{\md\mu}{\md\mu'}(\sigma)} V(\sigma)(\pi_\sigma(a) D_\sigma^{-1})=
z_\sigma\sqrt{\tfrac{\md\mu}{\md\mu'}(\sigma)} T_\sigma(\pi_\sigma(a) D_\sigma^{-1})T_\sigma^{-1}
\]
for almost all $\sigma\in\Irr(A)$. Let $(a_n)_{n\in \NN}$ be a sequence in $X$ converging to $\I$ in \wot. Putting $a_n$ in the above equality and letting $n$ go to the infinity we get
\[
{D'_\sigma}^{-1}=z_\sigma \sqrt{\tfrac{\md\mu}{\md\mu'}(\sigma)} T_\sigma D_\sigma^{-1}T_\sigma^{-1}
\]
(recall that $\sigma$ is finite dimensional). Since both ${D'}_\sigma^{-1}$ and $\sqrt{\tfrac{\md\mu}{\md\mu'}(\sigma)} T_\sigma D_\sigma^{-1}T_\sigma^{-1}$ are positive operators, we must have $z_\sigma=1$. We will arive at the desired equation once we take an inverse of both sides.
\end{proof}

Using Theorem \ref{Desmedtcstar} we can derive a result concerning quantum groups (see \cite[Theorem 3.4.1]{Desmedt}, also cf.~\cite[Theorem 1.6.1]{Caspers}). Recall that $\Irr(\CGDu)$ stands for the spectrum of the \cst-algebra $\CGDu$. We again remark that we will use theorems \ref{PlancherelL}, \ref{PlancherelR} only for quantum groups $\GG$ with \cst-algebra $\CGDu$ of type I.

\begin{theorem}\label{PlancherelL}
Let $\GG$ be a locally compact quantum group such that $\Linfd$ is a von Neumann algera of type I and the \cst-algebra $\CGDu$ is separable. There exists a standard measure $\mu$ on $\IrrG(=\Irr(\CGDu))$, a measurable field of Hilbert spaces $(\msf{H}_\pi)_{\pi\in \IrrG}$, measurable field of representations, measurable field of strictly positive self-adjoint operators $(D_\pi)_{\pi\in \IrrG}$ and a unitary operator $\mc{Q}_L\colon \msf{H}_{\hvp}\rightarrow \int_{\IrrG}^{\oplus} \HS(\msf{H}_\pi)\md\mu(\pi)$ such that:
\begin{enumerate}[label=\arabic*)]
\item For all $\alpha\in \Lj$ such that $\lambda(\alpha)\in\mf{N}_{\hvp}$ and $\mu$-almost every $\pi\in\IrrG$ the operator $(\alpha\otimes\id) (U^{\pi}) D_{\pi}^{-1}$ is bounded, and its closure $(\alpha\otimes\id) (U^{\pi})\cdot D_{\pi}^{-1}$ is Hilbert-Schmidt.
\item For all $\alpha,\beta\in\Lj$ such that $\lambda(\alpha),\lambda(\beta)\in \mf{N}_{\hvp}$ we have the Parseval formula:
\[
\ismaa{\Lhvp(\lambda(\alpha))}{\Lhvp(\lambda(\beta))}=
\int_{\IrrG} \Tr\bigl(
\bigl((\alpha\otimes\id) (U^{\pi})\cdot D_{\pi}^{-1}\bigr)^{*}
\bigl((\id\otimes\beta) (U^{\pi})\cdot D_{\pi}^{-1}\bigr)
\bigr)\md\mu(\pi),
\]
and $\mc{Q}_L$ is an isometric extension of
\[
\Lhvp(\lambda(\Lj)\cap \mf{N}_{\hvp})\ni \Lhvp(\lambda(\alpha))\mapsto
\int_{\IrrG}^{\oplus} 
(\alpha\otimes\id) (U^{\pi})\cdot D_{\pi}^{-1}\md\mu(\pi)
\in
\int_{\IrrG}^{\oplus} \HS(\msf{H}_\pi)\md\mu(\pi),
\]
\item
$\mc{Q}_L$ satisfies the following equations:
\[
\mc{Q}_L (\omega\otimes\id)\mrW=
\bigl(\int_{\IrrG}^{\oplus} (\omega\otimes\id)U^{\pi}\otimes\I_{\ov{\msf{H}_{\pi}}}\md\mu(\pi)\bigr)\mc{Q}_L
\]
and
\[
\mc{Q}_L (\omega\otimes\id)\chi(\mrV)=
\bigl(\int_{\IrrG}^{\oplus} \I_{\msf{H}_\pi}\otimes \pi^{c}((\omega\otimes\id){\WW})\md\mu(\pi)\bigr)\mc{Q}_L
\]
for every $\omega\in\LL^1(\GG)$.
\item
If $\alpha\in \LL^1(\GG)$ is such that $\lambda(\alpha)\in\LL^{\infty}(\whG)_+$, then we have
\[
\hvp(\lambda(\alpha))=
\int_{\IrrG} \Tr((\alpha\otimes\id)(U^{\pi}) \cdot {D_{\pi}}^{-2})
\md\mu(\pi).
\]
\item
Haar integrals on $\whG$ are tracial if and only if almost all $D_{\pi}$ are multiples of the identity.
\item Operator $\mc{Q}_L$ transforms $\LL^{\infty}(\whG)\cap \LL^{\infty}(\whG)'$ into diagonalisable operators.
\item
Assume that all irreducible representations of $\GG$ are finite dimensional. Let $\mu'$, $(\msf{H}'_{\pi'})_{\pi'\in\IrrG}$, $(D'_{\pi'})_{\pi'\in\IrrG}$ be other objects of the above type (with $\pi,\pi'\in[\pi']=[\pi]$). Assume that there exists a unitary operator $\mc{Q}'_L\colon \msf{H}_{\hvp}\rightarrow \int_{\IrrG}^{\oplus} \HS(\msf{H}'_{\pi'})\md\mu'(\pi')$. If
\begin{enumerate}
\item[7.1)] point $3)$ is satisfied for $\mu'$, $(\msf{H}'_{\pi'})_{\pi'\in\IrrG}$, $(D'_{\pi'})_{\pi'\in\IrrG}$,
\item[7.2)] we have
\[
\mc{Q}'_L \Lambda_{\hvp}(\lambda(\alpha))=
\int_{\IrrG}^{\oplus}
(\alpha\otimes\id)U^{\pi'} {D'}_{\pi'}^{-1}
\md\mu'(\pi')
\]
for all $\lambda(\alpha)$ in a subspace $X\subseteq\mf{N}_{\hvp}$ such that $\I$ belongs to the \wot -- sequential closure of $X$,
\item[7.3)] $\mc{Q}'_L$ transforms $\LL^{\infty}(\whG)\cap\LL^{\infty}(\whG)'$ into diagonalisable operators
\end{enumerate}
then $\mu$ and $\mu'$ are equivalent and for $\mu$-almost all $\pi\in\IrrG$ there exists a unitary intertwiner $T_\pi\colon \msf{H}_\pi\rightarrow\msf{H}'_{\pi'}$ such that
\[
D'_{\pi'}=\sqrt{\tfrac{\md\mu'}{\md\mu}(\pi)} T_\pi D_{\pi} {T_\pi}^{-1}.
\]
Moreover, objects with primes satisfies all the properties $1)$--$6)$.
\item We can assume that $(\msf{H}_\pi)_{\pi\in\IrrG}$ is the canonical measurable field of Hilbert spaces.
\end{enumerate}
\end{theorem}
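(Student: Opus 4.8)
The plan is to obtain Theorem~\ref{PlancherelL} by specialising the general Theorem~\ref{Desmedtcstar} (for existence and properties 1)--6) and 8)) and Lemma~\ref{lemat13} (for the uniqueness statement 7)) to a suitable weight on $\CGDu$. Concretely, I would take $A=\CGDu$ and let $\phi=\hvp\circ\Lambda_{\whG}$ be the lift of the left Haar weight of $\whG$ along the reducing surjection $\Lambda_{\whG}\in\Mor(\CGDu,\CGD)$. Since $\hvp$ is a KMS Haar weight and $\Lambda_{\whG}(\CGDu)$ is norm dense in $\CGD$, hence weakly dense in $\Linfd$, the weight $\phi$ is lower semicontinuous, densely defined and approximately KMS; its GNS space $\msf{H}_\phi$ is identified with $\LdG=\msf{H}_{\hvp}$, one has $\Lambda_\phi(x)=\Lhvp(\Lambda_{\whG}(x))$, and $\pi_\phi(A)''=\Linfd$ is type~I by hypothesis. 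Thus Theorem~\ref{Desmedtcstar} applies; I identify $\Irr(A)=\IrrG$, use the canonical identifications $K_\pi=\msf{H}_\pi$ and $K_\pi\otimes\ov{K_\pi}=\HS(\msf{H}_\pi)$, and set $\mc{Q}_L=\mc{P}$. Property 8) and the canonicity of $(\msf{H}_\pi)_{\pi\in\IrrG}$ are then exactly the content of the remark following Theorem~\ref{Desmedtcstar}.

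The key dictionary for translating the remaining conclusions is the identity $\pi(\lambda^{u}(\alpha))=(\alpha\otimes\id)U^\pi$ (from $U^\pi=(\id\otimes\pi)\WW$), together with $\lambda(\alpha)=\Lambda_{\whG}(\lambda^{u}(\alpha))$ and $\Lambda_\phi(\lambda^{u}(\alpha))=\Lhvp(\lambda(\alpha))$. Taking $x=\lambda^{u}(\alpha)\in\mf{N}_\phi$ (equivalent to $\lambda(\alpha)\in\mf{N}_{\hvp}$), points~1), \ref{item1}) and~4) of Theorem~\ref{Desmedtcstar} become properties 1), 2) and 4) respectively; for 4) I pass to the normal lift $\tilde\phi=\hvp$ on $\pi_\phi(A)''$ because $\lambda^{u}(\alpha)$ need not be positive in $\CGDu$. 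Properties 5) and 6) are immediate from point~5) (using $\tilde\phi=\hvp$) and point~3) (using $\pi_\phi(A)''=\Linfd$, $\pi_\phi(A)'=\Linfd'$). The first equation of property 3) is obtained by applying the fact that $\mc{P}$ carries $\pi_\phi$ into $\int^{\oplus}_{\IrrG}(\pi_\sigma\otimes\I)\md\mu$ to $x=\lambda^{u}(\omega)$, since $\pi_\phi(\lambda^{u}(\omega))=\lambda(\omega)=(\omega\otimes\id)\mrW$.

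The main obstacle is the second equation of property 3), involving $\chi(\mrV)$ and the conjugate representations. Here I would use $\chi(\mrV)=(\hat J\otimes\hat J)\mrW^{*}(\hat J\otimes\hat J)$ together with $(R\otimes\hat R)\mrW=\mrW$ and $R(x)=\hat J x^{*}\hat J$ to rewrite, for $\omega\in\Lj$, the operator $(\omega\otimes\id)\chi(\mrV)$ as $\hat J\,\lambda(\omega\circ R)^{*}\,\hat J=\rho_\phi(\lambda^{u}(\omega\circ R))$, recalling $\rho_\phi(x)=J_\phi\pi_\phi(x^{*})J_\phi$ and $J_\phi=\hat J$. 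Point~3) of Theorem~\ref{Desmedtcstar} then transports this to $\int^{\oplus}_{\IrrG}\I\otimes\check{\pi}_\pi(\lambda^{u}(\omega\circ R))\md\mu$, and it remains to identify $\check{\pi}_\pi(\lambda^{u}(\omega\circ R))=\pi^{c}((\omega\otimes\id)\WW)$; this follows from $\check{\pi}_\pi=\jmath_{\msf{H}_\pi}\circ\pi$, $\pi^{c}=\jmath_{\msf{H}_\pi}\circ\pi\circ\hat{R}^{u}$ and $\hat{R}^{u}(\lambda^{u}(\omega))=\lambda^{u}(\omega\circ R)$, the latter coming from the universal version $(R\otimes\hat{R}^{u})\WW=\WW$. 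Correctly tracking the antipodes and the conjugations $\hat J$ in passing between $\mrV$, $\mrW$ and $\hat{R}^{u}$ is the genuinely delicate step.

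For the uniqueness statement 7) I would apply Lemma~\ref{lemat13} with $A=\CGDu$, $\phi=\hvp\circ\Lambda_{\whG}$ and $\mc{P}'=\mc{Q}'_L$. Hypothesis 7.1) furnishes conditions 1) and 3) of the lemma (the two intertwining equations for the primed data, read off exactly as the two equations of property 3)); hypothesis 7.3) is condition 2); and hypothesis 7.2), taken on the subspace $X=\{\lambda^{u}(\alpha):\lambda(\alpha)\in\mf{N}_{\hvp}\}$ whose image contains $\I$ in its \wot--sequential closure, is condition 4). The lemma then yields equivalence of $\mu$ and $\mu'$ together with the relation $D'_{\pi'}=\sqrt{\tfrac{\md\mu'}{\md\mu}(\pi)}\,T_\pi D_\pi T_\pi^{-1}$ for a unitary intertwiner $T_\pi$. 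Finally, that the primed objects again satisfy 1)--6) follows because they are obtained from the already verified unprimed data by this transformation, under which each of 1)--6) is preserved (a routine check using that $T_\pi$ intertwines $\pi$ and $\pi'$ and that $\mu\sim\mu'$).
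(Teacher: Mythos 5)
Your route is the same as the paper's: the weight you take, $\phi=\hvp\circ\Lambda_{\whG}$, is precisely the universal weight $\hvp^{u}$ that the paper feeds into Theorem \ref{Desmedtcstar}; your dictionary $\pi(\lambda^{u}(\alpha))=(\alpha\otimes\id)U^{\pi}$, $\Lambda_\phi(\lambda^{u}(\alpha))=\Lhvp(\lambda(\alpha))$, $\mf{N}_\phi\ni\lambda^{u}(\alpha)\Leftrightarrow\lambda(\alpha)\in\mf{N}_{\hvp}$ is the paper's; your derivation of the second equation in property $3)$ via $\rho_\phi(\lambda^{u}(\omega\circ R))=\hat{J}\lambda(\omega\circ R)^{*}\hat{J}=(\omega\otimes\id)\chi(\mrV)$ and $\check{\pi}(\lambda^{u}(\omega\circ R))=\pi^{c}((\omega\otimes\id)\WW)$ matches the paper's computation; and property $7)$ is obtained from Lemma \ref{lemat13} exactly as in the paper.

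There is, however, one genuine gap, in property $4)$. You correctly notice that $\lambda^{u}(\alpha)$ need not be positive in $\CGDu$ even when $\lambda(\alpha)\in\Linfd_+$ (because $\Lambda_{\whG}$ is not injective unless $\GG$ is coamenable), but ``passing to the normal lift $\tilde{\phi}=\hvp$'' does not by itself repair this: point $4)$ of Theorem \ref{Desmedtcstar} is a statement about elements of $A_+$ only, and $\tilde{\phi}$ is tied to $\phi$ only through $\tilde{\phi}\circ\pi_\phi=\phi$ on $A_+$; equality of $\tilde{\phi}$ with the direct-integral weight on all of $\pi_\phi(A)''_+$ is not among the stated conclusions and would itself need an argument. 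The correct repair, which is what the paper does, is: use surjectivity of the $\star$-homomorphism $\Lambda_{\whG}$ to pick a positive lift $x\in\CGDu_+$ with $\Lambda_{\whG}(x)=\lambda(\alpha)$, apply point $4)$ of Theorem \ref{Desmedtcstar} to $x$ to get $\hvp(\lambda(\alpha))=\phi(x)=\int_{\IrrG}\Tr(\pi(x)\cdot D_\pi^{-2})\md\mu(\pi)$, and then -- this is the missing ingredient -- show that $\pi(x)=(\alpha\otimes\id)U^{\pi}$ for $\mu$-almost every $\pi$. Since $x$ and $\lambda^{u}(\alpha)$ differ by an element of $\ker\Lambda_{\whG}$, this requires knowing that $\mu$-almost every irreducible representation factors through $\CGD$, i.e.\ $\pi\lec\Lambda_{\whG}$; the paper invokes \cite[Theorem 3.4.8]{Desmedt} for this, noting that its proof does not rely on the property being derived, so there is no circularity. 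Without that step your formula computes $\int_{\IrrG}\Tr(\pi(x)\cdot D_\pi^{-2})\md\mu(\pi)$ for an uncontrolled positive lift $x$, not the desired $\int_{\IrrG}\Tr((\alpha\otimes\id)(U^{\pi})\cdot D_\pi^{-2})\md\mu(\pi)$. The remainder of your proposal (properties $1)$, $2)$, $3)$, $5)$, $6)$, $8)$, and the uniqueness statement via Lemma \ref{lemat13}) is sound and coincides with the paper's proof.
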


\begin{remark}$ $
\begin{itemize} 
\item We are abusing notation by using the same letter $\pi$ for both: a class of representations and chosen representative.
\item There are minor differences between this version of theorem and the version which appears in Desmedt's dissertation: 
\begin{itemize}
\item we prefer to state this result in a slightly more flexible way: rather than using functionals from $\mc{I}$, we prefer to take arbitrary $\alpha\in\Lj$ and assume that $\lambda(\alpha)\in\mf{N}_{\hvp}$ (see appendix \ref{appendixqg} for the definition of $\mc{I}$),
\item we believe that it is necessary (and worthwhile) to include point $6)$ in the statement of the theorem.
\end{itemize}
Due to these differences we include a proof. 
\end{itemize}
\end{remark}

\begin{proof}
We use Theorem \ref{Desmedtcstar} for $A=\CGDu$ and $\phi=\hvp^{u}$. In this way we get the objects that appear in the theorem.\\
Observe that for an arbitrary $\alpha\in \LL^1(\GG)$ we have
\[
\begin{split}
\mf{N}_{\hvp^u}\ni (\alpha\otimes\id) {\WW}
&\Leftrightarrow
\hvp^{u} \bigl( ((\alpha\otimes\id) {\WW})^*
((\alpha\otimes\id) {\WW})\bigr)<\infty\\
&\Leftrightarrow
\hvp( \lambda(\alpha)^* \lambda(\alpha))<\infty\\
&\Leftrightarrow
\lambda(\alpha)\in \mf{N}_{\hvp}.
\end{split}
\]
Points $1),2)$ follow from
\[
\pi((\alpha\otimes \id){\WW})=(\alpha\otimes\id)U^{\pi}
\]
for $\alpha\in \Lj,\pi\in\IrrG$ and points $1),2)$ of Theorem \ref{Desmedtcstar} combined with the equality
\[
\Lambda_{\hvp^{u}}((\alpha\otimes\id){\WW})
=\Lambda_{\hvp}(\lambda(\alpha))
\]
for $\alpha\in\Lj$ such that $\lambda(\alpha)\in\mf{N}_{\hvp}\Leftrightarrow (\alpha\otimes\id){\WW}\in\mf{N}_{\hvp^{u}}$. Observe also that\\
$\Lambda_{\hvp}(\lambda(\Lj)\cap\mf{N}_{\hvp})$ is a dense subspace of $\LdG$ (Lemma \ref{lemat28}).\\
Let us justify point $3)$: the first part follows directly from Theorem \ref{Desmedtcstar}, the second one from calculation
\[
\begin{split}
&\quad\;\check{\pi}((\alpha\otimes\id){\WW})=
\jmath_{\msf{H}_\pi}(\pi((\alpha\otimes\id){\WW}))=
\pi^{c}\circ \hat{R}^{u} ((\alpha\otimes\id){\WW})=
\pi^{c}((\alpha\circ R\otimes\id){\WW}),\\
&
\rho_{\hvp^{u}}((\alpha\otimes\id){\WW})=
\hat{J} \lambda(\alpha)^* \hat{J}=
(\alpha\circ R\otimes\id)( (\hat{J}\otimes\hat{J}) (\mrW)^*
(\hat{J}\otimes\hat{J}) )=
(\alpha\circ R\otimes\id) \chi(\mrV)
\end{split}
\]
and equality $\alpha\circ R\circ R=\alpha$ which holds for all $\alpha\in\Lj$. \\
Now we turn to the point $4)$: we can find $x\in \CGDu_+$ such that $\Lambda_{\whG}(x)=\lambda(\alpha)$ (because $\Lambda_{\whG}$ is a $\star$-homomorphism). Therefore
\[
\hvp(\lambda(\alpha))={\hvp}^{u}(x)=
\int_{\IrrG} \Tr(\pi(x)\cdot D_\pi^{-2})\md\mu(\pi).
\]
Theorem 3.4.8 in \cite{Desmedt} says that support of $\mu$ contains representations which factor through $\CGD$ (we can use this result because its proof does not rely on the property which we now wish to derive). This means that for all $\pi$ in the support of $\mu$ we can find a representation of $\CGD$, $\pi'$ such that $\pi=\pi'\circ \Lambda_{\whG}$, consequently
\[
\begin{split}
&\quad\;\pi(x)=\pi'(\Lambda_{\whG}(x))=\pi'(\lambda(\alpha))=
(\alpha\otimes\id)(\id\otimes\pi')\mrW
\\
&=(\alpha\otimes\id)(\id\otimes\pi'\circ \Lambda_{\whG})\WW=
(\alpha\otimes\id)(\id\otimes\pi)\WW=
(\alpha\otimes\id)U^{\pi}
\end{split}
\]
and
\[
\hvp(\lambda(\alpha))=\int_{\IrrG} \Tr((\alpha\otimes\id)U^{\pi}\cdot D_\pi^{-2})\md\mu(\pi).
\]
 Points $5),6),8)$ are immediate. Let us now justify point $7)$.\\
Lemma \ref{lemat13} implies that measures $\mu,\mu'$ are equivalent and for each $\pi$ there exists a unitary intertwiner $T_\pi\colon \msf{H}_\pi \rightarrow \msf{H}'_{\pi'}$ such that
\[
D'_{\pi'}=\sqrt{\tfrac{\md \mu'}{\md\mu}(\pi)} T_\pi D_\pi T_\pi^{-1}.
\]
Moreover, the proof of Lemma \ref{lemat13} implies that $\mc{Q}'_L \mc{Q}_L^{-1}$ is given by a composition
\[
\bigl(\int_{\IrrG}^{\oplus}
\HS(\msf{H}'_{\pi})\md \mu (\pi)
\rightarrow
\int_{\IrrG}^{\oplus}
\HS(\msf{H}'_{\pi})\md \mu' (\pi)\bigr)\circ
\int_{\IrrG}^{\oplus} V(\pi)\md\mu(\pi),
\]
where $V(\pi)\colon \HS(\msf{H}_\pi)\rightarrow \HS(\msf{H}'_\pi)$ is the map given by a $S\mapsto T_\pi S T_\pi^{-1}$. Take $\alpha\in\Lj$ such that $\lambda(\alpha)\in\mf{N}_{\hvp}$. Then we have
\[\begin{split}
&\quad\;
\mc{Q}'_L \Lambda_{\hvp}(\lambda(\alpha))=
\mc{Q}'_L \mc{Q}_L^{-1}
\int_{\IrrG}^{\oplus} (\alpha\otimes\id)U^{\pi}\,D_\pi^{-1} \md\mu(\pi)\\
&=
\int_{\IrrG}^{\oplus}
\sqrt{\tfrac{\md\mu}{\md\mu'}(\pi)}\; T_\pi (\alpha\otimes\id)U^{\pi} D_\pi^{-1} T_\pi^{-1} \md\mu'(\pi)\\
&=
\int_{\IrrG}^{\oplus}
\sqrt{\tfrac{\md\mu}{\md\mu'}(\pi)}\; (\alpha\otimes\id)U^{\pi'} T_\pi D_\pi^{-1} T_\pi^{-1} \md\mu'(\pi)\\
&=
\int_{\IrrG}^{\oplus}
(\alpha\otimes\id)U^{\pi'} {D'}_\pi^{-1} \md\mu'(\pi).
\end{split}\]
In the above calculation we have used the property that $T_\pi$ is an intertwiner. This proves point $7)$.
\end{proof}

Using Theorem \ref{Desmedtcstar} for $A=\CGDu$ and $\phi=\wh{\psi}^u$ we can get a right version of the above theorem (cf.~\cite[Theorem 1.6.3]{Caspers}).

\begin{theorem}\label{PlancherelR}
Let $\GG$ be a locally compact quantum group such that $\Linfd$ is a von Neumann algebra of type I and the \cst-algebra $\mathrm{C}_0^{u}(\whG)$ is separable. There exists a standard measure $\mu^R$ on $\IrrG(=\Irr(\CGDu))$, a measurable field of Hilbert space $(\msf{K}_\pi)_{\pi\in \IrrG}$, measurable field of representations, measurable field of strictly positive self-adjoint operators $(E_\pi)_{\pi\in \IrrG}$ and a unitary operator $\mc{Q}_R\colon \msf{H}_{\wh{\psi}}\rightarrow \int_{\IrrG}^{\oplus} \HS(\msf{K}_\pi)\md\mu^{R}(\pi)$ such that:
\begin{enumerate}[label=\arabic*)]
\item For all $\alpha\in \LL^1(\GG)$ such that $\lambda(\alpha)\in\mf{N}_{\hpsi}$ and $\mu^R$-almost every $\pi\in\IrrG$ operator $(\alpha\otimes\id) (U^{\pi}) E_{\pi}^{-1}$ is bounded and its closure $(\alpha\otimes\id) (U^{\pi})\cdot E_{\pi}^{-1}$ is Hilbert-Schmidt.
\item For all $\alpha,\beta\in\LL^1(\GG)$ such that $\lambda(\alpha),\lambda(\beta)\in\mf{N}_{\hpsi}$ we have the Parseval formula
\[
\ismaa{\Lhvps(\lambda(\alpha))}{\Lhvps(\lambda(\beta))}=
\int_{\IrrG} \Tr\bigl(
\bigl((\alpha\otimes\id) (U^{\pi})\cdot E_{\pi}^{-1}\bigr)^{*}
\bigl((\beta\otimes\id) (U^{\pi})\cdot E_{\pi}^{-1}\bigr)
\bigr)\md\mu^{R}(\pi),
\]
and $\mc{Q}_R$ is an isometric extension of
\[
\begin{split}
\hat{J}J\Lhvps(\lambda(\LL^1(\GG))\cap \mf{N}_{\hpsi})\ni &
\hat{J}J\Lhvps(\lambda(\alpha))\mapsto\\
&\mapsto
\int_{\IrrG}^{\oplus} 
(\alpha\otimes\id) (U^{\pi})\cdot E_{\pi}^{-1}\md\mu^R(\pi)
\in
\int_{\IrrG}^{\oplus} \HS(\msf{K}_\pi)\md\mu^R(\pi),
\end{split}
\]
\item
$\mc{Q}_R$ satisfies the following equations:
\[
\mc{Q}_R\hat{J}J (\omega\otimes\id)\mrW=
\bigl(\int_{\IrrG}^{\oplus} (\omega\otimes\id)U^{\pi}\otimes\I_{\ov{\msf{H}_{\pi}}}\md\mu^R(\pi)\bigr)\mc{Q}_R \hat{J}J
\]
and
\[
\mc{Q}_R \hat{J}J(\omega\otimes\id)\chi(\mrV)=
\bigl(\int_{\IrrG}^{\oplus} \I_{\msf{H}_\pi}\otimes \pi^{c}((\omega\otimes\id){\WW})\md\mu^R(\pi)\bigr)\mc{Q}_R\hat{J}J
\]
for every $\omega\in\LL^1(\GG)$.
\item
If $\beta\in \LL^1(\GG)$ is such that $\lambda(\beta)\in\LL^{\infty}(\whG)_+$, then
\begin{equation}\label{eq42}
\hpsi(\lambda(\beta))=
\int_{\IrrG} \Tr((\beta\otimes\id)(U^{\pi}) \cdot {E_{\pi}}^{-2})
\md\mu^R(\pi).
\end{equation}
\item
Haar inegrals on $\whG$ are tracial if and only if almost all $E_{\pi}$ are multiples of the identity.
\item Operator $\mc{Q}_R$ transforms $\LL^{\infty}(\whG)\cap \LL^{\infty}(\whG)'$ into diagonalisable operators.
\item
Assume that all irreducible representations of $\GG$ are finite dimensional. Let ${\mu'}^R$, $(\msf{K}'_{\pi'})_{\pi'\in\IrrG}$, $(E'_{\pi'})_{\pi'\in\IrrG}$ be other objects of the above type (with $\pi,\pi'\in[\pi']=[\pi]$). Assume that there exists a unitary operator $\mc{Q}'_R\colon \msf{H}_{\hpsi}\rightarrow \int_{\IrrG}^{\oplus} \HS(\msf{K}'_{\pi'})\md{\mu'}^R(\pi')$. If
\begin{enumerate}
\item[7.1)] point $3)$ is satisfied for ${\mu'}^R$, $(\msf{K}'_{\pi'})_{\pi'\in\IrrG}$, $(E'_{\pi'})_{\pi'\in\IrrG}$,
\item[7.2)] we have
\[
\mc{Q}'_R \hat{J}J\Lambda_{\hpsi}(\lambda(\alpha))=
\int_{\IrrG}^{\oplus}
(\alpha\otimes\id)U^{\pi'} {E'}_{\pi'}^{-1}
\md{\mu'}^R(\pi')
\]
for all $\lambda(\alpha)$ in a subspace $X\subseteq\mf{N}_{\hpsi}$ such that $\I$ belongs to the \wot -- sequential closure of $X$,
\item[7.3)] $\mc{Q}'_R$ transforms $\LL^{\infty}(\whG)\cap\LL^{\infty}(\whG)'$ into diagonalisable operators
\end{enumerate}
then $\mu^R$ and ${\mu'}^R$ are equivalent and for $\mu^R$-almost all $\pi\in\IrrG$ there exists a unitary intertwiner $T_\pi\colon \msf{K}_\pi\rightarrow\msf{K}'_{\pi'}$ such that
\[
E'_{\pi'}=\sqrt{\tfrac{\md{\mu'}^R}{\md\mu^R}(\pi)} T_\pi E_{\pi} {T_\pi}^{-1}.
\]
Moreover, objects with primes satisfies all the properties $1)$--$6)$.
\item
We can choose $\mu^R=\mu$ and $\msf{K}_\pi=\msf{H}_\pi$ (and the same field of representations as in Theorem \ref{PlancherelL}).
\end{enumerate}
\end{theorem}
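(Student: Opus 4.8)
The plan is to mirror the proof of Theorem~\ref{PlancherelL} line by line, replacing the left Haar weight $\hvp$ of $\whG$ by the right Haar weight $\hpsi$, the measure $\mu$ by $\mu^R$ and the field $(D_\pi)$ by $(E_\pi)$; the two features genuinely special to the right-handed statement are the conjugation factor $\hat{J}J$ appearing throughout, and the compatibility assertion of point~$8)$. First I would apply Theorem~\ref{Desmedtcstar} to $A=\CGDu$ and $\phi=\hpsi^{u}$, the universal right Haar weight of $\whG$, which produces $\mu^R$, the fields $(\msf{K}_\pi)_{\pi\in\IrrG}$, $(\pi_\sigma)_{\sigma}$, $(E_\pi)_{\pi\in\IrrG}$ and a unitary $\mc{P}\colon\msf{H}_{\hpsi}\rightarrow\int_{\IrrG}^{\oplus}\HS(\msf{K}_\pi)\md\mu^R(\pi)$. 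Exactly as before, $(\alpha\otimes\id)\WW\in\mf{N}_{\hpsi^u}\Leftrightarrow\lambda(\alpha)\in\mf{N}_{\hpsi}$, one has $\Lambda_{\hpsi^u}((\alpha\otimes\id)\WW)=\Lambda_{\hpsi}(\lambda(\alpha))$ and $\pi((\alpha\otimes\id)\WW)=(\alpha\otimes\id)U^\pi$, so points~$1)$ and~$2)$ drop out of the corresponding clauses of Theorem~\ref{Desmedtcstar} together with the density of $\Lambda_{\hpsi}(\lambda(\Lj)\cap\mf{N}_{\hpsi})$ in $\LdG$ (as in Lemma~\ref{lemat28}), once we set $\mc{Q}_R:=\mc{P}\,(\hat{J}J)^{-1}=\mc{P}\,J\hat{J}$.

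The conjugation $\hat{J}J$ is the only piece of new bookkeeping. It enters because the GNS map of the right weight is bound to that of the left weight of $\whG$ through the $\whG$-analogue of the convention $\hat{J}\Lvp(x)=\Lvps(R(x^*))$, namely $J\Lambda_{\hvp}(x)=\Lambda_{\hpsi}(\whR(x^*))$; pre-composing $\mc{P}$ with $(\hat{J}J)^{-1}$ transports the decomposition into the left-invariant picture, so that the same $U^\pi$ and the same field reappear and $\mc{Q}_R\hat{J}J=\mc{P}$. With this convention point~$3)$ is proved as its left counterpart: the first equation is read off Desmedt's transport of $\pi_\phi$, and the second from the computations of $\check{\pi}((\alpha\otimes\id)\WW)$ and $\rho_{\hpsi^u}((\alpha\otimes\id)\WW)$ in terms of $\chi(\mrV)$, $R$ and $\pi^c$. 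Point~$4)$ follows by writing $\lambda(\beta)=\Lambda_{\whG}(x)$ with $x\in\CGDu_+$ and invoking the statement on the support of $\mu^R$ from \cite{Desmedt}, exactly as in the left case.

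Points~$5)$ and~$6)$ are immediate from the corresponding clauses of Theorem~\ref{Desmedtcstar}, and point~$7)$ is Lemma~\ref{lemat13} applied verbatim to $\phi=\hpsi^u$, yielding the equivalence of $\mu^R$ and ${\mu'}^R$ and the intertwiner relation for $E'_{\pi'}$. I expect the main obstacle to be point~$8)$: that $\mu^R$, $(\msf{K}_\pi)$ and the representation field may be taken to coincide with those of Theorem~\ref{PlancherelL}. The key observation I would isolate is that the GNS representations of $\hvp^u$ and $\hpsi^u$ are \emph{literally the same} representation of $\CGDu$ on $\LdG$, both being $\iota\circ\Lambda_{\whG}$ for the inclusion $\iota\colon\CGD\hookrightarrow\B(\LdG)$; consequently they generate the common von Neumann algebra $\Linfd$ and their Desmedt decompositions decompose one and the same object.

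Given this, point~$8)$ is obtained by feeding the left data $\mu$, $(\msf{H}_\pi)$, $(\pi_\sigma)$ into the uniqueness mechanism of point~$7)$ (equivalently Lemma~\ref{lemat13}) for the right weight, and re-choosing the right Plancherel triple so that $\mu^R=\mu$, $\msf{K}_\pi=\msf{H}_\pi$ and the representation fields agree; only $E_\pi$ (and the unitary $\mc{Q}_R$, through the GNS map $\Lambda_{\hpsi}$) then differ from $D_\pi$ and $\mc{Q}_L$. The delicate point, and the step I expect to cost the most, is verifying that the left isometry conjugated by $\hat{J}J$ indeed satisfies the right-handed GNS relation~$7.2)$ and intertwining~$3)$; here the interplay of $J$, $\hat{J}$, $R$, $\whR$ and the relation $J\Lambda_{\hvp}(x)=\Lambda_{\hpsi}(\whR(x^*))$ must be used with care.
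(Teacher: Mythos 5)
Your proof follows the paper's own route for points $1)$--$7)$: apply Theorem \ref{Desmedtcstar} to $A=\CGDu$ and $\phi=\hpsi^{u}$, and correct the resulting Desmedt unitary by setting $\mc{Q}_R=\mc{Q}_{R,0}\circ J\hat{J}$ --- this is exactly what the paper does, and your verifications of points $1)$--$4)$ and $7)$ match the paper's. One caveat: point $6)$ is \emph{not} immediate from Desmedt's theorem. Desmedt gives that $\mc{Q}_{R,0}$ maps $\Linfd\cap\Linfd'$ to diagonalisable operators; to transfer this to $\mc{Q}_R=\mc{Q}_{R,0}\circ J\hat{J}$ you must additionally know that conjugation by $J\hat{J}$ preserves $\Linfd\cap\Linfd'$, which is what the paper records (conjugation by $J$ preserves $\Linfd$ and $\Linfd'$, while $\hat{J}\Linfd\hat{J}=\Linfd'$). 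It is a one-line fix, but it is a needed line.

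Point $8)$ is where you genuinely diverge: the paper simply cites Caspers' dissertation, whereas you sketch a self-contained argument through the uniqueness mechanism. Your key observation --- that the GNS representations of $\hvp^{u}$ and $\hpsi^{u}$ are literally the same representation $\Lambda_{\whG}$ of $\CGDu$ on $\LdG$, so the left and right Desmedt decompositions decompose one and the same object --- is correct and is the right starting point: with $\mc{P}'=\mc{Q}_L$, conditions $1)$ and $2)$ of Lemma \ref{lemat13} hold for $\phi=\hpsi^{u}$, giving equivalence of $\mu^R$ and $\mu$. However, two gaps remain. First, Lemma \ref{lemat13} is stated (and used) under the hypothesis that all irreducible representations are finite dimensional, while point $8)$ of Theorem \ref{PlancherelR} carries no such hypothesis; your route therefore proves point $8)$ only in the finite-dimensional case (sufficient for the rest of the paper, but strictly weaker than the statement, which is why the paper defers to Caspers' general argument). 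Second, measure equivalence alone does not finish the job: to re-choose the right triple so that $\mu^R=\mu$ and $\msf{K}_\pi=\msf{H}_\pi$ you must produce a \emph{measurable} field of unitary intertwiners $T_\pi\colon\msf{K}_\pi\rightarrow\msf{H}_\pi$ (these come from the $\eta$-isomorphism inside the \emph{proof} of Lemma \ref{lemat13}, not from its statement), then set $E'_\pi=\sqrt{\tfrac{\md\mu}{\md\mu^R}(\pi)}\,T_\pi E_\pi T_\pi^{-1}$, compose $\mc{Q}_R$ with the induced unitary between the direct integrals, and re-verify conditions $1)$--$7)$ for the new data, i.e.\ carry out a rescaling of the kind done in Proposition \ref{stw14}. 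None of this is difficult, but it is precisely the content your sketch leaves implicit.
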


\begin{proof}
We use the same argument as in the proof of Theorem \ref{PlancherelL}. We use Theorem \ref{Desmedtcstar} for $A=\mathrm{C}^{u}_{0}(\whG)$ and $\phi=\wh{\psi}_u$. In this way we get a measure $\mu^R$, measurable field of Hilbert spaces, representations, operators $(E_\pi)_{\pi\in\IrrG}$, and a unitary operator $\mc{Q}_{R,0}$. These objects satisfy the conditions of the theorem, except for the commutation rules (point $3)$) and that $\mc{Q}_{R,0}$ maps vector $\Lambda_{\hpsi}(\lambda(\alpha))$ to $\int_{\IrrG}^{\oplus} (\alpha\otimes\id)U^{\pi}\cdot E_\pi^{-1}\md\mu^R(\pi)$.  Let us define a unitary operator $\mc{Q}_R$ as $\mc{Q}_R=\mc{Q}_{R,0}\circ J\hat{J}$. Now, this operator meets conditions $2)$ and $3)$ (we justify point $3)$ in the same way, as in the proof of Theorem \ref{PlancherelL}). The subspace $\hat{J}J\Lambda_{\hpsi}(\lambda(\LL^1(\GG))\cap \mf{N}_{\hpsi})$ is dense in $\LdG$, which is shown in Lemma \ref{lemat28}.\\
Since conjugation by $J$ preserves $\Linfd$ and $\Linfd'$ and moreover $\hat{J}\Linfd \hat{J}=\Linfd'$, point $6)$ holds also for $\mc{Q}_R$. Points $4)$ and $7)$ can be obtain as in the left version of the theorem. Point 5 is clear, point 8 is proven in the Caspers' dissertation \cite{Caspers}.
\end{proof}

Whenever we say that we choose a Plancherel measure $\mu$ for $\GG$, we will in fact mean that we choose all of the objects
\[
\mu,(\msf{H}_\pi)_{\pi\in\IrrG},(D_\pi)_{\pi\in\IrrG},(E_\pi)_{\pi\in\IrrG},\mc{Q}_L,\mc{Q}_R
\]
and a measurable family of representations. Unless said otherwise, we will always choose the canonical measurable field of Hilbert spaces. In the examples section we describe the Plancherel measure for special classes of locally compact quantum groups: compact, classical, dual to classical and certain class of quantum groups constructed via the bicrossed product.

\begin{remark}
If $\whG$ is unimodular, we can take $E_\pi=D_\pi$ and $\mc{Q}_R=\mc{Q}_L\circ J\hat{J}$.
\end{remark}

The above theorem says that two Plancherel measures are equivalent. One can ask the following question: having a measure equivalent to the Plancherel measure, can we find operators $D_\pi$ which satisfy conditions of Theorem \ref{PlancherelL}? The following propositions states that this is indeed the case: 

\begin{proposition}\label{stw14}
Assume that $\GG$ is a locally compact quantum group such that $\CGDu$ is separable and of type I. Let $\mu,(\msf{H}_\pi)_{\pi\in\IrrG},(D_\pi)_{\pi\in\IrrG},(E_\pi)_{\pi\in\IrrG},\mc{Q}_L,\mc{Q}_R$ be objects given by theorems \ref{PlancherelL}, \ref{PlancherelR}. Assume that $\mu'$ is a measure equivalent to $\mu$. Define $\msf{H}'_\pi=\msf{H}_\pi$ (the same structure of measurable field of Hilbert spaces and of measurable field of representations),
\[
\begin{split}
\mc{Q}'_L\colon& \msf{H}_{\hvp}\xrightarrow[]{\mc{Q}_L} \int_{\IrrG}^{\oplus} \HS(\msf{H}_\pi)\md\mu(\pi)\rightarrow
\int_{\IrrG}^{\oplus} \HS(\msf{H}'_\pi)\md\mu'(\pi)\\
\mc{Q}'_R\colon &\msf{H}_{\hpsi}\xrightarrow[]{\mc{Q}_R} \int_{\IrrG}^{\oplus} \HS(\msf{H}_\pi)\md\mu(\pi)\rightarrow
\int_{\IrrG}^{\oplus} \HS(\msf{H}'_\pi)\md\mu'(\pi),
\end{split}
\]
where $\int_{\IrrG}^{\oplus} \HS(\msf{H}_\pi)\md\mu(\pi)\rightarrow
\int_{\IrrG}^{\oplus} \HS(\msf{H}'_\pi)\md\mu'(\pi)$ is the canonical unitary \\$\int_{\IrrG}^{\oplus} \xi_\pi\md\mu(\pi)\mapsto
\int_{\IrrG}^{\oplus} \xi_\pi \sqrt{\tfrac{\md\mu}{\md\mu'}(\pi)}\md\mu'(\pi)$ and
\[
D'_\pi=\sqrt{\tfrac{\md\mu'}{\md\mu}(\pi)} D_\pi,\quad
E'_\pi=\sqrt{\tfrac{\md\mu'}{\md\mu}(\pi)} E_\pi\quad(\pi\in\IrrG).
\]
Then objects with primes satisfy theorems \ref{PlancherelL}, \ref{PlancherelR}.
\end{proposition}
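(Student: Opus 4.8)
The plan is to verify that the primed objects satisfy each of the points $1)$--$8)$ of Theorems \ref{PlancherelL} and \ref{PlancherelR} by tracking how the change of measure affects every quantity in turn. Write $W$ for the canonical unitary
\[
W\colon \int_{\IrrG}^{\oplus}\HS(\msf{H}_\pi)\md\mu(\pi)\rightarrow\int_{\IrrG}^{\oplus}\HS(\msf{H}'_\pi)\md\mu'(\pi),\qquad
\int_{\IrrG}^{\oplus}\xi_\pi\md\mu(\pi)\mapsto\int_{\IrrG}^{\oplus}\xi_\pi\sqrt{\tfrac{\md\mu}{\md\mu'}(\pi)}\md\mu'(\pi),
\]
appearing in the definition of $\mc{Q}'_L$ and $\mc{Q}'_R$, so that (after the identification $\msf{H}'_\pi=\msf{H}_\pi$) one has $\mc{Q}'_L=W\mc{Q}_L$ and $\mc{Q}'_R=W\mc{Q}_R$ with the same $W$. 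Since $\mu$ and $\mu'$ are equivalent, the derivative $\tfrac{\md\mu}{\md\mu'}$ is almost everywhere finite and strictly positive, which is exactly what makes $W$ a well-defined unitary.

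The single computation that drives everything is the following intertwining property of $W$: for any measurable field of operators $(T_\pi)_{\pi\in\IrrG}$ one has
\[
W\,\Bigl(\int_{\IrrG}^{\oplus}T_\pi\md\mu(\pi)\Bigr)=\Bigl(\int_{\IrrG}^{\oplus}T_\pi\md\mu'(\pi)\Bigr)\,W,
\]
because $W$ merely rescales each fibre vector by the scalar $\sqrt{\md\mu/\md\mu'}(\pi)$, which commutes with every fibrewise operator. In particular $W$ sends decomposable (resp.\ diagonalisable) operators for $\mu$ to the corresponding decomposable (resp.\ diagonalisable) operators for $\mu'$.

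With this in hand each point becomes a short check. Using $D'_\pi=\sqrt{\md\mu'/\md\mu}(\pi)\,D_\pi$ I get $(D'_\pi)^{-1}=\sqrt{\md\mu/\md\mu'}(\pi)\,D_\pi^{-1}$ and $(D'_\pi)^{-2}=\tfrac{\md\mu}{\md\mu'}(\pi)\,D_\pi^{-2}$; since the factor is a positive scalar, point $1)$ (boundedness and Hilbert--Schmidtness) and point $5)$ (being a scalar multiple of the identity) are immediate. For the Parseval formula in point $2)$ and the weight formula in point $4)$ the scalars $\tfrac{\md\mu}{\md\mu'}(\pi)$ coming from the two copies of $(D'_\pi)^{-1}$ (resp.\ from $(D'_\pi)^{-2}$) cancel exactly against the change of base measure $\md\mu'(\pi)=\tfrac{\md\mu'}{\md\mu}(\pi)\md\mu(\pi)$, returning the original unprimed integrals; the claim that $\mc{Q}'_L$ is the asserted isometric extension follows from $\mc{Q}'_L=W\mc{Q}_L$ together with $W\int_{\IrrG}^{\oplus}(\alpha\otimes\id)U^\pi D_\pi^{-1}\md\mu=\int_{\IrrG}^{\oplus}(\alpha\otimes\id)U^\pi (D'_\pi)^{-1}\md\mu'$. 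The commutation relations of point $3)$ and the preservation of diagonalisable operators in point $6)$ follow at once from $\mc{Q}'_L=W\mc{Q}_L$ and the intertwining property, since the relevant operators are decomposable. Point $8)$ holds because $\msf{H}'_\pi=\msf{H}_\pi$ is still the canonical field. Finally, point $7)$ is not an independent claim: its proof in Theorem \ref{PlancherelL} uses only properties $2)$, $3)$, $6)$ of the reference data through Lemma \ref{lemat13}, so having verified these for the primed objects one obtains point $7)$ by the identical argument. The verification for Theorem \ref{PlancherelR} is word for word the same, with $E_\pi$, $\mc{Q}_R$, $\hpsi$ and the extra twist by $\hat{J}J$ in place of their left counterparts.

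The argument is essentially bookkeeping; the only thing that requires care is keeping the two Radon--Nikodym derivatives $\tfrac{\md\mu}{\md\mu'}$ and $\tfrac{\md\mu'}{\md\mu}$ straight, so that the rescalings coming from $W$, from $(D'_\pi)^{-1}$ and from the change of base measure cancel in precisely the right combination — this is the one place where a direction slip would leave behind a spurious Radon--Nikodym factor and break the equalities.
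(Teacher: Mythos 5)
Your proof is correct and follows essentially the same route as the paper's: both verify the points of Theorems \ref{PlancherelL}, \ref{PlancherelR} directly, using the cancellation of the Radon--Nikodym factors in the Parseval and weight formulas and the fact that the canonical unitary carries decomposable (and diagonalisable) operators for $\mu$ to the corresponding ones for $\mu'$. Your write-up is merely more explicit than the paper's (which dismisses points $4)$--$8)$ with ``the rest is straightforward''), in particular in isolating the intertwining property of $W$ and in noting that point $7)$ follows from points $2)$, $3)$, $6)$ via Lemma \ref{lemat13}.
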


\begin{proof}
It is clear that $\mc{Q}'_L,\mc{Q}'_R$ are unitary operators -- they are given by a composition of unitaries. Operators $D'_\pi,E'_\pi$ form a measurable field of positive invertible operators. Points $1)$ of theorems \ref{PlancherelL}, \ref{PlancherelR} are satified since measures $\mu$, $\mu'$ are equivalent. For $\alpha,\beta\in\Lj$ such, that $\alpha,\beta\in\mf{N}_{\hvp}$ we have
\[
\begin{split}
&\quad\;\int_{\IrrG} \Tr\bigl(
\bigl((\alpha\otimes\id) (U^{\pi})\cdot D_{\pi}^{-1}\bigr)^{*}
\bigl((\beta\otimes\id) (U^{\pi})\cdot D_{\pi}^{-1}\bigr)
\bigr)\md\mu(\pi)\\
&=\int_{\IrrG} \Tr\bigl(
\bigl((\alpha\otimes\id) (U^{\pi})\cdot {D'}_{\pi}^{-1}\bigr)^{*}
\bigl((\beta\otimes\id) (U^{\pi})\cdot {D'}_{\pi}^{-1}\bigr)
\bigr)\md\mu'(\pi),
\end{split}
\]
and
\[
\mc{Q}'_L \Lambda_{\hvp}(\lambda(\alpha))=
\int_{\IrrG}^{\oplus} \sqrt{\tfrac{\md\mu}{\md\mu'}(\pi)}(\alpha\otimes\id)U^\pi\cdot D_\pi^{-1} \md\mu'(\pi)=
\int_{\IrrG}^{\oplus} (\alpha\otimes\id)U^\pi\cdot {D'}_\pi^{-1} \md\mu'(\pi)
\]
which shows that point $2)$ of Theorem \ref{PlancherelL} is satisfied. In a similar manner we check that point $2)$ of Theorem \ref{PlancherelR} holds. Recall that operators of the form $\int_{\IrrG}^{\oplus} T_\pi \md\mu(\pi)\; (T_\pi\in \B(\HS(\msf{H}_\pi)))$ are called decomposable. Point $3)$ is true since the mapping $\int_{\IrrG}^{\oplus}\HS(\msf{H}_\pi)\md\mu(\pi)\rightarrow \int_{\IrrG}^{\oplus}\HS(\msf{H}'_\pi)\md\mu'(\pi)$ transforms decomposable operators as follows: $\int_{\IrrG}^{\oplus} T_\pi\md\mu(\pi)\mapsto$\\ $\mapsto \int_{\IrrG}^{\oplus} T_\pi \md\mu'(\pi)$. The rest is straightforward.
\end{proof}

\section{Integral representations}\label{secintrep}
From now on we assume that $\whG$ is \emph{second countable}, which means that the \cst-algebra $\CGDu$ is separable (see Lemma \ref{lemat37} for equivalent conditions).\\
Let $(X,\mf{M}_X,\mu_X)$ be a $\sigma$-finite measure space, $(\msf{H}_x)_{x\in X}$ a measurable field od Hilbert spaces, and $(\pi_x)_{x\in X}$ a measurable field of representations of $\CGDu$. Define $\pi_X=\int_X^{\oplus}\pi_x\md\mu_X(x)$. Then $( \pi_x(\CGDu)'')_{x\in X}$ is a measurable field of von Neumann algebras (\cite[Definition 1, page 197]{DixmiervNA}): there exists a dense subset $\{a_i\}_{i\in \NN}$ in $\CGDu$. Then $\{\int_X^{\oplus} \pi_x(a_i)\md\mu_X(x)\}_{i\in \NN}$ is a countable family of decomposable operators, and for almost all $x\in X$ family $\{\pi_x(a_i)\}_{i\in \NN}$ generates $\pi_x(\CGDu)''$. It follows that we can form a direct integral von Neumann algebra 
\[
\int_X^{\oplus} \pi_x(\CGDu)''\md\mu_X(x)=
\bigr\{\int_X^{\oplus} T_x \md\mu_X(x)\,\big|\, \textnormal{for almost all } x\in X,\;T_x\in \pi_x(\CGDu)''\bigr\}
\]
and we have
\[
\pi_X(\CGDu)''\subseteq \int_X^{\oplus} \pi_x(\CGDu)''\md\mu_X(x).
\]
In general this inclusion is strict.
\begin{definition}\label{defintrep}
An \emph{integral representation} of $\GG$ is a sextuple\footnote{
We could consider a more general situation of integral representations equipped with a measurable field of faithful normal functionals $(\psi_x)_{x\in X}$, where $\psi_x\in \B(\msf{H}_x)_*$. We could then define integral character and weight via respectively $\int_X (\id\otimes\psi_x) U^{\pi_x}\md\mu_X(x)$ and $\int_X^{\oplus} \psi_x\md\mu_X(x)$. Most results from sections \ref{secintrep}, \ref{secintweight}, \ref{secweightchar} generalize to this setting. Definition \ref{defintrep} corresponds to the case $\psi_x=\Tr_{\msf{H}_x}\,(x\in X)$.
} $(X,\mf{M}_X,\mu_X,(\msf{H}_x)_{x\in X}$, $\{(e^i_x)_{x\in X}\}_{i\in \NN}, (\pi_x)_{x\in X})$, where
\begin{enumerate}[label=\arabic*)]
\item $(X,\mf{M}_X,\mu_X)$ is a $\sigma$-finite measure space,
\item $(X,(\msf{H}_x)_{x\in X},\{(e^i_x)_{x\in X}\}_{i\in \NN})$ is a measurable field of Hilbert spaces such that $\dim(\msf{H}_x)<+\infty$ for all $x\in X$,
\item $(\pi_x)_{x\in X}$ is a measurable field of nondegenerate representations of $\CGDu$.
\end{enumerate}
\end{definition}
With every integral representation we can associate a nondegenerate representation of $\CGDu$: 
\[
\pi_X=\int_X^{\oplus}\pi_x\md\mu_X(x)\colon \CGDu\rightarrow
\B(\int_X^{\oplus} \msf{H}_x\md\mu_X(x)).
\]
To ease the notation we will write that an integral representation is $(X,\mu_X,(\pi_x)_{x\in X})$ or even $\pi_X$ if there is no risk of confusion. With an integral representation we will associate three more notions: an \emph{integral dimension}
\[
\dim^{\int}(\pi_X)=\dim^{\int}(\int_X^{\oplus} \pi_x\md\mu_X(x))=\int_X \dim(\pi_x) \md\mu_X.
\]
and an \emph{integral character} (whenever the integral dimension is finite)
\begin{equation}\label{eq25}
\chi^{\int}(\int_X^{\oplus}\pi_x\md\mu_X(x))=
\int_X (\id\otimes\Tr_x)U^{\pi_x}\md\mu_X(x)\in \Linf.
\end{equation}
(Symbol $\Tr_x$ denotes the trace over $\msf{H}_x$). We stress that the integral dimension and the integral character of a given integral representation in general are not preserved by unitary equivalence. Moreover, we define the \emph{integral weight} $\Psi^{\int_X}$ as the restriction of the weight $\int_X^{\oplus}\Tr_x\md\mu_X(x)$ to $\pi_X(\CGDu)''$ (\cite[Definition 1, page 223]{DixmiervNA}). It is a faithful, normal tracial weight, though not necessarily semifinite. If the integral dimension of $\pi_X$ is finite, the integral weight is a bounded functional. Let us introduce few more convenient definitions: 

\begin{definition}
Let $(X,\mu_X,(\pi_x)_{x\in X})$ be an integral representation of $\GG$ and let $n\in\NN$. We define a measurable subset
\[
X\rest_n=\{x\in X\,|\, \dim(\pi_x)=n\}.
\]
\end{definition}

\begin{definition}
The family of integral representations will be denoted by $\Rep^{\int}(\GG)$. Moreover, $\Rep^{\int}_{<+\infty}(\GG)$ will stand for the family of integral representations with finite integral dimension.
\end{definition}

Let us prove a certain technical lemma which will be of use later on. 

\begin{lemma}\label{lemat22}
Let $(X,\mu_X,(\pi_x)_{x\in X})$ be an integral representation of $\GG$. We have $\pi_X\lec \Lambda_{\whG}$ if and only if $\pi_x\lec \Lambda_{\whG}$ for $\mu_X$-almost all $x\in X$.
\end{lemma}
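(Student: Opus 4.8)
The plan is to translate the statement about weak containment into a statement about ideals. Recall the standard characterisation (see e.g.\ \cite[Proposition 3.4.2]{DixmierC}): for nondegenerate representations $\rho,\sigma$ of a \cst-algebra $A$ one has $\rho\lec\sigma$ if and only if $\ker\sigma\subseteq\ker\rho$. Applying this with $\sigma=\Lambda_{\whG}$, whose kernel $I:=\ker\Lambda_{\whG}$ is a fixed closed two-sided ideal of $\CGDu$ (independent of the integral representation), the lemma becomes the purely algebraic equivalence
\[
I\subseteq\ker\pi_X
\quad\Longleftrightarrow\quad
I\subseteq\ker\pi_x\ \text{ for }\mu_X\text{-almost all }x\in X.
\]

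The bridge between the two sides is the observation that $\pi_X(a)=\int_X^{\oplus}\pi_x(a)\md\mu_X(x)$ is a decomposable operator, and such an operator vanishes if and only if its components $\pi_x(a)$ vanish for $\mu_X$-almost all $x$ (a standard property of direct integrals, cf.\ \cite{DixmiervNA}). Thus for a single element $a\in\CGDu$ we have $a\in\ker\pi_X$ exactly when $\pi_x(a)=0$ for almost every $x$. The implication $(\Leftarrow)$ is then immediate and needs no countability: if $I\subseteq\ker\pi_x$ outside a $\mu_X$-null set $N$, then for every $a\in I$ we get $\pi_x(a)=0$ for $x\notin N$, whence $\pi_X(a)=0$, so $I\subseteq\ker\pi_X$.

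The main obstacle is the reverse implication $(\Rightarrow)$, where one must interchange the quantifier ``for all $a\in I$'' with ``for almost every $x$''; the delicate point is that the null set on which $\pi_x(a)=0$ fails depends on $a$, and $I$ is uncountable. This is resolved by separability of $\CGDu$: since $I$ is a separable \cst-algebra, choose a countable dense subset $\{a_i\}_{i\in\NN}\subseteq I$. Assuming $I\subseteq\ker\pi_X$, each $a_i$ lies in $\ker\pi_X$, so by the bridge above there is a $\mu_X$-null set $N_i$ with $\pi_x(a_i)=0$ for $x\notin N_i$. Setting $N=\bigcup_{i\in\NN}N_i$ (still $\mu_X$-null), for every $x\notin N$ we have $\pi_x(a_i)=0$ for all $i$; since $\pi_x$ is norm-continuous and $\{a_i\}$ is dense in $I$, this forces $\pi_x(a)=0$ for all $a\in I$, i.e.\ $I\subseteq\ker\pi_x$. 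Hence $\pi_x\lec\Lambda_{\whG}$ for $\mu_X$-almost all $x$, completing the argument.
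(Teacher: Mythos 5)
Your proof is correct, and the reverse implication is the same as the paper's; but for the forward implication you take a genuinely different, more elementary route. The paper first uses $\pi_X\lec\Lambda_{\whG}$ to factor $\pi_X$ through $\CGD$, writing $\pi_X=\rho\circ\Lambda_{\whG}$, and then invokes the disintegration result \cite[Lemma 8.3.1]{DixmierC} (applicable because each $\rho(\Lambda_{\whG}(a))$ is decomposable) to produce fiber representations $\pi'_x$ of $\CGD$ with $\rho=\int_X^{\oplus}\pi'_x\md\mu_X(x)$; it then matches $\pi_x(a_i)=\pi'_x(\Lambda_{\whG}(a_i))$ on a countable dense subset of $\CGDu$, intersects the null sets, and concludes $\pi_x=\pi'_x\circ\Lambda_{\whG}$ almost everywhere, whence $\pi_x\lec\Lambda_{\whG}$. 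You instead stay entirely at the level of kernels: since $I=\ker\Lambda_{\whG}$ is a closed subspace of the separable algebra $\CGDu$ (the standing second countability assumption), it is itself separable, and you run the same countable-density plus null-set-union argument directly inside $I$; the only direct-integral input you need is that a decomposable operator vanishes iff its fibers vanish almost everywhere. This avoids the disintegration machinery altogether, which makes your argument shorter and self-contained given the paper's stated characterization of $\lec$ as kernel containment. What the paper's route buys in exchange is the explicit a.e.\ fiber factorization $\pi_x=\pi'_x\circ\Lambda_{\whG}$, i.e.\ genuine representations of $\CGD$ in each fiber, which is slightly more information than the kernel inclusion; but for the statement of the lemma your argument is fully sufficient.
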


Recall that symbol $\lec$ denotes weak containment: for nondegenerate representations $\rho,\rho'$ we have $\rho\lec\rho'$ if and only if $(\rho'(a)=0)\Rightarrow (\rho(a)=0)$ for all $a$.\\
We remark that the above lemma could be phrased in more general setting, with $\Lambda_{\whG}$ replaced by an arbitrary nondegenerate representation.

\begin{proof}
Assume $\pi_X\lec \Lambda_{\whG}$, then we can define a representation $\rho\colon \CGD\rightarrow \B(\int_{X}^{\oplus}\msf{H}_x\md\mu_X(x))$ via
\[
\rho(\Lambda_{\whG}(a))=\pi_X(a)\quad(a\in \CGDu).
\]
For every $\Lambda_{\whG}(a)\in \CGD$ operator $\rho(\Lambda_{\whG}(a))$ is decomposable, therefore we can use \cite[Lemma 8.3.1]{DixmierC} -- for every $x\in X$ there exists a representation $\pi'_x\colon \CGD\rightarrow \B(\msf{H}_x)$ such that
\[
\int_X^{\oplus} \pi_x(a)\md\mu_X(x)=\pi_X(a)=
\rho(\Lambda_{\whG}(a))=
\int_X^{\oplus} \pi'_x(\Lambda_{\whG}(a)) \md\mu_X(x)\quad(a\in \CGDu).
\]
Let $\{a_i\}_{i\in\NN}$ be a countable dense subset of $\CGDu$, fix $i\in\NN$. Thanks to the above equality we can find subset of full measure $X_i\subseteq X$ such that $\pi_x(a_i)=\pi'_x(\Lambda_{\whG}(a_i))$ for all $x\in X_i$. Intersection $X_0=\bigcap_{i=1}^{\infty} X_i$ is also of full measure and we have $\pi_x(a_i)=\pi'_x(\Lambda_{\whG}(a_i))$ for all $(i,x)\in \NN\times X_0$. Density of $\{a_i\}_{i=1}^{\infty}$ gives us $\pi_x(a)=\pi'_x(\Lambda_{\whG}(a))$ for all $a\in\CGDu$ and $x\in X_0$. Consequently we get $\pi_x\lec \Lambda_{\whG}\;(x\in X_0)$.\\
One the other hand, assume that $\pi_x\lec \Lambda_{\whG}$ for almost all $x\in X$ and take $a\in \CGDu$ such that $\Lambda_{\whG}(a)=0$. Then we have
\[
\pi_X(a)=\int_X^{\oplus}\pi_x(a)\md\mu_X(x)=0,
\]
and it follows that $\pi_X\lec \Lambda_{\whG}$.
\end{proof}

\subsection{Operations on integral representations}
Let $(X,\mu_X,(\pi_x)_{x\in X}),(Y,\mu_Y,(\gamma_y)_{y\in Y})\in\Rep^{\int}(\GG)$ be two integral representations of $\GG$. With these representations we can associate three new integral representations.

\subsubsection{Direct sum}
We define a direct sum of the above integral representations as
\[
(X\sqcup Y, \mu_{X\sqcup Y},(\kappa_{z})_{z\in X\sqcup Y}),
\]
where $\mu_{X\sqcup Y}$ is the obvious measure on the disjoint union of measure spaces. We equip $(X\sqcup Y,\mu_{X\sqcup })$ with the natural measurable field of Hilbert spaces. Moreover, we define $\kappa_z=\pi_x$ if $z=x\in X$ and $\kappa_z=\gamma_y$ if $z=y\in Y$. We have an identification (unitary isomorphism)
\[
\int_{X\sqcup Y}^{\oplus} \msf{H}_z \md\mu_{X\sqcup Y}(z)=
\int_{X}^{\oplus} \msf{H}_x \md\mu_{X}(x)\oplus
\int_{ Y}^{\oplus} \msf{H}_y \md\mu_{ Y}(y)
\]
which gives
\[
\int_{X\sqcup Y}^{\oplus} \kappa_z \md\mu_{X\sqcup Y}(z)=
\int_{X}^{\oplus} \pi_x \md\mu_{X}(x)\oplus
\int_{ Y}^{\oplus} \gamma_y \md\mu_{ Y}(y).
\]
Therefore we have expressions for the integral dimension and weight:
\[\begin{split}
\dim^{\int}(\int_{X\sqcup Y}^{\oplus} \kappa_z \md\mu_{X\sqcup Y}(z))&=
\dim^{\int}(\int_{X}^{\oplus} \pi_x \md\mu_{X}(x))+
\dim^{\int}(\int_{ Y}^{\oplus} \gamma_y \md\mu_{ Y}(y))\\
\Psi^{\int_{X\sqcup Y}}&=\Psi^{\int_X}\oplus\Psi^{\int_Y},
\end{split}\]
and if the integral dimension is finite we also have
\[
\chi^{\int}(\int_{X\sqcup Y}^{\oplus} \kappa_z \md\mu_{X\sqcup Y}(z))=
\chi^{\int}(\int_{X}^{\oplus} \pi_x \md\mu_{X}(x))+
\chi^{\int}(\int_{ Y}^{\oplus} \gamma_y \md\mu_{ Y}(y)).
\]

\subsubsection{Tensor product}
First, we define the measure space of the tensor product to be $X\times Y$ with the usual $\sigma$-algebra and measure $\mu_{X\times Y}=\mu_X\times\, \mu_Y$. Define a structure of measurable field of Hilbert spaces $(\msf{H}_{x}\otimes\msf{H}_{y})_{(x,y)\in X\times Y}$ as follows: as fundamental vector fields we take $\{(e^i_x\otimes f^j_y)_{(x,y)\in X\times Y}\}_{(i,j)\in\NN\times\NN}$, where $\{e_i\}_{i\in\NN}$ $\{f_j\}_{j\in\NN}$ are fundamental vector fields on $X,Y$. Then, if $(\xi_x)_{x\in X},\,(\eta_y)_{y\in Y}$ are measurable vector fields on $X,Y$ it follows that $(\xi_x\otimes\eta_y)_{(x,y)\in X\times Y}$ is a measurable vector field on $X\times Y$ (see e.g.~\cite{DixmiervNA}). We have a measurable family of representations $(\pi_x\tp\gamma_y)_{(x,y)\in X\times Y}$. Indeed, let $a\in \CGDu$ be arbitrary and let $(\iota_k)_{k\in \NN}$ be an approximate unit for $\CGDu\otimes\CGDu$. For any $k\in\NN$ an operator $\Delta^u(a)\iota_k$ belongs to $\CGDu\otimes\CGDu$, therefore it can be written as
\[
\Delta^u(a)\iota_k=
\lim_{p\to\infty} E_{p,k}
\]
for certain $E_{p,k}$ in the algebraic tensor product $\CGDu\otimes_{\operatorname{alg}}\CGDu \;(p,k\in\NN)$. Then, for each $(i,j),(i',j')\in\NN\times\NN$ the function 
\[
\begin{split}
X\times Y \ni (x,y)&\mapsto
\ismaa{ e^i_x\otimes f^j_y}{
\pi_x\tp\gamma_y(a) (e^{i'}_x\otimes f^{j'}_y)}\\
&=
\lim_{k\to\infty}\ismaa{ e^i_x\otimes f^j_y}{
(\pi_x\otimes\gamma_y)(\Delta^{u}(a) \iota_k) (e^{i'}_x\otimes f^{j'}_y)}\\
&=
\lim_{k\to\infty}\lim_{p\to\infty}
\ismaa{ e^i_x\otimes f^j_y}{
(\pi_x\otimes\gamma_y)(E_{p,k}) (e^{i'}_x\otimes f^{j'}_y)}
\in \CC
\end{split}
\]
is measurable -- it is a pointwise limit of a sequence of measurable functions. We define the tensor product of $\pi_X,\gamma_Y$ to be
\[
(X\times Y,\mu_X\otimes\mu_Y,(\pi_x\tp \gamma_y)_{(x,y)\in X\times Y}).
\]
We have
\[
\int_{X\times Y}\dim\md\mu_{X\times Y}=
\bigl(\int_X\dim\md\mu_X\bigr)
\bigl(\int_{Y}\dim\md\mu_{Y}\bigr),
\]
therefore the integral dimension $\int_{X\times Y}\dim\md\mu_{X\times Y}$ is finite if and only if $\int_{X}\dim\md\mu_{X},\int_{Y}\dim\md\mu_{Y}$ are finite. Assume this is the case, then we can consider the integral character:
\[
\begin{split}
&\quad\;
\chi^{\int}\bigl(\int_{X\times Y}^{\oplus}
\pi_x\tp \gamma_y \md\mu_{X\times Y}(x,y)\bigr)\\
&=
\int_{X\times Y} \chi(U^{\pi_x}\tp U^{\gamma_y})
\md\mu_{X\times Y}(x,y)\\
&=
\bigl(\int_{X}\chi(U^{\pi_x})\md\mu_X(x)\bigr)
\bigl(\int_{Y}\chi(U^{\gamma_y})\md\mu_Y(y)\bigr)\\
&=
\bigl(\chi^{\int}(\int_{X}^{\oplus}\pi_x \md\mu_{X}(x)\bigr)
\bigl(\chi^{\int}(\int_{Y}^{\oplus}\gamma_y \md\mu_{Y}(y)\bigr).
\end{split}
\]
\subsubsection{Conjugate representation}\label{conjrep}
Define formally a new measure space $(\ov{X},\mf{M}_{\ov{X}},\mu_{\ov{X}})$ to be equal $(X,\mf{M}_X,\mu_X)$. Points in $\ov{X}$ will be denoted by $\ov{x}$. Next, define a measurable family of Hilbert spaces $((\msf{H}_{\ov{x}})_{\ov{x}\in \ov{X}},\{(e^i_{\ov{x}})_{\ov{x}\in \ov{X}}\}_{i\in\NN})$ such that $\msf{H}_{\ov{x}}=\ov{\msf{H}_x}$ and $e^i_{\ov{x}}=\ov{e^i_x}$. Take a measurable family of representations $(\pi_{\ov{x}})_{\ov{x}\in \ov{X}}$ to be $\pi_{\ov{x}}=\jmath_{\msf{H}_{x}}\circ \pi_x\circ \hat{R}^u$. For all $a\in \CGDu,i,j\in\NN,\ov{x}\in \ov{X}$ we have
\[
\ismaa{e^i_{\ov{x}}}{\pi_{\ov{x}}(a) e^j_{\ov{x}}}=
\ismaa{\ov{e^i_x}}{\jmath_{\msf{H}_x}( \hat{R}^u(a)) \ov{e^j_x}}=
\ismaa{e^j_x}{\hat{R}^u(a) e^i_x},
\]
therefore we indeed get a measurable family of representations. This way we define an integral representation of $\GG$:
\[
(\ov{X}, \mu_{\ov{X}},(\pi_{\ov{x}})_{\ov{x}\in\ov{X}}).
\]
 We have $\int_{\ov{X}}\dim \md\mu_{\ov{X}}=\int_X\dim\md\mu_X$ so if this expression is finite we can consider the integral character. Provided irreducible representations of $\GG$ are admissible we have (see Definition \ref{defadmissible} and Lemma \ref{lemat32})
\[
\chi^{\int}(\int_{\ov{X}}^{\oplus}\pi_{\ov{x}} \md\mu_{\ov{X}}(\ov{x}))=
\int_{\ov{X}}\chi(U^{\ov{\pi_x}}) \md\mu_{\ov{X}}(\ov{x})=
\int_X \chi(U^{\pi_x})^* \md\mu_X(x)=
\bigl(\chi^{\int}(\int_X^{\oplus}\pi_x\md\mu_X(x))\bigr)^*.
\]
Note that we have a unitary equivalence between $\pi_{\ov X}$ and $\jmath_{\msf{H}_{\pi_X}}\circ \pi_X \circ \hat{R}^{u}$. Indeed, define a map 
\[
U\colon \int_{\ov{X}}^{\oplus} \msf{H}_{\ov x}\md\mu_{\ov X}(\ov x)\ni
\int_{\ov{X}}^{\oplus}\xi_{\ov x}\md\mu_{\ov X}(\ov x)\mapsto
\ov{ \int_X^{\oplus} \ov{\xi_{\ov x}}\md\mu_X(x)}
\in \ov{\int_X^{\oplus} \msf{H}_x\md\mu_X(x)}.
\]
This map is well defined: for $i\in \NN$ and $\int_{\ov{X}}^{\oplus}\xi_{\ov x}\md\mu_{\ov X}(\ov x)\in \int_{\ov{X}}^{\oplus} \msf{H}_{\ov x}\md\mu_{\ov X}(\ov x)$ we have
\[
X\ni x \mapsto\ismaa{e^i_x}{ \ov{\xi_{\ov{x}}}}=
\ov{\ismaa{\ov{e^i_x}}{ \xi_{\ov{x}}}}\in \CC
\]
which is measurable. Next,
\[
\bigl\|
\int_{X}^{\oplus}\ov{\xi_{\ov x}}\md\mu_{ X}( x)
\bigr\|^2=
\int_{X} \|\ov{\xi_{\ov{x}}}\|^2 \md\mu_X(x)=
\int_{\ov X} \|\xi_{\ov{x}}\|^2 \md\mu_{\ov X} (\ov x)=
\bigl\|\int_{\ov X}^{\oplus} \xi_{\ov{x}} \md\mu_{\ov X}(\ov x)
\bigr\|^2,
\]
so $U$ is an isometry. It is clear that $U$ is surjective, so $U$ is a unitary map. Let us check that $U$ is an intertwiner: for $a\in \CGDu$ we have
\[\begin{split}
&\quad\;
\bigl( U \int_{\ov X}^{\oplus} \pi_{\ov x}\md\mu_{\ov X}(\ov x)(a)\bigr)
\int_{\ov X}^{\oplus} \xi_{\ov{x}} \md\mu_{\ov X}(\ov x)=
U\int_{\ov X}^{\oplus} \ov{ \pi_{x}(\hat{R}^{u}(a))^* \ov{\xi_{\ov x}}}\md\mu_{\ov X}(\ov x)\\
&=
\ov{ \int_X^{\oplus} \pi_x(\hat{R}^{u}(a)^*) \ov{\xi_{\ov x}}
\md\mu_X(x)}=
\bigl(\jmath_{\msf{H}_{\pi_X}}\circ\pi_X\circ \hat{R}^{u}(a) \bigr)
\ov{\int_X^{\oplus} \ov{\xi_{\ov x}} \md\mu_X(x)}\\
&=
\bigl((\jmath_{\msf{H}_{\pi_X}}\circ\pi_X\circ \hat{R}^{u}(a)) U\bigr) 
\int_{\ov X}^{\oplus} \xi_{\ov{x}} \md\mu_{\ov X}(\ov x).
\end{split}\]
This proves that $U$ is a unitary intertwiner between $\pi_{\ov X}$ and $\jmath_{\msf{H}_{\pi_X}}\circ\pi_X \circ \hat{R}^{u}$.
\subsection{Measure class associated with an integral representation}
Let $(X,\mu_X,(\pi_x)_{x\in X})$ be an integral representation of $\GG$. Define $\pi_X$ as usual: $\pi_X=\int_X^{\oplus} \pi_x\md\mu_X(x)$. This is a nondegenerate representation of $\CGDu$, hence we can associate with it a measure class $[\mu_{\pi_X}]$ on $\IrrG$, the spectrum of $\CGDu$. We have $\pi_X\approx_q \int_{\IrrG}^{\oplus} \pi \md\mu_{\pi_X}(\pi)$ (\cite[Definition 8.4.3.]{DixmierC}). In the case when $\CGDu$ is of type $I$, we have the Plancherel measure $\mu$ and $[\mu_{\pi_X}]\ll [\mu]$ holds if and only if $\pi_X\lec_q \Lambda_{\whG}$ ($\lec_q$ denotes quasi-containment, for more information about this relation see the appendix). This observation stems from the fact that the measure class associated with $\Lambda_{\whG}$ is $[\mu]$. Indeed, representation $\Lambda_{\whG}\colon \CGDu\rightarrow \CGD\subseteq \B(\LL^2(\GG))\colon \Lambda_{\whG}(\lambda^u(\omega))=\lambda(\omega)$ is unitarily equivalent to
\[
\int_{\IrrG}^{\oplus} \pi\otimes\I_{\ov{\msf{H}_\pi}}\md\mu(\pi)
\simeq \int_{\IrrG}^{\oplus} (\dim (\pi))\cdot \pi \md\mu(\pi)
\approx_q \int_{\IrrG}^{\oplus}\pi\md\mu(\pi)
\]
(point $3)$ Theorem \ref{PlancherelL}). We have used the property that quasi-equivalence does not see multiplicities of representations and respects direct sums. It is clear that the measure class associated with $\int_{\IrrG}^{\oplus}\pi\md\mu(\pi)$ is $[\mu]$.

\section{Integral weights}\label{secintweight}
Recall that we assume that $\whG$ is second countable. From now on we moreover assume that $\GG$ is \emph{type I} (which means that $\CGDu$ is a \cst-algebra of type I) and irreducible representations of $\GG$ are finite dimensional.\\
The next proposition is an important technical result on which most of the further reasoning will rely. Allow us first to explain it in a less formal way.\\
Let $\cdots\subseteq X_2\subseteq X_1\subseteq\IrrG$ be a sequence of measurable subsets and let $\mu_{X_1}$ be a $\sigma$-finite measure on $X_1$. Out of it, we can construct the disjoint union measure space $X=\bigsqcup_{i=1}^{\infty} X_i$, measure $\mu_X$ on $X$ and an integral representation $(X,\mu_X,(\pi_x)_{x\in X})$ in a obvious way ($(\pi_x)_{x\in \IrrG}$ is a measurable family of representations such that each $\pi_x$ belongs to the class $x$). Let $(Y,\mu_Y,(\gamma_y)_{y\in Y})$ be another integral representation and assume that representations $\int_X^{\oplus}\pi_x\md\mu_X(x)$ and $\int_Y^{\oplus} \gamma_y \md\mu_Y(y)$ are equivalent. Let $\mc{O}$ be a unitary operator implementing this equivalence. On the von Neumann algebras  generated by the images of $\int_X^{\oplus}\pi_x\md\mu_X(x)$ and $\int_Y^{\oplus} \gamma_y \md\mu_Y(y)$ we have weights given by the direct integrals of traces (i.e.~the integral weights $\Psi^{\int_X},\Psi^{\int_Y}$ introduced in the Section \ref{secintrep}). Operator $\mc{O}$ transforms these von Neumann algebras one to the other, however it may happen that it does not transform the corresponding weights. The next proposition tells us that we can always rescale the measure $\mu_{X_1}$ (using a function $\varpi\colon X_1\rightarrow \RR_{>0}$) in such a way that $\mc{O}$ will transform the (rescaled) weights, one onto the other.

\begin{proposition}\label{treq}
Let $\cdots\subseteq X_2\subseteq X_1\subseteq\IrrG$ be a family of measurable subsets (perhaps empty for some $n$) and let $\mu_{X_1}$ be a $\sigma$-finite measure on $X_1$. Assume moreover that $\sum_{i=1}^{\infty} \chi_{X_i}(x)<+\infty$ for each $x\in \IrrG$. Define a measurable space $X=\bigsqcup_{i=1}^{\infty} X_i$ and measures: on $X_i$, let $\mu_{X_i}$ be the restriction of $\mu_{X_1}$ ($i\in \NN$) and on the whole $X$ via
\[
\mu_X(\Omega)=\sum_{i=1}^{\infty} \mu_{X_i}(\Omega\cap X_i)\quad(\Omega\in\mc{B}(X)),
\]
i.e. $\mu_X=\bigsqcup_{i=1}^{\infty} \mu_{X_i}$. Define a structure of measurable field of Hilbert spaces on $X_i,\,X(i\in\NN)$, which comes from the (canonical) structure on $\IrrG$. Let $(\pi_x)_{x\in X}$ be a measurable field of representations such that $\pi_x\in x$. We get an integral representation $(X,\mu_X,(\pi_x)_{x\in X})$.\\
Assume that we have a second integral representation $(Y,\mu_Y,(\gamma_y)_{y\in Y})$ and a unitary intertwiner
\[
\mc{O}\colon \msf{H}_\pi=\int_X^{\oplus} \msf{H}_x\md\mu_X(x)\rightarrow
\msf{H}_\gamma=\int_Y^{\oplus} \msf{K}_y\md\mu_Y(y),
\]
where 
\[
\pi=\int_X^{\oplus} \pi_x \md\mu_X(x),\quad
\gamma=\int_Y^{\oplus} \gamma_y \md\mu_Y(y).
\]
There exists a unique measurable function $\varpi\colon X_1\rightarrow \RR_{>0}$ such that if we define an integral representation $(X,\tilde{\mu}_{X},(\pi_{x})_{x\in X})$ in the same way as $(X,\mu_X,(\pi_x)_{x\in X})$ except $\tilde{\mu}_X =\bigsqcup_{i=1}^{\infty} \varpi|_{X_i} \mu_{X_i}$, then
\[
\dim^{\int}(\int_Y^{\oplus}\gamma_y\md\mu_Y(y))=
\dim^{\int}(\int_X^{\oplus} \pi_x\md\tilde{\mu}_X(x))
\]
and we have an equality of weights on $\pi(\CGDu)''$:
\[
\Psi^{\int_Y} (\mc{O} \cdot \mc{O}^*)=
(\Psi^{\int_X})^{\sim},
\]
where $(\Psi^{\int_X})^{\sim}$ is the integral weight associated with $(X,\tilde{\mu}_{X},(\pi_{x})_{x\in X})$.
\end{proposition}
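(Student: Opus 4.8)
The plan is to reduce the whole statement to a Radon--Nikodym comparison of two traces on the single von Neumann algebra $M:=\pi(\CGDu)''$, and then to match the abstract central density with the function $\varpi$. Writing $m(x)=\sum_{i=1}^{\infty}\chi_{X_i}(x)$ (finite for every $x$ by hypothesis), I would first repackage $X=\bigsqcup_i X_i$ over the base $\IrrG$, so that $\pi=\int_X^{\oplus}\pi_x\md\mu_X(x)$ becomes the multiplicity representation $\int_{X_1}^{\oplus}(m(x)\cdot\pi_x)\md\mu_{X_1}(x)$. Since each $\pi_x$ is irreducible, $\pi_x(\CGDu)''=\B(\msf{H}_x)$ is a finite-dimensional factor, so
\[
M\cong\int_{X_1}^{\oplus}\B(\msf{H}_x)\md\mu_{X_1}(x),
\]
a type I algebra whose centre is the diagonalisable algebra $\cong\LL^{\infty}(X_1,\mu_{X_1})$. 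With this identification a positive $T\in M$ is a field $(A_x)_{x\in X_1}$, $A_x\in\B(\msf{H}_x)_+$, and the definition of the integral weight gives $\Psi^{\int_X}(T)=\int_{X_1}m(x)\Tr_x(A_x)\md\mu_{X_1}(x)$, while the rescaled weight is $(\Psi^{\int_X})^{\sim}(T)=\int_{X_1}m(x)\varpi(x)\Tr_x(A_x)\md\mu_{X_1}(x)$. Thus $(\Psi^{\int_X})^{\sim}=\Psi^{\int_X}(\varpi\,\cdot\,)$ with $\varpi$ acting as a central multiplier, so the target equality of weights is precisely a Radon--Nikodym statement for a \emph{central} density.

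Next I would transport the $Y$-weight along $\mc{O}$. As $\mc{O}$ is a unitary intertwiner, $\mc{O}M\mc{O}^*=\gamma(\CGDu)''$ and $\Phi_2(T):=\Psi^{\int_Y}(\mc{O}T\mc{O}^*)$ is a faithful, normal, tracial weight on $M$; $\Phi_1:=\Psi^{\int_X}$ is another one, and it is semifinite by the explicit formula above together with $\sigma$-finiteness of $\mu_{X_1}$ and finiteness of the fibres. Every normal faithful trace on $\int_{X_1}^{\oplus}\B(\msf{H}_x)\md\mu_{X_1}(x)$ has the form $T\mapsto\int_{X_1}g(x)\Tr_x(A_x)\md\mu_{X_1}(x)$ for a measurable $g\colon X_1\to(0,\infty]$ (uniqueness of the trace on each factor $\B(\msf{H}_x)$ plus a measurable selection, with $g$ a priori allowed to be infinite). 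Applying this to $\Phi_2$ yields such a $g$, and I would set $\varpi(x)=g(x)/m(x)$, so that $\Phi_2=\Phi_1(\varpi\,\cdot\,)=(\Psi^{\int_X})^{\sim}$; this is exactly the asserted equality $\Psi^{\int_Y}(\mc{O}\,\cdot\,\mc{O}^*)=(\Psi^{\int_X})^{\sim}$.

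The crux, and the step I expect to cost the most work, is to show that $\varpi$ is finite and strictly positive $\mu_{X_1}$-almost everywhere, i.e.\ that $g$ is finite a.e.\ (equivalently, that $\Phi_2$ is semifinite, which is \emph{not} automatic for integral weights). Strict positivity is immediate from faithfulness of $\Phi_2$. For finiteness I would use that $\gamma\simeq\pi$, so the measure class of $\gamma$ on $\IrrG$ coincides with that of $\pi$, namely $[\mu_{X_1}]$; combined with $\sigma$-finiteness of $\mu_Y$ and the finite-dimensionality of all fibres, this forces the fibre traces of $\Psi^{\int_Y}$ to be \emph{finite} multiples of $\Tr_x$ almost everywhere. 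Concretely I would disintegrate $\mc{O}$ into a measurable field of fibre unitaries over the common base $[\mu_{X_1}]$ — exactly as in the proof of Lemma \ref{lemat13}, via \cite[Proposition 8.2.4]{DixmierC} — intertwining $m(x)\pi_x$ with the fibre of $\gamma$ over $x$; pulling $\int_Y^{\oplus}\Tr_y$ back through these fibre unitaries realises $g(x)$ as a genuine Radon--Nikodym derivative of two measures in the class $[\mu_{X_1}]$, which is therefore finite a.e.

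Finally, the dimension equality and uniqueness come for free. Evaluating the weight equality at $\I\in M$ gives $\dim^{\int}(\gamma)=\Psi^{\int_Y}(\mc{O}\I\mc{O}^*)=\Psi^{\int_Y}(\I_{\msf{H}_\gamma})$ on one side and $(\Psi^{\int_X})^{\sim}(\I)=\dim^{\int}(\int_X^{\oplus}\pi_x\md\tilde{\mu}_X(x))$ on the other, which is precisely the claimed equality of integral dimensions. Uniqueness of $\varpi$ follows because $\int_{X_1}m\varpi\,\Tr_x(A_x)\md\mu_{X_1}=\int_{X_1}m\varpi'\,\Tr_x(A_x)\md\mu_{X_1}$ for all fields $(A_x)$ forces $\varpi=\varpi'$ $\mu_{X_1}$-a.e., that is, the central Radon--Nikodym derivative is unique almost everywhere.
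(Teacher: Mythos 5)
Your overall skeleton is genuinely close in spirit to the paper's proof: the paper also reduces everything to a Radon--Nikodym comparison of the two tracial weights $\Psi_2=\Psi^{\int_X}$ and $\Psi_1=\Psi^{\int_Y}(\mc{O}\,\cdot\,\mc{O}^*)$ on $\pi_1(\CGDu)''$, and also exploits that, both weights being tracial, the derivative must be central, hence a scalar field giving $\varpi$. Your identification of $M\cong\int_{X_1}^{\oplus}\B(\msf{H}_x)\md\mu_{X_1}(x)$, the explicit formulas for the two weights, the classification of normal tracial weights (provable by cutting with matrix units on each dimension component and classifying normal weights on the abelian centre), the dimension formula obtained by evaluating at $\I$, and the uniqueness argument (after localizing to subsets where both integrals are finite, as the paper does to avoid comparing $\infty$ with $\infty$) are all sound. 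You have also correctly isolated the crux: semifiniteness of your $\Phi_2$, i.e.\ finiteness of $g$ almost everywhere, is not automatic and is where the real work lies.

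However, the argument you propose for that crux does not work. Dixmier's result \cite[Proposition 8.2.4]{DixmierC}, and Lemma \ref{lemat13} which rests on it, apply only to a unitary between two direct integrals of \emph{irreducible} representations which moreover carries diagonalisable operators to diagonalisable operators; here $\mc{O}$ is merely an intertwiner, and the fibres $\gamma_y$ need not be irreducible. A single fibre $\msf{K}_y$ may contain irreducible summands lying over many different points of $X_1$ (think of $\gamma_y\simeq\pi_{\alpha(y)}\oplus\pi_{\beta(y)}$ with $\alpha(y)\neq\beta(y)$), so there is no field of ``fibre unitaries over the common base $[\mu_{X_1}]$'' into which $\mc{O}$ could be disintegrated -- avoiding any such disintegration of $\mc{O}$ is precisely why the paper works with the weights abstractly. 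If instead you first reorganize $\gamma$ into irreducibles, writing $\gamma_y\simeq\bigoplus_j\sigma_{y,j}$ measurably, then what you compare with $\mu_{X_1}$ is the occurrence measure $\nu=\int_Y\bigl(\sum_j\delta_{[\sigma_{y,j}]}\bigr)\md\mu_Y(y)$; absolute continuity $\nu\ll\mu_{X_1}$ is indeed easy, but a finite Radon--Nikodym derivative exists only when $\nu$ is $\sigma$-finite, and $\sigma$-finiteness of such a pushforward-with-multiplicity of a $\sigma$-finite measure is exactly what can fail: it is equivalent to the semifiniteness of $\Phi_2$ you are trying to prove, so at this point the argument is circular. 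The paper obtains finiteness by a different mechanism: it invokes the Radon--Nikodym theorem of \cite{VaesRN} (cf.\ \cite{PedersenTakesaki}) for the pair $(\Psi_1,\Psi_2)$, whose conclusion is an honest densely defined positive self-adjoint operator $\Pi$ affiliated with $\pi_1(\CGDu)''$; since that algebra consists of decomposable operators, $\Pi=\int_{X_1}^{\oplus}\Pi_x\md\mu_{X_1}(x)$ with each $\Pi_x$ bounded (the fibres are finite dimensional) and scalar (centrality, from traciality of both weights), which gives $\varpi(x)\in\RR_{>0}$ at once. The remaining, genuinely laborious, part of the paper's proof -- which your outline also omits -- is the GNS-level verification, via Vaes' approximate units, that $\Psi_2(\Pi^{1/2}\,\cdot\,\Pi^{1/2})$ coincides with the rescaled integral weight $(\Psi^{\int_X})^{\sim}$; to salvage your route you would need to prove that identification by hand as well, together with a real argument (of Lusin--Novikov type, using finiteness of the multiplicity function $m$ of $\pi\simeq\gamma$) for the $\sigma$-finiteness of $\nu$.
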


The above result can be written as
\[
\int_Y \Tr_y ( (\mc{O} \int_X^{\oplus}T_x \md\mu_X(x) \mc{O}^*)|_{y}) \md\mu_Y(y)=
\int_X \Tr_x (T_x) \md\tilde{\mu}_X(x)
\]
for $\int_X^{\oplus} T_x\md\mu_X(x)\in \pi(\CGDu)''_+$ (cf.~remarks at the beginning of the Section \ref{secintrep}). Proof of this proposition owes much to the proof of \cite[Theorem 3.4.5]{Desmedt}.\\
Recall that every operator of the form $\int_X^{\oplus} T_x\md\mu_X(x)$ is called decomposable, and if furthermore $T_x\in \CC\I_{\msf{H}_x}\,(x\in X)$ then we say that it is diagonalisable.

\begin{proof}
The weight $\Psi^{\int_X}$ is semifinite -- we will prove this in Lemma \ref{lemat36}, at the end of this section.\\
First we will show existence of $\varpi$. Let us introduce the notation $A=\CGDu$. The universal property of $A^{**}$ tells us that nondegenerate representations of $A$ extend uniquely to normal representations of $A^{**}$ (which will be denoted by a bar). Proposition 8.6.4 in \cite{DixmierC} says that for each $i$ the commutant of $\int_{X_i}^{\oplus} \pi_x\md\mu_{X_i}(x) (A)$ is the algebra of diagonalisable operators $\Diag (\int_{X_i}^{\oplus} \msf{H}_x \md\mu_{X_i}(x))$, therefore
\begin{equation}\label{eq41}
\bigl(\int_{X_i}^{\oplus} \pi_x\md\mu_{X_i}(x) (A)\bigr)''=
\Dec (\int_{X_i}^{\oplus} \msf{H}_x \md\mu_{X_i}(x)).
\end{equation}
Consequently, arbitrary decomposable operator on $\int_{X_i}^{\oplus} \msf{H}_x \md\mu_{X_i}(x)$ is in the image of the representation $\int_{X_{i}}^{\oplus} \pi_x \md\mu_{X_{i}}$ extended to $A^{**}$. For each $i\in \NN$ the isometry
\[
P_{i}\colon 
\int_{X_i}^{\oplus}\msf{H}_x\md\mu_{X_i}(x)\ni
\int_{X_i}^{\oplus}\xi_x\md\mu_{X_i}(x)
\mapsto
\int_{X_1}^{\oplus}\xi_x \chi_{X_i}(x)\md\mu_{X_1}(x)\in
\int_{X_1}^{\oplus}\msf{H}_x\md\mu_{X_1}(x)
\]
satisfies
\[
\pi_i(a)
\int_{X_i}^{\oplus}\xi_x\md\mu_{X_i}(x)=
P_i^* \pi_1(a) P_i\int_{X_i}^{\oplus}\xi_x\md\mu_{X_i}(x)\quad(
a\in A, \int_{X_i}^{\oplus}\xi_x\md\mu_{X_i}(x)\in \int_{X_i}^{\oplus}\msf{H}_x\md\mu_{X_i}(x))
\]
therefore due to \swot-continuity we have
\[
\ov{\pi_i}(a)=P_i^* \ov{\pi_1}(a) P_i\quad(a\in A^{**}).
\]
\swot-continuity of the representation $\ov{\pi}$ allows us to conclude that
\[
\mc{O} \ov{\pi}(a) \mc{O}^*\in 
\bigl(\int_Y^{\oplus} \gamma_y\md\mu_Y(y)(A)\bigr)''
\subseteq \Dec\bigl(\int_Y^{\oplus}\msf{K}_y\md\mu_Y(y)\bigr)
\]
for all $a\in A^{**}$. Define two weights on $\pi_1(A)''=\ov{\pi}_1(A^{**})$: 
\[\begin{split}
&\Psi_1\colon \ov{\pi_1}(a)\mapsto
\Psi^{\int_Y}(\mc{O} \ov{\pi}(a) \mc{O}^*),\\
&\Psi_2\colon \ov{\pi_1}(a)\mapsto
\Psi^{\int_X} (\ov{\pi}(a)).
\end{split}
\]
The weight $\Psi_1$ is normal, faithful and tracial, while $\Psi_2$ is a n.s.f.~weight. Moreover $\Psi_2$ is tracial, hence its modular automorphism group is trivial. Therefore we know (\cite[Proposition 5.2]{VaesRN}, see also \cite[Theorem 5.4]{PedersenTakesaki}) that there exists a positive self-adjoint operator $\Pi$ affiliated with $\pi_1(A)''$ such that
\[
\Psi_1=\Psi_2(\Pi^{\frac{1}{2}}\cdot \Pi^{\frac{1}{2}})
\]
(weight $\Psi_2(\Pi^{\frac{1}{2}}\cdot \Pi^{\frac{1}{2}})$ is defined in \cite{VaesRN}). Since
\[
\pi_1(A)''\subseteq \Dec(\int_{X_1}^{\oplus} \pi_x\md\mu_{X_1}(x))\subseteq
 \Diag(\int_{X_1}^{\oplus} \pi_x\md\mu_{X_1}(x))'
\]
we know that the operator $\Pi$ is decomposable (\cite[Theorem 1.8]{Lance}): there exists a measurable field of closed densely defined operators $(\Pi_x)_{x\in X_1}$ such that $\Pi=\int_{X_1}^{\oplus} \Pi_x\md\mu_{X_1}(x)$. Since $\dim(\msf{H}_x)<+\infty$ for each $x\in X$, we know that operators $\Pi_x$ are bounded. Moreover, we know that for almost all $x\in X_1$ operator $\Pi_x$ is positive and invertible (it follows e.g. from \cite[Theorem 1.10]{Lance}). Since both $\Psi_1,\Psi_2$ are tracial, operator $\Pi$ is in fact affiliated with $\mc{Z}(\pi_1(A)'')$ (this is a consequence of Proposition 2.5 and the formula for $\Delta'^{is}$ \cite[Lemma 2.1]{VaesRN}). It follows that each $\Pi_x$ belongs to $\B(\msf{H}_x)'=\CC \I_{\msf{H}_x}$, hence we can introduce a measurable function $\varpi\colon X_1\rightarrow \RR_{>0}$ such that $\Pi_x=\sqrt{\varpi(x)}\I_{\msf{H}_x}$.\\
To move further we need to identify a GNS construction for $\Psi_2$. For $x\in X$ consider the Hilbert space $(\pi_x(\CGDu)'',\langle\cdot|\cdot\rangle_{x})$ where the scalar product is given by
\[
\is{T}{S}_{x}=(\sum_{i=1}^{\infty} \chi_{X_i}(x))\Tr_x(T^*S)\quad(T,S\in \pi_x(\CGDu)'').
\]
It is a complete space since $\dim(\msf{H}_x)<+\infty$. Denote this space by $\msf{L}_x$. Choose a dense subset of $\CGDu$, $\{a_i\}_{i\in \NN}$. We can construct a measurable field of Hilbert spaces $(\msf{L}_x)_{x\in X}$ with fundamental fields $\{(\pi_x(a_i))_{x\in X}\}_{i\in \NN}$. Indeed, it is clear that for each $x\in X$ set $\{\pi_x(a_i)\}_{i\in \NN}$ generates $\msf{L}_x$ and that for $i,j\in \NN$ the function
\[
X\ni x \mapsto \is{\pi_x(a_i)}{\pi_x(a_j)}_{x}=
(\sum_{k=1}^{\infty} \chi_{X_k}(x))
\Tr_x(\pi_x(a_i^* a_j))\in \CC
\]
is measurable. In order to distinguish elements in $\int_{X_1}^{\oplus}\pi_x(\CGDu)''\md\mu_{X_1}(x)$ and $\int_{X_1}^\oplus \msf{L}_x\md\mu_{X_1}(x)$, a vector in the second space will be denoted with an underline: $\underline{\int_{X_1}^{\oplus}} T_x\md\mu_{X_1}(x)$. Consider the map
\[
\eta_{\Psi_2}\colon \mf{N}_{\Psi_2}\ni
\int_{X_1}^{\oplus} T_x \md\mu_{X_1}(x)\mapsto
\underline{\int_{X_1}^{\oplus}} T_x \md\mu_{X_1}(x)\in
 \int_{X_1}^{\oplus} \msf{L}_x\md\mu_{X_1}(x).
\]
We have
\[
\int_{X_1} \|T_x\|_{x}^{2} \md\mu_{X_1}(x)=
\int_X \Tr_x( T_x^*T_x) \md\mu_X(x)=
\Psi_2\bigl(\bigl( \int_{X_1}^{\oplus}T_x\md\mu_{X_1}(x)\bigr)^*
\bigl( \int_{X_1}^{\oplus}T_x\md\mu_{X_1}(x)\bigr)\bigr)<+\infty,
\]
hence it is a well defined linear map (to get measurability of $\underline{\int_{X_1}^{\oplus}} T_x \md\mu_{X_1}(x)$ we use separability of $\msf{H}_\pi$). It is clear that $\eta_{\Psi_2}$ has trivial kernel. We can define a representation
\[
\pi_{\Psi_2}\colon \pi_1(\CGDu)''\rightarrow \B(\int_{X_1}^{\oplus}\msf{L}_x\md\mu_{X_1}(x))
\]
via
\[
\pi_{\Psi_2}(\int_{X_1}^{\oplus} S_x\md\mu_{X_1}(X)) \underline{\int_{X_1}^{\oplus}} T_x\md\mu_{X_1}(x)=
\underline{\int_{X_1}^{\oplus}} S_xT_x\md\mu_{X_1}(x).
\]
It is a GNS construction for the weight $\Psi_2$: we know that such a construction exists, let $\tilde{\eta}$ be its generalized vector. There exists the unitary operator mapping $\eta_{\Psi_2}(a)$ to $\tilde{\eta}(a)$. This way we get a $\ssot\times\|\cdot\|$ closedness of $\eta_{\Psi_2}$. Note that since the weight $\Psi_2$ is tracial, we have $\nabla_{\Psi_2}=\I$ and consequently
\[
J_{\Psi_2} \eta_{\Psi_2}(a)=
\eta_{\Psi_2}(a^*)\quad (a\in \mf{N}_{\Psi_2}).
\]
The GNS construction for the weight $\Psi_3=\Psi_2(\Pi^{\frac{1}{2}}\cdot \Pi^{\frac{1}{2}})$ is introduced in \cite{VaesRN}. Let us recall its details. It is built out of the GNS construction for $\Psi_2$; for $\ov{\pi}_1(a)\in\pi_1(\CGDu)''$ such that $\ov{\pi}_1(a)\Pi$ is closable and its closure $\ov{\pi}_1(a)\cdot \Pi$ is in $\mf{N}_{\Psi_2}$ we define
\[
\eta_{\Psi_3}(\ov{\pi}_1(a))=\eta_{\Psi_2}(\ov{\pi}_1(a)\cdot\Pi).
\]
We can introduce the approximate unit $(e_n)_{n\in\NN}$ in $\pi_1(\CGDu)''$ (defined in \cite{VaesRN}). Fix any $\ov{\pi}_1(a)\in \mf{N}_{\Psi_3}$. For $n\in\NN$ we have $\ov{\pi}_1(a) (\Pi e_n)\in\mf{N}_{\Psi_2}$ (\cite[Lemma 3.2]{VaesRN}) and
\begin{equation}\begin{split}\label{eq11}  
\eta_{\Psi_3}(\ov{\pi}_1(a) e_n)=\eta_{\Psi_2}(\ov{\pi}_1(a) (\Pi e_n))=
\underline{\int_{X_1}^{\oplus}} \ov{\pi}_1(a)_x (\Pi e_n)_x \md\mu_{X_1}(x).
\end{split}\end{equation}
We can write
\[
\eta_{\Psi_3}(\ov{\pi}_1(a))=\underline{\int_{X_1}^{\oplus}} v_x\md\mu_{X_1}(x)
\]
for some vectors $v_x\in \msf{L}_x$ -- we now wish to show that $v_x=\ov{\pi}_1(a)_x\Pi_x$. Thanks to the properties of $e_n$ we get
\[\begin{split} 
\eta_{\Psi_3}(\ov{\pi}_1(a) e_n)=
J_{\Psi_3} \pi_{\Psi_3}(\sigma^{\Psi_3}_{-i/2}(e_n)) J_{\Psi_3} \eta_{\Psi_3}(\ov{\pi}_1(a))=
J_{\Psi_3} \pi_{\Psi_3}(\sigma^{\Psi_2}_{-i/2}(e_n)) J_{\Psi_3} \eta_{\Psi_3}(\ov{\pi}_1(a))
\end{split}\]
Since $J_{\Psi_3}=J_{\Psi_2}$, we can write
\begin{equation}\begin{split}\label{eq12}  
&\quad\;
\eta_{\Psi_3}(\ov{\pi}_1(a) e_n)=
J_{\Psi_2} \pi_{\Psi_3}(\sigma^{\Psi_2}_{-i/2}(e_n)) \underline{\int_{X_1}^{\oplus}}
v_x^*\md\mu_{X_1}(x)=
J_{\Psi_2}  \underline{\int_{X_1}^{\oplus}}
\sigma^{\Psi_2}_{-i/2}(e_n)_x v_x^*\md\mu_X(x)\\
&=
\underline{\int_{X_1}^{\oplus}}
v_x\sigma^{\Psi_2}_{i/2}(e_n)_x\md\mu_{X_1}(x).
\end{split}\end{equation}
Equations \eqref{eq11}, \eqref{eq12} give us
\[
\ov{\pi}_1(a)_x (\Pi e_n)_x=
v_x \sigma^{\Psi_2}_{i/2}(e_n)_x
\]
for almost all $x\in X_1$. We have $e_n\xrightarrow[n\to\infty]{\ssot}\I$ and $\sigma^{\Psi_2}_{i/2}(e_n)\xrightarrow[n\to\infty]{\ssot} \I$ so there exist a subsequence such that $(e_{n_k})_x\xrightarrow[n\to\infty]{\ssot}\I_x$ and $\sigma^{\Psi_2}_{i/2}(e_{n_k})_x\xrightarrow[k\to\infty]{\ssot} \I_x$ almost everywhere.\\ Consequently
\[
\eta_{\Psi_3}(\ov{\pi}_1(a))=\underline{\int_{X_1}^{\oplus}} \ov{\pi}_1(a)_x \Pi_x \md\mu_{X_1}(x)=
\underline{\int_{X_1}^{\oplus}} \ov{\pi}_1(a)_x \sqrt{\varpi(x)} \md\mu_{X_1}(x)
\]
for all $\ov{\pi}_1(a)\in\mf{N}_{\Psi_3}$ (in particular this vector is integrable). Moreover we have
\begin{equation}\begin{split}
&\quad\;\Psi^{\int_Y}(\mc{O} \ov{\pi}(a^* b) \mc{O}^*)=
\Psi_1(\ov{\pi}_1(a^* b))=
\Psi_3(\ov{\pi}_1(a^* b))=
\ismaa{\eta_{\Psi_3}(\ov{\pi}_1(a))}{\eta_{\Psi_3}(\ov{\pi}_1(b))}\\=
&\int_{X_1}(\sum_{i=1}^{\infty}\chi_{X_i}(x)) \Tr_x( \ov{\pi}_1(a^*b)_x )\varpi(x)\md\mu_{X_1}(x)=
\int_{X} \Tr_x( \ov{\pi}(a^*b)_x )\varpi(x)\md\mu_X(x)
\end{split}
\end{equation}
for $\ov{\pi}_1(a),\ov{\pi}_1(b)\in \mf{N}_{\Psi_3}$ (note that we have extended the domain of $\varpi$ to $X$ in the obvious way). This way we proved
\begin{equation}\label{eq14}
\Psi^{\int_Y}(\mc{O}a^*b\,\mc{O}^*)=
(\Psi^{\int_X})^{\sim}(a^*b),
\end{equation}
for all $a,b\in \mf{N}_{\Psi^{\int_Y} (\mc{O}\cdot \mc{O}^*)}\subseteq \pi(\CGDu)''$, where $(\Psi^{\int_X})^{\sim}$ is the weight as in the statement of the theorem. Since these weights are tracial and n.s.f., \cite[Proposition 3.15]{TakesakiII} gives us
\[
\Psi^{\int_Y}(\mc{O}\cdot\mc{O}^*)=(\Psi^{\int_X})^{\sim}.
\]
We need to check the equality of integral dimensions. We get this result once we substitute the unit into equation \eqref{eq14}:
\[\begin{split}
&\quad\;
\int_Y\dim(\gamma_y)\md\mu_Y(y)=
\Psi^{\int_Y}(\ov{\gamma}(\I))=
\Psi^{\int_Y}(\mc{O} \ov{\pi}(\I)\mc{O}^*)\\
&=
\int_X \dim(\pi_x) \varpi(x)\md\mu_X(x)=
\int_X\dim(\pi_x)\md\tilde{\mu}_X.
\end{split}\]
Now, that existence of $\varpi$ is established, let us show that $\varpi$ is the only function (up to a difference on a $\mu_X$-measure zero set) for which the equality of the rescaled weights holds.\\
Assume this is not true, and we have two measurable functions $\varpi_1,\varpi_2\colon X_1\rightarrow \RR_{>0}$ such that
\begin{equation}\label{eq6}
\bigoplus_{n=1}^{\infty} \int_{X_n}^{\oplus}\varpi_1(x)\Tr_x \md\mu_{X_n}(x)=
\bigoplus_{n=1}^{\infty} \int_{X_n}^{\oplus}\varpi_2(x)\Tr_x 
\md \mu_{X_n}(x).
\end{equation}
If $\varpi_1\neq \varpi_2$, then without loss of generality we can assume that there exists a measurable subset $Y\subseteq X_1$ such that $\sum_{n=1}^{\infty}\int_{Y\cap X_n} \varpi_1 \dim \md\mu_{X_n}<+\infty$ and $\varpi_1>\varpi_2$ on $Y$. We have \cite[Proposition 8.6.4, A 80]{DixmierC} 
\[
\bigl(\int_{X_1}^{\oplus} \pi_x \md\mu_X(x)(A)\bigr)''=\Dec(\int_{X}^{\oplus} \msf{H}_x \md\mu_X(x)),
\]
hence there exists a positive operator $a\in A^{**}$ such that
\[
\ov{\pi}_1(a)=
\int_{X_1}^{\oplus} \chi_Y(x) \I_{\msf{H}_x} \md\mu_{X_1}(x)\in
\bigl(\int_{X_1}^{\oplus} \pi_x \md\mu_X(x)(A)\bigr)''.
\]
If we apply the weights that appear in equation \eqref{eq6} to this operator, we get
\[\begin{split}
+\infty&>
\bigl(\bigoplus_{n=1}^{\infty} \int_{X_n}^{\oplus}\varpi_1(x)\Tr_x \md\mu_{X_n}(x)\bigr)(\ov{\pi}(a))=
\sum_{n=1}^{\infty}
\int_{X_n} \varpi_1(x) \chi_Y(x) \Tr_x(\I_{\msf{H}_x}) \md\mu_{X_n}(x)\\
&=
\sum_{n=1}^{\infty}
\int_{Y\cap X_n} \varpi_1\dim \md\mu_{X_n}>
\sum_{n=1}^{\infty}
\int_{Y\cap X_n} \varpi_2\dim \md\mu_{X_n}\\
&=
\bigl(\bigoplus_{n=1}^{\infty} \int_{X_n}^{\oplus}\varpi_2(x)\Tr_x \md\mu_{X_n}(x)\bigr)(\ov{\pi}(a)),
\end{split}\]
which gives us a contradiction.
\end{proof}

\begin{lemma}\label{lemat36}
The integral weight $\Psi^{\int_X}$ from the previous proposition is semifinite.
\end{lemma}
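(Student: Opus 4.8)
The plan is to produce an increasing sequence of central projections in $N:=\pi_X(\CGDu)''$, each of finite weight, whose supremum is $\I$; once this is in hand, normality of the tracial weight $\Psi^{\int_X}$ immediately yields that $\mf{N}_{\Psi^{\int_X}}$ is \swot-dense, which is precisely semifiniteness.

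To build the projections, I would use that $\Psi^{\int_X}$ is the restriction to $N$ of the integral trace $\int_X^{\oplus}\Tr_x\md\mu_X(x)$ on $\Dec(\int_X^{\oplus}\msf{H}_x\md\mu_X(x))$, and that $N$ is contained in this decomposable algebra. Since $\mu_{X_1}$ is $\sigma$-finite and both $x\mapsto\dim(\msf{H}_x)$ and the multiplicity $k(x):=\sum_{i=1}^{\infty}\chi_{X_i}(x)$ are measurable and everywhere finite, I would fix sets $A_n\nearrow X_1$ with $\mu_{X_1}(A_n)<+\infty$ and set $C_n=A_n\cap\{x:\dim(\msf{H}_x)\le n\}\cap\{x:k(x)\le n\}$, so that $C_n\nearrow X_1$, $\mu_{X_1}(C_n)<+\infty$, and $\dim,k\le n$ on $C_n$. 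The candidate projections are the diagonalisable operators $z_n=\int_X^{\oplus}\chi_{C_n}(\bar x)\I_{\msf{H}_x}\md\mu_X(x)$, where $\bar x\in X_1$ denotes the base point of a copy.

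The crucial point is that each $z_n$ really lies in $N$, not merely in the ambient decomposable algebra. Here I would reuse the isometries $P_i$ and the identity \eqref{eq41} from the proof of Proposition \ref{treq}: over $X_1$ the field is multiplicity-free, so the component $\pi_1:=\int_{X_1}^{\oplus}\pi_x\md\mu_{X_1}(x)$ satisfies $\pi_1(\CGDu)''=\Dec(\int_{X_1}^{\oplus}\msf{H}_x\md\mu_{X_1}(x))$. A uniformly bounded net $(a_\alpha)$ in $\CGDu$ with $\pi_1(a_\alpha)\to\int_{X_1}^{\oplus}\chi_{C_n}(x)\I_{\msf{H}_x}\md\mu_{X_1}(x)$ in \ssot\ then transports, through $\pi_i(a_\alpha)=P_i^*\pi_1(a_\alpha)P_i$ (where $\pi_i:=\int_{X_i}^{\oplus}\pi_x\md\mu_{X_i}(x)$) and a dominated-convergence estimate over the direct sum $\bigoplus_{i}$, to $\pi_X(a_\alpha)\to z_n$ in \ssot; hence $z_n\in N$. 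Being diagonalisable over $X$, $z_n$ lies in the centre of $\Dec(\int_X^{\oplus}\msf{H}_x\md\mu_X(x))$ and therefore commutes with $N$, so $z_n\in\mc{Z}(N)$. A direct computation then gives
\[
\Psi^{\int_X}(z_n)=\int_{C_n} k(x)\dim(\msf{H}_x)\md\mu_{X_1}(x)\le n^2\,\mu_{X_1}(C_n)<+\infty.
\]

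Finally, for any $T\in N$, centrality of $z_n$ gives $\Psi^{\int_X}\bigl((z_nT)^*z_nT\bigr)=\Psi^{\int_X}(z_nT^*T)\le\|T\|^2\Psi^{\int_X}(z_n)<+\infty$, so $z_nT\in\mf{N}_{\Psi^{\int_X}}$, while $z_n\nearrow\I$ forces $z_nT\to T$ in \ssot. Thus $\mf{N}_{\Psi^{\int_X}}$ is \swot-dense and $\Psi^{\int_X}$ is semifinite. The only genuinely delicate step is verifying $z_n\in N$ rather than just in the decomposable algebra, which is exactly where the irreducibility of the $\pi_x$ and the multiplicity-free structure over $X_1$ (through the $P_i$) must be invoked; the weight estimate and the density argument are then routine.
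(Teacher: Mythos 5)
Your proof is correct and follows essentially the same route as the paper's: your truncation sets $C_n$ are exactly its $\Omega_n$, membership of the truncated operator in $\pi_X(\CGDu)''$ rests on the same combination of \eqref{eq41} with the compressions $P_i$, and your bound $n^2\mu_{X_1}(C_n)$ is its estimate $n^2\mu(V_n)\|\ov{\pi_X}(a)\|^2$ specialized to $a=\I$. The only difference is packaging: the paper truncates the fibers of each positive element $a\in\CGDu^{**}_+$ directly, whereas you truncate only the identity to obtain central projections $z_n$ (precisely the paper's $\ov{\pi_X}(b_n)$ for $a=\I$) and then cut arbitrary elements using centrality and traciality --- an equivalent reorganization, not a genuinely different argument.
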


\begin{proof}
Let $(V_n)_{n\in\NN}$ be an increasing family of measurable subsets of $X_1$ such that $\mu_{X_1}(V_n)<+\infty\,(n\in\NN)$ and $\bigcup_{n\in\NN}V_n=X_1$. Define a family of measurable sets $\{\Omega_n\}_{n\in\NN}$ via
\[
\Omega_n=\{x\in X_1\,|\, x\in V_n,\, \dim(x)\le n,\,\sum_{i=1}^{\infty} \chi_{X_i}(x)\le n\}\quad(n\in\NN).
\]
Take any $a\in \CGDu^{**}_+$. We can write $\ov{\pi_X}(a)=\bigoplus_{i=1}^{\infty}\int_{X_i}^{\oplus} T_{x}\md\mu_{X_i}(x)$ for some positive operators $(T_{x})_{x\in X_1}$. Since the measurable field of Hilbert spaces under consideration is canonical (i.e.~reduces to $\CC^n$ on each component $X_1\rest_n$), we have equality \eqref{eq41} and consequently for each $n\in\NN$ we can find an operator $b_n\in \CGDu^{**}_+$ such that
\[
\ov{\pi_X}(b_n)=
\bigoplus_{i=1}^{\infty}\int_{X_i}^{\oplus}\chi_{\Omega_n}(x) T_{x}\md\mu_{X_i}(x).
\]
Clearly, we have $\ov{\pi_X}(b_n)\xrightarrow[n\to\infty]{\swot}\ov{\pi_X}(a)$ and 
\[
\Psi^{\int_X} (\ov{\pi_X}(b_n)^*\,
\ov{\pi_X}(b_n))=
\sum_{i=1}^{\infty} \int_{X_i} 
\chi_{\Omega_n}(x) \Tr_x(T_{x}^* T_x)\md\mu_{X_i}(x)\le
n^2 \mu(V_n) \|\ov{\pi_X}(a)\|^2<+\infty.
\]
It follows that $\Psi^{\int_X}$ is semifinite.
\end{proof}

\section{Connections between the integral weight and the integral character}\label{secweightchar}

\begin{proposition}
Let $\pi_X\in \Rep^{\int}_{<+\infty}(\GG)$ be an integral representation with finite integral dimension. We have the following:
\[
\omega(\chi^{\int}(\int_X^{\oplus}\pi_x\md\mu_X(x)))=
\Psi^{\int_X}(\pi_X((\omega\otimes\id){\WW}))
\]
for any $\omega\in\Lj$.
\end{proposition}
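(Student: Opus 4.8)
The plan is to unfold both sides of the asserted identity to the common scalar expression
\[
\int_X \Tr_x\bigl((\omega\otimes\id)U^{\pi_x}\bigr)\md\mu_X(x),
\]
and then to justify the two interchanges (of $\omega$ with a direct integral, and of a trace with a slice map) that this requires. Since $\pi_X$ has finite integral dimension, the integral weight $\Psi^{\int_X}$ is a \emph{bounded} normal functional on $\pi_X(\CGDu)''$, so no domain issues arise and both sides are a priori well defined.

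For the right-hand side, I would first identify the operator $\pi_X((\omega\otimes\id){\WW})=\pi_X(\lambda^u(\omega))$ as a decomposable operator. Using the slice-map identity $\pi_x\circ(\omega\otimes\id)=(\omega\otimes\id)\circ(\id\otimes\pi_x)$ on $\M(\CG\otimes\CGDu)$ together with the correspondence $U^{\pi_x}=(\id\otimes\pi_x){\WW}$, one gets $\pi_x(\lambda^u(\omega))=(\omega\otimes\id)U^{\pi_x}$ for each $x$, hence
\[
\pi_X(\lambda^u(\omega))=\int_X^{\oplus}(\omega\otimes\id)U^{\pi_x}\md\mu_X(x).
\]
This lies in $\pi_X(\CGDu)''\subseteq\Dec(\int_X^{\oplus}\msf{H}_x\md\mu_X(x))$, and since $\Psi^{\int_X}$ is the restriction of $\int_X^{\oplus}\Tr_x\md\mu_X(x)$ and is bounded, it evaluates this decomposable operator as $\int_X\Tr_x(\cdot)\md\mu_X(x)$, giving exactly the common expression.

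For the left-hand side, I would read the integral character as the weak-$*$ integral $\chi^{\int}(\int_X^{\oplus}\pi_x\md\mu_X(x))=\int_X(\id\otimes\Tr_x)U^{\pi_x}\md\mu_X(x)\in\Linf$; by the very definition of that integral, pairing with $\omega\in\Lj$ pulls $\omega$ inside to give $\int_X\omega((\id\otimes\Tr_x)U^{\pi_x})\md\mu_X(x)$. The scalar integrand equals $(\omega\otimes\Tr_x)(U^{\pi_x})$, and since each $\msf{H}_x$ is finite dimensional the elementary Fubini identity $(\omega\otimes\Tr_x)(T)=\Tr_x((\omega\otimes\id)T)$ rewrites it as $\Tr_x((\omega\otimes\id)U^{\pi_x})$, again the common expression.

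The only genuinely delicate points are bookkeeping: checking that the integral defining $\chi^{\int}(\pi_X)$ is absolutely convergent when paired with $\omega$ (which follows from the estimate $|\omega((\id\otimes\Tr_x)U^{\pi_x})|\le\|\omega\|\dim(\pi_x)$, using that matrix coefficients of the unitary $U^{\pi_x}$ have norm at most $1$, together with finiteness of the integral dimension), and confirming that a bounded $\Psi^{\int_X}$ evaluates a not-necessarily-positive decomposable operator by the same integral-of-traces formula that defines it on the positive cone. Both are routine given the finite-dimensionality of the fibres and the finiteness of $\int_X\dim(\pi_x)\md\mu_X(x)$, so I expect no real obstacle beyond carefully invoking the slice-map and Fubini identities.
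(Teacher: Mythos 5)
Your proof is correct and follows essentially the same route as the paper: both reduce the two sides to the common expression $\int_X \Tr_x\bigl((\omega\otimes\id)U^{\pi_x}\bigr)\md\mu_X(x)$ via the same three ingredients (pulling $\omega$ through the weak-$*$ integral defining $\chi^{\int}$, the trace/slice interchange on finite-dimensional fibres, and the identity $\pi_x\bigl((\omega\otimes\id)\WW\bigr)=(\omega\otimes\id)U^{\pi_x}$ combined with the definition of $\Psi^{\int_X}$). The paper presents this as a single four-step chain of equalities with the justifications left implicit, whereas you spell out the convergence and boundedness bookkeeping explicitly; the mathematical content is the same.
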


\begin{proof}
The above equality follows from a direct calculation:
\[
\begin{split}
&\quad\;\omega(\chi^{\int}(\int_X^{\oplus}\pi_x\md\mu_X(x)))=
\int_X \omega((\id\otimes\Tr_x)U^{\pi_x})\md\mu_X(x)=
\int_X \Tr_{x} ( (\omega\otimes\id)U^{\pi_x})\md\mu_X(x)\\
&=
\int_X \Tr_x(\pi_x ((\omega\otimes\id){\WW}))\md\mu_X(x)=
\Psi^{\int_X}\bigl( \int_X^{\oplus} \pi_x((\omega\otimes\id){\WW})\md\mu_X(x)\bigr).
\end{split}
\]
\end{proof}

Second result tells us that for equivalent representations, equality of integral characters holds if and only if the integral weights are equal (after composing with appropriate isomorphism).

\begin{proposition}\label{trchar}
Take $\pi_X,\gamma_Y\in\Rep^{\int}_{<+\infty}(\GG)$ and assume that we have a unitary intertwiner
\[
\mc{O}\colon \int_X^{\oplus} \msf{H}_x\md\mu_X(x)\rightarrow
\int_Y^{\oplus} \msf{K}_y\md\mu_Y(y).
\]
Then integral characters are equal: $\chi^{\int}(\pi_X)=\chi^{\int}(\gamma_Y)$ if and only if
\[
\Psi^{\int_X}=\Psi^{\int_Y}(\mc{O} \cdot \mc{O}^*).
\]
\end{proposition}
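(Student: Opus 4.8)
The plan is to reduce both implications to the scalar identities provided by the preceding proposition, which expresses the integral character through the integral weight. Applying that proposition to $\pi_X$ and to $\gamma_Y$ yields, for every $\omega\in\Lj$,
\[
\omega(\chi^{\int}(\pi_X))=\Psi^{\int_X}(\pi_X(\lambda^u(\omega))),\qquad
\omega(\chi^{\int}(\gamma_Y))=\Psi^{\int_Y}(\gamma_Y(\lambda^u(\omega))),
\]
where $\lambda^u(\omega)=(\omega\otimes\id)\WW$. Since $\mc{O}$ is a unitary intertwiner between the representations $\pi_X$ and $\gamma_Y$ of $\CGDu$, we have $\gamma_Y(a)=\mc{O}\,\pi_X(a)\,\mc{O}^*$ for all $a\in\CGDu$, so the second identity can be rewritten as
\[
\omega(\chi^{\int}(\gamma_Y))=\bigl(\Psi^{\int_Y}(\mc{O}\cdot\mc{O}^*)\bigr)(\pi_X(\lambda^u(\omega))).
\]

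Everything now reduces to comparing the two bounded normal functionals $\Psi^{\int_X}$ and $\Psi^{\int_Y}(\mc{O}\cdot\mc{O}^*)$ on $\pi_X(\CGDu)''$ (they are bounded because the integral dimensions are finite). The implication $\Leftarrow$ is immediate: if the two functionals coincide, then in particular they agree on each $\pi_X(\lambda^u(\omega))$, whence $\omega(\chi^{\int}(\pi_X))=\omega(\chi^{\int}(\gamma_Y))$ for all $\omega\in\Lj$; as $\Lj$ separates the points of $\Linf$ this forces $\chi^{\int}(\pi_X)=\chi^{\int}(\gamma_Y)$.

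For the converse $\Rightarrow$, assume the integral characters agree. Then the identities above show that $\Psi^{\int_X}$ and $\Psi^{\int_Y}(\mc{O}\cdot\mc{O}^*)$ agree on the linear span of $\{\pi_X(\lambda^u(\omega))\,|\,\omega\in\Lj\}$. The main point is to promote this to equality on all of $\pi_X(\CGDu)''$. To this end I would invoke the standard fact that $\lin\{\lambda^u(\omega)\,|\,\omega\in\Lj\}$ is norm-dense in $\CGDu$; hence $\lin\{\pi_X(\lambda^u(\omega))\}$ is norm-dense in $\pi_X(\CGDu)$, which in turn is $\sigma$-weakly dense in its bicommutant $\pi_X(\CGDu)''$ because $\pi_X$ is nondegenerate. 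Two bounded normal (that is, $\sigma$-weakly continuous) functionals agreeing on a $\sigma$-weakly dense subspace coincide, so $\Psi^{\int_X}=\Psi^{\int_Y}(\mc{O}\cdot\mc{O}^*)$, completing the proof.

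The only genuine obstacle is the passage from the family of scalar equalities to equality of the weights, i.e.\ the density of the slices $\lambda^u(\omega)$ in $\CGDu$ combined with the step from norm-density in $\pi_X(\CGDu)$ to $\sigma$-weak density in the bicommutant; once that is in place, both directions follow by a routine application of the preceding proposition and the intertwining property of $\mc{O}$.
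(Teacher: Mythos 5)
Your proof is correct and follows essentially the same route as the paper's: both reduce the statement to the preceding proposition, rewrite $\Psi^{\int_Y}(\gamma_Y(\lambda^u(\omega)))$ via the intertwiner, and then use norm density of $\{\lambda^u(\omega)\,|\,\omega\in\Lj\}$ in $\CGDu$ together with normality of the (bounded) integral weights to extend the equality to $\pi_X(\CGDu)''$. The reverse implication you spell out via separation of points by $\Lj$ is exactly what the paper compresses into ``reverse this argument.''
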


\begin{proof}
Assume that the integral characters are equal. Due to the previous proposition we have
\[
\begin{split}
&\quad\;\Psi^{\int_X}( \pi_X((\omega\otimes\id){\WW}) )=
\omega(\chi^{\int}(\pi_X))
=\omega(\chi^{\int}(\gamma_Y))\\
&=
\Psi^{\int_Y} (\gamma_Y((\omega\otimes\id){\WW}))=
\Psi^{\int_Y} (\mc{O}\,\pi_X((\omega\otimes\id){\WW})\mc{O}^*)
\end{split}
\]
for all $\omega\in\Lj$. Norm density of $\{(\omega\otimes\id){\WW}\,|\,\omega\in \Lj\}$ in $\CGDu$ gives us
\[
\Psi^{\int_X}( \pi_X(a) )= \Psi^{\int_Y}(\mc{O} \pi_X(a) \mc{O}^*)\quad(a\in \CGDu),
\]
and normality of the integral weights (which in this case are bounded functionals) gives us the claim. We get an implication in the other direction, once we reverse this argument.
\end{proof}

\section{Results concerning the Haar integrals on $\whG$}\label{secintwhG}
Recall that we assume that $\whG$ is second countable and $\GG$ is a type I locally compact quantum group whose irreducible representations are finite dimensional. 
Let us fix an arbitrary Plancherel measure for $\GG$ (along with the operator $\mc{Q}_L$, etc.).\\
The right invariance of $\wh{\psi}$ in the \cst-algebraic version states the following: if $\theta\in \CGD^*_+$ is a positive functional, and $a\in \CGDu_+$ is such that $\wh{\psi}(a)<+\infty$ then we have
\[
\theta(\I_{\whG})\wh{\psi}(a)=\wh{\psi}((\id\otimes\theta)\Delta_{\whG}(a)).
\]
For $\omega\in \LL^1(\GG)$ such that $\lambda(\omega)\in\CGD_+$ we have
\[
\wh{\psi}(\lambda(\omega))=
\int_{\IrrG} \Tr((\omega\otimes\id)U^{\pi}\,E_{\pi}^{-2}) \md\mu(\pi),
\]
where $\lambda(\omega)=(\omega\otimes\id)\mrW\in \mathrm{C}_0(\whG)$ (see equation \eqref{eq42}). Assume moreover that $\wh{\psi}(\lambda(\omega))<+\infty$. Then for any $\theta\in \CGD^*_+$ we have
\[
\begin{split}
&\quad\;
\theta(\I_{\whG})\int_{\IrrG} \Tr((\omega\otimes\id)U^{\pi}\,E_{\pi}^{-2}) \md\mu(\pi)=
\theta(\I_{\whG})\wh{\psi}(\lambda(\omega))=
\wh{\psi}((\id\otimes\theta)\Delta_{\whG} (\lambda(\omega)))\\
&=
\wh{\psi}((\omega\otimes\id\otimes\theta)(\mrW_{13} \mrW_{12}))=
\wh{\psi}\bigl(
(\omega\bigl((\id\otimes\theta)\mrW\cdot\bigr)\otimes\id)\mrW
\bigr)\\
&=
\wh{\psi}(\lambda(\omega((\id\otimes\theta)\mrW \cdot)
))\overset{\star}{=}
\int_{\IrrG}\Tr(
(\omega((\id\otimes\theta)\mrW\cdot)\otimes\id)U^{\pi}E_{\pi}^{-2}
)\md\mu(\pi)\\
&=
\int_{\IrrG}\Tr\bigl(
(\omega\otimes\id\otimes\theta)
( \mrW_{13}U^{\pi}_{12})
E_{\pi}^{-2}
\bigr)\md\mu(\pi)\\
&=
\int_{\IrrG}\Tr\bigl(
(\omega\otimes\pi\otimes \theta\circ \Lambda_{\whG})
({\WW}_{13} {\WW}_{12})
E_{\pi}^{-2}
\bigr)\md\mu(\pi)\\
&=
\int_{\IrrG}\Tr\bigl(
(\omega\otimes\pi\otimes \theta\circ \Lambda_{\whG})
(\id\otimes\Delta^{u}_{\whG}){\WW}
E_{\pi}^{-2}
\bigr)\md\mu(\pi)
\end{split}
\]
Equality marked $\overset{\star}{=}$ follows from the fact that
\[
\lambda(\omega((\theta\otimes\id)\mrW)\cdot)=
(\id\otimes\theta)\Delta_{\whG} (\lambda(\omega))\in \mf{M}_{\hat{\psi}}^+,
\]
so we can use the formula \eqref{eq42} for $\beta$ replaced with $\omega((\theta\otimes\id)\mrW \cdot)$. If $\theta\circ \Lambda_{\whG}=\theta'\circ \kappa$ for a certain nondegenerate representation $\kappa\in\Mor( \CGDu,\mc{K}(\msf{H}_{\kappa}))$ and $\theta'\in \B(\msf{H}_\kappa)^*_+$, we get
\[
\begin{split}
&\quad\;\theta'(\I_{\kappa})
\int_{\IrrG} \Tr((\omega\otimes\id)U^{\pi}\,E_{\pi}^{-2}) \md\mu(\pi)=
\int_{\IrrG}\Tr\bigl(
(\omega\otimes\pi\otimes \theta'\circ \kappa)
(\id\otimes\Delta^{u}_{\whG}){\WW}
E_{\pi}^{-2}
\bigr)\md\mu(\pi)\\
&=\int_{\IrrG}\Tr\bigl(
(\omega\otimes\theta'\otimes \id) (\id\otimes\kappa\tp\pi){\WW}
E_{\pi}^{-2}
\bigr)\md\mu(\pi).
\end{split}
\]
In particular, if $\dim(\kappa)<+\infty$, $\theta'=\omega_{\xi_i}$ for an orthonormal basis $\{\xi_i\}_{i=1}^{\dim(\kappa)}$ and we take a sum over $i$ we arrive at
\[
\begin{split}
&\quad\;\dim(\kappa)
\int_{\IrrG} \Tr((\omega\otimes\id)U^{\pi}\,E_{\pi}^{-2}) \md\mu(\pi)\\
&=
\int_{\IrrG}\Tr_{\kappa\stp \pi}\bigl(
(\omega\otimes\id\otimes \id) (\id\otimes\kappa\tp\pi){\WW}
(\I_\kappa\otimes E_{\pi}^{-2})
\bigr)\md\mu(\pi).
\end{split}
\]
If $\kappa\lec \Lambda_{\whG}$ then $\kappa=\kappa'\circ \Lambda_{\whG}$ for a certain nondegenerate representation $\kappa'\colon \CGD\rightarrow \B(\msf{H}_{\kappa})$ (this situation occurs when $\kappa\in\supp(\mu)$, \cite[Theorem 3.4.8]{Desmedt}). For $\theta'\in \B(\msf{H}_{\kappa})^*_+$ we have
\[
\theta'\circ \kappa=\theta'\circ \kappa'\circ \Lambda_{\whG}=\theta\circ\Lambda_{\whG}
\]
where $\theta=\theta'\circ\kappa'\in \CGD^*_+$. To sum up, we have derived two results:

\begin{lemma}\label{lemat7}
Assume that $\omega\in \Lj$ is a functional such that $\lambda(\omega)\in \mf{M}_{\wh{\psi}}^+$. Then for any $\theta\in \CGD^*_+$ we have
\[
\theta(\I_{\whG})\int_{\IrrG} \Tr((\omega\otimes\id)U^{\pi}\,E_{\pi}^{-2}) \md\mu(\pi)=
\int_{\IrrG}\Tr\bigl(
(\omega\otimes\pi\otimes\theta\circ \Lambda_{\whG})
(\id\otimes\Delta^{u}_{\whG}){\WW}
E_{\pi}^{-2}
\bigr)\md\mu(\pi).
\]
If $\kappa\lec \Lambda_{\whG}$ and $\dim(\kappa)<+\infty$ then
\[
\dim(\kappa)
\int_{\IrrG} \Tr((\omega\otimes\id)U^{\pi}\,E_{\pi}^{-2}) \md\mu(\pi)=
\int_{\IrrG}\Tr_{\kappa\stp \pi}\bigl(
\kappa\tp\pi(
(\omega\otimes\id){\WW})
(\I_{\kappa}\otimes E_{\pi}^{-2})
\bigr)\md\mu(\pi).
\]
Above we have equalities of (finite) nonnegative numbers.
\end{lemma}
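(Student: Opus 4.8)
The plan is to obtain both identities from the \cst-algebraic right invariance of $\wh\psi$ recalled just before the lemma, namely $\theta(\I_{\whG})\wh\psi(a)=\wh\psi((\id\otimes\theta)\Delta_{\whG}(a))$ for positive $a\in\CGDu$ with $\wh\psi(a)<+\infty$ and $\theta\in\CGD^*_+$, applied to $a=\lambda(\omega)$, combined with the Plancherel formula \eqref{eq42} expressing $\wh\psi(\lambda(\cdot))$ as an integral of traces against $E_\pi^{-2}$.

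First I would treat the left-hand side: since $\lambda(\omega)\in\mf{M}_{\wh\psi}^+$, formula \eqref{eq42} gives $\wh\psi(\lambda(\omega))=\int_{\IrrG}\Tr((\omega\otimes\id)U^\pi E_\pi^{-2})\md\mu(\pi)$, so multiplying by $\theta(\I_{\whG})$ reproduces the left-hand side of the first identity. The substance is then rewriting $\wh\psi((\id\otimes\theta)\Delta_{\whG}(\lambda(\omega)))$. Using $\lambda(\omega)=(\omega\otimes\id)\mrW$ and the pentagon relation $(\id\otimes\Delta_{\whG})\mrW=\mrW_{13}\mrW_{12}$, one computes $(\id\otimes\theta)\Delta_{\whG}(\lambda(\omega))=(\omega\otimes\id\otimes\theta)(\mrW_{13}\mrW_{12})=\lambda(\omega')$ with $\omega'=\omega((\id\otimes\theta)\mrW\,\cdot\,)$. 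Reapplying \eqref{eq42} to $\wh\psi(\lambda(\omega'))$ and unwinding the slices through $U^\pi=(\id\otimes\pi)\WW$, $\mrW=(\id\otimes\Lambda_{\whG})\WW$ and the universal pentagon relation $(\id\otimes\Delta^{u}_{\whG})\WW=\WW_{13}\WW_{12}$, the integrand reorganises into $\Tr((\omega\otimes\pi\otimes\theta\circ\Lambda_{\whG})(\id\otimes\Delta^{u}_{\whG})\WW\,E_\pi^{-2})$, which is exactly the first displayed formula.

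For the second identity I would specialise the functional $\theta$. When $\kappa\lec\Lambda_{\whG}$ one has $\kappa=\kappa'\circ\Lambda_{\whG}$ for some nondegenerate representation $\kappa'$ of $\CGD$, so any $\theta'\in\B(\msf{H}_\kappa)^*_+$ can be realised as $\theta'\circ\kappa=\theta\circ\Lambda_{\whG}$ with $\theta=\theta'\circ\kappa'\in\CGD^*_+$; this is the freedom that lets me feed an arbitrary positive functional on $\B(\msf{H}_\kappa)$ into the first identity. Rewriting the relevant slice through $(\id\otimes\kappa\tp\pi)\WW$ gives, for each such $\theta'$, the equality $\theta'(\I_\kappa)\int_{\IrrG}\Tr((\omega\otimes\id)U^\pi E_\pi^{-2})\md\mu(\pi)=\int_{\IrrG}\Tr((\omega\otimes\theta'\otimes\id)(\id\otimes\kappa\tp\pi)\WW\,E_\pi^{-2})\md\mu(\pi)$. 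Taking $\theta'=\omega_{\xi_i}$ for an orthonormal basis $\{\xi_i\}_{i=1}^{\dim(\kappa)}$ of $\msf{H}_\kappa$ (finite, since $\dim(\kappa)<+\infty$) makes each $\theta'(\I_\kappa)=1$, and summing over $i$ produces the factor $\dim(\kappa)$ on the left while the vector states assemble on the right into the full trace $\Tr_{\kappa\stp\pi}$, yielding the second displayed formula.

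The main obstacle is justifying the reapplication of \eqref{eq42}, i.e.\ checking that $\lambda(\omega')=(\id\otimes\theta)\Delta_{\whG}(\lambda(\omega))$ genuinely lies in $\mf{M}_{\wh\psi}^+$ rather than being a merely formal slice. This holds because $\Delta_{\whG}(\lambda(\omega))$ is positive ($\Delta_{\whG}$ is a $\star$-homomorphism and $\lambda(\omega)\geq 0$), hence so is its slice by the positive functional $\theta$, and right invariance gives its weight as $\theta(\I_{\whG})\wh\psi(\lambda(\omega))<+\infty$; thus \eqref{eq42} applies to $\wh\psi(\lambda(\omega'))$. The remaining steps are routine leg-numbering manipulations together with a Fubini argument to commute the finite sum over the basis with the Plancherel integral.
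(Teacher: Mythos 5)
Your proposal is correct and follows essentially the same route as the paper: right invariance of $\hat{\psi}$, identification of $(\id\otimes\theta)\Delta_{\whG}(\lambda(\omega))$ as $\lambda(\omega((\id\otimes\theta)\mrW\,\cdot\,))$ lying in $\mf{M}_{\hat{\psi}}^+$ so that \eqref{eq42} can be reapplied, pentagon-relation rewriting to obtain the first identity, and then factorisation $\kappa=\kappa'\circ\Lambda_{\whG}$ together with summing the vector states $\omega_{\xi_i}$ over an orthonormal basis to get the second. Your explicit justification that the sliced element is positive with finite weight is exactly the paper's argument for the equality it marks $\overset{\star}{=}$.
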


Now we conduct a similar reasoning for $\hvp$: assume that $\lambda(\omega)\in \mf{M}_{\wh{\vp}}^+$.
\[
\begin{split}
&\quad\;
\theta(\I_{\whG})\int_{\IrrG} \Tr((\omega\otimes\id)U^{\pi}\,D_{\pi}^{-2}) \md\mu(\pi)=
\theta(\I_{\whG})\wh{\vp}(\lambda(\omega))=
\wh{\vp}((\theta\otimes\id)\Delta_{\whG} (\lambda(\omega)))\\
&=
\wh{\vp}((\omega\otimes\theta\otimes\id)(\mrW_{13} \mrW_{12}))=
\wh{\vp}\bigl(
(\omega\bigl(\cdot(\id\otimes\theta)\mrW\bigr)\otimes\id)\mrW
\bigr)\\
&=
\wh{\vp}(\lambda(\omega(\cdot\,(\id\otimes\theta)\mrW\,)
))=
\int_{\IrrG}\Tr\bigl(
(\omega(\cdot\,(\id\otimes\theta) \mrW)\otimes\id)U^{\pi}D_{\pi}^{-2}
\bigr)\md\mu(\pi)\\
&=
\int_{\IrrG}\Tr\bigl(
(\omega\otimes\theta\otimes\id)
(U^{\pi}_{13}\mrW_{12} )
D_{\pi}^{-2}
\bigr)\md\mu(\pi)\\
&=
\int_{\IrrG}\Tr\bigl(
(\omega\otimes\theta\circ \Lambda_{\whG}\otimes\pi)
({\WW}_{13} \WW_{12})
D_{\pi}^{-2}
\bigr)\md\mu(\pi)\\
&=
\int_{\IrrG}\Tr\bigl(
(\omega\otimes\theta\circ \Lambda_{\whG}\otimes\pi)
(\id\otimes\Delta^{u}_{\whG}){\WW}
D_{\pi}^{-2}
\bigr)\md\mu(\pi)
\end{split}
\]
If $\theta\circ\Lambda_{\whG}=\theta'\circ\kappa$ we can further write
\[
\begin{split}
&\quad\;
\int_{\IrrG}\Tr\bigl(
(\omega\otimes\theta'\circ\kappa\otimes\pi)
(\id\otimes\Delta^{u}_{\whG}){\WW}
D_{\pi}^{-2}
\bigr)\md\mu(\pi)\\
&=
\int_{\IrrG}\Tr\bigl(
(\omega\otimes\id\otimes\theta')
(\id\otimes\pi\tp\kappa){\WW}
D_{\pi}^{-2}
\bigr)\md\mu(\pi)
\end{split}
\]
and we proceed as before. In the end we get a left version of the previous lemma:

\begin{lemma}\label{linttr}
Assume that $\omega\in \Lj$ is such that $\lambda(\omega)\in \mf{M}_{\wh{\vp}}^+$. Then for any  $\theta\in \CGD^*_+$ we have
\[
\theta(\I_{\whG})\int_{\IrrG} \Tr((\omega\otimes\id)U^{\pi}\,D_{\pi}^{-2}) \md\mu(\pi)=
\int_{\IrrG}\Tr\bigl(
(\omega\otimes \theta\circ \Lambda_{\whG}\otimes\pi)
(\id\otimes \Delta^{u}_{\whG}){\WW}\;
D_{\pi}^{-2}
\bigr)\md\mu(\pi).
\]
If $\kappa\lec \Lambda_{\whG}$ and $\dim(\kappa)<+\infty$ then
\[
\dim(\kappa)
\int_{\IrrG} \Tr((\omega\otimes\id)U^{\pi}\,D_{\pi}^{-2}) \md\mu(\pi)=
\int_{\IrrG}\Tr_{\pi\stp\kappa}\bigl(
\pi\tp\kappa(
(\omega\otimes \id){\WW})
(D_{\pi}^{-2}\otimes\I_\kappa)
\bigr)\md\mu(\pi).
\]
Above we have an equality of (finite) nonnegative numbers.
\end{lemma}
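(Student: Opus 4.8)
The plan is to run exactly the computation carried out for $\hpsi$ in Lemma~\ref{lemat7}, but with the right invariance of $\hpsi$ replaced by the left invariance of $\hvp$. Concretely, I would start from the product $\theta(\I_{\whG})\,\hvp(\lambda(\omega))$ and apply left invariance of $\hvp$ in its \cst-algebraic form, $\theta(\I_{\whG})\hvp(a)=\hvp((\theta\otimes\id)\Delta_{\whG}(a))$, which is legitimate precisely because $\lambda(\omega)\in\mf{M}_{\hvp}^+$. Using $(\id\otimes\Delta_{\whG})\mrW=\mrW_{13}\mrW_{12}$ from Section~\ref{conventions} one expands $\Delta_{\whG}(\lambda(\omega))=(\omega\otimes\id\otimes\id)(\mrW_{13}\mrW_{12})$, and slicing the middle leg against $\theta$ yields $(\theta\otimes\id)\Delta_{\whG}(\lambda(\omega))=\lambda\bigl(\omega(\,\cdot\,(\id\otimes\theta)\mrW)\bigr)$, where $\omega(\,\cdot\,(\id\otimes\theta)\mrW)$ is the functional $x\mapsto\omega\bigl(x\,(\id\otimes\theta)\mrW\bigr)$. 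Note the asymmetry with Lemma~\ref{lemat7}: here the slice $(\id\otimes\theta)\mrW$ multiplies $x$ from the right rather than the left, which is the manifestation of passing from the right to the left Haar weight.

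The next step is to apply point~$4)$ of Theorem~\ref{PlancherelL} (the $D_\pi^{-2}$ version of the integral formula) to this modified functional. This is permitted because $\lambda\bigl(\omega(\,\cdot\,(\id\otimes\theta)\mrW)\bigr)=(\theta\otimes\id)\Delta_{\whG}(\lambda(\omega))$ is again a positive element of $\mf{M}_{\hvp}^+$, which simultaneously guarantees that all integrals occurring in the statement are finite nonnegative numbers. Unfolding $U^{\pi}=(\id\otimes\pi)\WW$, $\mrW=(\id\otimes\Lambda_{\whG})\WW$, and finally $\WW_{13}\WW_{12}=(\id\otimes\Delta^{u}_{\whG})\WW$ then rewrites the integrand and produces the first displayed equality. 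Keeping $D_\pi^{-2}$ on the correct (left) side of the trace throughout is the bookkeeping that distinguishes this left version from Lemma~\ref{lemat7}.

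For the second assertion I would specialise to $\theta\circ\Lambda_{\whG}=\theta'\circ\kappa$ with $\theta'\in\B(\msf{H}_\kappa)^*_+$, which is available once $\kappa\lec\Lambda_{\whG}$: writing $\kappa=\kappa'\circ\Lambda_{\whG}$ and $\theta=\theta'\circ\kappa'$ as in the text preceding the lemma (cf.~\cite[Theorem~3.4.8]{Desmedt}), so that $\theta(\I_{\whG})=\theta'(\I_{\kappa})$. The factor $(\theta'\circ\kappa\otimes\pi)(\id\otimes\Delta^{u}_{\whG})\WW$ is then reorganised, exactly as before, into $(\id\otimes\theta')(\id\otimes\pi\tp\kappa)\WW$, the fusion appearing here being $\pi\tp\kappa$ with $D_\pi^{-2}$ tensoring on the left as $D_\pi^{-2}\otimes\I_\kappa$ (rather than $\kappa\tp\pi$). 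Finally I would take $\theta'=\omega_{\xi_i}$ for an orthonormal basis $\{\xi_i\}_{i=1}^{\dim(\kappa)}$ of $\msf{H}_\kappa$ and sum over $i$: the left-hand factor $\theta'(\I_\kappa)$ becomes $\dim(\kappa)$, while $\sum_i(\id\otimes\omega_{\xi_i})$ turns the $\theta'$-slice into the partial trace over the $\kappa$-leg, upgrading $\Tr$ to $\Tr_{\pi\stp\kappa}$ and yielding the second equality.

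The main obstacle is notational rather than conceptual: one must track which leg each slice acts on, whether the auxiliary slice $(\id\otimes\theta)\mrW$ multiplies from the left or the right, and on which side $D_\pi^{-2}$ sits, since all of these are mirror-reversed relative to the $\hpsi$ computation. The one genuinely substantive point to check is that the modified functional again produces an element of $\mf{M}_{\hvp}^+$, so that point~$4)$ of Theorem~\ref{PlancherelL} may legitimately be invoked and all quantities remain finite and nonnegative.
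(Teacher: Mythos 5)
Your proposal is correct and follows essentially the same route as the paper: the paper's own proof is precisely the $\hvp$-analogue of the Lemma~\ref{lemat7} computation, using left invariance of $\hvp$, the identity $(\id\otimes\Delta_{\whG})\mrW=\mrW_{13}\mrW_{12}$ with the slice $\omega(\,\cdot\,(\id\otimes\theta)\mrW)$ multiplying from the right, point~$4)$ of Theorem~\ref{PlancherelL} applied to $(\theta\otimes\id)\Delta_{\whG}(\lambda(\omega))\in\mf{M}_{\hvp}^+$, and then the specialisation $\theta\circ\Lambda_{\whG}=\theta'\circ\kappa$ followed by summing $\theta'=\omega_{\xi_i}$ over an orthonormal basis to produce $\dim(\kappa)$ and the partial trace $\Tr_{\pi\stp\kappa}$ with $D_\pi^{-2}\otimes\I_\kappa$. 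All the points you flag as needing care (the left/right asymmetry of the slice, the positivity and finiteness justification, and the placement of the $\pi$-leg in $\pi\tp\kappa$) are exactly the ones the paper handles.
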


We can also derive an $\LL^2$-version of Lemma \ref{lemat7}:

\begin{lemma}\label{lemat16}
Let $\omega,\nu$ be a functionals such that $\lambda(\omega),\lambda(\nu)\in \lambda(\LL^1_{\sharp}(\GG))\cap \mf{N}_{\hpsi}$. If $\kappa\lec \Lambda_{\whG}$ and $\dim(\kappa)<+\infty$ then
\[
\begin{split}
\dim(\kappa)\hpsi (\lambda(\omega)^*\lambda(\nu))
&=\dim(\kappa)
\int_{\IrrG} \Tr(\pi(\lambda^{u}(\omega)^*\lambda^{u}(\nu))
\,E_{\pi}^{-2}) \md\mu(\pi)\\
&=
\int_{\IrrG}\Tr_{\kappa\stp\pi}\bigl(
\kappa\tp\pi(
\lambda^{u}(\omega)^*\lambda^{u}(\nu))
(\I_\kappa\otimes E_{\pi}^{-2})
\bigr)\md\mu(\pi).
\end{split}
\]
In particular, the above intergrals are convergent.
\end{lemma}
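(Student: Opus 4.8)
The plan is to reduce to the positive case already settled in \eqref{eq42} and Lemma \ref{lemat7} by polarisation. For $k\in\{0,1,2,3\}$ I put $\zeta_k=\nu+i^k\omega$. Since $\Ljsharp$ is a linear subspace of $\Lj$ we have $\zeta_k\in\Ljsharp$, and since $\mf{N}_{\hpsi}$ is a linear space we get $\lambda(\zeta_k)\in\lambda(\Ljsharp)\cap\mf{N}_{\hpsi}$. The polarisation identity $4c^*d=\sum_{k=0}^3 i^k(d+i^k c)^*(d+i^k c)$ then gives
\[
\lambda(\omega)^*\lambda(\nu)=\tfrac14\sum_{k=0}^{3} i^k\,\lambda(\zeta_k)^*\lambda(\zeta_k),\qquad
\lambda^u(\omega)^*\lambda^u(\nu)=\tfrac14\sum_{k=0}^{3} i^k\,\lambda^u(\zeta_k)^*\lambda^u(\zeta_k).
\]

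Next I would rewrite each positive summand as $\lambda$ of a single functional. Recalling that $\lambda$ and $\lambda^u$ are homomorphisms for the convolution product and that $\lambda(\zeta_k^\sharp)=\lambda(\zeta_k)^*$ (and the universal analogue $\lambda^u(\zeta_k^\sharp)=\lambda^u(\zeta_k)^*$), I set $\beta_k=\zeta_k^\sharp\star\zeta_k\in\Lj$, so that $\lambda(\beta_k)=\lambda(\zeta_k)^*\lambda(\zeta_k)$ and $\lambda^u(\beta_k)=\lambda^u(\zeta_k)^*\lambda^u(\zeta_k)$. In particular $\lambda(\beta_k)$ is a product of two elements of $\mf{N}_{\hpsi}$, hence $\lambda(\beta_k)\in\mf{M}_{\hpsi}^+$; thus $\hpsi(\lambda(\beta_k))<+\infty$ and, since $(\beta_k\otimes\id)U^{\pi}=\pi(\lambda^u(\beta_k))$, formula \eqref{eq42} applies and yields $\hpsi(\lambda(\beta_k))=\int_{\IrrG}\Tr(\pi(\lambda^u(\beta_k))E_\pi^{-2})\md\mu(\pi)$. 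This already gives convergence of all the integrals appearing in the statement.

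Now I would assemble the pieces. Multiplying the last identity by $\tfrac14 i^k$, summing over $k$, and using linearity of $\hpsi$ on $\mf{M}_{\hpsi}$, of each $\pi$, and of the integral, together with the two polarisation identities above, gives
\[
\hpsi(\lambda(\omega)^*\lambda(\nu))=\int_{\IrrG}\Tr\bigl(\pi(\lambda^u(\omega)^*\lambda^u(\nu))E_\pi^{-2}\bigr)\md\mu(\pi),
\]
which is the first asserted equality after multiplying by $\dim(\kappa)$. For the second equality I would instead invoke the second formula of Lemma \ref{lemat7}: since $\kappa\lec\Lambda_{\whG}$, $\dim(\kappa)<+\infty$ and $\lambda(\beta_k)\in\mf{M}_{\hpsi}^+$, it gives $\dim(\kappa)\int_{\IrrG}\Tr(\pi(\lambda^u(\beta_k))E_\pi^{-2})\md\mu(\pi)=\int_{\IrrG}\Tr_{\kappa\stp\pi}(\kappa\tp\pi(\lambda^u(\beta_k))(\I_\kappa\otimes E_\pi^{-2}))\md\mu(\pi)$. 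The same weighted summation over $k$, now using multiplicativity and linearity of $\kappa\tp\pi$, reconstitutes $\kappa\tp\pi(\lambda^u(\omega)^*\lambda^u(\nu))$ inside the trace and completes the proof.

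The step I expect to be most delicate is the passage to the universal level, namely the identity $\lambda^u(\zeta_k^\sharp)=\lambda^u(\zeta_k)^*$ needed to recognise $\lambda^u(\beta_k)$ as $\lambda^u(\zeta_k)^*\lambda^u(\zeta_k)$: the definition of $\Ljsharp$ in Section \ref{conventions} is phrased at the reduced level and $\Lambda_{\whG}$ need not be injective. I would either quote the compatibility of the $\sharp$-involution with $\lambda^u$ in the universal setting, or circumvent it entirely by restricting the integrals to the $\mu$-conull set of those $\pi\in\supp(\mu)$ that factor as $\pi=\pi'\circ\Lambda_{\whG}$ (Theorem 3.4.8 in \cite{Desmedt}); for such $\pi$ one has $\pi(\lambda^u(\beta_k))=\pi'(\lambda(\beta_k))$ and $\pi(\lambda^u(\omega)^*\lambda^u(\nu))=\pi'(\lambda(\omega)^*\lambda(\nu))$, so the reduced-level identities $\lambda(\zeta_k^\sharp)=\lambda(\zeta_k)^*$ already suffice.
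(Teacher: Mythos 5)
Your proposal is correct and follows essentially the same route as the paper: reduce to the positive case via polarisation and apply Lemma \ref{lemat7} (together with \eqref{eq42}) to each positive term $\lambda(\zeta_k)^*\lambda(\zeta_k)=\lambda(\zeta_k^\sharp\star\zeta_k)\in\mf{M}_{\hpsi}^+$. The paper merely organises the polarisation differently — it applies Lemma \ref{lemat7} to $\omega+\nu$ and $\omega+i\nu$ and subtracts the (finite) diagonal terms $\omega^\sharp\star\omega$, $\nu^\sharp\star\nu$, rather than using the four-term identity — and it uses the compatibility $\lambda^u(\omega^\sharp)=\lambda^u(\omega)^*$ without comment, the very point your final paragraph correctly flags and resolves.
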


\begin{proof}
Let $\omega,\nu$ be functionals as in the lemma. We have
\[
\lambda(\omega+\nu)^*\lambda(\omega+\nu)=
\lambda((\omega^{\sharp}+\nu^{\sharp})\star(\omega+\nu))\in
\mf{M}_{\hpsi}^+,
\]
therefore we can use Lemma \ref{lemat7}:
\[
\begin{split}
+\infty&>\dim(\kappa)
\int_{\IrrG} \Tr(\pi(\lambda^{u}((\omega^{\sharp}+\nu^{\sharp})\star(\omega+\nu)))
\,E_{\pi}^{-2}) \md\mu(\pi)\\
&=
\int_{\IrrG}\Tr_{\kappa\stp\pi}\bigl(
\kappa\tp\pi(
\lambda^{u}((\omega^{\sharp}+\nu^{\sharp})\star(\omega+\nu))
(\I_\kappa\otimes E_{\pi}^{-2})
\bigr)\md\mu(\pi).
\end{split}
\]
We can subtract integrals with $\omega^{\sharp}\star\omega$ and $\nu^{\sharp}\star\nu$, because these are finite and equal:
\begin{equation}\label{eq15}
\begin{split}
&\dim(\kappa)
\int_{\IrrG} \Tr(\pi(\lambda^{u}(\omega^{\sharp}\star \nu+\nu^{\sharp}\star\omega))
\,E_{\pi}^{-2}) \md\mu(\pi)\\
&=
\int_{\IrrG}\Tr_{\kappa\stp\pi}\bigl(
\kappa\tp\pi(
\lambda^{u}(\omega^{\sharp}\star\nu+\nu^{\sharp}\star\omega)
(\I_\kappa\otimes E_{\pi}^{-2})
\bigr)\md\mu(\pi).
\end{split}
\end{equation}
Analogous reasoning with $\omega+i\nu$ in place of $\omega+\nu$ gives us
\[
(\omega+i\nu)^{\sharp}\star(\omega+i\nu)=
\omega^{\sharp}\star\omega+
\nu^{\sharp}\star\nu+
i\omega^{\sharp}\star\nu-i \nu^{\sharp}\star\omega
\]
therefore (after subtracting terms with $\omega^{\sharp}\star\omega, \nu^\sharp\star\nu$ and dividing by $i$)
\begin{equation}\label{eq16}
\begin{split}
&\dim(\kappa)
\int_{\IrrG} \Tr(\pi(\lambda^{u}(\omega^{\sharp}\star \nu-\nu^{\sharp}\star\omega))
\,E_{\pi}^{-2}) \md\mu(\pi)\\
&=
\int_{\IrrG}\Tr_{\kappa\stp\pi}\bigl(
\kappa\tp\pi(
\lambda^{u}(\omega^{\sharp}\star\nu-\nu^{\sharp}\star\omega)
(\I_\kappa\otimes E_{\pi}^{-2})
\bigr)\md\mu(\pi).
\end{split}
\end{equation}
If we add both sides of equalities \eqref{eq15}, \eqref{eq16} (and divide by $2$) we get
\[
\begin{split}
&\quad\;
\dim(\kappa)
\int_{\IrrG} \Tr(\pi(\lambda^{u}(\omega)^*\lambda^{u}( \nu))
\,E_{\pi}^{-2}) \md\mu(\pi)\\
&=\dim(\kappa)
\int_{\IrrG} \Tr(\pi(\lambda^{u}(\omega^{\sharp}\star \nu))
\,E_{\pi}^{-2}) \md\mu(\pi)\\
&=
\int_{\IrrG}\Tr_{\kappa\stp\pi}\bigl(
\kappa\tp\pi(
\lambda^{u}(\omega^{\sharp}\star\nu)
(\I_\kappa\otimes E_{\pi}^{-2})
\bigr)\md\mu(\pi)\\
&=
\int_{\IrrG}\Tr_{\kappa\stp\pi}\bigl(
\kappa\tp\pi(
\lambda^{u}(\omega)^*\lambda^{u}(\nu))
(\I_\kappa\otimes E_{\pi}^{-2})
\bigr)\md\mu(\pi).
\end{split}
\]
\end{proof}

Clearly there is also a version of this result for $\hvp$.

\section{Decomposition of the tensor product}\label{secdecomp}
Fix any Plancherel measure $\mu$ for $\GG$. Let us start with the definition of the representation associated with a subset $\Omega\subseteq\IrrG$:

\begin{definition}
Let $\Omega\subseteq \IrrG$ be a measurable subset. We will denote by $\sigma_\Omega$ the integral representation with measure space $(\Omega,\mc{B}(\Omega),\mu_\Omega)$, where $\mc{B}(\Omega)$ is the Borel $\sigma$-algebra and $\mu_\Omega$ is the restriction of $\mu$ to $\Omega$. As a measurable field of Hilbert spaces and field of representations we take the canonical fields on $\IrrG$ restricted to $\Omega$. The symbol $\sigma_\Omega$ will also stand for the representation of $\CGDu$ given by $\int_\Omega^{\oplus}\pi\md\mu_\Omega(\pi)$.
\end{definition}
 
Take any measurable subset $\Omega\subseteq\IrrG$ and a finite dimensional nondegenerate representation $\kappa\colon \CGDu\rightarrow \B(\msf{H}_\kappa)$. We can form the tensor product representation $\kappa\tp\sigma_\Omega$. Because $\CGDu$ is a \cst-algebra of type I, we have the following decomposition
\[
\kappa\tp\sigma_\Omega\simeq \bigoplus_{n\in\NN\cup\{\aleph_0\}}
n\cdot
\int_{\IrrG}^{\oplus} \zeta \md\mu_n(\zeta)
\]
for certain disjoint measures $\{\mu_n\,|\,n\in\NN\cup\{\aleph_0\}\}$ (\cite[Theorem 8.6.6]{DixmierC}). We know that the measure class associated with $\sigma_\Omega$ is $[\chi_\Omega\,\mu]$, it is clear that $[\chi_\Omega\, \mu]\ll[\mu]$. Next, due to \cite[Proposition 3.14]{SoltanWoronowicz} we know that the representation $\kappa\tp\Lambda_{\whG}$ is unitarily equivalent to the direct sum of $\dim(\kappa)$ copies of $\Lambda_{\whG}$, hence quasi-equivalent to $\Lambda_{\whG}$. In particular, the measure class associated with $\kappa\tp\Lambda_{\whG}$ is $[\mu]$. Thanks to the properties of quasi-containment (Proposition \ref{stw10}) we have the following: 
\[
[\mu_{\kappa\stp\sigma_\Omega}]=
[\mu_{\kappa}]\tp[\chi_\Omega\,\mu]\ll
[\mu_{\kappa}]\tp[\mu]=
[\mu_{\kappa\stp \Lambda_{\whG}}]=[\mu_{\Lambda_{\whG}}]=[\mu],
\]
and therefore we know that the measures $\{\mu_n\,|\,n\in\NN\cup\{\aleph_0\}\}$ are absolutely continuous with respect to $\mu$. Taking equivalent measures we can assume that
\[
\mu_n=\chi_{\E^n_{\kappa\stp\sigma_{\Omega}}} \,\mu
\]
for certain measurable pairwise disjoint subsets $\E^n_{\kappa\stp\sigma_\Omega}\subseteq\IrrG$ (defined uniquely up to measure $0$). We get
\[
\kappa\tp\sigma_\Omega \simeq \bigoplus_{n\in\NN\cup\{\aleph_0\}}n\cdot
\int_{\E^n_{\kappa\stp\sigma_\Omega}}^{\oplus}\zeta\md\mu_{\E^n_{\kappa\stp\sigma_\Omega}}(\zeta)=\bigoplus_{n\in\NN\cup\{\aleph_0\}} n\cdot \sigma_{\E^n_{\kappa\stp \sigma_{\Omega}}},
\]
where $\mu_{\E^n_{\kappa\stp\sigma_\Omega}}$ is the measure $\mu$ restricted to $\E^n_{\kappa\stp\sigma_\Omega}$. To be in the situation of Proposition \ref{treq}, define subsets
\[
\F^{n}_{\kappa\stp\sigma_\Omega}=
\bigcup_{k\in \{n,n+1,\dotsc\}\cup\{\aleph_0\}} \E^{k}_{\kappa\stp\sigma_\Omega}\quad(n\in\NN),
\]
then we get another decomposition
\[
\kappa\tp\sigma_\Omega\simeq \bigoplus_{n=1}^{\infty}
\int_{\F^n_{\kappa\stp\sigma_{\Omega}}}^{\oplus} \zeta \md\mu_{\F^n_{\kappa\stp\sigma_{\Omega}}}(\zeta)=
\bigoplus_{n=1}^{\infty} \sigma_{\F^{n}_{\kappa\stp \sigma_\Omega}}.
\]
We remark here that in Lemma \ref{lemat8} we will prove $\E^{\aleph_0}_{\kappa\stp\sigma_{\Omega}}=\emptyset$.\\
We will use the same notation of sets $\E,\F$ for arbitrary representation (not just $\kappa\tp\sigma_\Omega$) whose measures are absolutely continuous with respect to $\mu$.

\subsection{Results concerning sets $\F^n_{\kappa\stp\sigma_\Omega}$}

In this section we derive a couple of results concerning sets $\F^n_{\kappa\stp\sigma_\Omega}$. We start with aquiring some information about the decomposition of $\kappa\tp\Lambda_{\whG}$:

\begin{lemma}
For an arbitrary nondegenerate finite dimensional representation $\kappa\colon\CGDu\rightarrow\B(\msf{H}_\kappa)$ we have
\[
\sum_{n=1}^{\infty}\chi_{\F^n_{\kappa\stp\Lambda_{\whG}}}=\dim(\kappa)
\sum_{n=1}^{\infty} n\chi_{\IrrG\rest_n},\quad
\E^{\aleph_0}_{\kappa\stp\Lambda_{\whG}}=\emptyset.
\]
\end{lemma}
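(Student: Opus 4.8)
The plan is to reduce everything to two structural facts already recorded in this section. First, by \cite[Proposition 3.14]{SoltanWoronowicz} the representation $\kappa\tp\Lambda_{\whG}$ is unitarily equivalent to $\dim(\kappa)$ copies of $\Lambda_{\whG}$. Second, by point $3)$ of Theorem \ref{PlancherelL} we have $\Lambda_{\whG}\simeq\int_{\IrrG}^{\oplus}(\dim(\pi))\cdot\pi\md\mu(\pi)$. Combining these two unitary equivalences I would write
\[
\kappa\tp\Lambda_{\whG}\simeq\dim(\kappa)\cdot\Lambda_{\whG}\simeq
\int_{\IrrG}^{\oplus}\bigl(\dim(\kappa)\dim(\pi)\bigr)\cdot\pi\md\mu(\pi),
\]
so that the pointwise multiplicity of $\pi$ in $\kappa\tp\Lambda_{\whG}$ is the finite number $\dim(\kappa)\dim(\pi)$. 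In particular the measure class of $\kappa\tp\Lambda_{\whG}$ is $[\mu]$, so the sets $\E^n_{\kappa\stp\Lambda_{\whG}},\F^n_{\kappa\stp\Lambda_{\whG}}$ are well defined.

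Next I would put this direct integral into the canonical form $\bigoplus_{n\in\NN\cup\{\aleph_0\}}n\cdot\int_{\IrrG}^{\oplus}\zeta\md\mu_n(\zeta)$. Partitioning $\IrrG=\bigsqcup_{m}\IrrG\rest_m$ and using $\dim(\pi)=m$ on $\IrrG\rest_m$, the multiplicity there is the constant $\dim(\kappa)m$, so pulling the constant out of the direct integral over each piece gives
\[
\kappa\tp\Lambda_{\whG}\simeq\bigoplus_{m=1}^{\infty}\bigl(\dim(\kappa)m\bigr)\cdot\int_{\IrrG\rest_m}^{\oplus}\zeta\md\mu_{\IrrG\rest_m}(\zeta).
\]
Since $\dim(\kappa)$ is fixed, the map $m\mapsto\dim(\kappa)m$ is injective, so grouping by the value $k=\dim(\kappa)m$ and invoking the uniqueness in \cite[Theorem 8.6.6]{DixmierC} I would identify $\E^{\dim(\kappa)m}_{\kappa\stp\Lambda_{\whG}}=\IrrG\rest_m$ (up to $\mu$-null sets) and $\E^{k}_{\kappa\stp\Lambda_{\whG}}=\emptyset$ for every $k$ not of this form; in particular every multiplicity is finite, which gives $\E^{\aleph_0}_{\kappa\stp\Lambda_{\whG}}=\emptyset$.

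Finally I would compute the sum. Because the $\E^{k}_{\kappa\stp\Lambda_{\whG}}$ are pairwise disjoint and $\E^{\aleph_0}_{\kappa\stp\Lambda_{\whG}}=\emptyset$, we have $\chi_{\F^n_{\kappa\stp\Lambda_{\whG}}}=\sum_{k\ge n}\chi_{\E^k_{\kappa\stp\Lambda_{\whG}}}$, whence $\sum_{n=1}^{\infty}\chi_{\F^n_{\kappa\stp\Lambda_{\whG}}}=\sum_{k}k\,\chi_{\E^k_{\kappa\stp\Lambda_{\whG}}}$; that is, $\sum_n\chi_{\F^n_{\kappa\stp\Lambda_{\whG}}}$ is exactly the multiplicity function. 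By the second paragraph this multiplicity function equals $\dim(\kappa)\dim(\cdot)$, and writing $\dim(\cdot)=\sum_{n=1}^{\infty}n\,\chi_{\IrrG\rest_n}$ yields the claimed identity. The only point needing care is the passage from a direct integral with varying multiplicity to the canonical Dixmier decomposition together with the bookkeeping of the two nested sums; but this is routine measure theory once the equivalence $\kappa\tp\Lambda_{\whG}\simeq\dim(\kappa)\cdot\Lambda_{\whG}$ is in hand, so there is no genuine analytic obstacle here.
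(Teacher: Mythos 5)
Your proposal is correct and follows essentially the same route as the paper: both rest on the absorbing property $\kappa\tp\Lambda_{\whG}\simeq\dim(\kappa)\cdot\Lambda_{\whG}$ from \cite[Proposition 3.14]{SoltanWoronowicz} together with the decomposition $\Lambda_{\whG}\simeq\int_{\IrrG}^{\oplus}(\dim\pi)\cdot\pi\md\mu(\pi)$ from Theorem \ref{PlancherelL}, then read off $\E^{n}_{\kappa\stp\Lambda_{\whG}}=\IrrG\rest_{n/\dim(\kappa)}$ (empty when $n/\dim(\kappa)\notin\NN$) and sum. The only cosmetic difference is that you pass through a single direct integral with multiplicity function $\dim(\kappa)\dim(\cdot)$ and invoke Dixmier's uniqueness explicitly, whereas the paper writes the direct sum over the pieces $\IrrG\rest_n$ at once; the content is identical.
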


Recall that $\Omega\rest_n=\{\pi\in\Omega\,|\, \dim(\pi)=n\}$ for any $\Omega\subseteq \IrrG$.

\begin{proof}
By Theorem \ref{PlancherelL} we have the following equivalence
\[
\Lambda_{\whG}\simeq \int_{\IrrG}^{\oplus} (\dim \pi)\cdot \pi \md\mu(\pi),
\]
so
\[
\E^n_{\Lambda_{\whG}}=\IrrG\rest_n,\quad
\F^n_{\Lambda_{\whG}}=\bigcup_{k=n}^{\infty}\IrrG\rest_k
\quad(n\in\NN),\quad 
\E^{\aleph_0}_{\Lambda_{\whG}}=\emptyset. 
\]
Regular representation $W^{\GG}$ corresponds to the representation $\Lambda_{\whG}\in\Mor(\CGDu,\mc{K}(\LL^2(\GG)))$. We know that $\kappa\tp \Lambda_{\whG}\simeq \dim(\kappa)\cdot \Lambda_{\whG}$ (regular representation is right absorbing \cite[Proposition 3.14]{SoltanWoronowicz}). Therefore
\[
\kappa \tp \Lambda_{\whG}\simeq \dim(\kappa) \cdot\Lambda_{\whG}\simeq
\bigoplus_{n=1}^{\infty} \dim(\kappa)n\cdot \int_{\IrrG\rest_n}^{\oplus}
\pi\md\mu_{\IrrG\rest_n}(\pi)
\]
and $\E^n_{\kappa\stp \Lambda_{\whG}}=\IrrG\rest_{\tfrac{n}{\dim(\kappa)}}$ if $\tfrac{n}{\dim(\kappa)}\in \NN$ and $\E^n_{\kappa\stp \Lambda_{\whG}}=\emptyset$ otherwise. Consequently
\[
\sum_{i=1}^{\infty}\chi_{\F^i_{\kappa\stp\Lambda_{\whG}}}=
\sum_{n=1}^{\infty} n \chi_{\E^n_{\kappa\stp \Lambda_{\whG}}}=\dim(\kappa)
\sum_{n=1}^{\infty} n\chi_{\IrrG\rest_n}
\]
and $\E^{\aleph_0}_{\kappa\stp\Lambda_{\whG}}=\emptyset$.
\end{proof}

Recall that for a measurable subset $\Omega\subseteq\IrrG$ we have introduced the integral representation $\sigma_\Omega=\int_{\Omega}^{\oplus} \pi\md\mu_{\Omega}(\pi)$.

\begin{lemma}\label{lemat3}
For any finite dimensional nondegenerate representation $\kappa\colon\CGDu\rightarrow\B(\msf{H}_\kappa)$ we have
\[
1\le \sum_{n=1}^{\infty}\chi_{\F^n_{\kappa\stp\sigma_{\IrrG}}}
\le
\dim(\kappa)\sum_{n=1}^{\infty} n\chi_{\IrrG\rest_n}
\]
almost everywhere and $\E^{\aleph_0}_{\kappa\stp\sigma_{\IrrG}}=\emptyset$.
\end{lemma}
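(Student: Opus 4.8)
The plan is to read both inequalities as statements about the \emph{multiplicity function} of the representation $\kappa\tp\sigma_{\IrrG}$. Recall that for any representation $\rho$ whose measure class is $\ll[\mu]$, with decomposition $\rho\simeq\bigoplus_{n\in\NN\cup\{\aleph_0\}} n\cdot\sigma_{\E^n_\rho}$, the function $\sum_{n=1}^{\infty}\chi_{\F^n_\rho}=\sum_{n\in\NN\cup\{\aleph_0\}} n\,\chi_{\E^n_\rho}$ records, at each $\zeta$, the multiplicity with which $\zeta$ occurs in $\rho$ (the sets $\E^k$ being disjoint). So the claim is exactly that this multiplicity function $m_{\kappa\stp\sigma_{\IrrG}}$ satisfies $1\le m_{\kappa\stp\sigma_{\IrrG}}\le \dim(\kappa)\,\dim$ $\mu$-almost everywhere, together with finiteness of the multiplicity.

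For the lower bound I would argue that the measure class of $\kappa\tp\sigma_{\IrrG}$ is full. Taking $\Omega=\IrrG$ in the computation of Section \ref{secdecomp} gives $[\mu_{\kappa\stp\sigma_{\IrrG}}]=[\mu_\kappa]\tp[\chi_{\IrrG}\,\mu]=[\mu_\kappa]\tp[\mu]=[\mu_{\kappa\stp\Lambda_{\whG}}]=[\mu]$, so the support $\bigcup_n \E^n_{\kappa\stp\sigma_{\IrrG}}$ is $\mu$-conull and hence $m_{\kappa\stp\sigma_{\IrrG}}=\sum_n\chi_{\F^n_{\kappa\stp\sigma_{\IrrG}}}\ge 1$ $\mu$-a.e. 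For the upper bound I would exhibit $\kappa\tp\sigma_{\IrrG}$ as a subrepresentation of $\dim(\kappa)\cdot\Lambda_{\whG}$. First, $\sigma_{\IrrG}\subseteq\Lambda_{\whG}$: using the equivalence $\Lambda_{\whG}\simeq\int_{\IrrG}^{\oplus}\pi\otimes\I_{\ov{\msf{H}_\pi}}\md\mu(\pi)$ from point $3)$ of Theorem \ref{PlancherelL}, a measurable field of rank-one projections $(\I_{\msf{H}_\pi}\otimes p_\pi)_{\pi}$ lies in the commutant, and its range carries a subrepresentation unitarily equivalent to $\int_{\IrrG}^{\oplus}\pi\md\mu(\pi)=\sigma_{\IrrG}$. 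If $P\in\Lambda_{\whG}(\CGDu)'$ is the corresponding projection, then $\I_{\msf{H}_\kappa}\otimes P$ commutes with $(\kappa\otimes\Lambda_{\whG})\Delta^u_{\whG}(\CGDu)=\kappa\tp\Lambda_{\whG}(\CGDu)$, so $\msf{H}_\kappa\otimes\operatorname{ran}P$ reduces $\kappa\tp\Lambda_{\whG}$ and the restriction is $\kappa\tp\sigma_{\IrrG}$; thus $\kappa\tp\sigma_{\IrrG}\subseteq\kappa\tp\Lambda_{\whG}$. Since the regular representation is right absorbing, $\kappa\tp\Lambda_{\whG}\simeq\dim(\kappa)\cdot\Lambda_{\whG}$ (\cite[Proposition 3.14]{SoltanWoronowicz}), whose decomposition is $\int_{\IrrG}^{\oplus}\dim(\kappa)\dim(\pi)\cdot\pi\md\mu(\pi)$ and whose multiplicity function is precisely $\dim(\kappa)\sum_{n=1}^{\infty} n\,\chi_{\IrrG\rest_n}$. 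Because multiplicity functions add along direct sums, they are monotone under containment; applying this to $\kappa\tp\sigma_{\IrrG}\subseteq\dim(\kappa)\cdot\Lambda_{\whG}$ yields $m_{\kappa\stp\sigma_{\IrrG}}\le \dim(\kappa)\sum_n n\,\chi_{\IrrG\rest_n}$ $\mu$-a.e. In particular this multiplicity is finite a.e., so $\mu(\E^{\aleph_0}_{\kappa\stp\sigma_{\IrrG}})=0$, and since the sets $\E^k$ are only determined up to $\mu$-null modification we may take $\E^{\aleph_0}_{\kappa\stp\sigma_{\IrrG}}=\emptyset$.

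The main obstacle is the book-keeping for multiplicity theory in the direct-integral (non-type-finite) setting: I need the facts that multiplicity functions are additive under direct sums and therefore monotone under containment, and that the subrepresentation $\operatorname{ran}P$ of $\Lambda_{\whG}$ is genuinely $\sigma_{\IrrG}$ rather than merely quasi-equivalent to it (quasi-equivalence would wash out exactly the multiplicities I am trying to control). These I would pin down via the canonical decomposition $\rho\simeq\bigoplus_n n\cdot\sigma_{\E^n_\rho}$ of \cite[Theorem 8.6.6]{DixmierC} and the multiplicity theory of \cite[\S 8.6]{DixmierC}, together with Proposition \ref{stw10} on quasi-containment, keeping all measure classes $\ll[\mu]$ so that the multiplicity functions are comparable on a common conull set.
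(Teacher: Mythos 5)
Your proof is correct and takes essentially the same route as the paper: the lower bound amounts to the quasi-equivalence $\kappa\tp\sigma_{\IrrG}\approx_q\kappa\tp\Lambda_{\whG}\simeq\dim(\kappa)\cdot\Lambda_{\whG}$ (your measure-class computation is exactly this, via Proposition \ref{stw26} and Proposition \ref{stw10}), and the upper bound comes from the containment $\kappa\tp\sigma_{\IrrG}\subseteq\kappa\tp\Lambda_{\whG}$ combined with the multiplicity function of $\kappa\tp\Lambda_{\whG}$ and monotonicity of multiplicities under subrepresentations. The only additions beyond the paper's argument are your explicit rank-one-projection construction of the embedding $\sigma_{\IrrG}\subseteq\Lambda_{\whG}$ (which the paper simply asserts) and the explicit null-set argument for $\E^{\aleph_0}_{\kappa\stp\sigma_{\IrrG}}=\emptyset$; both are sound.
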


\begin{proof}
The first inequality follows from the quasi-equivalence
\[
\Lambda_{\whG}\simeq \int_{\IrrG}^{\oplus} (\dim \pi)\cdot \pi \md\mu(\pi)\approx_q
\int_{\IrrG}^{\oplus} \pi \md\mu(\pi)=\sigma_{\IrrG}
\]
which gives $\kappa\tp \sigma_{\IrrG}\approx_q \kappa\tp \Lambda_{\whG}\simeq \dim(\kappa)\cdot \Lambda_{\whG}$ and $\F^1_{\kappa\stp \sigma_{\IrrG}}=\F^1_{\kappa\stp \Lambda_{\whG}}=\IrrG$. Next, $\sigma_{\IrrG}$ is (equivalent to) a subrepresentation of $\Lambda_{\whG}$, so that $\kappa\tp\sigma_{\IrrG}$ is (equivalent to) a subrepresentation of $\kappa\tp\Lambda_{\whG}$. It follows that
\[
\sum_{n=1}^{\infty}\chi_{\F^n_{\kappa\stp\sigma_{\IrrG}}}\le
\sum_{n=1}^{\infty}\chi_{\F^n_{\kappa\stp\Lambda_{\whG}}}=
\dim(\kappa)\sum_{n=1}^{\infty} n \chi_{\IrrG\rest_n}.
\]
and $\E^{\aleph_0}_{\kappa\stp\sigma_{\IrrG}}=\emptyset$.
\end{proof}

We get a corollary for an arbitrary $\Omega\subseteq \IrrG$:

\begin{lemma}\label{lemat8}
Let $\kappa\colon\CGDu\rightarrow\B(\msf{H}_\kappa)$ be a finite dimensional nondegenerate representation, and $\Omega\subseteq\Omega'\subseteq\IrrG$ be measurable subsets. We have
\[
\sum_{n=1}^{\infty}n \chi_{\E^{n}_{\kappa\stp\sigma_{\Omega}}}=
\sum_{n=1}^{\infty}\chi_{\F^n_{\kappa\stp\sigma_{\Omega}}}\le
\sum_{n=1}^{\infty}\chi_{\F^n_{\kappa\stp\sigma_{\Omega'}}}\le
\dim(\kappa) \sum_{n=1}^{\infty} n \chi_{\IrrG\rest_n}
\]
almost everywhere and $\E^{\aleph_0}_{\kappa\stp\sigma_{\Omega}}=\emptyset$.
\end{lemma}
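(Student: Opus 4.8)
The plan is to deduce everything from the single principle already invoked in the proof of Lemma \ref{lemat3}: the function $\sum_{n=1}^{\infty}\chi_{\F^n_{\rho}}$ attached to a representation $\rho$ whose measure class is absolutely continuous with respect to $\mu$ is exactly its pointwise multiplicity function, and this function is monotone under passing to subrepresentations. Indeed, by the very definitions a point $\zeta$ lies in $\F^n_{\rho}$ precisely when the multiplicity of $\zeta$ in the canonical decomposition of $\rho$ is at least $n$, so $\sum_{n=1}^{\infty}\chi_{\F^n_{\rho}}(\zeta)$ equals that multiplicity (taking the value $+\infty$ on $\E^{\aleph_0}_{\rho}$). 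If $\rho\subseteq\rho'$, the multiplicity of each $\zeta$ can only grow, which is the monotonicity I shall use.

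First I would record the relevant subrepresentation chain. Since $\Omega\subseteq\Omega'$, splitting $\Omega'=\Omega\sqcup(\Omega'\setminus\Omega)$ gives $\sigma_{\Omega'}\simeq\sigma_\Omega\oplus\sigma_{\Omega'\setminus\Omega}$, and likewise $\sigma_{\IrrG}\simeq\sigma_{\Omega'}\oplus\sigma_{\IrrG\setminus\Omega'}$; hence $\sigma_\Omega\subseteq\sigma_{\Omega'}\subseteq\sigma_{\IrrG}$. Tensoring on the left by $\kappa$ respects direct sums, so $\kappa\tp\sigma_\Omega\subseteq\kappa\tp\sigma_{\Omega'}\subseteq\kappa\tp\sigma_{\IrrG}$. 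Applying the monotonicity principle along this chain yields
\[
\sum_{n=1}^{\infty}\chi_{\F^n_{\kappa\stp\sigma_{\Omega}}}\le
\sum_{n=1}^{\infty}\chi_{\F^n_{\kappa\stp\sigma_{\Omega'}}}\le
\sum_{n=1}^{\infty}\chi_{\F^n_{\kappa\stp\sigma_{\IrrG}}}
\]
almost everywhere, and Lemma \ref{lemat3} bounds the right-hand side by $\dim(\kappa)\sum_{n=1}^{\infty}n\chi_{\IrrG\rest_n}$. This delivers both inequalities of the statement at once.

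Next I would extract $\E^{\aleph_0}_{\kappa\stp\sigma_\Omega}=\emptyset$: the displayed bound shows that $\sum_{n=1}^{\infty}\chi_{\F^n_{\kappa\stp\sigma_{\Omega}}}$ is finite $\mu$-almost everywhere, since every $\zeta$ has finite dimension and hence $\dim(\kappa)\sum_n n\chi_{\IrrG\rest_n}$ is finite a.e.; by the multiplicity interpretation this means the multiplicity of almost every $\zeta$ is finite, i.e.\ $\E^{\aleph_0}_{\kappa\stp\sigma_\Omega}$ is null. Finally, once $\E^{\aleph_0}_{\kappa\stp\sigma_\Omega}$ is null, the first equality is the purely combinatorial identity $\sum_{n=1}^{\infty}\chi_{\F^n_{\kappa\stp\sigma_{\Omega}}}(\zeta)=\sum_{n=1}^{\infty}n\,\chi_{\E^n_{\kappa\stp\sigma_{\Omega}}}(\zeta)$, valid at every $\zeta$ whose multiplicity is a finite number $m$ (both sides then equal $m$). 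The only place the argument could go wrong is the treatment of the $\aleph_0$-multiplicity component, because on $\E^{\aleph_0}$ the left-hand side $\sum_n n\chi_{\E^n}$ would read $0$ while $\sum_n\chi_{\F^n}$ would read $+\infty$; this is exactly why the equality is stated together with, and is contingent on, the vanishing of $\E^{\aleph_0}_{\kappa\stp\sigma_\Omega}$ established just above, so the whole lemma is genuinely a corollary of Lemma \ref{lemat3} plus careful bookkeeping.
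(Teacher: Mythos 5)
Your proposal is correct and follows exactly the route the paper intends: the paper states Lemma \ref{lemat8} as an unproved corollary of Lemma \ref{lemat3}, and the implicit argument is precisely yours — the multiplicity function $\sum_{n}\chi_{\F^n}$ is monotone under passing to subrepresentations (the same principle used explicitly in the proof of Lemma \ref{lemat3}), applied to the chain $\kappa\tp\sigma_\Omega\subseteq\kappa\tp\sigma_{\Omega'}\subseteq\kappa\tp\sigma_{\IrrG}$ obtained by tensoring the decompositions $\sigma_{\Omega'}\simeq\sigma_\Omega\oplus\sigma_{\Omega'\setminus\Omega}$ and $\sigma_{\IrrG}\simeq\sigma_{\Omega'}\oplus\sigma_{\IrrG\setminus\Omega'}$ with $\kappa$. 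Your bookkeeping of the $\E^{\aleph_0}$ component and the resulting identity $\sum_n n\chi_{\E^n}=\sum_n\chi_{\F^n}$ is also the correct way to finish.
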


\subsection{Functions $\varpi$}
Let us fix a nondegenerate finite dimensional representation $\kappa\colon \CGDu\rightarrow \B(\msf{H}_\kappa)$ and a measurable subset $\Omega\subseteq\IrrG$. As usual, associate with $\Omega$ the integral representation $\sigma_\Omega$. Having lemmas from the previous subsection we can make use of Proposition \ref{treq}: take as the first representation
\[
\pi=\bigoplus_{i=1}^{\infty}
\int_{\F^i_{\kappa\stp\sigma_{\Omega}}}^{\oplus} \zeta \md\mu_{\F^i_{\kappa\stp\sigma_{\Omega}}}(\zeta)
\]
and define the second one to be
\[
\gamma=\int_{\Omega}^{\oplus} \kappa\tp x\md\mu_{\Omega}(x).
\]
Due to Lemma \ref{lemat8} we know that $\sum_{i=1}^{\infty} \chi_{\F^i_{\kappa\stp\sigma_{\Omega}}}<+\infty$. Sets $\F^i_{\kappa\stp\sigma_{\Omega}}$ correspond to the sets $X_i$ from Proposition \ref{treq}. Proposition \ref{treq} gives us a measurable function
\[
\varpi^{\kappa,\Omega,\mu}\colon \F^1_{\kappa\stp \sigma_\Omega}\rightarrow \RR_{>0}
\]
satisfying
\[
\begin{split}
&\quad\;
\Psi^{\int_\Omega}(\mc{O}\cdot\,\mc{O}^*)=\sum_{m=1}^{\infty} (\int_{\Omega\restriction_m}\otimes \Tr_m) (\mc{O} \cdot \mc{O}^*)\\
&=
\sum_{n=1}^{\infty} 
(\int_{(\bigsqcup_{i=1}^{\infty} \F^i_{\kappa\stp \sigma_\Omega})\restriction_n} \cdot \md\tilde{\mu}_{(\bigsqcup_{i=1}^{\infty} \F^i_{\kappa\stp \sigma_\Omega})\restriction_n} \otimes \Tr_n)=
(\Psi^{\int_{\bigsqcup_{i=1}^{\infty} \F^i_{\kappa\stp \sigma_\Omega}}})^{\sim}
\end{split}
\]
where $\tilde{\mu}_{(\bigsqcup_{i=1}^{\infty} \F^i_{\kappa\stp \sigma_\Omega})\restriction_n}$ is the measure $\mu_{(\bigsqcup_{i=1}^{\infty} \F^i_{\kappa\stp \sigma_\Omega})\restriction_n}$ multiplied by $\varpi^{\kappa,\Omega,\mu}$ (in the natural sense). Note that now on $\Omega$ we consider the field of representations $(\kappa\tp x)_{x\in \Omega}$ and $\mc{O}$ is a unitary operator given by appropriate compositions. \\
In the examples section we describe the function $\varpi^{\kappa,\Omega,\mu}$ in the case of quantum groups which are compact, classical, dual to classical or constructed via certain bicrossed product.\\
We would like to find the function $\varpi^{\kappa,\Omega,\mu}$ or at least derive some bounds for it. Our first result in this direction tells us how the function $\varpi$ changes once we change the Plancherel measure:

\begin{lemma}\label{lematvarpi}
Let $\Omega\subseteq\IrrG$ be a measurable subset, $\kappa\colon\CGDu\rightarrow\B(\msf{H}_\kappa)$ a nondegenerate finite dimensional representation and $f\colon \IrrG\rightarrow \RR_{>0}$ a measurable function. Take another Plancherel measure $\mu'=f\mu$. Assume that $c_1\le f\le c_2$ on $\Omega$. Then
\[
c_1\,\varpi^{\kappa,\Omega,\mu}\le f\varpi^{\kappa,\Omega,\mu'}\le c_2\,\varpi^{\kappa,\Omega,\mu}
\]
almost everywhere on $\F^1_{\kappa\stp\sigma_{\Omega}}$.
\end{lemma}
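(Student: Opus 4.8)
The plan is to compare the defining identity of Proposition \ref{treq} for the two Plancherel measures $\mu$ and $\mu'=f\mu$, exploiting that they are equivalent. First I would record the preliminary reductions. By Proposition \ref{stw14}, $\mu'=f\mu$ (with $D'_\pi=\sqrt{f}D_\pi$, $E'_\pi=\sqrt{f}E_\pi$ and the \emph{same} fields of Hilbert spaces and representations) is again a Plancherel measure, so $\varpi^{\kappa,\Omega,\mu'}$ is well defined. Since $\mu\sim\mu'$, the representation $\sigma_\Omega$ computed with $\mu$ is unitarily equivalent to $\sigma_\Omega$ computed with $\mu'$ via the canonical unitary of Proposition \ref{stw14}; hence $\kappa\tp\sigma_\Omega$ agrees for the two measures up to unitary equivalence. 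As the multiplicity decomposition is an invariant of the unitary equivalence class together with the measure class $[\mu]=[\mu']$, the sets $\E^n_{\kappa\stp\sigma_\Omega}$ and $\F^n_{\kappa\stp\sigma_\Omega}$ coincide almost everywhere for both measures. Write $X=\bigsqcup_{i=1}^{\infty}\F^i_{\kappa\stp\sigma_\Omega}$.

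Next I would choose the intertwiner for $\mu'$ compatibly. Let $\mc{O}$ be the intertwiner defining $\varpi^{\kappa,\Omega,\mu}$, between $\pi=\bigoplus_i\sigma_{\F^i}$ and $\gamma=\int_\Omega^\oplus \kappa\tp x\,\md\mu_\Omega(x)$, and let $V_X,V_\Omega$ be the canonical unitaries connecting the $\mu$- and $\mu'$-direct integrals on the $X$- and $\Omega$-sides. Then $\mc{O}'=V_\Omega\,\mc{O}\,V_X^*$ is a unitary intertwiner for the $\mu'$-versions, and since $\varpi^{\kappa,\Omega,\mu'}$ does not depend on the chosen intertwiner I may use $\mc{O}'$. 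The computational crux is that the canonical unitaries preserve the fibres of a decomposable operator (as in the proof of Proposition \ref{stw14}): for $T=\int_X^\oplus T_\zeta\,\md\mu(\zeta)\in\pi(\CGDu)''_+$ the operator $\mc{O}'(V_X T V_X^*)\mc{O}'^*=V_\Omega(\mc{O}T\mc{O}^*)V_\Omega^*$ has exactly the same fibres $S_x$ over $\Omega$ as $\mc{O}T\mc{O}^*$. Applying the integral form of Proposition \ref{treq} for $\mu$ and for $\mu'$, and using $\md\mu'=f\md\mu$ on both sides, gives for every such $T$ the two identities
\[
\int_\Omega \Tr_x(S_x)\,\md\mu(x)=\int_X \Tr(T_\zeta)\,\varpi^{\kappa,\Omega,\mu}(\zeta)\,\md\mu(\zeta),\qquad
\int_\Omega f(x)\Tr_x(S_x)\,\md\mu(x)=\int_X \Tr(T_\zeta)\,f(\zeta)\,\varpi^{\kappa,\Omega,\mu'}(\zeta)\,\md\mu(\zeta).
\]

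Finally I would invoke the hypothesis $c_1\le f\le c_2$ on $\Omega$. As $T\ge 0$ forces $\Tr_x(S_x)\ge 0$, the factor $f(x)$ in the second identity is sandwiched between $c_1$ and $c_2$, so substituting the first identity yields, for all positive $T$,
\[
c_1\int_X \Tr(T_\zeta)\,\varpi^{\kappa,\Omega,\mu}\,\md\mu
\le \int_X \Tr(T_\zeta)\,f\,\varpi^{\kappa,\Omega,\mu'}\,\md\mu
\le c_2\int_X \Tr(T_\zeta)\,\varpi^{\kappa,\Omega,\mu}\,\md\mu.
\]
Testing against $T=\chi_B\I$ for measurable $B\subseteq X$ of finite measure, exactly as in the uniqueness argument of Proposition \ref{treq}, gives the pointwise inequalities $c_1\,\varpi^{\kappa,\Omega,\mu}\le f\,\varpi^{\kappa,\Omega,\mu'}\le c_2\,\varpi^{\kappa,\Omega,\mu}$ almost everywhere on $X$, hence on $\F^1_{\kappa\stp\sigma_\Omega}$ after restricting along the fibres. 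I expect the main obstacle to be the bookkeeping of the two measures: ensuring that the bound on $f$, assumed only over $\Omega$, controls the $x$-integral, while the product $f\,\varpi^{\kappa,\Omega,\mu'}$ — living on $\F^1_{\kappa\stp\sigma_\Omega}$, where $f$ need not be bounded — is kept intact as a single quantity; and verifying that the canonical unitaries genuinely leave the fibres $S_x$ unchanged, so that the same fibre data feeds both the $\mu$- and the $\mu'$-computation.
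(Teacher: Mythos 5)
Your proposal is correct and follows essentially the same route as the paper: both evaluate the defining trace identity of Proposition \ref{treq} for $\mu$ and $\mu'=f\mu$ on operators with identical fibres, use $\md\mu'=f\md\mu$ together with $c_1\le f\le c_2$ on $\Omega$ to sandwich the two sides, and conclude almost everywhere on $\F^1_{\kappa\stp\sigma_\Omega}$ by arbitrariness of the test operator. The only point to keep straight is that your test operators $\chi_B\I$ must have coherent fibres across the copies $\F^i_{\kappa\stp\sigma_\Omega}$ (i.e.\ $B$ should be induced by a single measurable subset of $\F^1_{\kappa\stp\sigma_\Omega}$, since only such indicator operators lie in $\pi(\CGDu)''$), which is exactly what the uniqueness argument of Proposition \ref{treq} that you cite does.
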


\begin{proof}
For $a\in \CGDu^{**}_+$ let
\[
\begin{split}
\overline{(\int_{\bigsqcup_{i=1}^{\infty}\F^i_{\kappa\stp\sigma_{\Omega}}}^{\oplus} x 
\md\mu_{\bigsqcup_{i=1}^{\infty}\F^i_{\kappa\stp\sigma_{\Omega}}}(x)})(a)&=
\int_{\bigsqcup_{i=1}^{\infty}\F^i_{\kappa\stp\sigma_{\Omega}}}^{\oplus} S_x
\md\mu_{\bigsqcup_{i=1}^{\infty}\F^i_{\kappa\stp\sigma_{\Omega}}}(x),\\
\overline{(\int_{\Omega}^{\oplus} \kappa\tp x 
\md\mu_{\Omega}(x)})(a)&=
\int_{\Omega}^{\oplus} T_x
\md\mu_{\Omega}(x)
\end{split}
\]
for certain almost everywhere positive operators $S_x,T_x$. We also have
\[
\begin{split}
\overline{(\int_{\bigsqcup_{i=1}^{\infty}\F^i_{\kappa\stp\sigma_{\Omega}}}^{\oplus} x 
\md\mu'_{\bigsqcup_{i=1}^{\infty}\F^i_{\kappa\stp\sigma_{\Omega}}}(x)})(a)&=
\int_{\bigsqcup_{i=1}^{\infty}\F^i_{\kappa\stp\sigma_{\Omega}}}^{\oplus} S_x
\md\mu'_{\bigsqcup_{i=1}^{\infty}\F^i_{\kappa\stp\sigma_{\Omega}}}(x),\\
\overline{(\int_{\Omega}^{\oplus} \kappa\tp x 
\md\mu'_{\Omega}(x)})(a)&=
\int_{\Omega}^{\oplus} T_x
\md\mu'_{\Omega}(x)
\end{split}
\]
and
\[
\begin{split}
&\quad\;\int_{\IrrG} \bigl( \sum_{i=1}^{\infty} \chi_{\F^i_{\kappa\stp\sigma_\Omega}}(\zeta)\bigr) \Tr_\zeta(S_\zeta)
\varpi^{\kappa,\Omega,\mu'}(\zeta) f(\zeta)\md\mu(\zeta)\\
&=
\int_{\IrrG} \bigl( \sum_{i=1}^{\infty} \chi_{\F^i_{\kappa\stp\sigma_\Omega}}(\zeta)\bigr) \Tr_\zeta(S_\zeta)
\varpi^{\kappa,\Omega,\mu'}(\zeta) \md\mu'(\zeta)\\
&=
\int_\Omega \Tr_{\kappa\stp x}(T_x) \md\mu'_\Omega(x)\le
c_2\int_\Omega \Tr_{\kappa\stp x}(T_x) \md\mu_\Omega(x)\\
&=
c_2\int_{\IrrG} \bigl( \sum_{i=1}^{\infty} \chi_{\F^i_{\kappa\stp\sigma_\Omega}}(\zeta)\bigr) \Tr_\zeta(S_\zeta)
\varpi^{\kappa,\Omega,\mu}(\zeta) \md\mu(\zeta).
\end{split}
\]
Since $a$ is arbitrary, $S_\zeta$ also is 
arbitrary on $\F^1_{\kappa\stp\sigma_\Omega}$ and we get $f\varpi^{\kappa,\Omega,\mu'}\le c_2\varpi^{\kappa,\Omega,\mu}$. The second inequality can be derived in a similar fashion.
\end{proof}

The next proposition gives us an upper bound on $\varpi^{\kappa,\Omega,\mu}$ and is crucial for further reasoning.

\begin{proposition}\label{stw4}
Let $\kappa\colon\CGDu\rightarrow\B(\msf{H}_\kappa)$ be such a nondegenerate representation that $\kappa\lec \Lambda_{\whG}$ and $\dim(\kappa)<+\infty$. Let $\Omega\subseteq\IrrG$ be a measurable subset. The inequality
\[
\bigl( \sum_{i=1}^{\infty} \chi_{\F^i_{\kappa\stp\sigma_{\Omega}}}\bigr) (\pi)\varpi^{\kappa,\Omega,\mu}(\pi)\le
(\sup_{\pi'\in \Omega}\|E_{\pi'}^2\|)
\dim(\kappa) \|E_\pi^2\|^{-1}
\]
holds for almost every $\pi\in \IrrG$.
\end{proposition}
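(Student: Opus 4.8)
The plan is to prove the estimate in a ``tested'' integral form and then localise. Set $C=\sup_{\pi'\in\Omega}\|E^2_{\pi'}\|$ and assume $C<+\infty$ (otherwise there is nothing to prove); write $\gamma=\kappa\tp\sigma_\Omega$, $\rho=\kappa\tp\sigma_{\IrrG}$, and let $m=\sum_{n=1}^{\infty}\chi_{\F^n_{\kappa\stp\sigma_{\Omega}}}$ be the multiplicity function on $\IrrG$, and write $\overline{\cdot}$ for the normal extension of a representation to $\CGDu^{**}$. I claim it suffices to establish
\begin{equation*}
\int_{\IrrG} m(\pi)\,\varpi^{\kappa,\Omega,\mu}(\pi)\,\Tr_\pi(T_\pi)\md\mu(\pi)\le C\dim(\kappa)\int_{\IrrG}\Tr_\pi(T_\pi E_\pi^{-2})\md\mu(\pi)\tag{$\ast$}
\end{equation*}
for every measurable field of positive operators $(T_\pi)_{\pi\in\IrrG}$ with both sides finite. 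Granting $(\ast)$, the proposition follows by measurable selection: if the set where $m\,\varpi^{\kappa,\Omega,\mu}>C\dim(\kappa)\|E^2_\bullet\|^{-1}$ had positive measure, I would intersect it with a set of finite measure on which $\dim$ and $\|E^{-2}_\bullet\|$ are bounded, choose there a measurable field of unit vectors $v_\pi$ with $\ismaa{v_\pi}{E_\pi^{-2}v_\pi}\le(1+\eps)\|E_\pi^2\|^{-1}$ (possible since $\|E_\pi^2\|^{-1}$ is the smallest eigenvalue of $E_\pi^{-2}$), and test $(\ast)$ against $T_\pi=\sima{v_\pi}{v_\pi}$ on that set and $0$ elsewhere; letting $\eps\to 0$ gives a contradiction.

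To produce the left-hand side of $(\ast)$ I would use the defining property of $\varpi^{\kappa,\Omega,\mu}$. The map $\overline{\sigma_{\IrrG}}\colon\CGDu^{**}\to\Dec(\int_{\IrrG}^{\oplus}\msf{H}_\pi\md\mu(\pi))$ is onto (its range is $\sigma_{\IrrG}(\CGDu)''=\Dec$), so for a field $T$ of finite support as above I may pick $a\in\CGDu^{**}_+$ with $\pi(a)=T_\pi$ for $\mu$-almost every $\pi$ (finite support keeps the relevant weights finite, cf.~Lemma \ref{lemat36}). Since $\gamma\simeq\bigoplus_{n}\sigma_{\F^n_{\kappa\stp\sigma_{\Omega}}}$ and the fibre over $\zeta$ of the corresponding operator equals $\zeta(a)$ on each multiplicity copy, Proposition \ref{treq} (through which $\varpi^{\kappa,\Omega,\mu}$ was defined) gives
\begin{equation*}
\Psi^{\int_\Omega}\bigl(\overline{\gamma}(a)\bigr)=\int_{\IrrG} m(\pi)\,\varpi^{\kappa,\Omega,\mu}(\pi)\,\Tr_\pi(\pi(a))\md\mu(\pi),
\end{equation*}
whose right-hand side is exactly the left-hand side of $(\ast)$; here $\Psi^{\int_\Omega}=\int_\Omega\Tr_x\md\mu_\Omega(x)$ is the integral weight of $\gamma$.

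The heart of the argument is then a chain of three inequalities. First, $\|E^2_x\|\le C$ on $\Omega$ gives $\I_\kappa\otimes E_x^{-2}\ge C^{-1}\I$ there, hence
\begin{equation*}
\Psi^{\int_\Omega}\bigl(\overline{\gamma}(a)\bigr)\le C\int_\Omega \Tr_{\kappa\stp x}\bigl(\kappa\tp x(a)(\I_\kappa\otimes E_x^{-2})\bigr)\md\mu_\Omega(x).
\end{equation*}
The integral on the right is the value of the $E$-twisted weight $\Phi_E(S)=\int_{\IrrG}\Tr_{\kappa\stp\pi}(S_\pi(\I_\kappa\otimes E_\pi^{-2}))\md\mu(\pi)$ on $\rho(\CGDu)''$ at $\overline{\rho}(a)$ cut down by the diagonalisable projection $Q_\Omega=\int_{\IrrG}^{\oplus}\chi_\Omega(\pi)\I\md\mu(\pi)$, i.e.\ it equals $\Phi_E(Q_\Omega\overline{\rho}(a))$. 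As $Q_\Omega$ commutes with every decomposable operator and $\overline{\rho}(a)\ge 0$, we have $Q_\Omega\overline{\rho}(a)\le\overline{\rho}(a)$, so by positivity of $\Phi_E$ this is $\le\Phi_E(\overline{\rho}(a))$. Finally the second part of Lemma \ref{lemat7} (applicable since $\kappa\lec\Lambda_{\whG}$ and $\dim(\kappa)<+\infty$) identifies $\Phi_E(\overline{\rho}(a))=\dim(\kappa)\int_{\IrrG}\Tr_\pi(\pi(a)E_\pi^{-2})\md\mu(\pi)$. Stringing these together and recalling $\pi(a)=T_\pi$ yields $(\ast)$.

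The main obstacle I anticipate is this last identification: Lemma \ref{lemat7} is stated for $a=\lambda^u(\omega)$ with $\lambda(\omega)\in\mf{M}^+_{\hpsi}$, whereas here $a$ ranges over $\CGDu^{**}_+$. Both $\Phi_E\circ\overline{\rho}$ and $\dim(\kappa)\int_{\IrrG}\Tr_\pi(\overline{\sigma_{\IrrG}}(\cdot)_\pi E_\pi^{-2})\md\mu$ are normal weights on $\CGDu^{**}$ with the same kernel — indeed $\rho\approx_q\sigma_{\IrrG}$, because $\kappa\tp\Lambda_{\whG}\simeq\dim(\kappa)\cdot\Lambda_{\whG}$ and $\sigma_{\IrrG}\approx_q\Lambda_{\whG}$ — and they agree on the $\sigma$-weakly dense set $\{\lambda^u(\omega)\}$; the delicate point is to promote this to equality of the two weights by a normality/core argument. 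The measurable selection used in the first paragraph is routine but also deserves a careful word.
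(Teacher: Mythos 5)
Your chain of inequalities (i)--(iii) is sound (and is, in substance, the same chain the paper runs: the $\varpi$-identity, insertion of $(\I_\kappa\otimes E_\pi^{-2})(\I_\kappa\otimes E_\pi^{2})$, the bound by $\sup_{\Omega}\|E^2_{\bullet}\|$, enlargement of the integration domain by positivity, and Lemma \ref{lemat16}). The genuine gap is the step you yourself flagged, and it is not a routine ``normality/core argument''. You need the two normal weights $\Theta_1=\Phi_E\circ\overline{\rho}\circ(\overline{\sigma_{\IrrG}})^{-1}$ and $\Theta_2=\dim(\kappa)\int_{\IrrG}\Tr_\pi(\,\cdot_\pi\,E_\pi^{-2})\md\mu$ on $\Dec\bigl(\int_{\IrrG}^{\oplus}\msf{H}_\pi\md\mu(\pi)\bigr)$ to coincide, knowing only that they agree on the $\sigma$-weakly dense $*$-subalgebra spanned by the $\sigma_{\IrrG}(\lambda^u(\omega)^*\lambda^u(\nu))$. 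Normal semifinite weights are \emph{not} determined by their values on a $\sigma$-weakly dense $*$-subalgebra: they are merely $\sigma$-weakly lower semicontinuous, and lower semicontinuity gives bounds in the same direction for both $\Theta_1$ and $\Theta_2$ along any approximation of your test field $T$, so you cannot conclude $\Theta_1(T)\le\Theta_2(T)$ at the rank-one fields you need — which are precisely \emph{not} of the form $\sigma_{\IrrG}(\lambda^u(\omega)^*\lambda^u(\omega))$, so the gap is load-bearing. Dense agreement does suffice for \emph{traces} — that is exactly what Proposition \ref{treq} exploits via \cite[Proposition 3.15]{TakesakiII} — but $\Theta_1,\Theta_2$ are not tracial: the twist by $E_\pi^{-2}$ gives them the nontrivial modular group $T\mapsto E^{-2it}TE^{2it}$ (unless almost all $E_\pi$ are scalar). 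Repairing your route would require a Pedersen--Takesaki type argument (e.g.\ modular invariance of the agreement subalgebra), which is substantial extra work that the proposal does not contain.

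This is the trap the paper's proof is engineered to avoid: it applies the $\varpi$-identity and Lemma \ref{lemat16} only to honest \cst-algebra elements $\lambda^u(\omega)^*\lambda^u(\omega)$, reads the resulting estimate as boundedness (by $\sqrt{\dim(\kappa)\sup_{\pi'\in\Omega}\|E^2_{\pi'}\|}$) of the \emph{closed} operator $\int_{\IrrG}^{\oplus}\bigl(\sum_i\chi_{\F^i_{\kappa\stp\sigma_\Omega}}\bigr)^{1/2}(\varpi^{\kappa,\Omega,\mu})^{1/2}K_\pi\md\mu(\pi)$, $K_\pi(S)=SE_\pi$, on the dense subspace $\bigl\{\int^{\oplus}\pi(\lambda^u(\omega))E_\pi^{-1}\md\mu(\pi)\bigr\}$ (dense by Lemma \ref{lemat28} and unitarity of $\mc{Q}_R$), upgrades this to global boundedness via Lemma \ref{lemat24}, and then localises through the a.e.\ fibrewise operator inequality evaluated at the spectral projection of $E_\pi$ for its top eigenvalue — no bidual, no weight extension. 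Two smaller points: your identification of the fibre of $\overline{\sigma_{\F^n_{\kappa\stp\sigma_\Omega}}}(a)$ over $\zeta$ with ``$\zeta(a)$'' is false for $a\in\CGDu^{**}$ (point evaluations are not normal); what is true, and what you actually use, is that these fibres agree a.e.\ with those of $\overline{\sigma_{\IrrG}}(a)$, since normal extensions respect the decomposition $\sigma_{\IrrG}=\sigma_{\F^n_{\kappa\stp\sigma_\Omega}}\oplus\sigma_{\IrrG\setminus\F^n_{\kappa\stp\sigma_\Omega}}$. Also, in the localisation you should first truncate the bad set to $\{m\,\varpi^{\kappa,\Omega,\mu}\le N\}$ so that both sides of $(\ast)$ are finite; this is harmless because $m\,\varpi^{\kappa,\Omega,\mu}<+\infty$ a.e.\ by Lemma \ref{lemat8}.
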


\begin{proof}
It is enough to consider the case $\sup_{\pi\in\Omega}\|E^2_{\pi}\|<+\infty$. Define
\[
K_\pi\colon \HS(\msf{H}_\pi)\ni S \mapsto
S E_\pi\in\HS(\msf{H}_\pi)\quad(\pi\in\IrrG),
\]
it is a bounded positive operator. Moreover, consider the operator 
\[
\int_{\IrrG}^{\oplus}
 \bigl( \sum_{i=1}^{\infty} \chi_{\F^i_{\kappa\stp\sigma_{\Omega}}}\bigr) (\pi)^{\frac{1}{2}}
\varpi^{\kappa,\Omega,\mu}(\pi)^{\frac{1}{2}} K_\pi
\md\mu(\pi)
\]
with domain consisting of those $\int_{\IrrG}^{\oplus}T_\pi\md\mu(\pi)\in \int_{\IrrG}^{\oplus} \HS(\msf{H}_\pi)\md\mu(\pi)$ for which
\[
\int_{\IrrG}\bigl\|
 \bigl( \sum_{i=1}^{\infty} \chi_{\F^i_{\kappa\stp\sigma_{\Omega}}}\bigr) (\pi)^{\frac{1}{2}}
\varpi^{\kappa,\Omega,\mu}(\pi)^{\frac{1}{2}} K_\pi(T_\pi)
\bigr\|_{\HS(\msf{H}_\pi)}^{2}\md\mu(\pi)<+\infty.
\]
It is an unbounded positive self-adjoint operator (for the theory of direct integrals of unbounded operators we refer to \cite{Lance}). For $\lambda(\omega)\in\lambda(\LL^1_{\sharp}(\GG))\cap\mf{N}_{\hpsi}$ we have
\[
\begin{split}
&\quad\;
\int_{\IrrG}\bigl\|
 \bigl( \sum_{i=1}^{\infty} \chi_{\F^i_{\kappa\stp\sigma_{\Omega}}}\bigr) (\pi)^{\frac{1}{2}}
\varpi^{\kappa,\Omega,\mu}(\pi)^{\frac{1}{2}} K_\pi
\bigl( \pi(\lambda^u(\omega))E_\pi^{-1}\bigr)
\bigr\|_{\HS(\msf{H}_\pi)}^2\md\mu(\pi)\\
&=
\int_{\IrrG}
 \bigl( \sum_{i=1}^{\infty} \chi_{\F^i_{\kappa\stp\sigma_{\Omega}}}\bigr) (\pi)
\Tr_{\pi}\bigl(\pi(\lambda^u(\omega)^*\lambda^u(\omega)) 
\bigr)\varpi^{\kappa,\Omega,\mu}(\pi)
\md\mu(\pi)\\
&\overset{\star}{=}
\int_{\Omega} \Tr_{ \kappa\stp\pi} \bigl(
\kappa\tp\pi ( \lambda^u(\omega)^*\lambda^u(\omega))
\bigr)\md\mu(\pi)\\
&=
\int_{\Omega} \Tr_{ \kappa\stp\pi} \bigl(
\kappa\tp\pi ( \lambda^u(\omega)^*\lambda^u(\omega))
(\I_\kappa\otimes E_\pi^{-2})(\I_\kappa\otimes E_\pi^{2})
\bigr)\md\mu(\pi)\\
&\le
(\sup_{\pi\in \Omega}\|E_\pi^2\|)
\int_{\IrrG}\Tr_{ \kappa\stp\pi} \bigl(
\kappa\tp\pi ( \lambda^u(\omega)^*\lambda^u(\omega))
(\I_\kappa\otimes E_\pi^{-2})
\bigr)\md\mu(\pi)\\
&\overset{\star\star}{=}
(\sup_{\pi\in \Omega}\|E_\pi^2\|)
\dim(\kappa)\int_{\IrrG}
\Tr_{\pi} (\pi(\lambda^{u}(\omega)^* \lambda^{u}(\omega)) E_\pi^{-2})
\md\mu(\pi)\\
&=
\bigl\langle \int_{\IrrG}^{\oplus} \pi(\lambda^u(\omega)) E_\pi^{-1} \md\mu(\pi) \big|
(\sup_{\pi\in\Omega}\|E_\pi^2\|) \dim(\kappa)
\int_{\IrrG}^{\oplus} \pi(\lambda^u(\omega))E_\pi^{-1}\md\mu(\pi)
\bigr\rangle<+\infty,
\end{split}
\]
hence vector $\int_{\IrrG}^{\oplus} \pi(\lambda^u(\omega))E_\pi^{-1}\md\mu(\pi)$ belongs to the domain and we have an inequality
\[\begin{split}
&\quad\;
\big\|
\int_{\IrrG}^{\oplus}
 \bigl( \sum_{i=1}^{\infty} \chi_{\F^i_{\kappa\stp\sigma_{\Omega}}}\bigr) (\pi)^{\frac{1}{2}}
\varpi^{\kappa,\Omega,\mu}(\pi)^{\frac{1}{2}} K_\pi
\md\mu(\pi)
\int_{\IrrG}^{\oplus} \pi(\lambda^u(\omega))E_\pi^{-1}\md\mu(\pi)
\bigr\|^2\\
&\le
(\sup_{\pi\in\Omega}\|E_\pi^2\|) \dim(\kappa)
\bigl\|\int_{\IrrG}^{\oplus} \pi(\lambda^u(\omega))E_\pi^{-1}\md\mu(\pi)
\bigr\|^2.
\end{split}\]
In equation $\overset{\star}{=}$ we have used the definition of the function $\varpi^{\kappa,\Omega,\mu}$ and in $\overset{\star\star}{=}$ we have used Lemma \ref{lemat16}.\\
From Lemma \ref{lemat28} and unitarity of $\mc{Q}_R$ it follows that the subspace
\[
\bigl\{
\int_{\IrrG}^{\oplus}
\pi(\lambda^u(\omega)) E_\pi^{-1}\md\mu(\pi)\big|
\lambda(\omega)\in\lambda(\LL^1_{\sharp}(\GG))\cap\mf{N}_{\hpsi}\bigr\}.
\]
is dense in $\int_{\IrrG}^{\oplus}\HS(\msf{H}_\pi)\md\mu(\pi)$, therefore we can use Lemma \ref{lemat24}: the operator
\[
\int_{\IrrG}^{\oplus}
 \bigl( \sum_{i=1}^{\infty} \chi_{\F^i_{\kappa\stp\sigma_{\Omega}}}\bigr) (\pi)^{\frac{1}{2}}
\varpi^{\kappa,\Omega,\mu}(\pi)^{\frac{1}{2}} K_\pi
\md\mu(\pi)
\]
is bounded. The same reasoning gives
\[\begin{split}
&\quad\;
\bigl\langle \int_{\IrrG}^{\oplus} \pi(\lambda^u(\omega)) E_\pi^{-1} \md\mu(\pi) \big|
\int_{\IrrG}^{\oplus}
 \bigl( \sum_{i=1}^{\infty} \chi_{\F^i_{\kappa\stp\sigma_{\Omega}}}\bigr) (\pi)
\varpi^{\kappa,\Omega,\mu}(\pi) K_\pi^2
\md\mu(\pi)
\int_{\IrrG}^{\oplus} \pi(\lambda^u(\omega))E_\pi^{-1}\md\mu(\pi)
\bigr\rangle\\
&\le
\bigl\langle \int_{\IrrG}^{\oplus} \pi(\lambda^u(\omega)) E_\pi^{-1} \md\mu(\pi) \big|
(\sup_{\pi\in\Omega}\|E_\pi^2\|) \dim(\kappa)
\int_{\IrrG}^{\oplus} \pi(\lambda^u(\omega))E_\pi^{-1}\md\mu(\pi)
\bigr\rangle.
 \end{split}\]
Consequently, the operator
\[
\int_{\IrrG}^{\oplus}
\bigl(
(\sup_{\pi\in\Omega}\|E_\pi^2\|)\dim(\kappa)\I_\pi-
 \bigl( \sum_{i=1}^{\infty} \chi_{\F^i_{\kappa\stp\sigma_{\Omega}}}\bigr) (\pi)
\varpi^{\kappa,\Omega,\mu}(\pi) K_\pi^2\bigr)
\md\mu(\pi)
\]
is bounded and positive. In particular
\[
 \bigl( \sum_{i=1}^{\infty} \chi_{\F^i_{\kappa\stp\sigma_{\Omega}}}\bigr) (\pi)
\varpi^{\kappa,\Omega,\mu}(\pi) K_\pi^2\le
(\sup_{\pi'\in\Omega}\|E_{\pi'}^2\|)\dim(\kappa)\I_\pi
\]
for almost all $\pi\in \IrrG$ (as an inequality of operators on the Hilbert space $\HS(\msf{H}_\pi)$). Let $\xi_\pi\in\msf{H}_\pi$ be a eigenvector of $E_\pi$ with highest eigenvalue, and $P_\pi$ the projection onto $\CC \xi_\pi$. The previous inequality gives us
\[\begin{split}
&
 \bigl( \sum_{i=1}^{\infty} \chi_{\F^i_{\kappa\stp\sigma_{\Omega}}}\bigr) (\pi)
\varpi^{\kappa,\Omega,\mu}(\pi) \|E_\pi^2\|=
\bigl\langle
P_{\pi} \big|
 \bigl( \sum_{i=1}^{\infty} \chi_{\F^i_{\kappa\stp\sigma_{\Omega}}}\bigr) (\pi)
\varpi^{\kappa,\Omega,\mu}(\pi) K_\pi^2 (P_\pi)\bigr\rangle
\\
&\le
\bigl\langle P_\pi \big|
(\sup_{\pi'\in\Omega}\|E_{\pi'}^2\|)\dim(\kappa)\I_\pi(P_\pi)
\bigr\rangle=
(\sup_{\pi'\in\Omega}\|E_{\pi'}^2\|)\dim(\kappa).
\end{split}\]
\end{proof}

The next lemma allows us to compute $\varpi^{\kappa,\Omega,\mu}$ if the set $\Omega$ can be written as a disjoint sum of two sets. We will also use it in Section \ref{secconv} in order to define the operator $\mc{L}_\kappa$.

\begin{lemma}\label{lemat6}
Let $\kappa\colon\CGDu\rightarrow\B(\msf{H}_\kappa)$ be such a representation that $\kappa\lec \Lambda_{\whG}$ and $\dim(\kappa)<+\infty$. Let $\Omega_1,\Omega_2$ be two disjoint measurable subsets of $\IrrG$ and $\Omega=\Omega_1\cup\Omega_2$. Then the equality
\[
\varpi^{\kappa,\Omega_1,\mu}(\pi)
\sum_{i=1}^{\infty} \chi_{\F^i_{\kappa\stp\sigma_{\Omega_1}}} (\pi)
+
\varpi^{\kappa,\Omega_2,\mu}(\pi)
\sum_{i=1}^{\infty} \chi_{\F^i_{\kappa\stp\sigma_{\Omega_2}}}(\pi)
=
\varpi^{\kappa,\Omega,\mu}(\pi)
\sum_{i=1}^{\infty} \chi_{\F^i_{\kappa\stp\sigma_{\Omega}}}(\pi)
\]
holds for almost all $\pi\in\IrrG$.
\end{lemma}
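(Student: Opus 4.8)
The plan is to exploit the disjointness $\Omega=\Omega_1\sqcup\Omega_2$, which gives $\sigma_\Omega=\sigma_{\Omega_1}\oplus\sigma_{\Omega_2}$ and hence $\kappa\tp\sigma_\Omega=(\kappa\tp\sigma_{\Omega_1})\oplus(\kappa\tp\sigma_{\Omega_2})$. Writing $\rho_\Theta=\kappa\tp\sigma_\Theta$ for $\Theta\in\{\Omega,\Omega_1,\Omega_2\}$ and denoting by $\Psi^{\int_\Theta}$ the integral weight of the integral representation $\int_\Theta^{\oplus}\kappa\tp x\md\mu_\Theta(x)$, the direct-sum formula for integral weights from Section \ref{secintrep} yields $\Psi^{\int_\Omega}=\Psi^{\int_{\Omega_1}}\oplus\Psi^{\int_{\Omega_2}}$. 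For each $\Theta$, Proposition \ref{treq} supplies the unitary $\mc{O}_\Theta$ intertwining $\bigoplus_n\sigma_{\F^n_{\kappa\stp\sigma_\Theta}}$ with $\rho_\Theta$, together with the identity $\Psi^{\int_\Theta}(\mc{O}_\Theta\cdot\mc{O}_\Theta^*)=(\Psi^{\int_{V_\Theta}})^{\sim}$, where $V_\Theta=\bigsqcup_i\F^i_{\kappa\stp\sigma_\Theta}$ and the tilde denotes rescaling of the $\F$-side trace by $\varpi^{\kappa,\Theta,\mu}$. The idea is to evaluate all three weights on one common element and compare the resulting scalars.

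I would fix a Borel set $B\subseteq\IrrG$ and take the central projection $z_B\in\CGDu^{**}$ whose normal extension $\ov{\theta}(z_B)$ to any representation $\theta$ of $\CGDu$ is the spectral projection of $\theta$ over $B$; for type I separable $\CGDu$ such projections are furnished by the central decomposition over the standard Borel space $\IrrG$ (\cite{DixmierC}). On an irreducible $\zeta$ one has $\ov{\zeta}(z_B)=\chi_B(\zeta)\I_{\msf{H}_\zeta}$, so extending fiberwise over the direct integral gives $\mc{O}_\Theta^*\ov{\rho_\Theta}(z_B)\mc{O}_\Theta=\bigoplus_n\int_{\F^n_{\kappa\stp\sigma_\Theta}}^{\oplus}\chi_B(\zeta)\I_{\msf{H}_\zeta}\md\mu_{\F^n_{\kappa\stp\sigma_\Theta}}(\zeta)$. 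Feeding this into the weight identity of Proposition \ref{treq} and using $\Tr_\zeta(\I_{\msf{H}_\zeta})=\dim(\zeta)$ I obtain, as an equality in $[0,+\infty]$,
\[
\Psi^{\int_\Theta}(\ov{\rho_\Theta}(z_B))=\int_B \dim(\zeta)\Bigl(\sum_{i=1}^{\infty}\chi_{\F^i_{\kappa\stp\sigma_\Theta}}(\zeta)\Bigr)\varpi^{\kappa,\Theta,\mu}(\zeta)\md\mu(\zeta).
\]

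Because $z_B$ is a single element of $\CGDu^{**}$ and $\rho_\Omega=\rho_{\Omega_1}\oplus\rho_{\Omega_2}$ on the nose, the normal extension respects this decomposition, $\ov{\rho_\Omega}(z_B)=\ov{\rho_{\Omega_1}}(z_B)\oplus\ov{\rho_{\Omega_2}}(z_B)$, and the additivity of the integral weight over the direct sum turns the $\Theta=\Omega$ instance into the sum of the $\Theta=\Omega_1$ and $\Theta=\Omega_2$ instances. Thus for every Borel $B\subseteq\IrrG$,
\[
\int_B\dim\Bigl(\sum_i\chi_{\F^i_{\kappa\stp\sigma_\Omega}}\Bigr)\varpi^{\kappa,\Omega,\mu}\md\mu=\int_B\dim\Bigl(\sum_i\chi_{\F^i_{\kappa\stp\sigma_{\Omega_1}}}\Bigr)\varpi^{\kappa,\Omega_1,\mu}\md\mu+\int_B\dim\Bigl(\sum_i\chi_{\F^i_{\kappa\stp\sigma_{\Omega_2}}}\Bigr)\varpi^{\kappa,\Omega_2,\mu}\md\mu,
\]
and since $\dim>0$ everywhere, equality of these integrals over all $B$ forces the $\mu$-a.e.\ equality of the three integrands, which is precisely the claimed identity. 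The main obstacle I anticipate is the clean handling of the common central projection $z_B$: one must be sure that a single element of $\CGDu^{**}$ simultaneously cuts out the over-$B$ part of all three representations (equivalently, that the spectral projection over $B$ of a direct sum is the direct sum of the spectral projections), and that after transport through each $\mc{O}_\Theta$ it becomes exactly the diagonalisable operator $\bigoplus_n\int^{\oplus}\chi_B\I\md\mu_{\F^n_{\kappa\stp\sigma_\Theta}}$ to which Proposition \ref{treq} applies verbatim; granting this, the dimension factor and the measure-theoretic cancellation are routine.
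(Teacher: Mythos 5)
Your proposal is, in substance, the paper's own proof. Both arguments rest on the same skeleton: transport a single element of $\CGDu^{**}$ through the three unitaries attached to $\Omega_1,\Omega_2,\Omega$ by Proposition \ref{treq}, use the disjointness $\Omega=\Omega_1\sqcup\Omega_2$ (via the canonical projections onto the two summands, which is your direct-sum decomposition) to split the $\Omega$-side weight into the sum of the $\Omega_1$- and $\Omega_2$-side weights, and conclude by letting the test object vary. The only difference is the choice of test objects: the paper takes an arbitrary bounded positive decomposable operator $T=\int_{\IrrG}^{\oplus}T_\pi\md\mu(\pi)$, realized as $\ov{\theta_0}(a)$ for $\theta_0=\int_{\IrrG}^{\oplus}\pi\md\mu(\pi)$ (possible because $\theta_0(\CGDu)''=\Dec(\int_{\IrrG}^{\oplus}\msf{H}_\pi\md\mu(\pi))$), and concludes from arbitrariness of $T$; you take only the indicator operators attached to Borel sets $B$ and conclude from arbitrariness of $B$. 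These are interchangeable.

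The one step you should not leave as stated is the identification $\mc{O}_\Theta^*\,\ov{\rho_\Theta}(z_B)\,\mc{O}_\Theta=\bigoplus_{n=1}^{\infty}\int_{\F^n_{\kappa\stp\sigma_\Theta}}^{\oplus}\chi_B(\zeta)\I_{\msf{H}_\zeta}\md\mu(\zeta)$, which you justify by computing $\ov{\zeta}(z_B)$ on irreducibles and ``extending fiberwise over the direct integral''. That inference is not valid for elements of the bidual: for $a\in\CGDu^{**}$ the field $x\mapsto\ov{\pi_x}(a)$ need not even be measurable, and $\ov{\pi_X}(a)$ is not computed fiber by fiber — this is exactly the phenomenon behind the possibly strict inclusion $\pi_X(\CGDu)''\subseteq\int_X^{\oplus}\pi_x(\CGDu)''\md\mu_X(x)$ recorded at the start of Section \ref{secintrep}. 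The repair is cheap and removes any appeal to central decompositions: choose $z_B\in\CGDu^{**}_+$ with $\ov{\theta_0}(z_B)=\int_{\IrrG}^{\oplus}\chi_B(\pi)\I_{\msf{H}_\pi}\md\mu(\pi)$ (surjectivity of $\ov{\theta_0}$ onto $\Dec$ again). Each representation $\bigoplus_n\sigma_{\F^n_{\kappa\stp\sigma_\Theta}}$ is a subrepresentation of $\aleph_0\cdot\theta_0$ whose carrier projection is diagonalisable, hence lies in $(\aleph_0\cdot\theta_0)(\CGDu)'=(\aleph_0\cdot\ov{\theta_0})(\CGDu^{**})'$; compressing $\aleph_0\cdot\ov{\theta_0}(z_B)$ by that projection gives precisely the desired diagonalisable indicator operator, and since unitary equivalence of representations intertwines their normal extensions, the identity transports through $\mc{O}_\Theta$. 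This is the same fact the paper's proof uses implicitly when it rewrites $\Tr_{\kappa\stp x}(\mc{O}_k\ov{\pi_k}(a)\mc{O}_k^*|_{\kappa\stp x})$ in terms of the fibers $T_\zeta$; with it in place, your argument is complete and coincides step for step with the paper's.
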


\begin{proof}
We have unitary intertwiners:
\[
\mc{O}_k\colon \bigoplus_{i=1}^{\infty}\int^{\oplus}_{\F^{i}_{\kappa\stp\sigma_{\Omega_k}}}\msf{H}_{\zeta} \md\mu_{\F^i_{\kappa\stp\sigma_{\Omega_k}}}(\zeta)\rightarrow
\int_{\Omega_k}^{\oplus} \msf{H}_{\kappa\stp x}\md\mu_{\Omega_k}(x)
\quad(k\in\{1,2\})
\]
and
\[
\mc{O}\colon \bigoplus_{i=1}^{\infty}\int^{\oplus}_{\F^{i}_{\kappa\stp\sigma_{\Omega}}}\msf{H}_{\zeta} \md\mu_{\F^i_{\kappa\stp\sigma_{\Omega}}}(\zeta)\rightarrow
\int_{\Omega}^{\oplus} \msf{H}_{\kappa\stp x}\md\mu_{\Omega}(x).
\]
Define
\[
\pi_k=\bigoplus_{i=1}^{\infty}\int_{\F^{i}_{\kappa\stp\sigma_{\Omega_k}}}^{\oplus}\zeta 
\md\mu_{\F^i_{\kappa\stp\sigma_{\Omega_k}}}(\zeta)\quad(k\in\{1,2\}),\quad
\pi=\bigoplus_{i=1}^{\infty}\int_{\F^{i}_{\kappa\stp\sigma_{\Omega}}}^{\oplus}\zeta 
\md\mu_{\F^i_{\kappa\stp\sigma_{\Omega}}}(\zeta)
\]
and let $P_k$ be the canonical projection
\[
P_k\colon \int_{\Omega}^{\oplus} \msf{H}_{\kappa\stp x}\md\mu_{\Omega}(x)\rightarrow 
\int_{\Omega_k}^{\oplus} \msf{H}_{\kappa\stp x}\md\mu_{\Omega_k}(x)\quad
(k\in\{1,2\})
\]
corresponding to the inclusion $\Omega_k\subseteq \Omega$. For $a\in \CGDu$ we have
\[
P_1^*\bigl(\int_{\Omega_1}^{\oplus}\kappa\tp x \md\mu_{\Omega_1}(x)
\bigr)(a)P_1+
P_2^*\bigl(\int_{\Omega_2}^{\oplus}\kappa\tp x \md\mu_{\Omega_2}(x)
\bigr)(a)P_2=
\bigl(\int_{\Omega}^{\oplus}\kappa\tp x \md\mu_{\Omega}(x)\bigr)(a)
\]
and moreover
\[
\begin{split}
&\quad\;P_1^*\mc{O}_1\bigl(\bigoplus_{i=1}^{\infty}\int_{\F^{i}_{\kappa\stp\sigma_{\Omega_1}}}^{\oplus}\pi 
\md\mu_{\F^i_{\kappa\stp\sigma_{\Omega_1}}}(\pi)
\bigr)(a)\mc{O}_1^*P_1+
P_2^*\mc{O}_2\bigl(\bigoplus_{i=1}^{\infty}\int_{\F^{i}_{\kappa\stp\sigma_{\Omega_2}}}^{\oplus}\pi 
\md\mu_{\F^i_{\kappa\stp\sigma_{\Omega_2}}}(\pi)
\bigr)(a)\mc{O}_2^*P_2\\
&=
\mc{O}\bigl(\bigoplus_{i=1}^{\infty}\int_{\F^{i}_{\kappa\stp\sigma_{\Omega}}}^{\oplus}\pi 
\md\mu_{\F^i_{\kappa\stp\sigma_{\Omega}}}(\pi)
\bigr)(a)\mc{O}^*.
\end{split}
\]
From the $\swot$ continuity in $a$ we get this equality for an arbitrary $a\in \CGDu^{**}$ (and extended representations). For an arbitrary bounded $0\le T=\int_{\IrrG}^{\oplus}T_\pi\md\mu(\pi)$ let $a\in \CGDu^{**}_+$ be such an element that $(\int_{\IrrG}^{\oplus}\pi\md\mu(\pi))^{-}(a)=T$ (recall that bar denotes the extension of the representation to the bidual). Such an element $a$ exists since $(\int_{\IrrG}^{\oplus} \pi\md\mu(\pi))(\CGDu)''=\Dec(\int_{\IrrG}^{\oplus} \msf{H}_\pi \md\mu(\pi))$ \cite[Proposition 8.6.4, A 80]{DixmierC}. We have
\[
\begin{split}
&\quad\;
\int_{\IrrG} \bigl(
\Tr_\pi(T_\pi) \varpi^{\kappa,\Omega_1,\mu}(\pi)
\sum_{i=1}^{\infty} \chi_{\F^i_{\kappa\stp\sigma_{\Omega_1}}} (\pi)
+
\Tr_\pi(T_\pi)\varpi^{\kappa,\Omega_2,\mu}(\pi)
\sum_{i=1}^{\infty} \chi_{\F^i_{\kappa\stp\sigma_{\Omega_2}}}
(\pi)
\bigr)\md\mu(\pi)\\
&=
\int_{\Omega_1} \Tr_{\kappa\stp x}(\mc{O}_1\ov{\pi_1}(a)\mc{O}^*_1|_{\kappa\stp x} ) \md\mu_{\Omega_1}(x)
+
\int_{\Omega_2} \Tr_{\kappa\stp x}(\mc{O}_2\ov{\pi_2}(a)\mc{O}^*_2|_{\kappa\stp x}) \md\mu_{\Omega_2}(x)\\
&=
\int_{\Omega} \Tr_{\kappa\stp x}(\mc{O}\ov{\pi}(a)\mc{O}^*|_{\kappa\stp x}) \md\mu_{\Omega}(x)\\
&=
\int_{\IrrG}
\Tr_\pi(T_\pi) \varpi^{\kappa,\Omega,\mu}(\pi)
\sum_{i=1}^{\infty}\chi_{\F^i_{\kappa\stp\sigma_{\Omega}}}
(\pi) \md\mu(\pi).
\end{split}
\]
Since $T$ was arbitrary, we get the claim.
\end{proof}

\section{Square integrable integral characters}\label{secsquare}
Recall that we assume that $\whG$ is second countable and $\GG$ is a type I locally compact quantum group with finite dimensional irreducible representations. Fix any Plancherel measure for $\GG$.\\
In this section we will exhibit relations between: on the one hand, conditions similar to $\chi^{\int}(\int_\Omega^{\oplus}\pi\md\mu_{\Omega}(\pi))\in\mf{N}_\psi$, and on the other hand, $\int_\Omega \Tr(E^2_\bullet)\md\mu<+\infty$. \\
We will frequently use the following orthogonality relations (\cite[Lemma 2.1.2, Lemma 2.1.4, Theorem 2.1.5]{Caspers}):

\begin{proposition}\label{Casp}
Let $\xi,\xi',\eta,\eta'\in\int_{\IrrG}^{\oplus} \msf{H}_\pi \md\mu(\pi)$ be square integrable vector fields and let $E=\int_{\IrrG}^{\oplus} E_\pi \md\mu(\pi), D=\int_{\IrrG}^{\oplus} D_\pi\md\mu(\pi)$.
\begin{enumerate}[label=\arabic*)]
\item
Assume that $\eta,\eta'\in\Dom(E)$ and fields $(\xi_\pi\otimes \ov{E_\pi \eta_\pi})_{\pi\in \IrrG}, (\xi'_\pi\otimes \ov{E_\pi \eta'_\pi})_{\pi\in \IrrG}$ are square integrable. Then, there exist \swot-convergent integrals
\[
\int_{\IrrG} (\id\otimes\omega_{\xi_\pi,\eta_\pi})U^{\pi} \md\mu(\pi),\quad
\int_{\IrrG} (\id\otimes\omega_{\xi'_\pi,\eta'_\pi})U^{\pi} \md\mu(\pi)
\in\mf{N}_{\psi}\subseteq\Linf.
\]
Moreover, we have the following orthogonality relation:
\[
\begin{split}
\psi\bigl(\bigl(
\int_{\IrrG} (\id\otimes\omega_{\xi_\pi,\eta_\pi})U^{\pi} \md\mu(\pi)
\bigr)^*&\bigl(
\int_{\IrrG} (\id\otimes\omega_{\xi'_\pi,\eta'_\pi})U^{\pi} \md\mu(\pi)
\bigr)\bigr)\\
&=
\int_{\IrrG}\is{\xi'_\pi}{\xi_\pi}
\is{E_\pi \eta_\pi}{E_\pi \eta'_\pi}\md\mu(\pi).
\end{split}
\]
\item
If $\eta,\eta'\in\Dom(D)$ and fields $(\xi_\pi\otimes \ov{D_\pi \eta_\pi})_{\pi\in \IrrG}, (\xi'_\pi\otimes \ov{D_\pi \eta'_\pi})_{\pi\in \IrrG}$ are square integrable, then there exist \swot-convergent integrals
\[
\int_{\IrrG} (\id\otimes\omega_{\xi_\pi,\eta_\pi})(U^{\pi *}) \md\mu(\pi),\quad
\int_{\IrrG} (\id\otimes\omega_{\xi'_\pi,\eta'_\pi})(U^{\pi *}) \md\mu(\pi)
\in\mf{N}_{\vp}\subseteq\Linf.
\]
We have also
\[
\begin{split}
\vp\bigl(\bigl(
\int_{\IrrG} (\id\otimes\omega_{\xi_\pi,\eta_\pi})(U^{\pi *}) \md\mu(\pi)
\bigr)^*&\bigl(
\int_{\IrrG} (\id\otimes\omega_{\xi'_\pi,\eta'_\pi})(U^{\pi *}) \md\mu(\pi)
\bigr)\bigr)\\
&=
\int_{\IrrG}\is{\xi'_\pi}{\xi_\pi}
\is{D_\pi \eta_\pi}{D_\pi \eta'_\pi}\md\mu(\pi).
\end{split}
\]
\end{enumerate}
\end{proposition}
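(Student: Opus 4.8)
The plan is to reduce the orthogonality relation to the isometry of the Plancherel transform, by identifying the GNS image $\Lvps(x)$ of the integrated matrix coefficient $x$ with an explicit vector field built from $\xi_\pi$ and $E_\pi\eta_\pi$ in $\int_{\IrrG}^{\oplus}\HS(\msf{H}_\pi)\md\mu(\pi)$.

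\emph{Reduction.} Abbreviate $x=\int_{\IrrG}(\id\otimes\omega_{\xi_\pi,\eta_\pi})U^{\pi}\md\mu(\pi)$ and likewise $x'$ from the primed fields. Since $\psi(a^*b)=\ismaa{\Lvps(a)}{\Lvps(b)}$ for $a,b\in\mf{N}_\psi$, it suffices to show $x,x'\in\mf{N}_\psi$ and that the Plancherel unitary $\mc{Q}_R\hat{J}J$ of Theorem \ref{PlancherelR} carries $\Lvps(x)$ to the field $\pi\mapsto \xi_\pi\otimes\ov{E_\pi\eta_\pi}$ (the exact placement of the conjugate being fixed by the conventions $\ismaa{\ov a}{\ov b}=\ismaa{b}{a}$ and sesquilinearity of $\omega_{\xi,\eta}$). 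Granting this, the relation is a direct computation of the inner product of two such fields in $\int_{\IrrG}^{\oplus}\HS(\msf{H}_\pi)\md\mu(\pi)$: evaluating $\Tr(S^*T)$ on rank-one operators and using $\ismaa{\ov a}{\ov b}=\ismaa{b}{a}$ produces exactly $\int_{\IrrG}\ismaa{\xi'_\pi}{\xi_\pi}\ismaa{E_\pi\eta_\pi}{E_\pi\eta'_\pi}\md\mu(\pi)$.

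\emph{The identification (the crux).} The matrix coefficients $(\id\otimes\omega)U^{\pi}\in\Linf$ are second-leg slices of $\WW$, whereas the Plancherel data $(\mu,E_\pi)$ of Theorem \ref{PlancherelR} are phrased through first-leg slices $(\alpha\otimes\id)U^{\pi}=\pi(\lambda^{u}(\alpha))$ and the weight $\hpsi$ of $\whG$. The bridge between the two is the $\GG$--$\whG$ duality, and I would make it explicit first on elementary fields: take $\xi=\chi_\Omega\xi_0,\ \eta=\chi_\Omega\eta_0$ supported on a set $\Omega$ where $\|E_\pi^2\|$ is bounded, with $\xi_0,\eta_0$ from a fundamental sequence, so that all integrals converge and the relevant traces are finite. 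On such fields one transfers the second-leg slice into first-leg data using the commutation relations in point $3)$ of Theorem \ref{PlancherelR} (which record how $\lambda(\omega)=(\omega\otimes\id)\mrW$ and $(\omega\otimes\id)\chi(\mrV)$ act on the two legs of $\HS(\msf{H}_\pi)$), while the operator $E_\pi$ is produced by the defining formula \eqref{eq42} for $\hpsi$ together with the $\LL^2$-identity of Lemma \ref{lemat16}. This pins down $\mc{Q}_R\hat{J}J\Lvps(x)$ on a core of vector fields.

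\emph{Well-definedness, extension, and part 2).} For general square-integrable $\xi,\eta$ the hypothesis that $(\xi_\pi\otimes\ov{E_\pi\eta_\pi})_{\pi}$ be square integrable is precisely the statement that the target fields converge in $\int_{\IrrG}^{\oplus}\HS(\msf{H}_\pi)\md\mu(\pi)$; approximating by elementary fields as above, the isometry already established forces the matrix coefficients to be $\|\cdot\|_\psi$-Cauchy and the defining integral to converge in the \swot-topology, whence closedness of $\Lvps$ gives $x\in\mf{N}_\psi$ with the asserted image. Part $2)$ is proved in the same way after replacing $\WW$ by $\mrW^*$ (so $U^{\pi}$ by $U^{\pi*}$), $\psi$ by $\vp$, $E_\pi$ by $D_\pi$, and $\mc{Q}_R$ by the transform $\mc{Q}_L$ of Theorem \ref{PlancherelL}; the passage between the two parts is implemented by the unitary antipode via $\psi\circ R=\vp$. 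I expect the main obstacle to be the identification step: correctly bridging the second-leg matrix coefficients seen by $\psi$ with the first-leg $\whG$-Plancherel data, while keeping track of the conjugate-space conventions and controlling the unbounded operator $E$ (domain and core issues) and the measurability of all fields involved.
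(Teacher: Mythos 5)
First, a point of reference: the paper does not prove Proposition \ref{Casp} at all --- it is imported from Caspers' thesis (Lemma 2.1.2, Lemma 2.1.4 and Theorem 2.1.5 of \cite{Caspers}), so there is no internal proof to match; the closest internal argument is Proposition \ref{stw8}, which establishes the identification you are aiming at, but only under the standing \emph{hypothesis} that the integrated coefficient $x=\int_{\IrrG}(\id\otimes\omega_{\xi_\pi,\eta_\pi})U^{\pi}\md\mu(\pi)$ already lies in $\mf{N}_{\psi}$. Judged as a reconstruction, your architecture is the right one: the integrals converge even absolutely in norm (since $\int_{\IrrG}\|\xi_\pi\|\,\|\eta_\pi\|\md\mu\le\|\xi\|\,\|\eta\|$ by Cauchy--Schwarz), the orthogonality relation is indeed an isometry statement for an identification of $\Lambda_{\psi}(x)$ with a rank-one field in $\int_{\IrrG}^{\oplus}\HS(\msf{H}_\pi)\md\mu(\pi)$, and the extension step via $\ssot\times\|\cdot\|$-closedness of $\Lambda_{\psi}$ is sound once the elementary case is in place.

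The genuine gap sits exactly at the step you call the crux, and it is not merely technical. The tools you invoke there --- the commutation relations of point 3) of Theorem \ref{PlancherelR}, formula \eqref{eq42}, and Lemma \ref{lemat16} --- are all statements about the \emph{dual} side: they describe how $\lambda(\omega)\in\Linfd$ and the commutant $\Linfd'$ act on $\int_{\IrrG}^{\oplus}\HS(\msf{H}_\pi)\md\mu(\pi)$, and they compute the dual weight $\hpsi$ on elements of $\Linfd$. None of them can touch $\psi$ or $\Lambda_{\psi}$, because $\Linf$ is a third von Neumann algebra on $\LdG$, distinct from $\Linfd$ and $\Linfd'$, about which those intertwining relations are silent. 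The bridge actually needed, which your sketch never names, is the duality characterization of GNS vectors: $\ismaa{\Lambda_{\psi}(a)}{\xi_R(\nu)}=\nu(a^*)$ for $a\in\mf{N}_{\psi}$, $\nu\in\mc{I}_R$ (Lemma \ref{lemat25}), together with its converse, the Kustermans--Vaes dual-weight theorem applied to $\whG$ via biduality: if the normal functional $\theta=\int_{\IrrG}\omega_{\xi_\pi,\eta_\pi}\circ\pi\,\md\mu(\pi)$ satisfies $|\theta(y^*)|\le M\|\Lambda_{\hpsi}(y)\|$ on $\mf{N}_{\hpsi}$, then $x\in\mf{N}_{\psi}$ with prescribed GNS vector. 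That bound --- obtained from point 2) of Theorem \ref{PlancherelR} plus a Hilbert--Schmidt Cauchy--Schwarz estimate, which is precisely where $E_\pi$ and your square-integrability hypothesis enter --- is what proves membership in $\mf{N}_{\psi}$, and the identification of $\mc{Q}_R\Lambda_{\psi}(x)$ then follows by pairing against the dense family $\hat{J}J\Lambda_{\hpsi}(\lambda(\alpha))$; this is exactly the mechanism of equation \eqref{eq19} in the paper's proof of Proposition \ref{stw8}. Two further slips: by Proposition \ref{stw8} the unitary carrying $\Lambda_{\psi}(x)$ to the rank-one field is $\mc{Q}_R$ itself, not $\mc{Q}_R\hat{J}J$ (the latter is the one adapted to $\Lambda_{\hpsi}\circ\lambda$); and the antipode shortcut for part 2) does not work as stated, since $(R\otimes\hat{R}^{u})\WW=\WW$ shows that $R$ maps slices of $\WW$ to slices of $\WW$ through the conjugate representation, so it never produces coefficients of $U^{\pi*}$, and comparing $(\mu,E_\pi)$ with the conjugated data is the content of Propositions \ref{stw12}, \ref{stw13} and \ref{stw5}, the last of which the paper deduces \emph{from} Proposition \ref{Casp} --- so that route is circular. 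Part 2) should instead be run in parallel with $\mc{Q}_L$, $\hvp$ and $D_\pi$, as you also suggest.
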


\subsection{Approach via sesquilinear form}
In the next subsection it will turn out that for a subset $\Omega\subseteq\IrrG$, the condition $\int_\Omega \Tr(E^2_\bullet)\md\mu<+\infty$ is directly related to $\chi^{\int}(\int_{\Omega}^{\oplus} \pi\md\mu(\pi))\in\mf{N}_{\psi}$. Our aim now is to derive the following equality:
\[
\int_\Omega \Tr(E^2_\pi)\md\mu(\pi)=
\sum_{k=1}^{\infty}\psi(
(\int_{\IrrG} (\id\otimes\omega_{\xi^k_\pi})U^\pi \md\mu(\pi))^*
(\int_{\IrrG} (\id\otimes\omega_{\xi^k_\pi})U^\pi \md\mu(\pi))),
\]
which gives necessary and sufficient requirements for this condition to hold (precise formulation of this result can be found as Proposition \ref{stw6}). First, we will derive however a more general statement, working with unbounded sesquilinear forms. This approach owes much to the proof of \cite[Theorem 2.1.6]{Caspers}.\\
Fix a vector $\xi=\bigoplus_{k=1}^{\infty}\xi^k \in \bigoplus_{k=1}^{\infty} \int_{\IrrG}^{\oplus} \msf{H}_\pi \md\mu(\pi)$ such that for each $k\in \NN$ the vector field $(\xi^k_\pi)_{\pi\in\IrrG}$ is bounded and the function $\|\xi^k_\bullet\|$ is bounded from below on its support. For $\eta\in \bigoplus_{k\in\NN}\int_{\IrrG}^{\oplus} \msf{H}_\pi\md\mu(\pi)$ and $k\in\NN$ the integral $\int_{\IrrG} (\id\otimes\omega_{\xi^k_\pi,\eta^k_\pi})U^\pi \md\mu(\pi)$ is well defined: measurability is not a problem, moreover
\[
\begin{split}
&\quad\;\int_{\IrrG} \|(\id\otimes\omega_{\xi^k_\pi,\eta^k_\pi}U^\pi\| \md\mu(\pi)\le
\int_{\IrrG} \|\xi^k_\pi\| \|\eta^k_\pi\|\md\mu(\pi)\\
&\le
\bigl(
\int_{\IrrG} \|\xi^k_\pi\|^2\md\mu(\pi)\bigr)^{\frac{1}{2}}
\bigl(
\int_{\IrrG} \|\eta^k_\pi\|^2\md\mu(\pi)\bigr)^{\frac{1}{2}}<+\infty.
\end{split}
\]
It follows that we can consider a subspace
\[
\begin{split}
&\Dom(q)=\bigl\{
\eta=\bigoplus_{k=1}^{\infty} \int_{\IrrG}^{\oplus}\eta^k_\pi \md\mu(\pi)
\in \bigoplus_{k=1}^{\infty} \int_{\IrrG}^{\oplus} \msf{H}_\pi \md\mu(\pi) \big|\\
&\big|\sum_{k=1}^{\infty}
\psi(
(\int_{\IrrG} (\id\otimes\omega_{\xi^k_\pi,\eta^k_\pi})U^\pi \md\mu(\pi))^*
(\int_{\IrrG} (\id\otimes\omega_{\xi^k_\pi,\eta^k_\pi})U^\pi \md\mu(\pi)))<+\infty\bigr\}.
\end{split}
\]
and define an (unbounded) sesquilinear form $q$ with the domain $\Dom(q)$, acting as follows:
\[
q(\eta,\eta')=
\sum_{k=1}^{\infty}
\psi(
(\int_{\IrrG} (\id\otimes\omega_{\xi^k_\pi,\eta^k_\pi})U^\pi \md\mu(\pi))^*
(\int_{\IrrG} (\id\otimes\omega_{\xi^k_\pi,{\eta'}^k_\pi})U^\pi \md\mu(\pi)))
\quad(\eta,\eta'\in\Dom(q)).
\]
It is clear that for $\eta,\eta'\in \Dom(q)$ the series in the definition of $q(\eta,\eta')$ is convergent. Note that $q$ depends on the choice of vector $\xi$.

\begin{theorem}\label{tw5}
Let $\bigoplus_{k\in\NN}\xi^k$ be a vector as above and let $q$ be an unbounded sesquilinear form associated with it. It is densely defined, closed and positive. Moreover $\Dom(q)=\Dom(\bigoplus_{k\in\NN} \int_{\IrrG}^{\oplus} \|\xi^k_\pi\| E_\pi\md\mu(\pi))$ and
\[
q(\eta,\eta')=\bigl\langle \bigl(\bigoplus_{k\in\NN} \int_{\IrrG}^{\oplus} \|\xi^k_\pi\| E_\pi\md\mu(\pi)\bigr)\, \eta\big|
\bigl(\bigoplus_{k\in\NN} \int_{\IrrG}^{\oplus} \|\xi^k_\pi\| E_\pi\md\mu(\pi)\bigr)\,\eta'\bigr\rangle
\]
for $\eta,\eta'\in\Dom(q)$.
\end{theorem}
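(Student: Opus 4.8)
The plan is to identify the positive self-adjoint operator
\[
A=\bigoplus_{k\in\NN}\int_{\IrrG}^{\oplus}\|\xi^k_\pi\|\,E_\pi\md\mu(\pi)
\]
and to show that $q$ is exactly the closed quadratic form $\eta\mapsto\|A\eta\|^2$ associated with $A$. Since $A$ is the direct sum over $k$ of the decomposable positive self-adjoint operators $B_k=\int_{\IrrG}^{\oplus}\|\xi^k_\pi\|E_\pi\md\mu(\pi)$, and since $q$ splits as a sum over $k$ of the terms $\psi(T_k^*T_k)$ with $T_k=\int_{\IrrG}(\id\otimes\omega_{\xi^k_\pi,\eta^k_\pi})U^\pi\md\mu(\pi)$, the statement reduces to the single-$k$ assertion $\psi(T_k^*T_k)=\int_{\IrrG}\|\xi^k_\pi\|^2\|E_\pi\eta^k_\pi\|^2\md\mu(\pi)$, read as an equality in $[0,+\infty]$, together with the polarised identity for $q(\eta,\eta')$.

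First I would treat the ``nice'' vectors. For $\eta^k\in\Dom(E)$ with $(\xi^k_\pi\otimes\ov{E_\pi\eta^k_\pi})_\pi$ square integrable, Proposition \ref{Casp}(1) applies verbatim (with both slots equal to $\xi^k$) and gives $T_k\in\mf{N}_\psi$ together with $\psi(T_k^*T_k)=\int_{\IrrG}\|\xi^k_\pi\|^2\|E_\pi\eta^k_\pi\|^2\md\mu=\|B_k\eta^k\|^2$; here the hypothesis that $\|\xi^k_\bullet\|$ is bounded from below on its support is exactly what forces $\eta^k$, restricted to $\supp\xi^k$ (the only part of $\eta^k$ that $T_k$ sees), into $\Dom(E)$. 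Taking for $\eta^k$ the spectral truncations $P^{(N)}_\pi\eta^k_\pi$, where $P^{(N)}_\pi$ is the spectral projection of $E_\pi$ onto $[0,N]$, with only finitely many nonzero $k$, produces a core for $A$ on which Proposition \ref{Casp}(1), after polarisation, yields $q(\eta,\eta')=\langle A\eta\,|\,A\eta'\rangle$. This already gives $\Dom(A)\subseteq\Dom(q)$, the positivity and dense definedness of $q$, and the claimed formula on $\Dom(A)$.

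The substantial point is the reverse inclusion $\Dom(q)\subseteq\Dom(A)$, equivalently the lower bound $\psi(T_k^*T_k)\ge\int_{\IrrG}\|\xi^k_\pi\|^2\|E_\pi\eta^k_\pi\|^2\md\mu$. I would exploit the closedness of the GNS map $\Lambda_{\psi}$ together with the orthogonality relation. Setting $T_k^N=\int_{\IrrG}(\id\otimes\omega_{\xi^k_\pi,P^{(N)}_\pi\eta^k_\pi})U^\pi\md\mu$, each $T_k^N$ is nice, so $\|\Lambda_{\psi}(T_k^N)\|^2=I_k^N:=\int_{\IrrG}\|\xi^k_\pi\|^2\|E_\pi P^{(N)}_\pi\eta^k_\pi\|^2\md\mu$; applying Proposition \ref{Casp}(1) to the differences $T_k^{N'}-T_k^N$ (again nice, attached to the orthogonal spectral interval $(N,N']$) shows that the vectors $\Lambda_{\psi}(T_k^N)$ have orthogonal increments, while $I_k^N\nearrow I_k:=\|B_k\eta^k\|^2\in[0,+\infty]$ by monotone convergence. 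Moreover $T_k^N\to T_k$ in norm, hence $\sigma$-strongly${}^{*}$. The orthogonal-increment structure means that $\Lambda_{\psi}(T_k^N)$ converges in $\mf{H}_\psi$ precisely when $\sup_N I_k^N=I_k<+\infty$, and for $\eta\in\Dom(q)$ we know $T_k\in\mf{N}_\psi$.

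Closing the gap requires showing that the $\sigma$-strong${}^{*}$ convergence $T_k^N\to T_k\in\mf{N}_\psi$, combined with closedness of $\Lambda_{\psi}$, forces the monotone norms $\|\Lambda_{\psi}(T_k^N)\|$ to stay bounded with limit $\Lambda_{\psi}(T_k)$, whence $I_k=\|\Lambda_{\psi}(T_k)\|^2=\psi(T_k^*T_k)<+\infty$ and $\eta^k\in\Dom(B_k)$. This identification $\Lambda_{\psi}(T_k^N)\to\Lambda_{\psi}(T_k)$ — i.e.\ upgrading the isometry furnished by Proposition \ref{Casp} on the core to its full closure \emph{without} presupposing $\eta^k\in\Dom(E)$ — is the main obstacle, since lower semicontinuity of $\psi$ only delivers $\psi(T_k^*T_k)\le I_k$, the wrong direction. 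The natural way to secure the reverse bound is through the Plancherel transform $\mc{Q}_R$, under which $\Lambda_{\psi}(T_k)$ should correspond to the Hilbert--Schmidt field $\int_{\IrrG}^{\oplus}|\xi^k_\pi\rangle\langle E_\pi\eta^k_\pi|\md\mu$, whose unitarity makes membership in $\mf{N}_\psi$ equivalent to square integrability of that field in \emph{both} directions and thereby pins down $\Dom(V_k)=\Dom(B_k)$ for the closed operator $V_k\colon\eta^k\mapsto\Lambda_{\psi}(T_k)$. Once the domain equality and the formula are in hand, $q$ is the form of the self-adjoint operator $A$ and is therefore automatically closed, which completes the proof.
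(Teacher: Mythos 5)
There is a genuine gap, and you have located it yourself: your argument for the inclusion $\Dom(q)\subseteq\Dom\bigl(\bigoplus_{k\in\NN}\int_{\IrrG}^{\oplus}\|\xi^k_\pi\|E_\pi\md\mu(\pi)\bigr)$ is not a proof but a pointer. The assertion that for $T_k=\int_{\IrrG}(\id\otimes\omega_{\xi^k_\pi,\eta^k_\pi})U^\pi\md\mu(\pi)\in\mf{N}_\psi$ the vector $\Lambda_{\psi}(T_k)$ corresponds under $\mc{Q}_R$ to the Hilbert--Schmidt field $\int_{\IrrG}^{\oplus}|\eta^k_\pi\rangle\langle\xi^k_\pi|E_\pi\md\mu(\pi)$ \emph{without} assuming $\eta^k\in\Dom(E)$ is exactly Proposition \ref{stw8} of the paper, whose proof is a substantial separate argument (approximation of decomposable operators by $\int_{\IrrG}^{\oplus}\pi(\lambda^u(\omega_n))\md\mu(\pi)$ via Kaplansky and metrizability, passage to subsequences converging almost everywhere, dominated convergence). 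Nothing in your truncation/orthogonal-increment discussion produces it: as you correctly observe, lower semicontinuity of $\psi$ and closedness of $\Lambda_{\psi}$ only give $\psi(T_k^*T_k)\le\sup_N I_k^N$, the wrong direction. (Your route would not be circular --- Proposition \ref{stw8} does not rely on Theorem \ref{tw5} --- but as written the crux of the proof is missing.) A smaller slip: having the formula $q(\eta,\eta')=\langle A\eta|A\eta'\rangle$ on a core of $A$ does not ``already give'' it on all of $\Dom(A)$ before closedness of $q$ is known; this is harmless only because Proposition \ref{Casp} applies directly to \emph{every} $\eta\in\Dom(A)$ once one replaces $\eta^k$ by $\chi_{\supp\xi^k}\eta^k$ (which leaves $T_k$ unchanged and lands in $\Dom(E)$ by the lower bound on $\|\xi^k_\bullet\|$), so the spectral truncations are an unnecessary detour.

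The idea you are missing is that the reverse inclusion is never needed. The paper first proves that $q$ is closed by a direct argument: if $\eta(n)\to\eta$ with $q(\eta(n)-\eta(m))\to 0$, then the vectors $\bigoplus_{k}\Lambda_{\psi}(T_k(n))$ form a Cauchy sequence, while $\|T_k(n)-T_k\|\le\|\xi^k\|\,\|\eta^k(n)-\eta^k\|\to 0$, so the $\ssot\times\|\cdot\|$ closedness of $\Lambda_{\psi}$ identifies the limit componentwise and yields $\eta\in\Dom(q)$, $q(\eta(n)-\eta)\to 0$. Together with positivity and dense definedness (your easy step), Kato's representation theorem \cite[Theorem 2.23]{Kato} produces a positive self-adjoint operator $A_q$ with $\Dom(A_q)=\Dom(q)$ and $q=\langle A_q\cdot|A_q\cdot\rangle$. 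The orthogonality relations then give $\Dom(B)\subseteq\Dom(q)$ and $\|B\eta\|=\|A_q\eta\|$ there, where $B=\bigoplus_{k\in\NN}\int_{\IrrG}^{\oplus}\|\xi^k_\pi\|E_\pi\md\mu(\pi)$, and Lemma \ref{lemat2} --- two positive self-adjoint operators whose norms agree on the smaller of their domains must coincide, since self-adjoint operators have no proper self-adjoint extensions --- forces $B=A_q$. Thus the hard inclusion comes for free from maximality of self-adjoint operators; if you prefer your Plancherel route, you must first prove the Proposition \ref{stw8}-type identification in full, which is more work than the theorem itself.
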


\begin{proof}
First, let us show that $q$ is closed: let $(\eta(n))_{n\in\NN}$ be a sequence in $\Dom(q)$ such that $\eta(n)\xrightarrow[n\to\infty]{} \eta\in\bigoplus_{k\in\NN}\int_{\IrrG}^{\oplus}\msf{H}_\pi\md\mu(\pi)$ and $q(\eta(n)-\eta(m))\xrightarrow[n,m\to\infty]{}0$. We want to show that $\eta\in \Dom(q)$ and $q(\eta(n)-\eta)\xrightarrow[n\to\infty]{}0$. We have
\[
\sum_{k=1}^{\infty}
\psi(
(\int_{\IrrG} (\id\otimes\omega_{\xi^k_\pi,\eta^k_\pi(n)-\eta^k_\pi(m)})U^\pi \md\mu(\pi))^*
(\int_{\IrrG} (\id\otimes\omega_{\xi^k_\pi,\eta^k_\pi(n)-\eta^k_\pi(m)})U^\pi \md\mu(\pi)))
\xrightarrow[n,m\to\infty]{}0,
\]
therefore the sequence
\[
\bigl(\bigoplus_{k=1}^{\infty}\Lambda_{\psi}( \int_{\IrrG} (\id\otimes\omega_{\xi^k_\pi,\eta^k_\pi(n)})U^\pi \md\mu(\pi) )\bigr)_{n\in\NN}
\]
is Cauchy. Let us see what happens on individual $k$'s. For each $k\in\NN$ the following holds
\[
\begin{split}
&\quad\;\bigl\|
\int_{\IrrG} (\id\otimes\omega_{\xi^k_\pi,\eta^k_\pi(n)})U^\pi \md\mu(\pi) -
\int_{\IrrG} (\id\otimes\omega_{\xi^k_\pi,\eta^k_\pi})U^\pi \md\mu(\pi)\bigr\|\\
&=
\bigl\|
\int_{\IrrG} (\id\otimes\omega_{\xi^k_\pi,\eta^k_\pi(n)-\eta^k_\pi})U^\pi \md\mu(\pi)\bigr\|\le
\int_{\IrrG} \|\xi^k_\pi\| \|\eta^k_\pi(n)-\eta^k_\pi\|\md\mu(\pi)\\
&\le
\|\xi^k\| \|\eta^k(n)-\eta^k\|\xrightarrow[n\to\infty]{}0,
\end{split}
\]
therefore due to closedness of $\Lambda_\psi$ we arrive at
\[
\int_{\IrrG} (\id\otimes\omega_{\xi^k_\pi,\eta^k_\pi})U^\pi \md\mu(\pi)\in \Dom(\Lambda_\psi)
\]
and
\[
\Lambda_{\psi}( \int_{\IrrG} (\id\otimes\omega_{\xi^k_\pi,\eta^k_\pi(n)})U^\pi \md\mu(\pi) )\xrightarrow[n\to\infty]{}
\Lambda_{\psi}( \int_{\IrrG} (\id\otimes\omega_{\xi^k_\pi,\eta^k_\pi})U^\pi \md\mu(\pi) )
\]
for each $k\in\NN$. It follows that
\[
\bigoplus_{k=1}^{\infty}\Lambda_{\psi}( \int_{\IrrG} (\id\otimes\omega_{\xi^k_\pi,\eta^k_\pi(n)})U^\pi \md\mu(\pi) )
\xrightarrow[n\to\infty]{}
\bigoplus_{k=1}^{\infty}\Lambda_{\psi}( \int_{\IrrG} (\id\otimes\omega_{\xi^k_\pi,\eta^k_\pi})U^\pi \md\mu(\pi) )
\]
has to hold. We get
\[\begin{split} 
&\quad\;\sum_{k=1}^{\infty}
\psi(
(\int_{\IrrG} (\id\otimes\omega_{\xi^k_\pi,\eta^k_\pi})U^\pi \md\mu(\pi))^*
(\int_{\IrrG} (\id\otimes\omega_{\xi^k_\pi,\eta^k_\pi})U^\pi \md\mu(\pi)))\\
&=
\sum_{k=1}^{\infty}\bigl\|
\Lambda_\psi
(\int_{\IrrG} (\id\otimes\omega_{\xi^k_\pi,\eta^k_\pi})U^\pi \md\mu(\pi))\bigr\|^2=
\bigl\|
\bigoplus_{k=1}^{\infty}\Lambda_{\psi}( \int_{\IrrG} (\id\otimes\omega_{\xi^k_\pi,\eta^k_\pi})U^\pi \md\mu(\pi) )\bigr\|^2
<+\infty,
\end{split}\]
hence $\eta\in\Dom(q)$. Moreover
\[\begin{split} 
&\quad\;q(\eta(n)-\eta)\\
&=
\sum_{k=1}^{\infty} 
\psi(
(\int_{\IrrG} (\id\otimes\omega_{\xi^k_\pi,\eta^k_\pi(n)-\eta^k_\pi})U^\pi \md\mu(\pi))^*
(\int_{\IrrG} (\id\otimes\omega_{\xi^k_\pi,\eta^k_\pi(n)-\eta^k_\pi})U^\pi \md\mu(\pi)))\\
&=
\sum_{k=1}^{\infty}
\bigl\|
\Lambda_\psi(\int_{\IrrG} (\id\otimes\omega_{\xi^k_\pi,\eta^k_\pi(n)})U^\pi \md\mu(\pi))-
\Lambda_\psi(\int_{\IrrG} (\id\otimes\omega_{\xi^k_\pi,\eta^k_\pi})U^\pi \md\mu(\pi))
\bigr\|^2\\
&=
\bigl\|
\bigoplus_{k=1}^{\infty}\Lambda_\psi(\int_{\IrrG} (\id\otimes\omega_{\xi^k_\pi,\eta^k_\pi(n)})U^\pi \md\mu(\pi))-
\bigoplus_{k=1}^{\infty}
\Lambda_\psi(\int_{\IrrG} (\id\otimes\omega_{\xi^k_\pi,\eta^k_\pi})U^\pi \md\mu(\pi))
\bigr\|^2\\
&\quad\;\xrightarrow[n\to\infty]{}0,
\end{split}\]
which proves that $q$ is closed. It follows directly from the definition that $q$ is positive ($q(\eta,\eta)\ge 0$) and symmetric ($q(\eta,\eta')=\ov{q(\eta',\eta)}\; (\eta,\eta'\in\Dom(q))$). The form $q$ is densely defined, which means that $\Dom(q)$ is a dense subspace. Indeed, if $(\zeta_\pi)_{\pi\in\IrrG}$ is a square integrable vector field such that $\int_{\IrrG} \|E_\pi \zeta_\pi\|^2\md\mu(\pi)<+\infty$ then from the ortogonality relations (Proposition \ref{Casp}) we get
\[\begin{split}
&\quad\;\psi(
(\int_{\IrrG} (\id\otimes\omega_{\xi^k_\pi,\zeta_\pi})U^\pi \md\mu(\pi))^*
(\int_{\IrrG} (\id\otimes\omega_{\xi^k_\pi,\zeta_\pi})U^\pi \md\mu(\pi)))\\
&=
\int_{\IrrG} \|\xi^k_\pi\|^2 \|E_\pi \zeta_\pi\|^2\md\mu(\pi)<+\infty
 \end{split}\]
for each $k\in \NN$. Consequently, for such a vector field and any $k_0\in\NN$ the vector
\[
\bigoplus_{k\in\NN} \delta_{k,k_0}\int_{\IrrG}^{\oplus} \zeta_\pi\md\mu(\pi)
\]
belongs to the domain of $q$. It is clear that such vectors span a dense subspace, therefore $q$ is densely defined. We can use \cite[Theorem 2.23]{Kato} - there exists an (unbounded) positive self-adjoint operator $A$ on $\bigoplus_{k\in\NN}\int_{\IrrG}^\oplus \msf{H}_\pi\md\mu(\pi)$ such that
\[
\Dom(A)=\Dom(q),\quad q(\eta,\eta')=\ismaa{A\eta}{A\eta'}\quad(\eta,\eta'\in\Dom(q)).
\]
Consider now the positive self-adjoint unbounded operator $\bigoplus_{k\in\NN}\int_{\IrrG}^{\oplus}\|\xi^k_\pi\| E_\pi\md\mu(\pi)$. For $\eta$ in its domain, for each $k$ the vector fields
\[
(\xi^k_\pi)_{\pi\in\IrrG},\; (\eta^k_\pi)_{\pi\in\IrrG}\;,(\xi^k_\pi\otimes \ov{E_\pi\eta^k_\pi})_{\pi\in\IrrG},\;
(E_\pi\eta^k_\pi \supp_{\xi^k}(\pi))_{\pi\in\IrrG}
\]
are square integrable. Indeed, it is clear for the first two ones, and moreover
\[
\int_{\IrrG} \|\xi^k_\pi\otimes \ov{E_\pi\eta^k_\pi}\|^2\md\mu(\pi)=
\int_{\IrrG} \|\|\xi^k_\pi\|E_\pi \eta^k_\pi\|^2\md\mu(\pi)<+\infty
\]
because $\eta^k\in\Dom(\int_{\IrrG}^{\oplus}\|\xi^k_\pi\|E_\pi\md\mu(\pi))$ and
\[
\int_{\IrrG} \chi_{\supp \xi^k}(\pi)\|E_\pi\eta^k_\pi\|^2\md\mu(\pi)\le
\bigl(\sup_{\pi\in \supp\|\xi^k_{\bullet}\|} \|\xi^k_\pi\|^{-2}\bigr)
\int_{\IrrG} \|\xi^k_\pi\|^2\|E_\pi\eta^k_\pi\|^2\md\mu(\pi)<+\infty.
\]
Here we use that fact that $\|\xi^k_{\bullet}\|$ is bounded from below on its support. We can therefore make use of the orthogonality relations:
\[
\begin{split}
&\quad\;\bigl\|
\bigl(\bigoplus_{k\in\NN}\int_{\IrrG}^{\oplus}\|\xi^k_\pi\| E_\pi\md\mu(\pi)\bigr)\eta\bigr\|^2\\
&=
\sum_{k=1}^{\infty}
\int_{\IrrG} \|\xi^k_\pi\|^2 \|E_\pi \eta^k_\pi\|^2\md\mu(\pi)\\
&=
\sum_{k=1}^{\infty}
\psi(
(\int_{\IrrG} (\id\otimes\omega_{\xi^k_\pi,\eta^k_\pi})U^\pi \md\mu(\pi))^*
(\int_{\IrrG} (\id\otimes\omega_{\xi^k_\pi,{\eta}^k_\pi})U^\pi \md\mu(\pi)))\\
&=q(\eta,\eta)=\ismaa{A\eta}{A\eta}=\|A\eta\|^2,
\end{split}
\]
in particular $\Dom(\bigoplus_{k\in\NN}\int_{\IrrG}^{\oplus}\|\xi^k_\pi\| E_\pi\md\mu(\pi))\subseteq\Dom(q)=\Dom(A)$. Lemma \ref{lemat2} gives us the equality
\[
\bigoplus_{k\in\NN}\int_{\IrrG}^{\oplus}\|\xi^k_\pi\| E_\pi\md\mu(\pi)
= A.
\]
\end{proof}

We can treat the above result as a generalization of the orthogonality relations.\\

Now we will make use of this result in a specific situation. Take $\Omega\subseteq \IrrG$, a measurable subset such that $\int_\Omega \dim \md\mu<+\infty$ (we treat $\dim$ as a function $\IrrG\ni \pi\mapsto \dim(\pi)\in\NN$). Let $\{\xi^k\}_{k=1}^{\infty}$ be a measurable field of orthonormal bases. Define vector fields $\{\xi^{\Omega,k}\}_{k=1}^{\infty}$ via $\xi^{\Omega,k}_{\pi}=\chi_{\Omega}(\pi) \xi^{k}_{\pi}$. We have
\[
\sum_{k=1}^{\infty}\|\xi^{\Omega,k}\|^2=
\sum_{k=1}^{\infty} \int_{\IrrG} \|\xi^{\Omega,k}_\pi\|^2\md\mu(\pi)=
\sum_{k=1}^{\infty} \mu(\{\pi\in \IrrG\,|\,\dim(\pi)\ge k\})=
\int_\Omega\dim\md\mu<+\infty,
\]
therefore $\xi^{\Omega}=\bigoplus_{k\in\NN}\xi^{\Omega,k}\in\bigoplus_{k\in\NN} \int_{\IrrG}^{\oplus}\msf{H}_\pi\md\mu(\pi)$. Of course $\|\xi^{\Omega,k}_\pi\|=1$ on the support of $\xi^{\Omega,k}$, hence we can use Theorem \ref{tw5}: for $\eta=\bigoplus_{k=1}^{\infty}\eta^k\in\bigoplus_{k\in\NN} \int_{\IrrG}^{\oplus}\msf{H}_\pi\md\mu(\pi)$ we have
\[
\sum_{k=1}^{\infty}\psi(
(\int_{\IrrG} (\id\otimes\omega_{\xi^{\Omega,k}_\pi,\eta^k_\pi})U^\pi \md\mu(\pi))^*
(\int_{\IrrG} (\id\otimes\omega_{\xi^{\Omega,k}_\pi,\eta^k_\pi})U^\pi \md\mu(\pi)))<+\infty
\]
if and only if
\[
\sum_{k=1}^{\infty} \int_{\IrrG} \|\xi^{\Omega,k}_\pi\|^2 \|E_\pi \eta^k_\pi\|^2\md\mu(\pi)=
\int_{\Omega} \sum_{k=1}^{\dim(\pi)} \|E_\pi\eta^k_\pi\|^2 \md\mu(\pi)<+\infty
\]
(we have used the monotone convergence theorem). Moreover, these numbers are both equal to $q(\eta,\eta)$. In particular we can take $\eta=\xi^{\Omega}$. Then $\int_\Omega \Tr(E^2_\pi)\md\mu(\pi)<+\infty$ if and only if 
\[
\sum_{k=1}^{\infty}\psi(
(\int_{\IrrG} (\id\otimes\omega_{\xi^{\Omega,k}_\pi})U^\pi \md\mu(\pi))^*
(\int_{\IrrG} (\id\otimes\omega_{\xi^{\Omega,k}_\pi})U^\pi \md\mu(\pi)))<+\infty
\]
and we have arrived at the following proposition:

\begin{proposition}\label{stw6}
Let $\Omega$ be a measurable subset of $\IrrG$ such that $\int_\Omega \dim\md\mu<+\infty$. Let $\{(\xi^k_\pi)_{\pi\in\IrrG}\}_{k=1}^{\infty}$ be a measurable field of orthonormal bases. Define vector fields $\{\xi^{\Omega,k}\}_{k=1}^{\infty}$ via $\xi^{\Omega,k}_\pi=\chi_{\Omega}(\pi) \xi^{k}_{\pi}$. We have
\[
\int_\Omega \Tr(E^2_\pi)\md\mu(\pi)=
\sum_{k=1}^{\infty}\psi(
(\int_{\IrrG} (\id\otimes\omega_{\xi^{\Omega,k}_\pi})U^\pi \md\mu(\pi))^*
(\int_{\IrrG} (\id\otimes\omega_{\xi^{\Omega,k}_\pi})U^\pi \md\mu(\pi))),
\]
the above numbers can be equal to $+\infty$.
\end{proposition}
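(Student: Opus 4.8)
The plan is to recognize this proposition as the specialization of Theorem \ref{tw5} to the vector $\xi^{\Omega}=\bigoplus_{k}\xi^{\Omega,k}$ itself, so that almost all of the analytic work has already been done in that theorem; what remains is to verify that $\xi^\Omega$ is an admissible input and then to identify the two sides of the asserted equality by a direct computation.

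First I would check that $\xi^\Omega$ genuinely is a vector of $\bigoplus_{k\in\NN}\int_{\IrrG}^\oplus\msf{H}_\pi\md\mu(\pi)$. Since $\{(\xi^k_\pi)_\pi\}_k$ is a field of orthonormal bases, $\|\xi^k_\pi\|=1$ exactly when $k\le\dim(\pi)$ and $\xi^k_\pi=0$ otherwise; hence, by monotone convergence,
\[
\sum_{k=1}^\infty\|\xi^{\Omega,k}\|^2
=\sum_{k=1}^\infty\mu(\{\pi\in\Omega\,|\,\dim(\pi)\ge k\})
=\int_\Omega\dim\md\mu<+\infty .
\]
In particular each field $(\xi^{\Omega,k}_\pi)_\pi$ is bounded, and $\|\xi^{\Omega,k}_\bullet\|$, being the indicator of $\Omega\cap\{\dim\ge k\}$, equals $1$ on its support and so is bounded below there. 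Thus $\xi^\Omega$ meets the hypotheses imposed on the defining vector in Theorem \ref{tw5}, and I may form the associated closed positive form $q$.

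Next I would apply Theorem \ref{tw5} with $\eta=\eta'=\xi^\Omega$, i.e.\ $\eta^k=\xi^{\Omega,k}$. That theorem identifies $\Dom(q)$ with $\Dom\bigl(\bigoplus_k\int_{\IrrG}^\oplus\|\xi^{\Omega,k}_\pi\|E_\pi\md\mu(\pi)\bigr)$ and computes $q$ as the quadratic form of this operator; reading the conclusion as an equality in $[0,+\infty]$ (membership in the domain on one side corresponding to finiteness on the other) gives
\[
\sum_{k=1}^\infty\psi\big(
(\int_{\IrrG}(\id\otimes\omega_{\xi^{\Omega,k}_\pi})U^\pi\md\mu(\pi))^{*}
(\int_{\IrrG}(\id\otimes\omega_{\xi^{\Omega,k}_\pi})U^\pi\md\mu(\pi))\big)
=\sum_{k=1}^\infty\int_{\IrrG}\|\xi^{\Omega,k}_\pi\|^2\,\|E_\pi\xi^{\Omega,k}_\pi\|^2\md\mu(\pi),
\]
the left-hand side being precisely the right-hand side of the proposition.

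Finally I would evaluate the operator side. Using $\xi^{\Omega,k}_\pi=\chi_\Omega(\pi)\xi^k_\pi$, monotone convergence to exchange the sum and the integral, and the self-adjointness of $E_\pi$ together with the fact that $\{\xi^k_\pi\}_{k=1}^{\dim(\pi)}$ is an orthonormal basis of the finite dimensional space $\msf{H}_\pi$, I obtain for $\pi\in\Omega$
\[
\sum_{k=1}^\infty\|\xi^{\Omega,k}_\pi\|^2\,\|E_\pi\xi^{\Omega,k}_\pi\|^2
=\sum_{k=1}^{\dim(\pi)}\is{\xi^k_\pi}{E_\pi^2\xi^k_\pi}
=\Tr(E_\pi^2),
\]
while the summand vanishes for $\pi\notin\Omega$; integrating yields $\int_\Omega\Tr(E_\pi^2)\md\mu(\pi)$, which completes the identification. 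The only point requiring care is the bookkeeping of $+\infty$: I must read Theorem \ref{tw5} not merely as an equality of finite numbers on $\Dom(q)$ but as the equivalence that $\xi^\Omega\in\Dom(q)$ if and only if $\int_\Omega\Tr(E^2)\md\mu<+\infty$, together with equality of the two values whenever either is finite, so that the asserted identity holds in $[0,+\infty]$; everything else reduces to the routine trace computation above.
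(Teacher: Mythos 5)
Your proposal is correct and is essentially the paper's own argument: the paper also verifies that $\xi^{\Omega}=\bigoplus_k\xi^{\Omega,k}$ satisfies the hypotheses of Theorem \ref{tw5} (square summability via $\sum_k\|\xi^{\Omega,k}\|^2=\int_\Omega\dim\md\mu$, and $\|\xi^{\Omega,k}_\bullet\|=1$ on its support), applies that theorem with $\eta=\xi^{\Omega}$, reads the identification $\Dom(q)=\Dom\bigl(\bigoplus_k\int_{\IrrG}^{\oplus}\|\xi^{\Omega,k}_\pi\|E_\pi\md\mu(\pi)\bigr)$ as an equality in $[0,+\infty]$, and finishes with the same trace computation $\sum_{k=1}^{\dim(\pi)}\|E_\pi\xi^k_\pi\|^2=\Tr(E_\pi^2)$.
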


\subsection{Equivalence of $\chi^{\int}(\int_\Omega^{\oplus}\pi\md\mu(\pi))\in\mf{N}_\psi$ and $\int_\Omega \Tr(E^2_{\bullet})\md\mu<+\infty$}

Recall that $\mc{Q}_R$ is the unitary operator given by Theorem \ref{PlancherelR}. The first result which we wish to prove in this subsection says that $\Lambda_{\psi}(\chi^{\int}(\int_{\Omega}^{\oplus}\pi\md\mu(\pi)))$ gets mapped to $\int_{\IrrG}^{\oplus}\chi_\Omega(\pi) E_\pi\md\mu(\pi)$ by $\mc{Q}_R$. Similarly to the previous subsection we will first derive more general result. Let $\bigoplus_{k=1}^{\infty} \int_{\IrrG}^{\oplus} \xi^k_\pi\md\mu(\pi)$, $\bigoplus_{k=1}^{\infty} \int_{\IrrG}^{\oplus} \eta^k_\pi\md\mu(\pi)$ be vectors in $\bigoplus_{k=1}^{\infty}\int_{\IrrG}^{\oplus}
\msf{H}_\pi\md\mu(\pi)$. In particular this means that
\[
\sum_{k=1}^{\infty} \int_{\IrrG} \|\xi^k_\pi\|^2\md\mu(\pi)=
\int_{\bigsqcup_{k=1}^{\infty}\IrrG} \|\xi^k_\pi\|^2\md\mu^{\sqcup}(k,\pi)<+\infty,
\]
in other words, the function
\[
\bigsqcup_{k=1}^{\infty}\IrrG\ni (k,\pi)\mapsto
\|\xi^k_\pi\|\in \CC
\]
is square integrable (with respect to the natural measure $\mu^{\sqcup}$ on the disjoint union). The same holds for $\eta$. We can calculate the scalar product:
\[
\ismaa{\|\xi^{\bullet}_{\bullet}\|}{
\|\eta^{\bullet}_{\bullet}\|}=
\int_{\bigsqcup_{k=1}^{\infty} \IrrG}
\|\xi^k_\pi\| \|\eta^k_\pi\|\md\mu^{\sqcup}(k,\pi)=
\sum_{k=1}^{\infty} \int_{\IrrG}
\|\xi^k_\pi\| \|\eta^k_\pi\|\md\mu(\pi)<+\infty
\]
It follows that the operator $\sum_{k=1}^{\infty}\int_{\IrrG}(\id\otimes\omega_{\xi^{k}_{\pi},\eta^{k}_\pi})U^{\pi}\md\mu(\pi)$ is well defined and belongs to $\Linf$: for each $k\in\NN$ we have
\[
\int_{\IrrG} \| (\id\otimes\omega_{\xi^{k}_{\pi},\eta^{k}_\pi})U^{\pi} \|\md\mu(\pi)\le
\int_{\IrrG} \|\xi^k_\pi\|\|\eta^k_\pi\|\md\mu(\pi)<+\infty,
\]
and
\[
\sum_{k=1}^{\infty} \bigl\|
\int_{\IrrG}(\id\otimes\omega_{\xi^{k}_{\pi},\eta^{k}_\pi})U^{\pi}
\md\mu(\pi) \bigr\|\le
\sum_{k=1}^{\infty}
\int_{\IrrG} \|\xi^k_\pi\|\|\eta^k_\pi\|\md\mu(\pi)<+\infty.
\]
Moreover, thanks to the monotone convergence theorem we have
\[
\sum_{k=1}^{\infty}
\int_{\IrrG} \|\xi^k_\pi\|\|\eta^k_\pi\|\md\mu(\pi)=
\int_{\IrrG} \sum_{i=1}^{\infty}\|\xi^k_\pi\|\|\eta^k_\pi\|\md\mu(\pi)<+\infty,
\]
consequently $\sum_{i=1}^{\infty} \|\xi^k_\pi\|\|\eta^k_\pi\|<+\infty$ for almost all $\pi$ and the operator
\[
\sum_{k=1}^{\infty} |\eta^k_\pi\rangle\langle \xi^k_\pi|
\in\B(\msf{H}_\pi)
\]
is well defined for almost all $\pi\in\IrrG$. 

\begin{proposition}\label{stw8}
Let $\bigoplus_{k=1}^{\infty} \int_{\IrrG}^{\oplus} \xi^k_\pi\md\mu(\pi)$, $\bigoplus_{k=1}^{\infty} \int_{\IrrG}^{\oplus} \eta^k_\pi\md\mu(\pi)$ be vectors in \\$\bigoplus_{k=1}^{\infty}\int_{\IrrG}^{\oplus}
\msf{H}_\pi\md\mu(\pi)$. Assume that $\sum_{k=1}^{\infty}\int_{\IrrG}(\id\otimes\omega_{\xi^{k}_{\pi},\eta^{k}_\pi})U^{\pi}\md\mu(\pi)\in\mf{N}_\psi$. Then, the vector field $(\sum_{k=1}^{\infty} | \xi^k_\pi\rangle\langle \eta^k_\pi| E_\pi)_{\pi\in\IrrG}$ is measurable, square integrable and we have equality in $\int_{\IrrG}^{\oplus}\HS(\msf{H}_\pi)\md\mu(\pi)$:
\[
\mc{Q}_R\, \Lambda_{\psi}(
\sum_{k=1}^{\infty}\int_{\IrrG}(\id\otimes\omega_{\xi^{k}_{\pi},\eta^{k}_\pi})U^{\pi}\md\mu(\pi))=
\int_{\IrrG}^{\oplus} 
\sum_{k=1}^{\infty} | \eta^k_\pi\rangle\langle \xi^k_\pi| E_\pi\md\mu(\pi).
\]
\end{proposition}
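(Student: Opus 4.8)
The plan is to recognise the whole series as a single trace–class ``slice'' of the field $(U^\pi)_\pi$, to settle the claim first for finite–rank slices where the orthogonality relations of Proposition \ref{Casp} apply verbatim, and then to restore the infinite sum by a closedness argument. First I would record the bookkeeping already begun before the statement: for almost every $\pi$ the bound $\sum_{k}\|\xi^k_\pi\|\,\|\eta^k_\pi\|<+\infty$ makes $\theta_\pi:=\sum_{k}\si{\eta^k_\pi}{\xi^k_\pi}$ a well–defined trace–class operator, the field $(\theta_\pi)_\pi$ is measurable (a trace–norm convergent sum of measurable finite–rank fields), and therefore $\bigl(\sum_{k}\si{\eta^k_\pi}{\xi^k_\pi}E_\pi\bigr)_\pi=(\theta_\pi E_\pi)_\pi$ is a measurable field of Hilbert–Schmidt operators. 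Writing $a_k=\int_{\IrrG}(\id\otimes\omega_{\xi^k_\pi,\eta^k_\pi})U^\pi\md\mu(\pi)$ and $a=\sum_k a_k$, the norm estimates from the preamble show that $a=\sum_k a_k$ converges absolutely in $\Linf$, so it suffices to compute $\mc{Q}_R\Lambda_{\psi}(a_k)$ for each $k$ and to control the passage to the limit.

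For a single index $k$ the vector fields $(\xi^k_\pi)_\pi,(\eta^k_\pi)_\pi$ meet the hypotheses of Proposition \ref{Casp} (after the harmless reduction, by density, to bounded fields with $\eta^k$ in the domain of $E$), so $a_k\in\mf{N}_{\psi}$ and the proposition evaluates $\psi(a_k^*a_{k'})$ as $\int_{\IrrG}\is{\xi^{k'}_\pi}{\xi^k_\pi}\is{E_\pi\eta^k_\pi}{E_\pi\eta^{k'}_\pi}\md\mu(\pi)$. A direct Hilbert–Schmidt computation shows that this equals the integrated fibrewise inner product of the candidate fields $\si{\eta^k_\pi}{\xi^k_\pi}E_\pi$ and $\si{\eta^{k'}_\pi}{\xi^{k'}_\pi}E_\pi$, so $\Lambda_{\psi}(a_k)\mapsto\int_{\IrrG}^{\oplus}\si{\eta^k_\pi}{\xi^k_\pi}E_\pi\md\mu(\pi)$ preserves inner products. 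To see that this isometric correspondence is implemented by the specific unitary $\mc{Q}_R$, and not merely by some isometry differing from it by a decomposable unitary, I would match it with the explicit description of $\mc{Q}_R$ in Theorem \ref{PlancherelR}: on the $\sigma$-weakly total set where $\mc{Q}_R\hat{J}J\Lambda_{\hpsi}(\lambda(\alpha))=\int_{\IrrG}^{\oplus}(\alpha\otimes\id)U^\pi\cdot E_\pi^{-1}\md\mu(\pi)$ is known (point $2)$), together with the intertwining relation (point $3)$) which forces $\mc{Q}_R$ to carry left multiplication by matrix coefficients to fibrewise left multiplication. This yields $\mc{Q}_R\Lambda_{\psi}(a_k)=\int_{\IrrG}^{\oplus}\si{\eta^k_\pi}{\xi^k_\pi}E_\pi\md\mu(\pi)$, and by linearity the same holds for every finite partial sum $a_N=\sum_{k\le N}a_k$.

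It remains to pass from $a_N$ to $a$, and this is where the real work lies. Since $\mc{Q}_R$ is unitary and the finite case is settled, $\bigl\|\Lambda_{\psi}(a_N)-\Lambda_{\psi}(a_M)\bigr\|^2=\bigl\|\sum_{M<k\le N}\int_{\IrrG}^{\oplus}\si{\eta^k_\pi}{\xi^k_\pi}E_\pi\md\mu(\pi)\bigr\|^2=\psi\bigl((a_N-a_M)^*(a_N-a_M)\bigr)$, so the sequence $\mc{Q}_R\Lambda_{\psi}(a_N)$ is Cauchy precisely when the limiting field $\int_{\IrrG}^{\oplus}\theta_\pi E_\pi\md\mu(\pi)$ is square–integrable and the finite–rank approximants converge to it in $L^2$. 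The hypothesis $a\in\mf{N}_{\psi}$ is exactly what supplies this, through the trace–class form of the orthogonality relation $\psi(a^*a)=\int_{\IrrG}\|\theta_\pi E_\pi\|_{\HS}^2\md\mu(\pi)$, which I would obtain by the same closed–form mechanism used in Theorem \ref{tw5} (approximating $\theta_\pi$ by $\sum_{k\le N}\si{\eta^k_\pi}{\xi^k_\pi}$ and invoking closedness of $\Lambda_{\psi}$). Once the field is square–integrable, $\sum_k\int_{\IrrG}^{\oplus}\si{\eta^k_\pi}{\xi^k_\pi}E_\pi\md\mu(\pi)$ converges; applying $\mc{Q}_R^*$ shows that $\Lambda_{\psi}(a_N)$ converges, and since $a_N\to a$ in norm while $\Lambda_{\psi}$ is closed with $a\in\mf{N}_{\psi}$, the limit must be $\Lambda_{\psi}(a)$. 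Applying $\mc{Q}_R$ back gives the asserted identity. The main obstacle is thus precisely the interchange of the weight $\psi$ with the infinite sum — i.e. justifying the trace–class orthogonality relation and the resulting square–integrability, rather than the individual rank–one cases — which is controlled by the assumption $a\in\mf{N}_{\psi}$ together with the closedness of $\Lambda_{\psi}$.
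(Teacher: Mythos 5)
Your overall architecture --- settle the rank-one and finite cases via the orthogonality relations, then pass to the limit by closedness of $\Lambda_{\psi}$ --- does not fit the hypotheses of the proposition, and this is a genuine gap rather than a presentational one. The only assumption available is that the \emph{single} element $a=\sum_{k=1}^{\infty}\int_{\IrrG}(\id\otimes\omega_{\xi^k_\pi,\eta^k_\pi})U^\pi\md\mu(\pi)$ lies in $\mf{N}_\psi$; nothing forces the individual $a_k$ to lie in $\mf{N}_\psi$, nor the fields to satisfy the hypotheses of Proposition \ref{Casp} (namely $\eta^k\in\Dom(E)$ and square integrability of $(\xi^k_\pi\otimes\ov{E_\pi\eta^k_\pi})_{\pi}$). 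Take for instance $\xi^1=\xi^2$, $\eta^1=-\eta^2$ with $\eta^1\notin\Dom(E)$ and all other fields zero: then $a=0\in\mf{N}_\psi$ and the statement holds trivially, but $\psi(a_1^*a_1)$ is (morally, and provably by monotone approximation) infinite, so your very first step --- ``$a_k\in\mf{N}_{\psi}$ and $\psi(a_k^*a_{k'})=\int_{\IrrG}\is{\xi^{k'}_\pi}{\xi^k_\pi}\is{E_\pi\eta^k_\pi}{E_\pi\eta^{k'}_\pi}\md\mu(\pi)$'' --- already fails. The proposed ``harmless reduction, by density, to bounded fields with $\eta^k$ in the domain of $E$'' is not harmless: perturbing the fields changes $a$, and there is no continuity statement tying the conclusion for the perturbed data back to the original data; producing such a statement would require precisely the $\LL^2$-identity you are trying to prove.

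The same circularity appears at the limiting stage. Cauchyness of $(\Lambda_{\psi}(a_N))_{N}$, equivalently the identity $\psi(a^*a)=\int_{\IrrG}\bigl\|\sum_{k}\si{\eta^k_\pi}{\xi^k_\pi}E_\pi\bigr\|_{\HS}^2\md\mu(\pi)$, is (by unitarity of $\mc{Q}_R$) an immediate corollary of the proposition itself, so it cannot serve as input; and Theorem \ref{tw5} does not supply it, since that theorem controls the \emph{diagonal} quadratic form $\sum_k\psi(a_k^*a_k)$ under extra hypotheses on $\xi$ (bounded fields, norms bounded below on supports), whereas here you need the quantity attached to the single operator $a$, cross-terms included, with no such hypotheses. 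This is exactly why the paper's proof never decomposes $\Lambda_{\psi}(a)$: it writes $\mc{Q}_R\Lambda_{\psi}(a)=\int_{\IrrG}^{\oplus}T_\pi\md\mu(\pi)$ (legitimate because $a\in\mf{N}_\psi$ by hypothesis) and identifies the fibres $T_\pi=\sum_k\si{\eta^k_\pi}{\xi^k_\pi}E_\pi$ almost everywhere by duality: it pairs against the known vectors $\mc{Q}_R\hat{J}J\Lambda_{\hpsi}(\lambda(\omega))=\int_{\IrrG}^{\oplus}(\omega\otimes\id)U^\pi E_\pi^{-1}\md\mu(\pi)$ (Lemma \ref{lemat25} and Theorem \ref{PlancherelR}), approximates arbitrary bounded fields $\chi_V(\pi)K_\pi$ in the bounded \ssot{} by elements $\int_{\IrrG}^{\oplus}\pi(\lambda^u(\omega_n))\md\mu(\pi)$, extracts a subsequence converging almost everywhere, and concludes using normality of the functional $\sum_k\ismaa{\eta^k}{\,\cdot\;\xi^k}$ together with dominated convergence. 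To salvage your outline you would have to replace the term-by-term use of Proposition \ref{Casp} by an identification argument of this duality type; the rank-one computations you propose can then survive only as motivation, not as steps of the proof.
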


\begin{proof}
We have
\[
\mc{Q}_R\, \Lambda_{\psi}(
\sum_{k=1}^{\infty}\int_{\IrrG}(\id\otimes\omega_{\xi^{k}_{\pi},\eta^{k}_\pi})U^{\pi}\md\mu(\pi))=
\int_{\IrrG}^{\oplus} T_\pi\md\mu(\pi)
\]
for certain operators $T_\pi\in \HS(\msf{H}_\pi)$. We want to show that $T_\pi=\sum_{k=1}^{\infty} | \eta^k_\pi\rangle\langle \xi^k_\pi| E_\pi$ for almost all $\pi$. For $\omega\in(\ov{\mc{I}_R}\cap\Ljsharp)^{\sharp}$ we have
\begin{equation}\begin{split} \label{eq19}
&\quad\;
\bigl\langle
\int_{\IrrG}^{\oplus}T_\pi\md\mu(\pi)\big|
\int_{\IrrG}^{\oplus} (\omega\otimes\id)U^\pi E_\pi^{-1}\md\mu(\pi)
\bigr\rangle\\
&=
\bigl\langle
\mc{Q}_R \Lambda_{\psi}(
\sum_{k=1}^{\infty}\int_{\IrrG}(\id\otimes\omega_{\xi^{k}_{\pi},\eta^{k}_\pi})U^{\pi}\md\mu(\pi)) \big|
\mc{Q}_R\xi_R(\ov{\omega^{\sharp}})\bigr\rangle\\
&=\ov{\omega^{\sharp}}(( 
\sum_{k=1}^{\infty}\int_{\IrrG}(\id\otimes\omega_{\xi^{k}_{\pi},\eta^{k}_\pi})U^{\pi}\md\mu(\pi))^*)\\
&=
\ov{\sum_{k=1}^{\infty}\int_{\IrrG}
(\omega^{\sharp}\otimes\omega_{\xi^k_\pi,\eta^k_\pi})U^\pi\md\mu(\pi)}\\
&=
\sum_{k=1}^{\infty}\int_{\IrrG}
(\omega\otimes\omega_{\eta^k_\pi,\xi^k_\pi})U^\pi\md\mu(\pi)
\end{split}
\end{equation}
(we have used Lemma \ref{lemat25} to get $\xi_R(\ov{\omega^{\sharp}})=\hat{J}J\Lambda_{\hpsi}(\lambda(\omega))$). We know that $\int_{\IrrG}^{\oplus} \pi \md\mu(\pi)(\CGDu)''$ is the von Neumann algebra of decomposable operators on $\int_{\IrrG}^{\oplus}\msf{H}_\pi\md\mu(\pi)$, i.e.~operators of the form $\int_{\IrrG}^{\oplus} T_\pi\md\mu(\pi)$. In particular, arbitrary decomposable operator can be approximated in \ssot by operators of the form $\int_{\IrrG}^{\oplus} \pi(a) \md\mu(\pi)$ with $a\in \CGDu$. Thanks to the Kaplansky theorem we can approximate with bounded nets. Next, the set of functionals $\omega$ as above is norm dense in $\Lj$ (Lemma \ref{lemat28}), therefore we can approximate any decomposable operator by operators of the form $\int_{\IrrG}^{\oplus} \pi (\lambda^u(\omega))\md\mu(\pi)$. Since the Hilbert space $\int_{\IrrG}^{\oplus}\msf{H}_\pi\md\mu(\pi)$ is separable, the $\sigma$-strong operator topology is metrizable on bounded subsets (\cite[Proposition I.6.3]{Davidson}). Therefore we can approximate using bounded sequences.\\
Choose arbitrary measurable subset $V\subseteq\IrrG$ of finite measure and a measurable family of operators $(K_\pi)_{\pi\in V}$ on $(\msf{H}_\pi)_{\pi\in\IrrG}$ such that
\[
\sup_{\pi'\in V}( \dim(\pi') \|T_{\pi'}\| \|E_{\pi'}^{-1}\|)<+\infty,
\quad
\sup_{\pi\in V}\|K_\pi\|<+\infty.
\]
Reasoning in the previous paragraph implies that we can find a sequence $(\omega_n)_{n\in\NN}$ in\\ $(\ov{\mc{I}_R}\cap\Ljsharp)^{\sharp}$ such that
\[
\sup_{n\in\NN} \bigl\|
\int_{\IrrG}^{\oplus} (\omega_n\otimes\id)U^\pi \md\mu(\pi)
\bigr\|=
\sup_{n\in\NN}\sup_{\pi\in\IrrG}\|
(\omega_n\otimes\id)U^\pi\|<+\infty
\]
and
\[
\int_{\IrrG}^{\oplus} (\omega_n\otimes\id)U^\pi \md\mu(\pi)
\xrightarrow[n\to\infty]{\ssot}
\int_{\IrrG}^{\oplus} \chi_V(\pi) K_\pi\md\mu(\pi).
\]
Next, we can find a subsequence $(n_p)_{p\in\NN}$ such that
\begin{equation}\label{eq18}
(\omega_{n_p}\otimes\id)U^{\pi}
\xrightarrow[p\to\infty]{\ssot}
\chi_V(\pi)K_\pi
\end{equation}
for almost all $\pi\in \IrrG$ (\cite[Proposition 4, page 183]{DixmiervNA}). Since $\sum_{k=1}^{\infty} \| \int_{\IrrG}^{\oplus}\xi^k_\pi\md\mu(\pi)\|^2<+\infty$ and similarly for $\eta$, the functional $\sum_{k=1}^{\infty} \ismaa{\int_{\IrrG}^{\oplus}\eta^k_\pi\md\mu(\pi)}{\cdot\;\int_{\IrrG}^{\oplus}\xi^k_\pi\md\mu(\pi)}$ is well defined and normal. Consequently, due to equation \eqref{eq19} we have
\[\begin{split} 
&\quad\;\sum_{k=1}^{\infty}\int_{V} \ismaa{\eta^k_\pi}{K_\pi \xi^k_\pi}\md\mu(\pi)\\
&=\sum_{k=1}^{\infty} 
\bigl\langle\int_{\IrrG}^{\oplus} \eta^k_\pi \md\mu(\pi)\big|
\int_{\IrrG}^{\oplus} \chi_V(\pi) K_\pi\md\mu(\pi)
\int_{\IrrG}^{\oplus} \xi^k_\pi \md\mu(\pi)\bigr\rangle\\
&=
\lim_{p\to\infty}
\sum_{k=1}^{\infty} 
\bigl\langle\int_{\IrrG}^{\oplus} \eta^k_\pi \md\mu(\pi)\big|
\int_{\IrrG}^{\oplus} (\omega_{n_p}\otimes\id)U^\pi \md\mu(\pi)
\int_{\IrrG}^{\oplus} \xi^k_\pi \md\mu(\pi)\bigr\rangle\\
&=
\lim_{p\to\infty}
\sum_{k=1}^{\infty} 
\int_{\IrrG} (\omega_{n_p}\otimes\omega_{\eta^k_\pi,\xi^k_\pi} )U^\pi \md\mu(\pi)\\
&=
\lim_{p\to\infty}
\bigl\langle\int_{\IrrG}^{\oplus}T_\pi\md\mu(\pi)\big|
\int_{\IrrG}^{\oplus} (\omega_{n_p}\otimes\id)U^{\pi}
E_{\pi}^{-1} \md\mu(\pi)\bigr\rangle\\
&=
\lim_{p\to\infty} \int_{\IrrG} \Tr( T_\pi^* (\omega_{n_p}\otimes\id)U^{\pi} E_\pi^{-1})\md\mu(\pi).
\end{split}\]
Since
\[
\sum_{k=1}^{\infty} \int_V \ismaa{\eta^k_\pi}{K_\pi \xi^k_\pi} \md\mu(\pi)=
\int_V \sum_{k=1}^{\infty} \ismaa{\eta^k_\pi}{K_\pi \xi^k_\pi} \md\mu(\pi)
\]
and
\[
\begin{split}
\sum_{k=1}^{\infty} \ismaa{\eta^k_\pi}{K_\pi \xi^k_\pi}&=
\sum_{k=1}^{\infty}\sum_{j=1}^{\dim(\pi)} \ismaa{\eta^k_\pi}{K_\pi\zeta^j_\pi}
\ismaa{\zeta^j_\pi}{ \xi^k_\pi}
=
\sum_{j=1}^{\dim(\pi)}
\bigl\langle\zeta^j_\pi\big|
\sum_{k=1}^{\infty} |\xi^k_\pi\rangle\langle \eta^k_\pi| K_\pi\; \zeta^j_\pi\bigr\rangle\\
&=
\Tr\bigl( \sum_{k=1}^{\infty} |\xi^k_\pi\rangle\langle \eta^k_\pi| K_\pi \bigr),
\end{split}
\]
(for any orthonormal basis $\{\zeta^j_\pi\}_{j=1}^{\dim(\pi)}$ in $\msf{H}_\pi$) we get
\[\begin{split}
\int_V \Tr( (\sum_{k=1}^{\infty} | \xi^k_\pi
\rangle\langle \eta^k_\pi | ) K_\pi)=
\lim_{p\to\infty}
\int_V \Tr( T_\pi^* (\omega_{n_p}\otimes\id)U^\pi E_\pi^{-1})
\md\mu(\pi).
 \end{split}\]
Observe that \eqref{eq18} implies that for almost all $\pi\in V$ we have
\[
\lim_{p\to\infty} \Tr( T_\pi^* (\omega_{n_p}\otimes\id)U^\pi E_\pi^{-1})=
\Tr( T_\pi^* K_\pi E_\pi^{-1}).
\]
Moreover
\[
|\Tr( T_\pi^* (\id\otimes\omega_{n_p})U^\pi E_\pi^{-1})|\le
\sup_{\pi'\in V}( \dim(\pi') \|T_{\pi'}\| \|E_{\pi'}^{-1}\|)\;
\sup_{n\in\NN}\sup_{\pi'\in\IrrG}\|(\id\otimes\omega_n)U^{\pi'}\|<+\infty
\]
for all $p\in \NN$ and almost all $\pi\in V$. Since $\mu(V)<+\infty$, the above function is integrable and we can make use of the dominated convergence theorem:
\[
\int_V \Tr( (\sum_{k=1}^{\infty} | \xi^k_\pi
\rangle\langle \eta^k_\pi | ) K_\pi)=
\int_V \lim_{p\to\infty}\Tr( T_\pi^* (\omega_{n_p}\otimes\id)U^\pi E_\pi^{-1})
\md\mu(\pi)=
\int_V \Tr( T_\pi^* K_\pi E_\pi^{-1})
\md\mu(\pi).
\]
Subset $V$ and operators $(K_\pi)_{\pi\in\IrrG}$ are arbitrary (within the made assumptions), hence
\[
\sum_{k=1}^{\infty} | \xi^k_\pi
\rangle\langle \eta^k_\pi |=E_\pi^{-1}T_\pi^* 
\]
and
\[
T_\pi=
\sum_{k=1}^{\infty} | \eta^k_\pi
\rangle\langle \xi^k_\pi | E_\pi
\]
for almost all $\pi\in\IrrG$.
\end{proof}

As a consequence we get a result for integral characters:

\begin{proposition}\label{stw9}
Let $\Omega\subseteq\IrrG$ be such a subset that $\int_\Omega\dim\md\mu<+\infty$, assume moreover that the integral character $\chi^{\int}(\int_{\Omega}^{\oplus}\pi\md\mu(\pi))$ is in $\mf{N}_\psi$. Then
\[
\mc{Q}_R \Lambda_{\psi}(
\chi^{\int}(\int_{\Omega}^{\oplus}\pi\md\mu(\pi)))=
\int_{\IrrG}^{\oplus} \chi_{\Omega}(\pi)E_\pi \md\mu(\pi)
\]
and consequently
\[
\|\Lambda_{\psi}(
\chi^{\int}(\int_{\Omega}^{\oplus}\pi\md\mu(\pi)))\|^2=
\int_{\Omega}\Tr(E^2_\pi)\md\mu(\pi).
\]
\end{proposition}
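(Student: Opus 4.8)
The plan is to deduce Proposition \ref{stw9} directly from Proposition \ref{stw8} by a judicious choice of the two families of vector fields. Fix a measurable field of orthonormal bases $\{(\xi^k_\pi)_{\pi\in\IrrG}\}_{k=1}^{\infty}$ (available since the field of Hilbert spaces is canonical) and, following the notation introduced before Proposition \ref{stw6}, put $\xi^k_\pi=\eta^k_\pi:=\xi^{\Omega,k}_\pi=\chi_\Omega(\pi)\xi^k_\pi$. The first thing I would record is that $\bigoplus_{k}\xi^{\Omega,k}$ genuinely defines a vector in $\bigoplus_{k}\int_{\IrrG}^{\oplus}\msf{H}_\pi\md\mu(\pi)$: as computed just above Proposition \ref{stw6}, $\sum_{k=1}^{\infty}\|\xi^{\Omega,k}\|^2=\int_\Omega\dim\md\mu<+\infty$ by hypothesis, so these fields are admissible inputs for Proposition \ref{stw8}.

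Next I would identify the operator assembled from these fields with the integral character. Since $\Tr_\pi=\sum_{k=1}^{\dim(\pi)}\omega_{\xi^k_\pi,\xi^k_\pi}$ on the support of the basis, one has
\[
\sum_{k=1}^{\infty}\int_{\IrrG}(\id\otimes\omega_{\xi^{\Omega,k}_\pi,\xi^{\Omega,k}_\pi})U^\pi\md\mu(\pi)=
\int_\Omega(\id\otimes\Tr_\pi)U^\pi\md\mu(\pi)=
\chi^{\int}(\int_\Omega^{\oplus}\pi\md\mu(\pi)).
\]
By the standing assumption this operator lies in $\mf{N}_\psi$, which is precisely the hypothesis required to invoke Proposition \ref{stw8}. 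That proposition then yields
\[
\mc{Q}_R\,\Lambda_{\psi}(\chi^{\int}(\int_\Omega^{\oplus}\pi\md\mu(\pi)))=
\int_{\IrrG}^{\oplus}\sum_{k=1}^{\infty}|\xi^{\Omega,k}_\pi\rangle\langle\xi^{\Omega,k}_\pi|\,E_\pi\md\mu(\pi).
\]
Because $\{\xi^k_\pi\}$ is an orthonormal basis of $\msf{H}_\pi$, the completeness relation $\sum_k|\xi^k_\pi\rangle\langle\xi^k_\pi|=\I_{\msf{H}_\pi}$ gives $\sum_k|\xi^{\Omega,k}_\pi\rangle\langle\xi^{\Omega,k}_\pi|=\chi_\Omega(\pi)\I_{\msf{H}_\pi}$, so the integrand collapses to $\chi_\Omega(\pi)E_\pi$ and the first displayed equality of the proposition follows.

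Finally I would obtain the norm identity from unitarity of $\mc{Q}_R$: the left-hand side equals $\|\int_{\IrrG}^{\oplus}\chi_\Omega(\pi)E_\pi\md\mu(\pi)\|^2$, and evaluating this norm in the direct integral of Hilbert--Schmidt spaces gives $\int_{\IrrG}\chi_\Omega(\pi)\Tr(E_\pi^* E_\pi)\md\mu(\pi)=\int_\Omega\Tr(E_\pi^2)\md\mu(\pi)$, where the last step uses that each $E_\pi$ is self-adjoint. I do not expect a serious obstacle here, since all the analytic content — measurability and square-integrability of the output field, and the approximation argument pinning down $T_\pi$ — is already handled inside Proposition \ref{stw8}; the only points needing care are verifying that the chosen fields satisfy the integrability hypothesis of that proposition, and the elementary bookkeeping by which the diagonal sum of rank-one operators reduces to $\chi_\Omega(\pi)E_\pi$ through the completeness relation.
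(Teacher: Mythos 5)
Your proposal is correct and follows essentially the same route as the paper's own proof: the same choice $\xi^k_\pi=\eta^k_\pi=\chi_\Omega(\pi)\xi^k_\pi$, the same verification that $\sum_k\|\xi^{\Omega,k}\|^2=\int_\Omega\dim\md\mu<+\infty$, the same identification of the assembled operator with $\chi^{\int}(\int_\Omega^{\oplus}\pi\md\mu(\pi))$, and the same application of Proposition \ref{stw8} followed by the completeness relation $\sum_k|\xi^{\Omega,k}_\pi\rangle\langle\xi^{\Omega,k}_\pi|=\chi_\Omega(\pi)\I_{\msf{H}_\pi}$ and the Hilbert--Schmidt norm computation via unitarity of $\mc{Q}_R$. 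No gaps.
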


\begin{proof}
Let $\{(\zeta^k_\pi)_{\pi\in\IrrG}\}_{k=1}^{\infty}$ be any field of orthonormal bases, define $\xi^k_\pi=\eta^k_\pi=\chi_{\Omega}(\pi) \zeta^k_\pi$. We are in the situation from the previous proposition: for any $k\in\NN$ we have
\[
\int_{\IrrG} \|\xi^k_\pi\|^2\md\mu(\pi)=
\mu(\{\pi\in\Omega\,|\,\dim(\pi)\le k\})<+\infty,
\]
and
\[
\sum_{k=1}^{\infty} \bigl\|\int_{\IrrG}^{\oplus}\xi^k_\pi\md\mu(\pi)
\bigr\|^2=
\sum_{k=1}^{\infty} \mu(\{\pi\in\Omega\,|\,
\dim(\pi)\le k\})=
\int_\Omega\dim\md\mu<+\infty,
\]
which gives us $\bigoplus_{k=1}^{\infty} \int_{\IrrG}^{\oplus}\xi^k_\pi\md\mu(\pi)\in\bigoplus_{k=1}^{\infty} \int_{\IrrG}^{\oplus}\msf{H}_\pi\md\mu(\pi)$. The operator from Proposition \ref{stw8} is
\[
\sum_{k=1}^{\infty} \int_{\IrrG} (\id\otimes\omega_{\xi^k_\pi})U^{\pi}\md\mu(\pi)=
\int_\Omega (\id\otimes\Tr)U^{\pi}\md\mu(\pi)=
\chi^{\int}(\int_{\Omega}^{\oplus}\pi\md\mu(\pi))
\]
which we assume to be in $\mf{N}_\psi$. It follows that Proposition \ref{stw8} gives us
\[
\begin{split}
&\quad\;
\mc{Q}_R \Lambda_{\psi}(
\chi^{\int}(\int_{\Omega}^{\oplus}\pi\md\mu(\pi)))=
\mc{Q}_R\, \Lambda_{\psi}(
\sum_{i=1}^{\infty}\int_{\IrrG}(\id\otimes\omega_{\xi^{k}_{\pi}})U^{\pi}\md\mu(\pi))\\
&=
\int_{\IrrG}^{\oplus} 
\sum_{k=1}^{\infty} | \xi^k_\pi\rangle\langle \xi^k_\pi| E_\pi\md\mu(\pi)=
\int_{\IrrG}^{\oplus} \chi_{\Omega}(\pi)E_\pi \md\mu(\pi).
\end{split}
\]
In particular
\[
\|\Lambda_{\psi}(
\chi^{\int}(\int_{\Omega}^{\oplus}\pi\md\mu(\pi)))\|^2=
\int_{\Omega}\Tr(E^2_\pi)\md\mu(\pi).
\]
\end{proof}

In the previous proposition we have assumed $\chi^{\int}(\int_{\Omega}^{\oplus}\pi\md\mu(\pi))\in\mf{N}_\psi$ and ended up with the conclusion that $\int_\Omega\Tr(E^2_\bullet)<+\infty$. The next proposition tells us in particular that the reverse implication also holds.

\begin{proposition}\label{stw11}
Let $\Omega\subseteq \IrrG$ be a measurable subset such that $\int_\Omega \dim \md\mu<+\infty$. Let $\{\{\xi^n_\pi\}_{\pi\in \IrrG}\}_{n=1}^{\infty}$ be a measurable field of orthonormal bases and let $\{V_p\}_{p\in\NN}$ be an increasing family of subsets of $\IrrG$ such that $\bigcup_{p\in\NN}V_p=\IrrG$ and $\lim_{p\to\infty} p \mu(\Omega\setminus V_p)=0$. Then 
\[
\chi^{\int}(\int_\Omega^{\oplus} \pi \md\mu_\Omega(\pi))=
\sum_{n=1}^{\infty}
\chi^{\int}(\int_{\Omega\rest_n}^{\oplus} \pi \md\mu_{\Omega\rest_n}(\pi))=
\lim_{p\to\infty}\sum_{n=1}^{p}
\int_{\Omega}\chi_{V_p}(\pi) (\id\otimes\omega_{\xi^n_\pi})U^\pi \md\mu_{\Omega}(\pi).
\]
Moreover, if $\int_{\Omega} \Tr(E^2_{\bullet})\md\mu<+\infty$ then also
\[
\chi^{\int}(\int_\Omega^{\oplus} \pi \md\mu_\Omega(\pi)),\quad
\chi^{\int}(\int_{\Omega\rest_n}^{\oplus} \pi \md\mu_{\Omega\rest_n}(\pi)),
\quad
\int_\Omega \chi_{V_p}(\pi)(\id\otimes\omega_{\xi^n_\pi})U^\pi \md\mu_\Omega(\pi)
\in\mf{N}_\psi\quad(n,p\in\NN)
\]
and
\[\begin{split}
\Lambda_{\psi}(\chi^{\int}(\int_\Omega^{\oplus} \pi \md\mu_\Omega(\pi)))&=
\sum_{n=1}^{\infty} \Lambda_{\psi}(\chi^{\int}(\int_{\Omega\rest_n}^{\oplus} \pi \md\mu_{\Omega\rest_n}(\pi)))\\
&=
\lim_{p\to\infty}\sum_{n=1}^{p} \Lambda_{\psi}(\int_\Omega
\chi_{V_p}(\pi) (\id\otimes\omega_{\xi^n_\pi})U^\pi \md\mu_\Omega(\pi)).
\end{split}\]
The above sums and limits are norm-convergent.
\end{proposition}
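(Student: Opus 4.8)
The plan is to treat the operator-norm statement (the first display) and the $\Lambda_{\psi}$-statement separately, reducing the latter to a computation in $\int_{\IrrG}^{\oplus}\HS(\msf{H}_\pi)\md\mu(\pi)$ through the unitary $\mc{Q}_R$. For the operator-norm part I would first record the elementary bound $\|\chi(U^\pi)\|=\|(\id\otimes\Tr_\pi)U^\pi\|\le\dim(\pi)$, valid because $U^\pi$ is unitary and $\chi(U^\pi)=\sum_{i=1}^{\dim\pi}(\id\otimes\omega_{\xi^i_\pi})U^\pi$ is a sum of $\dim(\pi)$ matrix coefficients of norm at most $1$. This gives $\|\chi^{\int}(\int_{\Omega\rest_n}^{\oplus}\pi\md\mu_{\Omega\rest_n}(\pi))\|\le n\,\mu(\Omega\rest_n)$, hence $\sum_{n}\|\chi^{\int}(\int_{\Omega\rest_n}^{\oplus}\pi\md\mu_{\Omega\rest_n}(\pi))\|\le\int_\Omega\dim\md\mu<+\infty$ (in particular $\mu(\Omega)<+\infty$). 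Since $\Omega=\bigsqcup_n\Omega\rest_n$ and the integral character is additive over this disjoint decomposition, the first equality follows as a norm-convergent series.

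For the second equality, set $S_p=\int_\Omega\chi_{V_p}(\pi)\sum_{n=1}^p(\id\otimes\omega_{\xi^n_\pi})U^\pi\md\mu_\Omega(\pi)$ (a well-defined element of $\Linf$ because $\mu(\Omega)<+\infty$), and estimate
\[
\chi^{\int}\Bigl(\int_{\Omega}^{\oplus}\pi\md\mu_\Omega(\pi)\Bigr)-S_p
=\int_{\Omega\setminus V_p}\chi(U^\pi)\md\mu(\pi)
+\int_{\Omega\cap V_p}\Bigl(\chi(U^\pi)-\sum_{n=1}^p(\id\otimes\omega_{\xi^n_\pi})U^\pi\Bigr)\md\mu(\pi).
\]
The second integrand is supported on $\{\dim>p\}$ with norm $\le\dim(\pi)-p$, so it is bounded by $\int_{\{\pi\in\Omega\,:\,\dim\pi>p\}}\dim\md\mu\to0$ by dominated convergence (the sets decrease to $\emptyset$ and $\dim\in\LL^1(\Omega,\mu)$). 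For the first term I would use the split $\int_{\Omega\setminus V_p}\dim\md\mu\le p\,\mu(\Omega\setminus V_p)+\int_{\{\pi\in\Omega\,:\,\dim\pi>p\}}\dim\md\mu$, where the hypothesis $\lim_p p\,\mu(\Omega\setminus V_p)=0$ controls the first summand; this is precisely where that assumption is used. Norm convergence follows.

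For the $\Lambda_\psi$-statement, assume $\int_\Omega\Tr(E^2_\bullet)\md\mu<+\infty$. Membership in $\mf{N}_\psi$ of $\chi^{\int}(\int_{\Omega}^{\oplus}\pi\md\mu(\pi))$ and of each $\chi^{\int}(\int_{\Omega\rest_n}^{\oplus}\pi\md\mu(\pi))$ follows from Proposition \ref{stw9} applied to $\Omega$ and to $\Omega\rest_n$ (the hypothesis passes to subsets), which also yields $\mc{Q}_R\Lambda_\psi(\chi^{\int}(\int_{\Omega'}^{\oplus}\pi\md\mu(\pi)))=\int_{\IrrG}^{\oplus}\chi_{\Omega'}(\pi)E_\pi\md\mu$; for the truncated terms I would use Proposition \ref{stw8} with the single field $\chi_{\Omega\cap V_p}\xi^n$, whose relevant square-integrability comes from $\int_\Omega\Tr(E^2)<+\infty$, giving $\mc{Q}_R\Lambda_\psi(\int_\Omega\chi_{V_p}(\id\otimes\omega_{\xi^n_\pi})U^\pi\md\mu)=\int_{\IrrG}^{\oplus}\chi_{\Omega\cap V_p}(\pi)|\xi^n_\pi\rangle\langle\xi^n_\pi|E_\pi\md\mu$. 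Applying the unitary $\mc{Q}_R$ then reduces everything to $\int_{\IrrG}^{\oplus}\HS(\msf{H}_\pi)\md\mu(\pi)$: for the first $\Lambda_\psi$-equality the images $\int_{\IrrG}^{\oplus}\chi_{\Omega\rest_n}E_\pi\md\mu$ have disjoint supports, hence are orthogonal, and $\sum_n\|\cdot\|^2=\int_\Omega\Tr(E^2)\md\mu<+\infty$, so the partial sums are Cauchy and converge to $\int_{\IrrG}^{\oplus}\chi_\Omega E_\pi\md\mu$. For the limit, $\mc{Q}_R$ sends the $p$-th partial sum to $\int_{\IrrG}^{\oplus}\chi_{\Omega\cap V_p}(\pi)P^p_\pi E_\pi\md\mu$, where $P^p_\pi$ is the orthogonal projection onto $\operatorname{span}\{\xi^1_\pi,\dots,\xi^p_\pi\}$ (so $P^p_\pi=\I_{\msf{H}_\pi}$ once $\dim\pi\le p$); the squared distance to $\int_{\IrrG}^{\oplus}\chi_\Omega E_\pi\md\mu$ equals $\int_{\Omega\setminus V_p}\Tr(E_\pi^2)\md\mu+\int_{\Omega\cap V_p}\|(\I_{\msf{H}_\pi}-P^p_\pi)E_\pi\|_{\HS}^2\md\mu$, the second integrand being supported on $\{\dim>p\}$, and both tend to $0$ by dominated convergence. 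Unitarity of $\mc{Q}_R$ then gives the asserted norm convergence in $\LdG$.

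The main obstacle is the coordinated double truncation — cutting the basis index at $n\le p$ and the set at $V_p$ simultaneously — and verifying that after $\mc{Q}_R$ it corresponds exactly to the projection-weighted field $\chi_{\Omega\cap V_p}P^p_\pi E_\pi$, so that the residual error cleanly separates into a set-tail piece (controlled via $\int_{\Omega\setminus V_p}$ and the hypothesis on $p\,\mu(\Omega\setminus V_p)$) and a dimension-tail piece supported on $\{\dim>p\}$. Bookkeeping the passage through Propositions \ref{stw8} and \ref{stw9}, and justifying that $\Lambda_\psi$ commutes with the infinite sums (closedness of $\Lambda_\psi$ together with orthogonality of the $\mc{Q}_R$-images), is where the genuine work lies; the operator-norm part is comparatively routine.
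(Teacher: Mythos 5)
The first half of your argument (the operator-norm equalities) is correct and essentially the paper's own computation in a slightly different packaging: your direct estimate of $\chi^{\int}(\int_\Omega^{\oplus}\pi\md\mu_\Omega(\pi))-S_p$, split into a set-tail over $\Omega\setminus V_p$ (controlled by $p\,\mu(\Omega\setminus V_p)\to 0$ plus a dimension tail) and a dimension-tail over $\Omega\cap V_p$ supported on $\{\dim>p\}$, is fine.

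The second half has a genuine gap: you quote Propositions \ref{stw8} and \ref{stw9} in the wrong direction. Both of them take membership in $\mf{N}_\psi$ as a \emph{hypothesis} and derive the formula for $\mc{Q}_R\Lambda_\psi(\cdot)$ as a conclusion; neither can be used to establish membership. The paper makes this explicit just before the statement: Proposition \ref{stw9} gives ``$\chi^{\int}(\int_\Omega^{\oplus}\pi\md\mu(\pi))\in\mf{N}_\psi\Rightarrow\int_\Omega\Tr(E^2_\bullet)\md\mu<+\infty$'', and Proposition \ref{stw11} is precisely the reverse implication — so deducing membership of the full character from \ref{stw9} is circular, and membership is exactly the crux of what you are asked to prove. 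The repair follows the paper's route: (i) membership of the truncated operators $\int_\Omega\chi_{V_p}(\pi)(\id\otimes\omega_{\xi^n_\pi})U^\pi\md\mu_\Omega(\pi)$ and of $\chi^{\int}(\int_{\Omega\rest_n}^{\oplus}\pi\md\mu_{\Omega\rest_n}(\pi))$ (finite sums of coefficient integrals) comes from the orthogonality relations, Proposition \ref{Casp}, using $\int_\Omega\Tr(E^2_\bullet)\md\mu<+\infty$ — not from \ref{stw8}, which again presupposes membership; (ii) the Cauchy property of the $\Lambda_\psi$-images of the partial sums must be computed directly for these truncated terms via Proposition \ref{Casp}, rather than by comparing against $\mc{Q}_R\Lambda_\psi$ of the full character, which at that stage is not known to exist; (iii) only then does the $\ssot\times\|\cdot\|$-closedness of $\Lambda_\psi$, combined with the operator-norm convergence from your first half, yield simultaneously that the full character lies in $\mf{N}_\psi$ and that its image is the limit of the images of the partial sums. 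You relegate closedness of $\Lambda_\psi$ to ``bookkeeping'' at the end, but it is the only mechanism producing the membership statements; with the argument reordered in this way, your direct-integral estimates (which are themselves correct) do complete the proof.
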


\begin{proof}
For any $N\in\NN$ we have
\[
\begin{split}
&\quad\;\sum_{n=1}^{N}
\chi^{\int}(\int_{\Omega\rest_n}^{\oplus} \pi \md\mu_{\Omega\rest_n}(\pi))\\
&=
 \sum_{n,m=1}^{N}
\int_{\Omega\rest_n} (\id\otimes\omega_{\xi^m_\pi})U^\pi \md\mu_{\Omega\rest_n}(\pi)\\
&=
 \sum_{m=1}^{N}
(\int_{\Omega}(\id\otimes\omega_{\xi^m_\pi})U^\pi \md\mu_{\Omega}(\pi)-
\sum_{n=N+1}^{\infty}\int_{\Omega\rest_n} (\id\otimes\omega_{\xi^m_\pi})U^\pi \md\mu_{\Omega\rest_n}(\pi))\\
&=
 \sum_{m=1}^{N}
(\int_{\Omega}\chi_{V_N}(\pi)(\id\otimes\omega_{\xi^m_\pi})U^\pi \md\mu_{\Omega}(\pi)-
\sum_{n=N+1}^{\infty}
\int_{\Omega}(\id\otimes\omega_{\xi^m_\pi})U^\pi \md\mu_{\Omega}(\pi)\\
&\quad\quad\quad\quad\quad\quad
\quad\quad\quad\quad\quad\quad\quad\quad\quad\quad\quad+
\int_{\Omega}\chi_{\IrrG\setminus V_N}(\pi) (\id\otimes\omega_{\xi^m_\pi})U^\pi \md\mu_{\Omega}(\pi)).
\end{split}
\]
The two last terms disappear in the limit $N\to\infty$:
\[
\begin{split}
&\bigl\|\sum_{m=1}^{N}
\sum_{n=N+1}^{\infty}\int_{\Omega\rest_n} (\id\otimes\omega_{\xi^m_\pi})U^\pi \md\mu_{\Omega\rest_n}(\pi)\bigr\|\le
\sum_{n=N+1}^{\infty} N \mu(\Omega\rest_n)\le
\sum_{n=N+1}^{\infty} n \mu(\Omega\rest_n)\xrightarrow[N\to\infty]{}0,\\
&\bigl\|
\sum_{m=1}^{N}
\int_{\Omega}\chi_{\IrrG\setminus V_N}(\pi) (\id\otimes\omega_{\xi^m_\pi})U^\pi \md\mu_{\Omega}(\pi)\bigr\|\le
N\mu(\Omega\setminus V_N)\xrightarrow[N\to\infty]{}0.
\end{split}
\]
Consequently, we get the convergence in norm:
\[
\chi^{\int}(\int_\Omega^{\oplus} \pi \md\mu_\Omega(\pi))=
\lim_{N\to\infty} \sum_{n=1}^{N}
\chi^{\int}(\int_{\Omega\rest_n}^{\oplus} \pi \md\mu_{\Omega\rest_n}(\pi))=
\lim_{N\to\infty} \sum_{n=1}^{N}
\int_{\Omega}\chi_{V_N}(\pi) (\id\otimes\omega_{\xi^n_\pi})U^\pi \md\mu_{\Omega}(\pi),
\]
(the first equality follows from the definition of the integral character). Due to Proposition \ref{Casp} we know that the integrals 
\[
\int_\Omega \chi_{V_N}(\pi)(\id\otimes\omega_{\xi^n_\pi})U^\pi \md\mu_\Omega(\pi),\quad
\chi^{\int}(\int_{\Omega\rest_n}^{\oplus}\pi\md\mu_{\Omega\rest_n}(\pi))
\]
belong to $\mf{N}_\psi$ (here we use $\int_\Omega \Tr(E^2_{\bullet})\md\mu<+\infty$). For $N>N'\in\NN$ we have
\[
\begin{split}
&\quad\;\bigl\|\sum_{n=1}^{N}\Lambda_{\psi}(
\int_{\Omega}\chi_{V_N}(\pi)
 (\id\otimes\omega_{\xi^n_\pi})U^\pi \md\mu_{\Omega}(\pi))-
\sum_{n=1}^{N'}\Lambda_{\psi}(
\int_{\Omega}\chi_{V_{N'}}(\pi)
 (\id\otimes\omega_{\xi^n_\pi})U^\pi \md\mu_{\Omega}(\pi))\bigr\|^2\\
&=
\bigl\|
\sum_{n=N'+1}^{N}
\Lambda_{\psi}(
\int_{\Omega}\chi_{V_{N'}}(\pi)
 (\id\otimes\omega_{\xi^n_\pi})U^\pi \md\mu_{\Omega}(\pi))+
\sum_{n=1}^{N}
\Lambda_{\psi}(
\int_{\Omega}\chi_{V_N\setminus V_{N'}}(\pi)
 (\id\otimes\omega_{\xi^n_\pi})U^\pi \md\mu_{\Omega}(\pi))\bigr\|^2\\
&=
\sum_{n,n'=N'+1}^{N} \int_{\Omega} \chi_{V_{N'}}(\pi)\ismaa{\xi^n_\pi}{\xi^{n'}_\pi}
\ismaa{E_\pi\xi^n_\pi}{E_\pi\xi^{n'}_\pi}\md\mu_\Omega(\pi)\\
&\quad\quad\quad\quad\quad\quad+
\sum_{n,n'=1}^{N} \int_{\Omega} \chi_{V_N\setminus V_{N'}}(\pi)\ismaa{\xi^n_\pi}{\xi^{n'}_\pi}
\ismaa{E_\pi\xi^n_\pi}{E_\pi\xi^{n'}_\pi}\md\mu_\Omega(\pi)\\
&=
\sum_{n=N'+1}^{N} \int_\Omega \chi_{V_{N'}}(\pi) \ismaa{\xi^n_\pi}{E_\pi^2 \xi^n_\pi}\md\mu_\Omega(\pi)+
\sum_{n=1}^{N} \int_\Omega \chi_{V_N\setminus V_{N'}}(\pi)\ismaa{\xi^n_\pi}{E_\pi^2 \xi^n_\pi}\md\mu_\Omega(\pi)\\
&\le
\sum_{n=N'+1}^{N}\sum_{m=N'+1}^{\infty} \int_{\Omega\rest_m} 
\ismaa{\xi^n_\pi}{E_\pi^2 \xi^n_\pi}\md\mu(\pi)+
\int_{\Omega\setminus V_{N'}} \Tr(E^2_\pi)\md\mu(\pi)
\\
&\le
\sum_{m=N'+1}^{\infty} \int_{\Omega\rest_m} \Tr(E_\pi^2 )\md\mu_{\Omega}(\pi)+\int_{\Omega\setminus V_{N'}}\Tr(E^2_\pi)\md\mu(\pi)
\xrightarrow[N'\to\infty]{} 0.
\end{split}
\]
As the Hilbert space $\LL^2(\GG)$ is complete, we get the existence of the norm limit
\[
\lim_{N\to\infty} \sum_{n=1}^{N}\Lambda_{\psi}(\int_{\Omega} \chi_{V_N}(\pi)(\id\otimes\omega_{\xi^n_\pi})U^\pi \md\mu_{\Omega}(\pi)).
\]
The $\ssot\times \|\cdot\|$ closedness of $\Lambda_\psi$ implies that $\chi^{\int}(\int_{\Omega}^{\oplus}\pi\md\mu_{\Omega}(\pi))\in\mf{N}_{\psi}$ and
\[
\lim_{N\to\infty} \sum_{n=1}^{N}\Lambda_{\psi}(\int_{\Omega} \chi_{V_N}(\pi)(\id\otimes\omega_{\xi^n_\pi})U^\pi \md\mu_{\Omega}(\pi))=\Lambda_{\psi}(
\chi^{\int}(\int_{\Omega}^{\oplus} \pi\md\mu_{\Omega}(\pi))).
\]
We check the last equality of the statement in a similar fashion: convergence
\[
\begin{split}
&\quad\;\bigl\| \sum_{n=N'}^{N} \Lambda_{\psi}(\chi^{\int} (\int_{\Omega\rest_n}^{\oplus}
\pi\md\mu_{\Omega\rest_n}(\pi)))\bigr\|^2
=\bigl\|\sum_{n=N'}^{N}\sum_{m=1}^{n}
\Lambda_{\psi}(\int_{\Omega\rest_n} (\id\otimes\omega_{\xi^m_\pi})U^\pi \md\mu_{\Omega}(\pi))\bigr\|^2\\
&=
\sum_{n,n'=N'}^{N}\sum_{m=1}^{n}\sum_{m'=1}^{n'} \int_{\IrrG} 
\chi_{\Omega\rest_n\cap \Omega\rest_{n'}}(\pi)\ismaa{\xi^m_\pi}{\xi^{m'}_\pi}
\ismaa{E_\pi\xi^m_\pi}{E_\pi\xi^{m'}_\pi}\md\mu(\pi)\\
&=
\sum_{n=N'}^{N}\sum_{m=1}^{n} \int_{\Omega\rest_n} \ismaa{\xi^m_\pi}{E_\pi^2 \xi^m_\pi}\md\mu_{\Omega\rest_n}(\pi)=
\sum_{n=N'}^{N}\int_{\Omega\rest_n} \Tr(E^2_\pi)\md\mu_{\Omega\rest_n}(\pi)\xrightarrow[N',N\to\infty]{} 0.
\end{split}
\]
and $\ssot\times \|\cdot\|$ closedness of $\Lambda_\psi$ give us
\[
\lim_{N\to\infty} \sum_{n=1}^{N}\Lambda_{\psi}(
\chi^{\int}(\int_{\Omega\rest_n}^{\oplus} \pi\md\mu_{\Omega\rest_n}(\pi))
)=\Lambda_{\psi}(
\chi^{\int}(\int_{\Omega}^{\oplus} \pi\md\mu_{\Omega}(\pi))).
\]
\end{proof}

The next result concerns integral characters related to the tensor product $\kappa\tp\sigma_\Omega$.

\begin{lemma}\label{lemat9}
Let $\kappa\colon\CGDu\rightarrow\B(\msf{H}_\kappa)$ be a nondegenerate representation such that $\kappa\lec \Lambda_{\whG}$ and $\dim(\kappa)<+\infty$, let $\Omega\subseteq \IrrG$ be a measurable subset such that $\int_\Omega \dim \md\mu<+\infty$ and $\sup_{\pi\in\Omega}\|E^2_\pi\|<+\infty$. Let $\tilde{\mu}=(\varpi^{\kappa,\Omega,\mu}+\chi_{\IrrG\setminus \F^{1}_{\kappa\stp\sigma_\Omega}})\mu$ be an equivalent Plancherel measure. Then
\begin{equation}\label{eq10}
\dim(\kappa)\int_{\Omega}\dim \md\mu=
\sum_{i=1}^{\infty} \int_{\F^{i}_{\kappa\stp\sigma_\Omega}} 
\dim \md\tilde{\mu}<+\infty
\end{equation}
and
\[
\begin{split}
 \chi(U^\kappa)\chi^{\int} ( \int_{\Omega}^{\oplus} \pi \md\mu_\Omega(\pi))&=
\chi^{\int} ( \int_{\Omega}^{\oplus} \kappa\tp\pi \md\mu_\Omega(\pi))=
\chi^{\int}( \int_{\bigsqcup_{i=1}^{\infty} \F^{i}_{\kappa\stp\sigma_\Omega} }^{\oplus} \zeta \md\tilde{\mu}_{\bigsqcup_{i=1}^{\infty} 
\F^{i}_{\kappa\stp\sigma_\Omega} }(\zeta))\\
&=\sum_{i=1}^{\infty}\chi^{\int}( \int_{\F^{i}_{\kappa\stp\sigma_\Omega} }^{\oplus} \zeta \md\tilde{\mu}_{\F^{i}_{\kappa\stp\sigma_\Omega} }(\zeta)).
\end{split}
\]
Moreover
\[
\sum_{i'=1}^{\infty}\chi^{\int}( \int_{\F^{i'}_{\kappa\stp\sigma_\Omega} }^{\oplus} \zeta \md\tilde{\mu}_{\F^{i'}_{\kappa\stp\sigma_\Omega} }(\zeta)),\quad
\chi^{\int}( \int_{\F^{i}_{\kappa\stp\sigma_\Omega} }^{\oplus} \zeta \md\tilde{\mu}_{\F^{i}_{\kappa\stp\sigma_\Omega} }(\zeta))\in\mf{N}_\psi
\quad(i\in\NN)
\]
and
\[
\Lambda_{\psi}\bigl(
\sum_{i=1}^{\infty}\chi^{\int}( \int_{\F^{i}_{\kappa\stp\sigma_\Omega} }^{\oplus} \zeta \md\tilde{\mu}_{\F^{i}_{\kappa\stp\sigma_\Omega} }(\zeta))
\bigr)=
\sum_{i=1}^{\infty}\Lambda_{\psi}\bigl(\chi^{\int}( \int_{\F^{i}_{\kappa\stp\sigma_\Omega} }^{\oplus} \zeta \md\tilde{\mu}_{\F^{i}_{\kappa\stp\sigma_\Omega} }(\zeta))\bigr).
\]
The above series are norm convergent. 
\end{lemma}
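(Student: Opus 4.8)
The plan is to read the three displayed identities off the structural results of Sections~\ref{secintrep}--\ref{secweightchar}, and then to extract the square-integrability assertions and the commutation of $\Lambda_{\psi}$ with the infinite sum from Propositions~\ref{stw11} and~\ref{stw9}, transported to the twisted Plancherel measure $\tilde{\mu}$ via Proposition~\ref{stw14} and controlled by the bound of Proposition~\ref{stw4}. First I would dispose of \eqref{eq10} and the first two equalities. Equation \eqref{eq10} is just the equality of integral dimensions furnished by Proposition~\ref{treq} applied to $\pi=\bigoplus_{i}\int_{\F^i_{\kappa\stp\sigma_\Omega}}^{\oplus}\zeta\,\md\mu_{\F^i_{\kappa\stp\sigma_\Omega}}(\zeta)$ and $\gamma=\int_\Omega^{\oplus}\kappa\tp x\,\md\mu_\Omega(x)$: indeed $\dim^{\int}(\gamma)=\dim(\kappa)\int_\Omega\dim\md\mu$ while the twisted integral dimension of $\pi$ is $\sum_i\int_{\F^i_{\kappa\stp\sigma_\Omega}}\dim\md\tilde\mu$, and finiteness is exactly the hypothesis $\int_\Omega\dim\md\mu<+\infty$. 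The identity $\chi(U^\kappa)\chi^{\int}(\sigma_\Omega)=\chi^{\int}(\int_\Omega^{\oplus}\kappa\tp\pi\,\md\mu_\Omega(\pi))$ is the multiplicativity of the integral character over tensor products from Section~\ref{secintrep}, viewing $\kappa$ as an integral representation over a one-point space so that $\chi^{\int}(\kappa)=\chi(U^\kappa)$.

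The second equality is where Proposition~\ref{trchar} enters. The defining property of $\varpi^{\kappa,\Omega,\mu}$ set up in the ``Functions $\varpi$'' subsection is precisely the equality of integral weights $\Psi^{\int_\Omega}(\mc{O}\cdot\mc{O}^*)=(\Psi^{\int_{\bigsqcup_i\F^i_{\kappa\stp\sigma_\Omega}}})^{\sim}$ under the unitary intertwiner $\mc{O}$; reading this as the hypothesis of Proposition~\ref{trchar} (with $\int_Y=\int_\Omega^{\oplus}\kappa\tp x\,\md\mu_\Omega$ and $\int_X$ the twisted representation $\bigoplus_i\int_{\F^i_{\kappa\stp\sigma_\Omega}}^{\oplus}\zeta\,\md\tilde\mu_{\F^i_{\kappa\stp\sigma_\Omega}}(\zeta)$, $\mc{O}$ precomposed with the canonical unitary implementing $\mu\to\tilde\mu$) immediately yields $\chi^{\int}(\int_\Omega^{\oplus}\kappa\tp\pi\,\md\mu_\Omega)=\chi^{\int}(\int_{\bigsqcup_i\F^i_{\kappa\stp\sigma_\Omega}}^{\oplus}\zeta\,\md\tilde\mu)$. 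The third equality is the additivity of the integral character over direct sums (Section~\ref{secintrep}); the sum $\sum_i\chi^{\int}(\int_{\F^i_{\kappa\stp\sigma_\Omega}}^{\oplus}\zeta\,\md\tilde\mu_{\F^i_{\kappa\stp\sigma_\Omega}})$ converges in norm because $\|\chi^{\int}(\sigma_{\F^i_{\kappa\stp\sigma_\Omega}},\tilde\mu)\|\le\int_{\F^i_{\kappa\stp\sigma_\Omega}}\dim\md\tilde\mu$ and these majorants sum to $\dim(\kappa)\int_\Omega\dim\md\mu<+\infty$ by \eqref{eq10}.

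For the membership in $\mf{N}_\psi$ I would apply Proposition~\ref{stw11} with the Plancherel data for $\tilde\mu$, where by Proposition~\ref{stw14} one has $\tilde{E}_\pi=\sqrt{\varpi^{\kappa,\Omega,\mu}(\pi)}\,E_\pi$ on $\F^1_{\kappa\stp\sigma_\Omega}\supseteq\F^i_{\kappa\stp\sigma_\Omega}$. Its two hypotheses for the subset $\F^i_{\kappa\stp\sigma_\Omega}$ are $\int_{\F^i_{\kappa\stp\sigma_\Omega}}\dim\md\tilde\mu<+\infty$, already known from \eqref{eq10}, and $\int_{\F^i_{\kappa\stp\sigma_\Omega}}\Tr(\tilde{E}^2_\bullet)\md\tilde\mu<+\infty$. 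Writing $m=\sum_{i}\chi_{\F^i_{\kappa\stp\sigma_\Omega}}$ (so $m\ge1$ on $\F^1_{\kappa\stp\sigma_\Omega}$), I would obtain the latter by combining the bound $m(\pi)\varpi^{\kappa,\Omega,\mu}(\pi)\|E^2_\pi\|\le\dim(\kappa)\sup_{\pi'\in\Omega}\|E^2_{\pi'}\|$ of Proposition~\ref{stw4} with $\Tr(E^2_\pi)\le\dim(\pi)\|E^2_\pi\|$, which dominates $\int_{\F^1_{\kappa\stp\sigma_\Omega}}m^2\Tr(\tilde{E}^2)\md\tilde\mu=\int_{\F^1_{\kappa\stp\sigma_\Omega}}m^2(\varpi^{\kappa,\Omega,\mu})^2\Tr(E^2)\md\mu$ by $\dim(\kappa)(\sup_\Omega\|E^2\|)\sum_i\int_{\F^i_{\kappa\stp\sigma_\Omega}}\dim\md\tilde\mu<+\infty$. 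In particular each $\int_{\F^i_{\kappa\stp\sigma_\Omega}}\Tr(\tilde{E}^2)\md\tilde\mu$ is finite, so Proposition~\ref{stw11} gives $\chi^{\int}(\int_{\F^i_{\kappa\stp\sigma_\Omega}}^{\oplus}\zeta\,\md\tilde\mu_{\F^i_{\kappa\stp\sigma_\Omega}})\in\mf{N}_\psi$, and Proposition~\ref{stw9} identifies $\tilde{\mc{Q}}_R\Lambda_{\psi}(\chi^{\int}(\sigma_{\F^i_{\kappa\stp\sigma_\Omega}},\tilde\mu))=\int_{\IrrG}^{\oplus}\chi_{\F^i_{\kappa\stp\sigma_\Omega}}(\pi)\tilde{E}_\pi\md\tilde\mu(\pi)$.

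Finally, for the infinite sum I would prove that the partial sums $S_N=\sum_{i=1}^{N}\Lambda_{\psi}(\chi^{\int}(\sigma_{\F^i_{\kappa\stp\sigma_\Omega}},\tilde\mu))$ are Cauchy in $\LL^2(\GG)$. Since $\tilde{\mc{Q}}_R$ is unitary, $\|S_N-S_{N'}\|^2=\int_{\IrrG}\bigl(\sum_{N'<i\le N}\chi_{\F^i_{\kappa\stp\sigma_\Omega}}\bigr)^2\Tr(\tilde{E}^2)\md\tilde\mu$; bounding $\bigl(\sum_{N'<i\le N}\chi_{\F^i_{\kappa\stp\sigma_\Omega}}\bigr)^2\le m\sum_{i>N'}\chi_{\F^i_{\kappa\stp\sigma_\Omega}}$ and using that the integrand is dominated by the $\tilde\mu$-integrable function $m^2\Tr(\tilde{E}^2)$ (finite by the previous paragraph) while $\sum_{i>N'}\chi_{\F^i_{\kappa\stp\sigma_\Omega}}\downarrow0$ pointwise (as $m<+\infty$ everywhere, by Lemma~\ref{lemat8}), dominated convergence yields the Cauchy property. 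Then the $\ssot\times\|\cdot\|$-closedness of $\Lambda_{\psi}$, together with the norm convergence $\sum_{i=1}^{N}\chi^{\int}(\sigma_{\F^i_{\kappa\stp\sigma_\Omega}},\tilde\mu)\to\chi^{\int}(\kappa\tp\sigma_\Omega,\tilde\mu)$ from the second paragraph, gives $\sum_i\chi^{\int}(\sigma_{\F^i_{\kappa\stp\sigma_\Omega}},\tilde\mu)\in\mf{N}_\psi$ and $\Lambda_{\psi}(\sum_i\,\cdot\,)=\lim_N S_N=\sum_i\Lambda_{\psi}(\chi^{\int}(\sigma_{\F^i_{\kappa\stp\sigma_\Omega}},\tilde\mu))$, the asserted norm-convergent identity. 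The main obstacle is exactly this last domination step: converting the a.e.\ operator inequality of Proposition~\ref{stw4} into genuine $\LL^1(\tilde\mu)$-integrability of $m^2\Tr(\tilde{E}^2)$, which is what simultaneously powers the membership in $\mf{N}_\psi$ and the dominated-convergence Cauchy estimate.
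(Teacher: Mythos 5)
Your proposal is correct, and its skeleton agrees with the paper's: equation \eqref{eq10} and the character identities come from Proposition \ref{treq}, the multiplicativity of integral characters, and Proposition \ref{trchar}, exactly as in the paper; membership of each $\chi^{\int}(\int_{\F^{i}_{\kappa\stp\sigma_\Omega}}^{\oplus}\zeta\,\md\tilde{\mu}_{\F^{i}_{\kappa\stp\sigma_\Omega}}(\zeta))$ in $\mf{N}_\psi$ is obtained in both cases from the Proposition \ref{stw4} bound on $\varpi^{\kappa,\Omega,\mu}$ fed into Proposition \ref{stw11} for the twisted Plancherel data $(\tilde{\mu},\tilde{E})$. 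Where you genuinely diverge is in the two remaining points. First, the paper gets $\sum_i\chi^{\int}(\cdots)\in\mf{N}_\psi$ from the left-ideal property of $\mf{N}_\psi$ (using $\chi^{\int}(\sigma_\Omega)\in\mf{N}_\psi$), whereas you recover it as a byproduct of the closedness of $\Lambda_\psi$; both are fine. Second, and more substantially, for the Cauchy property of the partial sums the paper expands each $\Lambda_\psi$-image into matrix coefficients via Proposition \ref{stw11}, applies the orthogonality relations, rewrites $\chi_{\F^i_{\kappa\stp\sigma_\Omega}\cap\F^{i'}_{\kappa\stp\sigma_\Omega}}$ through the sets $\E^n_{\kappa\stp\sigma_\Omega}$, and carries out explicit double- and triple-sum estimates (the bounds culminating in $6\sum_{m\ge a}m\int_{\E^m_{\kappa\stp\sigma_\Omega}}\dim\md\tilde{\mu}\to0$). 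You instead transport Proposition \ref{stw9} to $\tilde{\mu}$ (legitimate via Proposition \ref{stw14}), identify $\Lambda_\psi(\chi^{\int}(\sigma_{\F^i_{\kappa\stp\sigma_\Omega}},\tilde{\mu}))$ with the decomposable vector $\int_{\IrrG}^{\oplus}\chi_{\F^i_{\kappa\stp\sigma_\Omega}}\tilde{E}_\pi\md\tilde{\mu}(\pi)$, and close by dominated convergence against the majorant $m^2\Tr(\tilde{E}^2_\bullet)$ with $m=\sum_i\chi_{\F^i_{\kappa\stp\sigma_\Omega}}$; your verification that this majorant is $\tilde{\mu}$-integrable (grouping $m^2\varpi^2\Tr(E^2)\le(m\varpi)\dim(\kappa)(\sup_\Omega\|E^2_\bullet\|)\dim$ and invoking \eqref{eq10}) is correct, as is the pointwise bound $(\sum_{N'<i\le N}\chi_{\F^i_{\kappa\stp\sigma_\Omega}})^2\le m\sum_{i>N'}\chi_{\F^i_{\kappa\stp\sigma_\Omega}}$ and the a.e.\ finiteness of $m$ from Lemma \ref{lemat8}. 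Your route is shorter and avoids the $\E^n$-combinatorics entirely, at the modest cost of invoking Proposition \ref{stw9} for the twisted measure, where the paper's argument stays within the orthogonality relations and is more self-contained.
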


\begin{proof}
The first equality was proven in Proposition \ref{treq}, the second one follows from the equality of the integral weights (after composition with an appropriate unitary operator) and Proposition \ref{trchar}.\\
Let $\tilde{E}_\pi$ be an operator associated with Plancherel measure $\tilde{\mu}$: $\tilde{E}_\pi=\sqrt{\varpi^{\kappa,\Omega,\mu}(\pi)}E_\pi$. Then
\begin{equation}\label{eq3}
\begin{split}
\Tr(\tilde{E}_{\pi}^2)=\varpi^{\kappa,\Omega,\mu}(\pi)\Tr(E_\pi^2)&\le 
\dim(\kappa)(\sup_{\pi'\in\Omega} \|E^2_{\pi'}\|) \|E^2_\pi\|^{-1}
\bigl(\sum_{i=1}^{\infty} \chi_{\F^i_{\kappa\stp\sigma_{\Omega}}}\bigr)(\pi)^{-1}\Tr(E^2_\pi)\\
&\le
\dim(\kappa)(\sup_{\pi'\in\Omega} \|E^2_{\pi'}\|) 
\bigl(\sum_{i=1}^{\infty} \chi_{\F^i_{\kappa\stp\sigma_{\Omega}}}\bigr)(\pi)^{-1}\dim(\pi)
\end{split}
\end{equation}
for almost all $\pi\in \F^{1}_{\kappa\stp \sigma_\Omega}$ (Proposition \ref{stw4}). Therefore for any $i\in \NN$ we have
\[
\int_{\F^i_{\kappa\stp\sigma_\Omega}}\Tr(\tilde{E}^2_\pi)\md\tilde{\mu}(\pi)\le
\dim(\kappa)(\sup_{\pi'\in\Omega} \|E^2_{\pi'}\|) 
\int_{\F^i_{\kappa\stp\sigma_\Omega}}\dim\md\tilde{\mu}<+\infty
\]
We can make use of Proposition \ref{stw11} to get
\[
\chi^{\int}(\int_{\F^i_{\kappa\stp\sigma_\Omega}}^{\oplus} \pi \md\tilde{\mu}_{\F^i_{\kappa\stp\sigma_\Omega}}(\pi))=
\int_{\F^i_{\kappa\stp\sigma_\Omega}}\varpi^{\kappa,\Omega,\mu}(\pi)\chi( U^\pi) \md\mu_{\F^i_{\kappa\stp\sigma_\Omega}}(\pi)\in\mf{N}_{\psi}\quad
(i\in\NN).
\]
Since $\sup_{\Omega}\|E^2_\bullet\|<+\infty$ and $\int_\Omega\dim\md\mu<+\infty$ then $\chi^{\int}(\int_\Omega^{\oplus}\pi\md\mu(\pi))\in\mf{N}_{\psi}$ (Proposition \ref{stw11}) and as $\mf{N}_{\psi}$ is an left ideal we also have
\[
\chi(U^{\kappa})\chi^{\int}(\int_\Omega^{\oplus}\pi\md\mu_\Omega(\pi))=\sum_{i=1}^{\infty}\chi^{\int}(\int_{\F^i_{\kappa\stp\sigma_\Omega}}^{\oplus} \pi \md\tilde{\mu}_{\F^i_{\kappa\stp\sigma_\Omega}}(\pi))\in\mf{N}_\psi.
\]
Let us now show that
\[
\bigl(\sum_{i=1}^{a} \Lambda_{\psi}(
\int_{\F^i_{\kappa\stp\sigma_\Omega}}\varpi^{\kappa,\Omega,\mu}(\pi)\chi( U^\pi) \md\mu_{\F^i_{\kappa\stp\sigma_\Omega}}(\pi)
)\bigr)_{a\in\NN}
\]
is a Cauchy sequence in $\LdG$. Let $\{(\xi^n_\pi)_{\pi\in\IrrG}\,|\, n\in\NN\}$ be a measurable field of orthonormal bases. Again due to Proposition \ref{stw11} we have
\[
\Lambda_{\psi}(
\int_{\F^i_{\kappa\stp\sigma_\Omega}}\varpi^{\kappa,\Omega,\mu}(\pi)\chi( U^\pi) \md\mu_{\F^i_{\kappa\stp\sigma_\Omega}}(\pi)
)=
\sum_{n=1}^{\infty}
\Lambda_{\psi}(
\int_{\F^i_{\kappa\stp\sigma_\Omega}}
(\id\otimes\omega_{\xi^{n}_\pi})(U^{\pi})\,
 \md\tilde{\mu}_{\F^i_{\kappa\stp\sigma_\Omega}}(\pi)
\quad(i\in\NN),
\]
hence 
\[
\begin{split}
&\quad\;\bigl\|\sum_{i=a}^{a'} \Lambda_{\psi}(
\int_{\F^i_{\kappa\stp\sigma_\Omega}}\varpi^{\kappa,\Omega,\mu}(\pi)\chi( U^\pi) \md\mu_{\F^i_{\kappa\stp\sigma_\Omega}}(\pi)
)\bigr\|^2\\
&=
\sum_{i,i'=a}^{a'}\sum_{n,n'=1}^{\infty}
\ismaa{
\Lambda_{\psi}(
\int_{\F^i_{\kappa\stp\sigma_\Omega}}
(\id\otimes\omega_{\xi^{n}_\pi})(U^{\pi})\,
 \md\tilde{\mu}_{\F^i_{\kappa\stp\sigma_\Omega}}(\pi)
)}
{
\Lambda_{\psi}(
\int_{\F^{i'}_{\kappa\stp\sigma_\Omega}}
(\id\otimes\omega_{\xi^{n'}_\pi})(U^{\pi }) \md\tilde{\mu}_{\F^{i'}_{\kappa\stp\sigma_\Omega}}(\pi)
)}\\
&=
\sum_{i,i'=a}^{a'}\sum_{n,n'=1}^{\infty}
\int_{\IrrG}
\chi_{\F^i_{\kappa\stp\sigma_\Omega}\cap\F^{i'}_{\kappa\stp\sigma_\Omega}}(\pi)
\ismaa{\xi^{n}_\pi}{\xi^{n'}_\pi}
\ismaa{\tilde{E}_\pi\xi^{n}_\pi}{\tilde{E}_\pi\xi^{n'}_\pi}
\md\tilde{\mu}(\pi)\\
&=
\sum_{i,i'=a}^{a'}
\int_{\IrrG}
\chi_{\F^i_{\kappa\stp\sigma_\Omega}\cap\F^{i'}_{\kappa\stp\sigma_\Omega}}(\pi)
\Tr(\tilde{E}^2_\pi)
\md\tilde{\mu}(\pi)
\end{split}
\]
for $a'>a$. The last equality follows from the monotone convergence theorem. Due to the inequality \eqref{eq3} and the fact that
\[
\begin{split}
\F^i_{\kappa\stp\sigma_\Omega}=\bigcup_{n\ge i}\E^n_{\kappa\stp\sigma_\Omega},&
\quad
\chi_{\F^i_{\kappa\stp\sigma_\Omega}\cap\F^j_{\kappa\stp\sigma_\Omega}}=
\sum_{n=\max\{ i,j\}}^{\infty}\chi_{\E^n_{\kappa\stp\sigma_\Omega}}
\quad(i,j\in\NN),\\
\sum_{j=1}^{\infty}\chi_{\F^j_{\kappa\stp\sigma_\Omega}}&=
\sum_{n=1}^{\infty}n\chi_{\E^n_{\kappa\stp\sigma_\Omega}}
\end{split}
\]
we can further write
\[
\begin{split}
&\quad\;\bigl\|\sum_{i=a}^{a'} \Lambda_{\psi}(
\int_{\F^i_{\kappa\stp\sigma_\Omega}}\varpi^{\kappa,\Omega,\mu}(\pi)\chi( U^\pi ) \md\mu_{\F^i_{\kappa\stp\sigma_\Omega}}(\pi)
)\bigr\|^2\\
&\le
\dim(\kappa) (\sup_{\pi'\in \Omega} \|E^2_{\pi'}\|)
\sum_{i,i'=a}^{a'}\int_{\F^1_{\kappa\stp\sigma_\Omega}}
\bigl(\sum_{n=\max\{i,i'\}}^{\infty} \chi_{\E^{n}_{\kappa\stp\sigma_\Omega}}\bigr)
\bigl(\sum_{n=1}^{\infty}n \chi_{\E^{n}_{\kappa\stp\sigma_\Omega}}\bigr)^{-1}\dim\md\tilde{\mu}\\
&=
\dim(\kappa)(\sup_{\pi'\in \Omega} \|E^2_{\pi'}\|)
\sum_{i,i'=a}^{a'}
\sum_{m=1}^{\infty}\int_{\E^{m}_{\kappa\stp\sigma_\Omega}}
\bigl(\sum_{n=\max\{i,i'\}}^{\infty} \chi_{\E^{n}_{\kappa\stp\sigma_\Omega}}\bigr)
\bigl(\sum_{n=1}^{\infty}n \chi_{\E^{n}_{\kappa\stp\sigma_\Omega}}\bigr)^{-1}\dim\md\tilde{\mu}\\
&=
\dim(\kappa)(\sup_{\pi'\in \Omega} \|E^2_{\pi'}\|)
\sum_{i,i'=a}^{a'}
\sum_{m=1}^{\infty}\int_{\E^{m}_{\kappa\stp\sigma_\Omega}}
[m\ge i,i']\dim(\pi)
\;\tfrac{1}{m}\md\tilde{\mu}(\pi)\\
&=
\dim(\kappa)(\sup_{\pi'\in \Omega} \|E^2_{\pi'}\|)
\sum_{i,i'=a}^{a'}\;
\sum_{m=\max\{i,i'\}}^{\infty}\tfrac{1}{m}\int_{\E^{m}_{\kappa\stp\sigma_\Omega}}
\dim(\pi)
\md\tilde{\mu}(\pi)\\
&=
\dim(\kappa)(\sup_{\pi'\in \Omega} \|E^2_{\pi'}\|)
\sum_{i=a}^{a'}\;\bigl(
(i-a)\sum_{m=i}^{\infty}\tfrac{1}{m}\int_{\E^{m}_{\kappa\stp\sigma_\Omega}}
\dim(\pi)
\md\tilde{\mu}(\pi)\\
&\quad\quad\quad\quad\quad\quad\quad\quad\quad
\quad\quad\quad\quad\quad\quad\quad\quad\quad+
\sum_{i'=i}^{a'}
\sum_{m=i'}^{\infty}\tfrac{1}{m}\int_{\E^{m}_{\kappa\stp\sigma_\Omega}}
\dim(\pi)
\md\tilde{\mu}(\pi)
\bigr)\\
&\le
\dim(\kappa)(\sup_{\pi'\in \Omega} \|E^2_{\pi'}\|)
\sum_{i=a}^{\infty}\;\bigl(
(i-a)\sum_{m=i}^{\infty}\tfrac{1}{m}\int_{\E^{m}_{\kappa\stp\sigma_\Omega}}
\dim(\pi)
\md\tilde{\mu}(\pi)\\
&\quad\quad\quad\quad\quad\quad\quad\quad\quad
\quad\quad\quad\quad\quad\quad\quad\quad\quad+
\sum_{i'=i}^{\infty}
\sum_{m=i'}^{\infty}\tfrac{1}{m}\int_{\E^{m}_{\kappa\stp\sigma_\Omega}}
\dim(\pi)
\md\tilde{\mu}(\pi)
\bigr).
\end{split}
\]
Let us check that both terms of this sum converge to $0$ as $a\to\infty$:
\[
\begin{split}
0&\le\sum_{i=a}^{\infty}
(i-a)\sum_{m=i}^{\infty}\tfrac{1}{m}\int_{\E^{m}_{\kappa\stp\sigma_\Omega}}
\dim(\pi)
\md\tilde{\mu}(\pi)\le
2\sum_{i=a}^{\infty}\sum_{m=i}^{\infty}\tfrac{i}{m}\int_{\E^{m}_{\kappa\stp\sigma_\Omega}}
\dim(\pi)
\md\tilde{\mu}(\pi)\\
&\le
2\sum_{i=a}^{\infty}\sum_{m=i}^{\infty}\int_{\E^{m}_{\kappa\stp\sigma_\Omega}}
\dim(\pi)
\md\tilde{\mu}(\pi)=
2\sum_{m=a}^{\infty}(m-a+1)
\int_{\E^{m}_{\kappa\stp\sigma_\Omega}}
\dim(\pi)
\md\tilde{\mu}(\pi)\\
&\le
6\sum_{m=a}^{\infty}m
\int_{\E^{m}_{\kappa\stp\sigma_\Omega}}
\dim(\pi)
\md\tilde{\mu}(\pi)\xrightarrow[a\to\infty]{}0,
\end{split}
\]
the last convergence follows from the equation \eqref{eq10}. Similarly,
\[
\begin{split}
0&\le\sum_{i=a}^{\infty}\sum_{i'=i}^{\infty}
\sum_{m=i'}^{\infty}\tfrac{1}{m}\int_{\E^{m}_{\kappa\stp\sigma_\Omega}}
\dim(\pi)
\md\tilde{\mu}(\pi)=
\sum_{i'=a}^{\infty} (i'-a+1)
\sum_{m=i'}^{\infty}\tfrac{1}{m}\int_{\E^{m}_{\kappa\stp\sigma_\Omega}}
\dim(\pi)
\md\tilde{\mu}(\pi)\\
&\le
3\sum_{i'=a}^{\infty}
\sum_{m=i'}^{\infty}\tfrac{i'}{m}\int_{\E^{m}_{\kappa\stp\sigma_\Omega}}
\dim(\pi)
\md\tilde{\mu}(\pi)
\le
3\sum_{i'=a}^{\infty}
\sum_{m=i'}^{\infty}\int_{\E^{m}_{\kappa\stp\sigma_\Omega}}
\dim(\pi)
\md\tilde{\mu}(\pi)
\xrightarrow[a\to\infty]{}0,
\end{split}
\]
where the last convergence follows from previously made calculations. This reasoning shows that the sequence
\[
\bigl(\sum_{i=1}^{a} \Lambda_{\psi}(
\int_{\F^i_{\kappa\stp\sigma_\Omega}}\varpi^{\kappa,\Omega,\mu}(\pi)\chi( U^\pi) \md\mu_{\F^i_{\kappa\stp\sigma_\Omega}}(\pi)
)\bigr)_{a\in\NN}
\]
is a Cauchy sequence, consequently due to $\ssot\times\|\cdot\|$ closedness of $\Lambda_{\psi}$ we have
\[
\Lambda_{\psi} \bigl(
\sum_{i=1}^{\infty} 
 \int_{\F^{i}_{\kappa\stp\sigma_\Omega} } 
\varpi^{\kappa,\Omega,\mu}(\zeta)\chi(U^\zeta) \md\mu_{\F^{i}_{\kappa\stp\sigma_\Omega} }(\zeta)
\bigr)=
\sum_{i=1}^{\infty}\Lambda_{\psi}\bigl(
 \int_{\F^{i}_{\kappa\stp\sigma_\Omega} } 
\varpi^{\kappa,\Omega,\mu}(\zeta)\chi(U^\zeta) \md\mu_{\F^{i}_{\kappa\stp\sigma_\Omega} }(\zeta)
\bigr).
\]
\end{proof}

At the end of this section we define the following operator:
\[
T\colon \LL^{2}(\IrrG)\supseteq \mc{D}(T)\ni f\mapsto \Lambda_{\psi} (
\int_{\IrrG} f(\pi)
\Tr(E_\pi^{2})^{-\frac{1}{2}}
 \chi(U^\pi) \md\mu(\pi))\in \LL^2(\GG),
\]
where $\mc{D}(T)$ is a subspace of those $f\in \LL^2(\IrrG)$ for which
\[
\mu(\supp f),\;\int_{\IrrG}|f|^2\md\mu,\;
\int_{\IrrG}|f|^2\Tr(E^2_\pi)^{-1}\md\mu<+\infty
\]
and $|\{n\in\NN\,|\, \IrrG \restriction_n \cap \supp f\neq \emptyset\}|<\infty$. The subspace $\mc{D}(T)$ from now on will be called the original domain of $T$.

\begin{lemma}\label{lemat34}
Operator $T$ is well defined and extends to an isometry $\LL^2(\IrrG)\rightarrow \LL^2(\GG)$.
\end{lemma}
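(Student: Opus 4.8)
The plan is to decompose the character $\chi(U^\pi)=(\id\otimes\Tr_\pi)U^\pi$ along a measurable field of orthonormal bases and then read off both membership in $\mf{N}_\psi$ and the norm from the orthogonality relations of Proposition \ref{Casp}. Fix $f\in\mc{D}(T)$ and set $g=f\,\Tr(E^2_\bullet)^{-\frac12}$, so that $Tf=\Lambda_\psi(\int_{\IrrG}g(\pi)\chi(U^\pi)\md\mu(\pi))$. Since $\supp f$ meets only finitely many strata $\IrrG\rest_n$, there is $M\in\NN$ with $\dim(\pi)\le M$ for almost every $\pi\in\supp f$. Using $\|\chi(U^\pi)\|\le\dim(\pi)$, $\mu(\supp f)<+\infty$ and $\|g\|^2_{\LL^2(\IrrG)}=\int_{\IrrG}|f|^2\Tr(E^2_\bullet)^{-1}\md\mu<+\infty$, a Cauchy--Schwarz estimate gives $\int_{\IrrG}|g(\pi)|\,\|\chi(U^\pi)\|\md\mu(\pi)\le M\|g\|_2\,\mu(\supp f)^{\frac12}<+\infty$, so the integral $\int_{\IrrG}g(\pi)\chi(U^\pi)\md\mu(\pi)$ converges in norm to a well-defined element of $\Linf$; it remains to show it lies in $\mf{N}_\psi$.

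Let $\{(\zeta^k_\pi)_{\pi\in\IrrG}\}_{k=1}^\infty$ be a measurable field of orthonormal bases, so that $\chi(U^\pi)=\sum_{k=1}^{\dim(\pi)}(\id\otimes\omega_{\zeta^k_\pi})U^\pi$. Putting $\xi^k_\pi=\chi_{\supp f}(\pi)\zeta^k_\pi$ and $\eta^k_\pi=g(\pi)\zeta^k_\pi$ (both $0$ for $k>\dim\pi$), one has $g(\pi)(\id\otimes\omega_{\zeta^k_\pi})U^\pi=(\id\otimes\omega_{\xi^k_\pi,\eta^k_\pi})U^\pi$, and because $\eta^k\equiv 0$ for $k>M$ the integral is a \emph{finite} sum $\int_{\IrrG}g(\pi)\chi(U^\pi)\md\mu(\pi)=\sum_{k=1}^M a_k$, where $a_k=\int_{\IrrG}(\id\otimes\omega_{\xi^k_\pi,\eta^k_\pi})U^\pi\md\mu(\pi)$. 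The fields $\xi^k$ are square integrable since $\mu(\supp f)<+\infty$, while $\eta^k\in\Dom(E)$ because $\sum_{k}\int_{\IrrG}\|E_\pi\eta^k_\pi\|^2\md\mu=\int_{\IrrG}|g|^2\Tr(E^2_\bullet)\md\mu=\int_{\IrrG}|f|^2\md\mu=\|f\|^2_2<+\infty$; the remaining integrability hypotheses of Proposition \ref{Casp}(1) are then immediate. Hence each $a_k\in\mf{N}_\psi$, and as $\mf{N}_\psi$ is a subspace and $\Lambda_\psi$ is linear we get $\int g\chi(U^\pi)\md\mu\in\mf{N}_\psi$ and $Tf=\sum_{k=1}^M\Lambda_\psi(a_k)$, which establishes that $T$ is well defined on $\mc{D}(T)$.

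The isometry property is now a direct consequence of the orthogonality relation: $\is{\Lambda_\psi(a_k)}{\Lambda_\psi(a_{k'})}=\psi(a_k^*a_{k'})=\int_{\IrrG}\is{\xi^{k'}_\pi}{\xi^k_\pi}\is{E_\pi\eta^k_\pi}{E_\pi\eta^{k'}_\pi}\md\mu(\pi)=\delta_{k,k'}\int_{\IrrG}|g|^2\,\|E_\pi\zeta^k_\pi\|^2\md\mu(\pi)$, so the vectors $\Lambda_\psi(a_k)$ are pairwise orthogonal and $\|Tf\|^2=\sum_{k=1}^M\int_{\IrrG}|g|^2\,\|E_\pi\zeta^k_\pi\|^2\md\mu=\int_{\IrrG}|g|^2\Tr(E^2_\bullet)\md\mu=\int_{\IrrG}|f|^2\md\mu=\|f\|^2_{\LL^2(\IrrG)}$.

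It then remains to check that $\mc{D}(T)$ is dense, whereupon $T$ extends uniquely to an isometry on all of $\LL^2(\IrrG)$ by continuity. Given $f\in\LL^2(\IrrG)$ and an increasing exhaustion $(V_N)$ of $\IrrG$ by finite-measure sets (available by $\sigma$-finiteness), set $\Omega_N=V_N\cap\{\pi:\dim(\pi)\le N\}\cap\{\pi:\Tr(E^2_\pi)^{-1}\le N\}$ and $f_N=f\chi_{\Omega_N}$. Each $f_N$ lies in $\mc{D}(T)$ — its support has finite measure and is contained in the finitely many strata $\IrrG\rest_1,\dots,\IrrG\rest_N$, and $\int|f_N|^2\Tr(E^2_\bullet)^{-1}\md\mu\le N\|f\|^2_2$ — and since $E_\pi$ is strictly positive we have $\Tr(E^2_\pi)^{-1}<+\infty$ almost everywhere, so $\Omega_N\uparrow\IrrG$ up to a null set and $f_N\to f$ in $\LL^2(\IrrG)$ by dominated convergence. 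The only genuinely delicate point is the bookkeeping that forces the sum over $k$ to terminate at $M$ and simultaneously makes the $\xi^k$ and $\eta^k$ meet the hypotheses of Proposition \ref{Casp}: this is precisely the role of the three domain conditions defining $\mc{D}(T)$, and once they are exploited as above the argument is routine.
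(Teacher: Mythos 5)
Your proof is correct and follows essentially the same route as the paper: both decompose $\chi(U^\pi)$ along a measurable field of orthonormal bases, verify the hypotheses of Proposition \ref{Casp} for the vector fields $\xi^k_\pi=\chi_{\supp f}(\pi)\zeta^k_\pi$, $\eta^k_\pi=f(\pi)\Tr(E^2_\pi)^{-\frac{1}{2}}\zeta^k_\pi$ (a finite sum thanks to the stratum condition in $\mc{D}(T)$), and then obtain the isometry from the orthogonality relations. The only difference is that you additionally spell out the density of $\mc{D}(T)$ via the exhaustion $\Omega_N$, a point the paper asserts without proof; this is a worthwhile supplement rather than a change of method.
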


We will denote the isometry $\LL^2(\IrrG)\rightarrow \LdG$ from the above lemma also by $T$.

\begin{proof}
Let us check that the integral which appears in the definition of $T$ is well defined: let $\{(\xi^{\pi}_k)_{\pi\in\IrrG}\}_{k=1}^{\infty}$ be a measurable field of orthonormal bases. Fix $f\in\mc{D}(T),n\in\NN$ and define
\[
\xi=\xi'=(\chi_{\supp f}(\pi) \xi^{\pi}_n)_{\pi\in\IrrG},\quad
\eta=\eta'=(f(\pi)\Tr(E^{2}_{\pi})^{-\frac{1}{2}}\xi^{\pi}_n)_{\pi\in\IrrG}.
\]
These fields satisfy assumptions of Proposition \ref{Casp}: due to the assumptions on $f$, they are square integrable. As
\[
\begin{split}
\int_{\IrrG} \| E_\pi \eta_\pi\|^2\md\mu(\pi)&=
\int_{\IrrG} |f(\pi)|^2 \Tr(E^{2}_{\pi})^{-1} \|E_\pi \xi^{\pi}_n\|^2
\md\mu(\pi)\\
&\le
\int_{\IrrG} |f(\pi)|^2 \Tr(E^{2}_{\pi})^{-1} \Tr(E^{2}_\pi)
\md\mu(\pi)
=\int_{\IrrG} |f|^2 \md\mu<+\infty,
\end{split}
\]
$\eta$ belongs to $\Dom(E)$. Moreover
\[
\int_{\IrrG} \|\xi_\pi\otimes \ov{E_\pi \eta_\pi} \|^2\md\mu(\pi)=
\int_{\IrrG} \chi_{\supp f}(\pi) \|E_\pi \eta_\pi\|^2\md\mu(\pi)\le\int_{\IrrG}|f|^2\md\mu<+\infty
\]
due to previous calculations. Consequently for any $n\in\NN$ there exist integrals
\[
\int_{\IrrG} f(\pi) \Tr(E^{2}_{\pi})^{-\frac{1}{2}}
(\id\otimes\omega_{\xi^n_\pi})U^{\pi} \md\mu(\pi)\in\mf{N}_{\psi},
\]
and if we take a sum over $n$ (which is finite) we get existence of
\[
\sum_{n=1}^{\infty}
\int_{\IrrG} f(\pi) \Tr(E^{2}_{\pi})^{-\frac{1}{2}}
(\id\otimes\omega_{\xi^n_\pi})U^{\pi} \md\mu(\pi)=
\int_{\IrrG} f(\pi) \Tr(E^{2}_{\pi})^{-\frac{1}{2}}
\chi(U^{\pi}) \md\mu(\pi)\in \mf{N}_{\psi}.
\]
This shows that $T$ is a well defined on the dense domain $\mc{D}(T)$. Let us make use of the orthogonality relations to show that $T$ is an isometry:
\[
\begin{split}
&\quad\;\psi\bigl(\bigl(
\int_{\IrrG} 
f(\pi) \Tr(E^{2}_{\pi})^{-\frac{1}{2}} \chi(U^{\pi})
 \md\mu(\pi)
\bigr)^*\bigl(
\int_{\IrrG}  
f(\pi) \Tr(E^{2}_{\pi})^{-\frac{1}{2}} \chi(U^{\pi})
 \md\mu(\pi)
\bigr)\bigr)\\
&=
\sum_{n,m=1}^{\infty}
\psi\bigl(\bigl(
\int_{\IrrG} 
f(\pi) \Tr(E^{2}_{\pi})^{-\frac{1}{2}} (\id\otimes\omega_{\xi^n_\pi})(U^{\pi})
 \md\mu(\pi)
\bigr)^*\\
&\quad\quad\quad\quad\quad\quad\quad\quad\quad\quad\quad\quad\bigl(
\int_{\IrrG}  
f(\pi) \Tr(E^{2}_{\pi})^{-\frac{1}{2}} (\omega_{\xi^{\pi}_m}\otimes\id)(U^{\pi})
 \md\mu(\pi)
\bigr)\bigr)\\
&=
\sum_{n,m=1}^{\infty}
\int_{\IrrG}|f(\pi)|^2 \Tr(E^{2}_{\pi})^{-1}
\is{\xi^{\pi}_n}{\xi^{\pi}_m}
\is{E_\pi \xi^{\pi}_n}{E_\pi \xi^{\pi}_m} \md\mu(\pi)\\
&=
\int_{\IrrG} |f(\pi)|^2 \Tr(E^2_{\pi})^{-1} \Tr(E^{2}_{\pi}) 
\md\mu(\pi)=\|f\|^2.
\end{split}
\]
\end{proof}

Define $\mc{H}$ to be an image of $T$:
\[
\mc{H}=T(\LL^2(\IrrG))\subseteq \LL^2(\GG).
\]
Since $T$ is an isometry, $\mc{H}$ is a closed linear subspace. In Section \ref{cstA} we will show that $\mc{H}$ is the subspace of integral characters on the $\LL^2$ level (see Proposition \ref{stw1} for precise formulation).

\section{Convolution operators}\label{secconv}
\subsection{Operator $\mc{L}_\kappa$}
Let us choose a Plancherel measure and a finite dimensional nondegenerate representation $\kappa\colon\CGDu\rightarrow\B(\msf{H}_\kappa)$ such that $\kappa\lec\Lambda_{\whG}$. Define a dense subspace in $\LL^2(\IrrG)$:
\[
\mc{F}=\lin\{\Tr(E^2_{\bullet})^{\frac{1}{2}}\chi_\Omega\,|\,\Omega\subseteq_{meas}\IrrG:\;\mu(\Omega)<+\infty,\;\sup_{\pi\in\Omega}(\dim(\pi)+\|E^2_\pi\|)<+\infty\}
\]
and the following map
\begin{equation}\label{eq24}
\mc{L}_\kappa \colon \mc{F}\ni
\Tr(E^2_\bullet)^{\frac{1}{2}}\chi_\Omega\mapsto
\Tr(E^2_\bullet)^{\frac{1}{2}}\varpi^{\kappa,\Omega,\mu}\sum_{i=1}^{\infty} \chi_{\F^i_{\kappa\stp\sigma_{\Omega}}}\in \LL^2(\IrrG).
\end{equation}

\begin{proposition}\label{stw3}
$\mc{L}_\kappa$ is a well defined linear operator.
\end{proposition}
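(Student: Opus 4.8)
The plan is to check two separate things. First, that for each admissible set $\Omega$ (i.e.\ $\mu(\Omega)<+\infty$ and $\sup_{\pi\in\Omega}(\dim(\pi)+\|E_\pi^2\|)<+\infty$) the proposed value $\Tr(E^2_\bullet)^{\frac12}\varpi^{\kappa,\Omega,\mu}\sum_{i}\chi_{\F^i_{\kappa\stp\sigma_{\Omega}}}$ genuinely lies in $\LL^2(\IrrG)$; and second, that the assignment respects the (many) linear relations among the spanning vectors $\Tr(E^2_\bullet)^{\frac12}\chi_\Omega$, so that it extends to a well-defined \emph{linear} map on all of $\mc{F}$.

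For the finiteness I would write $g_\Omega=\sum_{i=1}^{\infty}\chi_{\F^i_{\kappa\stp\sigma_\Omega}}$, whose (essential) support is $\F^1_{\kappa\stp\sigma_\Omega}$, where $\varpi^{\kappa,\Omega,\mu}$ is defined. Measurability of the product $\Tr(E^2_\bullet)^{\frac12}\varpi^{\kappa,\Omega,\mu}g_\Omega$ is immediate since each factor is measurable. For the norm bound, the generator conditions guarantee $\int_\Omega\dim\md\mu\le(\sup_\Omega\dim)\,\mu(\Omega)<+\infty$ and $\sup_{\pi\in\Omega}\|E_\pi^2\|<+\infty$, so Proposition \ref{stw4} and Lemma \ref{lemat9} both apply. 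Writing $C=(\sup_{\pi'\in\Omega}\|E_{\pi'}^2\|)\dim(\kappa)$ and using Proposition \ref{stw4} in the form $g_\Omega(\pi)\varpi^{\kappa,\Omega,\mu}(\pi)\le C\|E_\pi^2\|^{-1}$ together with $\Tr(E_\pi^2)\|E_\pi^2\|^{-1}\le\dim(\pi)$, I would estimate
\[
\int_{\IrrG}\Tr(E_\pi^2)\,(\varpi^{\kappa,\Omega,\mu}g_\Omega)^2\md\mu
\le C\int_{\IrrG}\Tr(E_\pi^2)\|E_\pi^2\|^{-1}\,(\varpi^{\kappa,\Omega,\mu}g_\Omega)\md\mu
\le C\int_{\IrrG}\dim(\pi)\,\varpi^{\kappa,\Omega,\mu}(\pi)g_\Omega(\pi)\md\mu,
\]
and the last integral equals $\sum_i\int_{\F^i_{\kappa\stp\sigma_\Omega}}\dim\md\tilde\mu=\dim(\kappa)\int_\Omega\dim\md\mu<+\infty$ by equation \eqref{eq10} (with $\tilde\mu=\varpi^{\kappa,\Omega,\mu}\mu$ on $\F^1_{\kappa\stp\sigma_\Omega}$). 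Thus each image lies in $\LL^2(\IrrG)$; in particular if $\mu(\Omega)=0$ the right-hand side vanishes, so the image is $0$.

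The heart of the argument is well-definedness, and here the key input is Lemma \ref{lemat6}. Multiplying its conclusion by $\Tr(E^2_\bullet)^{\frac12}$ shows that for disjoint admissible sets $\Omega',\Omega''$ with $\Omega=\Omega'\sqcup\Omega''$ the proposed images are additive, i.e.\ $\mc{L}_\kappa(\Tr(E^2_\bullet)^{\frac12}\chi_\Omega)=\mc{L}_\kappa(\Tr(E^2_\bullet)^{\frac12}\chi_{\Omega'})+\mc{L}_\kappa(\Tr(E^2_\bullet)^{\frac12}\chi_{\Omega''})$. Given a finite relation $\sum_k c_k\Tr(E^2_\bullet)^{\frac12}\chi_{\Omega_k}=0$ in $\LL^2(\IrrG)$, I would pass to the atoms $A_1,\dots,A_m$ of the finite algebra generated by $\Omega_1,\dots,\Omega_k$; these are disjoint and admissible (being contained in $\bigcup_k\Omega_k$), and $\chi_{\Omega_k}=\sum_{A_j\subseteq\Omega_k}\chi_{A_j}$. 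By the additivity just noted (and induction) the image of each $\Tr(E^2_\bullet)^{\frac12}\chi_{\Omega_k}$ is the sum of the images of the corresponding atoms, so $\sum_k c_k\mc{L}_\kappa(\Tr(E^2_\bullet)^{\frac12}\chi_{\Omega_k})=\sum_j d_j\,\mc{L}_\kappa(\Tr(E^2_\bullet)^{\frac12}\chi_{A_j})$ with $d_j=\sum_{A_j\subseteq\Omega_k}c_k$. Because $\Tr(E_\pi^2)^{\frac12}>0$ everywhere (as $E_\pi$ is strictly positive and $\msf{H}_\pi\neq 0$) and the $A_j$ are disjoint, the hypothesis $\sum_j d_j\Tr(E^2_\bullet)^{\frac12}\chi_{A_j}=0$ forces $d_j=0$ whenever $\mu(A_j)>0$; and when $\mu(A_j)=0$ the image already vanishes by the finiteness estimate. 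In either case each term dies, so $\sum_k c_k\mc{L}_\kappa(\cdots)=0$, which is exactly well-definedness; linearity is then automatic from the extension. The main obstacle is precisely this second step: turning the additivity content of Lemma \ref{lemat6} into consistency across \emph{all} overlapping representations of an element of $\mc{F}$, for which the reduction to a common refinement into atoms and the strict positivity of $\Tr(E^2_\bullet)^{\frac12}$ are the essential devices.
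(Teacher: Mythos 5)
Your proof is correct, and it rests on the same three ingredients as the paper's own proof — Lemma \ref{lemat6} for additivity over disjoint unions, Proposition \ref{stw4} for the pointwise bound on $\varpi^{\kappa,\Omega,\mu}g_\Omega$ (where $g_\Omega=\sum_{i}\chi_{\F^i_{\kappa\stp\sigma_\Omega}}$), and equation \eqref{eq10} to close the integral estimate — but the execution differs on both halves, in each case instructively. On the analytic side, the paper squares the bound of Proposition \ref{stw4}, producing the integrand $\Tr(E^2_\pi)\|E^2_\pi\|^{-2}$, which need not be integrable over $\IrrG$; this forces the introduction of the truncation family $\{V_p\}_{p\in\NN}$ with $\sup_{\pi\in V_p}(\dim(\pi)+\|E^2_\pi\|^{-1})\le p$, an expansion of $\|\chi_{V_p}\mc{L}_\kappa(\sum_k c_k\cdots)\|^2$ into cross terms, a bound on each cross term uniform in $p$ (and it is precisely there that the paper, like you, keeps one factor of $\varpi^{\kappa,\Omega,\mu}g_\Omega$ unestimated so that \eqref{eq10} applies), and a final passage $p\to\infty$ by monotone convergence. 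Your observation that one can keep that factor from the very start — giving the pointwise bound $\Tr(E^2_\pi)\,(\varpi^{\kappa,\Omega,\mu}g_\Omega)^2\le\dim(\kappa)\,(\sup_{\pi'\in\Omega}\|E^2_{\pi'}\|)\dim(\pi)\,\varpi^{\kappa,\Omega,\mu}g_\Omega$ — eliminates the truncation, the cross terms and the limiting argument in one stroke, since finite linear combinations then lie in $\LL^2(\IrrG)$ by the triangle inequality. On the algebraic side the roles are reversed: the paper dismisses well-definedness in a single sentence after quoting Lemma \ref{lemat6}, whereas you supply the verification that sentence hides — the passage to the atoms of the algebra generated by $\Omega_1,\dots,\Omega_n$ (each admissible, being a subset of an admissible set), the strict positivity of $\Tr(E^2_\bullet)^{1/2}$ to kill coefficients on atoms of positive measure, and the vanishing of the proposed image on null atoms. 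So your argument is shorter where the paper is long and longer where the paper is terse; both are complete proofs of the proposition.
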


\begin{proof}
Lemma \ref{lemat6} says that if $\Omega_1,\Omega_2$ are measurable disjoint subsets in $\IrrG$ then
\[
\varpi^{\kappa,\Omega_1\cup\Omega_2,\mu}\sum_{i=1}^{\infty} \chi_{\F^i_{\kappa\stp\sigma_{\Omega_1\cup\Omega_2}}}=
\varpi^{\kappa,\Omega_1,\mu}\sum_{i=1}^{\infty} \chi_{\F^i_{\kappa\stp\sigma_{\Omega_1}}}
+
\varpi^{\kappa,\Omega_2,\mu}\sum_{i=1}^{\infty} \chi_{\F^i_{\kappa\stp\sigma_{\Omega_2}}}
\]
almost everywhere. Consequently, we can define linear map $\mc{L}_\kappa$ as above, although now we only know that for $f\in \mc{F}$, $\mc{L}_\kappa (f)$  is in the linear space of measurable maps on $\IrrG$. Let $\{V_p\}_{p\in\NN}$ be any increasing family of measurable subsets which are of finite measure and $\sup_{\pi\in V_p} (\dim(\pi)+\|E_\pi^2\|^{-1})\le p$. Let $\sum_{k=1}^{n} c_k \Tr(E^2_\bullet)^{\frac{1}{2}}\chi_{\Omega_k}\in \mc{F}$. Due to Proposition \ref{stw4} we have $\chi_{V_p} \mc{L}_\kappa \bigl(c_k \Tr(E^2_\bullet)^{\frac{1}{2}}\chi_{\Omega_k} \bigr)\in \LL^2(\IrrG)$ for each $p\in \NN$ and $k\in\{1,\dotsc,n\}$. Indeed:
\[\begin{split} 
&\quad\;
\int_{\IrrG} \bigl|
\chi_{V_p} \mc{L}_\kappa \bigl(c_k \Tr(E^2_\bullet)^{\frac{1}{2}}\chi_{\Omega_k} \bigr)\bigr|^2\md\mu\\
&=
|c_k|^2 \int_{V_p} \Tr(E^2_\pi) \bigl(
\bigl( \sum_{i=1}^{\infty} \chi_{\F^i_{\kappa\stp\sigma_{\Omega_k}}}\bigr) (\pi)\varpi^{\kappa,\Omega_k,\mu}(\pi)\bigr)^2\md\mu(\pi)\\
&\le
|c_k|^2 ( \sup_{\pi'\in \Omega_k}\|E^2_{\pi'}\|)^2
\dim(\kappa)^2\int_{V_p} \Tr(E^2_\pi) \|E^2_\pi\|^{-2}\md\mu(\pi)\\
&\le
|c_k|^2 ( \sup_{\pi'\in \Omega_k}\|E^2_{\pi'}\|)^2
\dim(\kappa)^2
\int_{V_p}\dim(\pi) \|E^2_\pi\|^{-1}\md\mu(\pi)\\
&\le
|c_k|^2 ( \sup_{\pi'\in \Omega_k}\|E^2_{\pi'}\|)^2
\dim(\kappa)^2 p^2 \mu(V_p)<+\infty.
\end{split}\]
The following holds:
\[
\begin{split}
&\quad\;\bigl\| \chi_{V_p}\mc{L}_\kappa\bigl( \sum_{k=1}^{n} c_k \Tr(E^2_\bullet)^{\frac{1}{2}}\chi_{\Omega_k} \bigr)\bigr\|^2\\
&=
\sum_{k,k'=1}^{n}\ov{c_k} c_{k'}
\int_{V_p}\Tr(E^2_\bullet)
\bigl( 
\varpi^{\kappa,\Omega_k,\mu}\sum_{i=1}^{\infty} \chi_{\F^i_{\kappa\stp\sigma_{\Omega_k}}}\bigr)
\bigl(
 \varpi^{\kappa,\Omega_{k'},\mu}\sum_{i'=1}^{\infty} \chi_{\F^{i'}_{\kappa\stp\sigma_{\Omega_{k'}}}} \bigr)
\md\mu(\pi).
\end{split}
\]
Let us fix $k,k'\in\{1,\dotsc,n\}$ and turn to giving a bound for the above integrals. Thanks to Proposition \ref{stw4} we have
\[
\begin{split}
&\quad\;\int_{V_p}
\Tr(E^2_\pi)
\bigl( \varpi^{\kappa,\Omega_k,\mu}\sum_{i=1}^{\infty} \chi_{\F^i_{\kappa\stp\sigma_{\Omega_k}}}\bigr)(\pi)
\bigl( 
\varpi^{\kappa,\Omega_{k'},\mu}\sum_{i'=1}^{\infty} \chi_{\F^{i'}_{\kappa\stp\sigma_{\Omega_{k'}}}} \bigr)(\pi)
\md\mu(\pi)\\
&\le
\dim(\kappa)(\sup_{\pi\in\Omega_k}\|E^2_\pi\|)\int_{V_p}
\Tr(E^2_\pi)\|E^2_\pi\|^{-1} 
\varpi^{\kappa,\Omega_{k'},\mu}(\pi)\sum_{i=1}^{\infty} \chi_{\F^i_{\kappa\stp\sigma_{\Omega_{k'}}}}(\pi)\md\mu(\pi)\\
&\le
\dim(\kappa)(\sup_{\pi\in\Omega_k}\|E^2_\pi\|)\sum_{i=1}^{\infty}
\int_{\F^i_{\kappa\stp\sigma_{\Omega_{k'}}}}\dim
\md\,(\varpi^{\kappa,\Omega_{k'},\mu} \mu)\\
&=
\dim(\kappa)^2(\sup_{\pi\in\Omega_k}\|E^2_\pi\|)
\int_{\Omega_{k'}}\dim
\md\mu<+\infty.
\end{split}
\] 
In the last equality we have invoked equality \eqref{eq10}.Thanks to the above inequality we get
\[
\begin{split}
&\quad\;\bigl\| \chi_{V_p}\mc{L}_\kappa\bigl( \sum_{k=1}^{n} c_k \chi_{\Omega_k} \bigr)\bigr\|^2\le
\sum_{k,k'=1}^{n} |c_k c_{k'}| 
(\sup_{\pi\in\Omega_k}\|E^2_\pi\|)\dim(\kappa)^2 \mu(\Omega_{k'})<+\infty
\quad(p\in\NN).
\end{split}
\]
As this inequality does not depend on $p$, we arrive at the claim of the proposition:
\[
\begin{split}
&\quad\;\int_{\IrrG}\bigl|
\mc{L}_\kappa\bigl( \sum_{k=1}^{n} c_k 
\Tr(E^2_\bullet)^{\frac{1}{2}}\chi_{\Omega_k} \bigr)\bigr|^2
\md\mu=
\lim_{p\to\infty}
\int_{\IrrG}\chi_{V_p}\bigl|
\mc{L}_\kappa\bigl( \sum_{k=1}^{n} c_k 
\Tr(E^2_\bullet)^{\frac{1}{2}}\chi_{\Omega_k} \bigr)\bigr|^2
\md\mu\\
&\le
\sum_{k,k'=1}^{n} |c_k c_{k'}|
(\sup_{\pi\in\Omega_k}\|E^2_\pi\|) \dim(\kappa)^2 \mu(\Omega_k)<+\infty.
\end{split}
\]
\end{proof}

Let us introduce a notion of admissibility of representations (\cite[Definition 3.1]{DasDawsSalmi}, see also \cite[Definition 2.2]{Soltanqb}):

\begin{definition}\label{defadmissible}
Let $U=(U_{i,j})_{i,j=1}^{N}\in \M(\CG\otimes \mc{K}(\msf{H}))$ be a finite dimensional representation of $\GG$. We say that $U$ is \emph{admissible} if the element $U^t=(U_{j,i})_{i,j=1}^{N}$ is invertible in $\M(\CG\otimes \mc{K}(\msf{H}))$.
\end{definition}

Recall that we have defined $\mc{H}$, a closed subspace in $\LdG$ via $\mc{H}=T(\LL^2(\IrrG))$. In the next theorem we show that the operator $\mc{L}_\kappa$ is unitarily equivalent to the restricted character $\chi(U^{\kappa})|_{\mc{H}}$.

\begin{theorem}\label{tw1}
Choose a nondegenerate finite dimensional representation $\kappa\colon\CGDu\rightarrow\B(\msf{H}_\kappa)$ such that $\kappa\lec\Lambda_{\whG}$. Operator $\mc{L}_\kappa$ extends to a bounded operator on $\LdIrr$ such that
\[
T\mc{L}_\kappa f=\chi(U^{\kappa})T f\quad(f\in \LL^2(\IrrG)).
\]
In particular $\|\mc{L}_\kappa\|\le\dim(\kappa)$, and if $\kappa$ is \emph{admissible} then ${\mc{L}_\kappa}^*=\mc{L}_{\kappa^{c}}$.
\end{theorem}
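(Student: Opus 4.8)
The plan is to establish the intertwining identity $T\mc{L}_\kappa f=\chi(U^{\kappa})Tf$ first for $f$ in the dense original domain $\mc{D}(T)$, deduce boundedness of $\mc{L}_\kappa$ from it, and only then extract the operator-norm bound and the adjoint formula. The central computation is to recognize both sides on a single generating vector $f=\Tr(E^2_\bullet)^{\frac{1}{2}}\chi_\Omega$ for a suitable $\Omega\subseteq\IrrG$. First I would note that $Tf=\Lambda_\psi(\int_{\IrrG}f(\pi)\Tr(E^2_\pi)^{-\frac{1}{2}}\chi(U^\pi)\md\mu(\pi))=\Lambda_\psi(\chi^{\int}(\int_\Omega^{\oplus}\pi\md\mu_\Omega(\pi)))$, by the very definition of $T$ (the trace factors cancel). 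Multiplying by the character is, in $\LdG$, left multiplication by $\chi(U^\kappa)\in\Linf$, so $\chi(U^\kappa)Tf=\Lambda_\psi(\chi(U^\kappa)\chi^{\int}(\int_\Omega^{\oplus}\pi\md\mu_\Omega(\pi)))$, using that $\mf{N}_\psi$ is a left ideal and that $\chi(U^\kappa)$ is bounded.

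Next I would apply Lemma \ref{lemat9}, which is the engine of the argument: it gives exactly
\[
\chi(U^\kappa)\chi^{\int}(\int_\Omega^{\oplus}\pi\md\mu_\Omega(\pi))=
\sum_{i=1}^{\infty}\chi^{\int}(\int_{\F^i_{\kappa\stp\sigma_\Omega}}^{\oplus}\zeta\md\tilde\mu_{\F^i_{\kappa\stp\sigma_\Omega}}(\zeta)),
\]
where $\tilde\mu=\varpi^{\kappa,\Omega,\mu}\mu$ on $\F^1_{\kappa\stp\sigma_\Omega}$, and moreover that $\Lambda_\psi$ commutes with this (norm-convergent) sum. Applying $\Lambda_\psi$ and then Proposition \ref{stw9} componentwise — each summand lies in $\mf{N}_\psi$ by Lemma \ref{lemat9}, and its image under $\mc{Q}_R$ is $\int_{\IrrG}^{\oplus}\chi_{\F^i_{\kappa\stp\sigma_\Omega}}(\pi)\tilde E_\pi\md\tilde\mu(\pi)$ with $\tilde E_\pi=\sqrt{\varpi^{\kappa,\Omega,\mu}(\pi)}E_\pi$ — I would assemble the sum into $\int_{\IrrG}^{\oplus}\varpi^{\kappa,\Omega,\mu}(\pi)^{\frac{1}{2}}(\sum_{i}\chi_{\F^i_{\kappa\stp\sigma_\Omega}})(\pi)E_\pi\md\mu(\pi)$ after converting $\md\tilde\mu$ back to $\md\mu$ via the canonical identification. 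Transporting through $\mc{Q}_R$ to $\LL^2(\IrrG)$ and unwinding the definition of $T$ should produce precisely $T$ applied to $\Tr(E^2_\bullet)^{\frac{1}{2}}\varpi^{\kappa,\Omega,\mu}\sum_i\chi_{\F^i_{\kappa\stp\sigma_\Omega}}=T(\mc{L}_\kappa f)$, matching the definition \eqref{eq24}. The main bookkeeping obstacle is the measure-rescaling: keeping the factors $\varpi^{\kappa,\Omega,\mu}$, $\Tr(E^2_\bullet)^{\pm\frac{1}{2}}$ and the Radon–Nikodym derivatives consistent between $\mu$ and $\tilde\mu$ while pushing vectors through $\mc{Q}_R$ and $T$.

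Once the identity $T\mc{L}_\kappa f=\chi(U^\kappa)Tf$ holds on $\mc{D}(T)$, boundedness is immediate: since $T$ is an isometry (Lemma \ref{lemat34}) and $\chi(U^\kappa)$ acts as a bounded multiplication operator, $\|\mc{L}_\kappa f\|=\|T\mc{L}_\kappa f\|=\|\chi(U^\kappa)Tf\|\le\|\chi(U^\kappa)\|\,\|f\|$. I would bound $\|\chi(U^\kappa)\|\le\dim(\kappa)$ by writing $\chi(U^\kappa)=(\id\otimes\Tr_\kappa)U^\kappa$ and estimating the trace of a product of $\dim(\kappa)$ matrix entries of the unitary $U^\kappa$, each of norm at most $1$; this gives $\|\mc{L}_\kappa\|\le\dim(\kappa)$, and $\mc{L}_\kappa$ extends by continuity to all of $\LL^2(\IrrG)$, with the intertwining relation persisting by density. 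For the adjoint, I would use that $\chi(U^\kappa)^*=\chi(U^{\kappa^c})$ when $\kappa$ is admissible (so that the conjugate character is the involution of the character, cf.~the conjugate-representation computation in Section \ref{conjrep} and Lemma \ref{lemat32}), and that $T$ is isometric: for $f,g\in\LL^2(\IrrG)$, $\langle\mc{L}_\kappa f|g\rangle=\langle T\mc{L}_\kappa f|Tg\rangle=\langle\chi(U^\kappa)Tf|Tg\rangle=\langle Tf|\chi(U^{\kappa^c})Tg\rangle=\langle Tf|T\mc{L}_{\kappa^c}g\rangle=\langle f|\mc{L}_{\kappa^c}g\rangle$, whence $\mc{L}_\kappa^*=\mc{L}_{\kappa^c}$. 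The hardest part is the first paragraph's identification through $\mc{Q}_R$; everything after it is formal once the isometry $T$ and Lemma \ref{lemat9} are in hand.
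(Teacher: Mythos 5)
Your overall strategy coincides with the paper's: the same test vectors $f=\Tr(E^2_\bullet)^{\frac{1}{2}}\chi_\Omega$, the same key input (Lemma \ref{lemat9}) converting $\chi(U^\kappa)\chi^{\int}(\int_\Omega^{\oplus}\pi\md\mu_\Omega(\pi))$ into the rescaled sum of integral characters over the sets $\F^i_{\kappa\stp\sigma_\Omega}$, and the same endgame (isometry of $T$ gives $\|\mc{L}_\kappa\|\le\dim(\kappa)$, and $\mc{L}_\kappa=T^*\chi(U^\kappa)T$ gives the adjoint formula via Lemma \ref{lemat32}). Your detour through $\mc{Q}_R$ and Proposition \ref{stw9} is a workable variant of the paper's direct computation, provided you apply Proposition \ref{stw9} to the rescaled Plancherel measure $\tilde\mu=\varpi^{\kappa,\Omega,\mu}\mu$ (legitimate by Proposition \ref{stw14}) and keep track of the canonical unitary between the $\mu$- and $\tilde\mu$-direct integrals, which you acknowledge.

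However, there is a genuine gap at the one point of the proof that is not formal: the step ``unwinding the definition of $T$ should produce precisely $T$ applied to $g:=\Tr(E^2_\bullet)^{\frac{1}{2}}\varpi^{\kappa,\Omega,\mu}\sum_{i}\chi_{\F^i_{\kappa\stp\sigma_\Omega}}$''. The defining formula $T(h)=\Lambda_{\psi}(\int_{\IrrG}h(\pi)\Tr(E^2_\pi)^{-\frac{1}{2}}\chi(U^\pi)\md\mu(\pi))$ is available only for $h$ in the original domain $\mc{D}(T)$, and $g$ is not known to lie there: nothing guarantees that $\supp(g)$ meets only finitely many of the sets $\IrrG\rest_n$, nor that $\int_{\IrrG}|g|^2\Tr(E^2_\bullet)^{-1}\md\mu<+\infty$; indeed $\F^1_{\kappa\stp\sigma_\Omega}$ typically has infinite measure and contains representations of unbounded dimension. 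Outside $\mc{D}(T)$ the operator $T$ is only the abstract isometric extension, so equating $T(g)$ with $\Lambda_{\psi}(\int\cdots)$ --- equivalently, with the direct-integral vector you produce after applying $\mc{Q}_R$ --- is exactly what has to be proved, not something that follows from the definition. (You also need Proposition \ref{stw3} even to know $g\in\LL^2(\IrrG)$, so that $T(g)$ makes sense.) The paper closes this hole by truncation: it chooses an increasing exhaustion $\{V_p\}_{p\in\NN}$ of $\IrrG$ by sets of finite measure, bounded dimension and bounded $\Tr(E^2_\bullet)^{-1}$, checks that $\chi_{V_p}g\in\mc{D}(T)$, and then combines continuity of $T$, norm convergence of the truncated integrals $\int_{V_p}\varpi^{\kappa,\Omega,\mu}(\pi)(\sum_{i}\chi_{\F^i_{\kappa\stp\sigma_\Omega}})(\pi)\chi(U^\pi)\md\mu(\pi)$, and closedness of $\Lambda_{\psi}$ to identify $T(g)$ with $\chi(U^\kappa)Tf$. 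Some such approximation argument must be inserted into your proof; once it is, the rest of your write-up (boundedness, extension by continuity, and the adjoint computation) goes through as stated.
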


In the above theorem, $\kappa^{c}$ means the conjugate representation $\jmath_{\msf{H}_{\kappa}}\circ \kappa\circ\hat{R}^{u}$.

\begin{proof}
Take $f=\Tr(E^2_\bullet)^{\frac{1}{2}}\chi_\Omega\in\mc{F}\subseteq\LL^2(\IrrG)$ for a measurable subset $\Omega\subseteq \supp(\mu)\subseteq\IrrG$ with finite measure. Then $f$ belongs to the original domain of $T$, $\mc{D}(T)$. We have
\[
\begin{split}
\star&=\chi(U^\kappa) T f\\
&=
\chi(U^\kappa)
\Lambda_{\psi}(\int_{\IrrG} f(\pi)\Tr(E^2_\pi)^{-\frac{1}{2}} \chi(U^{\pi})\md\mu(\pi))\\
&=
\chi(U^\kappa)
\Lambda_{\psi}(\int_{\IrrG} \chi_\Omega(\pi) \chi(U^{\pi})\md\mu(\pi))\\
&=
\Lambda_{\psi}(\chi(U^{\kappa}) \chi^{\int}(\int_{\Omega}^{\oplus}
\pi\md\mu_{\Omega}(\pi))).
\end{split}
\]
We have previously derived the following equations:
\begin{equation}\label{eq7}
\dim(\kappa)\int_\Omega \dim \md\mu_\Omega(\pi)
=
\sum_{i=1}^{\infty}
\int_{\F^{i}_{\kappa\stp\sigma_\Omega}} \dim\;\varpi^{\kappa,\Omega,\mu}\md\mu_{\F^i_{\kappa\stp\sigma_\Omega}}(\pi)
<+\infty
\end{equation}
and
\begin{equation}
\label{chareq}
\begin{split}
&\quad\;
 \chi(U^\kappa)\chi^{\int} ( \int_{\Omega}^{\oplus} \pi \md\mu_\Omega(\pi))
=\sum_{i=1}^{\infty}\chi^{\int}( \int_{\F^{i}_{\kappa\stp\sigma_\Omega} }^{\oplus} \zeta \md\,
(\varpi^{\kappa,\Omega,\mu}\mu)_{\F^{i}_{\kappa\stp\sigma_\Omega} }(\zeta)).
\end{split}
\end{equation}
Due to the equality \eqref{chareq} we get
\begin{equation}\label{eq20}
\begin{split}
\star=\chi(U^\kappa)T f&=
\Lambda_{\psi}\bigl(
\sum_{i=1}^{\infty}\chi^{\int}( \int_{\F^{i}_{\kappa\stp\sigma_\Omega} }^{\oplus} \pi \md\,(
\varpi^{\kappa,\Omega,\mu}\mu)_{\F^{i}_{\kappa\stp\sigma_\Omega} }(\pi))
\bigr)\\
&=
\Lambda_{\psi}\bigl(
\sum_{i=1}^{\infty}\int_{\F^i_{\kappa\stp\sigma_\Omega}}\chi(U^\pi)
\varpi^{\kappa,\Omega,\mu}(\pi)\md\mu(\pi)\bigr)\\
&=
\Lambda_{\psi}\bigl(
\int_{\IrrG}\bigl(\sum_{i=1}^{\infty}\chi_{\F^i_{\kappa\stp\sigma_\Omega}}(\pi)\bigr)\chi(U^\pi)
\varpi^{\kappa,\Omega,\mu}(\pi)\md\mu(\pi)\bigr)\\
&=
\Lambda_{\psi}\bigl(
\int_{\IrrG}\Tr(E^2_\pi)^{\frac{1}{2}}\bigl(\sum_{i=1}^{\infty}\chi_{\F^i_{\kappa\stp\sigma_\Omega}}(\pi)\bigr)
\varpi^{\kappa,\Omega,\mu}(\pi)
\chi(U^\pi)
\Tr(E^2_\pi)^{-\frac{1}{2}}\md\mu(\pi)\bigr)
\end{split}
\end{equation}
We would like to write that the above vector is a result of the action of the operator $T$ on a function
\[
g=\Tr(E^2_\bullet)^{\frac{1}{2}}\varpi^{\kappa,\Omega,\mu}\sum_{i=1}^{\infty} \chi_{\F^{i}_{\kappa\stp\sigma_\Omega}}.
\]
However, we cannot do this right away -- we do not know whether $g$ belongs to the original domain of $T$. Observe that Proposition \ref{stw3} implies that the function $g$ is in $\LL^2(\IrrG)$. Let us introduce an increasing family of subsets of $\IrrG$, $\{V_p\}_{p\in\NN}$, such that
\[
\mu(V_p)<+\infty,\quad
V_p\subseteq \bigcup_{p'=1}^{p}\IrrG\rest_{p'},\quad
\sup_{\pi\in V_p}\Tr(E^2_\pi)^{-1}<+\infty\quad(p\in\NN)
\]
and $\bigcup_{p=1}^{\infty} V_p=\IrrG$. It is clear that such a family exists - one simply has to take an intersection of appropriate subsets. For any  $p\in\NN$ we have
\[
\mu(\supp(\chi_{V_p} g)),\quad
\int_{\IrrG} |\chi_{V_p}g|^2\md\mu,\quad
\int_{\IrrG} |\chi_{V_p}g|^2\Tr(E^2_\bullet)^{-1}\md\mu<+\infty,
\]
moreover every representation in $\supp(\chi_{V_p}g)$ has dimension $\le p$. It follows that function $\chi_{V_p}g$ is in the original domain of $T$ and we have
\[
\begin{split}
T(\chi_{V_p}g)&=
\Lambda_{\psi}\bigl(\int_{\IrrG} \chi_{V_p}(\pi) g(\pi) \Tr(E^2_\pi)^{-\frac{1}{2}} \chi(U^\pi)\md\mu(\pi)\bigr)\\
&=
\Lambda_{\psi}\bigl(\int_{V_p} \varpi^{\kappa,\Omega,\mu}(\pi)
(\sum_{i=1}^{\infty} \chi_{\F^i_{\kappa\stp\sigma_{\Omega}}})(\pi) \chi(U^\pi)\md\mu(\pi)\bigr)\quad(p\in\NN).
\end{split}
\]
It is clear that $\chi_{V_p}g\xrightarrow[p\to\infty]{}g$, therefore by continuity of $T$ we have
\[
T(g)=
\lim_{p\to\infty}T(\chi_{V_p}g)=
\lim_{p\to\infty}
\Lambda_{\psi}\bigl(\int_{V_p} \varpi^{\kappa,\Omega,\mu}(\pi)
(\sum_{i=1}^{\infty} \chi_{\F^i_{\kappa\stp\sigma_{\Omega}}})(\pi) \chi(U^\pi)\md\mu(\pi)\bigr)
\]
(in particular this limit exists). We also have
\[
\begin{split}
&\quad\;
\bigl\|\int_{V_p} \varpi^{\kappa,\Omega,\mu}(\pi)
(\sum_{i=1}^{\infty} \chi_{\F^i_{\kappa\stp\sigma_{\Omega}}})(\pi) \chi(U^\pi)\md\mu(\pi)-
\int_{\IrrG} \varpi^{\kappa,\Omega,\mu}(\pi)
(\sum_{i=1}^{\infty} \chi_{\F^i_{\kappa\stp\sigma_{\Omega}}})(\pi) \chi(U^\pi)\md\mu(\pi)\bigr\|\\
&=
\bigl\|
\int_{\IrrG\setminus V_p}\varpi^{\kappa,\Omega,\mu}(\pi)
(\sum_{i=1}^{\infty} \chi_{\F^i_{\kappa\stp\sigma_{\Omega}}})(\pi) \chi(U^\pi)\md\mu(\pi)\bigr\|\\
&\le
\int_{\IrrG\setminus V_p}
\varpi^{\kappa,\Omega,\mu}(\pi)
(\sum_{i=1}^{\infty} \chi_{\F^i_{\kappa\stp\sigma_{\Omega}}})(\pi) \dim(\pi)\md\mu(\pi)\xrightarrow[p\to\infty]{}0,
\end{split}
\]
therefore closedness of $\Lambda_{\psi}$ implies
\[
\begin{split}
&\quad\;\star=\chi(U^\kappa)Tf=
\Lambda_{\psi}\bigl(\int_{\IrrG} \varpi^{\kappa,\Omega,\mu}(\pi)
(\sum_{i=1}^{\infty} \chi_{\F^i_{\kappa\stp\sigma_{\Omega}}})(\pi) \chi(U^\pi)\md\mu(\pi)\bigr)\\
&=
\lim_{p\to\infty}
\Lambda_{\psi}\bigl(\int_{V_p} \varpi^{\kappa,\Omega,\mu}(\pi)
(\sum_{i=1}^{\infty} \chi_{\F^i_{\kappa\stp\sigma_{\Omega}}})(\pi) \chi(U^\pi)\md\mu(\pi)\bigr)\\
&=\lim_{p\to\infty} T(\chi_{V_p}g)=T(g)=
T\bigl(
\Tr(E^2_\bullet)^{\frac{1}{2}}\varpi^{\kappa,\Omega,\mu}\sum_{i=1}^{\infty} \chi_{\F^{i}_{\kappa\stp\sigma_\Omega}}\bigr)=
T\mc{L}_{\kappa}(f).
\end{split}
\]
So far we have checked this equality for very special $f\in \mc{F}$, namely those of the form $f=\Tr(E^2_\bullet)^{\frac{1}{2}}\chi_{\Omega}$. However, by linearity of $\mc{L}_\kappa$ we know that this equality holds for every $f\in \mc{F}$. Because $T$ is an isometric map we get
\[
\sup_{f\in \mc{F}:\, \|f\|=1}\|\mc{L}_\kappa f\|=
\sup_{f\in \mc{F}:\, \|f\|=1}\|T\mc{L}_\kappa f\|=
\sup_{f\in \mc{F}:\, \|f\|=1}\|\chi(U^\kappa)Tf\|\le
\dim(\kappa).
\]
It follows that $\mc{L}_\kappa$ is a bounded operator and can be extended to the whole $\LL^2(\IrrG)$. Denote this extension with the same symbol, by continuity we have
\[
\chi(U^\kappa)T f=
T\mc{L}_\kappa f
\quad(f\in\LL^2(\IrrG)).
\]
Since $T$ is an isometry, we can write $\mc{L}_\kappa=T^* \chi(U^{\kappa})T$. Therefore if $\kappa$ is an admissible representation, we have
\[
{\mc{L}_\kappa}^*=(T^* \chi(U^{\kappa}) T)^*=T^* \chi(U^{\kappa^{c}}) T
= \mc{L}_{\kappa^{c}}.
\]
\end{proof}

\begin{corollary}\label{wniosek}
The above result shows in particular that
\[
\chi(U^\kappa)\mc{H}\subseteq \mc{H}
\]
for any finite dimensional nondegenerate representation $\kappa$ which is weakly contained in $\Lambda_{\whG}$.
\end{corollary}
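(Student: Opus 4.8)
The plan is to read off the statement directly from Theorem \ref{tw1}, which already contains all the substantive content. Recall that $\mc{H}$ is by definition the image $T(\LL^2(\IrrG))$ of the isometry $T$, and that Theorem \ref{tw1} provides, for every finite dimensional nondegenerate $\kappa$ with $\kappa\lec\Lambda_{\whG}$, a bounded operator $\mc{L}_\kappa$ on $\LL^2(\IrrG)$ satisfying the intertwining identity $\chi(U^{\kappa})\,T f=T\,\mc{L}_\kappa f$ for all $f\in\LL^2(\IrrG)$. The corollary is then a formal consequence of this identity.

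Concretely, I would argue as follows. Take an arbitrary vector $h\in\mc{H}$ and write it as $h=Tf$ for some $f\in\LL^2(\IrrG)$, which is possible precisely because $\mc{H}=T(\LL^2(\IrrG))$. Applying the operator $\chi(U^{\kappa})$ and invoking the intertwining relation of Theorem \ref{tw1} gives $\chi(U^{\kappa})h=\chi(U^{\kappa})Tf=T\,\mc{L}_\kappa f$. The key point to record is that $\mc{L}_\kappa f$ again lies in $\LL^2(\IrrG)$: this is exactly the boundedness of $\mc{L}_\kappa$ as an operator on $\LL^2(\IrrG)$ established in Theorem \ref{tw1}. Hence $\chi(U^{\kappa})h=T(\mc{L}_\kappa f)\in T(\LL^2(\IrrG))=\mc{H}$, and since $h\in\mc{H}$ was arbitrary we conclude $\chi(U^{\kappa})\mc{H}\subseteq\mc{H}$.

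There is essentially no obstacle at this stage — the entire difficulty has been absorbed into the construction of $\mc{L}_\kappa$ and the verification of the intertwining identity in Theorem \ref{tw1}. The only subtlety worth flagging explicitly is that one must use the full strength of Theorem \ref{tw1}, namely that $\mc{L}_\kappa$ is defined and bounded on all of $\LL^2(\IrrG)$ (not merely on the dense subspace $\mc{F}$), so that $\mc{L}_\kappa f$ makes sense for every $f$ with $Tf=h$; the density argument and the norm bound $\|\mc{L}_\kappa\|\le\dim(\kappa)$ already carried out in the proof of Theorem \ref{tw1} guarantee this.
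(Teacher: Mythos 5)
Your proposal is correct and is precisely the argument the paper intends: Corollary \ref{wniosek} is stated as an immediate consequence of Theorem \ref{tw1}, and your verification — writing $h=Tf$, applying the intertwining identity $\chi(U^{\kappa})Tf=T\mc{L}_\kappa f$, and using the boundedness of $\mc{L}_\kappa$ on all of $\LL^2(\IrrG)$ to conclude $T\mc{L}_\kappa f\in T(\LL^2(\IrrG))=\mc{H}$ — is exactly the implicit proof. Nothing is missing.
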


\subsection{Operator $\mc{L}_\nu$}
For any irreducible representation $\kappa\in\supp(\mu)$ we have $\dim(\kappa)<+\infty$ and $\kappa\lec \Lambda_{\whG}$ (\cite[Theorem 3.4.8]{Desmedt}). We have introduced a bounded linear map
\[
\mc{L}_\kappa\colon\LdIrr\rightarrow\LL^2(\IrrG),
\]
therefore for $\nu\in\LL^1(\IrrG)$ we can define a linear operator
\begin{equation}\label{eq46}
\mc{L}_\nu=\int_{\IrrG}\tfrac{\nu(\kappa)}{\dim(\kappa)}\mc{L}_{\kappa} \md\mu(\kappa).
\end{equation}
The above integral converges in \swot$ $ -- it follows from the bound $\|\mc{L}_\kappa\|\le\dim(\kappa)$ and the fact that for $\xi,\eta\in\LdG$ the function
\[
\IrrG\ni \kappa\mapsto \ismaa{\xi}{\chi(U^{\kappa})\eta}\in\CC
\]
is measurable. We obviously have $\|\mc{L}_\nu\|\le\|\nu\|_1$. We get another corollary from Theorem \ref{tw1}:

\begin{corollary}\label{wniosek2}
Let $\Omega\subseteq \IrrG$ be a measurable subset such that $\int_{\Omega}\dim\md\mu<+\infty$. Let $\nu=\sum_{m=1}^{\infty}m\chi_{\Omega\rest_m}=\dim\chi_\Omega$ be a function in $\LL^1(\IrrG)$. Then
\[
T\mc{L}_\nu f=\chi^{\int}(\int_{\Omega}^{\oplus}\pi\md\mu_{\Omega}(\pi))T f\quad(f\in \LL^2(\IrrG)).
\]
\end{corollary}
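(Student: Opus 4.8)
The plan is to unwind the definitions and reduce everything to the pointwise identity supplied by Theorem \ref{tw1}. First I would note that since $\nu=\dim\chi_\Omega$ we have $\tfrac{\nu(\kappa)}{\dim(\kappa)}=\chi_\Omega(\kappa)$ for $\mu$-almost every $\kappa$ (every $\kappa\in\supp(\mu)$ has finite dimension and satisfies $\kappa\lec\Lambda_{\whG}$ by \cite[Theorem 3.4.8]{Desmedt}, so $\mc{L}_\kappa$ is defined $\mu$-a.e.). Hence by \eqref{eq46},
\[
\mc{L}_\nu=\int_{\IrrG}\chi_\Omega(\kappa)\,\mc{L}_\kappa\md\mu(\kappa)=\int_\Omega \mc{L}_\kappa\md\mu_\Omega(\kappa),
\]
the integral converging in \swot, which is legitimate precisely because $\int_\Omega\dim\md\mu<+\infty$ guarantees $\nu\in\LL^1(\IrrG)$.

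Next I would test the claimed identity against an arbitrary vector. Fix $f\in\LL^2(\IrrG)$ and $\zeta\in\LdG$. Using the defining \swot-convergence of $\mc{L}_\nu$ together with the boundedness (isometry) of $T$, I compute
\[
\ismaa{\zeta}{T\mc{L}_\nu f}=\ismaa{T^*\zeta}{\mc{L}_\nu f}=\int_\Omega\ismaa{T^*\zeta}{\mc{L}_\kappa f}\md\mu(\kappa)=\int_\Omega\ismaa{\zeta}{T\mc{L}_\kappa f}\md\mu(\kappa).
\]
Now Theorem \ref{tw1} gives $T\mc{L}_\kappa f=\chi(U^\kappa)Tf$ for each such $\kappa$, so the right-hand side becomes $\int_\Omega\ismaa{\zeta}{\chi(U^\kappa)Tf}\md\mu(\kappa)$.

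Finally I would identify this last integral with the action of the integral character. By \eqref{eq25}, $\chi^{\int}(\int_{\Omega}^{\oplus}\pi\md\mu_{\Omega}(\pi))=\int_\Omega\chi(U^\pi)\md\mu(\pi)$, and since $\|\chi(U^\pi)\|\le\dim(\pi)$ (as $U^\pi$ is unitary and $\|\Tr_\pi\|=\dim(\pi)$) together with $\int_\Omega\dim\md\mu<+\infty$, this is a norm-convergent Bochner integral in $\Linf$. Consequently $\ismaa{\zeta}{\chi^{\int}(\int_{\Omega}^{\oplus}\pi\md\mu_{\Omega}(\pi))Tf}=\int_\Omega\ismaa{\zeta}{\chi(U^\pi)Tf}\md\mu(\pi)$, which matches the expression obtained above. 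As $\zeta$ is arbitrary, I conclude $T\mc{L}_\nu f=\chi^{\int}(\int_{\Omega}^{\oplus}\pi\md\mu_{\Omega}(\pi))Tf$.

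The only genuine technical point, and the step I expect to require the most care, is the interchange of the bounded operator $T$ with the \swot-integral defining $\mc{L}_\nu$ and the subsequent matching of two a priori different notions of integral: the \swot-operator integral on $\LL^2(\IrrG)$ and the norm (Bochner) integral defining $\chi^{\int}$ on $\Linf$. Both reduce, once tested against vectors, to the same scalar integral $\int_\Omega\ismaa{\zeta}{\chi(U^\kappa)Tf}\md\mu(\kappa)$, whose measurability is exactly the fact recorded when $\mc{L}_\nu$ was defined; so no new measurability input is needed and the argument closes.
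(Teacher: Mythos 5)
Your proposal is correct and follows essentially the same route as the paper's own proof: pair against a vector in $\LdG$, pull $T$ through the \swot-convergent integral defining $\mc{L}_\nu$, apply Theorem \ref{tw1} pointwise in $\kappa$, and recognize the resulting scalar integral as the integral character acting on $Tf$. The only cosmetic difference is your appeal to norm (Bochner) convergence of $\int_\Omega \chi(U^\pi)\md\mu(\pi)$, which is stronger than needed (and would require checking essential separable-valuedness of $\pi\mapsto\chi(U^\pi)$); the paper simply uses that the integral character is by definition a \swot-convergent Pettis integral, which already gives the weak identity you actually use.
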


\begin{proof}
The above result is a direct consequence of Theorem \ref{tw1}: for $f\in \LdIrr,g\in \LdG$ the following holds
\[\begin{split} 
\is{g}{T\lambda_{\nu} f}&=
\int_{\IrrG}\tfrac{\nu(\kappa)}{\dim(\kappa)} 
\is{g}{T\mc{L}_{\kappa}f} \md\mu(\kappa)=
\int_{\IrrG}\chi_{\Omega}(\kappa)
\is{g}{\chi(U^{\kappa}) Tf} \md\mu(\kappa)\\
&=
\ismaa{g}{\chi^{\int}(\int_{\Omega}^{\oplus}\pi\md\mu_{\Omega}(\pi) T f}.
\end{split}\]
\end{proof}

\section{Conjugation $\IrrG\rightarrow \IrrG$}\label{secconj}
Recall that $\GG$ is a second countable type I locally compact quantum group whose all irreducible representations are finite dimensional.\\
In this section we will be concerned with the conjugation map defined on the level of $\IrrG$. Our first important result is that the Plancherel measure is equivalent to the Plancherel measure composed with conjugation. Next, we will be able to derive relations between traces of appropriate powers of $E_\pi$ and $D_{\ov{\pi}}$ (see propositions \ref{stw13}, \ref{stw5} for precise formulation). At the end of the section we prove a theorem which connects coamenability of $\GG$ with spectra of integral characters and is one of the two main theorems of the paper.\\

For $\pi$, a nondegenerate representation of $\CGDu$ we define its conjugate representation as $\pi^c=\jmath_{\msf{H}_\pi}\circ\pi\circ\hat{R}^u$. If $\pi$ is irreducible then so is $\pi^c$. This operation also preserves equivalences, therefore it can be transfered to the level of classes: we get a map $\IrrG\rightarrow\IrrG$ which will be denoted by $\pi\mapsto\ov{\pi}$ -- since this moment $\ov{\pi}$ denotes a class or a representative chosen according to our measurable field of representations on the canonical measurable field of Hilbert spaces. We obviously have $\ov{\ov{\pi}}=\pi$. Note that we do not have equality of $\ov{\pi}$ and $\pi^c=\jmath_{\msf{H}_\pi}\circ\pi\circ \hat{R}^{u}$ (e.g.~the first representation acts on $\CC^{\dim(\pi)}$, the second one on $\ov{\CC^{\dim(\pi)}}$), though these representations are unitarily equivalent. Let $\rho_{\ov{\pi}}\colon \B(\ov{\CC^{\dim(\pi)}})\rightarrow \B(\CC^{\dim(\pi)})$ be an isomorphism given by conjugation with a unitary intertwiner. This means that the following equality holds: $\rho_{\ov{\pi}}\circ\jmath_{\msf{H}_\pi}\circ \pi\circ \hat{R}^{u}=\ov{\pi}$. Note that since $\ov{\ov{\pi}}=\pi$ we have
\[
\pi=
\rho_{\pi}\circ \jmath_{\msf{H}_{\ov{\pi}}}\circ \ov{\pi}\circ \hat{R}^u=
\rho_{\pi}\circ \jmath_{\msf{H}_{\ov{\pi}}}\circ 
\rho_{\ov{\pi}}\circ\jmath_{\msf{H}_\pi}\circ \pi\circ \hat{R}^{u}
\circ \hat{R}^u=
\rho_{\pi}\circ \jmath_{\msf{H}_{\ov{\pi}}}\circ 
\rho_{\ov{\pi}}\circ\jmath_{\msf{H}_\pi}\circ \pi,
\]
which implies $\id=
\rho_{\pi}\circ \jmath_{\msf{H}_{\ov{\pi}}}\circ 
\rho_{\ov{\pi}}\circ\jmath_{\msf{H}_\pi}\,(\pi\in \IrrG)$. As usual, let us fix a Plancherel measure $\mu$.

\begin{lemma}
The map $\IrrG\ni \pi \mapsto \ov{\pi}\in \IrrG$ is measurable.
\end{lemma}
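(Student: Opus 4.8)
The plan is to realise $\pi\mapsto\overline{\pi}$ as the class map of a measurable field of irreducible representations and then to invoke the standard fact that such a class map is Borel for the Mackey-Borel structure. First I would note that the conjugate field is measurable. Working on the canonical measurable field of Hilbert spaces $(\msf{H}_\pi)_{\pi\in\IrrG}$ with fundamental sequence $\{(e^i_\pi)_{\pi\in\IrrG}\}_{i\in\NN}$, I form the conjugate field $(\ov{\msf{H}_\pi})_{\pi\in\IrrG}$ with fundamental sequence $\{(\ov{e^i_\pi})_{\pi\in\IrrG}\}_{i\in\NN}$ together with the field of representations $\pi^c=\jmath_{\msf{H}_\pi}\circ\pi\circ\hat{R}^u$, exactly as in Subsection~\ref{conjrep}. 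The computation carried out there gives
\[
\ismaa{\ov{e^i_\pi}}{\pi^c(a)\,\ov{e^j_\pi}}=\ismaa{e^j_\pi}{\pi(\hat{R}^u(a))\,e^i_\pi}\qquad(a\in\CGDu,\;i,j\in\NN).
\]
Since $(\pi)_{\pi\in\IrrG}$ is a measurable field and $\hat{R}^u(a)\in\CGDu$, the right-hand side is a measurable function of $\pi$; hence $(\pi^c)_{\pi\in\IrrG}$ is a measurable field of representations, and each $\pi^c$ is irreducible because $\pi$ is and conjugation preserves irreducibility.

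Next I would appeal to the general principle (\cite[Section 8.6]{DixmierC}) that, for a separable \cst-algebra, a measurable field of irreducible representations over a standard Borel space induces a Borel map into the spectrum equipped with the Mackey-Borel structure. Applied to $(\pi^c)_{\pi\in\IrrG}$ over the standard Borel space $\IrrG$, this yields measurability of the map $\IrrG\ni\pi\mapsto[\pi^c]\in\IrrG$. As $[\pi^c]=\ov{\pi}$ by definition of the conjugation map, this establishes the claim.

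The main obstacle is this second step, i.e.\ justifying that the class map of a measurable field of irreducibles is Mackey-Borel measurable, and here the type I hypothesis is what makes the argument clean. Indeed, when $\CGDu$ is type I and separable, $\IrrG$ is standard and its Mackey-Borel structure coincides with the Borel structure of the Jacobson topology under the homeomorphism $\IrrG\cong\operatorname{Prim}(\CGDu)$, $[\pi]\mapsto\ker\pi$. Since $\ker\pi^c=\{a:\pi(\hat{R}^u(a))=0\}=(\hat{R}^u)^{-1}(\ker\pi)=\hat{R}^u(\ker\pi)$ (using that $\hat{R}^u$ is an involutive antiautomorphism, so $\hat{R}^u\circ\hat{R}^u=\id$), the conjugation corresponds under this identification to $\ker\pi\mapsto\hat{R}^u(\ker\pi)$. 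The antiautomorphism $\hat{R}^u$ carries primitive ideals to primitive ideals and preserves inclusions and intersections of closed two-sided ideals, so it induces a homeomorphism of $\operatorname{Prim}(\CGDu)$. Thus $\pi\mapsto\ov{\pi}$ is in fact a Borel isomorphism of $\IrrG$, which I would present as a self-contained verification of measurability, sidestepping any delicate point about lifting into the Mackey-Borel quotient.
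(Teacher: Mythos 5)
Your proposal is correct, and its core — the second argument, via $\operatorname{Prim}(\CGDu)$ — is a genuinely different route from the paper's. The paper never leaves the Mackey--Borel framework: it works upstairs on the spaces $\Irr(\CGDu)\rest_n$ of concrete irreducible representations on $\CC^n$, where the Mackey--Borel structure is by definition generated by the coefficient functions $\sigma\mapsto\ismaa{\xi}{\sigma(x)\zeta}$, checks that the lift of conjugation $j\colon\sigma\mapsto\jmath_{\ov{\msf{H}_\sigma}}\circ\rho_n^{-1}\circ\sigma\circ\hat{R}^{u}$ is measurable there (up to the fixed unitary identifications, its matrix coefficients are conjugates of matrix coefficients of $\sigma$ evaluated at $\hat{R}^{u}(x)$), and then pushes this through the quotient description of the Mackey--Borel structure on $\IrrG$; type I enters only to identify the $\sigma$-algebra on $\IrrG$ with the Mackey--Borel structure, so that argument in fact yields Mackey--Borel measurability for an arbitrary separable $\CGDu$. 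Your route uses type I essentially, and twice: once for bijectivity (hence homeomorphy) of $[\pi]\mapsto\ker\pi$ onto $\operatorname{Prim}(\CGDu)$ (two irreducible representations of a type I algebra with equal kernels are equivalent), and once for the coincidence of the Mackey--Borel structure with the Borel structure of the Jacobson topology. In exchange you get a stronger and more transparent conclusion: conjugation is implemented by the homeomorphism $I\mapsto\hat{R}^{u}(I)$ of $\operatorname{Prim}(\CGDu)$, hence is a Borel automorphism of $\IrrG$ — which is precisely the form used afterwards, e.g.\ in Proposition \ref{stw12} — and your key computation $\ker\pi^{c}=\hat{R}^{u}(\ker\pi)$ is correct, since $\jmath_{\msf{H}_\pi}$ is injective and $\hat{R}^{u}$ is an isometric involutive $\star$-antiautomorphism, so it permutes closed two-sided ideals preserving inclusions.

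One caveat on your first argument: it is essentially the paper's proof repackaged (the ``general principle'' you cite is proved by exactly the lifting argument above), but as stated it glosses over two points the paper's formulation is designed to avoid. A field of representations in the direct-integral sense is a priori only measurable relative to a measure rather than Borel, and speaking of the field $(\pi^{c})_{\pi\in\IrrG}$ with $\pi$ a chosen representative of its class presupposes a Borel cross-section $\IrrG\rest_n\to\Irr(\CGDu)\rest_n$ of the quotient map — a nontrivial selection statement. The paper avoids both issues by never choosing representatives: it only manipulates preimages of subsets of $\IrrG$ in $\Irr(\CGDu)\rest_n$. Since your second argument is self-contained and complete, the proposal as a whole stands.
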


\begin{proof}
Since $\GG$ is type I, the $\sigma$-algebra on $\IrrG$ is equal to the Mackey-Borel structure. Let $\Omega\subseteq\IrrG$ be a measurable subset. Then the set $\{\sigma\in \Irr(\CGDu)\rest_n\,|\, [\sigma]\in \Omega\}$ is measurable for each $n\in\NN$ \cite[Section 3.8.1]{DixmierC}. We need to show that the set $\{\sigma\in \Irr(\CGDu)\rest_n\,|\, [\sigma]\in \ov{\Omega}\}$ is also measurable.\\
Choose any isomorphism $\rho_n\colon \B(\ov{\CC^n})\rightarrow \B(\CC^n)$ which is given by conjugating with a unitary operator $\ov{\CC^n}\rightarrow \CC^n$. On $\operatorname{Irrep}(\CGDu)_n$ the Mackey-Borel structure is defined as the smallest $\sigma$-algebra such that the functions $\sigma\mapsto \is{\xi}{\sigma(x)\zeta}\,(x\in\CGDu,\xi,\zeta\in\CC^n)$ are measurable. It follows that the function $j\colon \sigma\mapsto \jmath_{\ov{\msf{H}_\sigma}}\circ\rho_n^{-1}\circ \sigma\circ \hat{R}^{u}$ on the level of $\Irr(\CGDu)\rest_n$ is measurable. Since we have
\[\begin{split}
&\quad\;j^{-1}(\{
\sigma\in \Irr(\CGDu)\rest_n\,|\,[\sigma]\in\Omega
\})=
\{\rho_n\circ \jmath_{\msf{H}_\sigma}\circ\sigma\circ \hat{R}^{u}
\in \Irr(\CGDu)\rest_n\,|\,
[\sigma]\in \Omega\}\\
&=
\{\sigma
\in \Irr(\CGDu)\rest_n\,|\,
[\sigma]\in \ov{\Omega}\},
\end{split}\]
the map $\IrrG\ni[\sigma]\mapsto[\sigma^c] \in \IrrG$ is measurable.
\end{proof}

\begin{proposition}\label{stw12}
The measure $\mu'\colon \mscr{B}(\IrrG)\ni \Omega\mapsto \mu(\ov{\Omega})\in \RR_{\ge 0}\cup \{+\infty\}$ is equivalent to $\mu$.
\end{proposition}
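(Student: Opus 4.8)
The plan is to realise $\mu'$ as the measure class of the conjugate of the regular representation, and then to show that this conjugate is quasi-equivalent to $\Lambda_{\whG}$ itself, whose measure class is known to be $[\mu]$. Since quasi-equivalent representations carry the same associated measure class, this forces $[\mu']=[\mu]$, i.e.\ $\mu'\sim\mu$.

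First I would recall that $\sigma_{\IrrG}=\int_{\IrrG}^{\oplus}\pi\md\mu(\pi)$ has measure class $[\mu]$ and satisfies $\sigma_{\IrrG}\approx_q\Lambda_{\whG}$ (both established in the subsection on the measure class associated with an integral representation). Next I would form the conjugate integral representation of $\sigma_{\IrrG}$ from the construction in Section~\ref{conjrep}, where it is shown to be unitarily equivalent to $(\sigma_{\IrrG})^c:=\jmath_{\msf{H}_{\sigma_{\IrrG}}}\circ\sigma_{\IrrG}\circ\hat{R}^u$. Its underlying measure space is $(\IrrG,\mu)$, and the irreducible representation sitting over the point $\overline{\pi}$ is $(\pi)^c$, which lies in the class $\overline{\pi}$; thus the class map of this measurable field is exactly the conjugation $c\colon\pi\mapsto\overline{\pi}$, which is a measurable involution by the previous lemma. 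A change of variables then identifies the measure class of $(\sigma_{\IrrG})^c$ with the pushforward $c_*\mu$, and since $(c_*\mu)(\Omega)=\mu(c^{-1}(\Omega))=\mu(\overline{\Omega})=\mu'(\Omega)$ (using $\overline{\overline{\pi}}=\pi$), this measure class is precisely $[\mu']$.

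I would then use that the conjugation operation $\rho\mapsto\jmath\circ\rho\circ\hat{R}^u$ is functorial and sends unitary equivalences to unitary equivalences, hence preserves quasi-equivalence; so $\sigma_{\IrrG}\approx_q\Lambda_{\whG}$ yields $(\sigma_{\IrrG})^c\approx_q(\Lambda_{\whG})^c$. It then remains to compute the measure class of $(\Lambda_{\whG})^c$. Since $\hat{R}^u$ lifts the unitary antipode of $\whG$, that is $\Lambda_{\whG}\circ\hat{R}^u=\hat{R}\circ\Lambda_{\whG}$, we obtain $(\Lambda_{\whG})^c=\jmath_{\LdG}\circ\hat{R}\circ\Lambda_{\whG}$. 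The map $\phi:=\jmath_{\LdG}\circ\hat{R}$ is a composition of two normal $\star$-anti-isomorphisms, hence a normal $\star$-isomorphism of $\Linfd=\Lambda_{\whG}(\CGDu)''$ onto $\jmath_{\LdG}(\Linfd)=(\Lambda_{\whG})^c(\CGDu)''$ which satisfies $\phi(\Lambda_{\whG}(a))=(\Lambda_{\whG})^c(a)$ for all $a\in\CGDu$. This is exactly a quasi-equivalence $\Lambda_{\whG}\approx_q(\Lambda_{\whG})^c$, so the measure class of $(\Lambda_{\whG})^c$ is again $[\mu]$. Chaining the identifications gives $[\mu']=[\mu_{(\sigma_{\IrrG})^c}]=[\mu_{(\Lambda_{\whG})^c}]=[\mu]$, which is the claim.

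The hard part will be the last step, namely verifying $(\Lambda_{\whG})^c\approx_q\Lambda_{\whG}$: the two essential inputs are the compatibility $\Lambda_{\whG}\circ\hat{R}^u=\hat{R}\circ\Lambda_{\whG}$ of the universal and reduced antipodes, and the observation that $\jmath_{\LdG}\circ\hat{R}$ is a normal $\star$-isomorphism of $\Linfd$ onto $\jmath_{\LdG}(\Linfd)$ that intertwines the two representations. A secondary point requiring care is the change-of-variables justification that the measure class of the conjugate field equals $[\mu']=[c_*\mu]$, which relies on the measurability and involutivity of $\pi\mapsto\overline{\pi}$ and on each fibre $(\pi)^c$ being irreducible.
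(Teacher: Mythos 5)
Your proof is correct, but it takes a genuinely different route from the paper's. The paper stays entirely inside the Plancherel machinery: it constructs an explicit antiunitary $\mc{V}_1$ between $\int_{\IrrG}^{\oplus}\HS(\msf{H}_\pi)\md\mu(\pi)$ and $\int_{\IrrG}^{\oplus}\HS(\msf{H}_\pi)\md\mu'(\pi)$ out of the maps $T_\pi\mapsto \rho_\pi\circ\jmath_{\msf{H}_{\ov{\pi}}}(T_{\ov{\pi}})^*$, composes it with $\mc{Q}_R$, $\hat{J}J$, $J$ and $\mc{Q}_L^{-1}$ to obtain a unitary $\mc{V}$, checks that $\mc{Q}'_L=\mc{V}\circ\mc{Q}_L$ satisfies the commutation relation with the operators $(\omega\otimes\id)\mrW$ and carries $\Linfd\cap\Linfd'$ into diagonalisable operators, and then concludes $\mu\sim\mu'$ from the uniqueness part of the Plancherel theorem (Lemma \ref{lemat13}, used through point $7)$ of Theorem \ref{PlancherelL}). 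You argue instead at the level of Dixmier's measure classes: the conjugate of $\sigma_{\IrrG}$ has measure class equal to that of the push-forward of $\mu$ under $\pi\mapsto\ov{\pi}$, which is $[\mu']$; conjugation preserves quasi-equivalence (it commutes with multiples and direct sums and sends unitary equivalences to unitary equivalences); and $\Lambda_{\whG}$ is self-conjugate up to quasi-equivalence via the normal $\star$-isomorphism $\jmath_{\LdG}\circ\hat{R}$, whence $[\mu']=[\mu]$. All your ingredients are sound: the compatibility $\Lambda_{\whG}\circ\hat{R}^u=\hat{R}\circ\Lambda_{\whG}$ follows from $(R\otimes\hat{R})\mrW=\mrW$ lifted to $\WW$, and the self-conjugacy of $\Lambda_{\whG}$ via exactly the map $\gamma=\jmath_{\LdG}\circ\hat{R}$ is proven by the paper itself, but only later (in Section \ref{cstA}), so your proof rightly supplies it independently and no circularity arises. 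The trade-off is this: your argument is shorter and more conceptual, at the price of leaning on Dixmier's decomposition theory — in particular on the measurability of $\pi\mapsto\ov{\pi}$ (the lemma preceding the proposition) and on the standard but nontrivial fact that two measurable fields of irreducible representations lying in the same classes over a standard measure yield unitarily equivalent direct integrals, which your change-of-variables step needs; the paper's computation, by contrast, produces the explicit unitary $\mc{V}$ matching up the two Plancherel decompositions and keeps track of how the Duflo--Moore operators transform under conjugation, which is in the same spirit as (and prepares the reader for) the finer quantitative companions of this proposition, Propositions \ref{stw13} and \ref{stw5}, relating $\Tr(E_\pi^{\pm 2})$, $\Tr(D_{\ov{\pi}}^{\pm 2})$ and the derivative $\tfrac{\md\mu'}{\md\mu}$ — information invisible to a pure measure-class argument.
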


\begin{proof} 
Since $\IrrG$ is a standard measurable space, the measure $\mu'$ is standard. With measure space $(\IrrG,\mu')$ we associate the standard measurable field of Hilbert spaces -- same as for $(\IrrG,\mu)$. Recall that the operator $\hat{J}$ acts as follows:
\[
\hat{J}\Lvp(x)=\Lambda_{\psi}(R(x)^*)\quad(x\in\mf{N}_{\vp})
\]
therefore by duality
\[
J\Lhvp(x)=\Lambda_{\hpsi}(\hat{R}(x)^*)\quad(x\in\mf{N}_{\hvp}).
\]
In particular for $\lambda(\alpha)\in\lambda(\LL^1_{\sharp}(\GG))\cap\mf{N}_{\hvp}$ we have
\[
J\Lhvp(\lambda(\alpha))=
\Lambda_{\hpsi}(\lambda(\alpha\circ R)^*)=
\Lambda_{\hpsi}(\lambda((\alpha\circ R)^{\sharp})).
\]
We have a canonical antilinear map\footnote{We need to introduce operator $\rho_\pi$ in the definition of $\mc{V}_1$ in order for the operators to act on the appropriate spaces -- nevertheless, it is rather artificial and stems from our choice to work with the canonical measurable field of Hilbert spaces.}
\[\begin{split}
\mc{V}_1\colon \int_{\IrrG}^{\oplus} &\HS(\msf{H}_\pi)\md\mu(\pi)\ni \int_{\IrrG}^\oplus T_\pi\md\mu(\pi)\mapsto\\
&\mapsto\int_{\IrrG}^\oplus \rho_{\pi}\circ\jmath_{\msf{H}_{\ov\pi}}(T_{\ov\pi})^*\md\mu'(\pi)\in
\int_{\IrrG}^{\oplus} \HS(\msf{H}_\pi)\md\mu'(\pi).
\end{split}\]
It is well defined: if $\tilde{T}_\pi=T_\pi$ for $\mu$-almost all $\pi$ then $\jmath_{\msf{H}_{\ov\pi}}(\tilde{T}_{\ov\pi})=\jmath_{\msf{H}_{\ov\pi}}(T_{\ov\pi})$ for $\mu'$-almost all $\pi$. Map $\mc{V}_1$ is isometric:
\[\begin{split} 
&\quad\;\bigl\|\int_{\IrrG}^\oplus \rho_{\pi}\circ\jmath_{\msf{H}_{\ov\pi}}(T_{\ov\pi})^*\md\mu'(\pi)\bigr\|^2=
\int_{\IrrG}\Tr(\jmath_{\msf{H}_{\ov\pi}}(T_{\ov\pi})
\jmath_{\msf{H}_{\ov\pi}}(T_{\ov\pi})^*)\md\mu'(\pi)\\
&=
\int_{\IrrG} \Tr( T_{\ov\pi}^* T_{\ov\pi})\md\mu'(\pi)=
\int_{\IrrG}\Tr(T_{\pi}^* T_{\pi})\md\mu(\pi)=
\bigl\|\int_{\IrrG}^\oplus T_{\pi}\md\mu(\pi)\bigr\|^2.
\end{split}\]
The same argument shows that the map
\[\begin{split}
\int_{\IrrG}^{\oplus} &\HS(\msf{H}_\pi)\md\mu'(\pi)\ni \int_{\IrrG}^\oplus T_\pi\md\mu'(\pi)\\
&\mapsto
\int_{\IrrG}^\oplus 
\rho_\pi\circ\jmath_{\msf{H}_{\ov\pi}}(T_{\ov\pi})^*\md\mu(\pi)
\in
\int_{\IrrG}^{\oplus} \HS(\msf{H}_\pi)\md\mu(\pi).
\end{split}\]
is well defined, it is clear that it is an inverse to $\mc{V}_1$. Consequently, $\mc{V}_1$ is an antiunitary. Since
\[\begin{split}
&\quad\;
\rho_{\pi}\circ\jmath_{\msf{H}_{\ov\pi}}( (\alpha\otimes\id)U^{\ov\pi})^*=
\rho_{\pi}\circ\jmath_{\msf{H}_{\ov\pi}}\circ
\rho_{\ov{\pi}}\circ\jmath_{\msf{H}_\pi}
\bigl(((\alpha\circ R\otimes\id)U^{\pi})^* \bigr)\\
&=
(((\alpha\circ R)\otimes\id)U^{\pi})^*=
((\alpha\circ R)^{\sharp}\otimes\id)U^\pi
\end{split}\]
and $
(\alpha\circ R)^{\sharp}=\alpha^{\sharp}\circ R$ for $\alpha\in\LL^1_{\sharp}(\GG),\,\pi\in\IrrG$, the operator $\mc{V}_1$ satisfies
\[
\mc{V}_1 \bigl(\int_{\IrrG}^{\oplus} 
((\alpha\circ R)^{\sharp}\otimes\id)U^\pi\, E_{\pi}^{-1} \md\mu(\pi)\bigr)=
\int_{\IrrG}^{\oplus} 
(\alpha\otimes\id)U^{\pi}
\rho_{\pi}\circ
\jmath_{\msf{H}_{\ov\pi}}(E_{\ov\pi}^{-1})\md\mu'(\pi)
\]
for $\alpha\circ R\in\LL^1_{\sharp}(\GG)\cap \ov{\mc{I}_R}$.\\
If we precompose this map with $\mc{Q}_R$, $\hat{J}J$, $J$ and $\mc{Q}_L^{-1}$ we get a unitary operator
\[
\mc{V}=\mc{V}_1\circ\mc{Q}_R\circ (\hat{J}J)\circ J\circ\mc{Q}_L^{-1}
\colon \int_{\IrrG}^{\oplus}\HS(\msf{H}_\pi) \md\mu(\pi)
\rightarrow
\int_{\IrrG}^{\oplus}\HS(\msf{H}_\pi) \md\mu'(\pi)
\]
satisfying
\[
\begin{split}
&\quad\;\mc{V} \bigl(
\int_{\IrrG}^{\oplus} (\alpha\otimes\id)U^\pi D_\pi^{-1}\md\mu(\pi)\bigr)
=\mc{V}_1 \circ \mc{Q}_R \circ (\hat{J}J)\circ J (\Lhvp(\lambda(\alpha)))\\
&=
\mc{V}_1\circ \mc{Q}_R (\hat{J}J \Lambda_{\hpsi} (\hat{R}(\lambda(\alpha))^*)=
\mc{V}_1\circ \mc{Q}_R (\hat{J}J \Lambda_{\hpsi} (\lambda(\alpha^{\sharp}\circ R)))\\
&=
\mc{V}_1\bigl(\int_{\IrrG}^{\oplus} ((\alpha\circ R)^{\sharp}\otimes\id)U^\pi E_\pi^{-1}\md\mu(\pi)\bigr)
=
\int_{\IrrG}^{\oplus} (\alpha\otimes\id)U^\pi 
\rho_\pi\circ\jmath_{\msf{H}_{\ov\pi}}(E_{\ov\pi}^{-1})\md\mu'(\pi)
\end{split}
\]
for $\alpha\in\LL^1_{\sharp}(\GG)$ such that $\lambda(\alpha)\in\mf{N}_{\hvp}$ and $\alpha\circ R\in \ov{\mc{I}_R}$.\\
Operator $\mc{V}$ maps diagonalisable operators to diagonalisable operators\footnote{Recall that an operator is diagonalisable if it can be written as $\int_{\IrrG}^{\oplus} f(\pi) \I_{\HS(\msf{H}_\pi)} \md\mu(\pi)$ for a scalar valued function $f$.}. Indeed: we know that operators $\mc{Q}_L,\mc{Q}_R$ transforms diagonalisable operators into $\LL^{\infty}(\whG)\cap{\LL^{\infty}(\whG)}'$ (point 6) of theorems \ref{PlancherelL}, \ref{PlancherelR}). Next, we have $
\hat{J}\LL^{\infty}(\whG)\hat{J}=\LL^{\infty}(\whG)'$,
consequently, conjugation by $\hat{J}$ preserves $\LL^{\infty}(\whG)\cap\LL^{\infty}(\whG)'$. We need to check that the operator $\mc{V}_1$ transforms diagonalisable operators to diagonalisable operators -- it follows directly from the definition: we have
\[\begin{split}
&\quad\;
\mc{V}_1^* \bigl( \int_{\IrrG}^{\oplus} C(\pi)\I_{\HS(\msf{H}_\pi)}\md\mu'(\pi)
\bigr) \mc{V}_1 \bigl(
\int_{\IrrG}^{\oplus} T_\pi\md\mu(\pi)\bigr)\\
&=
\mc{V}_1^* \bigl(
\int_{\IrrG}^{\oplus} C(\pi)\rho_\pi\circ\jmath_{\msf{H}_{\ov\pi}}(T_{\ov\pi})^*\md\mu'(\pi)\bigr)\\
&=
\mc{V}_1^* \bigl(
\int_{\IrrG}^{\oplus} \rho_\pi\circ\jmath_{\msf{H}_{\ov\pi}}(\ov{C(\pi)}T_{\ov\pi})^*\md\mu'(\pi)\bigr)\\
&=
\int_{\IrrG}^{\oplus} \ov{C(\ov\pi)} T_\pi\md\mu(\pi)\\
&=
\bigl( \int_{\IrrG}^{\oplus} \ov{C(\ov\pi)}\I_{\HS(\msf{H}_\pi)}\md\mu(\pi)
\bigr) \bigl(
\int_{\IrrG}^{\oplus} T_\pi\md\mu(\pi)\bigr)
\end{split}
\]
for any $\mu'$-almost everywhere bounded measurable function $C\colon\IrrG\rightarrow\CC$ and arbitrary vector $\int_{\IrrG}^{\oplus} T_\pi\md\mu(\pi)\in\int_{\IrrG}^{\oplus}\HS(\msf{H}_\pi)\md\mu(\pi)$.\\
 Let
\[
\mc{Q}'_L=\mc{V}\circ \mc{Q}_L\colon
\LdG\rightarrow \int_{\IrrG}^{\oplus}\HS(\msf{H}_\pi) \md\mu'(\pi).
\]
It is clear that it maps $\LL^{\infty}(\whG)\cap\LL^{\infty}(\whG)'$ into diagonalisable operators, because $\mc{Q}_L$ does so and $\mc{V}$ maps diagonalisable operators to diagonalisable operators. In order to make use of Lemma \ref{lemat13} (in the Plancherel measure version) we need to check that
\[	
\mc{Q}'_L (\omega\otimes\id)\mrW=
\bigl(\int_{\IrrG}^{\oplus} (\omega\otimes\id)U^{\pi}\otimes\I_{\ov{\msf{H}_{\pi}}}\md\mu'(\pi)\bigr)\mc{Q}'_L.
\]
for $\omega\in\Lj$. We already know that the following holds
\[
\mc{Q}_L (\omega\otimes\id)\mrW=
\bigl(\int_{\IrrG}^{\oplus} (\omega\otimes\id)U^{\pi}\otimes\I_{\ov{\msf{H}_{\pi}}}\md\mu(\pi)\bigr)\mc{Q}_L,
\]
therefore it is enough to check that
\[
\mc{V}\bigl(\int_{\IrrG}^{\oplus} (\omega\otimes\id)U^{\pi}\otimes\I_{\ov{\msf{H}_{\pi}}}\md\mu(\pi)\bigr)=
\bigl(\int_{\IrrG}^{\oplus} (\omega\otimes\id)U^{\pi}\otimes\I_{\ov{\msf{H}_{\pi}}}\md\mu'(\pi)\bigr) \mc{V}.
\]
Assume that we have $\omega,\alpha\in\Ljsharp$ such that $\alpha\circ R,\omega\circ R\in\Ljsharp\cap \ov{\mc{I}_R}$ and $\lambda(\alpha)\in\mf{N}_{\hvp}$. Then $\lambda(\omega\star\alpha)=\lambda(\omega)\lambda(\alpha)\in\mf{N}_{\hvp}$ and
\[
\ov{(\omega\star \alpha)\circ R}=\ov{(\alpha\circ R)\star(\omega\circ R)}=
\ov{\alpha\circ R}\star \ov{\omega\circ R}\in\mc{I}_R
\]
due to Lemma \ref{lemat21}. We know that functionals $\omega$ as above form a dense subspace (Lemma \ref{lemat28}). For those functionals we can calculate
\[\begin{split} 
&\quad\;
\mc{V}\bigl(\int_{\IrrG}^{\oplus} (\omega\otimes\id)U^{\pi}\otimes\I_{\ov{\msf{H}_{\pi}}}\md\mu(\pi)\bigr)
\int_{\IrrG}^{\oplus} (\alpha\otimes\id)U^\pi D_\pi^{-1}\md\mu(\pi)\\
&=
\mc{V} \int_{\IrrG}^{\oplus}
(\omega\star\alpha\otimes\id)U^{\pi} D_\pi^{-1}
\md\mu(\pi)\\
&=
\int_{\IrrG}^{\oplus}
(\omega\star\alpha\otimes\id)U^{\pi} \rho_\pi\circ\jmath_{\msf{H}_{\ov\pi}}(E_{\ov\pi}^{-1})
\md\mu'(\pi)\\
&=
\bigl(\int_{\IrrG}^{\oplus} (\omega\otimes\id)U^{\pi}\otimes\I_{\ov{\msf{H}_{\pi}}}\md\mu'(\pi)\bigr)
\int_{\IrrG}^{\oplus}
(\alpha\otimes\id)U^{\pi} \rho_\pi\circ\jmath_{\msf{H}_{\ov\pi}}(E_{\ov\pi}^{-1})
\md\mu'(\pi)\\
&=
\bigl(\int_{\IrrG}^{\oplus} (\omega\otimes\id)U^{\pi}\otimes\I_{\ov{\msf{H}_{\pi}}}\md\mu'(\pi)\bigr)
\mc{V}\int_{\IrrG}^{\oplus}
(\alpha\otimes\id)U^{\pi} D_\pi^{-1}
\md\mu(\pi).
\end{split}\]
Lemma \ref{lemat28} gives us density of vectors $\int_{\IrrG}^{\oplus}(\alpha\otimes\id)U^\pi D_\pi^{-1}\md\mu(\pi)$, which finishes the claim.
\end{proof}

For $\beta\in\LL^1(\GG)$ such that $\lambda(\beta)\ge 0$ we have
\[\begin{split}
\hat{\vp}(\lambda(\beta))&=\int_{\IrrG} 
\Tr\bigl(\pi((\beta\otimes\id){\WW}^{\GG})D_\pi^{-2}\bigr) \md\mu(\pi),\\
\hat{\psi}(\lambda(\beta))&=\int_{\IrrG} 
\Tr\bigl(\pi((\beta\otimes\id){\WW}^{\GG})E_{\pi}^{-2}\bigr) \md\mu(\pi).
\end{split}\]
Moreover, for such $\beta$ we have
\[
0\le\hat{R}(\lambda(\beta))=
(\beta\otimes\hat{R})\mrW=
(\beta\circ R\otimes\id)\mrW=
\lambda(\beta\circ R),
\]
and consequently, since $\hpsi=\hvp\circ\hat{R}$
\begin{equation}\label{eq17}
\begin{split}
&\quad\;\int_{\IrrG} 
\Tr\bigl(\pi((\beta\circ R\otimes\id){\WW}^{\GG})D_\pi^{-2}\bigr) \md\mu(\pi)=
\hat{\vp}(\lambda(\beta\circ R))=
\hat{\vp}(\hat{R}(\lambda(\beta)))\\
&=
\hat{\psi}(\lambda(\beta))
=
\int_{\IrrG} 
\Tr\bigl(\pi((\beta\otimes\id){\WW}^{\GG})E_{\pi}^{-2}\bigr) \md\mu(\pi)
\end{split}
\end{equation}

We can make use of this observation to derive a relation between $\Tr(E^{-2}_{\pi})$ and $\Tr(D_{\ov{\pi}}^{-2})$:

\begin{proposition}\label{stw13}
Let $\mu'$ be the Plancherel measure $\mu$ composed with conjugation. We have $\Tr(E_\pi^{-2})=\Tr(D_{\ov\pi}^{-2})\tfrac{\md\mu'}{\md\mu}(\pi)$ for almost all $\pi\in \IrrG$.
\end{proposition}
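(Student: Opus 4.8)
The plan is to read off the stated pointwise relation directly from the reciprocity formula \eqref{eq17}, after rewriting it in terms of the conjugate representation. Fix $\beta\in\LL^1(\GG)$ with $\lambda(\beta)\ge 0$ (and finite on the relevant weights). Using the universal intertwining relation $\hat{R}^{u}(\lambda^u(\beta))=\lambda^u(\beta\circ R)$, the left-hand integrand of \eqref{eq17} is $\Tr(\pi(\hat{R}^{u}(\lambda^u(\beta)))D_\pi^{-2})$. First I would substitute $\pi\circ\hat{R}^{u}=\jmath_{\msf{H}_\pi}^{-1}\circ\pi^c=\jmath_{\msf{H}_\pi}^{-1}\circ\rho_{\ov\pi}^{-1}\circ\ov\pi$, which follows from $\pi^c=\jmath_{\msf{H}_\pi}\circ\pi\circ\hat{R}^{u}$ and $\ov\pi=\rho_{\ov\pi}\circ\pi^c$. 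Since $\jmath$ is a trace-preserving antiautomorphism and $\rho_{\ov\pi}$ a trace-preserving automorphism, cyclicity of the trace together with antimultiplicativity of $\jmath_{\msf{H}_\pi}^{-1}$ yields
\[
\Tr(\pi(\hat{R}^{u}(\lambda^u(\beta)))D_\pi^{-2})=\Tr\bigl(\ov\pi(\lambda^u(\beta))\,\rho_{\ov\pi}\circ\jmath_{\msf{H}_\pi}(D_\pi^{-2})\bigr),
\]
where $\ov\pi(\lambda^u(\beta))=(\beta\otimes\id)U^{\ov\pi}$.

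Next I would change variables $\pi\mapsto\ov\pi$ in the $\mu$-integral. As $\mu'$ is by definition the pushforward of $\mu$ under conjugation, $\int_{\IrrG}g(\ov\pi)\md\mu(\pi)=\int_{\IrrG}g\,\md\mu'=\int_{\IrrG}g\,\tfrac{\md\mu'}{\md\mu}\md\mu$, where the last step uses the equivalence $\mu'\sim\mu$ from Proposition \ref{stw12}. Applying this to \eqref{eq17} turns its left-hand side into $\int_{\IrrG}\Tr(\pi(\lambda^u(\beta))F_\pi)\md\mu(\pi)$, with the measurable field of positive operators $F_\pi=\rho_\pi\circ\jmath_{\msf{H}_{\ov\pi}}(D_{\ov\pi}^{-2})\tfrac{\md\mu'}{\md\mu}(\pi)$, while the right-hand side is $\int_{\IrrG}\Tr(\pi(\lambda^u(\beta))E_\pi^{-2})\md\mu(\pi)$. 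Thus the two positive fields $F_\bullet$ and $E_\bullet^{-2}$ produce the same trace integral against every $\pi(\lambda^u(\beta))$ with $\lambda(\beta)\ge 0$, and by linearity against every $\pi(x)$ for $x$ in the span of such elements.

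It then remains to pass from this integrated identity to the pointwise equality $F_\pi=E_\pi^{-2}$ for almost every $\pi$, which is the main technical point. I would argue as in the proof of Proposition \ref{stw8}: the operators $\pi(\lambda^u(\beta))$ are dense, in the $\sigma$-strong sense on sets of finite measure, among decomposable positive operators, because $(\int_{\IrrG}^\oplus\pi\md\mu(\pi))(\CGDu)''=\Dec(\int_{\IrrG}^\oplus\msf{H}_\pi\md\mu(\pi))$ by \cite[Proposition 8.6.4]{DixmierC} and the relevant functionals are norm dense by Lemma \ref{lemat28}. Testing the identity against fields of rank-one operators supported on a measurable set of finite measure then forces $F_\pi=E_\pi^{-2}$ almost everywhere. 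Finally, taking the (finite) trace and using that both $\rho_\pi$ and $\jmath_{\msf{H}_{\ov\pi}}$ preserve the trace gives $\Tr(E_\pi^{-2})=\Tr(D_{\ov\pi}^{-2})\tfrac{\md\mu'}{\md\mu}(\pi)$ for almost all $\pi$, as asserted. The delicate points are merely the bookkeeping of the Hilbert spaces on which $\rho$ and $\jmath$ act and the finiteness needed to justify the disintegration, both of which are controlled exactly as in the earlier sections.
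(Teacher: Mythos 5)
Your reductions in the first two steps are correct, and they are in fact the same algebraic identities the paper uses: the rewriting $\Tr\bigl(\pi(\hat{R}^{u}(\lambda^u(\beta)))D_\pi^{-2}\bigr)=\Tr\bigl(\ov\pi(\lambda^u(\beta))\,\rho_{\ov\pi}\circ\jmath_{\msf{H}_\pi}(D_\pi^{-2})\bigr)$ and the change of variables $\int g(\ov\pi)\md\mu(\pi)=\int g\,\tfrac{\md\mu'}{\md\mu}\md\mu$ (using Proposition \ref{stw12}) are exactly what happens inside the paper's computation. The problem is the final step, which you yourself flag as the main technical point. The identity you obtain, $\int\Tr(\pi(\lambda^u(\beta))F_\pi)\md\mu=\int\Tr(\pi(\lambda^u(\beta))E_\pi^{-2})\md\mu$, is available \emph{only} for test operators of the global form $\pi(\lambda^u(\beta))$ (with $\lambda(\beta)\ge 0$, or, after polarization as in Lemma \ref{lemat16}, for $\lambda^u(\omega)^*\lambda^u(\nu)$ with $\lambda(\omega),\lambda(\nu)\in\mf{N}_{\hpsi}$). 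To "test against rank-one fields supported on a finite-measure set" you must approximate such a field in \ssot\ by elements $\pi_{\IrrG}(\lambda^u(\beta_n))$ and exchange the limit with both integrals. But the only available majorants are $\|\pi(\lambda^u(\beta_n))\|\,\Tr(F_\pi)$ and $\|\pi(\lambda^u(\beta_n))\|\,\Tr(E_\pi^{-2})$, and neither $\Tr(F_\bullet)$ nor $\Tr(E_\bullet^{-2})$ is $\mu$-integrable over all of $\IrrG$ in general (this would essentially force $\hvp$, $\hpsi$ to be bounded); they are only locally integrable. Positivity and Fatou then give only the one-sided inequalities $\int\Tr(T_\pi F_\pi)\md\mu\le\lim$ and $\int\Tr(T_\pi E_\pi^{-2})\md\mu\le\lim$, and mass can escape to infinity, so equality does not follow. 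This is precisely where the analogy with Proposition \ref{stw8} breaks down: there, one factor of the pairing is a \emph{fixed square-integrable field} $\int^\oplus T_\pi\md\mu$ supported on a controlled finite-measure set $V$, so the integrand automatically vanishes off $V$ and the domination is over $V$ only; in your setting both the approximating test element and the unknown fields $F_\bullet$, $E_\bullet^{-2}$ are global, and no such localization is available.

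The paper avoids this by never leaving the $\LL^2$ picture: the polarized identity is read as saying that $\int^\oplus\pi(\lambda^u(\omega))E_\pi^{-1}\md\mu\;\mapsto\;\int^\oplus\ov\pi(\lambda^u(\omega))\,\rho_{\ov\pi}\circ\jmath_{\msf{H}_\pi}(D_\pi^{-1})\md\mu$ is isometric on the dense subspace $\mc{Q}_R\hat{J}J\Lambda_{\hpsi}(\lambda(\mc{Y}))$ of $\int^\oplus_{\IrrG}\HS(\msf{H}_\pi)\md\mu$ (Lemma \ref{lemat28} plus unitarity of $\mc{Q}_R$); one then takes $\beta_n$ with $\int^\oplus\pi(\lambda^u(\beta_n))E_\pi^{-1}\md\mu\to\int^\oplus\chi_\Omega E_\pi^{-1}\md\mu$, identifies the limit of the image sequence almost everywhere along a subsequence as $\chi_{\ov\Omega}(\pi)\,\rho_{\ov\pi}\circ\jmath_{\msf{H}_\pi}(D_\pi^{-1})$, and equates norms to get $\int_\Omega\Tr(E_\pi^{-2})\md\mu=\int_{\ov\Omega}\Tr(D_\pi^{-2})\md\mu$ for every $\Omega$ of finite measure with $\int_\Omega\Tr(E_\pi^{-2})\md\mu<+\infty$; since such $\Omega$ exhaust $\IrrG$ up to null sets, the pointwise a.e.\ statement follows. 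If you prefer a route closer in spirit to yours, the legitimate way to get the pointwise (indeed operator-level) identity $\rho_\pi\circ\jmath_{\msf{H}_{\ov\pi}}(D_{\ov\pi}^{-2})\,\tfrac{\md\mu'}{\md\mu}(\pi)=E_\pi^{-2}$ is not a density argument but the uniqueness statement, point $7)$ of Theorem \ref{PlancherelR}: one checks that $\mu'$ together with the field $\bigl(\rho_\pi\circ\jmath_{\msf{H}_{\ov\pi}}(D_{\ov\pi})\bigr)_\pi$ is another right Plancherel datum (this is what the operator $\mc{V}$ in the proof of Proposition \ref{stw12} sets up), and then Schur's lemma forces the intertwiners $T_\pi$ to be scalars, giving the equality on the nose; tracing it yields the proposition.
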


\begin{proof}
Let us fix a meaurable subset of finite measure $\Omega\subseteq\IrrG$ such that $\int_{\Omega}\Tr(E^{-2}_{\pi})\md\mu(\pi)<+\infty$. Let $(\beta_n)_{n\in\NN}$ be a sequence in $\LL^1_{\sharp}(\GG)$ such that $\lambda(\beta_n)\in\mf{N}_{\hpsi}$ and
\[
\int_{\IrrG}^{\oplus} \pi(\lambda^u(\beta_n)) E_\pi^{-1}
\md\mu(\pi)\xrightarrow[n\to\infty]{} \int_{\IrrG}^{\oplus} \chi_{\Omega}(\pi) E_\pi^{-1} \md\mu(\pi).
\]
Such a sequence exists due to Lemma \ref{lemat28} and the fact that $\mc{Q}_R$ is unitary. Taking subsequence, we can assume that $\pi(\lambda^u(\beta_n))E_\pi^{-1}\xrightarrow[n\to\infty]{\HS(\msf{H}_\pi)} \chi_{\Omega}(\pi) E_\pi^{-1}$ almost everywhere. Observe that thanks to the equation \eqref{eq17} we have
\[
\begin{split}
&\quad\;\bigl\|\int_{\IrrG}^{\oplus} \pi(\lambda^u(\beta_n-\beta_m)) E_\pi^{-1}
\md\mu(\pi)\bigr\|^2\\
&=
\int_{\IrrG} \Tr\bigl(
\pi(\lambda^u((\beta_n-\beta_m)^{\sharp}\star
(\beta_n-\beta_m)))
E_\pi^{-2}
\bigr)\md\mu(\pi)\\
&=
\int_{\IrrG} \Tr\bigl(
\pi(\lambda^u(((\beta_n-\beta_m)^{\sharp}\star
(\beta_n-\beta_m))\circ R))
D_\pi^{-2}
\bigr)\md\mu(\pi)\\
&=
\int_{\IrrG} \Tr\bigl(
\pi\circ \hat{R}^{u}(\lambda^u(\beta_n-\beta_m))
\pi\circ \hat{R}^{u}(\lambda^u(\beta_n-\beta_m))^*
D_\pi^{-2}
\bigr)\md\mu(\pi)\\
&=
\int_{\IrrG} \Tr_{\ov{\msf{H}_\pi}}\bigl(
\jmath_{\msf{H}_\pi}\circ\pi\circ \hat{R}^{u}(\lambda^u(\beta_n-\beta_m))^*
\jmath_{\msf{H}_\pi}\circ\pi\circ \hat{R}^{u}(\lambda^u(\beta_n-\beta_m))
\jmath_{\msf{H}_\pi}(D_\pi^{-2})
\bigr)\md\mu(\pi)\\
&=
\int_{\IrrG} \Tr_{\msf{H}_\pi}\bigl(\bigl(
\ov{\pi}(\lambda^u(\beta_n-\beta_m))
\rho_{\ov{\pi}}\circ\jmath_{\msf{H}_\pi}(D_\pi^{-1})\bigr)^*
\bigl(
\ov{\pi}(\lambda^u(\beta_n-\beta_m))
\rho_{\ov{\pi}}\circ\jmath_{\msf{H}_\pi}(D_\pi^{-1})\bigr)
\bigr)\md\mu(\pi)\\
&=
\bigl\| \int_{\IrrG}^{\oplus}
\ov{\pi}(\lambda^{u}(\beta_n-\beta_m)) 
\rho_{\ov{\pi}}\circ\jmath_{\msf{H}_\pi}(D_\pi^{-1}) \md\mu(\pi)\bigr\|^2,
\end{split}
\]
which shows that the sequence $\bigl( \int_{\IrrG}^{\oplus}
\ov{\pi}(\lambda^{u}(\beta_n)) \rho_{\ov{\pi}}\circ\jmath_{\msf{H}_\pi}(D_\pi^{-1})\md\mu(\pi) \bigr)_{n\in\NN}$ is a sequence in $\int_{\IrrG}^{\oplus} \HS(\msf{H}_\pi)\md\mu(\pi)$, convergent to some $\int_{\IrrG}^{\oplus} T_\pi \md\mu(\pi)$.
Above, we have made use of the following relations:
\[
\lambda^u((\beta^{\sharp}\star\beta)\circ R)=
\hat{R}^{u}(\lambda^u(\beta^{\sharp}\star\beta))=
\hat{R}^{u} (\lambda^u(\beta)^*\lambda^u(\beta))=
\hat{R}^{u} (\lambda^u(\beta))
\hat{R}^{u} (\lambda^u(\beta))^*
\]
and the fact that $\jmath_{\msf{H}_\pi}$ preserves the trace. The same calculation gives us
\[
\bigl\|\int_{\IrrG}^{\oplus} \pi(\lambda^u(\beta_n)) E_\pi^{-1}
\md\mu(\pi)\bigr\|=
\bigl\| \int_{\IrrG}^{\oplus}
\ov{\pi}(\lambda^{u}(\beta_n)) 
\rho_{\ov{\pi}}\circ\jmath_{\msf{H}_\pi}(D_\pi^{-1})\md\mu(\pi)\bigr\|\quad(n\in\NN).
\]
Again, taking subsequence $(n_k)_{k\in\NN}$ we can assume that
\[
\begin{split}
T_\pi
&=\lim_{k\to\infty}
\ov{\pi}(\lambda^{u}(\beta_{n_k})) 
\rho_{\ov{\pi}}\circ\jmath_{\msf{H}_\pi}(D_\pi^{-1})=
\lim_{k\to\infty}
\ov{\pi}(\lambda^u(\beta_{n_k})) E_{\ov{\pi}}^{-1}\,
E_{\ov{\pi}}\, \rho_{\ov{\pi}}\circ\jmath_{\msf{H}_\pi}(D_\pi^{-1})
\\
&=
\chi_{\Omega}(\ov\pi)
E_{\ov\pi}^{-1} E_{\ov{\pi}} \rho_{\ov{\pi}}\circ \jmath_{\msf{H}_\pi}(D_{\pi}^{-1})
=
\chi_{\ov\Omega}(\pi) \rho_{\ov{\pi}}\circ \jmath_{\msf{H}_\pi}(D_\pi^{-1})
\end{split}
\]
almost everywhere (the above limit is taken in the norm of $\HS(\msf{H}_\pi)$). In the above calculation we have used Proposition \ref{stw12}: $\mu$ and $\mu'$ are equivalent. We arrive at
\[
\begin{split}
&\quad\;\int_{\ov{\Omega}} \Tr(D_\pi^{-2})\md\mu(\pi)=
\int_{\ov{\Omega}} \Tr (\rho_{\ov{\pi}}\circ \jmath_{\msf{H}_\pi}(D_\pi^{-2}))\md\mu(\pi)=
\bigl\| \int_{\IrrG}^{\oplus} T_\pi\md\mu(\pi)\bigr\|^2\\
&=
\lim_{n\to\infty} \bigl\| 
\int_{\IrrG}^{\oplus}
\ov{\pi}(\lambda^{u}(\beta_{n})) 
\rho_{\ov{\pi}}\circ\jmath_{\msf{H}_\pi}(D_\pi^{-1}) \md\mu(\pi)
\bigr\|^2
=
\lim_{n\to\infty} \bigl\| 
\int_{\IrrG}^{\oplus}
\pi(\lambda^u(\beta_n)) E_\pi^{-1} \md\mu(\pi)\bigr\|^2\\
&=
\bigl\| \int_{\IrrG}^{\oplus} \chi_{\Omega}(\pi) E_\pi^{-1} \md\mu(\pi)
\bigr\|^2
=
\int_{\Omega} \Tr(E_\pi^{-2})\md\mu(\pi).
\end{split}
\]
Next, we get
\[
\begin{split}
\int_{\Omega} \Tr(E_\pi^{-2})\md\mu(\pi)&=
\int_{\IrrG} \chi_{\ov\Omega}(\pi) \Tr(D_\pi^{-2})\md\mu(\pi)=
\int_{\IrrG} \chi_{\Omega}(\pi) \Tr(D_{\ov\pi}^{-2}) \md\mu'(\pi)\\
&=
\int_{\Omega}\Tr(D_{\ov\pi}^{-2}) \tfrac{\md\mu'}{\md\mu}(\pi)\md\mu(\pi).
\end{split}
\]
Because $\Omega$ was arbitrary, we have proven the claim.
\end{proof}

We can get a similar relation between $\Tr(E_\pi^2)$ and $\Tr(D_{\ov\pi}^2)$ this time using the formula $\psi=\vp\circ R$ for the Haar integrals on $\GG$:

\begin{proposition}\label{stw5}
Let $\mu'$ be the Plancherel measure $\mu$ composed with conjugation. We have $\Tr(E_\pi^2)=\tfrac{\md\mu}{\md\mu'}(\pi) \Tr(D_{\ov\pi}^2)$ for almost all $\pi\in\IrrG$.
\end{proposition}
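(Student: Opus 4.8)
The plan is to mirror the proof of Proposition \ref{stw13}, replacing the dual Haar weights $\hvp,\hpsi$ and the identity $\hpsi=\hvp\circ\hat{R}$ by the Haar weights $\vp,\psi$ of $\GG$ and the identity $\psi=\vp\circ R$, and replacing the vectors $\int_{\IrrG}^{\oplus}\pi(\lambda^u(\beta))E_\pi^{-1}\md\mu(\pi)$ (which carry the negative powers $E_\pi^{-1},D_\pi^{-1}$ coming from $\mc{Q}_L,\mc{Q}_R$) by $\Lambda_\psi$ of integral characters (which, by Proposition \ref{stw9}, carry the positive powers $E_\pi$). Concretely, I would fix a measurable $\Omega\subseteq\IrrG$ with $\int_\Omega\dim\md\mu<+\infty$ and $\int_\Omega\Tr(E^2_\bullet)\md\mu<+\infty$; such sets generate the $\sigma$-algebra up to null sets, so proving the identity for each of them suffices for the almost-everywhere statement. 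For such $\Omega$ we have $\chi^{\int}(\int_{\Omega}^{\oplus}\pi\md\mu_\Omega(\pi))\in\mf{N}_\psi$ and $\|\Lambda_\psi(\chi^{\int}(\int_{\Omega}^{\oplus}\pi\md\mu_\Omega(\pi)))\|^2=\int_\Omega\Tr(E^2_\pi)\md\mu(\pi)$ by Propositions \ref{stw11} and \ref{stw9}.

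The key computational input is the character identity $R(\chi(U^{\ov\pi}))=\chi(U^\pi)$. I would derive it from the universal invariance $(R\otimes\hat{R}^{u})\WW=\WW$, which gives $(R\otimes\id)U^\pi=U^{\pi\circ\hat{R}^{u}}$, together with $\pi^c=\jmath_{\msf{H}_\pi}\circ\pi\circ\hat{R}^{u}$, the trace-invariance $\Tr_{\ov{\msf{H}_\pi}}\circ\jmath_{\msf{H}_\pi}=\Tr_{\msf{H}_\pi}$, the fact that $[\pi^c]=\ov\pi$, and invariance of characters under unitary equivalence. Then, using the convention $\hat{J}\Lambda_\vp(x)=\Lambda_\psi(R(x^*))$, I would set $x=\int_\Omega\chi(U^{\ov\pi})^*\md\mu(\pi)$, so that $R(x^*)=\chi^{\int}(\int_{\Omega}^{\oplus}\pi\md\mu_\Omega(\pi))$ and hence $\Lambda_\psi(\chi^{\int}(\int_{\Omega}^{\oplus}\pi\md\mu_\Omega(\pi)))=\hat{J}\Lambda_\vp(x)$; by antiunitarity of $\hat{J}$ this yields $\|\Lambda_\psi(\chi^{\int}(\int_{\Omega}^{\oplus}\pi\md\mu_\Omega(\pi)))\|^2=\|\Lambda_\vp(x)\|^2$.

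It then remains to evaluate $\|\Lambda_\vp(x)\|^2$. Passing to the conjugate Plancherel measure $\mu'$ of Proposition \ref{stw12} (whose adapted operators are $D'_\pi=\sqrt{\tfrac{\md\mu'}{\md\mu}(\pi)}\,D_\pi$ by Proposition \ref{stw14}), a change of variables $\pi\mapsto\ov\pi$ rewrites $x=\int_{\ov\Omega}\chi(U^{\zeta*})\md\mu'(\zeta)$; expanding $\chi(U^{\zeta*})$ over a measurable field of orthonormal bases and applying the $\vp$-orthogonality relations (Proposition \ref{Casp}, part 2) with respect to $\mu'$ gives $\|\Lambda_\vp(x)\|^2=\int_{\ov\Omega}\Tr((D'_\zeta)^2)\md\mu'(\zeta)=\int_{\ov\Omega}\tfrac{\md\mu'}{\md\mu}(\zeta)\Tr(D^2_\zeta)\md\mu'(\zeta)$. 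Combining the two evaluations and converting back to $\Omega$, using $\md\mu'=\tfrac{\md\mu'}{\md\mu}\md\mu$ together with the involution identity $\tfrac{\md\mu'}{\md\mu}(\ov\pi)=\tfrac{\md\mu}{\md\mu'}(\pi)$ for the Radon--Nikodym derivative under $\pi\mapsto\ov\pi$, produces $\int_\Omega\Tr(E^2_\pi)\md\mu=\int_\Omega\tfrac{\md\mu}{\md\mu'}(\pi)\Tr(D^2_{\ov\pi})\md\mu(\pi)$. As $\Omega$ is arbitrary, the stated pointwise equality follows.

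The main obstacle I anticipate is the Radon--Nikodym bookkeeping: the relation pairs $E_\pi$ at $\pi$ with $D$ at the conjugate point $\ov\pi$ and relative to the conjugate measure, so one must route the computation through $\mu'$ and track the derivative with care in order to land on $\tfrac{\md\mu}{\md\mu'}$ rather than its reciprocal. The reciprocity $\tfrac{\md\mu'}{\md\mu}(\ov\pi)\,\tfrac{\md\mu'}{\md\mu}(\pi)=1$ and the correct placement of the rescaling $D'_\pi=\sqrt{\tfrac{\md\mu'}{\md\mu}(\pi)}\,D_\pi$ are precisely what fix the exponent. Everything else—the $\vp$-analogue of Proposition \ref{stw9}, the measurability and convergence of the character integrals, and the square-integrability hypotheses—is routine and parallels the results already established in Section \ref{secsquare}.
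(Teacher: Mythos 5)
Your proposal is correct in substance, and it reaches the identity by a genuinely different organization of the same raw ingredients. The paper never touches integral characters in this proof: it fixes a finite-measure $\Omega\subseteq\IrrG\rest_p$ with $\sup_{\pi\in\Omega}\Tr(E^2_\pi)$ and $\sup_{\pi\in\Omega}\tfrac{\md\mu}{\md\mu'}(\pi)^2\Tr(D^2_{\ov\pi})$ finite, works with the individual slices $(\id\otimes\omega_{\chi_\Omega\xi^k_\pi})U^\pi$ for $k\le p$ (so all sums over the basis are \emph{finite}), applies $\psi=\vp\circ R$ directly together with the slice-level formula $R((\id\otimes\omega_{\xi,\eta})U^\pi)=(\id\otimes\omega_{\mc{V}_{\ov\pi}\ov{\eta},\mc{V}_{\ov\pi}\ov{\xi}})U^{\ov\pi}$, and keeps every integral expressed against $\mu$, so that only Proposition \ref{Casp} and the involution identity $\tfrac{\md\mu}{\md\mu'}(\ov\pi)=\tfrac{\md\mu'}{\md\mu}(\pi)$ are needed. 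You instead work at the level of integral characters and GNS maps: the identity $R(\chi(U^\pi))=\chi(U^{\ov\pi})$ (which you derive correctly, and which needs no admissibility), the relation $\hat{J}\Lvp(x)=\Lambda_\psi(R(x^*))$, Propositions \ref{stw9} and \ref{stw11} on the $\psi$ side, and the conjugate Plancherel data $(\mu',D'_\pi)$ from Propositions \ref{stw12} and \ref{stw14}. Your Radon--Nikodym bookkeeping is right and lands on the correct exponent. The one real cost of your route is the step you flag yourself: the evaluation $\|\Lvp(x)\|^2=\int_{\ov\Omega}\Tr((D'_\zeta)^2)\md\mu'(\zeta)$ for a general $\Omega$ requires interchanging $\vp$ with an infinite sum over the basis field, i.e.\ a $\vp$/$\mc{Q}_L$-analogue of Propositions \ref{stw8} and \ref{stw9}, which the paper proves only in the $\psi$/$\mc{Q}_R$ version. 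It is indeed parallel (and legitimate for $\mu'$ because Proposition \ref{stw14} certifies the primed data as Plancherel data), but it is not literally quotable; if you want to avoid writing it out, restrict as the paper does to $\Omega\subseteq\IrrG\rest_n$ with bounded data, so the sum over the basis is finite and Proposition \ref{Casp}, part 2, applies directly --- the a.e.\ conclusion for general $\pi$ follows by exhaustion exactly as in your final paragraph. In exchange, your version is conceptually cleaner: the character identity under $R$ is more transparent than the paper's slice formula with the unitaries $\mc{V}_{\ov\pi}$, and it recycles the character-norm machinery of Section \ref{secsquare} instead of redoing the computation by hand.
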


\begin{proof}
For good vector fields, as in Proposition \ref{Casp}, we have
\[
\begin{split}
\psi\bigl(\bigl(
\int_{\IrrG} (\id\otimes\omega_{\xi_\pi,\eta_\pi})U^{\pi} \md\mu(\pi)
\bigr)^*&\bigl(
\int_{\IrrG} (\id\otimes\omega_{\xi'_\pi,\eta'_\pi})U^{\pi} \md\mu(\pi)
\bigr)\bigr)\\
&=
\int_{\IrrG}\is{\xi'_\pi}{\xi_\pi}
\is{E_\pi \eta_\pi}{E_\pi \eta'_\pi}\md\mu(\pi)\\
\vp\bigl(\bigl(
\int_{\IrrG} (\id\otimes\omega_{\xi_\pi,\eta_\pi})(U^{\pi *}) \md\mu(\pi)
\bigr)^*&\bigl(
\int_{\IrrG} (\id\otimes\omega_{\xi'_\pi,\eta'_\pi})(U^{\pi *}) \md\mu(\pi)
\bigr)\bigr)\\
&=
\int_{\IrrG}\is{\xi'_\pi}{\xi_\pi}
\is{D_\pi \eta_\pi}{D_\pi \eta'_\pi}\md\mu(\pi).
\end{split}
\]
We know that isomorphisms $\rho_{\ov{\pi}}$ are given by conjugation with unitaries: denote these unitaries by $\mc{V}_{\ov{\pi}}\colon \ov{\CC^{\dim(\pi)}}\rightarrow \CC^{\dim(\pi)}$. Observe that for any $\xi,\eta\in \msf{H}_\pi$ and $\omega\in \Lj$ the following holds
\[
\begin{split}
&\quad\;
\omega(R((\id\otimes\omega_{\xi,\eta}) U^\pi))=
\ismaa{\xi}{ 
\pi\circ \hat{R}^{u}\circ\lambda^u(\omega)
\eta}
=
\ismaa{\xi}{ \jmath_{\ov{\msf{H}}_\pi}\circ
\rho_{\ov{\pi}}^{-1}
( (\omega\otimes\id)U^{\ov\pi})\eta}\\
&=
\ismaa{\xi}{\ov{ \rho_{\ov\pi}^{-1}((\omega\otimes\id)U^{\ov{\pi}})^* \ov{\eta}}}
=
\ismaa{ \rho_{\ov\pi}^{-1}((\omega\otimes\id)U^{\ov{\pi}})^* \ov{\eta}}{\ov{\xi}}
=
\ismaa{ \ov{\eta}}{\rho_{\ov{\pi}}^{-1}((\omega\otimes\id)U^{\ov{\pi}})\,\ov{\xi}}\\
&=
\ismaa{ \mc{V}_{\ov\pi}\ov{\eta}}{
((\omega\otimes\id)U^{\ov{\pi}})\,\mc{V}_{\ov\pi}\ov{\xi}}=
\omega((\id\otimes\omega_{\mc{V}_{\ov\pi}\ov{\eta},\mc{V}_{\ov\pi}\ov{\xi}})U^{\ov{\pi}}),
\end{split}
\]
hence
\[
R((\id\otimes\omega_{\xi,\eta}) U^\pi)=
(\id\otimes\omega_{\mc{V}_{\ov\pi}\ov{\eta},\mc{V}_{\ov\pi}\ov{\xi}})U^{\ov{\pi}}\quad (\xi,\eta\in\msf{H}_\pi).
\]
Fix a field of orthonormal bases $\{(\xi^k_\pi)_{\pi\in\IrrG}\}_{k=1}^{\infty}$ and a measurable subset of finite measure $\Omega\subseteq\IrrG\rest_p$ for some $p\in \NN$ such that
\[
\sup_{\pi\in\Omega} \Tr(E^2_{\pi}),\quad
\sup_{\pi\in\Omega} \tfrac{\md\mu}{\md\mu'}(\pi)^2 \Tr(D_{\ov\pi}^2)<+\infty.
\]
Observe that for any measurable subset $V\subseteq\IrrG$ we have
\[
\int_{V} \tfrac{\md\mu}{\md\mu'}(\ov{\pi})\md\mu(\pi)=
\int_{\IrrG}\chi_{V}(\ov\pi)\tfrac{\md\mu}{\md\mu'}(\pi)
\md\mu'(\pi)=
\int_{\ov{V}}\md\mu=\int_V\md\mu'=
\int_V\tfrac{\md\mu'}{\md\mu} \md\mu,
\]
hence
\[
\tfrac{\md\mu}{\md\mu'}(\ov\pi)=\tfrac{\md\mu'}{\md\mu}(\pi)
\]
almost everywhere. Using the above formulas, \swot-continuity of $R$ and $\psi=\vp\circ R$ we get for any $k\in \{1,\dotsc,p\}$
\[
\begin{split}
&\quad\;
\int_{\IrrG}\chi_{\Omega}(\pi)\ismaa{\xi^k_\pi}{\xi^k_\pi}
\ismaa{E_\pi \xi^k_\pi}{E_\pi \xi^k_\pi}\md\mu(\pi)\\
&=
\psi\bigl(\bigl(
\int_{\IrrG} (\id\otimes\omega_{\chi_{\Omega}(\pi)\xi^k_\pi})(U^{\pi}) \md\mu(\pi)
\bigr)^*\bigl(
\int_{\IrrG} (\id\otimes\omega_{\chi_{\Omega}(\pi)\xi^k_\pi})(U^{\pi}) \md\mu(\pi)
\bigr)\bigr)\\
&=
\vp\circ R\bigl(\bigl(
\int_{\IrrG} (\id\otimes\omega_{\chi_{\Omega}(\pi)\xi^k_\pi})(U^{\pi}) \md\mu(\pi)
\bigr)^*\bigl(
\int_{\IrrG} (\id\otimes\omega_{\chi_{\Omega}(\pi)\xi^k_\pi})(U^{\pi}) \md\mu(\pi)
\bigr)\bigr)\\
&=
\vp\bigl(\bigl(
\int_{\IrrG} (\id\otimes\omega_{\mc{V}_{\ov\pi}\,\ov{\chi_{\Omega}(\pi)\xi^k_\pi}})(U^{\ov\pi}) \md\mu(\pi)
\bigr)\bigl(
\int_{\IrrG} (\id\otimes\omega_{\mc{V}_{\ov\pi}\,\ov{\chi_{\Omega}(\pi)\xi^k_\pi}})(U^{\ov\pi}) \md\mu(\pi)
\bigr)^*\bigr)\\
&=
\vp\bigl(\bigl(
\int_{\IrrG} 
\tfrac{\md\mu}{\md\mu'}(\pi)(\id\otimes\omega_{\mc{V}_{\ov\pi}\,\ov{\chi_{\Omega}(\pi)\xi^k_\pi}})(U^{\ov\pi}) \md\mu'(\pi)
\bigr)\bigl(
\int_{\IrrG} 
\tfrac{\md\mu}{\md\mu'}(\pi)(\id\otimes\omega_{\mc{V}_{\ov\pi}\,\ov{\chi_{\Omega}(\pi)\xi^k_\pi}})(U^{\ov\pi}) \md\mu'(\pi)
\bigr)^*\bigr)\\
&=
\vp\bigl(\bigl(
\int_{\IrrG} 
\tfrac{\md\mu}{\md\mu'}(\ov\pi)(\id\otimes\omega_{\mc{V}_{\pi}\,\ov{\chi_{\Omega}(\ov\pi)\xi^k_{\ov\pi}}})(U^{\pi}) \md\mu(\pi)
\bigr)\bigl(
\int_{\IrrG} 
\tfrac{\md\mu}{\md\mu'}(\ov\pi)(\id\otimes\omega_{\mc{V}_{\pi}\,\ov{\chi_{\Omega}(\ov\pi)\xi^k_{\ov\pi}}})(U^{\pi}) \md\mu(\pi)
\bigr)^*\bigr)\\
&=
\vp\bigl(\bigl(
\int_{\IrrG} 
\tfrac{\md\mu}{\md\mu'}(\ov\pi)(\id\otimes\omega_{\mc{V}_{\pi}\,\ov{\chi_{\Omega}(\ov\pi)\xi^k_{\ov\pi}}})(U^{\pi *}) \md\mu(\pi)
\bigr)^*\bigl(
\int_{\IrrG} 
\tfrac{\md\mu}{\md\mu'}(\ov\pi)(\id\otimes\omega_{\mc{V}_{\pi}\,\ov{\chi_{\Omega}(\ov\pi)\xi^k_{\ov\pi}}})(U^{\pi *}) \md\mu(\pi)
\bigr)\bigr)\\
&=
\int_{\IrrG}\tfrac{\md\mu}{\md\mu'}(\ov\pi)^2
\chi_{\Omega}(\ov\pi)\ismaa{\mc{V}_{\pi}\, \ov{\xi^k_{\ov\pi}}}{\mc{V}_{\pi}\,\ov{\xi^k_{\ov\pi}}}
\ismaa{ D_\pi \mc{V}_{\pi}\,\ov{\xi^k_{\ov\pi}}}{D_\pi\mc{V}_{\pi}\,\ov{\xi^k_{\ov\pi}}} \md\mu(\pi)\\
&=
\int_{\IrrG}\tfrac{\md\mu}{\md\mu'}(\ov\pi)
\chi_{\Omega}(\ov\pi)
\ismaa{ D_\pi \mc{V}_{\pi}\,\ov{\xi^k_{\ov\pi}}}{D_\pi\mc{V}_{\pi}\,\ov{\xi^k_{\ov\pi}}} \md\mu'(\pi)\\
&=
\int_{\IrrG}\tfrac{\md\mu}{\md\mu'}(\pi)
\chi_{\Omega}(\pi)
\ismaa{ D_{\ov\pi}\mc{V}_{\ov\pi}\, \ov{\xi^k_{\pi}}}{D_{\ov\pi}\mc{V}_{\ov\pi}\,\ov{\xi^k_{\pi}}} \md\mu(\pi)\\
&=
\int_{\IrrG}\tfrac{\md\mu}{\md\mu'}(\pi)
\chi_{\Omega}(\pi)
\ismaa{ \mc{V}_{\ov\pi}\,\ov{\xi^k_{\pi}}}{D_{\ov\pi}^2\mc{V}_{\ov\pi}\,\ov{\xi^k_{\pi}}} \md\mu(\pi).
\end{split}
\]
If we take a sum over $k\in\{1,\dotsc,p\}$ we get
\[
\int_\Omega \Tr(E_\pi^2) \md\mu(\pi)=
\int_\Omega \tfrac{\md\mu}{\md\mu'}(\pi) \Tr(D^2_{\ov\pi})\md\mu(\pi)
\]
Since $\Omega$ was somewhat arbitrary, we arrive at
\[
\Tr(E_\pi^2)=\tfrac{\md\mu}{\md\mu'}(\pi)\Tr(D_{\ov{\pi}}^2)
\]
almost everywhere.
\end{proof}

Observe that there always exists a Plancherel measure which is symmetric in the sense that $\mu'=\mu$: it is enough to define a measure $\tilde{\mu}=\mu+\mu'$ (which is equivalent to $\mu$) and use Proposition \ref{stw14}. \\

Recall that in Definition \ref{defadmissible} we have introduced a notion of admissibility of finite dimensional representations.\\
For any finite dimensional representation $U$ we define its character in the usual way: $\chi(U)=(\id\otimes\Tr)U$. It is well known that unitarily equivalent representations have the same character. The next lemma says that the character of the conjugate representation is adjoint of the character of the original representation, provided that the representation is admissible:

\begin{lemma}\label{lemat32}
Let $U\in \M(\CG\otimes\mc{K}(\msf{H}))$ be a finite dimensional representation of $\GG$ which is admissible and let $U^c$ be a representation conjugate to $U$. Then $\chi(U^c)=\chi(U)^*$.
\end{lemma}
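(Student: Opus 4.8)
The statement to prove is Lemma~\ref{lemat32}: for an admissible finite dimensional representation $U\in \M(\CG\otimes\mc{K}(\msf{H}))$ with conjugate $U^c$, we have $\chi(U^c)=\chi(U)^*$.

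The plan is to unwind the definition of the conjugate representation and compute its character directly. Recall that if $\pi\colon\CGDu\rightarrow\B(\msf{H}_\pi)$ is the nondegenerate representation corresponding to $U$ via $U=(\id\otimes\pi)\WW$, then $\pi^c=\jmath_{\msf{H}_\pi}\circ\pi\circ\hat{R}^u$ and $U^c=(\id\otimes\pi^c)\WW$. First I would write
\[
\chi(U^c)=(\id\otimes\Tr)U^c=(\id\otimes\Tr\circ\,\pi^c)\WW
=(\id\otimes\Tr\circ\,\jmath_{\msf{H}_\pi}\circ\pi\circ\hat{R}^u)\WW.
\]
The map $\jmath_{\msf{H}_\pi}\colon\B(\msf{H}_\pi)\rightarrow\B(\ov{\msf{H}_\pi})$ is antimultiplicative and satisfies $\Tr_{\ov{\msf{H}_\pi}}(\jmath_{\msf{H}_\pi}(T))=\Tr_{\msf{H}_\pi}(T^*)$ for $T\in\B(\msf{H}_\pi)$, since $\jmath_{\msf{H}_\pi}(T)\ov{\xi}=\ov{T^*\xi}$. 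Hence $\Tr\circ\jmath_{\msf{H}_\pi}$ applied to $\pi(\hat{R}^u(\cdot))$ produces $\Tr(\pi(\hat{R}^u(\cdot))^*)$, and pulling the adjoint out and using that $\hat{R}^u$ is a $\star$-antihomomorphism (an involution that reverses products and commutes with $\star$ in the appropriate sense) should let me rewrite $\chi(U^c)$ in terms of $\hat{R}$ applied to $\chi(U)$.

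The cleaner route, which I would actually carry out, is to relate everything to the unitary antipode via the slice formula. Using $(\id\otimes\pi)\WW=U$ and the relation $\hat{R}^u$ satisfies on $\WW$ (mirroring $(\R\otimes\hat{R})\mrW=\mrW$ at the universal level), I expect $(\id\otimes\pi\circ\hat{R}^u)\WW=(R\otimes\id)U$, so that $\chi(U^c)=(\id\otimes\Tr)(\jmath$-twist of $(R\otimes\id)U)$. Taking the trace through $\jmath$ converts this to $R((\id\otimes\Tr)U^*)$-type expression; here is where admissibility enters. For a general finite dimensional representation the character $\chi(U)=(\id\otimes\Tr)U$ need not be fixed in a convenient way by $R$, but admissibility guarantees $U^t=(U_{j,i})$ is invertible in $\M(\CG\otimes\mc{K}(\msf{H}))$, which is exactly the condition ensuring $R(\chi(U))=\chi(U)$ (the character of an admissible representation is $R$-invariant; this is the standard fact that admissible representations behave like finite dimensional unitary representations of compact quantum groups, where $R(\chi(U))=\chi(U)$). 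Combining $R$-invariance of $\chi(U)$ with the $\jmath$-induced adjoint then yields $\chi(U^c)=\chi(U)^*$.

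The main obstacle I anticipate is bookkeeping the difference between the universal antipode $\hat{R}^u$ acting on $\CGDu$ and the antipode $R$ on $\Linf$, and making precise the step where admissibility converts the character into its $R$-fixed form. Concretely, I must verify the identity $(\id\otimes\pi\circ\hat{R}^u)\WW=(R\otimes\id)U$ at the level of the universal bicharacter, and I must invoke the precise characterization of admissibility from \cite{DasDawsSalmi} (Definition~\ref{defadmissible}) that identifies $R(\chi(U))$ with $\chi(U)$. Once the $R$-invariance of $\chi(U)$ is in hand, the remaining computation is the routine trace-through-$\jmath$ argument giving the adjoint, so the real content is isolating and correctly citing the admissibility input; everything else is a short direct calculation using $R(x)=\hat{J}x^*\hat{J}$ and the antimultiplicativity of $\jmath_{\msf{H}}$.
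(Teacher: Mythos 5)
Your overall strategy---unwinding $U^c=(\id\otimes\jmath_{\msf{H}}\circ\pi\circ\hat{R}^u)\WW$ and computing the character directly rather than citing the literature (the paper's entire proof is the citation to \cite[Remark 3.5]{DasDawsSalmi})---is reasonable, and your identity $(\id\otimes\pi\circ\hat{R}^u)\WW=(R\otimes\id)U$ is correct: it follows from the universal version $(R\otimes\hat{R}^u)\WW=\WW$ of $(R\otimes\hat{R})\mrW=\mrW$ together with $R^2=\id$. However, two of your stated facts are false, and they sit exactly where the content of the lemma lies. First, $\jmath_{\msf{H}}$ \emph{preserves} the trace: $\Tr_{\ov{\msf{H}}}(\jmath_{\msf{H}}(T))=\Tr(T)$, not $\Tr(T^*)=\ov{\Tr(T)}$ (this trace-preservation is used by the paper itself in the proof of Proposition \ref{stw13}). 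With the correct identity your computation gives, \emph{unconditionally and without admissibility}, $\chi(U^c)=(\id\otimes\Tr)\bigl((R\otimes\id)U\bigr)=R(\chi(U))$. Second, and fatally, your key admissibility input ``$R(\chi(U))=\chi(U)$'' is wrong: already for $\GG=\TT$ and $U=z$ the fundamental one-dimensional representation (admissible, as is every finite dimensional representation of a compact group) one has $R(z)=z^{-1}=z^*\neq z$. The parenthetical ``standard fact'' is backwards as well: for compact quantum groups $R(\chi(U))=\chi(U)^*$, which equals $\chi(U)$ only for self-conjugate representations.

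Note what this does to your argument: combining the \emph{correct} computation $\chi(U^c)=R(\chi(U))$ with your claimed $R$-invariance would yield $\chi(U^c)=\chi(U)$, which is not the lemma; you only arrive at $\chi(U)^*$ because your two errors cancel. The genuine content of the lemma---and the only place admissibility is needed---is precisely the identity $R(\chi(U))=\chi(U)^*$, which cannot be replaced by $R$-invariance of the character. This is what \cite[Remark 3.5]{DasDawsSalmi} supplies: invertibility of $U^t$ (admissibility) is exactly what makes the entrywise-adjoint conjugate $\ov{U}$, with matrix entries $(U_{ij})^*$ acting on $\ov{\msf{H}}$, an invertible representation, and $U^c$ is equivalent to it; since the trace is invariant under similarity, $\chi(U^c)=\chi(\ov{U})=\sum_i U_{ii}^*=\chi(U)^*$. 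If you want a self-contained proof you must establish this step, or equivalently show that $\chi(U)$ is fixed by the analytic extension $\tau_{-i/2}$ of the scaling group: since the antipode $S=R\tau_{-i/2}$ satisfies $S(U_{ii})=U_{ii}^*$ for a unitary finite dimensional representation, one has $\chi(U)^*=R\bigl(\tau_{-i/2}(\chi(U))\bigr)$, so the lemma is equivalent to $\tau_{-i/2}(\chi(U))=\chi(U)$, and it is this invariance that admissibility must be shown to force.
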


The above result is a consequence of \cite[Remark 3.5]{DasDawsSalmi}. Now we use this result to calculate adjoints of integral characters.

\begin{lemma}\label{lemat10}
Let $\Omega\subseteq \IrrG$ be a measurable subset such that $\int_{\Omega}\dim \md\mu<+\infty$. Assume that $\mu$ is a Plancherel measure invariant under $\IrrG\ni\pi\mapsto\ov{\pi}\in\IrrG$ and $\Omega$ contains only admissible representations. In this situation:
\begin{enumerate}[label=\arabic*)]
\item we have $\chi^{\int} ( \int_{\Omega}^{\oplus}\pi\md\mu_{\Omega}(\pi))^*=
\chi^{\int} ( \int_{\ov{\Omega}}^{\oplus}\pi\md\mu_{\ov{\Omega}}(\pi))$,
\item if $\Omega=\ov{\Omega}$ then the integral character $\int_{\Omega}^{\oplus}\pi\md\mu_{\Omega}(\pi)$ is self-adjoint.
\end{enumerate}
\end{lemma}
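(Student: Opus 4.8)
The plan is to establish 1) by three moves---pulling the adjoint inside the integral, rewriting the adjoint character as a conjugate character, and a change of variables under conjugation---and then to read off 2) as the special case $\Omega=\ov{\Omega}$. Since $\int_\Omega\dim\md\mu<+\infty$ and $\|\chi(U^\pi)\|=\|(\id\otimes\Tr)U^\pi\|\le\dim(\pi)$, the integral character $\chi^{\int}(\int_\Omega^{\oplus}\pi\md\mu_\Omega(\pi))=\int_\Omega\chi(U^\pi)\md\mu_\Omega(\pi)$ is a norm-convergent Bochner integral in $\Linf$. The involution $x\mapsto x^*$ is an antilinear isometry of $\Linf$, hence commutes with this integral, giving
\[
\chi^{\int}\bigl(\int_\Omega^{\oplus}\pi\md\mu_\Omega(\pi)\bigr)^*=\int_\Omega\chi(U^\pi)^*\md\mu_\Omega(\pi).
\]

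First I would apply Lemma \ref{lemat32}: every $\pi\in\Omega$ is admissible, so $\chi(U^\pi)^*=\chi((U^\pi)^c)$, where $(U^\pi)^c$ is the representation conjugate to $U^\pi$. As recalled above, $(U^\pi)^c$ is unitarily equivalent to the chosen representative $U^{\ov{\pi}}$ of the conjugate class, and characters are invariant under unitary equivalence, so $\chi(U^\pi)^*=\chi(U^{\ov{\pi}})$. Substituting, the adjoint equals $\int_\Omega\chi(U^{\ov{\pi}})\md\mu_\Omega(\pi)$.

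The second move is a change of variables under the conjugation map $c\colon\IrrG\ni\pi\mapsto\ov{\pi}\in\IrrG$, a measurable involution (shown above) whose pushforward satisfies $c_*\mu=\mu$; this is precisely the invariance hypothesis $\mu(\ov{\Omega})=\mu(\Omega)$. Applying $\int(F\circ c)\,\md\mu=\int F\,\md\mu$ with $F=\chi_{\ov{\Omega}}\,\chi(U^{\bullet})$ and using $\chi_{\ov{\Omega}}(\ov{\pi})=\chi_\Omega(\pi)$ gives
\[
\int_\Omega\chi(U^{\ov{\pi}})\md\mu_\Omega(\pi)=\int_{\IrrG}\chi_\Omega(\pi)\chi(U^{\ov{\pi}})\md\mu(\pi)=\int_{\IrrG}\chi_{\ov{\Omega}}(\pi)\chi(U^\pi)\md\mu(\pi)=\chi^{\int}\bigl(\int_{\ov{\Omega}}^{\oplus}\pi\md\mu_{\ov{\Omega}}(\pi)\bigr).
\]
I would also note that $\dim(\ov{\pi})=\dim(\pi)$ together with $c_*\mu=\mu$ yields $\int_{\ov{\Omega}}\dim\md\mu=\int_\Omega\dim\md\mu<+\infty$, so the integral character on the right is well defined; this proves 1). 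For 2), taking $\Omega=\ov{\Omega}$ in 1) makes the right-hand side equal to $\chi^{\int}(\int_\Omega^{\oplus}\pi\md\mu_\Omega(\pi))$ itself, so this integral character coincides with its adjoint and is therefore self-adjoint.

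The main obstacle is not computational but a matter of matching conventions: one must be sure that $\chi(U^\pi)^*$, a priori merely the Hilbert-space adjoint of the character of $\pi$, is genuinely the character of the conjugate class $\ov{\pi}$. This is exactly where admissibility enters, through Lemma \ref{lemat32}, and where the distinction between the formally conjugate representation $(U^\pi)^c=\jmath_{\msf{H}_\pi}\circ\pi\circ\hat{R}^u$ and the chosen representative $U^{\ov{\pi}}$ must be bridged by the unitary equivalence $\pi^c\simeq\ov{\pi}$. Once this identification is secured, the remaining change-of-variables bookkeeping is routine given the invariance of $\mu$.
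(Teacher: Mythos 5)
Your proof is correct and follows essentially the same route as the paper: pull the adjoint inside the integral, use admissibility (Lemma \ref{lemat32}) together with the unitary equivalence $\pi^c\simeq\ov{\pi}$ to get $\chi(U^\pi)^*=\chi(U^{\ov{\pi}})$, and then change variables via the conjugation-invariance of $\mu$. The only cosmetic difference is that you justify interchanging $*$ with the integral via norm (Bochner) convergence, whereas the paper invokes the \swot-convergence of the defining integral; both are valid here.
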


\begin{proof}
We have
\[
\begin{split}
&\quad\;\chi^{\int} ( \int_{\Omega}^{\oplus}\pi\md\mu_{\Omega}(\pi))^*=
\int_{\Omega} \chi(U^{\pi})^*\md\mu_{\Omega}(\pi)=
\int_{\Omega} \chi(U^{\ov{\pi}})\md\mu_{\Omega}(\pi)\\
&=
\int_{\ov{\Omega}}\chi(U^{\pi})\md\mu_{\ov{\Omega}}(\pi)
=
\chi^{\int} ( \int_{\ov{\Omega}}^{\oplus}\pi\md\mu_{\ov{\Omega}}(\pi)).
\end{split}
\]
In the above calculation we have used the invariance of the measure $\mu$, equation $\chi(U^{\ov{\pi}})=\chi(U^{\pi})^*$ and the fact that the integral in the definition of an integral character is \swot\, convergent.
\end{proof}

\begin{definition}
A subset $\Omega\subseteq\IrrG$ such that $\ov{\Omega}=\Omega$ will be called \emph{symmetric}.
\end{definition}

Let us recall the definition of a coamenable locally compact quantum group (see e.g.~\cite[Theorem 3.12]{Brannan}): 

\begin{definition}\label{defcoamenable}
A locally compact quantum group $\GG$ is \emph{coamenable} if there exists a net $(\xi_i)_{i\in \mc{I}}$ of unit vectors in $\LdG$ such that
\[
\| \mrW (\xi_i\otimes \eta)-\xi_i\otimes\eta\|\xrightarrow[i\in\mc{I}]{}0
\]
for all $\eta\in \LdG$.
\end{definition}

One easily sees that whenever $\LdG$ is separable (which is the case in our situation), $\GG$ is coamenable if and only if there exists a sequence $(\xi_n)_{n\in\NN}$ of unit vectors in $\LdG$ which satisfies the above condition. Now we are able to prove the main theorem of this section which relates coamenability of $\GG$ to the properties of spectra of integral characters.

\begin{theorem}\label{tw2}
Let $\GG$ be a second countable locally compact quantum group. Assume moreover that $\GG$ is type I and has only finite dimensional irreducible representations. Consider the following conditions:
\begin{enumerate}[label=\arabic*)]
\item $\GG$ is coamenable.
\item For any Plancherel measure $\mu$ and any measurable subset $\Omega\subseteq \IrrG$ such that $\int_{\Omega}\dim\md\mu<+\infty$ we have $\int_{\Omega}\dim\md\mu\in \sigma(\chi^{\int}(\int_{\Omega}^{\oplus}\pi\md\mu_{\Omega}(\pi)))$.
\item
Let $\mu$ be a Plancherel measure which is invariant under $\IrrG\ni\pi\mapsto\ov{\pi}\in\IrrG$. For any symmetric measurable subset $\Omega\subseteq \IrrG$ such that $\int_{\Omega}\dim\md\mu<+\infty$ we have $\int_{\Omega}\dim\md\mu\in \sigma(\chi^{\int}(\int_{\Omega}^{\oplus}\pi\md\mu_{\Omega}(\pi)))$.
\end{enumerate}
We have $1)\Rightarrow 2)\Rightarrow 3)$. If all irreducible representations of $\GG$ are admissible then also $3)\Rightarrow 1)$.
\end{theorem}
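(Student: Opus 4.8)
The plan is to prove the three implications in turn, with the bulk of the work residing in $1)\Rightarrow 2)$ and the final $3)\Rightarrow 1)$.

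For $\mathbf{1)\Rightarrow 2)}$, I would use the characterisation of coamenability from Definition \ref{defcoamenable}: there is a sequence $(\xi_n)_{n\in\NN}$ of unit vectors with $\|\mrW(\xi_n\otimes\eta)-\xi_n\otimes\eta\|\to 0$ for all $\eta$. Fix $\mu$ and $\Omega$ with $\int_\Omega\dim\md\mu<+\infty$. By Corollary \ref{wniosek2}, taking $\nu=\dim\chi_\Omega$, the integral character $\chi^{\int}(\int_\Omega^{\oplus}\pi\md\mu_\Omega(\pi))$ acts on the subspace $\mc{H}=T(\LL^2(\IrrG))$ as the image under $T$ of the convolution operator $\mc{L}_\nu$. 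The number $\int_\Omega\dim\md\mu$ is precisely $\|\nu\|_1$, and I expect $\int_\Omega\dim\md\mu\in\sigma(\mc{L}_\nu)$ to follow from an approximate-eigenvector argument: the near-invariant vectors $\xi_n$ for $\mrW$ should produce, after transport through $T$ and the Plancherel identification, a sequence witnessing that $\|\nu\|_1$ is an approximate eigenvalue of $\chi^{\int}(\int_\Omega^{\oplus}\pi\md\mu_\Omega(\pi))$. Concretely, since $\chi^{\int}(\int_\Omega^{\oplus}\pi\md\mu_\Omega(\pi))=\int_\Omega\chi(U^\pi)\md\mu_\Omega(\pi)$ and $\mrW(\xi_n\otimes\eta)\approx\xi_n\otimes\eta$, each $\chi(U^\pi)=(\id\otimes\Tr)U^\pi$ should act near-trivially on the $\xi_n$, pushing $\chi^{\int}(\cdots)\xi_n$ close to $(\int_\Omega\dim\md\mu)\,\xi_n$; one then checks the defect tends to zero. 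The spectrum of a bounded operator is closed, so the approximate eigenvalue lies in it.

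The implication $\mathbf{2)\Rightarrow 3)}$ is immediate, since condition $3)$ merely restricts $2)$ to the special case of a conjugation-invariant Plancherel measure and symmetric subsets $\Omega$; such measures exist by the remark following Proposition \ref{stw5}, so $3)$ is a genuine instance of $2)$.

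For the hard direction $\mathbf{3)\Rightarrow 1)}$, assuming all irreducible representations admissible, I would argue contrapositively or by directly extracting near-invariant vectors. The admissibility hypothesis is what lets me invoke Lemma \ref{lemat10}: for symmetric $\Omega$ the integral character $\chi^{\int}(\int_\Omega^{\oplus}\pi\md\mu_\Omega(\pi))$ is self-adjoint, so its spectrum is real and the condition $\int_\Omega\dim\md\mu\in\sigma(\cdot)$ together with the norm bound $\|\chi^{\int}(\cdots)\|\le\int_\Omega\dim\md\mu$ (the integral character has norm at most its integral dimension) forces $\int_\Omega\dim\md\mu$ to be the top of the spectrum, hence an approximate eigenvalue realised by a sequence of unit vectors $(\zeta_n)$ with $\chi^{\int}(\cdots)\zeta_n\approx(\int_\Omega\dim\md\mu)\zeta_n$. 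The main obstacle, and where I expect the real work to lie, is converting these approximate eigenvectors of character operators back into a net of near-invariant vectors for $\mrW$ in the sense of Definition \ref{defcoamenable}. The self-adjointness and the saturation of the norm bound should force the local structure $\chi(U^\pi)\zeta_n\approx\dim(\pi)\zeta_n$ for $\pi$ in $\Omega$, which via the relation $\chi(U^\pi)=(\id\otimes\Tr)U^\pi$ and unitarity of $U^\pi$ is exactly the statement that $U^\pi$ fixes $\zeta_n$ approximately; summing over $\Omega$ and letting $\Omega$ exhaust the support of $\mu$ (choosing a suitable increasing family of sets of finite integral dimension) should yield the required asymptotically invariant vectors for $\mrW$. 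I would handle the passage from ``character acts by the dimension'' to ``the representation fixes the vector'' through a Cauchy–Schwarz / equality-case argument, and then assemble the final net by a diagonal selection over the exhaustion of $\IrrG$.
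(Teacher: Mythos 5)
Your proposal is correct and follows essentially the same route as the paper's proof: $1)\Rightarrow 2)$ is obtained by testing $\chi^{\int}(\int_\Omega^{\oplus}\pi\md\mu_\Omega(\pi))$ directly against the coamenability vectors $\xi_n$ (the paper truncates $\Omega$ to representations of dimension $\le K$ so the tail contributes $\le\eps/3$ and dominated convergence applies, then uses the norm bound $\|\chi^{\int}(\cdot)\|\le\int_\Omega\dim\md\mu$ to turn convergence of expectation values into an approximate eigenvalue — your detour through $T$ and $\mc{L}_\nu$ is unnecessary but harmless), and $3)\Rightarrow 1)$ uses admissibility exactly as you say, via Lemma \ref{lemat10}, self-adjointness, and approximate eigenvectors, reassembled into near-invariant vectors for $\mrW$ through the Plancherel decomposition over a directed family of symmetric sets. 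The only divergence is bookkeeping in that last step: the paper never isolates your fiberwise statement $\chi(U^\pi)\zeta_n\approx\dim(\pi)\zeta_n$ (which in any case only holds in an integrated, $\LL^1(\Omega)$ sense), but instead passes to the vector states $\omega_{\zeta_n}$, shows they stay nearly maximal on every symmetric $\Omega'\subseteq\Omega$, and bounds the relevant quadratic form by the trace of the positive operator $\I-\tfrac{1}{2}(\omega_{\zeta_n}\otimes\id)\bigl((U^\pi)^*+U^\pi\bigr)$ — the same estimate your Cauchy--Schwarz equality-case argument yields, organized globally rather than fiber by fiber.
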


\begin{proof}
Implication $2)\Rightarrow 3)$ is trivial. Assume that point $3)$ holds and $\mu$ is a Plancherel measure invariant under conjugation. Let $\Upsilon$ be the family of symmetric measurable subsets $\Omega\subseteq\IrrG$ such that $\int_{\Omega}\dim \md\mu <+\infty$. On $\Upsilon$ we have partial ordering given by inclusion, with which $\Upsilon$ becomes a directed set, moreover $\bigcup \Upsilon=\IrrG$. For any $\Omega\in\Upsilon$ the integral character $\chi^{\int}(\int_{\Omega}^{\oplus}\pi\md\mu_{\Omega}(\pi))$ is self-adjoint and has $\int_\Omega\dim\md\mu$ in its spectrum. Therefore, there exists a sequence of approximate eigenvectors with eigenvalue $\int_{\Omega}\dim \md\mu$. Using them, we can build vector functionals (normal states) $\omega^{\Omega,n}$ on $\Linf$ such that
\[
\omega^{\Omega,n}(\chi^{\int}(\int_{\Omega}^{\oplus}\pi \md\mu_{\Omega}(\pi)))\approx_{\tfrac{1}{n}}
\int_{\Omega}\dim \md\mu\quad(n\in\NN).
\]
For any measurable symmetric $\Omega'\subseteq \Omega$ we have
\[
\begin{split}
\int_{\Omega'}\dim \md\mu+
\int_{\Omega\setminus \Omega'}\dim \md\mu &=
\int_{\Omega}\dim\md\mu \approx_{\tfrac{1}{n}}
\omega^{\Omega,n}( \chi^{\int}(\int_{\Omega}^{\oplus} \pi \md\mu_{\Omega}(\pi)))\\
&=
\omega^{\Omega,n}(\chi^{\int}(\int_{\Omega'}^{\oplus}\pi \md\mu_{\Omega'}(\pi)))+
\omega^{\Omega,n}(\chi^{\int}(\int_{\Omega\setminus\Omega'}^{\oplus}\pi \md\mu_{\Omega\setminus\Omega'}(\pi)))
\end{split}
\]
and
\[
|\omega^{\Omega,n}(\chi^{\int}(\int_{\Omega'}^{\oplus}\pi \md\mu_{\Omega'}(\pi)))|\le
\int_{\Omega'}\dim \md\mu,\quad
|\omega^{\Omega,n}(\chi^{\int}(\int_{\Omega\setminus\Omega'}^{\oplus}\pi \md\mu_{\Omega\setminus\Omega'}(\pi)))|\le
\int_{\Omega\setminus\Omega'}\dim \md\mu.
\]
Consequently 
\begin{equation}\label{eq4}
\omega^{\Omega,n}(\chi^{\int}(\int_{\Omega'}^{\oplus}\pi \md\mu_{\Omega'}(\pi)))\approx_{\tfrac{1}{n}}
\int_{\Omega'}\dim \md\mu\quad(n\in\NN).
\end{equation}
We can find unit vectors $\xi^{\Omega,n}\in \LdG$ such that $\omega^{\Omega,n}|_{\Linf}=\omega_{\xi^{\Omega,n}}$. Let $\mc{Q}_L$ be a unitary operator from Theorem \ref{PlancherelL}. Recall that it satisfies
\[
\mc{Q}_L\, ((\nu\otimes\id)\mrW)=\bigl(\int_{\IrrG}^{\oplus}
(\nu\otimes\id)U^{\pi}\otimes\I_{\ov{\msf{H}_\pi}}\md\mu(\pi)\bigr)\;\mc{Q}_L
\quad(\nu\in\LL^1(\GG)).
\]
In order to prove that $\GG$ is coamenable (with a net of unit vectors $(\xi^{\Omega,n})_{(\Omega,n)\in\Upsilon\times\NN}$) we need to show that
\[
\|\mrW (\xi^{\Omega,n}\otimes\eta)-\xi^{\Omega,n}\otimes\eta\|
\xrightarrow[(\Omega,n)\in\Upsilon\times\NN]{}0
\]
for all $\eta\in\LdG$, equivalently
\[
\begin{split}
&\quad\;\bigl\langle\eta \big| (\omega_{\xi^{\Omega,n}}\otimes\id) (\tfrac{\mrW^*+\mrW}{2}) \eta-\eta\bigr\rangle\\
&=\Re\ismaa{\eta}{(\omega_{\xi^{\Omega,n}}\otimes\id)\mrW\eta-\eta}\\
&=\Re\ismaa{\xi^{\Omega,n}\otimes\eta}{
\mrW (\xi^{\Omega,n}\otimes\eta)-\xi^{\Omega,n}\otimes\eta}
\xrightarrow[(\Omega,n)\in\Upsilon\times\NN]{}0.
\end{split}
\]
Clearly, it is enough to show this convergence for $\eta$ from a linearly dense set. If we move everything to the level of direct integrals and multiply by $-1$, the above condition can be written as
\[
\ismaa{\eta}{\bigl(
\I-\tfrac{1}{2}\int_{\IrrG}^{\oplus} (\omega_{\xi^{\Omega,n}}\otimes\id)((U^{\pi} )^*+U^\pi)\otimes\I_{\ov{\msf{H}_\pi}}\md\mu(\pi)
\bigr)\eta}
\xrightarrow[(\Omega,n)\in\Upsilon\times\NN]{}0
\]
for $\eta\in\int_{\IrrG}^{\oplus}\HS(\msf{H}_\pi)\md\mu(\pi)$ from a lineary dense subset. Observe that
\[
\bigl\|
\tfrac{1}{2}\int_{\IrrG}^{\oplus} (\omega_{\xi^{\Omega,n}}\otimes\id)((U^{\pi} )^*+U^\pi)\otimes\I_{\ov{\msf{H}_\pi}}\md\mu(\pi)\bigr\|\le
1,
\]
consequently, as $\tfrac{1}{2}\int_{\IrrG}^{\oplus} (\omega_{\xi^{\Omega,n}}\otimes\id)((U^{\pi} )^*+U^\pi)\otimes\I_{\ov{\msf{H}_\pi}}\md\mu(\pi)$ is self-adjoint we have
\[
0\le \I-\tfrac{1}{2}\int_{\IrrG}^{\oplus} (\omega_{\xi^{\Omega,n}}\otimes\id)((U^{\pi} )^*+U^\pi)\otimes\I_{\ov{\msf{H}_\pi}}\md\mu(\pi).
\]
Similar reasoning gives us
\[
0\le \I_{\msf{H}_\pi}\otimes\I_{\ov{\msf{H}_\pi}}-\tfrac{1}{2}(\omega_{\xi^{\Omega,n}}\otimes\id)((U^{\pi} )^*+U^\pi)\otimes\I_{\ov{\msf{H}_\pi}}\quad(\pi\in\IrrG).
\]
Let $\eta=\int_{\IrrG}^{\oplus} \xi_{\pi}\otimes\ov{\zeta_\pi}\md\mu(\pi)\in\int_{\IrrG}^{\oplus}\HS(\msf{H}_\pi)\md\mu(\pi)$. We have
\[
\begin{split}
&\quad\;
\ismaa{\eta}{\bigl(
\I-\tfrac{1}{2}\int_{\IrrG}^{\oplus} (\omega_{\xi^{\Omega,n}}\otimes\id)((U^{\pi})^*+U^\pi)\otimes\I_{\ov{\msf{H}_\pi}}\md\mu(\pi)
\bigr)\eta}\\
&=
\int_{\IrrG}
\is{\xi_\pi\otimes\ov{\zeta_\pi}}{
\xi_\pi\otimes\ov{\zeta_\pi}-\tfrac{1}{2}((\omega_{\xi^{\Omega,n}}\otimes\id)((U^{\pi})^*+U^\pi) \xi_\pi)\otimes\ov{\zeta_\pi}
}
\md\mu(\pi)\\
&=
\int_{\IrrG}
\is{\xi_\pi}{
\xi_\pi-\tfrac{1}{2}(\omega_{\xi^{\Omega,n}}\otimes\id)((U^{\pi})^*+U^\pi) \xi_\pi}
\|\zeta_\pi\|^2
\md\mu(\pi).
\end{split}
\]
Let us now fix $\Omega'\subseteq \IrrG$, a symmetric measurable subset of finite measure and a vector $\eta=\int_{\IrrG}^{\oplus} \xi_{\pi}\otimes\ov{\zeta_\pi}\md\mu(\pi)$ such that $\xi_{\pi}=0$ for $\pi\in \IrrG\setminus \Omega'$. Assume moreover that there exists $M\ge 0$ such that $\|\xi_\pi\|=1,\|\zeta_\pi\|\le M$ for $\pi\in\Omega'$. It is clear that subset of such $\eta$ is linearly dense in $\int_{\IrrG}^{\oplus}\HS(\msf{H}_\pi)\md\mu(\pi)$. We have
\[
\begin{split}
0&\le
\ismaa{\eta}{\bigl(
\I-\tfrac{1}{2}\int_{\IrrG}^{\oplus} (\omega_{\xi^{\Omega,n}}\otimes\id)
((U^{\pi})^*+U^\pi)\otimes\I_{\ov{\msf{H}_\pi}}\md\mu(\pi)
\bigr)\eta}\\
&=\int_{\IrrG}
\is{\xi_\pi}{
\xi_\pi-\tfrac{1}{2}(\omega_{\xi^{\Omega,n}}\otimes\id)((U^{\pi})^*+U^\pi) \xi_\pi}
\|\zeta_\pi\|^2
\md\mu(\pi)\\
&\le
M^2\int_{\IrrG}\chi_{\Omega'}(\pi) \Tr_\pi( \I-\tfrac{1}{2}(\omega_{\xi^{\Omega,n}}\otimes\id)((U^{\pi})^*+U^\pi))
\md\mu(\pi)\\
&=
M^2 \bigl(\int_{\Omega'}\dim\md\mu-\tfrac{1}{2}
\omega_{\xi^{\Omega,n}}\bigl(
\chi^{\int}(\int_{\Omega'}^{\oplus}\pi\md\mu_{\Omega'}(\pi))^*\bigr)
-
\tfrac{1}{2}\omega_{\xi^{\Omega,n}}\bigl(
\chi^{\int}(\int_{\Omega'}^{\oplus}\pi\md\mu_{\Omega'}(\pi))\bigr)\bigr)\\
&=
M^2 \bigl(\int_{\Omega'}\dim\md\mu-
\omega_{\xi^{\Omega,n}}\bigl(
\chi^{\int}(\int_{\Omega'}^{\oplus}\pi\md\mu_{\Omega'}(\pi))\bigr)\bigr)
\xrightarrow[(\Omega,n)\in\Upsilon\times\NN]{}0.
\end{split}
\]
The last convergence follows from \eqref{eq4} and the fact that for $\Omega$ large enough we have $\Omega'\subseteq \Omega$. We have also used the fact that $\Omega'$ is symmetric, in order to justify self-adjointness of $\chi^{\int}(\int_{\Omega'}^{\oplus}\pi\md\mu_{\Omega'}(\pi))$. Therefore we have proven the implication $3)\Rightarrow 1)$.\\
Let us now show the implication $1) \Rightarrow 2)$. Assume that $\GG$ is coamenable and $\Omega\subseteq\IrrG$ is a measurable subset such that $\int_\Omega \dim \md\mu<+\infty$. We do not assume anything about the measure $\mu$ or admissibility of irreducible representations. Let $(\xi_n)_{n\in\NN}$ be a sequence of unit vectors in $\LdG$, from the definition of coamenability (Definition \ref{defcoamenable}). Note that we can find a sequence, not a net, since $\LdG$ is separable. We have
\[
\lim_{n\to\infty}(\omega_n\otimes\omega_{\xi}) \mrW=\|\xi\|^2\quad
(\xi\in\LdG)
\]
where $\omega_n=\omega_{\xi_n}$ is a vector state. For $\int_{\IrrG}^{\oplus}\xi_\pi\otimes\ov{\zeta_\pi}\md\mu(\pi)\in\int_{\IrrG}^{\oplus}\HS(\msf{H}_\pi)\md\mu(\pi)$ such that $\xi_\pi=0$ when $\pi\notin \Omega$ we have
\[
\begin{split}
&\quad\;
\ismaa{\xi_n}{ (\int_{\Omega}(\id\otimes\omega_{\xi_\pi})U^{\pi} \|\zeta_\pi\|^2 \md\mu_{\Omega}(\pi))\xi_n}\\
&=\int_{\Omega} (\omega_{n}\otimes\omega_{\xi_\pi})U^\pi \|\zeta_\pi\|^2
\md\mu_{\Omega}(\pi)\\
&=
\int_{\Omega} 
\ismaa{\xi_\pi\otimes\ov{\zeta_\pi}}{ ((\omega_n\otimes\id)U^\pi\otimes
\I_{\ov{\msf{H}_\pi}})\,\xi_\pi\otimes\ov{\zeta_\pi}}
\md\mu_{\Omega}(\pi)\\
&=
\int_{\IrrG} 
\ismaa{\xi_\pi\otimes\ov{\zeta_\pi}}{ ((\omega_n\otimes\id)U^\pi\otimes
\I_{\ov{\msf{H}_\pi}})\,\xi_\pi\otimes\ov{\zeta_\pi}}
\md\mu(\pi)\\
&=
\bigl\langle \int_{\IrrG}^{\oplus} \xi_\pi\otimes\ov{\zeta_\pi}\md\mu(\pi)
\big|
\bigl(\int_{\IrrG}^{\oplus}
((\omega_n\otimes\id)U^\pi\otimes
\I_{\ov{\msf{H}_\pi}})\md\mu(\pi)\bigr)
\int_{\IrrG}^{\oplus} \xi_\pi\otimes\ov{\zeta_\pi}\md\mu(\pi)\bigr\rangle\\
&=
\bigl\langle \int_{\IrrG}^{\oplus} \xi_\pi\otimes\ov{\zeta_\pi}\md\mu(\pi)
\big|\mc{Q}_L
(\omega_n\otimes\id)\mrW
\mc{Q}_L^*\int_{\IrrG}^{\oplus} \xi_\pi\otimes\ov{\zeta_\pi}\md\mu(\pi)\bigr\rangle\\
&\quad\;\xrightarrow[n\to\infty]{}
\bigl\|
\int_{\IrrG}^{\oplus} \xi_\pi\otimes\ov{\zeta_\pi}\md\mu(\pi)
\bigr\|^2.
\end{split}
\]
Pick $\eps>0$. Since $\int_\Omega\dim\md\mu<+\infty$ there exists a $K\in\NN$ such that
\[
\int_{\Omega\cap\, \bigcup_{k=1}^{K}\IrrG\rest_k}
\dim \md\mu\approx_{\tfrac{\eps}{3}}\int_\Omega\dim\md\mu.
\]
Let $\{(\xi_\pi^k)_{\pi\in\IrrG}\,|\,k\in\NN\}$ be a field of orthonormal bases restricted to $\Omega\cap \bigcup_{k'=1}^{K}\IrrG\rest_{k'}$. The following holds:
\[
\begin{split}
&\quad\;\ismaa{\xi_n}{\chi^{\int}(\int_{\Omega}^{\oplus} \pi\md\mu_{\Omega}(\pi)) \xi_n}\\
&=
\ismaa{\xi_n}{\int_{\Omega\setminus\bigcup_{k'=1}^{K}\IrrG\rest_{k'}} \chi(U^\pi)\md\mu(\pi) \xi_n}
+
\ismaa{\xi_n}{\int_{\Omega\cap\bigcup_{k'=1}^{K}\IrrG\rest_{k'}} \chi(U^\pi)\md\mu(\pi) \xi_n}\\
&=
\ismaa{\xi_n}{\int_{\Omega\setminus\bigcup_{k'=1}^{K}\IrrG\rest_{k'}} \chi(U^\pi)\md\mu(\pi) \xi_n}+
\sum_{k=1}^{K}\int_{\Omega} \ismaa{\xi_n}{(\id\otimes\omega_{\xi^{k}_\pi})U^{\pi} \xi_n}\md\mu(\pi)\\
&=
\ismaa{\xi_n}{\int_{\Omega\setminus\bigcup_{k'=1}^{K}\IrrG\rest_{k'}} \chi(U^\pi)\md\mu(\pi) \xi_n}+
\sum_{k=1}^{K}\int_{\Omega} (\omega_{\xi_n}\otimes\omega_{\xi^{k}_\pi})U^{\pi}\md\mu(\pi).
\end{split}
\]
Absolute value of the first term is less or equal to $\tfrac{\eps}{3}$:
\[
\bigl|\ismaa{\xi_n}{\int_{\Omega\setminus\bigcup_{k'=1}^{K}\IrrG\rest_{k'}} \chi(U^\pi)\md\mu(\pi) \xi_n}\bigr|
\le
\int_{\Omega\setminus\bigcup_{k'=1}^{K}\IrrG\rest_{k'}} \dim \md\mu\le
\tfrac{\eps}{3}.
\]
Moreover
\[
\sum_{k=1}^{K}\int_{\Omega} (\omega_{\xi_n}\otimes\omega_{\xi^{k}_\pi})U^{\pi}\md\mu(\pi)
\xrightarrow[n\to\infty]{}
\sum_{k=1}^{K}\bigl\|
\int_{\IrrG}^{\oplus} \xi^k_\pi\otimes\ov{\xi^k_\pi}\md\mu(\pi)
\bigr\|^2=
\int_{\Omega\cap\bigcup_{k'=1}^{K}\IrrG\rest_{k'}}\dim \md\mu,
\]
consequently, there exists $n_0$ such that
\[
\ismaa{\xi_n}{\chi^{\int}(\int_{\Omega}^{\oplus} \pi\md\mu_{\Omega}(\pi)) \xi_n}\approx_{\eps}
\int_{\Omega}\dim\md\mu,
\]
for $n\ge n_0$. Since the norm of $\chi^{\int}(\int_{\Omega}^{\oplus} \pi \md\mu_{\Omega}(\pi))$ is less or equal to $\int_{\Omega} \dim \md\mu$, this implies that $\int_{\Omega} \dim \md\mu$ is an approximate eigenvalue of $\chi^{\int}(\int_{\Omega}^{\oplus} \pi \md\mu_{\Omega}(\pi))$ and proves
\[
\int_{\Omega}\dim\md\mu\in\sigma(\chi^{\int}(\int_{\Omega}^{\oplus}
\pi\md\mu_{\Omega}(\pi))).
\]
\end{proof}

\section{$\mf{A}$ - a \cst-algebra generated by integral characters}\label{cstA}
In this section we introduce a \cst-algebra $\mf{A}$, which is generated by certain integral characters. We prove two theorems which gives conditions equivalent to coamenability: Theorem \ref{tw3} states that $\GG$ is coamenble if and only if there exists a certain character on $\mf{A}$, and Theorem \ref{tw4} relates coamenability to the properties of spectra $\sigma(\mc{L}_\nu)$.\\
Let us fix any Plancherel measure $\mu$ for $\GG$. Recall that whenever we take an integral representation $\pi_X\in \Rep^{\int}(\GG)$, it comes together with a measure space and measurable fields: of Hilbert spaces and of representations:
\[
(X,\mf{M}_X,\mu_X,(\msf{H}_x)_{x\in X},\{(e^i_x)_{x\in X}\}_{i\in\NN},(\pi_x)_{x\in X}).
\]
For an integral representation $\pi_X\in \Rep^{\int}(\GG)$ such that $\pi_X\lec_q\Lambda_{\whG}$ we can find a descending family of sets $\{\F^{n}_{\pi_X}\}_{n\in\NN}$ defined uniquely up to measure zero, such that
\[
\pi_X=\int_X^{\oplus}\pi_x\md\mu_X(x)\simeq
\bigoplus_{n\in\NN} 
\int_{\F^{n}_{\pi_X}}^{\oplus} \kappa\md\mu_{\F^n_{\pi}}(\kappa).
\]
If $\sum_{n=1}^{\infty} \chi_{\F^n_{\pi_X}}<+\infty$ pointwise (in particular $\E^{\aleph_0}_{\pi_X}=\emptyset$) then we can use Proposition \ref{treq}: we get a measurable function $\varpi^{\pi_X}$ on $\F^1_{\pi_X}$ such that the measure $\tilde{\mu}=\varpi^{\pi_X}\mu_{\F^1_{\pi_X}}$ satisfies
\[
\int_X\dim\md\mu_X=\sum_{n=1}^{\infty} \int_{\F^n_{\pi_X}}
\dim \md\tilde{\mu}
\]
and we have equality of the integral weights. If the above integral dimensions are finite, we have equality of the integral characters:
\[
\begin{split}
\chi^{\int}\bigl(\int_{X}^{\oplus}
\pi_x \md\mu_{X}(x)\bigr)=
\sum_{n=1}^{\infty} \chi^{\int}(
\int_{\F^{n}_{\pi_X}}^{\oplus}
\kappa \md\tilde{\mu}_{\F^{n}_{\pi_X}}(\kappa))=
\sum_{n=1}^{\infty}
\int_{\F^{n}_{\pi_X}}
\varpi^{\pi_X}(\kappa)
\chi(U^\kappa) \md\mu_{\F^{n}_{\pi_X}}(\kappa).
\end{split}
\]
Let us now introduce new families of representations:

\begin{definition}$ $
\begin{itemize}
\item
Let $\Rep^{\int}_{q}(\GG)$ be the family of integral representations $\pi_X\in \Rep^{\int}(\GG)$ satisfying $\pi_X\lecq\Lambda_{\whG}$.
\item
Let $\Rep^{\int}_{f}(\GG)$ be the family of integral representations $\pi_X\in \Rep^{\int}(\GG)$ such that $\pi_X\lecq\Lambda_{\whG}$ and $\sum_{n=1}^{\infty} \chi_{\F^n_{\pi_X}}<+\infty$ pointwise.
\end{itemize}
\end{definition}

We define also
\[
\Rep^{\int}_{q,<+\infty}(\GG)=\Rep^{\int}_q(\GG)\cap \Rep^{\int}_{<+\infty}(\GG)\textnormal{ and }
\Rep^{\int}_{f,<+\infty}(\GG)=\Rep^{\int}_f(\GG)\cap \Rep^{\int}_{<+\infty}(\GG).
\]
Take integral representations $\pi_X,\pi_Y\in \Rep^{\int}_{<+\infty}(\GG)$. We have justified in Section \ref{secintrep} that their direct sum, tensor product and conjugate representation end up in $\Rep^{\int}_{<+\infty}(\GG)$. Properties of the regular representation and quasi-containment shows that if $\pi_X,\pi_Y\lec_q \Lambda_{\whG}$ then the same is true for representations constructed via basic operations. Indeed, Proposition \ref{stw10} shows that this is the case for a tensor product, it is also clear for a direct sum. In the Subsubsection \ref{conjrep} we have proven that $\pi_{\ov X}\simeq \jmath_{\msf{H}_{\pi_X}}\circ \pi_X \circ \hat{R}^{u}$, therefore $\pi_{\ov X}$ is quasi-contained in the representation conjugate to $\Lambda_{\whG}$. However, $\Lambda_{\whG}$ is self-conjugate up to quasi equivalence: by \cite[Theorem 2.2]{closedqsub} we need to show the existence of a normal $\star$-isomorphism:
\[
\gamma\colon
\{(\eta\otimes\id)\mrW \,|\, \eta\in \Lj\}''\rightarrow
\{(\eta\otimes\id)(R\otimes \jmath_{\LdG})\mrW \,|\, \eta\in \Lj\}''.
\]
satisfying $(\id\otimes\gamma)\mrW=(R\otimes\jmath_{\LdG})\mrW$. However, we have
\[\begin{split}
\Linfd&=\{(\eta\otimes\id)\mrW \,|\, \eta\in \Lj\}'',\\
\jmath_{\LdG}(\Linfd)&=
\{(\eta\otimes\id)(R\otimes \jmath_{\LdG})\mrW \,|\, \eta\in \Lj\}''\subseteq \B(\ov{\LdG}),
 \end{split}\]
therefore we can take $\gamma=\jmath_{\LdG}\circ \hat{R}$. Indeed, it is a normal linear $\star$-multiplicative map, which is clearly an isomorphism between the von Neumann algebras $\Linfd$ and $\jmath_{\LdG}(\Linfd)$. Moreover, it satisfies
\[
(\id\otimes\gamma)\mrW=(\id\otimes\jmath_{\LdG}\circ\hat{R})\mrW=(R\otimes\jmath_{\LdG})\mrW.
\]
It follows that $\pi_{\ov{X}}\lec_q \Lambda_{\whG}$ whenever $\pi_X\lec_q \Lambda_{\whG}$. let us introduce the following subsets:
\[
\mscr{A}=\bigl\{
\sum_{k=0}^{3} i^k \chi^{\int}(\int_{X_k}^{\oplus} \pi_x\md\mu_{X_k}(x))
\,\big|\, \forall_{k\in\ov{0,3}}\;
\pi_{X_k}\in \Rep^{\int}_{q,<+\infty}(\GG)\bigr\}\subseteq \Linf
\]
and $\mf{A}=\ov{\mscr{A}}^{\|\cdot\|}$. Both $\mscr{A}$ and $\mf{A}$ are subalgebras of $\Linfd$. It follows from the fact that we can express a sum and a product of integral characters as an integral character. If irreducible representations of $\GG$ are admissible then $\mscr{A}$ is a $\star$-algebra, and $\mf{A}$ is a \cst-algebra in $\Linf$ -- it is the case since then we can express adjoint of integral character as an integral character. It can happen that $\I\notin\ov{\mf{A}}^{\wot}$ (see example $\RR\rtimes \ZZ_2$), so it is not necessarily the case that $\mf{A}$ is a nondegenerate \cst-subalgebra of $\B(\LdG)$. For any subset $\Omega\subseteq\IrrG$ such that $\int_\Omega\dim\md\mu<+\infty$ we have $\chi^{\int}(\int_\Omega^{\oplus}\pi\md\mu(\pi))\in\mscr{A}\subseteq\mf{A}$.\\
Recall that in Lemma \ref{lemat34} we have defined operator $T$, which is an isometry $\LL^2(\IrrG)\rightarrow\LdG$. Moreover, we have introduced a closed subspace $\mc{H}$ in $\LdG$ as the image of $T$.\\
The next proposition is in part a generalization of propositions \ref{stw9} and \ref{stw11} to the case of more general integral representations.

\begin{proposition}\label{stw1}
Let $\pi_X\in \Rep^{\int}_{f,<+\infty}(\GG)$. We have
\[
\Tr(E^2_{\bullet})^{\frac{1}{2}}\varpi^{\pi_X}\sum_{n=1}^{\infty} \chi_{\F^{n}_{\pi_X}}\in \LL^2(\IrrG)
\Leftrightarrow 
\chi^{\int}(\int_X^{\oplus}\pi_x\md\mu_X(x))\in\mf{N}_\psi.
\]
If the above conditions hold, then also
\[
\Lambda_{\psi}\bigl((\chi^{\int}(\int_X^{\oplus}\pi_x\md\mu_X(x))\bigr)
=T(\Tr(E^2_\bullet)^{\frac{1}{2}}\varpi^{\pi_X}\sum_{n=1}^{\infty} \chi_{\F^{n}_{\pi_X}})\in\mc{H}.
\]
Consequently
\[
\begin{split}
\mc{H}=\ov{\lin}\bigl\{
\Lambda_{\psi}\bigl(
&\chi^{\int}(\int_X^{\oplus}\pi_x\md\mu_X(x))\bigr)\,\big|\pi_X\in \Rep^{\int}_{f,<+\infty}(\GG)\,:\,
\Tr(E^2_\bullet)^{\frac{1}{2}}\varpi^{\pi_X}\sum_{n=1}^{\infty} \chi_{\F^{n}_{\pi_X}}\in \LL^2(\IrrG)\bigr\}.
\end{split}
\]
\end{proposition}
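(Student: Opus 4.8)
The plan is to recognise the integral character as an $E$-weighted integral of characters against a multiplicity function, and then to run the two directions of the equivalence through the isometry $T$ and through Proposition \ref{stw8}, respectively. Write $m=\sum_{n=1}^{\infty}\chi_{\F^n_{\pi_X}}$, which is finite pointwise because $\pi_X\in\Rep^{\int}_{f,<+\infty}(\GG)$. Combining Proposition \ref{treq} with Proposition \ref{trchar} (as recorded just before the statement) gives the key identities
\[
\chi^{\int}(\int_X^{\oplus}\pi_x\md\mu_X(x))=\int_{\IrrG} m(\pi)\,\varpi^{\pi_X}(\pi)\,\chi(U^\pi)\md\mu(\pi),\qquad \int_{\IrrG} m\,\varpi^{\pi_X}\dim\md\mu=\int_X\dim\md\mu_X<+\infty,
\]
so that the candidate function is $g_{\pi_X}:=\Tr(E^2_\bullet)^{\frac{1}{2}}\varpi^{\pi_X}m$ and $\|g_{\pi_X}\|_{\LL^2(\IrrG)}^2=\int_{\IrrG}\Tr(E^2_\pi)\,\varpi^{\pi_X}(\pi)^2 m(\pi)^2\md\mu(\pi)$.

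For the implication $g_{\pi_X}\in\LL^2(\IrrG)\Rightarrow\chi^{\int}(\int_X^{\oplus}\pi_x\md\mu_X(x))\in\mf{N}_\psi$ together with the asserted formula, I would repeat the truncation argument of Theorem \ref{tw1}. Choose an increasing exhaustion $\{V_p\}_{p\in\NN}$ of $\IrrG$ with $\mu(V_p)<+\infty$, $V_p\subseteq\bigcup_{p'\le p}\IrrG\rest_{p'}$ and $\sup_{\pi\in V_p}\Tr(E^2_\pi)^{-1}<+\infty$. Then each $\chi_{V_p}g_{\pi_X}$ lies in the original domain $\mc{D}(T)$ and $T(\chi_{V_p}g_{\pi_X})=\Lambda_{\psi}(\int_{V_p} m\,\varpi^{\pi_X}\chi(U^\pi)\md\mu)$. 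As $p\to\infty$ the left-hand side converges to $T(g_{\pi_X})$ by continuity of $T$, while the argument of $\Lambda_{\psi}$ converges \emph{in norm} to $\chi^{\int}(\int_X^{\oplus}\pi_x\md\mu_X(x))$, since $\|\int_{\IrrG\setminus V_p} m\,\varpi^{\pi_X}\chi(U^\pi)\md\mu\|\le\int_{\IrrG\setminus V_p} m\,\varpi^{\pi_X}\dim\md\mu\to 0$ by finiteness of the total integral. The $\ssot\times\|\cdot\|$-closedness of $\Lambda_{\psi}$ then yields $\chi^{\int}(\int_X^{\oplus}\pi_x\md\mu_X(x))\in\mf{N}_\psi$ and $\Lambda_{\psi}(\chi^{\int}(\int_X^{\oplus}\pi_x\md\mu_X(x)))=T(g_{\pi_X})\in\mc{H}$.

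For the converse $\chi^{\int}(\int_X^{\oplus}\pi_x\md\mu_X(x))\in\mf{N}_\psi\Rightarrow g_{\pi_X}\in\LL^2(\IrrG)$ I would appeal to Proposition \ref{stw8}. Fixing a measurable field of orthonormal bases $\{(\xi^k_\pi)_{\pi}\}_k$ and putting $\xi^{X,k}_\pi=\eta^{X,k}_\pi=\sqrt{m(\pi)\varpi^{\pi_X}(\pi)}\,\chi_{\{\dim\ge k\}}(\pi)\,\xi^k_\pi$, the second identity in the display above shows $\sum_k\int_{\IrrG}\|\xi^{X,k}_\pi\|^2\md\mu=\int_X\dim\md\mu_X<+\infty$, so these fields assemble to vectors of $\bigoplus_k\int_{\IrrG}^{\oplus}\msf{H}_\pi\md\mu(\pi)$; moreover $\sum_k\int_{\IrrG}(\id\otimes\omega_{\xi^{X,k}_\pi})U^\pi\md\mu=\chi^{\int}(\int_X^{\oplus}\pi_x\md\mu_X(x))$ is assumed to lie in $\mf{N}_\psi$. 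Since $\sum_k|\xi^{X,k}_\pi\rangle\langle\xi^{X,k}_\pi|=m(\pi)\varpi^{\pi_X}(\pi)\I_{\msf{H}_\pi}$, Proposition \ref{stw8} gives $\mc{Q}_R\Lambda_{\psi}(\chi^{\int}(\int_X^{\oplus}\pi_x\md\mu_X(x)))=\int_{\IrrG}^{\oplus} m(\pi)\varpi^{\pi_X}(\pi)E_\pi\md\mu(\pi)$, whose square norm is exactly $\|g_{\pi_X}\|^2_{\LL^2(\IrrG)}$; in particular $g_{\pi_X}\in\LL^2(\IrrG)$, and this re-confirms the norm equality.

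Finally, for the description of $\mc{H}$: the inclusion $\supseteq$ is immediate, since by the previous steps every listed vector equals $T(g_{\pi_X})$ and $T$ maps into the closed subspace $\mc{H}$. For $\subseteq$ it suffices, as $T$ is isometric, to produce enough admissible $\pi_X$ whose functions $g_{\pi_X}$ span a dense subspace of $\LL^2(\IrrG)$. Taking $\pi_X=\sigma_\Omega$ for $\Omega$ with $\mu(\Omega)<+\infty$ and $\sup_{\pi\in\Omega}(\dim(\pi)+\|E^2_\pi\|)<+\infty$, one has $\F^1_{\sigma_\Omega}=\Omega$, $\F^n_{\sigma_\Omega}=\emptyset$ for $n\ge 2$ and $\varpi^{\sigma_\Omega}\equiv 1$, while the finiteness assumptions force $\int_\Omega\dim\md\mu<+\infty$ and $\int_\Omega\Tr(E^2_\bullet)\md\mu<+\infty$; hence $\sigma_\Omega\in\Rep^{\int}_{f,<+\infty}(\GG)$ (note $\sigma_\Omega\lecq\Lambda_{\whG}$) with $g_{\sigma_\Omega}=\Tr(E^2_\bullet)^{\frac{1}{2}}\chi_\Omega\in\LL^2(\IrrG)$. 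These are precisely the generators of the subspace $\mc{F}$, which is dense in $\LL^2(\IrrG)$; applying $T$ and taking closures gives $\mc{H}\subseteq\ov{\lin}\{\Lambda_{\psi}(\chi^{\int}(\int_X^{\oplus}\pi_x\md\mu_X(x)))\}$. The main obstacle is the bookkeeping in the first implication: $g_{\pi_X}$ need not belong to the original domain $\mc{D}(T)$, so the truncation-plus-closedness argument (rather than a direct application of $T$) is essential, and one must verify that the multiplicity-weighted character identity survives the passage to the norm limit.
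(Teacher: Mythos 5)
Your proposal is correct, and two of its three parts follow the paper's own argument: the implication from $g_{\pi_X}:=\Tr(E^2_\bullet)^{\frac{1}{2}}\varpi^{\pi_X}\sum_{n}\chi_{\F^n_{\pi_X}}\in\LL^2(\IrrG)$ to $\chi^{\int}(\pi_X)\in\mf{N}_\psi$ is done exactly as in the paper (truncation by an exhaustion $\{V_p\}_{p\in\NN}$ so that $\chi_{V_p}g_{\pi_X}\in\mc{D}(T)$, the identity $T(\chi_{V_p}g_{\pi_X})=\Lambda_{\psi}(\int_{V_p}\varpi^{\pi_X}m\,\chi(U^\pi)\md\mu)$, norm convergence of the operators thanks to $\int_{\IrrG} m\,\varpi^{\pi_X}\dim\md\mu=\int_X\dim\md\mu_X<+\infty$, and $\ssot\times\|\cdot\|$-closedness of $\Lambda_\psi$), and the identification of $\mc{H}$ via the representations $\sigma_\Omega$ with $\varpi^{\sigma_\Omega}\equiv 1$ and $g_{\sigma_\Omega}=\Tr(E^2_\bullet)^{\frac{1}{2}}\chi_\Omega$ spanning the dense subspace $\mc{F}$ is also the paper's route. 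Where you genuinely diverge is the converse implication $\chi^{\int}(\pi_X)\in\mf{N}_\psi\Rightarrow g_{\pi_X}\in\LL^2(\IrrG)$: the paper passes to the rescaled Plancherel measure $\tilde{\mu}=(\chi_{\IrrG\setminus\F^1_{\pi_X}}+\varpi^{\pi_X}m)\mu$, uses (implicitly, via Proposition \ref{stw14}) that the associated operators are $\tilde{E}^2_\pi=\varpi^{\pi_X}(\pi)m(\pi)E^2_\pi$, and then quotes Proposition \ref{stw9} for the set $\F^1_{\pi_X}$; you instead feed the weighted fields $\xi^{X,k}_\pi=\sqrt{m(\pi)\varpi^{\pi_X}(\pi)}\,\chi_{\{\dim\ge k\}}(\pi)\,\xi^k_\pi$ directly into Proposition \ref{stw8}, exploiting $\sum_k|\xi^{X,k}_\pi\rangle\langle\xi^{X,k}_\pi|=m(\pi)\varpi^{\pi_X}(\pi)\I_{\msf{H}_\pi}$. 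Both routes rest on the same orthogonality machinery (Proposition \ref{stw9} is itself a corollary of Proposition \ref{stw8}), but yours avoids the change of Plancherel measure entirely and yields the norm identity $\|\Lambda_\psi(\chi^{\int}(\pi_X))\|^2=\|g_{\pi_X}\|^2_{\LL^2(\IrrG)}$ and the image formula for $\mc{Q}_R\Lambda_\psi(\chi^{\int}(\pi_X))$ in one stroke, whereas the paper's version keeps the argument at the level of previously packaged statements about subsets $\Omega\subseteq\IrrG$ at the cost of the measure-rescaling bookkeeping. Your verifications of the hypotheses of Proposition \ref{stw8} (square-summability $\sum_k\|\xi^{X,k}\|^2=\int_X\dim\md\mu_X$ and the reconstruction of $\chi^{\int}(\pi_X)$ as the weighted sum) are exactly what is needed, so I see no gap.
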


\begin{proof}
Let $\pi_X$ be an integral representation in $\Rep_{f,<+\infty}^{\int}(\GG)$. We have
\[
\dim\varpi^{\pi_X}\sum_{n=1}^{\infty} \chi_{\F^n_{\pi_X}}\in\LL^1(\IrrG)
\]
and
\[
\begin{split}
\chi^{\int}(\int_X^{\oplus}\pi_x\md\mu_X(x))&=
\sum_{n=1}^{\infty} \int_{\IrrG} 
(\varpi^{\pi_X}\chi_{\F^n_{\pi_X}})(\pi)\chi(U^\pi)
\md\mu(\pi)\\
&=
\int_{\IrrG} 
\bigl(\varpi^{\pi_X}\sum_{n=1}^{\infty} \chi_{\F^n_{\pi_X}}\bigr)(\pi)\chi(U^\pi)
\md\mu(\pi),
\end{split}
\]
We can express the integral character using equivalent Plancherel measure: define $\tilde{\mu}=(\chi_{\IrrG\setminus \F^1_{\pi_X}}+\varpi^{\pi_X} (\sum_{n=1}^{\infty} \chi_{\F^n_{\pi_X}}))\mu$, then we have
\[
\chi^{\int}(\int_X^{\oplus}\pi_x\md\mu_X(x))=
\int_{\F^1_{\pi_X}} \chi(U^{\pi}) \md\tilde{\mu}(\pi)=
\chi^{\int}(\int_{\F^1_{\pi_X}}^{\oplus}\pi \md\tilde{\mu}_{\F^1_{\pi_X}}(\pi)).
\]
Operators $\tilde{E}_\pi$ corresponding to this Plancherel measure are given by
\[
\tilde{E}_\pi^2=\varpi^{\pi_X}(\pi)(\sum_{n=1}^{\infty} \chi_{\F^n_{\pi_X}}(\pi)) E^2_\pi\quad(\pi\in\F^1_{\pi_X}).
\]
Therefore, if the integral character is in $\mf{N}_\psi$, Proposition \ref{stw9} gives us
\[
+\infty>\int_{\F^1_{\pi_X}} \Tr(\tilde{E}^2_\pi)\md\tilde{\mu}(\pi)=
\int_{\IrrG}\varpi^{\pi_X}(\pi)^2
(\sum_{n=1}^{\infty} \chi_{\F^n_{\pi_X}}(\pi))^2 \Tr(E^2_\pi)\md\mu(\pi),
\]
which proves the implication $\Leftarrow$.\\
Assume now that $\Tr(E^2_\bullet)^{\frac{1}{2}}\varpi^{\pi_X}\sum_{n=1}^{\infty} \chi_{\F^n_{\pi_X}}\in \LL^2(\IrrG)$. Let $\{V_p\}_{p\in\NN}$ be an increasing family of measurable subsets of  $\IrrG$ which have finite measure and moreover
\[
\sup_{\pi\in V_p} \bigl(
\dim(\pi)+
\Tr(E^2_\pi)^{\frac{1}{2}}\varpi^{\pi_X}(\pi)\sum_{n=1}^{\infty} \chi_{\F^n_{\pi_X}}(\pi)+
\Tr(E^2_\pi)^{-1}\bigr)\le p\quad(p\in \NN).
\]
For any $p\in\NN$ and a measurable subset $V\subseteq V_p$, the function $\chi_{V} \Tr(E^2_\bullet)^{\frac{1}{2}}\varpi^{\pi_X}\sum_{n=1}^{\infty} \chi_{\F^n_{\pi_X}}$ belongs to the original domain of $T$, $\mc{D}(T)$. Let $\{(\xi^n_\kappa)_{\kappa\in\IrrG}\}_{n\in\NN}$ be a field of orthonormal bases. For any $p'>p$ we have
\begin{equation}\label{eq13}
\begin{split}
&\quad\;\int_{\IrrG}\chi_{V_{p'}\setminus V_p} \bigl|\Tr(E^2_\bullet)^{\frac{1}{2}}
\varpi^{\pi_X}\sum_{n=1}^{\infty} \chi_{\F^n_{\pi_X}}\bigr|^2\md\mu\\
&=
\bigl\|T\bigl(
\chi_{V_{p'}\setminus V_p}
\Tr(E^2_\bullet)^{\frac{1}{2}}\varpi^{\pi_X}\sum_{n=1}^{\infty} \chi_{\F^n_{\pi_X}}
\bigr)\bigr\|^2\\
&=\bigl\|
\Lambda_{\psi}\bigl(
\int_{\IrrG} \chi_{V_{p'}\setminus V_p}(\pi)
(\varpi^{\pi_X}\sum_{n=1}^{\infty} \chi_{\F^n_{\pi_X}})(\pi)
\chi(U^\pi)\md\mu(\pi)\bigr)
\bigr\|^2.
\end{split}
\end{equation}
Equation \eqref{eq13} shows that
\[
\bigl(\Lambda_{\psi}\bigl(
\int_{\IrrG}(\chi_{V_p}
\varpi^{\pi_X}\sum_{n=1}^{\infty} \chi_{\F^n_{\pi_X}})(\pi)
\chi(U^\pi)\md\mu(\pi)\bigr)\bigr)_{p\in\NN}
\]
is a Cauchy sequence in $\LdG$, therefore since $\Lambda_{\psi}$ is a closed map we have
\[
\chi^{\int}(\int_X^{\oplus}\pi_x\md\mu_X(x))=\int_{\IrrG}
\bigl(\varpi^{\pi_X}\sum_{n=1}^{\infty} \chi_{\F^n_{\pi_X}}\bigr)(\pi)\chi(U^\pi)\md\mu(\pi)\in\mf{N}_\psi
\]
and
\[
\begin{split}
&\quad\;\Lambda_{\psi}\bigl(\chi^{\int}(\int_X^{\oplus} \pi_x \md\mu_X(x))\bigr)=
\Lambda_{\psi}\bigl(\int_{\IrrG}
\bigl(\varpi^{\pi_X}\sum_{n=1}^{\infty} \chi_{\F^n_{\pi_X}}\bigr)(\pi)\chi(U^\pi)\md\mu(\pi)\bigr)\\
&=
\lim_{p\to\infty}
\Lambda_{\psi}\bigl(\int_{\IrrG}
\bigl(\chi_{V_p}\varpi^{\pi_X}\sum_{n=1}^{\infty} \chi_{\F^n_{\pi_X}}\bigr)(\pi)\chi(U^\pi)\md\mu(\pi)\bigr)\\
&=\lim_{p\to\infty}
T(\chi_{V_p}\Tr(E^2_\bullet)^{\frac{1}{2}}
\varpi^{\pi_X}\sum_{n=1}^{\infty} \chi_{\F^n_{\pi_X}})=
T(\Tr(E^2_\bullet)^{\frac{1}{2}}\varpi^{\pi_X}\sum_{n=1}^{\infty} \chi_{\F^n_{\pi_X}}).
\end{split}
\]
This also shows the inclusion
\[
\mc{H}\supseteq\bigl\{
\Lambda_{\psi}\bigl(
\chi^{\int}(\int_X^{\oplus}\pi_x\md\mu_X(x))\bigr)\,\big|\,
\pi_X\in \Rep^{\int}_{f,<+\infty}(\GG)\,:\,
\Tr(E^2_\bullet)^{\frac{1}{2}}
\varpi^{\pi_X}\sum_{n=1}^{\infty} \chi_{\F^{n}_{\pi_X}}\in \LL^2(\IrrG)\bigr\}.
\]
By the definition of $\mc{H}$ we have $\mc{H}=T(\LL^2(\IrrG))$. The Hilbert space $\LL^2(\IrrG)$ is generated by functions $\Tr(E^2_\bullet)^{\frac{1}{2}}\chi_{\Omega}$, where $\Omega$ is a measurable, finite measure subset which satisfies $\sup_{\pi\in\Omega}\Tr(E^2_\pi)<+\infty$ and contains representations of dimension $\le p$ for some $p\in \NN$. Then of course $\int_\Omega \dim\md\mu<+\infty$. It is also clear that
\[
\Lambda_{\whG}\gecq\sigma_\Omega=\int_\Omega^{\oplus}\kappa\md\mu_\Omega(\kappa)
\]
and $\F^n_{\sigma_\Omega}=\emptyset$ for $n\ge 2$, hence $\sigma_{\Omega}\in\Rep^{\int}_{f,<+\infty}(\GG)$. We also have
\[
\Tr(E^2_\bullet)^{\frac{1}{2}}\varpi^{\sigma_{\Omega}}\; \sum_{n=1}^{\infty} \chi_{\F^n_{\sigma_\Omega}}=\Tr(E^2_\bullet)^{\frac{1}{2}}\chi_{\Omega}\in\LL^2(\IrrG).
\]
Since the function $\Tr(E^2_\bullet)^{\frac{1}{2}}\chi_\Omega$ is in the original domain of $T$, we have
\[
T(\Tr(E^2_\bullet)^{\frac{1}{2}}\chi_\Omega)=\Lambda_{\psi}\bigl(\int_{\IrrG}
\chi_{\Omega}(\kappa) \chi(U^\kappa)\md\mu(\kappa)\bigr)=
\Lambda_{\psi}\bigl(
\chi^{\int}(\int_\Omega^{\oplus}\kappa\md\mu_\Omega(\kappa))\bigr),
\]
which ends the proof.
\end{proof} 

The above proposition says that the subspace $\mc{H}\subseteq \LdG$ is roughly speaking the Hilbert space of square integrable integral characters. The next lemma says that the \cst-algebra $\mf{A}$ preserves this subspace.

\begin{lemma}\label{lemat23}
We have $\mf{A}\mc{H}\subseteq\mc{H}$.
\end{lemma}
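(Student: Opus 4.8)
The plan is to show that $\mf{A}\mc{H}\subseteq\mc{H}$ by reducing to the dense generating sets on both factors. Since $\mf{A}=\ov{\mscr{A}}^{\|\cdot\|}$ and $\mc{H}$ is closed, it suffices to prove $a\,\xi\in\mc{H}$ for all $a\in\mscr{A}$ and all $\xi$ in a dense subset of $\mc{H}$. By Proposition \ref{stw1} the vectors $\Lambda_{\psi}(\chi^{\int}(\int_\Omega^{\oplus}\kappa\md\mu_\Omega(\kappa)))$ with $\Omega$ of finite measure, bounded dimension and $\sup_{\pi\in\Omega}\Tr(E^2_\pi)<+\infty$ span a dense subspace of $\mc{H}$; equivalently these are exactly the $T(\Tr(E^2_\bullet)^{\frac{1}{2}}\chi_\Omega)$ for $\Tr(E^2_\bullet)^{\frac{1}{2}}\chi_\Omega\in\mc{F}$. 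So the task is to understand how a single integral character $\chi^{\int}(\int_{X_k}^{\oplus}\pi_x\md\mu_{X_k}(x))$ (with $\pi_{X_k}\in\Rep^{\int}_{q,<+\infty}(\GG)$) acts on such a vector, and by linearity this handles every element of $\mscr{A}$.

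First I would reduce the action of a general integral character to that of the elementary convolution operators $\mc{L}_\kappa$. Recall from Section \ref{secconv} and Corollary \ref{wniosek2} that for a subset $\Omega'$ with $\int_{\Omega'}\dim\md\mu<+\infty$ and $\nu=\dim\chi_{\Omega'}$, the operator $\mc{L}_\nu=\int_{\IrrG}\tfrac{\nu(\kappa)}{\dim(\kappa)}\mc{L}_\kappa\md\mu(\kappa)$ satisfies $T\mc{L}_\nu f=\chi^{\int}(\int_{\Omega'}^{\oplus}\pi\md\mu_{\Omega'}(\pi))\,Tf$ for all $f\in\LL^2(\IrrG)$. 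Thus multiplication by $\chi^{\int}(\int_{\Omega'}^{\oplus}\pi\md\mu_{\Omega'}(\pi))$ on $\mc{H}=T(\LL^2(\IrrG))$ is implemented via $T\mc{L}_\nu T^*$, and in particular it maps $\mc{H}$ into $\mc{H}$. Since a general $\pi_{X_k}\in\Rep^{\int}_{q,<+\infty}(\GG)$ need not be of the form $\sigma_{\Omega'}$, I would pass to its decomposition: using $\pi_{X_k}\lec_q\Lambda_{\whG}$ and finiteness of $\sum_n\chi_{\F^n_{\pi_{X_k}}}$ (or directly Proposition \ref{treq} as in the discussion opening Section \ref{cstA}), the integral character equals $\chi^{\int}(\int_{\F^1_{\pi_{X_k}}}^{\oplus}\pi\md\tilde{\mu}_{\F^1_{\pi_{X_k}}}(\pi))$ for the rescaled measure $\tilde\mu=\varpi^{\pi_{X_k}}\mu$. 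Working with the equivalent Plancherel measure $\tilde\mu$ (legitimate by Proposition \ref{stw14}), this is again an integral character of the form $\chi^{\int}(\int_{\Omega'}^{\oplus}\pi\md\tilde\mu_{\Omega'}(\pi))$, so Corollary \ref{wniosek2} applies verbatim and shows $\mc{H}$ is preserved.

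Concretely, for $\xi=\Lambda_{\psi}(\chi^{\int}(\int_\Omega^{\oplus}\kappa\md\mu_\Omega(\kappa)))\in\mc{H}$ with $\Omega$ as above, and $a=\chi^{\int}(\int_{\Omega'}^{\oplus}\pi\md\mu_{\Omega'}(\pi))$, I would write $\xi=Tf$ with $f=\Tr(E^2_\bullet)^{\frac{1}{2}}\chi_\Omega$ and compute $a\,\xi=a\,Tf=T\mc{L}_\nu f\in T(\LL^2(\IrrG))=\mc{H}$. Extending by linearity gives $a'\,\xi\in\mc{H}$ for every $a'\in\mscr{A}$, and extending by norm-continuity (using that $\mc{H}$ is closed and that each such $a'$ is a bounded operator on $\LdG$) gives $a''\,\xi\in\mc{H}$ for every $a''\in\mf{A}$. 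A final density step over $\xi$ in the dense span described by Proposition \ref{stw1} then yields $\mf{A}\mc{H}\subseteq\mc{H}$.

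The main obstacle I anticipate is bookkeeping around the measure rescaling: Corollary \ref{wniosek2} is stated for integral characters $\chi^{\int}(\int_\Omega^{\oplus}\pi\md\mu_\Omega(\pi))$ attached to a \emph{subset} $\Omega$ of $\IrrG$ relative to the fixed Plancherel measure, whereas a general element of $\mscr{A}$ is a character attached to an abstract integral representation $\pi_{X_k}$. The care needed is to confirm that after replacing $\mu$ by the equivalent $\tilde\mu=\varpi^{\pi_{X_k}}\mu$ (invoking Proposition \ref{treq} for the existence of $\varpi^{\pi_{X_k}}$ and Proposition \ref{stw14} for the transport of all Plancherel data), the relevant operators $\mc{L}_\nu$ and the isometry $T$ intertwine correctly, and that the finiteness hypotheses $\int_{\Omega'}\dim\md\tilde\mu<+\infty$ hold — these follow from $\pi_{X_k}\in\Rep^{\int}_{q,<+\infty}(\GG)$ and equation \eqref{eq10}. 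Everything else is the routine density-and-continuity argument.
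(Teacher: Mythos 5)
There is a genuine gap in your reduction. Your whole argument funnels a general element of $\mscr{A}$ through Proposition \ref{treq} (existence of the rescaling function $\varpi^{\pi_{X_k}}$) and then Corollary \ref{wniosek2}, but Proposition \ref{treq} requires $\sum_{n=1}^{\infty}\chi_{\F^n_{\pi_{X_k}}}<+\infty$ pointwise, i.e. $\pi_{X_k}\in\Rep^{\int}_{f,<+\infty}(\GG)$. The algebra $\mscr{A}$, however, is defined using the strictly larger class $\Rep^{\int}_{q,<+\infty}(\GG)$: quasi-containment in $\Lambda_{\whG}$ plus finite integral dimension do \emph{not} force finite multiplicities. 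For a concrete failure, let $\GG$ be compact, $\kappa_0$ an irreducible representation (so $\{\kappa_0\}$ has positive Plancherel measure), $X=\NN$ with $\mu_X(\{n\})=2^{-n}$ and $\pi_n=\kappa_0$ for every $n$. Then $\int_X\dim\md\mu_X=\dim(\kappa_0)<+\infty$ and $\pi_X\lec_q\Lambda_{\whG}$, so $\chi^{\int}(\pi_X)\in\mscr{A}$, yet $\pi_X\simeq\aleph_0\cdot\kappa_0$, hence $\E^{\aleph_0}_{\pi_X}\neq\emptyset$ and $\sum_n\chi_{\F^n_{\pi_X}}(\kappa_0)=+\infty$; no function $\varpi^{\pi_X}$ exists and your reduction to Corollary \ref{wniosek2} never gets started. (By contrast, the bookkeeping issue you flag yourself is harmless: for an equivalent Plancherel measure $\tilde\mu=g\mu$ one has $\tilde{E}^2_\pi=g(\pi)E^2_\pi$ by Proposition \ref{stw14}, so $T_{\tilde\mu}=T\circ(f\mapsto g^{\frac{1}{2}}f)$ and the image subspace $\mc{H}$ is unchanged.)

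The paper's proof avoids the global decomposition entirely and works fiberwise, which is exactly what rescues the general case: since $\pi_X\lec_q\Lambda_{\whG}$ implies $\pi_X\lec\Lambda_{\whG}$, Lemma \ref{lemat22} gives $\pi_x\lec\Lambda_{\whG}$ for almost every $x$, and Theorem \ref{tw1} — which applies to \emph{any} finite dimensional nondegenerate representation weakly contained in $\Lambda_{\whG}$, not just to sets of irreducibles — yields $\chi(U^{\pi_x})T=T\mc{L}_{\pi_x}$ for almost all $x$. Consequently, for $f\in\LL^2(\IrrG)$,
\[
\chi^{\int}\bigl(\int_X^{\oplus}\pi_x\md\mu_X(x)\bigr)\,T(f)=
\int_X \chi(U^{\pi_x})T(f)\md\mu_X(x)=
\int_X T\mc{L}_{\pi_x}(f)\md\mu_X(x),
\]
a weak integral whose integrand takes values in the closed (hence weakly closed) subspace $\mc{H}$, so the integral lies in $\mc{H}$; linear combinations and norm limits then give $\mf{A}\mc{H}\subseteq\mc{H}$. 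If you wish to keep your route, you must first restrict to $\Rep^{\int}_{f,<+\infty}(\GG)$ and then supply a separate argument for representations with infinite multiplicities — at which point the fiberwise argument is both shorter and strictly more general.
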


\begin{proof}
Let $\chi^{\int}(\int_X^{\oplus} \pi_x \md\mu_X(x))=\int_{X}\chi(U^{\pi_x})\md\mu_X(x)\in \mf{A}$. Since $\pi_X\lecq \Lambda_{\whG}$, Lemma \ref{lemat22} gives us $\pi_x\lec \Lambda_{\whG}$ for almost all $x\in X$, therefore Theorem \ref{tw1} implies that $\chi(U^{\pi_x})T=T\mc{L}_{\pi_x}$ for almost all $x\in X$. Consequently, for $f\in \LL^2(\IrrG,\mu)$ we have
\[
\chi^{\int}(\int_X^{\oplus} \pi_x \md\mu_X(x)) T(f)=
\int_{X} \chi(U^{\pi_x})T(f)\md\mu_X(x)=
\int_X T \mc{L}_{\pi_x} (f) \md\mu_X(x)\in\mc{H}
\]
(integral converges in the weak topology on $\LdG$, image of the integrand lies in $\mc{H}$ therefore the integral belongs to $\mc{H}$ due to Riesz theorem). Taking limits and linear combinations gives us the claim.
\end{proof}

The above lemma tells us that the integral character of any integral representation in $\Rep^{\int}_{q,<+\infty}(\GG)$ preserves the subspace $\mc{H}$, hence we can consider the restricted operator. The next result provides us with information on how this operation changes its spectrum. Its proof is elementary hence will be ommited.

\begin{lemma}\label{lemat12}
Assume that all irreducible representations of $\GG$ are admissible. For any element $a\in \mf{A}$ we have an inclusion of spectra: $\sigma_{\Linf}(a)\supseteq\sigma_{\B(\mc{H})}(a|_{\mc{H}})$.
\end{lemma}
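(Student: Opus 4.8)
The plan is to exploit the admissibility hypothesis to upgrade the mere invariance of $\mc{H}$ under $\mf{A}$ (Lemma \ref{lemat23}) into the statement that $\mc{H}$ \emph{reduces} every $a\in\mf{A}$. Indeed, as recorded in the construction of $\mf{A}$, once all irreducible representations of $\GG$ are admissible the set $\mscr{A}$ is a $\star$-algebra and $\mf{A}$ is a \cst-algebra; in particular $a\in\mf{A}$ forces $a^*\in\mf{A}$. Applying Lemma \ref{lemat23} to both $a$ and $a^*$ then gives $a\mc{H}\subseteq\mc{H}$ \emph{and} $a^*\mc{H}\subseteq\mc{H}$, which is the extra input the non-admissible case lacks.

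First I would record that $a^*\mc{H}\subseteq\mc{H}$ forces $a\mc{H}^\perp\subseteq\mc{H}^\perp$: for $\xi\in\mc{H}^\perp$ and $\eta\in\mc{H}$ we have $\ismaa{\eta}{a\xi}=\ismaa{a^*\eta}{\xi}=0$, since $a^*\eta\in\mc{H}$ while $\xi\perp\mc{H}$. Hence both $\mc{H}$ and $\mc{H}^\perp$ are $a$-invariant, i.e. the orthogonal projection $P$ onto $\mc{H}$ commutes with $a$. With respect to $\LdG=\mc{H}\oplus\mc{H}^\perp$ the operator is therefore block diagonal, $a=(a|_{\mc{H}})\oplus(a|_{\mc{H}^\perp})$.

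Next I would compute the spectrum in $\B(\LdG)$. For a block-diagonal operator $a-\lambda\I$ is invertible in $\B(\LdG)$ precisely when both $a|_{\mc{H}}-\lambda$ and $a|_{\mc{H}^\perp}-\lambda$ are invertible on the respective summands, so $\sigma_{\B(\LdG)}(a)=\sigma_{\B(\mc{H})}(a|_{\mc{H}})\cup\sigma_{\B(\mc{H}^\perp)}(a|_{\mc{H}^\perp})\supseteq\sigma_{\B(\mc{H})}(a|_{\mc{H}})$. Finally, $\Linf$ is a von Neumann algebra acting on $\LdG$, hence a unital \cst-subalgebra of $\B(\LdG)$ sharing the unit $\I$; by spectral permanence for \cst-algebras $\sigma_{\Linf}(a)=\sigma_{\B(\LdG)}(a)$. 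Combining the two gives $\sigma_{\Linf}(a)\supseteq\sigma_{\B(\mc{H})}(a|_{\mc{H}})$.

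The only genuine point — and the reason the statement needs admissibility — is the passage from invariance to reducibility: without $a^*\in\mf{A}$ one obtains merely that $a$ is block upper-triangular relative to $\mc{H}\oplus\mc{H}^\perp$, and for an upper-triangular operator the inclusion of spectra can fail, since the off-diagonal corner $Pa(\I-P)$ may repair a failure of surjectivity of $a|_{\mc{H}}-\lambda$. I therefore expect no real obstacle beyond correctly invoking admissibility to close $\mf{A}$ under the adjoint; everything else is the routine direct-sum spectral computation together with spectral permanence.
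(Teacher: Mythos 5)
Your proof is correct. Note that the paper itself omits the proof of Lemma \ref{lemat12} (it is declared elementary and skipped), so there is nothing to compare against line by line; your argument supplies exactly the reasoning the authors evidently had in mind. The chain is sound: admissibility makes $\mscr{A}$, and hence $\mf{A}$, closed under the adjoint (this is established in the paper's construction of $\mf{A}$, via expressing $\chi^{\int}(\pi_X)^*$ as the integral character of the conjugate integral representation, which stays in $\Rep^{\int}_{q,<+\infty}(\GG)$); Lemma \ref{lemat23} applied to both $a$ and $a^*$ then shows that $\mc{H}$ reduces $a$, i.e. $a$ commutes with the projection onto $\mc{H}$; the direct-sum identity $\sigma_{\B(\LdG)}(a)=\sigma_{\B(\mc{H})}(a|_{\mc{H}})\cup\sigma_{\B(\mc{H}^{\perp})}(a|_{\mc{H}^{\perp}})$ (using that the inverse of an operator commuting with a projection also commutes with it) together with spectral permanence for the unital inclusion $\Linf\subseteq\B(\LdG)$ yields $\sigma_{\Linf}(a)=\sigma_{\B(\LdG)}(a)\supseteq\sigma_{\B(\mc{H})}(a|_{\mc{H}})$. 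Your closing remark is also the right diagnosis of where admissibility enters: mere invariance gives only a block upper-triangular form, for which the spectral inclusion can genuinely fail (e.g. the unilateral shift as the corner of the bilateral shift).
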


Define a $\star$-homomorphism $\phi$ as the restriction map:
\[
\phi\colon \mf{A}\in a \mapsto a|_{\mc{H}}\in \B(\mc{H}).
\]

\begin{lemma}
Assume that all irreducible representations of $\GG$ are admissible. Then $\phi$ is a faithful $\star$-homomorphism.
\end{lemma}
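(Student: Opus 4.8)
The map $\phi$ is plainly a $\star$-homomorphism: Lemma \ref{lemat23} gives $\mf{A}\mc{H}\subseteq\mc{H}$, so restriction to the closed invariant subspace $\mc{H}$ is multiplicative, and since all irreducible representations are admissible, $\mf{A}$ is a \cst-algebra (hence $\mscr{A},\mf{A}$ are $\star$-closed and $\phi$ preserves adjoints). As $\phi$ is a $\star$-homomorphism of \cst-algebras, faithfulness is equivalent to injectivity, so the plan is to take $a\in\mf{A}$ with $\phi(a)=a|_{\mc{H}}=0$ and prove $a=0$.

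The first step is to convert the vanishing of $a$ on $\mc{H}$ into a pointwise statement about characters. Recall that $T(\mc{F})$ is dense in $\mc{H}$, and that for $f=\Tr(E^2_\bullet)^{\frac{1}{2}}\chi_\Omega\in\mc{F}$ (so $\Omega$ has finite measure with $\dim$ and $\|E^2_\bullet\|$ bounded on it) the definition of $T$ gives $Tf=\Lambda_\psi(\chi^{\int}(\int_\Omega^\oplus\pi\md\mu_\Omega(\pi)))$. Writing $c_\Omega=\chi^{\int}(\int_\Omega^\oplus\pi\md\mu_\Omega(\pi))=\int_\Omega\chi(U^\pi)\md\mu(\pi)\in\mf{N}_\psi$, the hypothesis $a|_\mc{H}=0$ forces $a\,\Lambda_\psi(c_\Omega)=\Lambda_\psi(a c_\Omega)=0$, whence $a c_\Omega=0$ by faithfulness of $\psi$; equivalently $\int_\Omega a\chi(U^\pi)\md\mu(\pi)=0$. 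The sets $\Omega$ admissible here form a family rich enough to localize (every finite-measure set on which $\dim$ and $\|E^2_\bullet\|$ are bounded qualifies, and these exhaust $\IrrG$ up to null sets); testing against a countable separating family of functionals from $\Lj$ (available since $\LdG$ is separable) I would conclude that the $\Linf$-valued field $\pi\mapsto a\chi(U^\pi)$ vanishes, i.e. $a\chi(U^\pi)=0$ for $\mu$-almost every $\pi$.

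The second step turns this into $a\mf{A}=0$. Every integral character generating $\mscr{A}$ can be rewritten against the Plancherel measure: if $\pi_X\in\Rep^{\int}_{q,<+\infty}(\GG)$, then decomposing each finite-dimensional $\pi_x$ into irreducibles and using $\pi_X\lec_q\Lambda_{\whG}$ (so the resulting measure on $\IrrG$ is absolutely continuous with respect to $\mu$ and of finite total mass, bounded by $\dim^{\int}(\pi_X)$) yields $g\in\LL^1(\IrrG,\mu)$ with $\chi^{\int}(\int_X^\oplus\pi_x\md\mu_X(x))=\int_{\IrrG}\chi(U^\pi)g(\pi)\md\mu(\pi)$ (both sides \swot-convergent, the equality checked by slicing with $\omega\in\Lj$ and Fubini). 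Since left multiplication by the fixed operator $a$ commutes with the \swot-integral and $a\chi(U^\pi)=0$ almost everywhere, each such generator $b$ satisfies $ab=\int_{\IrrG}a\chi(U^\pi)g(\pi)\md\mu(\pi)=0$. Hence $a\mscr{A}=0$, and by norm-continuity $a\mf{A}=0$; taking $b=a^*\in\mf{A}$ gives $aa^*=0$, so $a=0$ and $\phi$ is faithful.

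The main obstacle will be the rigorous execution of the localization in the first step: deducing that the characters $a\chi(U^\pi)$ vanish almost everywhere from the vanishing of their integrals over the restricted class of admissible $\Omega$, which requires a careful weak-measurability/separability argument for the $\Linf$-valued field together with a check that this class of sets generates the measurable structure of $\IrrG$ up to $\mu$-null sets. The identity $\chi^{\int}(\int_X^\oplus\pi_x\md\mu_X(x))=\int_{\IrrG}\chi(U^\pi)g\md\mu$ also needs care, since the integral character is not invariant under unitary equivalence; but decomposing fiberwise into irreducibles (rather than invoking a global equivalence) sidesteps this, and quasi-containment is exactly what guarantees absolute continuity of the resulting density with respect to $\mu$.
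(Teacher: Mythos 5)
Your proof is correct and follows the same core strategy as the paper's: use $Tf=\Lambda_{\psi}(\chi^{\int}(\int_\Omega^{\oplus}\pi\md\mu_\Omega(\pi)))$ and faithfulness of $\psi$ to turn $a|_{\mc{H}}=0$ into $a\chi(U^\kappa)=0$ for $\mu$-almost every $\kappa$, and then kill all of $\mf{A}$. Two execution differences are worth noting. First, in the localization step the paper allows general bounded measurable $f$ supported in $\Omega$ and chooses $f(\kappa)=\ov{\is{\xi}{a\chi(U^\kappa)\zeta}}\chi_{\Omega}(\kappa)$, which yields $\int_{\Omega}|\is{\xi}{a\chi(U^{\kappa})\zeta}|^2\md\mu(\kappa)=0$ in one stroke; your indicator-functions-plus-countable-separating-family route is equally valid but more roundabout. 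Second, your endgame is a cleaner variant of the paper's: you prove $a\mscr{A}=0$ by left multiplication and then use $a^*\in\mf{A}$ (which is where you invoke admissibility) to get $aa^*=0$; the paper instead uses admissibility together with the conjugation-invariance of the Plancherel measure class (Proposition \ref{stw12} and Lemma \ref{lemat32}) to deduce $\chi(U^{\kappa})a^*=0$ almost everywhere, approximates $a$ within $\tfrac{1}{2}$ by an element $b\in\mscr{A}$, and derives the contradiction $\tfrac{1}{2}\le\|ba^*\|=0$; your version saves those two inputs. The one place where your argument is genuinely weaker is the rewriting $\chi^{\int}(\int_X^{\oplus}\pi_x\md\mu_X(x))=\int_{\IrrG}\chi(U^{\pi})g(\pi)\md\mu(\pi)$: your fiberwise decomposition of each $\pi_x$ into irreducibles requires a measurable selection of decompositions and a pushforward-with-multiplicity construction that you only sketch, and this is exactly the nontrivial content that the paper has already packaged at the start of Section \ref{cstA} via Proposition \ref{treq}, namely $\chi^{\int}(\pi_X)=\sum_{n=1}^{\infty}\int_{\F^n_{\pi_X}}\varpi^{\pi_X}(\kappa)\chi(U^{\kappa})\md\mu_{\F^n_{\pi_X}}(\kappa)$, i.e. $g=\varpi^{\pi_X}\sum_{n=1}^{\infty}\chi_{\F^n_{\pi_X}}$. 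Citing that identity instead of re-deriving it closes the gap you flagged and completes your proof.
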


\begin{proof}
Assume that there exists an operator $a\in \mf{A}$ in the kernel of $\phi$ with $\|a\|=1$. In particular this means that for any measurable subset $\Omega\subseteq\IrrG$ such that
\[
\mu(\Omega)+\sup_{\pi\in\Omega} (\Tr(E^2_\pi)+\dim(\pi))<+\infty,
\]
and any bounded measurable function $f$ with support in $\Omega$ we have
\[
aT(\Tr(E^2_\bullet)^{\frac{1}{2}}f)=a\Lambda_{\psi}(\int_\Omega f(\kappa)\chi(U^\kappa)\md\mu(\kappa))=
\Lambda_{\psi}(\int_\Omega f(\kappa)a\chi(U^\kappa)\md\mu(\kappa))=0.
\]
As the weight $\psi$ is faithful, the above equation gives us
\[
\int_{\Omega} f(\kappa) a \chi(U^\kappa)\md\mu(\kappa)=0.
\]
Choose vectors $\xi,\zeta\in\LdG$ and function $f(\kappa)=\ov{\is{\xi}{a\chi(U^\kappa)\zeta}}\chi_{\Omega}(\kappa)\,(\kappa\in\IrrG)$. We arrive at
\[
\bigl\langle\xi \big|
\bigl(\int_{\Omega} f(\kappa) a \chi(U^\kappa)\md\mu(\kappa)\bigr)
\zeta\bigr\rangle=
\int_{\Omega} |\is{\xi}{a\chi(U^{\kappa})\zeta}|^2\md\mu(\kappa)=0.
\]
Since $\xi,\zeta$, $\Omega$ were (more or less) arbitrary, we get $a\chi(U^\kappa)=0$ for almost all $\kappa\in \IrrG$. Consequently also $\chi(U^{\kappa})a^*=0$ for almost all $\kappa\in\IrrG$ (here we use the fact that $\kappa$ is admissible and that Plancherel measure is equivalent to the Plancherel measure composed with conjugation). Pick integral representations $\pi_{X_0},\dotsc,\pi_{X_3}\in 
\Rep^{\int}_{q,<+\infty}(\GG)$ such that
\[
\bigl\|a-\sum_{k=0}^{3} i^k
\chi^{\int}(\int_{X_k}^{\oplus}\pi_x \md\mu_{X_k}(x))\bigr\|\le \tfrac{1}{2}.
\]
We arrive at a contradiction:
\[
1-\tfrac{1}{2}\le
\bigl\|\bigl(\sum_{k=0}^{3} i^k
\chi^{\int}(\int_{X_k}^{\oplus}\pi_x \md\mu_{X_k}(x))\bigr)a^*\bigr\|=
\bigl\|
\sum_{k=0}^{3} i^k\sum_{n=1}^{\infty}
\int_{\F^n_{\pi_{X_k}}}  \varpi^{\pi_{X_k}}(\kappa)
\chi(U^{\kappa}) a^*
\md\mu_{\F^n_{\pi_{X_k}}}(\kappa)\bigr\|=0.
\]
\end{proof}

\begin{theorem}\label{tw3}
Let $\GG$ be a second countable locally compact quantum group. Assume moreover that $\GG$ is type I and has only finite dimensional irreducible representations all of which are admissible. Then $\GG$ is coamenable if and only if there exists a character $\delta\in \mf{A}^*$ such that
\[
\delta(\chi^{\int}(\int_{X}^{\oplus}\pi_x \md\mu_{X}(x)))=
\int_X \dim \md\mu_X
\]
for every integral representation $\pi_X\in\Rep^{\int}_{q,<+\infty}(\GG)$.
\end{theorem}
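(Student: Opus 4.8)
The plan is to prove both implications by relating the character $\delta$ to the vector states that witness coamenability, using the isometry $T$ and the operator picture developed in sections \ref{secsquare} and \ref{secconv}.

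For the implication that coamenability implies the existence of $\delta$, I would start from a sequence $(\xi_n)_{n\in\NN}$ of unit vectors in $\LdG$ as in Definition \ref{defcoamenable}, giving $\lim_{n\to\infty}(\omega_{\xi_n}\otimes\omega_\eta)\mrW=\|\eta\|^2$ for all $\eta$. The vector states $\omega_{\xi_n}$ restrict to states on $\mf{A}$; since $\mf{A}$ is a separable \cst-algebra, by passing to a weak$^*$ limit point of $(\omega_{\xi_n}|_{\mf{A}})_{n\in\NN}$ I obtain a state $\delta$ on $\mf{A}$. The computation already carried out in the proof of Theorem \ref{tw2} (the implication $1)\Rightarrow 2)$) shows that for a subset $\Omega$ with $\int_\Omega\dim\md\mu<+\infty$ one has $\ismaa{\xi_n}{\chi^{\int}(\int_{\Omega}^{\oplus}\pi\md\mu_\Omega(\pi))\xi_n}\to\int_\Omega\dim\md\mu$; the same estimate, applied to a general $\pi_X\in\Rep^{\int}_{q,<+\infty}(\GG)$ after rewriting its integral character as $\int_{\F^1_{\pi_X}}\chi(U^\kappa)\md\tilde\mu(\kappa)$ via $\varpi^{\pi_X}$, yields $\delta(\chi^{\int}(\int_X^{\oplus}\pi_x\md\mu_X(x)))=\int_X\dim\md\mu_X$. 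I then need to verify that $\delta$ is multiplicative: this should follow because $\delta$ attains the norm $\int_X\dim\md\mu_X=\dim^{\int}(\pi_X)$, which is the largest possible value of the spectrum of the character, so $\delta$ behaves like evaluation at a distinguished point; multiplicativity on products of integral characters (which are again integral characters by the tensor-product formula) can be checked by verifying $\delta(a^*a)=|\delta(a)|^2$ on the generating elements, using that a state saturating the top of the spectrum is automatically a character on the commutative \cst-algebra they generate.

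For the converse, given a character $\delta$ with $\delta(\chi^{\int}(\int_X^{\oplus}\pi_x\md\mu_X(x)))=\int_X\dim\md\mu_X$, I would produce approximate eigenvectors for the self-adjoint integral characters $\chi^{\int}(\int_\Omega^{\oplus}\pi\md\mu_\Omega(\pi))$ associated with symmetric subsets $\Omega$ (using a conjugation-invariant Plancherel measure, which exists by the remark following Proposition \ref{stw5}). Since $\delta$ is a character on the \cst-algebra $\mf{A}$ and sends the self-adjoint element $\chi^{\int}(\int_\Omega^{\oplus}\pi\md\mu_\Omega(\pi))$ to the top of its spectrum $\int_\Omega\dim\md\mu$, this value lies in $\sigma(\chi^{\int}(\int_\Omega^{\oplus}\pi\md\mu_\Omega(\pi)))$. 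This is exactly condition $3)$ of Theorem \ref{tw2}, which under the admissibility hypothesis gives coamenability via $3)\Rightarrow 1)$. Thus the converse reduces to checking that the existence of $\delta$ forces $\int_\Omega\dim\md\mu$ into the spectrum, for which I only need that a character of a \cst-algebra restricts to a point in the spectrum of each self-adjoint element and that the value $\int_\Omega\dim\md\mu$ equals the operator norm, hence lies in the spectrum.

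The main obstacle I expect is the multiplicativity of $\delta$ in the forward direction. The weak$^*$ limit of vector states produces only a state, and I must upgrade it to a character. The mechanism I would rely on is that $\delta$ simultaneously maximises $\int_X\dim\md\mu_X=\|\chi^{\int}(\int_X^{\oplus}\pi_x\md\mu_X(x))\|$ for every generating integral character (the norm bound $\|\chi^{\int}\|\le\int_X\dim\md\mu_X$ coming from $\|(\id\otimes\Tr_x)U^{\pi_x}\|\le\dim(\pi_x)$), so that on the commutative \cst-subalgebra generated by any single positive integral character $\delta$ is evaluation at the top of the spectrum, forcing $\delta(a^*a)=\delta(a^*)\delta(a)$; the delicate point is checking the Cauchy--Schwarz defect $\delta(a^*a)-|\delta(a)|^2$ vanishes uniformly enough to conclude multiplicativity on all of $\mf{A}$, which I would handle by expressing products $\chi^{\int}(\pi_X)\chi^{\int}(\gamma_Y)=\chi^{\int}(\pi_X\tp\gamma_Y)$ and noting that $\dim^{\int}$ is multiplicative under $\tp$, so $\delta$ respects products on generators and extends multiplicatively by norm-density.
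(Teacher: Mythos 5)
Your skeleton is the paper's: in the forward direction you extract $\delta$ as a weak$^*$ limit of the vector states $\omega_{\xi_n}|_{\mf{A}}$, and the multiplicativity mechanism you settle on in your final paragraph --- $\chi^{\int}(\pi_X)\chi^{\int}(\gamma_Y)=\chi^{\int}(\pi_X\tp\gamma_Y)$ combined with multiplicativity of $\dim^{\int}$ under $\tp$ --- is exactly the paper's argument. (Your intermediate justification via ``the commutative \cst-algebra they generate'' is unsound: $\mf{A}$ is not commutative in general, since integral characters need not commute --- see the case $\whG$ classical nonabelian, where $\chi(U^{\zeta})=\lambda_{\zeta^{-1}}$ --- and the ``top of the spectrum'' heuristic plays no role; but this detour is also unnecessary.) Your backward direction is correct and in fact marginally cleaner than the paper's: a character of a \cst-algebra maps each element into its spectrum (spectral permanence in the unitization, with the harmless $\{0\}$ caveat), which yields condition $3)$ of Theorem \ref{tw2} directly, whereas the paper re-extends $\delta$ to a state on $\Linf$ and reruns the approximate-eigenvector argument. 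One aside of yours reverses the logic, though: a priori one only knows $\|\chi^{\int}(\sigma_\Omega)\|\le\int_\Omega\dim\md\mu$; that equality holds is a consequence of the spectral membership, not an ingredient of it.

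The genuine gap is in the forward direction, at the step ``rewriting its integral character via $\varpi^{\pi_X}$.'' The function $\varpi^{\pi_X}$ is produced by Proposition \ref{treq}, whose hypothesis $\sum_{n=1}^{\infty}\chi_{\F^n_{\pi_X}}<+\infty$ pointwise is precisely the defining condition of $\Rep^{\int}_{f}(\GG)$; it simply does not exist for a general $\pi_X\in\Rep^{\int}_{q,<+\infty}(\GG)$, where multiplicities may be infinite. The two classes genuinely differ: take $X=\NN$ with $\mu_X(\{j\})=2^{-j}$ and $\pi_{x_j}=\pi_0$ a fixed irreducible with $\mu(\{\pi_0\})>0$ (e.g.\ $\GG$ compact); then $\int_X\dim\md\mu_X=\dim(\pi_0)<+\infty$ and $\pi_X\lecq\Lambda_{\whG}$, yet $\pi_X\simeq\aleph_0\cdot\pi_0$, so $\E^{\aleph_0}_{\pi_X}\neq\emptyset$ and $\pi_X\in\Rep^{\int}_{q,<+\infty}(\GG)\setminus\Rep^{\int}_{f,<+\infty}(\GG)$. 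This failure matters twice: the theorem asserts the identity $\delta(\chi^{\int}(\pi_X))=\int_X\dim\md\mu_X$ for \emph{all} of $\Rep^{\int}_{q,<+\infty}(\GG)$, and your multiplicativity argument must apply that identity to the tensor products $\pi_X\tp\gamma_Y$ of generators, which are only known to lie in $\Rep^{\int}_{q,<+\infty}(\GG)$ (closure of $\Rep^{\int}_{f,<+\infty}(\GG)$ under $\tp$ is nowhere established in the paper). The paper's proof avoids $\varpi^{\pi_X}$ altogether: it takes a unitary intertwiner $\mc{O}$ onto $\bigoplus_{n=1}^{\infty}\int_{\F^n_{\pi_X}}^{\oplus}\msf{H}_\pi\md\mu_{\F^n_{\pi_X}}(\pi)$ --- a decomposition available even with infinite multiplicities --- pushes a measurable field of orthonormal bases through $\mc{O}$ and $\mc{Q}_L$ to write $\omega(\chi^{\int}(\pi_X))=\sum_{n,k}(\omega\otimes\omega_{\eta_{n,k}})\mrW$ with $\sum_{n,k}\|\eta_{n,k}\|^2=\int_X\dim\md\mu_X$, and then lets coamenability act termwise, summing by dominated convergence. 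To repair your route you would have to either reproduce that direct computation, or prove that every element of $\Rep^{\int}_{q,<+\infty}(\GG)$ has the same integral character as some element of $\Rep^{\int}_{f,<+\infty}(\GG)$ --- an extension of Proposition \ref{treq} that the paper does not supply.
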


Character $\delta$ is closely related to the counit -- formally the counit satisfies the above equation. However, the counit (if exists) is defined only on $\mathrm{C}_0(\GG)$ not on the whole $\Linf$. The second difficulty stems from the fact that the above integral coverges in \swot, and the counit is only norm continuous.\\
Character $\delta$ will be defined (similarly as one can find the counit) by taking appropriate limit of vector functionals, given by vectors appearing in the definition of coamenability.

\begin{proof}
Assume that $\GG$ is coamenable. Choose any integral representation $\pi_X\in \Rep_{q,<+\infty}^{\int}(\GG)$. There exists a unitary intertwiner
\[
\mc{O}\colon \int_X^{\oplus}\msf{H}_x\md\mu_X(x)\rightarrow
\bigoplus_{n=1}^{\infty} \int_{\F^n_{\pi_X}}^{\oplus} \msf{H}_\pi \md\mu_{\F^n_{\pi_X}}(\pi).
\]
Let $\{(\xi^{k}_{x})_{x\in X}\}_{k=1}^{\infty}$ be a measurable field of orthonormal bases. We have
\[
+\infty>\int_X\dim\md\mu_X=\sum_{k=1}^{\infty} \int_X \|\xi^k_x\|^2 \md\mu_X(x),
\]
which means that for each $k\in\NN$ we can consider $\int_X^{\oplus}\xi^k_x \md\mu_X(x)\in\int_X^{\oplus}\msf{H}_x\md\mu_X(x)$ and
\[
\mc{O}\int_X^{\oplus}\xi^k_x \md\mu_X(x)=
\bigoplus_{n=1}^{\infty} \int_{\F^n_{\pi_X}}^{\oplus} \zeta^k_{n,\pi}\md\mu_{\F^n_{\pi_X}}(\pi)\quad(k\in\NN)
\]
for some vectors $\zeta^k_{n,\pi}$. By the definition of integral character the following holds:
\[
\chi^{\int}(\int_X^{\oplus} \pi_x\md\mu_X(x))=
\sum_{k=1}^{\infty}\int_X (\id\otimes\omega_{\xi^k_x})U^{\pi_x}\md\mu_X(x).
\]
Let us choose an arbitrary functional $\omega\in\Lj$. We have
\[\begin{split} 
&\quad\;\omega(\chi^{\int}(\int_X^{\oplus} \pi_x\md\mu_X(x)))=
\sum_{k=1}^{\infty}\int_X \omega_{\xi^k_x}((\omega\otimes\id)U^{\pi_x})\md\mu_X(x)\\
&=
\sum_{k=1}^{\infty}
\bigl\langle \int_X^{\oplus} \xi^{k}_{x}\md\mu_X(x)\big|
\bigl(\int_X^{\oplus} \pi_x(\lambda^u(\omega))\md\mu_X(x)\bigr)
\int_X^{\oplus} \xi^{k}_{x}\md\mu_X(x)\big\rangle\\
&=
\sum_{k=1}^{\infty}
\bigl\langle 
\mc{O}^*\bigoplus_{n=1}^{\infty}\int_{\F^n_{\pi_X}}^{\oplus} \zeta^{k}_{n,\pi}\md\mu_{\F^n_{\pi_X}}(x)\big|
\bigl(\int_X^{\oplus} \pi_x(\lambda^u(\omega))\md\mu_X(x)\bigr)\mc{O}^*
\bigoplus_{n=1}^{\infty}\int_{\F^n_{\pi_X}}^{\oplus} \zeta^{k}_{n,\pi}\md\mu_{\F^n_{\pi_X}}(x)\big\rangle\\
&=
\sum_{k=1}^{\infty}
\bigl\langle 
\bigoplus_{n=1}^{\infty}\int_{\F^n_{\pi_X}}^{\oplus} \zeta^{k}_{n,\pi}\md\mu_{\F^n_{\pi_X}}(x)\big|
\bigl(\bigoplus_{n=1}^{\infty}\int_{\F^n_{\pi_X}}^{\oplus} 
\pi(\lambda^u(\omega))\md\mu_{\F^n_{\pi_X}}(\pi)\bigr)
\bigoplus_{n=1}^{\infty}\int_{\F^n_{\pi_X}}^{\oplus} \zeta^{k}_{n,\pi}\md\mu_{\F^n_{\pi_X}}(x)\big\rangle\\
&=
\sum_{k=1}^{\infty}\sum_{n=1}^{\infty}
\int_{\F^n_{\pi_X}} \ismaa{\zeta^{k}_{n,\pi}}{
\pi(\lambda^u(\omega))\zeta^k_{n,\pi}}\md\mu_{\F^n_{\pi_X}}(\pi).
\end{split}\]
Let us extend fields $\{(\zeta^k_{n,\pi})_{\pi\in \F^n_{\pi_X}}\}_{k,n=1}^{\infty}$ by $0$ to the whole $\IrrG$. Since
\[
+\infty>\sum_{k=1}^{\infty}
\bigl\| \int_X^{\oplus} \xi^k_x\md\mu_X(x)\bigr\|^2=
\sum_{k=1}^{\infty}\bigl\|\bigoplus_{n=1}^{\infty} \int_{\F^n_{\pi_X}}^{\oplus}
\zeta^k_{n,\pi} \md\mu_{\F^n_{\pi_X}}(\pi)\bigr\|^2=
\sum_{k,n=1}^{\infty} \int_{\F^n_{\pi_X}} \|\zeta^k_{n,\pi}\|^2 \md\mu_{\F^n_{\pi_X}}(\pi),
\]
for any $n,k\in\NN$ the vector field $(\zeta^k_{n,\pi})_{\pi\in\IrrG}$ is square integrable. We can further write
\[\begin{split} 
&\quad\;\omega(\chi^{\int}(\int_X^{\oplus} \pi_x\md\mu_X(x)))=
\sum_{k=1}^{\infty} \sum_{n=1}^{\infty}
\int_{\IrrG}
\bigl\langle \zeta^k_{n,\pi}\otimes \tfrac{\ov{\zeta^k_{n,\pi}}}{\|\zeta^k_{n,\pi}\|} \big|
(\pi(\lambda^u(\omega))\otimes\I_{\ov{\msf{H}_\pi}})\;
\zeta^k_{n,\pi}\otimes \tfrac{\ov{\zeta^k_{n,\pi}}}{\|\zeta^k_{n,\pi}\|} \bigr\rangle
\md\mu(\pi)\\
&=
\sum_{k=1}^{\infty} \sum_{n=1}^{\infty}
\bigl\langle\int_{\IrrG}^{\oplus} \zeta^k_{n,\pi}\otimes \tfrac{\ov{\zeta^k_{n,\pi}}}{\|\zeta^k_{n,\pi}\|} \md\mu(\pi)\big|
\bigl(
\int_{\IrrG}^{\oplus} \pi(\lambda^u(\omega))\otimes\I_{\ov{\msf{H}_\pi}})\md\mu(\pi)\bigr)
\bigl(
\int_{\IrrG}^{\oplus} \zeta^k_{n,\pi}\otimes \tfrac{\ov{\zeta^k_{n,\pi}}}{\|\zeta^k_{n,\pi}\|} \md\mu(\pi)\bigr\rangle\\
&=
\sum_{k=1}^{\infty} \sum_{n=1}^{\infty}
\bigl\langle\int_{\IrrG}^{\oplus} \zeta^k_{n,\pi}\otimes \tfrac{\ov{\zeta^k_{n,\pi}}}{\|\zeta^k_{n,\pi}\|} \md\mu(\pi)\big|
\mc{Q}_L
(\omega\otimes\id)\mrW\mc{Q}_L^*
\bigl(
\int_{\IrrG}^{\oplus} \zeta^k_{n,\pi}\otimes \tfrac{\ov{\zeta^k_{n,\pi}}}{\|\zeta^k_{n,\pi}\|} \md\mu(\pi)\bigr\rangle.
\end{split}\]
Let us introduce vectors
\[
\eta_{n,k}=\mc{Q}_L^* \bigl(\int_{\IrrG}^{\oplus} 
\zeta^k_{n,\pi}\otimes \tfrac{\ov{\zeta^k_{n,\pi}}}{\|\zeta^k_{n,\pi}\|} \md\mu(\pi)\bigr)\in \LdG\quad(n,k\in\NN)
\]
These vector satisfy the following
\[
\sum_{n,k=1}^{\infty} \|\eta_{n,k}\|^2=
\sum_{n,k=1}^{\infty} \int_{\IrrG} \|\zeta^k_{n,\pi}\|^2\md\mu(\pi)=\sum_{k=1}^{\infty}
\bigl\| \int_X^{\oplus} \xi^k_x\md\mu_X(x)\bigr\|^2=
\int_{X}\dim\md\mu_X<+\infty.
\]
Above, we have proven the following equality
\[
\omega(\chi^{\int}(\int_X^{\oplus} \pi_x\md\mu_X(x)))=
\sum_{k=1}^{\infty}\sum_{n=1}^{\infty}
(\omega\otimes\omega_{\eta_{n,k}})\mrW
\]
for any $\omega\in\Lj$. In particular, we can substitute $\omega_m=\omega_{\xi_m}$, where $(\xi_m)_{m\in\NN}$ is a sequence of unit vectors from the definition of coamenability (Definition \ref{defcoamenable}). For any $n,k\in\NN$ we have
\[
(\omega_m\otimes\omega_{\eta_{n,k}})\mrW=
\ismaa{\xi_m\otimes\eta_{n,k}}{\mrW\;\xi_m\otimes\eta_{n,k}}
\xrightarrow[m\to\infty]{} \|\eta_{n,k}\|^2,
\]
therefore
\[
\omega_m(\chi^{\int}(\int_X^{\oplus} \pi_x\md\mu_X(x)))=
\sum_{k=1}^{\infty}\sum_{n=1}^{\infty}
(\omega_m\otimes\omega_{\eta_{n,k}})\mrW
\xrightarrow[m\to\infty]{}
\sum_{n,k=1}^{\infty} \|\eta_{n,k}\|^2=\int_X\dim\md\mu_X.
\]
We can interchange the signs of a limit and series. Indeed, for any $m,n,k\in\NN$ we have inequalities: $|(\omega_m\otimes\omega_{\eta_{n,k}})\mrW|\le \|\eta_{n,k}\|^2$ and $\sum_{n,k=1}^{\infty} \|\eta_{n,k}\|^2<+\infty$, therefore we can use the dominated convergence theorem.\\
Due to the $w^*$-compactness of the closed unit ball in $\mf{A}^*$ we can find a subnet $(\xi_{m_i})_{i\in I}$ such that the formula
\[
\delta(a)=\lim_{i\in I}\is{\xi_{m_i}}{a \xi_{m_i}}\quad(a\in \mf{A}).
\]
defines a bounded functional (with norm $\le 1$) on $\mf{A}$. We have proven that it satisfies
\[
\delta(\chi^{\int}(\int_X^{\oplus} \pi_x\md\mu_X(x)))=
\lim_{i\in I} \ismaa{\xi_{m_i}}{
\chi^{\int}(\int_X^{\oplus} \pi_x\md\mu_X(x)) \xi_{m_i}}
=\int_X\dim\md\mu_X
\]
for any $\pi_X\in \Rep_{q,<+\infty}^{\int}(\GG)$. Functional $\delta$ is a character (a nonzero $\star$-homomorphism $\mf{A}\rightarrow\CC$). Indeed, we know that it is a well defined nonzero bounded linear map. It is therefore enough to prove that it is $\star$-multiplicative on $\mscr{A}$. Let $\chi(\int_{X}^{\oplus}\pi_x\md\mu_{X}(x))\in\mscr{A}$. First, let us check that it preserves the star:
\[
\begin{split}
&\quad\;\delta\bigl(\bigl(
\chi^{\int}(\int_{X}^{\oplus}\pi_x\md\mu_{X}(x))\bigr)^*\bigr)=
\delta\bigl(\chi^{\int}(\int_{\ov{X}}^{\oplus}\pi_{\ov{x}}\md\mu_{\ov{X}}(x))\bigr)\\
&=
\int_{\ov{X}} \dim (\pi_{\ov x})
\md\mu_{\ov{X}}(\ov x)=
\int_{X} \dim (\pi_{ x})
\md\mu_{X}(x)\\
&=
\delta\bigl(\chi^{\int}(\int_{X}^{\oplus}\pi_{x}\md\mu_{X}(x))\bigr)=
\ov{\delta\bigl(
\chi^{\int}(\int_{X}^{\oplus}\pi_x\md\mu_{X}(x))\bigr)}.
\end{split}
\]
Now we check that $\delta$ is multiplicative: we have
\[
\begin{split}
&\quad\;\delta\bigl(
\chi^{\int}(\int_{X}^{\oplus}\pi_x\md\mu_{X}(x))\chi^{\int}(\int_{Y}^{\oplus}\sigma_y\md\mu_{Y}(y))=
\delta\bigl(
\chi^{\int}(\int_{X\times Y}^{\oplus}\pi_x\tp\sigma_y \md\mu_{X\times Y}((x,y))\bigr)\\
&=
\int_{X\times Y} \dim \md\mu_{X \times Y}=
\bigl(\int_X \dim \md\mu_X\bigr)
\bigl(\int_Y \dim \md\mu_Y\bigr)\\
&=
\delta\bigl(
\chi^{\int}(\int_{X}^{\oplus}\pi_x\md\mu_{X}(x))\bigr)
\delta\bigl(
\chi^{\int}(\int_{Y}^{\oplus}\sigma_y\md\mu_{Y}(y))\bigr)
\end{split}
\]
for any $\chi^{\int}(\int_{X}^{\oplus}\pi_x\md\mu_{X}(x)),\chi^{\int}(\int_{Y}^{\oplus}\sigma_y\md\mu_{Y}(y))\in\mscr{A}$, which proves the first implication.\\
Assume now that there exists a character $\delta\in \mf{A}^*$ as in the statement of the theorem. It is clear that $\|\delta\|=1$. Since $\mf{A}\subseteq\Linf$, we can extend $\delta$ to a state on $\Linf$ and then find a net of unit vectors $(\xi_i)_{i\in \mc{I}}$ in $\LdG$ such that
\[
\delta(a)=\lim_{i\in \mc{I}} \ismaa{\xi_i}{a\xi_i} \quad
(a\in \mf{A}).
\]
Take any Plancherel measure $\mu$ and a measurable subset $\Omega\subseteq \IrrG$ such that $\int_\Omega \dim\md\mu<+\infty$. Since
\[
\int_{\Omega}\dim \md\mu=
\delta\bigl(
\chi^{\int}(\int_{\Omega}\pi\md\mu_{\Omega}(\pi))\bigr)=
\lim_{i\in \mc{I}}
\ismaa{\xi_i}{\chi^{\int}(\int_{\Omega}\pi\md\mu_{\Omega}(\pi)) \xi_i},
\]
we have
\[
\int_\Omega\dim\md\mu\in \sigma\bigl(
\chi^{\int}(\int_{\Omega}\pi\md\mu_{\Omega}(\pi))\bigr),
\]
and the second point of Theorem \ref{tw2} is satisfied.
\end{proof}

Let us introduce a \cst-algebra with a unit
\[
\mf{B}=\phi(\mf{A})+\CC\I_{\mc{H}}\subseteq\B(\mc{H}).
\]
We know that $\mf{B}$ is a \cst-algebra because image of a \cst-algebra under a $\star$-homomorphism is a \cst-algebra, and a sum of a closed subspace and finite dimensional one is closed.

\begin{lemma}\label{lemat33}
Assume that all irreducible representations of $\GG$ are admissible. If $\GG$ is coamenable then
\[
\int_\Omega\dim \md\mu\in 
\sigma_{\mf{B}}( \chi^{\int}(\int_{\Omega}^{\oplus}
\pi \md\mu_{\Omega}(\pi))\big|_{\mc{H}} )=
\sigma_{\B(\mc{H})}( \chi^{\int}(\int_{\Omega}^{\oplus}
\pi \md\mu_{\Omega}(\pi))\big|_{\mc{H}} )
\]
for any Plancherel measure $\mu$ and any measurable subset $\Omega\subseteq\IrrG$ such that $\int_\Omega\dim\md\mu<+\infty$.
\end{lemma}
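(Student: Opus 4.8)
The plan is to bypass any direct spectral computation and instead feed the coamenability of $\GG$ into the character constructed in Theorem \ref{tw3}, transport that character through the faithful restriction $\phi$, and then invoke the elementary fact that a character of a unital C*-algebra sends an element into its own spectrum. The substantive analytic work has already been carried out in Theorems \ref{tw2} and \ref{tw3}; the present lemma is essentially a repackaging of that material.

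First I would observe that $\sigma_\Omega=\int_\Omega^\oplus \pi\md\mu_\Omega(\pi)$ belongs to $\Rep^{\int}_{q,<+\infty}(\GG)$: its integral dimension equals $\int_\Omega\dim\md\mu<+\infty$ by hypothesis, and $\sigma_\Omega\lecq\Lambda_{\whG}$ since the measure class of $\sigma_\Omega$ is $[\chi_\Omega\mu]\ll[\mu]$, the class of $\Lambda_{\whG}$ (Section \ref{secintrep}). Hence $b:=\chi^{\int}(\int_\Omega^\oplus\pi\md\mu_\Omega(\pi))\in\mscr{A}\subseteq\mf{A}$, and its restriction $\phi(b)=b|_{\mc{H}}$ lies in $\phi(\mf{A})\subseteq\mf{B}$. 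Because $\GG$ is coamenable and all its irreducible representations are admissible, Theorem \ref{tw3} furnishes a character $\delta\in\mf{A}^*$ with $\delta(b)=\int_\Omega\dim\md\mu=:R$.

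The key step is to descend $\delta$ to $\mf{B}$. By the lemma preceding Theorem \ref{tw3} the map $\phi\colon\mf{A}\to\B(\mc{H})$ is a faithful $\star$-homomorphism, hence an isometric isomorphism onto $\phi(\mf{A})$, so $\delta\circ\phi^{-1}$ is a well-defined character on the (possibly non-unital) C*-algebra $\phi(\mf{A})$. Since $\mf{B}=\phi(\mf{A})+\CC\I_{\mc{H}}$ is a unitization of $\phi(\mf{A})$ (and reduces to $\phi(\mf{A})$ when the latter already contains $\I_{\mc{H}}$), this character extends canonically to a character $\tilde\delta$ on $\mf{B}$ with $\tilde\delta(\I_{\mc{H}})=1$, and by construction $\tilde\delta(\phi(b))=\delta(b)=R$.

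Finally I would apply the standard observation that for a character $\tilde\delta$ of a unital C*-algebra and any element $c$ in it, the element $c-\tilde\delta(c)\I_{\mc{H}}$ lies in $\ker\tilde\delta$ and therefore cannot be invertible, so $\tilde\delta(c)\in\sigma(c)$. With $c=\phi(b)=\chi^{\int}(\int_\Omega^\oplus\pi\md\mu_\Omega(\pi))|_{\mc{H}}$ this yields $R\in\sigma_{\mf{B}}(\phi(b))$, while spectral permanence for the unital C*-subalgebra $\mf{B}\subseteq\B(\mc{H})$ gives $\sigma_{\mf{B}}(\phi(b))=\sigma_{\B(\mc{H})}(\phi(b))$, which is precisely the displayed statement. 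The only point demanding care is the descent of $\delta$ through $\phi$ together with the correct treatment of the unitization, which is exactly where faithfulness of $\phi$ (and hence admissibility of all irreducible representations) enters; everything else is routine C*-algebra theory.
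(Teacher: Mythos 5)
Your proof is correct and takes essentially the same route as the paper: both transport the character $\delta$ produced by Theorem \ref{tw3} through the faithful (hence isometric) restriction map $\phi$ to obtain a character on $\mf{B}$, treating the two cases $\I_{\mc{H}}\in\phi(\mf{A})$ and $\I_{\mc{H}}\notin\phi(\mf{A})$ exactly as you describe via the minimal unitization. The concluding steps — that a character of a unital \cst-algebra sends each element into its spectrum, and spectral permanence for the unital subalgebra $\mf{B}\subseteq\B(\mc{H})$ with the same unit — are precisely how the paper finishes as well.
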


\begin{proof}
Define a functional $\delta'$ on $\mf{B}$:
\[
\delta'\colon \mf{B}\ni \phi(a)\mapsto
\delta(a)\in \CC
\]
if $\I_{\mc{H}}\in \phi(\mf{A})$ and
\[
\delta'\colon \mf{B}\ni \phi(a) + z \I_{\mc{H}}\mapsto
\delta(a)+z\in \CC
\]
if\footnote{In this case $\mf{B}$ is the minimal unitization of $\phi(\mf{A})$.} $\I_{\mc{H}}\notin \phi(\mf{A})$, where $\delta$ is the character the existence of which was proven in Theorem \ref{tw3}. It is well defined since $\phi$ is an isometry. Functional $\delta'$ is a character. Indeed, in the first case it is clear, since $\delta$ is a character. In the second case $\delta'$ is a bounded linear functional. The fact that $\delta'$ is $\star$-multiplicative requires only simple calculation. We have defined a character $\delta'$ on $\mf{B}$ such that
\[
\delta'\bigl( \phi\bigl(\chi^{\int}(\int_{\Omega}^{\oplus}
\pi \md\mu_{\Omega}(\pi))\bigr)\bigr)=
\delta'( \chi^{\int}(\int_{\Omega}^{\oplus}
\pi \md\mu_{\Omega}(\pi))\big|_{\mc{H}})=
\int_{\Omega}\dim\md\mu.
\]
Consequently
\[
\int_\Omega\dim \md\mu\in 
\sigma_{\mf{B}}( \chi^{\int}(\int_{\Omega}^{\oplus}
\pi \md\mu_{\Omega}(\pi))\big|_{\mc{H}} )=
\sigma_{\B(\mc{H})}( \chi^{\int}(\int_{\Omega}^{\oplus}
\pi \md\mu_{\Omega}(\pi))\big|_{\mc{H}} ).
\]
The last equality follows from the fact that $\mf{B}\subseteq \B(\mc{H})$ is a unital \cst-subalgebra with the same unit.
\end{proof}

Recall that in equation \eqref{eq46} we have introduced operator $\mc{L}_\nu=\int_{\IrrG} \tfrac{\nu(\kappa)}{\dim(\kappa)} \mc{L}_{\kappa} \md\mu(\kappa)$. Having the above result, we can derive an analog of Theorem \ref{tw2} for operators $\mc{L}_\nu$.

\begin{theorem}\label{tw4}
Let $\GG$ be a second countable locally compact quantum group. Assume moreover that $\GG$ is type I and has only finite dimensional irreducible representations.
Consider the following conditions:
\begin{enumerate}[label=\arabic*)]
\item $\GG$ is coamenable.
\item Let $\mu$ be any Plancherel measure and $\Omega\subseteq\IrrG$ a measurable subset such that $\int_\Omega\dim\md\mu<+\infty$. Define $\nu=\dim \chi_\Omega$. Then $\int_\Omega\dim\md\mu\in \sigma(\mc{L}_\nu)$.
\item Let $\mu$ be a Plancherel measure which is invariant under conjugation. Let $\Omega\subseteq \IrrG$ be a symmetric subset such that $\nu=\dim\chi_\Omega\in\LL^1(\IrrG,\mu)$. Then $\int_{\Omega}\dim\md\mu\in \sigma(\mc{L}_\nu)$.
\end{enumerate}
We have $1)\Rightarrow 2)\Rightarrow 3)$. If all irreducible representations of $\GG$ are admissible then also $3)\Rightarrow 1)$.
\end{theorem}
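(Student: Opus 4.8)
The plan is to reduce the whole statement to the already-proved Theorem~\ref{tw2} by exploiting the isometry $T$. Fix a Plancherel measure $\mu$ and a measurable $\Omega\subseteq\IrrG$ with $a:=\int_\Omega\dim\md\mu<+\infty$, set $\nu=\dim\chi_\Omega$, and write $c=\chi^{\int}(\int_\Omega^{\oplus}\pi\md\mu_\Omega(\pi))$. By Corollary~\ref{wniosek2} we have $T\mc{L}_\nu f=c\,Tf$ for every $f\in\LL^2(\IrrG)$. Since $T$ is an isometry of $\LL^2(\IrrG)$ onto $\mc{H}$ (Lemma~\ref{lemat34}) and $c\,\mc{H}\subseteq\mc{H}$ (Lemma~\ref{lemat23}, because $c\in\mf{A}$), the operator $T$ implements a unitary equivalence between $\mc{L}_\nu\in\B(\LL^2(\IrrG))$ and the restriction $c|_{\mc{H}}\in\B(\mc{H})$. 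In particular $\sigma(\mc{L}_\nu)=\sigma_{\B(\mc{H})}(c|_{\mc{H}})$, so the theorem becomes a comparison between $\sigma_{\B(\mc{H})}(c|_{\mc{H}})$ and $\sigma_{\Linf}(c)$, where Theorem~\ref{tw2} already controls the latter.

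The implication $2)\Rightarrow 3)$ is then immediate: the hypotheses of $3)$ (an invariant $\mu$, a symmetric $\Omega$, and $\dim\chi_\Omega\in\LL^1(\IrrG)$, the last being the same as $a<+\infty$) form a special case of those of $2)$, and the conclusion $a\in\sigma(\mc{L}_\nu)$ is identical. For $3)\Rightarrow 1)$, under the admissibility assumption I would argue as follows: given an invariant $\mu$ and a symmetric $\Omega$ with $a<+\infty$, condition $3)$ together with the reduction gives $a\in\sigma_{\B(\mc{H})}(c|_{\mc{H}})$, and Lemma~\ref{lemat12} (which uses admissibility) yields $\sigma_{\B(\mc{H})}(c|_{\mc{H}})\subseteq\sigma_{\Linf}(c)$, hence $a\in\sigma_{\Linf}(c)=\sigma(c)$. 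This is exactly condition $3)$ of Theorem~\ref{tw2}, whose implication $3)\Rightarrow 1)$ (again invoking admissibility) then delivers coamenability of $\GG$. Thus both of these directions cost essentially nothing beyond the reduction.

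The genuinely hard direction is $1)\Rightarrow 2)$, which must be proved \emph{without} admissibility, so neither Lemma~\ref{lemat12} nor the character construction of Theorem~\ref{tw3} is available. Through the reduction it amounts to exhibiting, for a coamenable $\GG$, unit vectors $g_n\in\mc{H}$ with $\langle g_n\,|\,c\,g_n\rangle\to a$; since $\|c\|\le a$, the elementary bound $\|(c-a)g_n\|^2\le 2a^2-2a\,\mathrm{Re}\langle g_n\,|\,c\,g_n\rangle$ then forces $a\in\sigma_{\B(\mc{H})}(c|_{\mc{H}})=\sigma(\mc{L}_\nu)$. The naive idea is to feed the almost-invariant vectors of Definition~\ref{defcoamenable} into the computation from the proof of $1)\Rightarrow 2)$ in Theorem~\ref{tw2}, where one already shows $\langle\xi_n\,|\,c\,\xi_n\rangle\to a$. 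The difficulty is that those $\xi_n$ live in $\LdG$ and need not lie in $\mc{H}$, and simply compressing them by $TT^{*}$ is not legitimate: $c$ is not normal on $\LdG$, so it need not preserve $\mc{H}^{\perp}$, and the $\mc{H}$-component of $\xi_n$ could degenerate to zero.

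The plan for this step is therefore to rerun the expectation computation with test vectors that genuinely lie in $\mc{H}$. Concretely, I would use Proposition~\ref{stw9} and the orthogonality relations of Proposition~\ref{Casp} to describe elements of $\mc{H}$ as the $E_\pi$-weighted fields $\int_{\IrrG}^{\oplus}f(\pi)E_\pi\md\mu(\pi)$ under $\mc{Q}_R$, and then invoke the coamenability relation $\|\mrW(\xi_n\otimes\eta)-\xi_n\otimes\eta\|\to 0$ to drive the resulting integrals (over $\Omega$) up to $\int_\Omega\dim\md\mu$. The crux — and the step I expect to be the main obstacle — is arranging the almost-invariance to survive the passage into $\mc{H}$, i.e.\ producing almost-invariant vectors inside the subspace of square-integrable integral characters; this is the $\LL^2(\IrrG)$-level analogue of the counit-like functional built in Theorem~\ref{tw3}, but executed so as to dispense with the admissibility hypothesis.
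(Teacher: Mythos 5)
Your reduction via $T$ (Corollary \ref{wniosek2}, giving $\sigma(\mc{L}_\nu)=\sigma_{\B(\mc{H})}(c|_{\mc{H}})$ for $c=\chi^{\int}(\int_\Omega^{\oplus}\pi\md\mu_\Omega(\pi))$), your dismissal of $2)\Rightarrow 3)$ as a special case, and your derivation of $3)\Rightarrow 1)$ via Lemma \ref{lemat12} followed by Theorem \ref{tw2} coincide exactly with the paper's proof. The gap is in $1)\Rightarrow 2)$: you correctly diagnose why the approximate-eigenvector computation of Theorem \ref{tw2} does not transfer (the almost-invariant vectors $\xi_n$ need not lie in $\mc{H}$, and compression by $TT^*$ is illegitimate since $c$ need not preserve $\mc{H}^{\perp}$), but your replacement --- ``producing almost-invariant vectors inside the subspace of square-integrable integral characters'' --- is only announced, never carried out; you yourself flag it as the unresolved obstacle. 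As written, the proposal proves nothing in this direction.

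The paper closes this direction by a mechanism that avoids approximate eigenvectors in $\mc{H}$ altogether: a character argument. From coamenability, Theorem \ref{tw3} produces a character $\delta$ on the \cst-algebra $\mf{A}$ of integral characters --- obtained as a weak-$*$ limit of the vector states $\omega_{\xi_{m_i}}$ along a subnet of the almost-invariant vectors --- satisfying $\delta(\chi^{\int}(\pi_X))=\int_X\dim\md\mu_X$. Lemma \ref{lemat33} then transports $\delta$ through the faithful restriction homomorphism $\phi\colon\mf{A}\rightarrow\B(\mc{H})$ to a character $\delta'$ on the unital \cst-algebra $\mf{B}=\phi(\mf{A})+\CC\I_{\mc{H}}$; since $\delta'(c|_{\mc{H}})=\int_\Omega\dim\md\mu$, one gets $\int_\Omega\dim\md\mu\in\sigma_{\mf{B}}(c|_{\mc{H}})$, and spectral permanence ($\mf{B}\subseteq\B(\mc{H})$ unital with the same unit) upgrades this to $\sigma_{\B(\mc{H})}(c|_{\mc{H}})=\sigma(\mc{L}_\nu)$. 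One further point worth knowing: this route passes through Theorem \ref{tw3} and Lemma \ref{lemat33}, both of which assume admissibility, so the paper's own proof of $1)\Rightarrow 2)$ in fact invokes admissibility even though the statement of Theorem \ref{tw4} asserts that implication unconditionally. Your insistence on proving $1)\Rightarrow 2)$ without admissibility is therefore more ambitious than what the paper's argument actually delivers --- a legitimate goal, and your observation points at a real subtlety in the paper --- but to count as a proof you would have to construct the vectors $g_n\in\mc{H}$ (or an alternative certificate such as a character on a unital subalgebra of $\B(\mc{H})$ containing $c|_{\mc{H}}$), and the proposal does neither.
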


\begin{proof}
Assume that $\GG$ is coamenable, we have chosen an arbitrary Plancherel measure $\mu$ and a subset $\Omega\subseteq\IrrG$ such that $\nu=\dim\chi_\Omega\in\LL^1(\IrrG)$. Thanks to Lemma \ref{lemat33} we know that
\[
\int_\Omega\dim\md\mu\in \sigma_{\B(\mc{H})}(\chi^{\int}(\int_\Omega^{\oplus} \pi\md\mu_\Omega(\pi))\big|_{\mc{H}}).
\]
Corollary \ref{wniosek2} gives us a unitary equivalence
\[
T'\mc{L}_\nu =\chi^{\int}(\int_{\Omega}^{\oplus}\pi\md\mu_{\Omega}(\pi))\big|_{\mc{H}}T',
\]
($T'\colon\LL^2(\IrrG)\rightarrow T(\LL^2(\IrrG))=\mc{H}$ is an operator $T$ with restricted codomain, so that it is unitary not just isometric). As unitary equivalence does not change the spectrum, we have $1)\Rightarrow 2)$.\\
Implication $2)\Rightarrow 3)$ is trivial.\\
Assume that all irreducible representations of $\GG$ are admissible and point $3)$ holds. Let $\mu$ be a Plancherel measure which is invariant under conjugation, and let $\Omega$ be a symmetric measurable subset such that $\int_\Omega \dim\md\mu<+\infty$. Let $\nu=\dim \chi_\Omega$. Point $3)$ gives us $\int_\Omega \dim \md\mu\in \sigma(\mc{L}_\nu)$, and as before Corollary \ref{wniosek2} implies 
\[
\int_\Omega\dim\md\mu\in 
\sigma_{\B(\mc{H})}(\chi^{\int}(\int_\Omega^{\oplus} \pi\md\mu_\Omega(\pi))\big|_{\mc{H}}).
\]
Lemma \ref{lemat12} shows that
\[
\int_\Omega\dim\md\mu\in \sigma(\chi^{\int}(\int_\Omega^{\oplus} \pi\md\mu_\Omega(\pi))).
\]
Implication $3)\Rightarrow 1)$ from Theorem \ref{tw2} ends the proof.
\end{proof}

\section{Examples}
\subsection{$\GG$ compact}
Assume now that $\GG$ is a compact quantum group with at most countably many classes of irreducible representations. For the theory of compact quantum groups let us refer to \cite{NeshTu}. We have
\[
\CGDu=\CGD=\mathrm{c}_0(\whG)=\bigoplus_{\alpha\in\IrrG}^{c_0} \B(\msf{H}_\alpha),\quad
\Linfd=\ell^{\infty}(\whG)=\bigoplus_{\alpha\in\IrrG}^{\ell^{\infty}}
\B(\msf{H}_\alpha)
\]
and $\IrrG$ is a discrete measurable space. We declare all vector fields on $\IrrG$ to be measurable. Define operators $\mc{Q}_L,\mc{Q}_R$ to be
\[
\begin{split}
&\mc{Q}_L\colon \LL^2(\GG)\ni
\Lhvp\big((T_\alpha)_{\alpha\in\IrrG}\bigr)\mapsto
\int_{\IrrG}^{\oplus}T_\alpha\,{\uprho_\alpha}^{-\frac{1}{2}}\md\mu(\alpha) 
\in\int_{\IrrG}^{\oplus}\HS(\msf{H}_\alpha)\md\mu(\alpha),\\
&
\mc{Q}_R\colon \LL^2(\GG)\ni
\hat{J}J\Lambda_{\widehat{\psi}}\big((T_\alpha)_{\alpha\in\IrrG}\bigr)\mapsto
\int_{\IrrG}^{\oplus}T_\alpha\,{\uprho_\alpha}^{\frac{1}{2}}\md\mu(\alpha) 
\in\int_{\IrrG}^{\oplus}\HS(\msf{H}_\alpha)\md\mu(\alpha),
\end{split}
\]
where $(T_\alpha)_{\alpha\in\IrrG}$ belongs respectively: to $\mf{N}_{\hvp}$ in the case of $\mc{Q}_L$ and $\mf{N}_{\widehat{\psi}}$ in the case of $\mc{Q}_R$. Define positive invertible operators $D_\alpha,E_\alpha \in \B(\msf{H}_\alpha)$ and a measure $\mu$ on $\IrrG$ via
\[
D_\alpha={\uprho_\alpha}^{\frac{1}{2}},\;
E_\alpha={\uprho_\alpha}^{-\frac{1}{2}},\;
\mu(\{\alpha\})=d_\alpha\quad(\alpha\in\IrrG).
\]
where $d_\alpha$ is the quantum dimension of $\alpha$ (see \cite{NeshTu}).

\begin{proposition}\label{stw18}
The objects
\[
\mc{Q}_L,\;\mc{Q}_R,\;\mu,\; (D_\alpha)_{\alpha\in\IrrG},\;(E_\alpha)_{\alpha\in\IrrG}
\]
 satisfy all the conditions of theorems \ref{PlancherelL}, \ref{PlancherelR}.
\end{proposition}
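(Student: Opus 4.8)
The plan is to avoid re-deriving the orthogonality relations from scratch and instead to exploit the uniqueness statements, i.e.\ condition $7)$ of Theorems \ref{PlancherelL} and \ref{PlancherelR}. Since $\Linfd=\ell^{\infty}(\whG)=\bigoplus_{\alpha\in\IrrG}^{\ell^{\infty}}\B(\msf{H}_\alpha)$ is visibly a von Neumann algebra of type I and all irreducible representations are finite dimensional, the two theorems apply and produce \emph{some} Plancherel objects on the canonical measurable field $(\msf{H}_\alpha)_{\alpha\in\IrrG}$. I would then feed the explicit objects defined above into these theorems in the role of the primed data and verify the hypotheses $7.1)$--$7.3)$; condition $7)$ then yields at once that the explicit objects satisfy all of $1)$--$6)$, while $8)$ holds by our choice of the canonical field and $7)$ itself holds automatically, being the uniqueness assertion of the theorem applied to these objects. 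Throughout I would use the standard compact quantum group formulas $\hvp\big((T_\alpha)_\alpha\big)=\sum_\alpha d_\alpha\Tr(T_\alpha\uprho_\alpha^{-1})$ and $\hpsi\big((T_\alpha)_\alpha\big)=\sum_\alpha d_\alpha\Tr(T_\alpha\uprho_\alpha)$ for the Haar weights of $\whG$, together with $d_\alpha=\Tr(\uprho_\alpha)=\Tr(\uprho_\alpha^{-1})$ (see \cite{NeshTu}).

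First I would verify that $\mc{Q}_L$ is a well defined unitary. For $(T_\alpha)_\alpha\in\mf{N}_{\hvp}$ a direct computation gives $\|\int_{\IrrG}^{\oplus}T_\alpha\uprho_\alpha^{-1/2}\md\mu\|^2=\sum_\alpha d_\alpha\Tr(\uprho_\alpha^{-1/2}T_\alpha^{*}T_\alpha\uprho_\alpha^{-1/2})=\sum_\alpha d_\alpha\Tr(T_\alpha^{*}T_\alpha\uprho_\alpha^{-1})=\hvp((T_\alpha)^{*}(T_\alpha))$, so $\mc{Q}_L$ is isometric on the GNS image, and density of the finitely supported families gives surjectivity; the same computation with $\uprho_\alpha^{1/2}$ and the right Haar weight settles $\mc{Q}_R$, the prefactor $\hat{J}J$ being unitary. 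Condition $7.2)$ is then essentially immediate: since $\lambda(\alpha)$ is the family $\big((\alpha\otimes\id)U^\beta\big)_{\beta}\in\ell^{\infty}(\whG)$ and $D_\beta^{-1}=\uprho_\beta^{-1/2}$, the defining formula for $\mc{Q}_L$ gives $\mc{Q}_L\Lhvp(\lambda(\alpha))=\int_{\IrrG}^{\oplus}(\alpha\otimes\id)U^\beta D_\beta^{-1}\md\mu$, and analogously for $\mc{Q}_R$ with $E_\beta^{-1}=\uprho_\beta^{1/2}$. For the required subspace $X$ I would take the span of the $\lambda(\omega)$; its \wot-sequential closure contains $\I$ because the increasing finite sums of the central projections $\I_{\msf{H}_\alpha}$ converge to $\I$ and there are only countably many $\alpha$.

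Next comes condition $7.1)$, that is, the two intertwining relations of point $3)$. The first holds because, under the identification $\LdG\cong\int_{\IrrG}^{\oplus}\HS(\msf{H}_\alpha)\md\mu$, the operator $(\omega\otimes\id)\mrW=\lambda(\omega)$ acts by left multiplication $T_\alpha\mapsto\big((\omega\otimes\id)U^\alpha\big)T_\alpha$, which commutes with the right twist by $\uprho_\alpha^{\mp1/2}$; hence $\mc{Q}_L\lambda(\omega)=\big(\int_{\IrrG}^{\oplus}(\omega\otimes\id)U^\alpha\otimes\I_{\ov{\msf{H}_\alpha}}\md\mu\big)\mc{Q}_L$. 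The second relation, involving $\chi(\mrV)$ and the conjugate representations $\pi^{c}$, encodes the right regular action and is verified the same way once one recalls how $\hat{R}^{u}$ and $\jmath_{\msf{H}_\alpha}$ act on matrix coefficients. Condition $7.3)$ is straightforward: $\Linfd\cap\Linfd'$ is the centre $\bigoplus_\alpha\CC\I_{\msf{H}_\alpha}$, and a central element $(c_\alpha\I_{\msf{H}_\alpha})_\alpha$ is carried by $\mc{Q}_L$ to left multiplication by $c_\alpha$ on each $\HS(\msf{H}_\alpha)$, i.e.\ to $\int_{\IrrG}^{\oplus}c_\alpha\I_{\HS(\msf{H}_\alpha)}\md\mu$, which is diagonalisable. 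Invoking $7)$ of Theorem \ref{PlancherelL} (resp.\ \ref{PlancherelR}) then delivers $1)$--$6)$ for $\mc{Q}_L$ (resp.\ $\mc{Q}_R$).

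The part requiring genuine care is the modular bookkeeping: fixing the normalisation of $\uprho_\alpha$ and $d_\alpha$ so that the exponents $\pm\tfrac12$ in $\mc{Q}_L,\mc{Q}_R$ match the Haar-weight formulas, and confirming the precise action of $\hat{J}$, $J$, $\hat{R}^{u}$ and $\jmath_{\msf{H}_\alpha}$ on Peter--Weyl matrix coefficients needed for the unitarity of $\mc{Q}_R$ and for the second intertwining relation in point $3)$. All of this is standard in the theory of compact quantum groups, but keeping every convention consistent---rather than the verification of any single condition---is the main obstacle.
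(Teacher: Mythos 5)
Your overall strategy coincides with the paper's own proof: it also verifies that $\mc{Q}_L$ and $\mc{Q}_R$ are well-defined unitaries, checks conditions $7.1)$--$7.3)$, and then invokes the uniqueness clause $7)$ of Theorems \ref{PlancherelL}, \ref{PlancherelR} to conclude that properties $1)$--$6)$ hold. Your computations for unitarity, for condition $7.2)$, for the first intertwining relation (left multiplication commutes with the right twist by $\uprho_\alpha^{\mp 1/2}$), and for condition $7.3)$ are exactly those in the paper, and your choice of the subspace $X$ is legitimate (finite sums of the $\I_{\msf{H}_\alpha}$ lie in $\lin\lambda(\Lj)$ by the orthogonality relations and converge to $\I$, $\IrrG$ being countable).

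The one genuine soft spot is your claim that the second intertwining relation ``is verified the same way'' as the first. It is not, and an argument run literally that way would fail: under $\HS(\msf{H}_\alpha)\cong\msf{H}_\alpha\otimes\ov{\msf{H}_\alpha}$ the operator $\I_{\msf{H}_\alpha}\otimes\alpha^{c}(\lambda^u(\omega))$ acts by \emph{right} multiplication by $\alpha\circ\hat{R}(\lambda(\omega))$, and right multiplication does not commute with the right twist $T_\alpha\mapsto T_\alpha\uprho_\alpha^{-1/2}$ built into $\mc{Q}_L$ --- commuting $\alpha\circ\hat{R}(\lambda(\omega))$ past $\uprho_\alpha^{-1/2}$ is precisely where the nontrivial content sits. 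The paper needs two extra ingredients here: a separate lemma that $c_{00}(\whG)$ is a $\ssot\times\|\cdot\|$ core for $\Lhvp$ (so that it suffices to check the relation on $\Lhvp(c_{00}(\whG))$), and the modular identity $\nabla_{\hvp}\Lhvp(e^\alpha_{i,j})=\tfrac{(\uprho_\alpha)_j}{(\uprho_\alpha)_i}\Lhvp(e^\alpha_{i,j})$, from which it deduces that $\nabla_{\hvp}^{1/2}\uprho^{1/2}$ commutes with $\Linfd$ on that core; only then can the twisted right multiplication be rewritten as $\hat{J}(\cdots)^*\hat{J}$ acting on the GNS vector, matching $(\omega\otimes\id)\chi(\mrV)=\hat{J}\hat{R}((\omega\otimes\id)\mrW)^*\hat{J}$. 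The analogous step for $\mc{Q}_R$ uses $\nabla_{\hpsi}\Lambda_{\hpsi}(e^\alpha_{i,j})=\tfrac{(\uprho_\alpha)_i}{(\uprho_\alpha)_j}\Lambda_{\hpsi}(e^\alpha_{i,j})$. Your closing paragraph names the right objects ($\hat{J}$, $J$, $\hat{R}^u$, $\jmath_{\msf{H}_\alpha}$), so the plan is salvageable, but the needed step is a structural argument (core lemma plus the commutation of $\nabla_{\hvp}^{1/2}\uprho^{1/2}$ with $\Linfd$), not merely a recollection of how these maps act on matrix coefficients.
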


In order to prove this proposition, we will use point $7)$ of theorems \ref{PlancherelL}, \ref{PlancherelR}. First, let us check that $\mc{Q}_L$ is a well defined isometry:
\[
\begin{split}
&\bigl\|\int_{\IrrG}^{\oplus}
T_\alpha {\uprho_\alpha}^{-\frac{1}{2}}\md\mu(\alpha)\bigr\|^2=
\int_{\IrrG} \bigl\| T_\alpha {\uprho_\alpha}^{-\frac{1}{2}} \bigr\|^2_{\HS} \md\mu(\alpha)\\
&=
\sum_{\alpha\in\IrrG} d_\alpha \Tr({\uprho_\alpha}^{-1} {T_\alpha}^* T_\alpha)=
\| \Lhvp ((T_\alpha)_{\alpha\in\IrrG} )\|^2
\end{split}
\]
It is clear that the image of $\mc{Q}_L$ is dense, hence $\mc{Q}_L$ is a unitary operator. Analogous argument shows that $\mc{Q}_R$ also is unitary. \\
For $\omega\in\Lj$ such that $\lambda(\omega)\in\mf{N}_{\hvp}$ we have
\[
\mc{Q}_L \Lhvp(\lambda(\omega))=
\int_{\IrrG}^{\oplus} \alpha(\lambda(\omega))
{\uprho_\alpha}^{-\frac{1}{2}}\md\mu(\alpha)=
\int_{\IrrG}^{\oplus} (\omega\otimes\id)(U^{\alpha})
{\uprho_\alpha}^{-\frac{1}{2}}\md\mu(\alpha).
\]
Similarly, for $\omega\in\Lj$ such that $\lambda(\omega)\in \mf{N}_{\hpsi}$ we have
\[
\mc{Q}_R \hat{J}J \Lambda_{\hpsi}(\lambda(\omega))=
\int_{\IrrG}^{\oplus} (\omega\otimes\id)(U^{\alpha})\,\uprho_{\alpha}^{\frac{1}{2}}\md\mu(\alpha),
\]
which proves point $7.2)$.\\
Take $x\in\mf{N}_{\hvp}$ and $\omega\in\Lj$. We have
\[
\begin{split}
\mc{Q}_L ((\omega\otimes\id)\mrW) \Lhvp(x)
&=
\mc{Q}_L \Lhvp(((\omega\otimes\id)\mrW) x)=
\int_{\IrrG}^{\oplus} \alpha(((\omega\otimes\id)\mrW)x)
{\uprho_\alpha}^{-\frac{1}{2}} \md\mu(\alpha)\\
&=
\int_{\IrrG}^{\oplus} (\omega\otimes\id)(U^{\alpha})\alpha(x)
{\uprho_\alpha}^{-\frac{1}{2}} \md\mu(\alpha),
\end{split}
\]
on the other hand
\[
\begin{split}
&\quad\;\bigl(\int_{\IrrG}^{\oplus} (\omega\otimes\id)(U^{\alpha})\otimes
\I_{\ov{\msf{H}_\alpha}}\md\mu(\alpha)\bigr)\mc{Q}_L \Lhvp(x)\\
&=
\int_{\IrrG}^{\oplus}
((\omega\otimes\id)(U^{\alpha})\otimes
\I_{\ov{\msf{H}_\alpha}})
\alpha(x){\uprho_\alpha}^{-\frac{1}{2}}\md\mu(\alpha)\\
&=
\int_{\IrrG}^{\oplus}
(\omega\otimes\id)(U^{\alpha}) \alpha(x) {\uprho_\alpha}^{-\frac{1}{2}}
\md\mu(\alpha).
\end{split}
\]
The last equality follows from the isomorphism $\HS(\msf{H}_\alpha)=\msf{H}_\alpha\otimes\ov{\msf{H}_\alpha}$. The above calculation proves the commutation rule 
\[
\mc{Q}_L (\omega\otimes\id)\mrW=
\bigl(\int_{\IrrG}^{\oplus} (\omega\otimes\id)(U^{\alpha})\otimes
\I_{\ov{\msf{H}_\alpha}}\md\mu(\alpha)\bigr)\mc{Q}_L\quad(\omega\in\Lj).
\]
Let us introduce a dense *-subalgebra in $c_0(\whG)$:
\[
c_{00}(\whG)=\bigoplus_{\alpha\in\IrrG}^{alg} \B(\msf{H}_\alpha).
\]
In order to show the second commutation rule, we need the following lemma:
\begin{lemma}
The subspace $c_{00}(\whG)$ is a $\ssot\times \|\cdot\|$ core for $\Lhvp$.
\end{lemma}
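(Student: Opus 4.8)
The claim is that $c_{00}(\whG)=\bigoplus_{\alpha\in\IrrG}^{alg}\B(\msf{H}_\alpha)$ is a $\ssot\times\|\cdot\|$ core for the GNS map $\Lhvp$. The plan is to exploit the very explicit block-diagonal description of $c_0(\whG)$ in the compact case together with the fact that $\mc{Q}_L$ is an honest unitary carrying $\Lhvp$ into the direct-integral (here direct-sum) picture, where the core property becomes transparent. First I would recall that $\mf{N}_{\hvp}$ consists of those $x=(T_\alpha)_{\alpha\in\IrrG}$ for which $\sum_{\alpha}d_\alpha\Tr(\uprho_\alpha^{-1}T_\alpha^*T_\alpha)<+\infty$, and that $c_{00}(\whG)\subseteq\mf{N}_{\hvp}$ since each element has only finitely many nonzero blocks, each block being finite dimensional.

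The main step is the approximation. Given $x=(T_\alpha)_{\alpha\in\IrrG}\in\Dom(\Lhvp)=\mf{N}_{\hvp}$, I would define the truncations $x_F=(\chi_F(\alpha)T_\alpha)_{\alpha\in\IrrG}$ for finite subsets $F\subseteq\IrrG$, which all lie in $c_{00}(\whG)$. The point is to verify simultaneously that $x_F\xrightarrow{\ssot}x$ and that $\Lhvp(x_F)\xrightarrow{\|\cdot\|}\Lhvp(x)$ as $F$ increases to $\IrrG$. The norm convergence of the GNS images is immediate from the unitarity of $\mc{Q}_L$: we have $\mc{Q}_L\Lhvp(x_F)=\int_{\IrrG}^{\oplus}\chi_F(\alpha)T_\alpha\uprho_\alpha^{-\frac12}\md\mu(\alpha)$, so
\[
\|\Lhvp(x)-\Lhvp(x_F)\|^2=\sum_{\alpha\notin F}d_\alpha\Tr(\uprho_\alpha^{-1}T_\alpha^*T_\alpha)\xrightarrow[F\nearrow\IrrG]{}0,
\]
being the tail of a convergent series. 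For the $\ssot$ convergence $x_F\to x$ I would note that since the $x_F$ are uniformly bounded only if $x$ itself is bounded; in general $x\in\ell^\infty(\whG)$ automatically (as an element of the von Neumann algebra), so $\sup_\alpha\|T_\alpha\|<+\infty$ and the truncations are uniformly bounded by $\|x\|$. Then $\ssot$ convergence reduces to strong convergence applied to a dense set of vectors together with the adjoint (for $\sots$), and on the GNS vectors $\Lhvp((S_\alpha))$ this again amounts to a tail estimate of a convergent sum.

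The slightly delicate point — and the one I expect to be the main obstacle — is to confirm that the relevant topology is genuinely $\ssot\times\|\cdot\|$ and not merely $\sot\times\|\cdot\|$, i.e. to check convergence against all normal functionals rather than just vector functionals. Since $\LdG$ is separable and the nets here can be taken to be sequences indexed by an increasing exhaustion $F_1\subseteq F_2\subseteq\cdots$ of $\IrrG$ by finite sets (possible as $\IrrG$ is countable), the $\sigma$-strong topology is metrizable on the bounded set $\{x_F\}$ (cf.\ the argument via \cite[Proposition I.6.3]{Davidson} used elsewhere in the paper), so it suffices to test against a countable generating family of normal functionals; each test reduces once more to a tail estimate exploiting $\sum_\alpha d_\alpha\Tr(\uprho_\alpha^{-1}T_\alpha^*T_\alpha)<+\infty$. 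Having established both convergences, the definition of a $\ssot\times\|\cdot\|$ core gives that $c_{00}(\whG)$ is a core for $\Lhvp$, which is exactly the assertion. The whole argument is essentially bookkeeping on convergent series, with uniform boundedness of the truncations being the one hypothesis that must be checked carefully before invoking metrizability.
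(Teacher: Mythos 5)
Your proposal is correct, and its skeleton coincides with the paper's proof: truncate $T\in\mf{N}_{\hvp}$ over an exhaustion by finite subsets of $\IrrG$, get norm convergence $\Lhvp(T^n)\to\Lhvp(T)$ from the tail of the convergent sum $\sum_\alpha d_\alpha\Tr(\uprho_\alpha^{-1}T_\alpha^*T_\alpha)$, and upgrade $\sot$-convergence on a dense set to $\ssot$-convergence using uniform boundedness $\|T^n\|\le\|T\|$. The one place where you genuinely diverge is the dense-subspace step. The paper restricts to vectors $\Lhvp(S)$ with $S\in\mf{N}_{\hvp}\cap\Dom(\sigma^{\hvp}_{i/2})$ and uses the modular-theory identity $(T-T^n)\Lhvp(S)=\hat{J}\sigma^{\hvp}_{-i/2}(S^*)\hat{J}\,\Lhvp(T-T^n)$, so that strong convergence is bounded by $\|\sigma^{\hvp}_{-i/2}(S^*)\|\,\|\Lhvp(T-T^n)\|$ and follows for free from the already-established norm convergence. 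You instead work with all of $\Lhvp(\mf{N}_{\hvp})$ and run a second, independent tail estimate: $\|(T-T^n)\Lhvp(S)\|^2=\sum_{\alpha\notin F}d_\alpha\Tr(\uprho_\alpha^{-1}S_\alpha^*T_\alpha^*T_\alpha S_\alpha)$, which tends to zero because the full sum equals $\hvp\bigl((TS)^*(TS)\bigr)<+\infty$. This works, and is more elementary in that it avoids modular theory entirely, but the finiteness of that second sum (i.e.\ that $\mf{N}_{\hvp}$ is a left ideal, so $TS\in\mf{N}_{\hvp}$) is the fact you must state explicitly --- it is the whole content of ``tail of a convergent sum'' there. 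Two minor trims: the lemma only asks for $\ssot$, so the remark about $\sots$ and adjoints is superfluous; and the appeal to separability and metrizability of the $\sigma$-strong topology is an unnecessary detour --- for a \emph{bounded} net, $\sot$-convergence already implies $\ssot$-convergence by splitting any seminorm $x\mapsto\bigl(\sum_k\|x\xi_k\|^2\bigr)^{1/2}$ into finitely many terms plus a tail controlled by the uniform bound, which is the argument you in fact sketch.
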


Above (and everywhere else) we treat $\ell^{\infty}(\whG)$ as a subalgebra of $\B(\LdG)$, not $\B(\bigoplus_{\alpha\in\IrrG} \msf{H}_\alpha)$.
\begin{proof}
Let $T=(T_\alpha)_{\alpha\in\IrrG}\in \mf{N}_{\hvp}$, that is
\[
\hvp(T^*T)=\sum_{\alpha\in\IrrG}d_\alpha \Tr(T_\alpha^*T_\alpha \uprho^{-1}_\alpha)<+\infty.
\] 
Let $\{X_n\,|\,n\in\NN\}$ be any increasing family of finite subsets of $\IrrG$. Let $(T^n)_{n\in\NN}$ be a sequence of elements of $c_{00}(\whG)$ given by $T^n_\alpha=\chi_{X_n}(\alpha) T_\alpha$. It is clear that $T^n\in \mf{N}_{\hvp}$ for each $n\in\NN$. We have $\Lhvp(T^n)\xrightarrow[n\to\infty]{}\Lhvp(T)$. Indeed
\[\begin{split} 
&\quad\;\|\Lhvp(T)-\Lhvp(T^n)\|^2=
\sum_{\alpha\in\IrrG}d_\alpha \Tr((T_\alpha-T^n_\alpha)^*
(T_\alpha-T^n_\alpha) \uprho^{-1}_\alpha)\\
&=
\sum_{\alpha\in X_n}
d_\alpha \Tr((T_\alpha-T^n_\alpha)^*
(T_\alpha-T^n_\alpha) \uprho^{-1}_\alpha)+
\sum_{\alpha\in \IrrG\setminus X_n}
d_\alpha \Tr((T_\alpha-T^n_\alpha)^*
(T_\alpha-T^n_\alpha) \uprho^{-1}_\alpha)\\
&=
\sum_{\alpha\in\IrrG\setminus X_n}d_\alpha \Tr(T_\alpha^*
T_\alpha\uprho^{-1}_\alpha)\xrightarrow[n\to\infty]{}0.
\end{split}\]
Furthermore, we have $T^n\xrightarrow[n\to\infty]{\ssot}T$. Indeed: as the sequence $(T^n)_{n\in\NN}$ is bounded, it is enough to check convergence in \sot\, and for vectors from a dense subspace $\{\Lhvp(S)\,|\,S\in \mf{N}_{\hvp}\cap\Dom(\sigma^{\hvp}_{i/2})\}$. For any $S\in \mf{N}_{\hvp}\cap\Dom(\sigma^{\hvp}_{i/2})$ we have
\[\begin{split}
&\quad\;\|T\Lhvp(S)-T^n\Lvp(S)\|=
\|\Lhvp((T-T^n)S)\|=
\|\hat{J}\sigma^{\hvp}_{-i/2}(S^*)\hat{J} \Lhvp(T-T^n)\|\\
&\le
\|\sigma^{\hvp}_{-i/2}(S^*)\| \|\Lhvp(T-T^n)\|\xrightarrow[n\to\infty]{}0,
 \end{split}\]
which proves the claim.
\end{proof}

Let us now check the second commutation rule. Take any $T=(T_\alpha)_{\alpha\in\IrrG}\in c_{00}(\whG)$ and $\omega\in\Lj$ such that $\lambda(\omega)\in c_{00}(\whG)$. Let us note that the unbounded operators $\uprho=\bigoplus_{\alpha\in \IrrG}\uprho_\alpha$ and $\nabla_{\hvp}$ have the subspace $\Lhvp(c_{00}(\whG))$ in their domain, and moreover this subspace is preserved by them. Indeed, it is clear for $\uprho$, and we know that $\nabla_{\hvp}\Lhvp(e^\alpha_{i,j})=\tfrac{(\uprho_\alpha)_j}{(\uprho_\alpha)_{i}} \Lhvp(e^\alpha_{i,j})$. By the definition of $\mrV$ we have
\[\begin{split}
\mc{Q}_L (\omega\otimes\id)\chi(\mrV)\Lhvp(T)=
\mc{Q}_L\hat{J} \hat{R} ((\omega\otimes\id)\mrW) ^*\hat{J}
\Lhvp(T).
 \end{split}\]
On the other hand
\[\begin{split} 
&\quad\;
\bigl(\int_{\IrrG}^{\oplus} \I_{\msf{H}_\alpha}\otimes \alpha^{c}((\omega\otimes\id)\mrW) \md\mu(\alpha)\bigr)\mc{Q}_L \Lhvp(T)\\
&=
\int_{\IrrG}^{\oplus} \bigl(\I_{\msf{H}_\alpha}\otimes \alpha^{c}((\omega\otimes\id)\mrW) \bigr)
\bigl( \sum_{j=1}^{\dim(\alpha)} |\zeta^\alpha_j\rangle\langle
(T_\alpha \uprho_\alpha^{-\frac{1}{2}} )^* \zeta^\alpha_j| \md\mu(\alpha)\\
&=
\int_{\IrrG}^{\oplus} 
\sum_{j=1}^{\dim(\alpha)} \zeta^\alpha_j\otimes 
\ov{
\alpha\circ \hat{R}((\omega\otimes\id)\mrW)^*
\;(T_\alpha \uprho_\alpha^{-\frac{1}{2}} )^* \zeta^\alpha_j} \md\mu(\alpha)\\
&=
\int_{\IrrG}^{\oplus} 
T_\alpha \uprho_\alpha^{-\frac{1}{2}}
\alpha\circ \hat{R}((\omega\otimes\id)\mrW)\md\mu(\alpha)\\
&=
\mc{Q}_L \Lhvp(( T_\alpha \uprho_\alpha^{-\frac{1}{2}}
\alpha\circ \hat{R}((\omega\otimes\id)\mrW) \uprho_\alpha^{\frac{1}{2}})_{\alpha\in \IrrG})\\
&=
\mc{Q}_L \hat{J}\nabla_{\hvp}^{\frac{1}{2}}
(\uprho_\alpha^{-\frac{1}{2}}
\alpha\circ \hat{R}((\omega\otimes\id)\mrW) \uprho_\alpha^{\frac{1}{2}})_{\alpha\in\IrrG}^* \hat{J}\nabla_{\hvp}^{\frac{1}{2}} \Lhvp(T)\\
&=
\mc{Q}_L \hat{J}\nabla_{\hvp}^{\frac{1}{2}}
(\uprho_\alpha^{\frac{1}{2}}
\alpha\circ \hat{R}((\omega\otimes\id)\mrW)^* \uprho_\alpha^{-\frac{1}{2}})_{\alpha\in\IrrG} \nabla_{\hvp}^{-\frac{1}{2}}\hat{J} \Lhvp(T),
\end{split}\]
where $\{\zeta^{\alpha}_j\,|\,j\in\{1,\dotsc,\dim(\alpha)\}\}$ is any orthonormal basis in $\msf{H}_\alpha$. Since
\[\begin{split}
\nabla_{\hvp}^{\frac{1}{2}}\rho^{\frac{1}{2}}\Lhvp(e^\beta_{k,l})=
(\uprho_\beta)_k^{\frac{1}{2}}\nabla^{\frac{1}{2}}_{\hvp}\Lhvp(e^{\beta}_{k,l})=
(\uprho_\beta)_k^{\frac{1}{2}}
\bigl(\tfrac{(\uprho_\beta)_{l}}{(\uprho_\beta)_k}\bigr)^{\frac{1}{2}}
\Lhvp(e^\beta_{k,l})=
(\uprho_\beta)_{l}^{\frac{1}{2}}\Lhvp(e^\beta_{k,l})
 \end{split}\]
then
\[
e^{\alpha}_{i,j}\nabla_{\hvp}^{\frac{1}{2}}\rho^{\frac{1}{2}}\Lhvp(e^\beta_{k,l})=
\delta_{\alpha,\beta}\delta_{j,k} (\uprho_\beta)_{l}^{\frac{1}{2}}
\Lhvp(e^\beta_{i,l})=
\nabla_{\hvp}^{\frac{1}{2}}\rho^{\frac{1}{2}}
e^{\alpha}_{i,j}\Lhvp(e^\beta_{k,l})
\]
and the operator $\nabla_{\hvp}^{\frac{1}{2}}\rho^{\frac{1}{2}}$ commutes with operators from $\LL^{\infty}(\whG)$ (on $\Lhvp(c_{00}(\whG))$). Consequently
\[\begin{split}
&\quad\;
\mc{Q}_L \hat{J}\nabla_{\hvp}^{\frac{1}{2}}
(\uprho_\alpha^{\frac{1}{2}}
\alpha\circ \hat{R}((\omega\otimes\id)\mrW)^* \uprho_\alpha^{-\frac{1}{2}})_{\alpha\in\IrrG} \nabla_{\hvp}^{-\frac{1}{2}}\hat{J} \Lhvp(T)\\
&=
\mc{Q}_L \hat{J}(
\alpha\circ \hat{R}((\omega\otimes\id)\mrW)^*)_{\alpha\in\IrrG}
\nabla_{\hvp}^{\frac{1}{2}}\uprho^{\frac{1}{2}} 
 \uprho^{-\frac{1}{2}} \nabla_{\hvp}^{-\frac{1}{2}}\hat{J} \Lhvp(T)\\
&=
\mc{Q}_L\hat{J}
(\alpha\circ \hat{R}((\omega\otimes\id)\mrW)^*)_{\alpha\in\IrrG}
\hat{J}\Lhvp(T),
 \end{split}\]
and the second commutation relation holds.\\
Assume that $x=(x_\alpha)_{\alpha\in\IrrG}$ is an element of $\LL^{\infty}(\whG)\cap\LL^{\infty}(\whG)'$. Triviality of the center of $\B(\msf{H}_\alpha)$ impliess that $x_\alpha\in \CC\I_\alpha$ for each $\alpha\in\IrrG$. Operator $x$ is mapped via $\mc{Q}_L$ to $\int_{\IrrG}^{\oplus} x_\alpha\md\mu(\alpha)$, which is by the definition a diagonalisable operator. On the other hand, any diagonalisable operator $\int_{\IrrG}^{\oplus} y_\alpha \md\mu(\alpha)\;(y_\alpha\in\CC \I_{\HS(\msf{H}_\alpha)})$ is an image of $(y_\alpha)_{\alpha\in\IrrG}\in\LL^{\infty}(\whG)\cap\LL^{\infty}(\whG)'$. This proves that we have identified objects that are given by the left version of Theorem \ref{PlancherelL}.\\
Let us now check that $\mc{Q}_R$ and $E_\pi={\uprho_\pi}^{-\frac{1}{2}}$ satisfy conditions from point $7)$ of Theorem \ref{PlancherelR}: we only need to check the commutation rules, since the rest is clear. Now we need to use the formula $\nabla_{\hpsi}\Lambda_{\hpsi}(e^\alpha_{i,j})=\tfrac{(\uprho_\alpha)_i}{(\uprho_\alpha)_j}\Lambda_{\hpsi}(e^\alpha_{i,j})$. Take $\omega$ and $T$ as before. We have
\[\begin{split}
&\quad\;
\mc{Q}_R \hat{J}J (\omega\otimes\id)\mrW \Lambda_{\hpsi}(T)=
\mc{Q}_R \hat{J}J \Lambda_{\hpsi}(((\omega\otimes\id)\mrW)\, T)\\
&=
\int_{\IrrG}^{\oplus} \alpha( ((\omega\otimes\id)\mrW) T) \uprho_{\alpha}^{\frac{1}{2}} \md\mu(\alpha)
=
\int_{\IrrG}^{\oplus} 
(\omega\otimes\id)U^\alpha T_\alpha
\uprho_{\alpha}^{\frac{1}{2}} \md\mu(\alpha)\\
&=
\bigl(\int_{\IrrG}^{\oplus} (\omega\otimes\id)U^{\alpha} \otimes\I_{\ov{\msf{H}_\alpha}}\md\mu(\alpha)\bigr)
\mc{Q}_R \hat{J}J \Lambda_{\hpsi}(T),
\end{split}\]
which shows the first commutation rule. Let us now prove the second one:
\[\begin{split}
&\quad\;
\bigl(\int_{\IrrG}^{\oplus} 
\I_{\msf{H}_\alpha}\otimes \alpha^{c}((\omega\otimes\id){\WW})
\md\mu(\alpha)\bigr)
\mc{Q}_R \hat{J}J \Lambda_{\hpsi}(T)\\
&=
\int_{\IrrG}^{\oplus} T_\alpha \uprho_{\alpha}^{\frac{1}{2}}
\,\alpha\circ \hat{R} ((\omega\otimes\id)\mrW)\md\mu(\alpha)\\
&=
\mc{Q}_R \hat{J}J \Lambda_{\hpsi}\bigl((T_\alpha \uprho_{\alpha}^{\frac{1}{2}}
\,\alpha\circ \hat{R} ((\omega\otimes\id)\mrW) \uprho_{\alpha}^{-\frac{1}{2}}
)_{\alpha\in\IrrG}\bigr)\\
&=
\mc{Q}_R \hat{J}J
J^{\hpsi} \nabla_{\hpsi}^{\frac{1}{2}}
\bigl( \uprho_{\alpha}^{\frac{1}{2}}
\,\alpha\circ \hat{R} ((\omega\otimes\id)\mrW) \uprho_{\alpha}^{-\frac{1}{2}}
\bigr)_{\alpha\in\IrrG}^*
J^{\hpsi} \nabla_{\hpsi}^{\frac{1}{2}}\Lambda_{\hpsi}(T)\\
&=
\mc{Q}_R \hat{J}J
\hat{J} 
\hat{R} ((\omega\otimes\id)\mrW) ^*
\hat{J}\Lambda_{\hpsi}(T)=
\mc{Q}_R \hat{J}J
(\omega\otimes\id)\chi(\mrV)
\Lambda_{\hpsi}(T),
\end{split}\]
which concludes the proof of Proposition \ref{stw18}.

\begin{remark}
Note that one gets a general Plancherel measure by taking any positive measure on $\IrrG$ with full support. Indeed, let $c\colon \IrrG\rightarrow \RR_{>0}$ be an arbitrary function. Define measure $\mu^c\colon \{\alpha\}\mapsto c(\alpha)$. It is equivalent to the above measure $\mu=\mu^{d_\bullet}$ and we have
\[
\frac{\md\mu^c}{\md\mu}=\frac{c}{d_\bullet}.
\]
With this choice of a Plancherel measure we can relate the following Duflo-Moore operators:
\[
\begin{split}
D_\alpha&=\sqrt{\frac{\md\mu^c}{\md\mu}(\alpha)} {\uprho_\alpha}^{\frac{1}{2}}=c(\alpha)^{\frac{1}{2}} {d_\alpha}^{-\frac{1}{2}} {\uprho_\alpha}^{\frac{1}{2}},\\
E_\alpha&=\sqrt{\frac{\md\mu^c}{\md\mu}(\alpha)} {\uprho_\alpha}^{-\frac{1}{2}}=c(\alpha)^{\frac{1}{2}} {d_\alpha}^{-\frac{1}{2}} {\uprho_\alpha}^{-\frac{1}{2}}.
\end{split}
\]
\end{remark}

\subsubsection{Functions $\varpi$}
Choose the Plancherel measure $\mu$ with $c=1$, that is $\mu(\{\alpha\})=\mu^{c}(\{\alpha\})=1$ for all $\alpha\in \IrrG$. Take $\Omega\subseteq\IrrG$, and $\kappa\in \IrrG$. Define $\sigma_\Omega=\bigoplus_{\alpha\in \Omega}\alpha$. We have the following decomposition:
\[
\kappa\tp\sigma_\Omega\simeq\bigoplus_{k=1}^{\infty} \bigoplus_{\alpha\in \F^k_{\kappa\stp\sigma_\Omega}}\alpha.
\]
Define representations $\pi$ and $\sigma$ to be
\[
\pi=
\bigoplus_{k=1}^{\infty} \bigoplus_{\alpha\in \F^k_{\kappa\stp\sigma_{\Omega}}}\alpha,\quad
\sigma=
\bigoplus_{\alpha\in \Omega}\kappa\tp\alpha.
\]
Let $\mc{O}$ be a unitary intertwiner between $\pi$ and $\sigma$. We wish to find a function $\varpi^{\kappa,\Omega,\mu}\colon\F^1_{\kappa\stp\sigma_\Omega}\rightarrow\RR_{>0}$ satisfying
\[
\sum_{m=1}^{\infty} (\int_{\Omega\restriction_{m}}\otimes \Tr_m)
\bigl( \mc{O} \pi(a)\mc{O}^*\bigr)=
\sum_{n=1}^{\infty} \int_{\F^n_{\kappa\stp\sigma_\Omega}} \varpi(\alpha) \Tr(\alpha(a)) \md\mu_{\F^n_{\kappa\stp\sigma_\Omega}}(\alpha)\quad(a\in c_{00}(\whG)).
\]
Choose any function with finite support $g\colon \IrrG\rightarrow \CC$ and define $a\in c_{00}(\whG)$ to be $a=(g(\alpha)
\I_{\msf{H}_\alpha})_{\alpha\in\IrrG}$. We have
\[
\mc{O} \pi(a)\mc{O}^*=
\sigma(a)=
\bigoplus_{\alpha\in\Omega}\kappa\tp\alpha( a).
\]
For any $\alpha\in\Omega$ we have
\[
\kappa\tp\alpha(a)\simeq
\bigoplus_{k=1}^{\infty}\bigoplus_{\beta\in\F^{k}_{\kappa\stp\alpha}}
\beta(a),
\]
therefore
\[
\Tr(\kappa\tp\alpha(a))=\sum_{k=1}^{\infty} \sum_{\beta\in\F^k_{\kappa\stp\alpha}}\Tr(\beta(a))=\sum_{k=1}^{\infty}\sum_{\beta\in\F^k_{\kappa\stp\alpha}} \dim(\beta)g(\beta)
\]
The above information gives us
\begin{equation}\begin{split} \label{eq21}
&\quad\;
\sum_{\alpha\in\Omega}\sum_{k=1}^{\infty} \sum_{\beta\in
\F^{k}_{\kappa\stp\alpha}}\dim(\beta)g(\beta)=
\int_{\Omega} \Tr(\kappa\tp\alpha(a))\md\mu_{\Omega}(\alpha)\\
&=
\sum_{n=1}^{\infty} \int_{\F^n_{\kappa\stp\sigma_\Omega}} \varpi^{\kappa,\mu,\Omega}(\alpha) \Tr(\alpha(a)) \md\mu_{\F^n_{\kappa\stp\sigma_\Omega}}(\alpha)=
\sum_{n=1}^{\infty} \sum_{\alpha\in\F^n_{\kappa\stp\sigma_\Omega}} \varpi^{\kappa,\mu,\Omega}(\alpha) \dim(\alpha)g(\alpha)
\end{split}\end{equation}
Observe that the following holds:
\begin{equation}\label{eq29}
\bigoplus_{\alpha\in \Omega} \bigoplus_{k'=1}^{\infty} \bigoplus_{\beta\in \F^{k'}_{\kappa\stp\alpha}}\beta\simeq
\bigoplus_{\alpha\in \Omega} (\kappa\tp \alpha)\simeq
\kappa\tp \sigma_\Omega\simeq
\bigoplus_{k'=1}^{\infty} \bigoplus_{\beta\in \F^{k'}_{\kappa\stp \sigma_\Omega}}\beta,
\end{equation}
therefore for any $\beta\in\IrrG$ we have
\[
\sum_{\alpha\in\Omega} \sum_{k=1}^{\infty} \chi_{ \F^k_{\kappa\stp\alpha}}(\beta)
=\sum_{k=1}^{\infty} \chi_{\F^{k}_{\kappa\stp\sigma_{\Omega}}}(\beta)
\]
(it is the multiplicity of $\beta$ in the unitarily equivalent representations \eqref{eq29}). Let us muliply this formula by $\dim(\beta)g(\beta)$ and take a sum over $\beta$:
\[\begin{split}
&\quad\;
\sum_{\alpha\in\Omega}\sum_{k=1}^{\infty}
\sum_{\beta\in \F^k_{\kappa\stp\alpha}} \dim(\beta) g(\beta) =
\sum_{\beta\in \IrrG} \dim(\beta) g(\beta)\sum_{\alpha\in\Omega} \sum_{k=1}^{\infty} \chi_{\F^k_{\kappa\stp\alpha}}(\beta)\\
&=
\sum_{\beta\in\IrrG}\dim(\beta)g(\beta)
\sum_{k=1}^{\infty} \chi_{\F^{k}_{\kappa\stp\sigma_{\Omega}}}(\beta)=
\sum_{k=1}^{\infty}\sum_{\beta\in\F^k_{\kappa\stp\sigma_\Omega}} \dim(\beta)g(\beta).
\end{split}
\]
Once we substitute this result to the equation \eqref{eq21} we get
\[\begin{split}
&\quad\;
\sum_{k=1}^{\infty} \sum_{\beta\in\F^k_{\kappa\stp\sigma_\Omega}} \dim(\beta) g(\beta)=
\sum_{\alpha\in\Omega} \sum_{k=1}^{\infty} \sum_{\beta\in
\F^{k}_{\kappa\stp\alpha}}\dim(\beta)g(\beta)=
\sum_{k=1}^{\infty} \sum_{\alpha\in\F^k_{\kappa\stp\sigma_\Omega}} \varpi^{\kappa,\Omega,\mu}(\alpha) \dim(\alpha)g(\alpha).
\end{split}
\]
Since $g$ was arbitrary, we arrive at:
\[
\varpi^{\kappa,\Omega,\mu}(\alpha)=1\quad(\alpha\in\F^1_{\kappa\stp\sigma_{\Omega}}).
\]
\begin{proposition}
Let $\mu$ be a Plancherel measure given by $\mu(\{\alpha\})=1$ for all $\alpha\in \IrrG$ and let $\Omega\subseteq\IrrG$ be any subset. Then
\[
\varpi^{\kappa,\Omega,\mu}(\alpha)=1\quad(\alpha\in\F^1_{\kappa\stp\sigma_{\Omega}}).
\]
\end{proposition}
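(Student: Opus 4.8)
The plan is to unwind the defining property of $\varpi^{\kappa,\Omega,\mu}$ supplied by Proposition \ref{treq} in the present discrete (compact) situation, where $\mu$ is counting measure, the ambient von Neumann algebra is $\ell^{\infty}(\whG)=\bigoplus_{\alpha}\B(\msf{H}_\alpha)$, and the integral weights are genuine multiplicity-weighted sums of traces. Concretely, $\varpi^{\kappa,\Omega,\mu}$ is characterised by the equality of integral weights
\[
\sum_{m=1}^{\infty}(\int_{\Omega\restriction_m}\otimes\Tr_m)\bigl(\mc{O}\,\pi(a)\,\mc{O}^*\bigr)=
\sum_{n=1}^{\infty}\int_{\F^n_{\kappa\stp\sigma_\Omega}}\varpi^{\kappa,\Omega,\mu}(\alpha)\Tr(\alpha(a))\,\md\mu_{\F^n_{\kappa\stp\sigma_\Omega}}(\alpha)
\]
for all $a\in c_{00}(\whG)$, where $\pi=\bigoplus_{k}\bigoplus_{\alpha\in\F^k_{\kappa\stp\sigma_\Omega}}\alpha$, $\sigma=\bigoplus_{\alpha\in\Omega}\kappa\tp\alpha$, and $\mc{O}$ is a unitary intertwiner $\pi\to\sigma$. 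Since $\varpi^{\kappa,\Omega,\mu}$ is a scalar-valued function and both weights see only the diagonal trace, it suffices to test the identity on the central elements $a=(g(\alpha)\I_{\msf{H}_\alpha})_{\alpha\in\IrrG}$ indexed by finitely supported scalar functions $g\colon\IrrG\to\CC$.

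First I would compute the right-hand side: because $\Tr(\alpha(a))=\dim(\alpha)g(\alpha)$, it collapses to $\sum_{n}\sum_{\alpha\in\F^n_{\kappa\stp\sigma_\Omega}}\varpi^{\kappa,\Omega,\mu}(\alpha)\dim(\alpha)g(\alpha)$. For the left-hand side I would use that $\mc{O}$ is an intertwiner, so $\mc{O}\,\pi(a)\,\mc{O}^*=\sigma(a)=\bigoplus_{\alpha\in\Omega}\kappa\tp\alpha(a)$, and then decompose each factor as $\kappa\tp\alpha\simeq\bigoplus_{k}\bigoplus_{\beta\in\F^k_{\kappa\stp\alpha}}\beta$ to obtain $\Tr(\kappa\tp\alpha(a))=\sum_{k}\sum_{\beta\in\F^k_{\kappa\stp\alpha}}\dim(\beta)g(\beta)$. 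Summing over $\alpha\in\Omega$ then presents the left-hand side as a double sum of the numbers $\dim(\beta)g(\beta)$ weighted by the multiplicities $\sum_{\alpha\in\Omega}\sum_k\chi_{\F^k_{\kappa\stp\alpha}}(\beta)$.

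The crux is the combinatorial multiplicity identity
\[
\sum_{\alpha\in\Omega}\sum_{k=1}^{\infty}\chi_{\F^k_{\kappa\stp\alpha}}(\beta)=\sum_{k=1}^{\infty}\chi_{\F^k_{\kappa\stp\sigma_\Omega}}(\beta)\qquad(\beta\in\IrrG),
\]
which holds because both sides count the multiplicity of the fixed irreducible $\beta$ inside the single representation $\kappa\tp\sigma_\Omega\simeq\bigoplus_{\alpha\in\Omega}(\kappa\tp\alpha)$, decomposed in two different but unitarily equivalent ways. Multiplying this identity by $\dim(\beta)g(\beta)$ and summing over $\beta$ rewrites the left-hand side as $\sum_{k}\sum_{\beta\in\F^k_{\kappa\stp\sigma_\Omega}}\dim(\beta)g(\beta)$, which matches the right-hand side except for the factor $\varpi^{\kappa,\Omega,\mu}(\alpha)$. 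Since $g$ is an arbitrary finitely supported function and, for $\alpha\in\F^1_{\kappa\stp\sigma_\Omega}$, both the multiplicity $\sum_k\chi_{\F^k_{\kappa\stp\sigma_\Omega}}(\alpha)$ and $\dim(\alpha)$ are strictly positive, comparing coefficients of each $g(\alpha)$ forces $\varpi^{\kappa,\Omega,\mu}(\alpha)=1$. I expect the main obstacle to be the bookkeeping that legitimises restricting to the central test elements $a=(g(\alpha)\I_{\msf{H}_\alpha})_\alpha$ — one must argue that such diagonal scalar elements genuinely determine the scalar function $\varpi^{\kappa,\Omega,\mu}$ — together with the (finite, since $g$ has finite support and $\sum_k\chi_{\F^k_{\kappa\stp\sigma_\Omega}}<+\infty$ by Lemma \ref{lemat8}) interchanges of summation; both are routine in this discrete setting but are where the argument could slip.
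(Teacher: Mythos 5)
Your proposal is correct and follows essentially the same route as the paper: test the defining weight identity from Proposition \ref{treq} on the central elements $a=(g(\alpha)\I_{\msf{H}_\alpha})_{\alpha\in\IrrG}$ with $g$ finitely supported, use the intertwiner to reduce the left side to $\sum_{\alpha\in\Omega}\Tr(\kappa\tp\alpha(a))$, invoke the multiplicity identity $\sum_{\alpha\in\Omega}\sum_k\chi_{\F^k_{\kappa\stp\alpha}}(\beta)=\sum_k\chi_{\F^k_{\kappa\stp\sigma_\Omega}}(\beta)$ coming from the two decompositions of $\kappa\tp\sigma_\Omega$, and compare coefficients of $g(\alpha)$. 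Your worry about legitimising the restriction to central test elements is unfounded: the identity of Proposition \ref{treq} holds for all $a$, so evaluating it on these particular elements is automatically valid and already pins down the scalar function $\varpi^{\kappa,\Omega,\mu}$, exactly as in the paper.
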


\subsubsection{Operators $\mc{L}_\kappa,\mc{L}_\nu$ and integral characters}
We stick to the choice $c=1$, that is the Plancherel measure $\mu=\mu^c$ is given by $\mu(\{\alpha\})=1\, (\alpha\in\IrrG)$. We have $E_\alpha= {d_\alpha}^{-\frac{1}{2}}{\uprho_\alpha}^{-\frac{1}{2}}$, therefore $\Tr(E^2_\bullet)=1$. Note also that once we choose this Plancherel measure we can identify the Hilbert spaces $\LL^2(\IrrG)$ and $\ell^2(\IrrG)$. Let us fix $\kappa\in\IrrG$. According to the definition of $\mc{L}_\kappa$ (equation \eqref{eq24}) we have
\[
\mc{L}_\kappa\colon \Tr(E^2_\bullet)^{\frac{1}{2}}\chi_\Omega=\chi_\Omega
\mapsto
\Tr(E^2_\bullet)^{\frac{1}{2}}\varpi^{\kappa,\Omega,\mu}\sum_{n=1}^{\infty}\chi_{\F^n_{\kappa\stp\sigma_\Omega}}=
\sum_{n=1}^{\infty}\chi_{\F^n_{\kappa\stp\sigma_\Omega}}
\]
for finite subsets $\Omega\subseteq\IrrG$, consequently we have an equality of $\mc{L}_\kappa$ and the operator $\lambda_\kappa$ of \cite{NeshTu}. Furthermore, for $\nu\in\LL^1(\IrrG)=\ell^1(\IrrG)$ we have:
\[
\mc{L}_\nu=\int_{\IrrG}\tfrac{\nu(\kappa)}{\dim(\kappa)}\mc{L}_\kappa\md\mu(\kappa)=
\sum_{\alpha\in\IrrG} \tfrac{\nu(\alpha)}{\dim(\alpha)}\mc{L}_\alpha.
\]

Let us recall that a part of \cite[Theorem 2.7.10]{NeshTu} states that a compact quantum group $\GG$ is coamenable if and only if the the fusion ring of $\GG$ is amenable. By \cite[Definition 2.7.6]{NeshTu} this condition means that $1\in \sigma(\mc{L}_\nu)$ for every probability measure $\nu\in \ell^1(\IrrG)$. It follows that our Theorem \ref{tw4} in the case of compact quantum group $\GG$ is closely related to this theorem (the difference is that we consider only measures which have full support, on the other hand our implication $3)\Rightarrow 1)$ in Theorem \ref{tw3} is stronger). We also wish to point out that in \cite[Theorem 4.1]{HiaiIzumi} a similar result appears, though in slightly different context of fusion algebras.\\

In the case of compact quantum groups (and the Plancherel measure with $c=1$) the integral in definition of the integral character (equation \eqref{eq25}) reduces to a sum, therefore for finite dimensional integral representations, the notion of an integral character and of a character coincides. A result relating coamenability of a matrix compact quantum group $\GG$ to the real part of a spectrum of a character of the fundamental representation was derived by Skandalis (see \cite[Theorem 6.1]{Banica}). Our Theorem \ref{tw2} is however more closely related to a version which does not assume the existence of the fundamental representation. Equivalence $(ii)\Leftrightarrow (iii)$ in \cite[Theorem 2.7.10]{NeshTu} states that $\GG$ is coamenable if and only if $\dim U\in \sigma(\chi(U))$ for every finite dimensional representation $U$. Theorem \ref{tw2} is similar to this result, however we also consider infinite dimensional representations.

\subsection{$\whG$ classical}\label{secdclass}
Assume now that $\whG$ is a classical locally compact group which is second countable. Let us denote by $\hat{\delta}$ the modular element of $\whG$, which is defined as the Radon-Nikodym derivative $\hat{\delta}=\tfrac{\md\mu_R}{\md\mu_L}$, where $\mu_L,\mu_R$ are respectively the left and the right Haar measure on $\whG$ (note that in the classical theory of topological groups one usually considers $\tfrac{\md\mu_L}{\md \mu_R}$). We have $\IrrG=\whG$ as a topological space, $\CGDu=\CGD$ and every point $\zeta\in\whG$ corresponds to the one dimensional representation of $\CGD$ given by evaluation at $\zeta$. We will abuse the notation and identify (as sets) $\msf{H}_\zeta$ and $\B(\msf{H}_\zeta)$ with $\CC$ for each $\zeta\in\whG$.\\
Take any $p\in \RR$. Define a measure $\mu_p=\hat{\delta}^{p}\mu_L=\hat{\delta}^{p-1}\mu_R$, the structure of measurable field of Hilbert spaces $(\CC)_{\zeta\in\whG}$ given by measurable functions on $\whG$, positive operators $D_\zeta=\hat{\delta}(\zeta)^{\frac{p}{2}},E_\zeta=\hat{\delta}(\zeta)^{\frac{p-1}{2}}\,(\zeta\in\whG)$ and operators $\mc{Q}_L,\mc{Q}_R$ given by
\[
\begin{split}
\mc{Q}_L\colon\LL^2(\GG)\ni \Lambda_{\hvp}(f) &\mapsto 
\int_{\whG}^{\oplus} f(\zeta)\hat{\delta}(\zeta)^{-\frac{p}{2}}\md\mu_p(\zeta)\in
\int_{\whG}^{\oplus} \HS(\msf{H}_\zeta)\md\mu_p(\zeta),\\
\mc{Q}_R\colon\LL^2(\GG)\ni \hat{J}J\Lambda_{\widehat{\psi}}(f)
 &\mapsto 
\int_{\whG}^{\oplus} f(\zeta)\hat{\delta}(\zeta)^{-\frac{p-1}{2}}\md\mu_p(\zeta)\in
\int_{\whG}^{\oplus} \HS(\msf{H}_\zeta)\md\mu_p(\zeta).
\end{split}
\]

 Operators $\mc{Q}_L,\mc{Q}_R$ are at first only densely defined: $f$ belongs respectively to $\mf{N}_{\hvp}$ and $\mf{N}_{\wh{\psi}}$.

\begin{proposition}\label{stw18}
For each $p\in\RR$ the objects
\[
\mc{Q}_L,\;\mc{Q}_R,\;\mu_p,\; (D_\zeta)_{\zeta\in\whG},\;(E_{\zeta})_{\zeta\in\whG}
\]
 satisfy all the conditions of theorems \ref{PlancherelL}, \ref{PlancherelR}.
\end{proposition}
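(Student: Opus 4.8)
The plan is to verify the hypotheses of point $7)$ of Theorems \ref{PlancherelL} and \ref{PlancherelR}, exactly as in the compact case treated above. First I would record the concrete form of all objects in the dual-classical setting. Since $\CGDu=\CGD$ is the commutative algebra of functions on $\whG$, we have $\IrrG=\whG$ and every irreducible representation is one-dimensional, given by evaluation $\mathrm{ev}_\zeta$ at a point $\zeta\in\whG$; accordingly the canonical measurable field of Hilbert spaces is $(\CC)_{\zeta\in\whG}$ and $\HS(\msf{H}_\zeta)=\CC$. In particular every operator on $\int_{\whG}^{\oplus}\CC\,\md\mu_p(\zeta)$ is simultaneously decomposable and diagonalisable, so point $6)$ (and hence $7.3)$) is automatic once the multiplication structure is in hand: $\Linfd=\operatorname{L}^\infty(\whG)$ is a MASA on $\LL^2(\GG)=\LL^2(\whG)$, whence $\Linfd\cap\Linfd'=\Linfd$, and the one-dimensionality of the fibres forces $\mc{Q}_L$ to carry multiplication operators to diagonalisable ones. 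I would also identify the Haar weights of $\whG$ as integration against the Haar measures, $\hvp=\int\cdot\,\md\mu_L$ and $\hpsi=\int\cdot\,\md\mu_R$, with GNS maps $\Lhvp(f)=f\in\LL^2(\mu_L)$ and $\Lhvps(f)=f\in\LL^2(\mu_R)$, so that $\hat\delta=\tfrac{\md\mu_R}{\md\mu_L}$ is precisely the modular element entering $D_\zeta,E_\zeta$.

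A useful preliminary reduction is the following. All the measures $\mu_p=\hat\delta^p\mu_L$ are mutually equivalent, with $\tfrac{\md\mu_{p'}}{\md\mu_p}=\hat\delta^{p'-p}>0$, and a direct check shows that the sextuples for different $p$ are related exactly by the recipe of Proposition \ref{stw14}: passing from $\mu_p$ to $\mu_{p'}=\hat\delta^{p'-p}\mu_p$ sends $D_\zeta=\hat\delta(\zeta)^{p/2}$ to $\sqrt{\hat\delta^{p'-p}}\,\hat\delta^{p/2}=\hat\delta^{p'/2}$ and likewise $E_\zeta$ to $\hat\delta^{(p'-1)/2}$, while the canonical unitary of Proposition \ref{stw14} carries $\mc{Q}_L$ (resp.\ $\mc{Q}_R$) for $\mu_p$ to the one for $\mu_{p'}$. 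Hence it suffices to verify the proposition for a single convenient value, e.g.\ $p=1$, where $\mu_1=\mu_R$ and $E_\zeta=1$, and then invoke Proposition \ref{stw14} for all other $p\in\RR$.

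For that fixed $p$ I would first confirm that $\mc{Q}_L,\mc{Q}_R$ are unitary by the change of variables $\md\mu_p=\hat\delta^p\md\mu_L=\hat\delta^{p-1}\md\mu_R$: for $\mc{Q}_L$ one has $\int_{\whG}|f|^2\hat\delta^{-p}\,\md\mu_p=\int_{\whG}|f|^2\,\md\mu_L=\|\Lhvp(f)\|^2$, and symmetrically for $\mc{Q}_R$ using that $\hat J J$ is unitary and $\int|f|^2\hat\delta^{-(p-1)}\md\mu_p=\int|f|^2\md\mu_R$; density of the images gives surjectivity. Point $7.2)$ is then essentially definitional once one identifies $(\alpha\otimes\id)U^\zeta=\mathrm{ev}_\zeta(\lambda(\alpha))=\lambda(\alpha)(\zeta)$, which follows from $\pi((\alpha\otimes\id){\WW})=(\alpha\otimes\id)U^\pi$ together with $\Lambda_{\whG}(\lambda^u(\alpha))=\lambda(\alpha)$, as in the proof of Theorem \ref{PlancherelL}. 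The first commutation rule in point $3)/7.1)$ is equally easy: $(\omega\otimes\id)\mrW=\lambda(\omega)$ acts on $\operatorname{L}^\infty(\whG)$ as multiplication by the function $\zeta\mapsto(\omega\otimes\id)U^\zeta$, and since $\msf{H}_\zeta=\ov{\msf{H}_\zeta}=\CC$ the rule merely asserts that $\mc{Q}_L$ intertwines this multiplication with the corresponding diagonal operator, immediate from the definition of $\mc{Q}_L$.

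The one genuinely technical point, and the step I expect to be the main obstacle, is the second commutation rule, involving $\chi(\mrV)$ and the conjugate representations $\pi^c=\jmath_{\msf{H}_\pi}\circ\pi\circ\hat R^u$. Here I would use the explicit description for classical $\whG$: $\hat R$ is implemented by group inversion on $\operatorname{L}^\infty(\whG)$, the conjugations $\hat J,J$ and the modular operator $\nabla_{\hvp}$ act through powers of $\hat\delta$ as recorded in the conventions, and $\chi(\mrV)=(\hat J\otimes\hat J)\mrW^*(\hat J\otimes\hat J)$. The difficulty is purely bookkeeping: one must check that conjugating by $\hat J$ and passing to $\pi^c$ reproduces exactly the $\hat\delta$-powers packaged into $D_\zeta=\hat\delta^{p/2}$, $E_\zeta=\hat\delta^{(p-1)/2}$ and into $\mu_p$, so that all modular corrections cancel. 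This mirrors the role of $\nabla_{\hvp}^{1/2}\uprho^{1/2}$ commuting with $\Linfd$ in the compact computation, with $\uprho$ replaced by multiplication by $\hat\delta$; for $\mc{Q}_R$ the same computation goes through with $\hpsi=\hvp\circ\hat R$ and the prefactor $\hat J J$. Once both commutation rules are established, point $7)$ of Theorems \ref{PlancherelL} and \ref{PlancherelR} yields the claim for the chosen $p$, and Proposition \ref{stw14} propagates it to all $p\in\RR$.
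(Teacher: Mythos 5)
Your proposal is correct, and its overall skeleton --- verify unitarity of $\mc{Q}_L,\mc{Q}_R$ by change of variables, then check points $7.1)$--$7.3)$ of Theorems \ref{PlancherelL}, \ref{PlancherelR} --- is the same as the paper's. But you add one genuine structural improvement and you slightly misjudge where the work lies. The improvement is the reduction to a single value of $p$ via Proposition \ref{stw14}: the paper runs every verification uniformly in $p$, carrying the exponents $\hat{\delta}^{-p/2}$, $\hat{\delta}^{-(p-1)/2}$ through all computations, whereas your observation that $\mu_{p'}=\hat{\delta}^{p'-p}\mu_p$ and that the rescaled data $D'_\zeta=\sqrt{\hat{\delta}(\zeta)^{p'-p}}\,\hat{\delta}(\zeta)^{p/2}=\hat{\delta}(\zeta)^{p'/2}$, $E'_\zeta=\hat{\delta}(\zeta)^{(p'-1)/2}$, together with the composed unitaries, reproduce exactly the paper's formulas for $p'$ lets you do the computation once (say at $p=1$, where $E_\zeta=1$) and propagate it; this is a legitimate and cleaner use of the rescaling machinery the paper itself established in Proposition \ref{stw14}. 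On the other hand, the step you flag as ``the main obstacle'' --- the second commutation rule involving $\chi(\mrV)$ and $\pi^c$ --- is in fact the easiest part of this example, and for a reason worth internalizing: since $\LL^{\infty}(\whG)$ is commutative, the dual Haar weights are traces, so $\hat{J}\Lhvp(f)=\Lhvp(f^*)$ and hence $\hat{J}x^*\hat{J}=x$ for every $x\in\LL^{\infty}(\whG)$; consequently $(\omega\otimes\id)\chi(\mrV)=\hat{R}((\omega\otimes\id)\mrW)$, which is just composition with the group inversion, and since the fibres are one-dimensional $\pi^c$ reduces to evaluation at $\zeta^{-1}$. No $\hat{\delta}$-powers or modular corrections appear at all (indeed $\nabla_{\hvp}=\I$ here), so the analogy you draw with the operator $\nabla_{\hvp}^{\frac{1}{2}}\uprho^{\frac{1}{2}}$ from the compact case is unnecessary --- that device was needed there precisely because the dual Haar weights failed to be tracial. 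With this simplification your sketch closes without difficulty.
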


From this proposition follows that a general Plancherel measure is given by $g\mu_L$ for a strictly positive function $g$. We restrict our attention to the case $g=\hat{\delta}^{p}$ because this choice simplifies our calculations. On the other hand, we could as well chose $p=\tfrac{1}{2}$, because it will turn out that the measure $\mu_{\frac{1}{2}}$ is invariant under the conjugation. However, we prefer to describe a more general situation; it will give us a more general result (Corollary \ref{wniosek3}) and includes natural choices of measures $\mu_L,\mu_R$. Yet another reason for this choice is Proposition \ref{stw20} -- in this result we calculate functions $\varpi^{\zeta,\Omega,\mu_p}$ and see how it depends on $p$.\\

First, let us check that densely defined operators $\mc{Q}_L,\mc{Q}_R$ are isometric:
\[
\begin{split}
\bigl\| \int_{\whG}^{\oplus} f(\zeta) \hat{\delta}(\zeta)^{-\frac{p}{2}}
\md\mu_p(\zeta) \bigr\|^2
&=
\int_{\whG} |f(\zeta)|^2 \hat{\delta}(\zeta)^{-p} \hat{\delta}(\zeta)^{p}\md\mu_L(\zeta)=
\|\Lambda_{\hvp}(f)\|^2,\\
\bigl\| \int_{\whG}^{\oplus} f(\zeta) \hat{\delta}(\zeta)^{-\frac{p-1}{2}}
\md\mu_p(\zeta) \bigr\|^2
&=
\int_{\whG} |f(\zeta)|^2 \hat{\delta}(\zeta)^{-p+1} \hat{\delta}(\zeta)^{p-1}\md\mu_R(\zeta)=
\|\Lambda_{\wh{\psi}}(f)\|^2.
\end{split}
\]
It follows that they extend to the whole $\LdG$. It is clear that they have dense image, therefore are unitary.\\
We will use point $7)$ of theorems \ref{PlancherelL}, \ref{PlancherelR}. We have
\[
\begin{split}
\mc{Q}_L (\Lhvp(\lambda(\alpha)))&=
\int_{\whG}^{\oplus} ((\alpha\otimes\id)\mrW) (\zeta)
\hat{\delta}(\zeta)^{-\frac{p}{2}}\md\mu_p(\zeta)\\
&=
\int_{\whG}^{\oplus} (\alpha\otimes\id)(U^{\zeta}) D_\zeta^{-1}
\md\mu_p(\zeta)
\end{split}
\]
for $\alpha\in\Lj$ such that $\lambda(\alpha)\in\mf{N}_{\hvp}$. Similarly,
\[
\begin{split}
\mc{Q}_R \hat{J}J(\Lambda_{\widehat{\psi}}(\lambda(\alpha)))
&=
\int_{\whG}^{\oplus} ((\alpha\otimes\id)\mrW) (\zeta)
\hat{\delta}(\zeta)^{-\frac{p-1}{2}}\md\mu_p(\zeta)\\
&=
\int_{\whG}^{\oplus} (\alpha\otimes\id)(U^{\zeta}) E_\zeta^{-1}
\md\mu_p(\zeta)
\end{split}
\]
for $\alpha\in\LL^1(\GG)$ such that $\lambda(\alpha)\in\mf{N}_{\hpsi}$. Consequently, point $7.2)$ holds. Now, for $f\in\mf{N}_{\hvp}$ and $\omega\in \Lj$ we have
\[
\begin{split}
&\mc{Q}_L (\omega\otimes\id)\mrW \Lambda_{\hvp}(f)
\\
=&\mc{Q}_{L} \Lambda_{\hvp} ((\omega\otimes\id)(\mrW) f)\\
=&\int_{\whG}^{\oplus} ((\omega\otimes\id)\mrW) (\zeta) f(\zeta
)
\hat{\delta}(\zeta)^{-\frac{p}{2}}\md\mu_p(\zeta)\\
=&
\bigl(\int_{\whG}^{\oplus} ((\omega\otimes\id)\mrW) (\zeta)\otimes\I_{\ov{\msf{H}_\zeta}} \md\mu_p(\zeta)\bigr)
\mc{Q}_L \Lambda_{\hvp}(f)\\
=&
\bigl( \int_{\whG}^{\oplus} (\omega\otimes\id)(U^{\zeta})\otimes
\I_{\ov{\msf{H}_\zeta}}\md\mu_p(\zeta)\bigr)\mc{Q}_L \Lhvp(f),
\end{split}
\]
which gives us the first commutation relation. We have $\chi(\mrV)=(\hat{J}\otimes\hat{J}){\mrW}^* (\hat{J}\otimes\hat{J})$, so that $(\omega\otimes\id)\chi(\mrV)=\hat{J} \hat{R}((\omega\otimes\id)\mrW)^* \hat{J}$ for $\omega\in\Lj$. The Haar integrals on $\whG$ are tracial, hence the operator $\hat{J}$ acts as follows: $\hat{J}\Lhvp(f)=\Lhvp(f^*)\,(f\in\mf{N}_{\hvp})$. Consequently for each $x\in \LL^\infty(\whG),f,g\in\mf{N}_{\hvp}$ the following holds
\[
\ismaa{\Lhvp(g)}{\hat{J} x^* \hat{J} \Lhvp(f)}=
\ismaa{\Lhvp(g)}{\Lhvp(fx)}=
\hvp(g^* fx)=\hvp(g^*xf)=
\ismaa{\Lhvp(g)}{x\Lhvp(f)}.
\]
It follows that $\hat{J}x^*\hat{J}=x\,(x\in \LL^{\infty}(\whG))$ and
\[
(\omega\otimes\id)\chi(\mrV)=\hat{R}((\omega\otimes\id)\mrW)\quad(\omega\in\Lj).
\]
We obviously have $\hat{R}(x)(\zeta)=x(\zeta^{-1})\,(x\in \LL^{\infty}(\whG),\zeta\in\whG)$, therefore
\[\begin{split} 
&\quad\; \mc{Q}_L (\omega\otimes\id)\chi(\mrV) \Lhvp(f)\\
&=\int_{\whG}^{\oplus} ((\omega\otimes\id)\chi(\mrV) f)(\zeta) \hat{\delta}(\zeta)^{-\frac{p}{2}} \md\mu_p(\zeta)\\
&=
\int_{\whG}^{\oplus}  (\omega\otimes\id)\mrW (\zeta^{-1})f(\zeta) \hat{\delta}(\zeta)^{-\frac{p}{2}} \md\mu_p(\zeta)
\end{split}\]
and on the other hand
\[\begin{split} 
&\quad\;\bigl(\int_{\IrrG}^{\oplus} \I_{\msf{H}_\pi}\otimes \pi^{c}((\omega\otimes\id)\mrW)\md\mu(\pi)\bigr)\mc{Q}_L\Lhvp(f)\\
&=
\bigl(\int_{\whG}^{\oplus} \I_{\msf{H}_\pi}\otimes \jmath_{\msf{H}_\zeta}(((\omega\otimes\id)\mrW)(\zeta^{-1}))\md\mu_p(\zeta)\bigr)\mc{Q}_L\Lhvp(f)\\
&=
\int_{\whG}^{\oplus} 
\bigl(
\I_{\msf{H}_\pi}\otimes \jmath_{\msf{H}_\zeta}(((\omega\otimes\id)\mrW)(\zeta^{-1}))
\bigr)
(f(\zeta)\hat{\delta}(\zeta)^{-\frac{p}{2}})
\md\mu_p(\zeta)\\
&=
\int_{\whG}^{\oplus} 
((\omega\otimes\id)\mrW)(\zeta^{-1})
f(\zeta)\hat{\delta}(\zeta)^{-\frac{p}{2}}
\md\mu_p(\zeta)
\end{split}\]
for $\omega\in\Lj,f\in\mf{N}_{\hvp}$, which ends the proof of commutation relations for $\mc{Q}_L$.\\
We have $\LL^{\infty}(\whG)\cap \LL^{\infty}(\whG)'=\LL^{\infty}(\whG)$ and it is clear that operator $\mc{Q}_L$ maps a function $x\in\LL^{\infty}(\whG)$ to the operator $\int_{\whG}^{\oplus} x(\zeta)\md\mu_p(\zeta)$. Note that for each $x\in \Linfd$ and $f\in\mf{N}_{\hpsi}$ we have
\[
\begin{split}
&\quad\;
\mc{Q}_R x \mc{Q}_R^*  \int_{\whG}^{\oplus} f(\zeta)
\hat{\delta}(\zeta)^{-\frac{p-1}{2}} \md\mu_p(\zeta)=
\mc{Q}_R x \hat{J}J \Lambda_{\hpsi}(f)=
\mc{Q}_R \hat{J}JJ \hat{J}x \hat{J}J \Lambda_{\hpsi}(f)\\
&=
\mc{Q}_R \hat{J}JJ x^*J \Lambda_{\hpsi}(f)=
\mc{Q}_R \hat{J}J\hat{R}(x) \Lambda_{\hpsi}(f)=
\mc{Q}_R \hat{J}J\Lambda_{\hpsi}(\hat{R}(x) f)\\
&=
\int_{\IrrG}^{\oplus} \hat{R}(x)(\zeta) f(\zeta) 
\hat{\delta}(\zeta)^{-\frac{p-1}{2}} \md\mu_p(\zeta),
\end{split}
\]
therefore $\mc{Q}_R x \mc{Q}_R^*=\int_{\whG}^{\oplus} \hat{R}(x) (\zeta)\md\mu_p(\zeta)$. Consequently, point $7.3)$ of theorems \ref{PlancherelL}, \ref{PlancherelR} holds. We are left to show the commutation relations for $\mc{Q}_R$. Take any $\omega\in \Lj$ and $f\in \mf{N}_{\hpsi}$. We have
\[\begin{split}
&\quad\;
\mc{Q}_R \hat{J}J(\omega\otimes\id)\mrW \Lambda_{\hpsi}(f)=
\int_{\whG}^{\oplus} ((\omega\otimes\id)\mrW)(\zeta) f(\zeta)
\hat{\delta}(\zeta)^{-\frac{p-1}{2}}\md\mu_p(\zeta)\\
&=
\int_{\whG}^{\oplus} (\omega\otimes\id)(U^{\zeta}) f(\zeta)
\hat{\delta}(\zeta)^{-\frac{p-1}{2}}\md\mu_p(\zeta)=
\bigl(\int_{\whG}^{\oplus}
(\omega\otimes\id)(U^{\zeta})\otimes\I_{\msf{H}_\zeta}\md\mu_p(\zeta)\bigr)
\mc{Q}_R \hat{J}J \Lambda_{\hpsi}(f)
\end{split}\]
and
\[\begin{split}
&\quad\;
\mc{Q}_R \hat{J}J (\omega\otimes\id)\chi(\mrV)
\Lambda_{\hpsi}(f)=
\mc{Q}_R \hat{J}J \hat{R}((\omega\otimes\id)\mrW)
\Lambda_{\hpsi}(f)\\
&=
\int_{\whG}^{\oplus}
\hat{R}((\omega\otimes\id)\mrW)(\zeta) f(\zeta)
\hat{\delta}(\zeta)^{-\frac{p-1}{2}}\md\mu_p(\zeta)\\
&=
\bigl( \int_{\whG}^{\oplus} \I_{\msf{H}_\zeta}\otimes
\ov{\zeta}((\omega\otimes\id)\mrW) \md\mu_p(\zeta)\bigr)
\mc{Q}_R \hat{J}J \Lambda_{\hpsi}(f).
\end{split}\]
This concludes the proof of Proposition \ref{stw18}.

\subsubsection{Functions $\varpi$}
Let us fix $\zeta\in\whG$ and take a measurable subset $\Omega\subseteq\whG$. Let $\sigma_{\Omega}=\int_{\whG}^{\oplus} \zeta' \,\md\chi_{\Omega}\mu_p(\zeta')$ be a representation of $\CGD$. Then 
\[
\begin{split}
\sigma_{\Omega}(f)=
\int_{\whG}^{\oplus} \zeta'(f)\,\md\chi_\Omega\mu_p(\zeta')=
\int_{\whG}^{\oplus} f(\zeta')\,\md\chi_\Omega\mu_p(\zeta')=\operatorname{M}_{f}\in\B(\LL^2(\whG,\chi_\Omega\mu_p))
\quad (f\in \CGD),
\end{split}
\]
where $\M_f$ is a multiplication operator and we identify the direct integral Hilbert space $\int_{\whG}^{\oplus} \CC \md\chi_{\Omega}\mu_p(\zeta)$ with $\LL^2(\whG,\chi_\Omega \mu_p)$.
Let us define an operator
\[
U\colon \CC\otimes\LL^2(\whG,\chi_\Omega\mu_p) \ni
1\otimes \xi\mapsto \hat{\delta}(\zeta)^{-\frac{p-1}{2}}R_{\zeta^{-1}}\xi\in \LL^2(\whG,\chi_{\Omega\zeta}\mu_p),
\]
where $R^p_{\zeta^{-1}}\in \B(\LL^2(\whG,\chi_\Omega\mu_p))$ is the usual shift by $\zeta^{-1}$ from the right. Operator $U$ is a well defined isometry. Indeed, for any $\xi\in\LL^2(\whG,\chi_\Omega\mu_p)$ we have
\[
\begin{split}
\|U(1\otimes \xi)\|^2=
\|\hat{\delta}(\zeta)^{-\frac{p-1}{2}} R_{\zeta^{-1}}\xi\|^2
&=
\hat{\delta}(\zeta)^{-p+1}\int_{\whG}|\xi(\zeta'\zeta^{-1})|^2\md \chi_{\Omega\zeta}\mu_p\\
&=
\hat{\delta}(\zeta)^{-p+1}\int_{\whG}|\xi(\zeta'\zeta^{-1})|^2
\chi_{\Omega\zeta}(\zeta')\hat{\delta}(\zeta')^{p-1}\md\mu_R(\zeta')\\
&=
\hat{\delta}(\zeta)^{-p+1}\int_{\whG}|\xi(\zeta')|^2
\chi_{\Omega}(\zeta')\hat{\delta}(\zeta'\zeta)^{p-1}\md\mu_R(\zeta')\\
&=
\int_{\whG}|\xi(\zeta')|^2
\chi_{\Omega}(\zeta')\hat{\delta}(\zeta')^{p-1}\md\mu_R(\zeta')
=
\|1\otimes\xi\|^2.
\end{split}
\]
Surjectivity of $U$ is clear, hence $U$ is unitary. Moreover, it is an intertwiner between $\zeta\tp\sigma_\Omega=(\zeta\otimes\sigma_\Omega) \Delta_{\whG}^{u,\operatorname{op}}$ and $\sigma_{\Omega\zeta}$: for $f\in \mathrm{C
}_0(\whG),\xi,\eta\in \LL^2(\whG,\chi_{\Omega\zeta}\mu_p)$ we have
\[
\begin{split}
\is{\xi}{U \zeta\tp\sigma_\Omega(f) U^* \eta}&=
\is{\hat{\delta}(\zeta)^{\frac{p-1}{2}}1\otimes R_{\zeta}\xi}{\zeta\tp \sigma_\Omega(f) (\hat{\delta}(\zeta)^{\frac{p-1}{2}}1\otimes R_{\zeta}\eta)}\\
&=
\hat{\delta}(\zeta)^{p-1}\is{R_{\zeta}\xi}{f( \cdot\,\zeta )R_{\zeta}\eta}\\
&=
\hat{\delta}(\zeta)^{p-1}\int_{\whG} \ov{\xi(\zeta' \zeta)} f(\zeta'\zeta) \eta(\zeta'\zeta)\md\chi_\Omega \mu_p(\zeta')\\
&=
\hat{\delta}(\zeta)^{p-1}\int_{\whG} \chi_{\Omega}(\zeta'\zeta^{-1})
\ov{\xi(\zeta')} f(\zeta') \eta(\zeta')\hat{\delta}(\zeta'\zeta^{-1})^{p-1}\md\mu_R(\zeta')\\
&=
\int_{\whG} \ov{\xi(\zeta')} f(\zeta') \eta(\zeta')\md\chi_{\Omega\zeta}\mu_p(\zeta')=
\is{\xi}{M_f \eta}.
\end{split}
\]
Consequently $\E^{1}_{\zeta\stp\sigma_\Omega}=\E^{1}_{\sigma_{\Omega\zeta}}=\Omega\zeta$ and $\E^{n}_{\zeta\stp\sigma_\Omega}=\emptyset$ for $n\ge 2$. \\
We would like to find the function $\varpi^{\zeta,\Omega,\mu_p}$. Let us introduce two integral representations:
\[
\pi=\bigoplus_{n=1}^{\infty}\int_{\F^n_{\zeta\stp\sigma_\Omega}}^{\oplus}\zeta'\md\,(\mu_p)_{\F^n_{\zeta\stp\sigma_\Omega}}(\zeta')=
\int_{\Omega\zeta}^{\oplus}\zeta'\md\,(\mu_p)_{\Omega\zeta}(\zeta')
\]
and
\[
\gamma=\int_{\Omega}^{\oplus} \zeta\tp \zeta'\md\,(\mu_p)_\Omega(\zeta').
\]
Between these representations we have a unitary intertwiner $\mc{O}$.
For $f\in \CGD_+$  by the definition of the function $\varpi^{\zeta,\Omega,\mu_p}$ we have
\[
\int_{\Omega} \Tr(\mc{O} \pi(f)\mc{O}^*(\zeta'))\md\mu_p(\zeta')
=
\sum_{n=1}^{\infty} \int_{\F^n_{\kappa\stp\sigma_\Omega}} 
\varpi^{\zeta,\Omega,\mu_p}(\zeta') \Tr(\zeta'(f))\md\mu_p(\zeta')
\]
which can be translated to
\[
\int_{\Omega} f(\zeta'\zeta)\md\mu_p(\zeta')=
\int_{\Omega} \Tr(\zeta\tp\zeta'(f))\md\mu_p(\zeta')=
\int_{\Omega\zeta} \varpi^{\zeta,\Omega,\mu_p}(\zeta')f(\zeta')\md\mu_p(\zeta').
\]
Once we expand the definition of $\mu_p$ and use the right invariance of $\mu_R$ we arrive at
\[\begin{split}
&\quad\;\int_{\Omega\zeta} f(\zeta') \hat{\delta}(\zeta'\zeta^{-1})^{p-1}\md\mu_R(\zeta')=
\int_\Omega f(\zeta'\zeta) \hat{\delta}(\zeta')^{p-1}\md\mu_R(\zeta')\\
&=
\int_{\Omega\zeta} \varpi^{\zeta,\Omega,\mu_p}(\zeta')f(\zeta')\hat{\delta}(\zeta')^{p-1}\md\mu_R(\zeta').
\end{split}
\]
Since $f$ is arbitrary we have proved the following:

\begin{proposition}\label{stw20}
For any $\zeta\in\whG$ and any measurable subset $\Omega\subseteq\IrrG$ we have
\[
\zeta \tp \sigma_\Omega\simeq \sigma_{\Omega \zeta}\quad \textnormal{ and } \quad
\varpi^{\zeta,\Omega,\mu_p}(\zeta')=\hat{\delta}(\zeta)^{-p+1}\quad(\zeta'\in\Omega\zeta).
\]
\end{proposition}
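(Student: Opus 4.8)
The plan is to establish the two assertions in turn, extracting the value of $\varpi^{\zeta,\Omega,\mu_p}$ from the transformation rule for integral weights in Proposition \ref{treq} once the unitary equivalence $\zeta\tp\sigma_\Omega\simeq\sigma_{\Omega\zeta}$ is in hand. First I would realize $\sigma_\Omega$ concretely: identifying $\int_\whG^\oplus\CC\,\md\chi_\Omega\mu_p(\zeta')$ with $\LL^2(\whG,\chi_\Omega\mu_p)$, the representation $\sigma_\Omega$ becomes multiplication by $f$, and because $\tp$ is built from the opposite coproduct of $\whG$, the operator $\zeta\tp\sigma_\Omega(f)$ is multiplication by the right translate $f(\,\cdot\,\zeta)$. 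This already indicates that tensoring on the left by the point $\zeta$ should implement a right shift of the spectrum by $\zeta$, so that $\zeta\tp\sigma_\Omega\simeq\sigma_{\Omega\zeta}$.

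Next I would write down the candidate intertwiner explicitly as right translation by $\zeta^{-1}$ corrected by a modular factor, $U(1\otimes\xi)=\hat{\delta}(\zeta)^{-\frac{p-1}{2}}R_{\zeta^{-1}}\xi$, and verify that it is unitary and intertwines $\zeta\tp\sigma_\Omega$ with $\sigma_{\Omega\zeta}$. Both checks are direct computations: isometry uses the right invariance of $\mu_R$ together with the defining factor $\hat{\delta}^{\,p-1}$ of $\mu_p=\hat{\delta}^{\,p-1}\mu_R$, while the intertwining property is immediate once $\zeta\tp\sigma_\Omega(f)$ is seen to be multiplication by $f(\,\cdot\,\zeta)$. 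Surjectivity of $U$ being clear, this gives $\zeta\tp\sigma_\Omega\simeq\sigma_{\Omega\zeta}$; and since $\sigma_{\Omega\zeta}$ is a direct integral of pairwise inequivalent one dimensional representations, the decomposition is multiplicity free, so $\E^1_{\zeta\stp\sigma_\Omega}=\Omega\zeta$ and $\E^n_{\zeta\stp\sigma_\Omega}=\emptyset$ for $n\ge2$, equivalently $\F^1_{\zeta\stp\sigma_\Omega}=\Omega\zeta$ and $\F^n_{\zeta\stp\sigma_\Omega}=\emptyset$ for $n\ge2$.

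With the equivalence established I would invoke Proposition \ref{treq} for the two integral representations $\pi=\int_{\Omega\zeta}^\oplus\zeta'\,\md(\mu_p)_{\Omega\zeta}(\zeta')$ and $\gamma=\int_\Omega^\oplus\zeta\tp\zeta'\,\md(\mu_p)_\Omega(\zeta')$ joined by the unitary $\mc{O}$: equality of the rescaled integral weights, tested on diagonal positive elements $f\in\CGD_+$, collapses to the scalar identity $\int_\Omega f(\zeta'\zeta)\,\md\mu_p(\zeta')=\int_{\Omega\zeta}\varpi^{\zeta,\Omega,\mu_p}(\zeta')f(\zeta')\,\md\mu_p(\zeta')$. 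Unwinding $\mu_p=\hat{\delta}^{\,p-1}\mu_R$, applying right invariance of $\mu_R$ to the left hand side and the multiplicativity $\hat{\delta}(\zeta'\zeta^{-1})=\hat{\delta}(\zeta')\hat{\delta}(\zeta)^{-1}$, the arbitrariness of $f$ forces $\varpi^{\zeta,\Omega,\mu_p}(\zeta')=\hat{\delta}(\zeta)^{-(p-1)}=\hat{\delta}(\zeta)^{-p+1}$ for $\zeta'\in\Omega\zeta$, which is exactly the claim.

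The only genuinely delicate point is fixing the modular normalization in $U$ so that it is isometric between the two distinct measure spaces $\LL^2(\whG,\chi_\Omega\mu_p)$ and $\LL^2(\whG,\chi_{\Omega\zeta}\mu_p)$; the exponent $-\tfrac{p-1}{2}$ is precisely what absorbs the discrepancy between $\hat{\delta}(\zeta'\zeta)^{p-1}$ and $\hat{\delta}(\zeta')^{p-1}$ under the shift, and it is this same factor that propagates to the value $\hat{\delta}(\zeta)^{-p+1}$ of $\varpi$. Everything else is routine bookkeeping, the one thing to keep straight being that $\tp$ comes from the opposite coproduct, so that left tensoring by $\zeta$ produces a right rather than a left translation of the spectrum.
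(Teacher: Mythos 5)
Your proposal is correct and follows essentially the same route as the paper: the same identification of $\sigma_\Omega$ with multiplication operators on $\LL^2(\whG,\chi_\Omega\mu_p)$, the same unitary intertwiner $U(1\otimes\xi)=\hat{\delta}(\zeta)^{-\frac{p-1}{2}}R_{\zeta^{-1}}\xi$ (including the observation that the opposite coproduct turns left tensoring into a right shift of the spectrum), and the same extraction of $\varpi^{\zeta,\Omega,\mu_p}$ from Proposition \ref{treq} by testing the rescaled-weight identity on $f\in\CGD_+$ and unwinding $\mu_p=\hat{\delta}^{p-1}\mu_R$ via right invariance. The only cosmetic difference is that you make the multiplicity-free justification of $\E^1_{\zeta\stp\sigma_\Omega}=\Omega\zeta$, $\E^n_{\zeta\stp\sigma_\Omega}=\emptyset$ $(n\ge 2)$ explicit, which the paper leaves implicit.
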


\subsubsection{Operators $\mc{L}_\zeta$ and integral charactes}
Let us now find out how the operator $\mc{L}_\zeta$ works. We have $E_\zeta=\hat{\delta}(\zeta)^{\frac{p-1}{2}}$, hence $\Tr(E^2_\zeta)^{\frac{1}{2}}=\hat{\delta}(\zeta)^{\frac{p-1}{2}}$. By the definition of $\mc{L}_\zeta$ we have
\[\begin{split}
\mc{L}_\zeta\colon \LL^2(\whG,\mu_p)\ni&
\Tr(E^2_\bullet)^{\frac{1}{2}}\chi_\Omega=
\hat{\delta}^{\frac{p-1}{2}}\chi_\Omega\mapsto\\
\mapsto&
\Tr(E^2_\bullet)^{\frac{1}{2}} \varpi^{\zeta,\Omega,\mu_p}\sum_{n=1}^{\infty}\chi_{\F^n_{\zeta\stp\sigma_\Omega}}=
\hat{\delta}^{\frac{p-1}{2}} \hat{\delta}(\zeta)^{-p+1} \chi_{\Omega\zeta}\in\LL^2(\whG,\mu_p).
\end{split}
\]
For a finite measure subset $\Omega\subseteq\whG$ such that $\sup_{\Omega}\hat{\delta}<+\infty$ we can write:
\[\begin{split}
&\quad\;\mc{L}_\zeta(\hat{\delta}^{\frac{p-1}{2}}\chi_\Omega)=
\hat{\delta}(\zeta)^{-p+1}\hat{\delta}^{\frac{p-1}{2}}\chi_{\Omega\zeta}=
\hat{\delta}(\zeta)^{-p+1}(\hat{\delta}^{\frac{p-1}{2}}\chi_{\Omega})(\cdot\,\zeta^{-1}) \hat{\delta}(\zeta)^{\frac{p-1}{2}}\\
&=
\hat{\delta}(\zeta)^{-\frac{p-1}{2}}(\hat{\delta}^{\frac{p-1}{2}}\chi_{\Omega})(\cdot\,\zeta^{-1})=
\hat{\delta}(\zeta)^{-\frac{p-1}{2}} R^p_{\zeta^{-1}}(\hat{\delta}^{\frac{p-1}{2}} \chi_{\Omega}),
\end{split}
\]
where as before $R^p_{\zeta^{-1}}\colon \LL^2(\whG,\mu_p)\ni f \mapsto f(\cdot \,\zeta^{-1})\in\LL^2(\whG,\mu_p)$ is a bounded operator. By the density of the subspace spanned by functions $\hat{\delta}^{\frac{p-1}{2}}\chi_\Omega$, we arrive at the following result:

\begin{proposition}
The operator $\mc{L}_{\zeta}$ is given by
\[
\mc{L}_\zeta=\hat{\delta}(\zeta)^{-\frac{p-1}{2}} R^p_{\zeta^{-1}}\quad(\zeta\in\whG).
\]
\end{proposition}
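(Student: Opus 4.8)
The plan is to establish the identity on a linearly dense generating set and then extend it by boundedness of both operators. Since every irreducible representation of $\whG$ is one dimensional, the operator $E_\zeta=\hat{\delta}(\zeta)^{\frac{p-1}{2}}$ is a scalar, so $\Tr(E^2_\bullet)^{\frac{1}{2}}=\hat{\delta}^{\frac{p-1}{2}}$. First I would apply the defining formula \eqref{eq24} to the generators $\hat{\delta}^{\frac{p-1}{2}}\chi_\Omega$ of $\mc{F}$, where $\Omega\subseteq\whG$ ranges over measurable sets of finite $\mu_p$-measure with $\sup_\Omega\hat{\delta}<+\infty$; after identifying $\LL^2(\IrrG)$ with $\LL^2(\whG,\mu_p)$, these functions span a dense subspace.

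Next I would feed in the conclusion of Proposition \ref{stw20}. It gives the unitary equivalence $\zeta\tp\sigma_\Omega\simeq\sigma_{\Omega\zeta}$, so that $\E^{1}_{\zeta\stp\sigma_\Omega}=\Omega\zeta$ and $\E^{n}_{\zeta\stp\sigma_\Omega}=\emptyset$ for $n\ge 2$, whence $\sum_{n=1}^{\infty}\chi_{\F^{n}_{\zeta\stp\sigma_\Omega}}=\chi_{\Omega\zeta}$; it also provides the constant value $\varpi^{\zeta,\Omega,\mu_p}\equiv\hat{\delta}(\zeta)^{-p+1}$ on $\Omega\zeta$. Substituting these two facts into \eqref{eq24} collapses the right-hand side to $\mc{L}_\zeta(\hat{\delta}^{\frac{p-1}{2}}\chi_\Omega)=\hat{\delta}(\zeta)^{-p+1}\,\hat{\delta}^{\frac{p-1}{2}}\chi_{\Omega\zeta}$.

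The only genuine manipulation is then algebraic. Using that $\hat{\delta}$ is a group homomorphism, I would rewrite $\hat{\delta}^{\frac{p-1}{2}}\chi_{\Omega\zeta}=\hat{\delta}(\zeta)^{\frac{p-1}{2}}\,(\hat{\delta}^{\frac{p-1}{2}}\chi_\Omega)(\cdot\,\zeta^{-1})=\hat{\delta}(\zeta)^{\frac{p-1}{2}}R^p_{\zeta^{-1}}(\hat{\delta}^{\frac{p-1}{2}}\chi_\Omega)$, and collect the powers of $\hat{\delta}(\zeta)$, whose exponents sum to $-p+1+\tfrac{p-1}{2}=-\tfrac{p-1}{2}$. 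This yields $\mc{L}_\zeta(\hat{\delta}^{\frac{p-1}{2}}\chi_\Omega)=\hat{\delta}(\zeta)^{-\frac{p-1}{2}}R^p_{\zeta^{-1}}(\hat{\delta}^{\frac{p-1}{2}}\chi_\Omega)$ on every generator.

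Finally, both $\mc{L}_\zeta$ (bounded by Theorem \ref{tw1}) and $\hat{\delta}(\zeta)^{-\frac{p-1}{2}}R^p_{\zeta^{-1}}$ are bounded operators on $\LL^2(\whG,\mu_p)$ agreeing on a dense subspace, so they coincide, proving the formula. I do not anticipate a real obstacle; the only points needing care are the exponent bookkeeping in the translation step and confirming that $R^p_{\zeta^{-1}}$ is the right shift with $\|R^p_{\zeta^{-1}}\|=\hat{\delta}(\zeta)^{\frac{p-1}{2}}$, which follows from right-invariance of $\mu_R$ by the very computation that made the intertwiner $U$ isometric in the proof of Proposition \ref{stw20}; this makes $\hat{\delta}(\zeta)^{-\frac{p-1}{2}}R^p_{\zeta^{-1}}$ a (unitary, hence bounded) operator.
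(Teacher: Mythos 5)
Your proposal is correct and follows essentially the same route as the paper: evaluate the defining formula \eqref{eq24} on the generators $\hat{\delta}^{\frac{p-1}{2}}\chi_\Omega$, substitute the conclusions of Proposition \ref{stw20} (namely $\sum_{n}\chi_{\F^n_{\zeta\stp\sigma_\Omega}}=\chi_{\Omega\zeta}$ and $\varpi^{\zeta,\Omega,\mu_p}\equiv\hat{\delta}(\zeta)^{-p+1}$), use the homomorphism property of $\hat{\delta}$ to collect the exponents into $-\tfrac{p-1}{2}$, and extend by density and boundedness. Your extra observation that $\hat{\delta}(\zeta)^{-\frac{p-1}{2}}R^p_{\zeta^{-1}}$ is unitary (via right-invariance of $\mu_R$) is a correct refinement of the paper's remark that $R^p_{\zeta^{-1}}$ is merely bounded, but does not change the argument.
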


Let $\nu\in \LL^1(\whG,\mu_p)$ be a positive function. We define as usual
\[
\mc{L}_\nu=\int_{\whG}\tfrac{\nu(\zeta)}{\dim(\zeta)} \mc{L}_\zeta \md\mu_p(\zeta)=\int_{\whG} \nu(\zeta)\mc{L}_\zeta\md\mu_p(\zeta),
\]
and the above result gives us
\[
\mc{L}_\nu(f)
=
\int_{\whG}\nu(\zeta)\hat{\delta}(\zeta)^{-\frac{p-1}{2}}
f(\cdot \zeta^{-1})
\md\mu_p(\zeta)\quad(f\in\LL^2(\whG,\mu_p)).
\]
Consider the operator
\[
V_p\colon \LL^2(\whG,\mu_L)\ni f \mapsto \hat{\delta}^{-\frac{p-1}{2}}\,f(\cdot^{-1})\in\LL^2(\whG,\mu_p).
\]
It is unitary: indeed, it is isometric:
\[
\begin{split}
\|V_p(f)\|_{\LL^2(\whG,\mu_p)}^2&=
\int_{\whG}|f(\zeta^{-1})|^2 \hat{\delta}(\zeta)^{-p+1} \md\mu_p(\zeta)
=
\int_{\whG}|f(\zeta^{-1})|^2 \hat{\delta}(\zeta) \md\mu_L(\zeta)\\
&=
\int_{\whG} |f|^2\md\mu_L=\|f\|_{\LL^2(\whG,\mu_L)}^2
\quad(f\in \LL^2(\whG,\mu_L))
\end{split}
\]
and it is clear that $V_p$ is surjective. The inverse is given by $V_p^*\colon \LL^2(\whG,\mu_p)\ni f\mapsto\hat{\delta}^{-\frac{p-1}{2}}f(\cdot^{-1})\in\LL^2(\whG,\mu_L)$:\\
\[
V_pV_p^*(f)=\hat{\delta}^{-\frac{p-1}{2}} V_p^*(f)(\cdot^{-1})=
\hat{\delta}^{-\frac{p-1}{2}} (\hat{\delta}^{-\frac{p-1}{2}} f(\cdot^{-1}))(\cdot^{-1})=
\hat{\delta}^{-\frac{p-1}{2}}\hat{\delta}^{\frac{p-1}{2}}f=f.
\]
We need to translate our function $\nu\in \LL^1(\whG,\mu_p)$ to $\LL^1(\whG,\mu_L)$: define $\nu'=\hat{\delta}^{-p+1} \nu(\cdot^{-1})$. Then
\[
\|\nu'\|_{\LL^1(\whG,\mu_L)}=
\int_{\whG} |\nu(\zeta^{-1})| \hat{\delta}(\zeta)^{-p+1} \md\mu_{L}(\zeta)=
\int_{\whG} |\nu(\zeta)|\hat{\delta}(\zeta) \hat{\delta}(\zeta)^{p-1}\md\mu_L(\zeta)=
\|\nu\|_{\LL^1(\whG,\mu_p)}.
\]
Denote by $\lambda$ the regular representation of $\whG$ on $\LL^2(\whG,\mu_L)$: $\lambda_\zeta(f)(\zeta')=f(\zeta^{-1}\zeta')$. The operator $V_p$ transports the regular representation to $\LL^2(\whG,\mu_p)$: let $\zeta,\zeta'\in\whG,f\in \LL^2(\whG,\mu_p)$. We have
\[
\begin{split}
V_p\lambda_\zeta V_p^*(f)(\zeta')
&=
\hat{\delta}(\zeta')^{-\frac{p-1}{2}}\,\lambda_\zeta V_p^*(f)(\zeta'^{-1})\\
&=
\hat{\delta}(\zeta')^{-\frac{p-1}{2}}\,V_p^*(f)(\zeta^{-1}\zeta'^{-1})\\
&=
\hat{\delta}(\zeta')^{-\frac{p-1}{2}}\,
\hat{\delta}^{-\frac{p-1}{2}}(\zeta^{-1}\zeta'^{-1})\,f(\zeta'\zeta)\\
&=
\hat{\delta}(\zeta)^{\frac{p-1}{2}}f(\zeta'\zeta).
\end{split}
\]
Observe that we can write our operator $\mc{L}_\nu$ as an integral of operators $V_p\lambda_\zeta V_p^*$:
\[
\begin{split}
\mc{L}_{\nu}(f)&=
\int_{\whG}\nu(\zeta) \hat{\delta}(\zeta)^{-\frac{p-1}{2}} 
f(\cdot \zeta^{-1})\,\md\mu_p(\zeta)\\
&=
\int_{\whG}\nu(\zeta) \hat{\delta}(\zeta)^{-\frac{p-1}{2}}
f(\cdot \zeta^{-1}) \hat{\delta}(\zeta)^p\,\md\mu_L(\zeta)\\
&=
\int_{\whG} \nu(\zeta^{-1})\hat{\delta}(\zeta)^{\frac{p-1}{2}} 
f(\cdot \zeta) \hat{\delta}(\zeta)^{-p}\hat{\delta}(\zeta)\,\md\mu_L(\zeta)\\
&=
\int_{\whG} \nu(\zeta^{-1})\hat{\delta}(\zeta)^{-\frac{p-1}{2}} 
f(\cdot \zeta) \,\md\mu_L(\zeta)\\
&=
\int_{\whG}\nu(\zeta^{-1}) \hat{\delta}(\zeta)^{-p+1} 
(V_p \lambda_\zeta V_p^* )f\,\md\mu_L(\zeta)=
\int_{\whG} \nu'(\zeta) (V_p \lambda_\zeta V_p^* ) f \md\mu_L(\zeta).
\end{split}
\]
We have proved the following:

\begin{proposition}
The operator $\mc{L}_\nu$ is unitarily equivalent to the operator $\int_{\whG} \nu'(\zeta) \lambda_\zeta \md\mu_L(\zeta)$.
\end{proposition}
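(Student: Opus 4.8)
The plan is to exploit the computation already carried out just above, which reduced $\mc{L}_\nu$ to the \swot-convergent integral
\[
\mc{L}_\nu=\int_{\whG} \nu'(\zeta)\,(V_p\lambda_\zeta V_p^*)\md\mu_L(\zeta),
\]
where $V_p\colon\LL^2(\whG,\mu_L)\to\LL^2(\whG,\mu_p)$ is the unitary introduced above and $\nu'=\hat\delta^{-p+1}\nu(\cdot^{-1})$. Since $V_p$ does not depend on the integration variable $\zeta$, the whole statement amounts to pulling the fixed unitary $V_p$ (and $V_p^*$) out of this integral. Thus the only points to settle are that the target operator $\int_{\whG}\nu'(\zeta)\lambda_\zeta\md\mu_L(\zeta)$ is itself well defined, and that conjugation by $V_p$ commutes with the weak integration.

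First I would make sense of the operator $\int_{\whG}\nu'(\zeta)\lambda_\zeta\md\mu_L(\zeta)\in\B(\LL^2(\whG,\mu_L))$ exactly as $\mc{L}_\nu$ itself was defined, namely as a \swot-convergent integral. This is legitimate because $\nu'\in\LL^1(\whG,\mu_L)$ — indeed we have already noted $\|\nu'\|_{\LL^1(\whG,\mu_L)}=\|\nu\|_{\LL^1(\whG,\mu_p)}<+\infty$ — while $\|\lambda_\zeta\|=1$ for every $\zeta$ and the function $\whG\ni\zeta\mapsto\ismaa{f}{\lambda_\zeta g}\in\CC$ is measurable for all $f,g\in\LL^2(\whG,\mu_L)$. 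Hence the sesquilinear form $(g,f)\mapsto\int_{\whG}\nu'(\zeta)\ismaa{g}{\lambda_\zeta f}\md\mu_L(\zeta)$ is bounded by $\|\nu'\|_1$ and defines the required operator.

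Next I would verify the interchange at the level of matrix coefficients. For arbitrary $f,g\in\LL^2(\whG,\mu_p)$, the displayed formula for $\mc{L}_\nu$ gives, using that $V_p$ is unitary,
\[
\ismaa{g}{\mc{L}_\nu f}=\int_{\whG}\nu'(\zeta)\ismaa{g}{V_p\lambda_\zeta V_p^* f}\md\mu_L(\zeta)=\int_{\whG}\nu'(\zeta)\ismaa{V_p^* g}{\lambda_\zeta V_p^* f}\md\mu_L(\zeta).
\]
By the previous paragraph the right-hand side equals $\ismaa{V_p^* g}{\bigl(\int_{\whG}\nu'(\zeta)\lambda_\zeta\md\mu_L(\zeta)\bigr)V_p^* f}=\ismaa{g}{V_p\bigl(\int_{\whG}\nu'(\zeta)\lambda_\zeta\md\mu_L(\zeta)\bigr)V_p^* f}$. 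Since $f,g$ are arbitrary, this yields $\mc{L}_\nu=V_p\bigl(\int_{\whG}\nu'(\zeta)\lambda_\zeta\md\mu_L(\zeta)\bigr)V_p^*$, which is exactly the asserted unitary equivalence with conjugating unitary $V_p$.

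The argument involves no genuine obstacle: all the substantive computation (the formula for $\mc{L}_\nu$, the action of $V_p\lambda_\zeta V_p^*$, and the $\LL^1$-norm identity $\|\nu'\|_1=\|\nu\|_1$) is already established above. The only step requiring a little care is the routine justification that a fixed unitary conjugation may be moved through a \swot-convergent integral, which the matrix-coefficient computation settles.
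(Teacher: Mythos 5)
Your two auxiliary steps are correct: the well-definedness of $\int_{\whG}\nu'(\zeta)\lambda_\zeta\md\mu_L(\zeta)$ as a \swot-convergent integral (using $\nu'\in\LL^1(\whG,\mu_L)$, $\|\lambda_\zeta\|=1$ and measurability of matrix coefficients), and the matrix-coefficient argument showing that the fixed unitary $V_p$ passes through the weak integral, which is precisely the step the paper leaves implicit. The problem is your starting point. The identity
\[
\mc{L}_\nu=\int_{\whG} \nu'(\zeta)\,(V_p\lambda_\zeta V_p^*)\md\mu_L(\zeta)
\]
is not preceding material: it \emph{is} the paper's proof of the proposition (the display immediately before ``We have proved the following''). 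What genuinely precedes the statement is only the formula $\mc{L}_\nu(f)=\int_{\whG}\nu(\zeta)\hat{\delta}(\zeta)^{-\frac{p-1}{2}}f(\cdot\,\zeta^{-1})\md\mu_p(\zeta)$, the unitary $V_p$ together with the rule $V_p\lambda_\zeta V_p^*(f)=\hat{\delta}(\zeta)^{\frac{p-1}{2}}f(\cdot\,\zeta)$, and the definition $\nu'=\hat{\delta}^{-p+1}\nu(\cdot^{-1})$. Since your assumed identity differs from the proposition only by the interchange you then verify, taking it for granted amounts to assuming what is to be proved; as a self-contained argument your proposal therefore has a gap.

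What is missing is the change-of-variables computation linking the two displays above. Concretely: using $\md\mu_p=\hat{\delta}^{p}\md\mu_L$ and the substitution rule $\int_{\whG}h(\zeta)\md\mu_L(\zeta)=\int_{\whG}h(\zeta^{-1})\hat{\delta}(\zeta)\md\mu_L(\zeta)$, one gets
\[
\mc{L}_\nu(f)=\int_{\whG}\nu(\zeta)\hat{\delta}(\zeta)^{\frac{p+1}{2}}f(\cdot\,\zeta^{-1})\md\mu_L(\zeta)=
\int_{\whG}\nu(\zeta^{-1})\hat{\delta}(\zeta)^{\frac{1-p}{2}}f(\cdot\,\zeta)\md\mu_L(\zeta),
\]
and inserting $f(\cdot\,\zeta)=\hat{\delta}(\zeta)^{\frac{1-p}{2}}(V_p\lambda_\zeta V_p^*)f$ turns the last integral into $\int_{\whG}\nu(\zeta^{-1})\hat{\delta}(\zeta)^{1-p}(V_p\lambda_\zeta V_p^*)f\md\mu_L(\zeta)=\int_{\whG}\nu'(\zeta)(V_p\lambda_\zeta V_p^*)f\md\mu_L(\zeta)$. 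This short computation is the substantive content of the paper's proof; once you include it, your argument is complete and in fact slightly more careful than the paper's, since you justify explicitly the interchange of conjugation by $V_p$ with the weak integration.
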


It follows that our Theorem \ref{tw4} in the case of classical $\whG$ is closely related to the Kesten criterion of amenability (see for example \cite[Theorem G.4.4]{KP(T)}).\\
Now we would like to establish what the action of the integral characters is. Let us start with the following well known lemma:

\begin{lemma}
Let $\HH$ be a classical locally compact group. For each $\zeta\in\HH$ we have
\[
(\zeta\otimes\id)((\mrW^{\HH})^*)=\lambda_{\zeta^{-1}},
\]
where $\lambda_{\zeta}\in\M(\cst_{r}(\HH))=\M(\mathrm{C}_0(\wh{\HH}))$ is the left regular representation.
\end{lemma}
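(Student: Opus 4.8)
The plan is to compute $\mrW^{\HH}$ explicitly for the classical group $\HH$ and then recognise it as a direct integral of the left regular representation over its first leg, so that slicing by the point $\zeta$ simply reads off the fibre. First I would specialise the defining relation of the Kac--Takesaki operator from the Conventions section, $(\mrW^{\HH})^*(\Lvp(x)\otimes\Lvp(y))=(\Lvp\otimes\Lvp)(\Delta_{\HH}(y)(x\otimes\I))$, to the case $\GG=\HH$. Here the GNS map $\Lvp$ is the identity inclusion $\mf{N}_\vp\hookrightarrow\LL^2(\HH)$ (with respect to left Haar measure) and $\Delta_{\HH}(y)(s,t)=y(st)$, so that for $\xi\in\LL^2(\HH\times\HH)$ one obtains
\[
\bigl((\mrW^{\HH})^*\xi\bigr)(s,t)=\xi(s,st),\qquad
\bigl(\mrW^{\HH}\xi\bigr)(s,t)=\xi(s,s^{-1}t).
\]

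Next I would observe that, for each fixed $s$, the action of $\mrW^{\HH}$ in the second variable is precisely $\eta\mapsto\lambda_s\eta$, where $\lambda_s\eta(t)=\eta(s^{-1}t)$ is the left regular representation; in other words $\mrW^{\HH}=\int_{\HH}^{\oplus}\lambda_s\,\md s$ is decomposable over the first leg with fibre $\lambda_s$. Since $\mrW^{\HH}\in\M(\mathrm{C}_0(\HH)\otimes\cst_r(\HH))$ and $\zeta$ is the evaluation character $\mathrm{ev}_\zeta\in\Mor(\mathrm{C}_0(\HH),\CC)$, the slice $\zeta\otimes\id$ extends to a $\star$-homomorphism of multiplier algebras, and I claim $(\zeta\otimes\id)(\mrW^{\HH})=\lambda_\zeta$. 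To make this rigorous I would multiply the first leg by $g\in\mathrm{C}_0(\HH)$: the operator $(M_g\otimes\I)\mrW^{\HH}$ is decomposable with fibre $g(s)\lambda_s$, whence $(\zeta\otimes\id)\bigl((M_g\otimes\I)\mrW^{\HH}\bigr)=g(\zeta)\lambda_\zeta$; running $g$ through an approximate unit with $g(\zeta)\to1$ then yields $(\zeta\otimes\id)(\mrW^{\HH})=\lambda_\zeta$.

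Finally, since $\zeta\otimes\id$ is a $\star$-homomorphism it preserves adjoints, so
\[
(\zeta\otimes\id)\bigl((\mrW^{\HH})^*\bigr)=\bigl((\zeta\otimes\id)(\mrW^{\HH})\bigr)^*=\lambda_\zeta^*=\lambda_{\zeta^{-1}},
\]
using that $\lambda$ is a unitary representation. The one step demanding care is the passage from the decomposable field $s\mapsto\lambda_s$ to its value $\lambda_\zeta$ at the single point $\zeta$: pointwise evaluation of a direct integral is not a priori meaningful, and this is where the membership $\mrW^{\HH}\in\M(\mathrm{C}_0(\HH)\otimes\cst_r(\HH))$ together with the strong continuity of $s\mapsto\lambda_s$ is essential. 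The approximate-unit argument above is exactly what legitimises this evaluation, so I expect that to be the main (and only) genuine obstacle, the remaining computations being routine.
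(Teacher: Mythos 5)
Your explicit formula for $\mrW^{\HH}$ (with the paper's conventions, $(\mrW^{\HH}\xi)(s,t)=\xi(s,s^{-1}t)$), the identification of its fibres with $\lambda_s$, and the final adjoint step are all correct; note that the paper states this lemma as well known and gives no proof, so the only question is whether your argument stands on its own. It does not quite: the one step you yourself flag as delicate is exactly where it breaks. The cut-down $(M_g\otimes\I)\mrW^{\HH}$ with $g\in\mathrm{C}_0(\HH)$ is a multiplier of $\mathrm{C}_0(\HH)\otimes\cst_{r}(\HH)$, but it is \emph{not} an element of this \cst-algebra: under the identification $\mathrm{C}_0(\HH)\otimes\cst_{r}(\HH)\cong \mathrm{C}_0(\HH,\cst_{r}(\HH))$ (norm-continuous functions vanishing at infinity) it would have to correspond to $s\mapsto g(s)\lambda_s$, and this function is not norm-continuous unless $\HH$ is discrete, because $s\mapsto\lambda_s$ is only strongly (equivalently, strictly) continuous --- for $\HH=\RR$, say, $\|\lambda_s-\lambda_{s_0}\|=2$ whenever $s\neq s_0$. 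Consequently ``decomposable with fibre $g(s)\lambda_s$, whence $(\zeta\otimes\id)\bigl((M_g\otimes\I)\mrW^{\HH}\bigr)=g(\zeta)\lambda_\zeta$'' is a non sequitur: pointwise evaluation is legitimate for elements of $\mathrm{C}_0(\HH,\cst_{r}(\HH))$, and your cut-down never lands there. The subsequent approximate-unit limit therefore rests on an unproved identity, and the obstacle you announced is not actually removed by your argument.

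The repair is to compress \emph{both} legs. For $g\in\mathrm{C}_0(\HH)$ and $a\in\cst_{r}(\HH)$ the element $(g\otimes a)\mrW^{\HH}$ does belong to $\mathrm{C}_0(\HH)\otimes\cst_{r}(\HH)$: it acts fibrewise as $g(s)\,a\lambda_s$, and $s\mapsto a\lambda_s$ is norm-continuous (for $a=\lambda(f)$ with $f\in\LL^1(\HH)$ this is continuity of translation in the $\LL^1$-norm), so $(g\otimes a)\mrW^{\HH}$ corresponds to the function $s\mapsto g(s)\,a\lambda_s$ in $\mathrm{C}_0(\HH,\cst_{r}(\HH))$ and its slice at $\zeta$ is honestly $g(\zeta)\,a\lambda_\zeta$. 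On the other hand, multiplicativity of the extension of $\zeta\otimes\id$ to multiplier algebras gives $(\zeta\otimes\id)\bigl((g\otimes a)\mrW^{\HH}\bigr)=g(\zeta)\,a\,(\zeta\otimes\id)(\mrW^{\HH})$. Choosing $g$ with $g(\zeta)=1$ and letting $a$ range over $\cst_{r}(\HH)$ yields $a\,(\zeta\otimes\id)(\mrW^{\HH})=a\lambda_\zeta$ for all $a$, hence $(\zeta\otimes\id)(\mrW^{\HH})=\lambda_\zeta$ in $\M(\cst_{r}(\HH))$, since a multiplier is determined by its products with algebra elements. Your final step $(\zeta\otimes\id)\bigl((\mrW^{\HH})^*\bigr)=\lambda_\zeta^*=\lambda_{\zeta^{-1}}$ then goes through verbatim.
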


Note that in this lemma $\HH$, not $\widehat{\HH}$ is classical.\\

Now, let us go back to the situation where $\whG$ is a classical group. Take $\zeta\in\whG$. By the above lemma we have
\[
\chi(U^\zeta)=(\id\otimes\zeta)\mrW^{\GG}=(\zeta\otimes\id)((\mrW^{\whG})^*)=\lambda_{\zeta^{-1}}\in \M(\CG),
\]
where $\lambda_{\zeta}\in\M(\CG)\subseteq\B(\LdG)=\B(\LL^2(\whG))$ is the left regular representation of the classical group $\whG$.\\
Let $\Omega\subseteq\IrrG$ be a measurable subset with $\mu_p(\Omega)<+\infty$. Then of course $\int_\Omega\dim\md\mu_p<+\infty$, an according to the previous lemma, the integral character of the representation $\int_\Omega^{\oplus} \zeta \md\mu_p(\zeta)$ is given by
\[
\chi^{\int}(\int_\Omega^{\oplus} \zeta \md\mu_p(\zeta))=
\int_\Omega \lambda_{\zeta^{-1}}\md\mu_p(\zeta)\in \LL^{\infty}(\GG)=\LL(\whG).
\]
(Up to equivalence) the representation conjugate to $\zeta$ is $\zeta^{-1}$ -- it follows from the fact that $\hat{R}^u=\hat{R}$ is given by composition with the inverse. Let us now establish for which $p$ the measure $\mu_p$ is invariant under taking inverse: let $V\subseteq\whG$ be a compact subset. We have
\[
\mu_p(V^{-1})=
\int_{\whG} \chi_{V^{-1}} \hat{\delta}^p \md\mu_L=
\int_{\whG} \chi_V \hat{\delta}^{-p} \hat{\delta}\md\mu_L=
\mu_{-p+1}(V),
\]
which forces $p=-p+1$ and consequently $p=\tfrac{1}{2}$ ($\mu_{\frac{1}{2}}$ is the measure "between" the left and the right Haar measure on $\whG$). We know that a quantum group $\GG$ is coamenable if and only if the classical group $\whG$ is amenable (see e.g.~\cite{Brannan}). Note also that all finite dimensional representations of $\GG$ are admissible (\cite[Remark 3.3]{DasDawsSalmi}). We would like now to establish a corollary of Theorem \ref{tw2} in the case of classical $\whG$. Since the conditions in Theorem \ref{tw2} are concerned only with the spectrum intersected with the real line, we can consider the adjoint of the integral character and we arrive at the following corollary.

\begin{corollary}\label{wniosek3}
Let $G$ be a second countable locally compact group with the left Haar measure $\mu_L$, the right Haar measure $\mu_R$ and the modular element $\hat{\delta}=\tfrac{\md\mu_R}{\md \mu_L}$. The following conditions are equivalent:
\begin{enumerate}[label=\arabic*)]
\item $G$ is amenable.
\item For any $p\in \RR$ and any measurable subset $\Omega\subseteq G$ such that $\int_\Omega \hat{\delta}^p \md\mu_L<+\infty$ we have
\[
\int_\Omega \hat{\delta}^p\md\mu_L \in \sigma( \int_\Omega \hat{\delta}(\zeta)^p\lambda_{\zeta}  \md\mu_L(\zeta)).
\]
\item For any measurable subset $\Omega\subseteq G$ such that $\Omega=\Omega^{-1}$ and $\int_\Omega \hat{\delta}^{\frac{1}{2}} \md\mu_L<+\infty$ we have
\[
\int_\Omega \hat{\delta}^{\frac{1}{2}}\md\mu_L \in \sigma( \int_\Omega \hat{\delta}(\zeta)^{\frac{1}{2}}\lambda_{\zeta}  \md\mu_L(\zeta)).
\]
\end{enumerate}
\end{corollary}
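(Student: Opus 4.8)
The plan is to specialise Theorem \ref{tw2} to the quantum group $\GG=\wh{G}$ and then translate each of its three conditions into the concrete language of the classical group $G=\whG$. First I would record that the hypotheses of Theorem \ref{tw2} are satisfied: here $\CGDu=\CGD=\mathrm{C}_0(G)$ is commutative, hence of type I, and it is separable because $G$ is second countable; every irreducible representation is one-dimensional, hence finite-dimensional, and, as noted above, admissible by \cite[Remark 3.3]{DasDawsSalmi}. Since $\GG$ is coamenable if and only if $G$ is amenable, condition $1)$ of Theorem \ref{tw2} is precisely condition $1)$ of the corollary.

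Next, for a fixed $p\in\RR$, recall from Proposition \ref{stw18} that $\mu_p=\hat{\delta}^p\mu_L$ is a Plancherel measure. Because $\dim(\zeta)=1$ for all $\zeta$, we have $\int_\Omega\dim\md\mu_p=\mu_p(\Omega)=\int_\Omega\hat{\delta}^p\md\mu_L$, the real non-negative number appearing on both sides of the spectral condition. The integral character was already computed to be $\chi^{\int}(\int_\Omega^\oplus\zeta\md\mu_p(\zeta))=\int_\Omega\lambda_{\zeta^{-1}}\md\mu_p(\zeta)$. The key translation step is to pass to the adjoint: each $\lambda_{\zeta^{-1}}=\chi(U^\zeta)$ is unitary with $\lambda_{\zeta^{-1}}^*=\lambda_\zeta$ (equivalently, apply Lemma \ref{lemat32} together with $\ov{\zeta}=\zeta^{-1}$), and since the integral defining $\chi^{\int}$ converges $\sigma$-weakly, taking adjoints passes under the integral, giving
\[
\chi^{\int}\bigl(\int_\Omega^\oplus\zeta\md\mu_p(\zeta)\bigr)^*
=\int_\Omega\lambda_\zeta\md\mu_p(\zeta)
=\int_\Omega\hat{\delta}(\zeta)^p\lambda_\zeta\md\mu_L(\zeta).
\]
As $\mu_p(\Omega)$ is real and the spectrum of an operator is the complex conjugate of the spectrum of its adjoint, $\mu_p(\Omega)\in\sigma(\chi^{\int}(\cdots))$ if and only if $\mu_p(\Omega)\in\sigma(\int_\Omega\hat{\delta}^p\lambda_\zeta\md\mu_L)$.

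This identifies condition $2)$ of Theorem \ref{tw2}, specialised to the measures $\mu_p$, with condition $2)$ here; and, taking $p=\tfrac12$ together with the facts that $\mu_{1/2}$ is invariant under conjugation and that the symmetric subsets (those with $\ov{\Omega}=\Omega$) are exactly those with $\Omega=\Omega^{-1}$ since $\ov{\zeta}=\zeta^{-1}$, it identifies condition $3)$ of Theorem \ref{tw2} with condition $3)$ here. With these identifications the cycle closes using Theorem \ref{tw2}: $1)\Rightarrow2)$ because coamenability yields Theorem \ref{tw2}'s condition $2)$ for \emph{every} Plancherel measure, in particular each $\mu_p$; $2)\Rightarrow3)$ is the trivial restriction to $p=\tfrac12$ and $\Omega=\Omega^{-1}$; and $3)\Rightarrow1)$ is the implication $3)\Rightarrow1)$ of Theorem \ref{tw2} applied to the conjugation-invariant measure $\mu_{1/2}$, which is legitimate because all representations are admissible.

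I do not expect a serious obstacle here, as the statement is essentially a specialisation. The only points needing genuine care are the bookkeeping of the adjoint at the level of the group von Neumann algebra $\LL(G)$ — together with the remark that the spectrum computed there agrees with the spectrum in $\B(\LL^2(G))$, since a von Neumann algebra is inverse-closed — and the verification that restricting Theorem \ref{tw2}'s quantifiers (over all Plancherel measures and all symmetric sets) to the one-parameter family $\{\mu_p\}_{p\in\RR}$, and to $\mu_{1/2}$ in the final implication, still suffices to close the chain $1)\Rightarrow2)\Rightarrow3)\Rightarrow1)$.
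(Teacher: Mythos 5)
Your proof is correct and takes essentially the same route as the paper: specialising Theorem \ref{tw2} to the dual of the classical group, invoking Proposition \ref{stw18} for the Plancherel measures $\mu_p=\hat{\delta}^p\mu_L$, the identification $\chi(U^\zeta)=\lambda_{\zeta^{-1}}$, the facts that conjugation is $\zeta\mapsto\zeta^{-1}$ and that $\mu_{1/2}$ is the conjugation-invariant member of the family, and passing to adjoints because the spectral conditions only concern real points. Your additional remarks (inverse-closedness of $\LL(G)$ for the ambient-algebra independence of the spectrum, and the bookkeeping showing that restricting Theorem \ref{tw2}'s quantifiers to the family $\{\mu_p\}_{p\in\RR}$ still closes the cycle $1)\Rightarrow 2)\Rightarrow 3)\Rightarrow 1)$) merely make explicit what the paper leaves implicit.
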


Let us remark that in \cite[Theorem 1]{BergChristensen} Christian Berg and Jens Peter Reus Christens obtain a theorem in a similar spirit. However, they consider $\LL^p$ spaces and norm condition (rather than a condition concerning spectrum). Moreover, they consider more general measures (see also \cite[Theorem 3.2.2]{Greenleaf}).

\subsection{$\GG$ classical}
Let $\GG$ be a classical locally compact group and let $\mu_{\GG}$ be its left Haar measure. The symbol $\delta$ will denote the modular element of the classical group $\GG$, hence the Radon-Nikodym derivative $\delta=\tfrac{\md\mu_R}{\md \mu_\GG}$, where $\mu_R$ is the right invariant Haar measure given by $\mu_\GG$ composed with the inverse (again, note the difference in conventions). It is a well known result that the algebras associated with the dual locally compact quantum group $\whG$ are given by
\[
\CGD=\mathrm{C}_r^*(\GG),\quad
\CGDu=\mathrm{C}^*(\GG),\quad
\Linfd=\operatorname{L}(\GG).
\]
Moreover we have $\lambda(\omega)=\int_{\GG} \omega(x) \lambda_x\md\mu_{\GG}(x)$ for any $\omega\in\Lj$, where $\lambda_x\in \B(\LdG)$ is the unitary operator given by the left regular representation.
Let us now identify the subspaces $\mc{I},\mc{I}_R,\Ljsharp$:

\begin{proposition}$ $
\begin{enumerate}
\item We have $\mc{I}=\LL^1(\GG)\cap\LL^2(\GG)=\{\omega\in\LL^1(\GG)\,|\, \lambda(\omega)\in\mf{N}_{\hvp}\}$ and $\Lhvp(\lambda(\omega))=\omega$ for all $\omega\in \mc{I}$.
\item We have $\mc{I}_R=\LL^1(\GG)\cap \delta^{\frac{1}{2}}\LL^2(\GG)$ and $\xi_R(\omega)=\omega\delta^{-\frac{1}{2}}$ for all $\omega\in \mc{I}_R$.
\item We have $\LL^1_{\sharp}(\GG)=\Lj$ and $\alpha^{\sharp}=\ov{\alpha(\cdot^{-1})}\,\delta$ for all $\alpha\in \LL_{\sharp}^{1}(\GG)$.
\end{enumerate}
\end{proposition}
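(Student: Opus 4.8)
The plan is to translate the abstract objects $\mc{I}$, $\mc{I}_R$ and $\Ljsharp$ into their concrete classical meaning and then read everything off by elementary $\LL^2$-considerations. First I would record the explicit form of the relevant data: $\Linf=\LL^\infty(\GG)$ acts on $\LdG=\LL^2(\GG,\mu_\GG)$, the left Haar weight is $\vp=\int_\GG\cdot\,\md\mu_\GG$ with $\Lvp(f)=f$, the right Haar weight is $\psi=\int_\GG\cdot\,\delta\,\md\mu_\GG$ with $\Lvps(f)=\delta^{\frac12}f$ (so that $\|\Lvps(f)\|^2=\psi(f^*f)$), and a functional $\omega\in\Lj=\LL^1(\GG)$ pairs with $\Linf$ through $\omega(f)=\int_\GG\omega f\,\md\mu_\GG$; in particular $\omega(x^*)=\int_\GG\omega\,\ov{x}\,\md\mu_\GG$.

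For parts 1) and 2) I would invoke the two facts from the general theory recalled in the appendix: $\omega\in\mc{I}$ precisely when the densely defined functional $\Lvp(x)\mapsto\omega(x^*)$ on $\Lvp(\mf{N}_\vp)$ is bounded, in which case its representing vector is $\xi(\omega)$ and one has $\lambda(\omega)\in\mf{N}_{\hvp}$ with $\Lhvp(\lambda(\omega))=\xi(\omega)$; likewise $\omega\in\mc{I}_R$ iff $\Lvps(x)\mapsto\omega(x^*)$ is bounded, with representing vector $\xi_R(\omega)$. The computation is then a one-line application of the Riesz theorem. For $\mc{I}$, using $\Lvp(x)=x$ gives $\omega(x^*)=\ismaa{\Lvp(x)}{\omega}$, so the functional is the restriction of $\ismaa{\cdot}{\omega}$ and is bounded exactly when $\omega\in\LL^2(\GG)$, with $\xi(\omega)=\omega$; this yields $\mc{I}=\LL^1(\GG)\cap\LL^2(\GG)$ and $\Lhvp(\lambda(\omega))=\omega$, and the equality with $\{\omega\in\Lj\mid\lambda(\omega)\in\mf{N}_{\hvp}\}$ is the cited converse. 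For $\mc{I}_R$, substituting $v=\Lvps(x)=\delta^{\frac12}x$, i.e. $x=\delta^{-\frac12}v$, gives $\omega(x^*)=\int_\GG\omega\,\delta^{-\frac12}\ov{v}\,\md\mu_\GG=\ismaa{v}{\omega\delta^{-\frac12}}$, which is bounded exactly when $\omega\delta^{-\frac12}\in\LL^2(\GG)$, i.e. $\omega\in\delta^{\frac12}\LL^2(\GG)$, and $\xi_R(\omega)=\omega\delta^{-\frac12}$.

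For part 3) I would compute directly with $\lambda(\omega)=\int_\GG\omega(x)\lambda_x\,\md\mu_\GG(x)$. Since $\lambda_x^*=\lambda_{x^{-1}}$, we get $\lambda(\omega)^*=\int_\GG\ov{\omega(x)}\lambda_{x^{-1}}\,\md\mu_\GG(x)$, and the change of variables $x\mapsto x^{-1}$, which here reads $\int_\GG f(x^{-1})\,\md\mu_\GG(x)=\int_\GG f\,\md\mu_R=\int_\GG f\delta\,\md\mu_\GG$ by the definitions $\mu_R=\mu_\GG\circ(\cdot)^{-1}$ and $\delta=\frac{\md\mu_R}{\md\mu_\GG}$, turns this into $\int_\GG\ov{\omega(x^{-1})}\delta(x)\,\lambda_x\,\md\mu_\GG(x)=\lambda(\omega^\sharp)$ with $\omega^\sharp=\ov{\omega(\cdot^{-1})}\,\delta$. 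The same substitution gives $\|\omega^\sharp\|_1=\int_\GG|\omega(x^{-1})|\delta(x)\,\md\mu_\GG(x)=\|\omega\|_1$, so $\omega^\sharp\in\Lj$ for every $\omega\in\Lj$, and therefore $\Ljsharp=\Lj$.

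The main obstacle is one of conventions rather than mathematics: pinning down the precise definitions of $\mc{I}$, $\mc{I}_R$, $\xi$, $\xi_R$ together with the inner-product and involution conventions of the appendix, and in particular justifying the identity $\Lhvp(\lambda(\omega))=\xi(\omega)$, which is not a classical computation but rests on the Kustermans--Vaes construction of the dual weight $\hvp$ on $\Linfd=\LL(\GG)$. Once the explicit forms of $\Lvp$ and $\Lvps$ and the measure-substitution formula are fixed, parts 1)--3) follow from the elementary Riesz-representation and change-of-variables steps above; the only genuine care needed is to track the complex conjugations so that the representing vectors emerge as $\omega$ and $\omega\delta^{-\frac12}$ with the correct placement of $\delta$.
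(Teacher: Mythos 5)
Your Riesz-representation identification of $\mc{I}$ with $\LL^1(\GG)\cap\LL^2(\GG)$ and of $\xi(\omega)$ with $\omega$, your treatment of $\mc{I}_R$ in part 2), and your computation in part 3) are all correct and essentially identical to the paper's own proof: the paper runs the same Cauchy--Schwarz/Riesz argument for both GNS maps and the same change of variables $\int_\GG f(x^{-1})\,\md\mu_\GG(x)=\int_\GG f\delta\,\md\mu_\GG$ to get $\alpha^{\sharp}=\ov{\alpha(\cdot^{-1})}\,\delta$.

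The gap is in part 1), in the second set equality $\mc{I}=\{\omega\in\Lj\,|\,\lambda(\omega)\in\mf{N}_{\hvp}\}$, which you dispose of with the phrase ``the cited converse''. There is no such citation: the general theory (and what the paper's appendix recalls) gives only the forward inclusion, namely that $\omega\in\mc{I}$ implies $\lambda(\omega)\in\mf{N}_{\hvp}$ with $\Lhvp(\lambda(\omega))=\xi(\omega)$. The reverse inclusion is not automatic, because for a general $\omega\in\Lj$ with $\lambda(\omega)\in\mf{N}_{\hvp}$ nothing a priori relates the GNS vector $\Lhvp(\lambda(\omega))$ to the functional $x\mapsto\omega(x^*)$: the dual weight is built from $\lambda(\mc{I})$ alone, so the identity $\omega(x^*)=\ismaa{\Lvp(x)}{\Lhvp(\lambda(\omega))}$ has to be \emph{proved} before one can conclude that $\Lvp(x)\mapsto\omega(x^*)$ is bounded, i.e.\ that $\omega\in\mc{I}$. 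This is exactly where the paper spends most of its proof of part 1): using that $\lambda(\mc{I})$ is a $\ssots\times\|\cdot\|$ core for $\Lhvp$ (\cite[Lemma 2.5]{KustermansVaes}) it produces a net $(\omega_j)_j$ in $\mc{I}$ with $\lambda(\omega_j)\to\lambda(\omega)$ in \ssots{} and $\xi(\omega_j)\to\Lhvp(\lambda(\omega))$; it observes that $\omega_j(f)=\theta(\lambda(\omega_j))\to\theta(\lambda(\omega))=\omega(f)$ for elements of the special form $f=(\id\otimes\theta)\mrW$, $\theta\in\Ljd$, which form a core for $\Lambda_\vp$; and then, crucially using that $\vp$ is \emph{tracial} (this is where classicality of $\GG$ enters, guaranteeing $f\in\mf{N}_\vp\Leftrightarrow f^*\in\mf{N}_\vp$ and allowing the passage from $\Lvp(g_k)\to\Lvp(f^*)$ to $\Lvp(g_k^*)\to\Lvp(f)$), it extends the identity $\omega(f^*)=\ismaa{\Lvp(f)}{\Lhvp(\lambda(\omega))}$ from such special elements to all $f\in\mf{N}_\vp$. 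Without an argument of this kind your proof establishes $\mc{I}=\LL^1(\GG)\cap\LL^2(\GG)$ and $\Lhvp(\lambda(\omega))=\omega$, but not the claimed equality with $\{\omega\in\Lj\,|\,\lambda(\omega)\in\mf{N}_{\hvp}\}$.
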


\begin{proof}
 Take any $\omega\in \Lj\cap\LdG$ and $f\in \mf{N}_{\vp}$. We have
\[
\omega(f^*)=\int_{\GG} \omega(x) \ov{f(x)}\md\mu_{\GG}(x)=
\is{\Lambda_\vp (f)}{\omega},
\]
from which it follows that
\[
\omega\in\mc{I},\quad\Lhvp(\lambda(\omega))=\omega\quad(\omega\in\LL^1(\GG)\cap \LL^2 (\GG)).
\]
On the other hand, if $\omega\in\LL^1(\GG)$ is a function such that the operator
\[
\LdG\supseteq\Lambda_{\vp}(\mf{N}_{\vp})\ni \Lambda_{\vp}(f) \mapsto \int_{\GG}\omega(x) \ov{f(x)}\md\mu_{\GG}(x)\in\CC
\]
is bounded, then there exists a function $\nu\in \LL^2(\GG)$ satisfying
\[
\int_{\GG}\omega(x) \ov{f(x)} \md\mu_{\GG}(x)=
\int_{\GG}\nu(x) \ov{f(x)} \md\mu_{\GG}(x)\quad
(f\in \mf{N}_\vp).
\]
It follows that $\omega=\nu$ $\mu_{\GG}$-a.e., hence we have proved $\mc{I}=\LL^1(\GG)\cap \LL^2(\GG)$. Now we will prove that we also have
\[
\mc{I}=\{\omega\in \Lj\,|\, \lambda(\omega)\in \mf{N}_{\hvp}\}.
\]
Indeed, the inclusion $\subseteq$ holds in general. Take now $\omega\in\Lj$ such that $\lambda(\omega)\in \mf{N}_{\hvp}$. As $\lambda(\mc{I})$ is a core for $\Lhvp$ (\cite[Lemma 2.5]{KustermansVaes}) we can find a net $(\omega_j)_{j\in \mc{J}}$ in $\mc{I}$ such that $\lambda(\omega_j)\xrightarrow[j\in \mc{J}]{\ssots}\lambda(\omega)$ and $\Lhvp(\lambda(\omega_j))=\omega_j\xrightarrow[j\in\mc{J}]{}\Lhvp(\lambda(\omega))$. From the first convergence it follows that we have
\[
\omega_j ((\id\otimes\theta)\mrW)=\theta(\lambda(\omega_j))
\xrightarrow[j\in\mc{J}]{} \theta(\lambda(\omega))=
\omega((\id\otimes\theta)\mrW)
\]
for all $\theta\in \Ljd$. Take $f\in \mf{N}_{\vp}$ such that $f=(\id\otimes\theta)\mrW$ for some $\theta\in\LL^1(\whG)$. Notice that the space of such $f$'s forms a $\ssots\times\|\cdot\|$ core for $\Lambda_{\vp}$ (use this time \cite[Proposition 2.6]{KustermansVaes} for $\whG$). We get
\[
\ismaa{\Lvp(f^*)}{\Lhvp(\lambda(\omega))}=
\lim_{j\in\mc{J}}\ismaa{\Lvp(f^*)}{\Lhvp(\lambda(\omega_j))}=
\lim_{j\in\mc{J}} \omega_j(f)=\omega(f).
\]
Now, for arbitrary $f\in \mf{N}_{\vp}$ it follows that $f^* \in \mf{N}_{\vp}$ and we can find a net $(g_k)_{k\in\mc{K}}$ in \\$\{(\id\otimes\theta)\mrW\,|\,\theta\in\Ljd\}$ such that
\[
g_k\xrightarrow[k\in\mc{K}]{\ssots} f^*,\quad
\Lvp(g_k)\xrightarrow[k\in\mc{K}]{} \Lvp(f^*).
\]
It easily follows that we have $\Lvp(g_k^*)\xrightarrow[k\in\mc{K}]{} \Lvp(f)$ (recall that the weight $\vp$ is tracial). We arrive at
\[
\ismaa{\Lvp(f)}{\Lhvp(\lambda(\omega))}=
\lim_{k\in\mc{K}}\ismaa{\Lvp(g_k^*)}{\Lhvp(\lambda(\omega))}=
\lim_{k\in\mc{K}} \omega(g_k)=\omega(f^*),
\]
hence $\omega\in \mc{I}$.\\
Now, let $f\in \mf{N}_\psi$ be a function such that $\delta^{\frac{1}{2}}f\in \mf{N}_\vp$. The GNS construction of $\psi$ implies that $\Lambda_\psi(f)=\Lvp(f \delta^{\frac{1}{2}})=f \delta^{\frac{1}{2}}$. Consequently, for $\omega\in \LL^1(\GG)\cap \delta^{\frac{1}{2}} \LL^2(\GG)$ and $f$ as above we have
\[
\omega(f^*)=
\int_{\GG} \omega \ov{f}\md\mu_{\GG}=
\int_{\GG} \omega\delta^{-\frac{1}{2}} \ov{f \delta^{\frac{1}{2}}} \md\mu_{\GG}=
\ismaa{\Lambda_\psi(f)}{\omega\delta^{-\frac{1}{2}}}
\]
By density of the $\star$-algebra of the continuous functions with compact support we have $\omega\in \mc{I}_R$ and $\xi_R(\omega)=\omega\delta^{-\frac{1}{2}}\in\LL^2(\GG)$. Argument as above implies
\[
\mc{I}_R=\LL^1(\GG)\cap \delta^{\frac{1}{2}}\LL^2(\GG).
\]
Take $\alpha\in\LL^1(\GG)$. Then
\[
\lambda(\alpha)^*=
\int_{\GG} \ov{\alpha(x)} \lambda_{x^{-1}} \md\mu_{\GG}(x)=
\int_{\GG} \ov{\alpha(x^{-1})}\delta(x) \lambda_{x} \md\mu_{\GG}(x),
\]
hence $\LL^1_{\sharp}(\GG)=\LL^1(\GG)$ and $\alpha^{\sharp}=\ov{\alpha(\cdot^{-1})}\delta$.
\end{proof}

Since $\GG$ is classical, groups $(\tau_t)_{t\in\RR}, (\sigma^{\psi}_t)_{t\in\RR}$ are trivial and we have $\hat{\delta}^{it}=\I$ for all $t\in \RR$ (\cite[Proposition 5.15]{Daele}). We can therefore define $E_\pi=D_\pi$, and $\mc{Q}_R=\mc{Q}_L\circ J\hat{J}$.\\
Let us now describe a class of locally compact groups given by the semidirect product, which satisfy all the assumptions made in this paper:

\subsubsection{Semidirect product $N\rtimes H$}
Let $N$ be a locally compact, Hausdorff, second countable abelian group and $H$ a finite discrete group acting on $N$ via action $\alpha\colon H\rightarrow \operatorname{Aut}(N)$. Define $G=\GG$ to be the semidirect product $G=\GG=N\rtimes H$. This means that we have $(n,h)(n',h')=(n+\alpha_h(n'),hh')$ for all $(g,h),(g',h')\in G$. One easily checks that $G$ is unimodular and the Haar measure is given by the product measure of the Haar measure on $N$ and the counting measure on $H$. Since $N$ can be identified with a normal subgroup in $G$ (via $n\simeq (n,e_H))$ we have an action of $G$ on $\wh{N}$ given by
\[
g\cdot \nu(n)=\nu(g^{-1}ng)\quad(g\in G,n\in N,\,\nu\in \wh{N})
\]
Denote by $G_\nu$ the stabilizer of $\nu$. We wish to show that $G$ is a type I group with finite dimensional irreducible representations. Our tool is Theorem 6.43 from the Folland's book \cite{Folland}. In order to use it, we have to show that the action of $G$ on $\wh{N}$ is regular, which means (in this case) that for each $\nu\in\wh{N}$, the bijective map
\[
G/G_\nu\ni xG_\nu\mapsto x\cdot \nu\in \mc{O}_\nu=\{g\cdot\nu\,|\,g\in G\}
\]
is a homeomorphism (page 196 of \cite{Folland}).

\begin{lemma}
The action of $G$ on $\wh{N}$ is regular.
\end{lemma}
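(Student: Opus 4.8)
The plan is to exploit the finiteness of $H$ together with the fact that $N$ is abelian, which together force every orbit $\mc{O}_\nu$ to be finite; once this is established the homeomorphism claim becomes essentially automatic.

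First I would compute the action explicitly. For $g=(n',h')\in G$ and $n\in N$ one checks that $g^{-1}(n,e_H)g=(\alpha_{h'^{-1}}(n),e_H)$, so that
\[
g\cdot\nu(n)=\nu(\alpha_{h'^{-1}}(n))=(h'\cdot\nu)(n)\quad(n\in N).
\]
In particular $N$ acts trivially (this is where abelianness of $N$ enters, killing the dependence on $n'$), and the action of $G$ factors through the finite quotient $H=G/N$. Consequently $\mc{O}_\nu=\{h\cdot\nu\mid h\in H\}$ is a finite set, the stabilizer is $G_\nu=N\rtimes H_\nu$ with $H_\nu=\{h\in H\mid h\cdot\nu=\nu\}$, and the orbit--stabilizer correspondence gives a bijection $G/G_\nu\cong H/H_\nu\to\mc{O}_\nu$.

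Next I would verify the two topological points. Since $H$ is finite and discrete, $N\cong N\times\{e_H\}$ is an open subgroup of $G$ (the group is the disjoint union of the open sheets $N\times\{h\}$), and hence so is $G_\nu$; therefore $G/G_\nu$ is a finite discrete space. On the other side, $\wh{N}$ is the Pontryagin dual of a second countable locally compact abelian group, so it is itself a (second countable) locally compact Hausdorff group; a finite subset of a Hausdorff space carries the discrete topology, so $\mc{O}_\nu$ is discrete in the subspace topology inherited from $\wh{N}$.

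Finally I would assemble these facts. The orbit map $G\ni g\mapsto g\cdot\nu\in\wh{N}$ is constant on each sheet $N\times\{h'\}$, hence continuous (indeed locally constant), and it descends to the continuous bijection $G/G_\nu\to\mc{O}_\nu$ of the statement. A continuous bijection between discrete spaces is automatically a homeomorphism, so the map is a homeomorphism and the action is regular. The only step requiring genuine care --- and the point where the argument would break down for an infinite $H$ --- is the claim that $\mc{O}_\nu$ is discrete, i.e.\ locally closed, in $\wh{N}$: here it is immediate from finiteness, whereas in general establishing regularity of a group action is delicate (it amounts to local closedness of the orbits, equivalently to the orbit space being suitably separated).
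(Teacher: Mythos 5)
Your proof is correct and follows essentially the same route as the paper: compute the action explicitly to see it factors through the finite group $H$ (so orbits are finite, hence discrete in the Hausdorff space $\wh{N}$), observe $N\subseteq G_\nu$ forces $G/G_\nu$ to be discrete, and conclude since a (continuous) bijection between discrete spaces is a homeomorphism. The extra details you supply — openness of $N$ in $G$ and local constancy of the orbit map — are fine but not needed beyond what the paper records, since any bijection between discrete spaces is automatically a homeomorphism.
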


\begin{proof}
For $g=(n,h)\in G$, $n'\in N$, $\nu\in \wh{N}$ we have
\[
g^{-1}n'g=
(-\alpha_{h^{-1}}(n),h^{-1})(n',e_H)(n,h)=
(\alpha_{h^{-1}}(-n+n'),h^{-1})(n,h)
=
(\alpha_h(n'),e_H),
\]
hence
\[
g\cdot \nu(n')=\nu(\alpha_{h}(n')) \quad\textnormal{ and }\quad
\mc{O}_\nu=\{g\cdot\nu\,|\, g\in G\}=
\{\nu\circ\alpha_h\,|\,h\in H\},
\]
in particular, the orbit of $\nu$ is finite. The space $\wh{N}$ is Hausdorff, hence $\mc{O}_\nu$ is a discrete subspace. It is clear that $N\subseteq G_\nu$, therefore the space $G/G_\nu$ is also discrete. Indeed, any point in $G/G_\nu$ can be written as $h G_\nu$ for certain $h\in H$. Next, the set $\{h G_\nu\}\subseteq G/G_\nu$ is an image of an open set $\{(h,n)\,|\,n\in N\}$ via an open map $G\rightarrow G/G_\nu$. Consequently $G/G_\nu \rightarrow \mc{O}_\nu$ is a bijection between discrete spaces.
\end{proof}

Now we can describe all irreducible representations of $G$. Take $\nu\in \wh{N}$ and $\rho$, an irreducible representation of $H_\nu=G_\nu\cap H$ on the Hilbert space $\msf{H}_\rho$. Define a representation $\nu\rho$ of $G_\nu$ on $\msf{H}_\rho$ by
\[
(\nu\rho)(n,h)=\nu(n)\rho(h).
\]
Let $q\colon G\rightarrow G/G_{\nu}$ be the quotient map. Now we define a representation of $G$ as the induced representation $\Ind^G_{G_\nu}(\nu\rho)$: its Hilbert space $\msf{H}_{\Ind^G_{G_\nu}(\nu\rho)}$ is
\[
\msf{H}_{\Ind^G_{G_\nu}(\nu\rho)}=\{f\in C(G,\msf{H}_{\rho\nu})\,|\,
\forall_{g\in G,\,h\in G_{\nu}}\, f(gh)=(\nu\rho)(h^{-1})f(g)\}.
\]
Since $G/G_\nu$ is finite, there is no need for completion and $\msf{H}_{\Ind^G_{G_\nu}(\nu\rho)}$ has finite dimension. Scalar product on $\msf{H}_{\Ind^G_{G_\nu}(\nu\rho)}$ is given by:
\[
\is{f}{h}=\sum_{gG_\nu\in G/G_\nu} \is{f(g)}{h(g)}
\quad(f,h\in \msf{H}_{\Ind^G_{G_\nu}(\nu\rho)} )
\]
and the representation $\Ind^G_{G_\nu}(\nu\rho)$ is given by
\[
\Ind^G_{G_\nu}(\nu\rho)(g)f(g')=
f(g^{-1}g')\quad(g,g'\in G,f\in \msf{H}_{\Ind^G_{G_\nu}(\nu\rho)}).
\]
Theorem 6.43 of \cite{Folland} gives us a description of all irreducible representations of $G$:

\begin{proposition}
As a set, the space $\Irr(G)$ is given by
\[
\operatorname{Irr}(G)=\{[\Ind^{G}_{G_\nu}(\nu\rho)]\,|\, \nu\in\hat{N},\,\rho \textnormal{ - irreducible representation of }H_\nu\}.
\]
Moreover, representations $\Ind^{G}_{G_\nu}(\nu\rho)$ and $\Ind^{G}_{G_{\nu'}}(\nu'\rho')$ are equivalent if and only if there exists $x\in H$ such that $\nu'=x\cdot \nu$ and the representations $\rho,\;\rho'(x \cdot x^{-1})$ are equivalent.
\end{proposition}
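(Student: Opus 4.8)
The plan is to obtain this description as a direct application of the Mackey machine for regular semidirect products, namely Theorem 6.43 of \cite{Folland}, whose single nontrivial hypothesis---regularity of the action of $G$ on $\wh{N}$---has just been verified in the preceding lemma. Since $N$ is abelian, its unitary dual $\wh{N}$ coincides with the set of one--dimensional representations, so the normal subgroup analysis applies verbatim; and since $H$ is finite and discrete, $G_\nu$ is an open (hence closed, finite--index) subgroup of $G$, so the induced representations require no completion and the usual measure--theoretic subtleties disappear.

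First I would pin down the structure of the stabilizer. From the computation $g\cdot\nu(n')=\nu(\alpha_h(n'))$ for $g=(n,h)$ carried out in the previous lemma, the value of $g\cdot\nu$ depends only on $h$; hence $N\subseteq G_\nu$ for every $\nu$, and $G_\nu=N\rtimes H_\nu$ with $H_\nu=\{h\in H\mid \nu\circ\alpha_h=\nu\}=G_\nu\cap H$. Next I would check that $\nu\rho$, defined by $(\nu\rho)(n,h)=\nu(n)\rho(h)$, is a genuine unitary representation of $G_\nu$: the only point to verify is multiplicativity, and the potential obstruction is exactly the failure of $\nu$ to be $H_\nu$-invariant; since $h\in H_\nu$ forces $\nu\circ\alpha_h=\nu$, the character factor behaves correctly and the identity holds. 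This also records the standard fact that tensoring by the character $\nu$ yields a bijection between $\Irr(H_\nu)$ and the irreducible representations of $G_\nu$ whose restriction to $N$ is a multiple of $\nu$, which is the input the Mackey machine consumes.

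With the hypotheses in place I would invoke Theorem 6.43 of \cite{Folland} to conclude simultaneously that each $\Ind^G_{G_\nu}(\nu\rho)$ is irreducible, that every irreducible representation of $G$ is equivalent to one of these, and that the equivalence relation among them is governed by conjugacy of the data. The last clause is where care is needed: Folland's criterion states that $\Ind^G_{G_\nu}(\nu\rho)\simeq\Ind^G_{G_{\nu'}}(\nu'\rho')$ precisely when $\nu$ and $\nu'$ lie in a common $H$-orbit, say $\nu'=x\cdot\nu$, and the representation $\rho$ of $H_\nu$ is equivalent to the representation $h\mapsto\rho'(xhx^{-1})$ of $H_\nu$ (which makes sense since $xH_\nu x^{-1}=H_{\nu'}$)---that is, $\rho'(x\cdot x^{-1})$ in the notation of the statement.

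I expect the only real obstacle to be bookkeeping rather than mathematics: matching Folland's left/right conventions for the action of $G$ on $\wh{N}$ and for the conjugation appearing in the equivalence criterion, so that the conjugating element lands on the correct side and one arrives at exactly the stated condition $\rho'(x\cdot x^{-1})$. Because $H$ is finite there are no questions of measurability of the relevant cross--sections or of the Mackey--Borel structure to resolve, so once the conventions are aligned the statement follows immediately.
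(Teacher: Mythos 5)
Your proposal is correct and follows essentially the same route as the paper: after the regularity of the $G$-action on $\wh{N}$ is established in the preceding lemma, the paper likewise sets up the data $(\nu,\rho)$, forms $\nu\rho$ on $G_\nu$ and the induced representation, and obtains the proposition directly by citing Theorem 6.43 of Folland. The extra verifications you include (that $G_\nu=N\rtimes H_\nu$ and that $\nu\rho$ is multiplicative because $\nu\circ\alpha_h=\nu$ for $h\in H_\nu$) are sound and merely make explicit what the paper leaves implicit.
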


As a corollary we get the following result.

\begin{proposition}
$G=N\rtimes H$ is a type I locally compact group and its irreducible representations have dimension less or equal to 
\[
|H| \max\{\dim(\rho)\,|\, \rho \textnormal{ - irreducible representation of }K\le H\}<+\infty.
\]
\end{proposition}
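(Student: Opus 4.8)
The plan is to read off both assertions from the Mackey machine, which is now available because the preceding lemma shows that $G$ acts regularly on $\wh{N}$ and $N$ is a closed abelian normal subgroup. I would treat the two claims separately: type I-ness from the structure of the little groups, and the dimension bound from the explicit form of the induced representations in the proposition above.

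For type I, first I would record that each stabilizer splits as $G_\nu = N \rtimes H_\nu$, where $H_\nu = G_\nu \cap H$ is a subgroup of the finite group $H$ and hence finite. Since finite groups are type I and $N$ is abelian (so type I), every little group is type I. Invoking Folland's Theorem 6.43 --- whose regularity hypothesis is exactly the content of the previous lemma --- together with the fact that a regular extension of an abelian normal subgroup all of whose little groups are type I is itself type I, I would conclude that $G = N \rtimes H$ is type I. Equivalently, one may observe that $N$ is a closed abelian normal subgroup of finite index $|H|$ in $G$ and cite that a finite extension of a type I group is type I.

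For the dimension bound, the proposition above identifies every class in $\Irr(G)$ with some $\Ind^{G}_{G_\nu}(\nu\rho)$. Because $G/G_\nu$ is finite, the Hilbert space of the induced representation consists of $\msf{H}_\rho$-valued functions determined freely by their values on a set of coset representatives, so $\dim \Ind^{G}_{G_\nu}(\nu\rho) = [G : G_\nu]\,\dim(\nu\rho) = [G:G_\nu]\,\dim(\rho)$. Since $N \subseteq G_\nu$, $G/N \cong H$ and $G_\nu/N \cong H_\nu$, I get $[G:G_\nu] = [H:H_\nu] \le |H|$, while $\dim(\rho) \le \max\{\dim(\rho') : \rho' \textnormal{ irreducible representation of } K \le H\}$ as $\rho$ is an irreducible representation of the subgroup $H_\nu \le H$. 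Multiplying the two estimates yields the stated bound. I expect the only real obstacle to be pinning down the precise form of the Mackey machine that upgrades type I-ness of the little groups to type I-ness of $G$; the dimension computation itself is entirely routine.
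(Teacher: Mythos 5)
Your proposal is correct, and the dimension bound is handled exactly as in the paper: $\dim \Ind^G_{G_\nu}(\nu\rho)=[G:G_\nu]\dim(\rho)$, $N\subseteq G_\nu$, and $[G:G_\nu]=[H:H_\nu]\le |H|$. Where you genuinely diverge is the type I claim. The paper does \emph{not} invoke Mackey's little-group type I criterion or any finite-extension theorem; instead it bootstraps type I-ness from the dimension bound itself, using the criterion (Folland, Theorem 7.6) that a second countable locally compact group is type I if and only if $\mc{K}(\msf{H}_\pi)\subseteq \pi(\mathrm{C}^*(G))$ for every irreducible $\pi$. Once all irreducibles are known to be finite dimensional, this is a one-line check: $\pi(\mathrm{C}^*(G))=\pi(\mathrm{C}^*(G))''\supseteq\pi(G)''=\B(\msf{H}_\pi)=\mc{K}(\msf{H}_\pi)$, since a finite-dimensional $\star$-algebra equals its bicommutant and $\pi$ is irreducible. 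So in the paper the logical order is: classification (via Folland 6.43 and the regularity lemma) $\Rightarrow$ dimension bound $\Rightarrow$ type I, whereas you establish type I independently of the dimension bound. Both of your routes are sound: the little groups $H_\nu\le H$ are finite hence type I, so Mackey's criterion (regularity plus type I little groups) applies; alternatively $N$ is an open abelian normal subgroup of finite index, and finite extensions of type I groups are type I (this is essentially Moore's theorem on groups with bounded finite-dimensional irreducibles, or can be seen from $\mathrm{C}^*(G)\cong \mathrm{C}_0(\wh{N})\rtimes H$ with $H$ finite, so that the orbit space is automatically Hausdorff). What your approach buys is robustness: it would still work if the little groups were type I but the induced representations infinite dimensional. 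What the paper's approach buys is economy: it avoids citing the heavier Mackey/Moore machinery for type I-ness, needing only the elementary GCR criterion, at the price of making type I-ness logically dependent on first classifying all irreducible representations.
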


\begin{proof}
The second claim follows from the observation that
\[
\dim(\msf{H}_{\Ind^G_{G_\nu}(\nu\rho)})\le |G/G_\nu| \dim(\msf{H}_\rho)
\]
and the following facts: $N\subseteq G_\nu$ and $|G/N|=|H|$. We have the following criterion: a second countable locally compact group $G$ is type I if and only if for every irreducible representation $\pi$ of $G$ we have $\mc{K}(\msf{H}_\pi)\subseteq \pi(\mathrm{C}^{*}(G))$ (\cite[Theorem 7.6]{Folland}). In our case irreducible representations are finite dimensional, hence
\[
\pi(\mathrm{C}^*(G))=
\pi(\mathrm{C}^*(G))''\supseteq
\pi(G)''=\B(\msf{H}_\pi)=\mc{K}(\msf{H}_\pi).
\]
\end{proof}

\subsubsection{Special case: $\RR\rtimes \ZZ_2$}

In this subsection we will describe in detail the special case of the above construction, given by a semidirect product of $\RR$ and $\ZZ_2$, with the action of $\ZZ_2=\{1,-1\}$ on $\RR$ given by $\alpha_{-1}(t)=-t\,(t\in \RR)$\footnote{This construction can be generalized to $G\rtimes \ZZ_2$ with any abelian locally compact second countable Hausdorff group $G$.}. With $r\in \RR$ let us associate a one dimensional representation of $\RR$:
\[
\pi_r\colon \RR\ni t \mapsto \pi_r(t)=e^{itr}\in U(\CC^{1}).
\]
It is well known that we have $\wh{\RR}=\{\pi_r\,|\, r\in \RR\}$. Let us calculate the action of $G$ on $\wh{\RR}$: we have
\[
((r,a) \cdot \pi_s) (t)=
\pi_s((r,a)^{-1} (t,1) (r,a))=
\pi_s( (-ar,a) (t+r,a))=
\pi_s(at,1)=\pi_{as}(t)
\]
for all $(r,a)\in G,\,s,t\in\RR$, hence $(r,a)\cdot \pi_s=\pi_{as}$. Consequently, the orbits look as follows:
\[
G\cdot  \pi_0=\{\pi_0\},\quad
G\cdot \pi_s=\{\pi_{-s},\pi_s\}\quad(s\in\RR\setminus\{0\})
\]
and the stabilizer subgroups are given by
\[
G_{\pi_0}=G,\quad
G_{\pi_s}=\RR,\quad
H_{\pi_0}=H=\ZZ_2,\quad
H_{\pi_s}=\{1\}
\quad(s\in\RR\setminus\{0\}).
\]
(we have introduced notation $H=\ZZ_2$). Consequently, the group $H_{\pi_s}$ has only the trivial irreducible representation when $s\neq 0$ and two irreducible representations when $s=0$: trivial and the identity representation.\\
Let us first consider the case $s=0$. According to the general procedure we take $\rho$, an irreducible representation of $H_{\pi_0}=\ZZ_2$ and consider the representation of $G_{\pi_0}=G$ given by
\[
\pi_0 \rho\colon G_{\pi_0}\ni (t,a)\mapsto \pi_{0}(t)\rho(a)=\rho(a)\in U(\CC^1).
\]
Since $G_{\pi_0}=G$, the induction producedure is trivial and we have
\[
\Ind_{G_{\pi_0}}^{G}(\pi_0 \rho)\colon G\ni (t,a)\mapsto \pi_{0}(t)\rho(a)=\rho(a)\in U(\CC^1).
\]
It is clear that in this way we get two nonequivalent representations of $G$: trivial and 
\[
\sigma\colon G\ni (t,a)\mapsto a \in U(\CC^1).
\]
Let us now look at the case $s\in \RR\setminus\{0\}$. Since the little group $H_{\pi_s}$ is trivial, there is no representation $\rho$ involved. We have
\[
\msf{H}_{\operatorname{Ind}^{G}_{\RR}(\pi_s)}=
\{f\in C(G)\,|\, f(x\,(h,1))=\pi_r(-h)f(x)\,\forall x\in G,h\in \RR\}.
\]
Take $f\in \msf{H}_{\operatorname{Ind}^{G}_{\RR}(\pi_s)}$. Since
\[
f(t,a)=f((0,a)(at,1))=\pi_r(-at)f(0,a)=
e^{-iart} f(0,a)\quad((t,a)\in G)
\]
the dimension of $ \msf{H}_{\operatorname{Ind}^{G}_{\RR}(\pi_s)}$ equals $2$ and as a basis of this space we can take functions $f^s_1,f^s_{-1}$ determined by
\[
f^s_1(0,1)=1,\quad f^s_1(0,-1)=0,\quad
f^s_{-1}(0,1)=0,\quad f^s_{-1}(0,-1)=1.
\]
One easily checks that the induced representation $\operatorname{Ind}^{G}_{\RR}(\pi_s)$ is given by
\begin{equation}\label{eq26}
\operatorname{Ind}^{G}_{\RR}(\pi_s)(t,1) =
\begin{bmatrix}
e^{ist} & 0 \\
0 & e^{-ist}
\end{bmatrix},\quad
\operatorname{Ind}^{G}_{\RR}(\pi_s)(t,-1)=
\begin{bmatrix}
0 & e^{ist} \\
e^{-ist} & 0
\end{bmatrix}\quad(t,s\in\RR)
\end{equation}
in basis $(f^{s}_1,f^{s}_{-1})$. It follows from the general theory that $\Ind_{\RR}^{\GG}(\pi_s)$ is an irreducible representation which is equivalent to $\Ind_{\RR}^{\GG}(\pi_{-s})$ $(s\in\RR_{>0})$. Moreover, when $s>r>0$ then representations $\Ind_{\RR}^{\GG}(\pi_s),\Ind_{\RR}^{\GG}(\pi_r)$ are not equivalent. In order to ease the notation, let us write $\sigma_r=\Ind^{G}_{G_{\pi_r}}(\pi_r)\,(r\in \RR_{>0})$. Consequently, as a set the dual space of $G$ is given by
\[
\operatorname{Irr}(G)=\{ \sigma_r\,|\, r\in \RR_{>0}\}\cup
\{1\}\cup \{\sigma\}
\]
(we will abuse the notation and identify a class of representations with its representative).\\
One easily checks that the Mackey-Borel structure on $\operatorname{Irr}(G)$ (which is the Borel $\sigma$-algebra since $G$ is type I) is the most obvious one: sets $\{1\},\{\sigma\}$ are measurable, and a subset $\{\sigma_r\,|\, r\in E\}$ is measurable if and only if the corresponding subset $E\subseteq \RR_{>0}$ is measurable (for the relevant definitions, see \cite{DixmierC, Folland}). To sum up, we have proved the following result:

\begin{proposition}
The space $\Irr(G)$ is given by 
\[
\operatorname{Irr}(G)=\{ \sigma_r\,|\, r\in \RR_{>0}\}\cup
\{1\}\cup \{\sigma\},
\]
where $\sigma_r\,(r\in\RR_{>0})$ are two-dimensional representations given by \eqref{eq26}, $1$ is the trivial representation and $\sigma$ is the one-dimensional representation given by $\sigma(t,s)=s \,((t,s)\in G)$. In the above decomposition of $\Irr(G)$, sets $\{1\},\{\sigma\}$ are measurable, and the measurable structure on $\{\sigma_r\,|\, r\in \RR_{>0}\}$ agrees with the standard one on $\RR_{>0}$.
\end{proposition}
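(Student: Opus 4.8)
The plan is to assemble this statement from the general description of $\Irr(N\rtimes H)$ established above, specialized to the pair $(\RR,\ZZ_2)$. First I would record that, by the computation $(r,a)\cdot\pi_s=\pi_{as}$, the only orbits in $\wh{\RR}$ are the fixed point $\{\pi_0\}$ and the two-point sets $\{\pi_s,\pi_{-s}\}$ for $s\neq 0$, with little groups $H_{\pi_0}=\ZZ_2$ and $H_{\pi_s}=\{1\}$. Feeding these into the Mackey machine (the preceding proposition, which rests on \cite[Theorem 6.43]{Folland}) yields exactly the listed representations: the stabilizer $G_{\pi_0}=G$ together with the two characters of $\ZZ_2$ produces $1$ and $\sigma$, while each orbit $\{\pi_s,\pi_{-s}\}$ with its trivial little group produces a single two-dimensional induced representation, which I would realize concretely on $\CC^2$ through \eqref{eq26}. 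The relations $\sigma_s\simeq\sigma_{-s}$ and $\sigma_s\not\simeq\sigma_r$ for $s>r>0$, already noted, show that these classes are faithfully parametrized by $r\in\RR_{>0}$, giving the set-level equality $\Irr(G)=\{\sigma_r\mid r\in\RR_{>0}\}\cup\{1\}\cup\{\sigma\}$.

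For the measurable structure I would first invoke that, $G$ being second countable and type I, the space $\Irr(G)$ equipped with the Mackey-Borel $\sigma$-algebra is a standard Borel space; in particular every singleton is measurable, so $\{1\}$ and $\{\sigma\}$ are measurable with no further work. Moreover the dimension stratification is Borel: the sets $\Irr(G)\rest_n$ are measurable, and since here $\Irr(G)\rest_1=\{1,\sigma\}$ while $\Irr(G)\rest_2=\{\sigma_r\mid r\in\RR_{>0}\}$, the two-dimensional part is a Borel subset carrying its own standard structure. It then remains only to identify this structure with the usual one on $\RR_{>0}$, i.e. to prove that $\RR_{>0}\ni r\mapsto[\sigma_r]\in\Irr(G)\rest_2$ is a Borel isomorphism.

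The forward direction is easy: by \eqref{eq26} each matrix coefficient $r\mapsto\is{\xi}{\sigma_r(g)\zeta}$ is continuous (indeed real-analytic) for fixed $g\in G$ and $\xi,\zeta\in\CC^2$, so $r\mapsto\sigma_r$ is Borel into $\operatorname{Irrep}(G)_2$, and composing with the Borel quotient map to $\Irr(G)\rest_2$ keeps it Borel. For the inverse I would exhibit $r$ as a Borel function of the class: the character value $\chi_{\sigma_r}(t,1)=2\cos(rt)$ is an equivalence invariant depending Borel-measurably on $[\sigma_r]$, and $r=\bigl(\lim_{t\to0}t^{-2}\bigl(2-\chi_{\sigma_r}(t,1)\bigr)\bigr)^{1/2}$ recovers $r$ as a countable limit of Borel maps; alternatively, since $r\mapsto[\sigma_r]$ is an injective Borel map between standard Borel spaces, the Lusin--Souslin theorem gives at once that it is a Borel isomorphism onto its image. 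Either route yields that $\{\sigma_r\mid r\in E\}$ is measurable exactly when $E\subseteq\RR_{>0}$ is, completing the proof. The step requiring the most care is precisely this last one — tying the abstract Mackey-Borel structure to the explicit parameter $r$ — whereas all the representation-theoretic content is supplied by the orbit computation and the general semidirect-product proposition.
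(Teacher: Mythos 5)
Your proposal follows essentially the same route as the paper: the orbit computation $(r,a)\cdot\pi_s=\pi_{as}$, the identification of the little groups $H_{\pi_0}=\ZZ_2$ and $H_{\pi_s}=\{1\}$, and the Mackey machine of \cite[Theorem 6.43]{Folland} (via the general $N\rtimes H$ proposition) yield the set-level description exactly as in the text, with the two-dimensional classes realized through \eqref{eq26}. The only difference is that where the paper settles the measurable structure with ``one easily checks,'' you give an actual verification --- Borel-measurability of $r\mapsto[\sigma_r]$ from continuity of matrix coefficients, and Borel-measurability of the inverse either by recovering $r$ from character values or by Lusin--Souslin applied to an injective Borel map between standard Borel spaces --- which is correct and supplies the detail the paper leaves to the reader.
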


Denote by $\mu_G$ the Haar measure on $G$ and define a measure $\mu$ on $\Irr(G)$ by $\mu(\{1,\sigma\})=0$ and by $\tfrac{\md r}{4\pi}$ on $\{\sigma_r\,|\,r\in \RR_{>0}\}$. Next, define positive operators $D_\pi=\tfrac{\I_{\msf{H}_\pi}}{\sqrt{2}} (\pi\in \Irr(G))$ and a unitary operator
\begin{equation}\label{eq22}
\mc{Q}_L\colon \LL^2(G) \supseteq \LL^2(G)\cap \LL^1(G) \ni \alpha\mapsto 
\sqrt{2}\int_{\Irr(G)}^{\oplus} (\alpha\otimes\id)U^{\pi} \md\mu(\pi)\in
\int_{\Irr(G)}^{\oplus} \HS(\msf{H}_\pi) \md\mu(\pi).
\end{equation}
We will show that these objects satisfy assumptions of Theorem \ref{PlancherelL}. Because the set $\{1,\sigma\}$ is of $\mu$-measure $0$, we will identify $\Irr(G)$ with $\RR_{>0}$ as a measure space.\\ First, let us justify that $\mc{Q}_L$ is a well defined unitary operator. Take any $\alpha,\beta\in \LL^2(G)\cap\LL^1(G)$ and $r\in \RR\setminus\{0\}$. We have
\[
(\alpha\otimes\id)U^{\sigma_r}=\int_G \alpha(x) \sigma_r(x)\md\mu_G(x),
\]
and similarly for $\beta$, hence
\[
\Tr (((\alpha\otimes\id)U^{\sigma_r} )^*
 ((\beta\otimes\id)U^{\sigma_r}))=
\int_{G}\int_{G} \ov{\alpha(x)} \beta(x') \Tr( \sigma_r(x)^* 
\sigma_r(x'))
\md\mu(x)\md\mu(x')
\]
Let us calculate these traces: take any $t,t'\in \RR$. Using the equation \eqref{eq26} we have
\[
\Tr(\sigma_r(t,1)^* \sigma_r(t',1))=e^{ir (-t+t')}+e^{-ir (-t+t')}=
\Tr(\sigma_r(t,-1)^* \sigma_r(t',-1))
\]
and $\Tr(\sigma_r(t,1)^* \sigma_r(t',-1)=0$. It follows that
\[
\begin{split}
&\Tr (((\alpha\otimes\id)U^{\sigma_r} )^*
 ((\beta\otimes\id)U^{\sigma_r}))\\
&=
\int_{\RR}\int_{\RR} \ov{\alpha(t,1)} \beta(t',1)
 (e^{ir(t-t')} + e^{-ir(t-t')}) \md t \md t'\\
&\quad \,+
\int_{\RR}\int_{\RR} \ov{\alpha(t,-1)} \beta(t',-1)
 (e^{ir(t-t')} + e^{-ir(t-t')}) \md t \md t'.
\end{split}
\]
Let us introduce new functions of real variable $t$: $\alpha_1(t)=\alpha(t,1)$, $\alpha_{-1}(t)=\alpha(t,-1)$ and similarly for $\beta$. We will use the Fourier transform on $\RR$: $\mc{F}(f)(x)=\int_{\RR} e^{-2 \pi i x y} f(y) \,\md y$. We can rewrite the above expression:
\[
\begin{split}
&\quad\;\Tr(((\alpha\otimes\id)U^{\sigma_r} )^* ((\beta\otimes\id)U^{\sigma_r} ))\\
&=
\ov{\mc{F}(\alpha_1)}(\tfrac{r}{2\pi}) \mc{F}(\beta_1)(\tfrac{r}{2\pi})+
\ov{\mc{F}(\alpha_1)}(\tfrac{-r}{2\pi}) \mc{F}(\beta_1)(\tfrac{-r}{2\pi})\\
&\quad+
\ov{\mc{F}(\alpha_{-1})}(\tfrac{r}{2\pi}) \mc{F}(\beta_{-1})(\tfrac{r}{2\pi})+
\ov{\mc{F}(\alpha_{-1})}(\tfrac{-r}{2\pi}) \mc{F}(\beta_{-1})(\tfrac{-r}{2\pi}).
\end{split}
\]
Observe the following identity:
\[
\begin{split}
\Tr (((\alpha\otimes\id)U^{\sigma_{-r}} )^*
 ((\beta\otimes\id)U^{\sigma_{-r}}))=
\Tr (((\alpha\otimes\id)U^{\sigma_{r}} )^*
 ((\beta\otimes\id)U^{\sigma_{r}}))\quad(r\in\RR\setminus\{0\}).
\end{split}
\]
Using it, we arrive at the following equations:
\[
\begin{split}
&\quad\;\int_{\RR>0}\Tr(((\alpha\otimes\id)U^{\sigma_r} D_{\sigma_r}^{-1})^* ((\beta\otimes\id)U^{\sigma_r} D_{\sigma_r}^{-1})) \frac{\md r}{4\pi}\\
&=
\int_{\RR>0}\Tr(((\alpha\otimes\id)U^{\sigma_r} )^* ((\beta\otimes\id)U^{\sigma_r} )) \frac{\md r}{2\pi}\\
&=\int_{\RR}\Tr(((\alpha\otimes\id)U^{\sigma_r} )^* ((\beta\otimes\id)U^{\sigma_r} )) \frac{\md r}{4\pi}\\
&=
\int_{\RR}\bigl(
\ov{\mc{F}(\alpha_1)}(\tfrac{r}{2\pi}) \mc{F}(\beta_1)(\tfrac{r}{2\pi})+
\ov{\mc{F}(\alpha_1)}(\tfrac{-r}{2\pi}) \mc{F}(\beta_1)(\tfrac{-r}{2\pi})\\
&\quad+
\ov{\mc{F}(\alpha_{-1})}(\tfrac{r}{2\pi}) \mc{F}(\beta_{-1})(\tfrac{r}{2\pi})+
\ov{\mc{F}(\alpha_{-1})}(\tfrac{-r}{2\pi}) \mc{F}(\beta_{-1})(\tfrac{-r}{2\pi})\bigr)
\frac{\md r}{4 \pi}\\
&=
\int_{\RR} 2(\ov{\alpha_1} \beta_1+ \ov{\alpha_{-1}} \beta_{-1} ) \frac{\md r}{2}=
\int_{\RR} \bigl(\ov{\alpha(r,1)} \beta(r,1) + \ov{\alpha(r,-1)} \beta(r,-1) \bigr) \md r\\
&=
\int_{G} \ov{\alpha}\beta \md \mu_G.
\end{split}
\]
We have used the well known fact that the Fourier transform is a unitary operator. The above calculation shows that $\mc{Q}_L$ is a well defined isometry, hence it extends to the whole $\LdG$. Let us now justify that it is surjective. Take any function $f\in \LL^1(\RR)\cap \LL^2(\RR)$ and define $\alpha\colon G \ni (t,k)\mapsto \delta_{k,1} f(t)\in \CC$. We have
\[
(\alpha\otimes\id) U^{\sigma_r}=
\begin{bmatrix}
\int_{\RR} f(t)e^{irt}\md t & 0 \\
0 & \int_{\RR} f(t)e^{-irt}\md t
\end{bmatrix}=
\begin{bmatrix}
\mc{F}(f)(\tfrac{-r}{2\pi}) & 0\\
0 & \mc{F}(f)(\tfrac{r}{2\pi})
\end{bmatrix}
\]
and hence
\[
\mc{Q}_L (\alpha)=
\sqrt{2}\int_{\RR_{>0}}^{\oplus}
\begin{bmatrix}
\mc{F}(f)(\tfrac{-r}{2\pi}) & 0\\
0 & \mc{F}(f)(\tfrac{r}{2\pi})
\end{bmatrix}\frac{\md r}{4\pi}.
\]
For any $g\in \LL^2(\RR)$ we can find a sequence $(f_n)_{n\in\NN}$ in $\LL^1(\RR)\cap \LL^2(\RR)$ such that $(\mc{F}(f_n))_{n\in\NN}$ converges to $g$ in $\LL^2(\RR)$. It follows that
\[\begin{split}
&\quad\;\bigl\|\sqrt{2}
\int_{\RR_{>0}}^{\oplus}
\begin{bmatrix}
\mc{F}(f_n)(\tfrac{-r}{2\pi}) & 0\\
0 & \mc{F}(f_n)(\tfrac{r}{2\pi})
\end{bmatrix}\frac{\md r}{4\pi}-
\sqrt{2}\int_{\RR_{>0}}^{\oplus}
\begin{bmatrix}
g(\tfrac{-r}{2\pi})& 0\\
0 & g(\tfrac{r}{2\pi})
\end{bmatrix}\frac{\md r}{4\pi}
\bigr\|^2\\
&=
2\int_{\RR_{>0}} \bigl\|
\begin{bmatrix}
\mc{F}(f_n)(\tfrac{-r}{2\pi})-g(\tfrac{-r}{2\pi}) & 0\\
0 & \mc{F}(f_n)(\tfrac{r}{2\pi})-g(\tfrac{r}{2\pi})
\end{bmatrix}\bigr\|^2
\frac{\md r}{4\pi}\\
&=
\|\chi_{\RR_{<0}}(\mc{F}(f_n) - g)\|^2 + 
\|\chi_{\RR_{>0}}(\mc{F}(f_n)-g)\|^2=
\|\mc{F}(f_n)-g\|^2\xrightarrow[n\to\infty]{}0
\end{split}\]
and $\sqrt{2}\int_{\RR_{>0}}^{\oplus}
\begin{bmatrix} g(\tfrac{-r}{2\pi}) & 0 \\ 0 & g(\tfrac{r}{2\pi}) \end{bmatrix} \tfrac{\md r }{4\pi}$ belongs to the image of $\mc{Q}_L$. We can choose $g$ on $\RR_{\ge 0}$ and $\RR_{\le 0}$ independently, consequently for any $g,g'\in \LL^2(\RR_{>0})$ we have
\[
\int_{\RR_{>0}}^{\oplus}
\begin{bmatrix} g(r) & 0 \\ 0 & g'(r) \end{bmatrix} 
\tfrac{\md r }{4\pi}
\in \mc{Q}_L(\LL^2(G)).
\]
Analogous argument, this time using $\alpha(t,k)=\delta_{k,-1}f(t)$ shows that we have
\[
\int_{\RR_{>0}}^{\oplus}
\begin{bmatrix} g(r) & f(r) \\ f'(r) & g'(r) \end{bmatrix} 
\tfrac{\md r }{4\pi}
\in \mc{Q}_L(\LL^2(G))\quad(f,g,f',g'\in \LL^2(\RR_{>0}))
\]
and $\mc{Q}_L$ is a surjective operator. Moreover, since $\Lhvp(\lambda(\omega))=\omega$, we have checked point $7.2)$ of Theorem \ref{PlancherelL}.\\
Let us check the commutation relation $\mc{Q}_L (\alpha\otimes\id)\mrW^G=(\int_{\IrrG}^{\oplus} (\alpha\otimes\id)U^{\pi}\otimes \I_{\ov{\msf{H}_\pi}} \md\mu(\pi))\mc{Q}_L$ for $\alpha\in \LL^1(G)$. For any $\omega\in \mc{I}$ we have
\[\begin{split}
&\quad\;
\mc{Q}_L (\alpha\otimes\id)\mrW^G \Lhvp(\lambda(\omega))=
\mc{Q}_L \int_G \alpha(x) \lambda_x(\omega) \md\mu_G(x)=
\mc{Q}_L (\alpha\star\omega)\\
&=
\sqrt{2}\int_{\IrrG}^{\oplus}
(\alpha\star\omega\otimes\id) U^\pi\md\mu(\pi)=
\sqrt{2}\int_{\IrrG}^{\oplus} (\alpha\otimes\id)U^{\pi} \; (\omega\otimes\id)U^{\pi}
\md\mu(\pi)\\
&=
\sqrt{2}(\int_{\IrrG}^{\oplus} (\alpha\otimes\id)U^{\pi}\otimes \I_{\ov{\msf{H}_\pi}} \md\mu(\pi))
\int_{\IrrG}^{\oplus} 
(\omega\otimes\id)U^{\pi}
\md\mu(\pi)\\
&=
(\int_{\IrrG}^{\oplus} (\alpha\otimes\id)U^{\pi}\otimes \I_{\ov{\msf{H}_\pi}} \md\mu(\pi))\mc{Q}_L \Lhvp(\lambda(\omega))
\end{split}\]
from which the above equality follows.\\
Now the second relation. Since $G$ is unimodular, the dual group $\whG$ has tracial Haar integrals and as in the case of compact quantum groups we have
\[
(\alpha\otimes\id)\chi(\mrV^G)=\hat{J} \hat{R}((\alpha\otimes\id)\mrW^G)^* \hat{J}=
\hat{J} \lambda(\alpha^{\sharp}\circ R) \hat{J} \quad(\alpha\in \Lj),
\]
hence
\[\begin{split}
&\quad\;
\mc{Q}_L (\alpha\otimes\id)\chi(\mrV^G) \Lhvp(\lambda(\omega))=
\mc{Q}_L \hat{J} \Lhvp(\lambda(\alpha^{\sharp}\circ R)\lambda(\omega^{\sharp}))=
\mc{Q}_L \Lhvp(\lambda(\omega\star( \alpha\circ R)))\\
&=
\int_{\IrrG}^{\oplus} (\omega\star(\alpha\circ R)\otimes\id)U^{\pi}
\md\mu(\pi)\\
&=
\int_{\IrrG}^{\oplus} (\omega\otimes\id)U^{\pi}\;
(\alpha\circ R\otimes\id)U^{\pi}
\md\mu(\pi)\\
&=
\bigl(\int_{\IrrG}^{\oplus}
\I_{\msf{H}_\pi}\otimes ((\alpha\circ R\otimes\id)U^\pi)^T
\md\mu(\pi)\bigr)
\int_{\IrrG}^{\oplus} (\omega\otimes\id)U^\pi\md\mu(\pi)\\
&=
\bigl(\int_{\IrrG}^{\oplus}
\I_{\msf{H}_\pi}\otimes 
\pi^{c}((\alpha\otimes\id){\WW}^G)
\md\mu(\pi)\bigr)
\Lhvp(\lambda(\omega))
\end{split}\]
for any $\omega\in \mc{I}$. Let us check the equality $\mc{Q}_L( \Linfd\cap\Linfd')\mc{Q}_L^*=\Diag(\int_{\RR_{>0}}^{\oplus} \HS(\msf{H}_{\sigma_r}) \tfrac{\md r}{4\pi} )$. Equivalently we need to show $\mc{Q}_L (\Linfd\vee \Linfd') \mc{Q}_L^*=\Dec(\int_{\RR_{>0}}^{\oplus} \HS(\msf{H}_{\sigma_r}) \tfrac{\md r}{4\pi} )$. Since the right leg of $\mrW^G$ generates $\Linfd$, and the left leg of $\mrV^G$ generates $\Linfd'$ we have shown above the inclusion $\subseteq$. Equality follows from the reasoning similar to the one which showed surjectivity of $\mc{Q}_L$ -- the only difference is that one needs to pass to a subsequence in order to get convergence for almost all $r\in \RR_{>0}$.
This way we have proved the following:

\begin{proposition}
Operator $\mc{Q}_L$ introduced in the equation \eqref{eq22} and operators $D_\pi=\tfrac{1}{\sqrt{2}} \I_{\msf{H}_\pi}\,(\pi\in\Irr(G))$ are the operators given by Theorem \ref{PlancherelL}. Moreover, operators $\mc{Q}_R=\mc{Q}_L\circ J\hat{J},E_\pi=D_\pi,(\pi\in\Irr(G))$ are the operators given by Theorem \ref{PlancherelR}.
\end{proposition}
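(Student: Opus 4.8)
The plan is to obtain the claim from point $7)$ of Theorem \ref{PlancherelL}. Since $G$ is second countable and type I, the \cst-algebra $\CGDu=\mathrm{C}^*(G)$ is separable and $\Linfd=\operatorname{L}(G)$ is a von Neumann algebra of type I; hence Theorem \ref{PlancherelL} applies to $\GG$ and produces \emph{some} canonical system of Plancherel objects. I would then treat the explicitly defined $\mu$, $(D_\pi)_{\pi\in\Irr(G)}$ and $\mc{Q}_L$ as the primed data appearing in point $7)$, so that verifying hypotheses $7.1)$--$7.3)$ forces these objects to satisfy all of conditions $1)$--$6)$ of Theorem \ref{PlancherelL}. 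That is precisely the assertion for the left-handed objects.

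The first step is to check that $\mc{Q}_L$ is a well-defined unitary. Isometry is immediate from the Parseval identity for the Fourier transform on $\RR$, which yields $\|\mc{Q}_L\alpha\|^2=\int_G|\alpha|^2\md\mu_G$ for $\alpha\in\LL^1(G)\cap\LL^2(G)$; surjectivity I would get by approximating an arbitrary $g\in\LL^2(\RR)$ by Fourier transforms $\mc{F}(f_n)$ of functions $f_n\in\LL^1(\RR)\cap\LL^2(\RR)$ and noting that the two characters $\pi_{\pm r}$ sitting inside $\sigma_r$ let us prescribe the diagonal entries on $\RR_{>0}$ and $\RR_{<0}$ independently, with an analogous argument using the $(t,-1)$ component filling in the off-diagonal blocks. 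Hypothesis $7.2)$ is then the identity $\Lhvp(\lambda(\omega))=\omega$ combined with the definition \eqref{eq22} of $\mc{Q}_L$, taken over $\omega\in\mc{I}=\LL^1(G)\cap\LL^2(G)$, for which $\lambda(\mc{I})$ is a subspace of $\mf{N}_{\hvp}$ meeting the \wot -- sequential closure requirement of $7.2)$ (cf.\ Lemma \ref{lemat28}). Hypothesis $7.1)$, i.e.\ point $3)$, splits into the two commutation relations with $\mrW^G$ and $\chi(\mrV^G)$; I would verify them on the dense set $\{\Lhvp(\lambda(\omega))\mid\omega\in\mc{I}\}$, using $\alpha\star\omega$ for the first and the unimodularity identity $(\alpha\otimes\id)\chi(\mrV^G)=\hat{J}\lambda(\alpha^{\sharp}\circ R)\hat{J}$ for the second.

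For hypothesis $7.3)$ I would prove the equivalent statement $\mc{Q}_L(\Linfd\vee\Linfd')\mc{Q}_L^*=\Dec(\int_{\RR_{>0}}^{\oplus}\HS(\msf{H}_{\sigma_r})\tfrac{\md r}{4\pi})$: the inclusion $\subseteq$ follows from the commutation relations just established (the right leg of $\mrW^G$ generates $\Linfd$ and the left leg of $\mrV^G$ generates $\Linfd'$), while the reverse inclusion comes from a surjectivity argument of the same flavour as above. I expect this last point, together with surjectivity of $\mc{Q}_L$, to be the main obstacle: the isometry statements are one-line consequences of Parseval, but hitting every \emph{decomposable} field of $2\times 2$ matrices requires an entrywise approximation followed by passage to a subsequence to upgrade $\LL^2$-convergence to almost-everywhere convergence in $\HS(\msf{H}_{\sigma_r})$, and one must carefully track the $\RR_{<0}/\RR_{>0}$ bookkeeping induced by the pair $\pi_{\pm r}\subseteq\sigma_r$.

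Finally, the right-handed objects require no separate computation. Since $G$ is unimodular, its dual $\whG$ is unimodular as well, a fact already recorded as $\hat{\delta}^{it}=\I$, so that $E_\pi=D_\pi$ is consistent. By the Remark following Theorem \ref{PlancherelR} one may then take $E_\pi=D_\pi$ and $\mc{Q}_R=\mc{Q}_L\circ J\hat{J}$, which are exactly the stated operators, and these automatically satisfy the conditions of Theorem \ref{PlancherelR}.
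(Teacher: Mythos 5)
Your proposal follows essentially the same route as the paper: verify that the explicitly defined $\mu$, $(D_\pi)_\pi$, $\mc{Q}_L$ satisfy hypotheses $7.1)$--$7.3)$ of Theorem \ref{PlancherelL} (Parseval/Fourier isometry plus the independent prescription of diagonal and off-diagonal entries for unitarity, the identity $\Lhvp(\lambda(\omega))=\omega$ for $7.2)$, the two commutation relations on $\Lhvp(\lambda(\mc{I}))$ via $\alpha\star\omega$ and $(\alpha\otimes\id)\chi(\mrV^G)=\hat{J}\lambda(\alpha^{\sharp}\circ R)\hat{J}$ for $7.1)$, and the decomposable-operators equality with a subsequence argument for $7.3)$), then dispatch the right-handed objects by unimodularity of $\whG$ and the remark permitting $E_\pi=D_\pi$, $\mc{Q}_R=\mc{Q}_L\circ J\hat{J}$. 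The only cosmetic discrepancy is that the \wot-sequential closure requirement in $7.2)$ is supplied by Lemma \ref{lemat11} rather than Lemma \ref{lemat28}, which does not affect the argument.
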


\begin{remark}
Group $G=\RR\rtimes\ZZ_2$ is amenable: a right invariant mean for $G$ is given by 
\[
m\colon \LL^{\infty}(\RR\rtimes\ZZ_2)\ni f \mapsto
\tfrac{1}{2}( m_{\RR}(f(\cdot,1))+m_{\RR}(f(\cdot,-1)))\in\CC,
\]
where $m_{\RR}\in {\LL^{\infty}(\RR)}^*$ is an invariant mean for $\RR$. Since $G$ is classical, it follows that the dual locally compact quantum group is coamenable (\cite{Brannan}) and consequently, \cite[Corollary 3.4.9]{Desmedt} implies that the support of $\mu$ is the whole $\Irr(G)$.
\end{remark}

In order to find out how the operator $\mc{L}_{\sigma_r}$ looks like, we need to establish a decomposition of the tensor product of representations. Take $r\in\RR_{>0}$ and a measurable subset $\Omega\subseteq \Irr(G)$. Since the set $\{1,\sigma\}$ is of measure zero, we can assume that $1,\sigma\notin \Omega$. With $\Omega$ we associate an integral representation $\sigma_\Omega=\int_{\Omega}^{\oplus} \sigma_s \md\mu(s)$. An arbitrary vector $\xi\in \int_{\Omega}^{\oplus} \msf{H}_{\sigma_{s}}\,\md\mu(s)$ can be written as
\[
\xi=\int_{\Omega}^{\oplus} (\xi_1(s)f^{s}_1+\xi_{-1}(s) f^{s}_{-1} )\md\mu(s)
\]
for certain measurable, square integrable functions $\xi_1,\xi_{-1}\colon \Omega\rightarrow \CC$. For $t\in \RR,a\in \ZZ_2$ we have
\[
\sigma_r \tp \sigma_\Omega (1,t) (f^r_a \otimes \xi)=
e^{iart} f^{r}_{a} \otimes
\int_{\Omega}^{\oplus} ( e^{ist} \xi_1(s) f^{s}_1 +
e^{-ist} \xi_{-1}(s) f^{s}_{-1} ) \md \mu(s)
\]
and
\[
\sigma_r \tp \sigma_\Omega (-1,t) (f^r_a \otimes \xi)=
e^{-iart} f^{r}_{-a} \otimes
\int_{\Omega}^{\oplus} ( e^{-ist} \xi_1(s) f^{s}_{-1} +
e^{ist} \xi_{-1}(s) f^{s}_{1} ) \md \mu(s).
\]
Let us introduce the following operator
\[
\begin{split}
U &\colon \msf{H}_{\sigma_r}\otimes \int_{\Omega}^{\oplus} \msf{H}_{\sigma_{s}} \,\md \mu(s)\rightarrow
\int_{\Omega+r}^{\oplus} \msf{H}_{\sigma_{s}} \,\md \mu(s)
\oplus
\int_{(\Omega-r)\cap \RR_{>0}}^{\oplus} \msf{H}_{\sigma_{s}} \,\md \mu(s)
\oplus
\int_{-((\Omega-r)\cap \RR_{<0})}^{\oplus} \msf{H}_{\sigma_{s}} \,\md \mu(s)\\
&\colon
f^r_1 \otimes \int_{\Omega}^{\oplus}
(\xi_1(s) f^{s}_{1} + \xi_{-1}(s) f^{s}_{-1})
\md\mu(s)\\
&\mapsto
\int_{\Omega+r}^{\oplus}
\xi_1(s-r) f^{s}_{1}\md\mu(s)
\oplus
\int_{(\Omega-r)\cap \RR_{>0}}^{\oplus}
\xi_{-1}(s+r) f^{s}_{-1}\md\mu(s)
\oplus
\int_{-((\Omega-r)\cap \RR_{<0})}^{\oplus}
\xi_{-1}(-s+r) f^{s}_{1}\md\mu(s)\\
&\colon
f^r_{-1} \otimes \int_{\Omega}^{\oplus}
(\xi_1(s) f^{s}_{1} + \xi_{-1}(s) f^{s}_{-1})
\md\mu(s)\\
&\mapsto
\int_{\Omega+r}^{\oplus}
\xi_{-1}(s-r) f^{s}_{-1}\md\mu(s)
\oplus
\int_{(\Omega-r)\cap \RR_{>0}}^{\oplus}
\xi_{1}(s+r) f^{s}_{1}\md\mu(s)
\oplus
\int_{-((\Omega-r)\cap \RR_{<0})}^{\oplus}
\xi_{1}(-s+r) f^{s}_{-1}\md\mu(s).
\end{split}
\]
It is clear that $U$ is a unitary operator. A straightforward but lengthy calculations which we skip, show that $U$ is an intertwiner between $\sigma_r\tp\sigma_\Omega$ and $\sigma_{\Omega+r}\oplus \sigma_{(\Omega-r)\cap \RR_{>0}}\oplus \sigma_{-((\Omega-r)\cap \RR_{<0})}$, hence these representations are equivalent. Let us introduce the following notation:
\[
(\Omega-r)\cap \RR_{>0}=(\Omega-r)^+,\quad
-((\Omega-r)\cap \RR_{<0})=(\Omega-r)^-.
\]
We always have $(\Omega+r)\cap (\Omega-r)^-=\emptyset$, consequently
\[
\begin{split}
&\quad\;\sigma_{\Omega+r}\oplus \sigma_{(\Omega-r)\cap \RR_{>0}}\oplus \sigma_{-((\Omega-r)\cap \RR_{<0})}\\
&\simeq
\sigma_{(\Omega+r)\setminus (\Omega-r)^+}
\oplus
2\cdot \sigma_{(\Omega+r)\cap (\Omega-r)^+}\oplus
\sigma_{(\Omega+r)^+\setminus ((\Omega+r)\cup (\Omega-r)^-)}\oplus
2\cdot \sigma_{(\Omega-r)^+\cap (\Omega-r)^-}\oplus
\sigma_{(\Omega-r)^-\setminus (\Omega-r)^+}
\end{split}
\]
and we have
\[
\begin{split}
\E^{1}_{\sigma_r \stp \sigma_\Omega}&=
((\Omega+r)\setminus (\Omega-r)^+)\cup
((\Omega+r)^+\setminus ((\Omega+r)\cup (\Omega-r)^-))\cup
((\Omega-r)^-\setminus (\Omega-r)^+)\\
&=
(\Omega+r)\triangle (\Omega-r)^+\triangle (\Omega-r)^-
\\
\E^{2}_{\sigma_r \stp \sigma_\Omega}&=
((\Omega+r)\cap (\Omega-r)^+)\cup ((\Omega-r)^+\cap (\Omega-r)^-)
,\quad
\E^{n}_{\sigma_r \stp \sigma_\Omega}=
\emptyset\;(n\ge 3).
\end{split}
\]
Let us now prove that $\varpi^{\sigma_r,\Omega,\mu}=1$ on $\F^1_{\sigma_r\stp \sigma_{\Omega}}$. Since $\CGDu=\mathrm{C}^*(G)$, we have in particular $\mathrm{C}_c(G)\subseteq \mathrm{C}^*(G)$. Take any $g\in \mathrm{C}_c(G)$ such that $g(t,-1)=0$ for all $t\in \RR$. By the definition of function $\varpi^{\sigma_r,\Omega,\mu}$ we have
\[
\int_\Omega \Tr(\sigma_r\stp \sigma_s(g))\tfrac{\md s}{4\pi}=
\int_{\RR_{>0}}\varpi^{\sigma_r,\Omega,\mu}(s)
 \bigl(\sum_{n=1}^{2} n \chi_{\E^n_{\sigma_r\stp \sigma_\Omega}}\bigr)(s) \Tr(\sigma_s(g))\tfrac{\md s}{4\pi}
\]
(note that for any $g\in \mathrm{C}_c(G)$, treated as an element of $\mathrm{C}^*(G)$ we have $\int_{\Omega} \Tr(\sigma_r\tp\sigma_s (|g|)) \md s <+\infty$ hence we indeed can use this element). Left hand side of the above equality (multiplied by $4\pi$) looks as follows:
\[\begin{split}
&\quad\;
\int_\Omega \Tr(\sigma_r\stp \sigma_s(g))\md s=
\int_\Omega \int_{\RR} g(t,1) (e^{irt}+e^{-irt})(e^{ist}+e^{-ist})
\md t \md s\\
&=
\int_{\RR_{>0}} \chi_\Omega(s) (
\mc{F}(g_1)(\tfrac{-(r+s)}{2\pi})+ 
\mc{F}(g_1)(\tfrac{-(r-s)}{2\pi})+ 
\mc{F}(g_1)(\tfrac{r-s}{2\pi})+ 
\mc{F}(g_1)(\tfrac{r+s}{2\pi}))\md s
\end{split}\]
where as before, $g_1=g(\cdot,1)$, wheras the right hand side (again, multiplied by $4\pi$) is
\[\begin{split}
&\quad\;
\int_{\RR_{>0}}
\varpi^{\sigma_r,\Omega,\mu}(s) \bigl(\sum_{n=1}^{2} n \chi_{\E^n_{\sigma_r\stp \sigma_\Omega}}\bigr)(s) \Tr(\sigma_s(g))\md s\\
&=
\int_{\RR_{>0}}
\varpi^{\sigma_r,\Omega,\mu}(s) (\chi_{\Omega+r}+ \chi_{(\Omega-r)^+}+
\chi_{(\Omega-r)^-})(s) \int_{\RR} g(t,1) (e^{ist}+e^{-ist})\md t \md s\\
&=
\int_{\RR_{>0}} 
\varpi^{\sigma_r,\Omega,\mu}(s)(\chi_{\Omega+r}+ \chi_{(\Omega-r)^+}+
\chi_{(\Omega-r)^-})(s) 
(\mc{F}(g_1)(\tfrac{-s}{2\pi})+ \mc{F}(g_1)(\tfrac{s}{2\pi}))
\md s
\end{split}\]
It follows that we have
\begin{equation}\begin{split}\label{eq27}
&\quad\;\int_{\RR_{>0}} \chi_\Omega(s) (
\mc{F}(g_1)(\tfrac{-(r+s)}{2\pi})+ 
\mc{F}(g_1)(\tfrac{-(r-s)}{2\pi})+ 
\mc{F}(g_1)(\tfrac{r-s}{2\pi})+ 
\mc{F}(g_1)(\tfrac{r+s}{2\pi}))\md s\\
&=
\int_{\RR_{>0}} 
\varpi^{\sigma_r,\Omega,\mu}(s)(\chi_{\Omega+r}+ \chi_{(\Omega-r)^+}+
\chi_{(\Omega-r)^-})(s) 
(\mc{F}(g_1)(\tfrac{-s}{2\pi})+ \mc{F}(g_1)(\tfrac{s}{2\pi}))
\md s
\end{split}\end{equation}
for all $g\in \mathrm{C}_c(G)$ supported on $\RR\times \{1\}$. Since $\Omega$ has finite measure, $\mathrm{C}_c(\RR)$ is dense in $\LL^2(\RR)$, Fourier transform is unitary and $\varpi^{\sigma_r,\Omega,\mu}$ is bounded (see Proposition \ref{stw4}), by continuity argument we can plug in equation \eqref{eq27} any square integrable function on $\RR$ instead of $\mc{F}(g_1)$. Thus we have
\[
\begin{split}
&\quad\;\int_{\RR_{>0}} \chi_\Omega(s) (
h(-r-s)+ 
h(-r+s)+ 
h(r-s)+ 
h(r+s))\md s\\
&=
\int_{\RR_{>0}} 
\varpi^{\sigma_r,\Omega,\mu}(s)(\chi_{\Omega+r}+ \chi_{(\Omega-r)^+}+
\chi_{(\Omega-r)^-})(s) 
(h(-s)+ h(s))
\md s
\end{split}
\]
for all $h\in \LL^2(\RR)$. For $h\in \LL^2(\RR)$ such that $h(-t)=0\,(t\in \RR_{>0})$ the above expression simplifies:
\[
\begin{split}
&\quad\;
\int_{\RR_{>0}} (\chi_\Omega (s+r)+\chi_\Omega (-s+r) + \chi_{\Omega}(s-r))h(s)\md s\\
&=\int_{\RR_{>0}} \chi_\Omega(s) (
h(-r+s)+ 
h(r-s)+ 
h(r+s))\md s\\
&=
\int_{\RR_{>0}} 
\varpi^{\sigma_r,\Omega,\mu}(s)(\chi_{\Omega+r}+ \chi_{(\Omega-r)^+}+
\chi_{(\Omega-r)^-})(s) 
h(s)
\md s
\end{split}
\]
and it follows that we have
\[
\chi_\Omega (s+r)+\chi_\Omega (-s+r) + \chi_{\Omega}(s-r)=
\varpi^{\sigma_r,\Omega,\mu}(s)(\chi_{\Omega+r}+ \chi_{(\Omega-r)^+}+
\chi_{(\Omega-r)^-})(s) 
\]
for almost all $s\in \RR_{>0}$. Since
\[
\chi_{\Omega}(s+r)=\chi_{(\Omega-r)^+}(s),\quad
\chi_{\Omega}(-s+r)=\chi_{(\Omega-r)^-}(s),\quad
\chi_{\Omega}(s-r)=\chi_{\Omega+r}(s),
\]
for all $s\in \RR_{>0}$ we arrive at the conclusion:
\begin{proposition}
For any $r\in\RR_{>0}$ and a measurable subset $\Omega\subseteq\Irr(G)$ we have 
\[
\sigma_r\tp \sigma_{\Omega}\simeq
\sigma_{\Omega+r}\oplus\sigma_{(\Omega-r)^+}\oplus
\sigma_{(\Omega-r)^-},
\]
and
\[
\varpi^{\sigma_r,\Omega,\mu}(s)=1\quad(s\in\F^1_{\sigma_r\stp \sigma_\Omega}).
\]
\end{proposition}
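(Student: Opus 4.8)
The plan is to prove the two assertions separately: first the unitary equivalence, then the identity $\varpi\equiv 1$, using the concrete formulas assembled above together with the defining property of $\varpi^{\sigma_r,\Omega,\mu}$ from Proposition \ref{treq}.

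For the equivalence, I would take the explicit map $U$ displayed above, which sends $f^r_{\pm1}\otimes\int_\Omega^\oplus(\xi_1(s)f^s_1+\xi_{-1}(s)f^s_{-1})\md\mu(s)$ into the three summands indexed by $\Omega+r$, $(\Omega-r)^+$ and $(\Omega-r)^-$. A short norm computation, using that $s\mapsto s\pm r$ and $s\mapsto r-s$ preserve Lebesgue measure and that the three blocks land in disjoint fibres, shows $U$ is isometric; surjectivity is immediate from the formula, so $U$ is unitary. It then remains to check that $U$ intertwines $\sigma_r\tp\sigma_\Omega$ with $\sigma_{\Omega+r}\oplus\sigma_{(\Omega-r)^+}\oplus\sigma_{(\Omega-r)^-}$. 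Since the elements $(1,t)$ and $(-1,t)$ ($t\in\RR$) generate $G$, it suffices to compare both sides on them, for which the explicit action of $\sigma_r\tp\sigma_\Omega$ recorded above and the matrix form \eqref{eq26} of each $\sigma_s$ reduce the task to bookkeeping of the exponentials $e^{\pm ist}$; this is the ``straightforward but lengthy'' verification carrying the content of the first assertion.

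For the value of $\varpi$, the plan is to feed the defining equation of $\varpi^{\sigma_r,\Omega,\mu}$ with test elements $g\in\mathrm{C}_c(G)\subseteq\mathrm{C}^*(G)=\CGDu$ supported on $\RR\times\{1\}$. Expressing both the left-hand integral $\int_\Omega\Tr(\sigma_r\stp\sigma_s(g))\tfrac{\md s}{4\pi}$ and the right-hand integral $\int_{\RR_{>0}}\varpi^{\sigma_r,\Omega,\mu}(s)\big(\sum_n n\chi_{\E^n_{\sigma_r\stp\sigma_\Omega}}\big)(s)\Tr(\sigma_s(g))\tfrac{\md s}{4\pi}$ through the Fourier transform of $g_1=g(\cdot,1)$ yields the identity \eqref{eq27}. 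Because $\Omega$ has finite measure, $\mathrm{C}_c(\RR)$ is dense in $\LL^2(\RR)$, $\varpi^{\sigma_r,\Omega,\mu}$ is bounded by Proposition \ref{stw4}, and the Fourier transform is unitary, one may replace $\mc{F}(g_1)$ by an arbitrary $h\in\LL^2(\RR)$. Restricting to $h$ vanishing on $\RR_{<0}$ then collapses the identity to
\[
\int_{\RR_{>0}}\big(\chi_\Omega(s+r)+\chi_\Omega(-s+r)+\chi_\Omega(s-r)\big)h(s)\md s
=\int_{\RR_{>0}}\varpi^{\sigma_r,\Omega,\mu}(s)\big(\chi_{\Omega+r}+\chi_{(\Omega-r)^+}+\chi_{(\Omega-r)^-}\big)(s)\,h(s)\md s,
\]
and arbitrariness of $h$ forces the two integrands to agree almost everywhere on $\RR_{>0}$.

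Finally I would invoke the elementary pointwise identities $\chi_\Omega(s+r)=\chi_{(\Omega-r)^+}(s)$, $\chi_\Omega(-s+r)=\chi_{(\Omega-r)^-}(s)$, $\chi_\Omega(s-r)=\chi_{\Omega+r}(s)$, which show the two characteristic-function sums coincide. Cancelling the common factor on its support---which is exactly $\E^1_{\sigma_r\stp\sigma_\Omega}\cup\E^2_{\sigma_r\stp\sigma_\Omega}=\F^1_{\sigma_r\stp\sigma_\Omega}$, since $\sum_n n\chi_{\E^n}=\chi_{\Omega+r}+\chi_{(\Omega-r)^+}+\chi_{(\Omega-r)^-}$---yields $\varpi^{\sigma_r,\Omega,\mu}(s)=1$ there. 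I expect the main obstacle to be this Fourier/density step rather than the intertwiner check: one must verify that testing the weight identity only against $g$ supported on $\RR\times\{1\}$ (so that $\Tr\sigma_s(g)$ reduces to $\mc{F}(g_1)(\pm s/2\pi)$) still determines $\varpi$ almost everywhere, and keep careful track of which shifted and reflected translates $s\pm r$, $r-s$ land in $\RR_{>0}$ versus $\RR_{<0}$ when passing between the sets $\Omega+r$, $(\Omega-r)^+$ and $(\Omega-r)^-$.
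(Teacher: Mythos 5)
Your proposal is correct and takes essentially the same route as the paper: the same explicit unitary $U$ onto $\sigma_{\Omega+r}\oplus\sigma_{(\Omega-r)^+}\oplus\sigma_{(\Omega-r)^-}$ (with the intertwining relation checked by direct bookkeeping on the group elements $(t,\pm 1)$), followed by the same Fourier-transform/density argument for $\varpi$ — testing the defining identity against $g\in\mathrm{C}_c(G)$ supported on $\RR\times\{1\}$, replacing $\mc{F}(g_1)$ by an arbitrary $h\in\LL^2(\RR)$ vanishing on $\RR_{<0}$, and cancelling the common sum $\chi_{\Omega+r}+\chi_{(\Omega-r)^+}+\chi_{(\Omega-r)^-}$ on its support $\F^1_{\sigma_r\stp\sigma_\Omega}$. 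No gaps.
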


Armed with this result, we can proceed with calculating what is the action of the operator $\mc{L}_{\sigma_r}$. By the definition we have
\[
\mc{L}_{\sigma_r} \chi_{\Omega}=\sum_{i=1}^{\infty} \chi_{\F^i_{\sigma_r\stp\sigma_{\Omega}}}=
\chi_{\Omega+r}+\chi_{(\Omega-r)^-}+\chi_{(\Omega-r)^+}
\]
for all measurable subsets $\Omega\subseteq\Irr(G)$ of finite measure (note that we identify $\LL^2(\Irr(G))$ with $\LL^2(\RR_{>0},\tfrac{dr}{4\pi})$). Thanks to Theorem \ref{tw1} we know that $\mc{L}_{\sigma_r}$ is a well defined bounded operator on $\LL^2(\Irr(G))$. Let us introduce three shift operators:
\[
L_{r},\;L^+_{-r},\;L_{-r}^{-}\colon \LL^2(\IrrG)\rightarrow \LL^2(\IrrG)\quad(r\in\RR_{>0})
\]
defined by
\[
L_r(f)(s)=\begin{cases} f(s-r) & s-r>0 \\ 0 & s-r \le 0 \end{cases}
,\quad
L_{-r}^+(f)(s)=f(r+s),\quad
L_{-r}^-(f)(s)=\begin{cases} f(r-s) & r-s >0 \\ 0 & r-s \le 0 \end{cases}
\]
for any $f\in \LL^2(\IrrG)$ and $r,s\in\RR_{>0}$. It is clear that these are well defined contractions. For finite measure measurable subset $\Omega\subseteq\IrrG$ and $r,s\in\RR_{>0}$ we have
\[
\begin{split}
L_{r}(\chi_{\Omega})(s)&=
\chi_{\Omega}(s-r)=\chi_{\Omega+r}(s),\quad
L_{-r}^{+}(\chi_\Omega)(s)=
\chi_\Omega(r+s)=\chi_{(\Omega-r)^+}(s)\\
L_{-r}^{-}(\chi_\Omega)(s)&=
\chi_\Omega(r-s)=\chi_{-(\Omega-r)}(s)=
\chi_{(\Omega-r)^-}(s).
\end{split}
\]
It follows that 
\[
\mc{L}_{\sigma_r}(\chi_\Omega)=(L_{r}+L^+_{-r}+L_{-r}^{-})(\chi_\Omega)\quad(r\in\RR_{>0})
\]
and consequently by linearity and continuity we have the following result:
\begin{proposition}
The operator $\mc{L}_{\sigma_r}$ is given by $\mc{L}_{\sigma_r}=L_{r}+L^+_{-r}+L_{-r}^{-}$ for all $r\in \RR_{>0}$.
\end{proposition}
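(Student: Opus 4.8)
The operator $\mc{L}_{\sigma_r}$ equals $L_r+L_{-r}^++L_{-r}^-$ for all $r\in\RR_{>0}$.

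The plan is to verify the claimed formula on the dense subspace spanned by characteristic functions of finite-measure subsets, and then invoke linearity and continuity. The starting point is the displayed action of $\mc{L}_{\sigma_r}$ established just above the statement: by the definition of $\mc{L}_\kappa$ in \eqref{eq24}, together with the decomposition $\sigma_r\tp\sigma_\Omega\simeq\sigma_{\Omega+r}\oplus\sigma_{(\Omega-r)^+}\oplus\sigma_{(\Omega-r)^-}$ and the fact (just proven) that $\varpi^{\sigma_r,\Omega,\mu}=1$ on $\F^1_{\sigma_r\stp\sigma_\Omega}$, one reads off
\[
\mc{L}_{\sigma_r}\chi_\Omega=\sum_{i=1}^{\infty}\chi_{\F^i_{\sigma_r\stp\sigma_\Omega}}=\chi_{\Omega+r}+\chi_{(\Omega-r)^+}+\chi_{(\Omega-r)^-}.
\]
Here I use that $\Tr(E^2_\bullet)=\tfrac12$ is constant (so $\Tr(E^2_\bullet)^{1/2}\chi_\Omega$ differs from $\chi_\Omega$ only by a scalar and the normalisations cancel on both sides), and that the $\E$-sets have multiplicities $1$ and $2$ which assemble into the stated sum of $\F$-sets exactly as $\sum_i\chi_{\F^i}=\sum_n n\chi_{\E^n}$.

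The next step is purely computational: I would record the three shift operators $L_r,L_{-r}^+,L_{-r}^-$ defined in the statement and check that each, applied to $\chi_\Omega$, reproduces one of the three summands. This amounts to the three pointwise identities
\[
L_r(\chi_\Omega)(s)=\chi_\Omega(s-r)=\chi_{\Omega+r}(s),\quad
L_{-r}^+(\chi_\Omega)(s)=\chi_\Omega(r+s)=\chi_{(\Omega-r)^+}(s),
\]
together with $L_{-r}^-(\chi_\Omega)(s)=\chi_\Omega(r-s)=\chi_{(\Omega-r)^-}(s)$, all for $s\in\RR_{>0}$; these follow immediately from the definitions of the sets $(\Omega-r)^+=(\Omega-r)\cap\RR_{>0}$ and $(\Omega-r)^-=-((\Omega-r)\cap\RR_{<0})$ and the clipping built into $L_r$ and $L_{-r}^-$. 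Summing the three identities gives $\mc{L}_{\sigma_r}\chi_\Omega=(L_r+L_{-r}^++L_{-r}^-)\chi_\Omega$ for every finite-measure $\Omega$.

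Finally I would pass from characteristic functions to all of $\LL^2(\IrrG)$. Since $\mc{L}_{\sigma_r}$ is bounded (indeed $\|\mc{L}_{\sigma_r}\|\le\dim(\sigma_r)=2$ by Theorem \ref{tw1}, using that $\sigma_r\lec\Lambda_{\whG}$ as $\sigma_r\in\supp(\mu)$) and each $L_r,L_{-r}^\pm$ is a contraction, both sides of the desired equality are bounded operators. They agree on the linearly dense set of finite-measure characteristic functions, hence on their linear span, hence everywhere by continuity. I do not anticipate a genuine obstacle: the only point requiring a little care is keeping the three translation/reflection conventions straight so that the clipping at $0$ in $L_r$ and $L_{-r}^-$ matches the intersections with $\RR_{>0}$ and $\RR_{<0}$ in the definitions of $(\Omega-r)^\pm$. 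The substantive content—the tensor-product decomposition and the vanishing-correction $\varpi^{\sigma_r,\Omega,\mu}=1$—has already been done in the preceding propositions, so this final statement is essentially a bookkeeping consequence.
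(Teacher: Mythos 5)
Your proposal is correct and follows essentially the same route as the paper: read off $\mc{L}_{\sigma_r}\chi_\Omega=\chi_{\Omega+r}+\chi_{(\Omega-r)^+}+\chi_{(\Omega-r)^-}$ from the definition \eqref{eq24} together with the decomposition of $\sigma_r\tp\sigma_\Omega$ and $\varpi^{\sigma_r,\Omega,\mu}=1$, match this against the three shift operators on characteristic functions, and conclude by linearity, density and boundedness (Theorem \ref{tw1}). One trivial slip: the constant is $\Tr(E^2_{\sigma_r})=\Tr(\tfrac{1}{2}\I_2)=1$, not $\tfrac{1}{2}$ (you computed $\|E^2_{\sigma_r}\|$ instead of the trace), but since your argument only uses that this function is constant, nothing changes.
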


Theorem \ref{tw1} implies that the norm of $\mc{L}_{\sigma_r}$ is less or equal to $2$. For $\nu\in \LL^1(\IrrG)$ we can define an operator
\[
\mc{L}_\nu=\int_{\RR_{>0}} \tfrac{\nu(\sigma_r)}{\dim(\sigma_r)} \mc{L}_{\sigma_r}\tfrac{\md r}{4\pi}\in \B(\LL^2(\Irr(G))).
\]
Locally compact group $G$ is classical, hence coamenable -- by Theorem \ref{tw4} we know that for any measurable subset of finite measure $\Omega\subseteq \IrrG$ and $\nu=\dim \chi_\Omega$ the number $\|\nu\|_1=2\mu(\Omega)$ belongs to the spectrum of $\mc{L}_{\nu}$. One could check this also directly: the sequence of unit vectors $(f_m)_{m\in\NN}$ with $f_m=\sqrt{\tfrac{4\pi}{m}} \chi_{[m,2m]}\,(m\in\NN)$ forms an approximate eigenvector with eigenvalue $\|\nu\|_1$ (this holds for a general positive function $\nu\in\LL^1(\Irr(G))$).\\

At the end of this example, let us take a look at the integral characters of $G=\RR\rtimes \ZZ_2$ and calculate the \swot\, closure of the \cst-algebra $\mf{A}$ defined in Section \ref{cstA}. Our aim is to show the following result:

\begin{proposition}
The \swot-closure of the \cst-algebra $\mf{A}$ is given by
\begin{equation}\label{eq28}
\ov{\mf{A}}^{\swot}=\{f\in \LL^{\infty}(G)\,|\, \supp(f)\subseteq \RR\times \{1\},\;
\forall_{t\in \RR} \,f(t,1)=f(-t,1)\},
\end{equation}
i.e. $\ov{\mf{A}}^{\swot}$ consist of measurable, bounded functions supported on $\RR\times\{1\}$ which are even. 
\end{proposition}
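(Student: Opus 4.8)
The plan is to identify, on $\Linf=\LL^\infty(G)$, the topology $\swot$ with the weak-$*$ topology induced by the predual $\Lj=\LL^1(G)$, so that $\ov{\mf{A}}^{\swot}$ is exactly the weak-$*$ closure of $\mf{A}$. Write $\mc{M}$ for the right-hand side of \eqref{eq28}. Since each condition defining $\mc{M}$ --- vanishing on $\RR\times\{-1\}$ and invariance under $t\mapsto -t$ on $\RR\times\{1\}$ --- can be phrased as annihilation against a family of elements of $\Lj$, the subspace $\mc{M}$ is weak-$*$ closed. First I would record the explicit form of the generators: for $r\in\RR_{>0}$ the character $\chi(U^{\sigma_r})$ is the function $(t,a)\mapsto\Tr(\sigma_r(t,a))$, and \eqref{eq26} yields $\Tr(\sigma_r(t,1))=2\cos(rt)$ and $\Tr(\sigma_r(t,-1))=0$. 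Consequently every integral character, and so every element of $\mscr{A}$ and (by norm-closure) of $\mf{A}$, is a function supported on $\RR\times\{1\}$ and even in the $\RR$-variable. This gives $\mf{A}\subseteq\mc{M}$, and as $\mc{M}$ is weak-$*$ closed we obtain $\ov{\mf{A}}^{\swot}\subseteq\mc{M}$.

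For the reverse inclusion I would invoke the bipolar theorem: because $\mc{M}$ is weak-$*$ closed, it is enough to prove that the pre-annihilators in $\Lj$ satisfy $\mf{A}_\perp\subseteq\mc{M}_\perp$ (the reverse being clear from $\mf{A}\subseteq\mc{M}$), which forces $\ov{\mf{A}}^{\swot}={}^\perp(\mf{A}_\perp)=\mc{M}$. So let $\phi\in\Lj=\LL^1(G)$ annihilate $\mf{A}$ and put $\phi_a=\phi(\cdot,a)\in\LL^1(\RR)$ for $a\in\ZZ_2$. For any finite Lebesgue-measure subset $\Omega\subseteq\RR_{>0}$ the integral representation $\sigma_\Omega$ lies in $\Rep^{\int}_{q,<+\infty}(\GG)$: its measure class is $[\chi_\Omega\mu]\ll[\mu]$ so $\sigma_\Omega\lecq\Lambda_{\whG}$, and its integral dimension $\int_\Omega\dim\md\mu=\int_\Omega\tfrac{\md r}{2\pi}$ is finite, whence $\chi^{\int}(\int_\Omega^{\oplus}\pi\md\mu_\Omega(\pi))\in\mscr{A}\subseteq\mf{A}$. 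Pairing this element with $\phi$ and applying Fubini (legitimate since $|\cos|\le 1$, $\phi_1\in\LL^1(\RR)$ and $\mu(\Omega)<\infty$) gives $\tfrac{1}{2\pi}\int_\Omega c(r)\md r=0$, where $c(r)=\int_\RR\cos(rt)\phi_1(t)\md t$. Letting $\Omega$ range over all such sets forces $c=0$ almost everywhere on $\RR_{>0}$, and since $c$ is bounded, continuous and even, $c\equiv 0$.

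Finally I would split $\phi_1=\phi_1^e+\phi_1^o$ into even and odd parts; as $\cos(rt)$ is even in $t$ one has $c(r)=\int_\RR\cos(rt)\phi_1^e(t)\md t$, so $c$ is (up to normalisation) the Fourier transform $\mc{F}(\phi_1^e)$, and $c\equiv 0$ together with injectivity of $\mc{F}$ on $\LL^1(\RR)$ gives $\phi_1^e=0$, i.e. $\phi_1$ is odd. Then for every $f\in\mc{M}$ the pairing collapses to $\int_\RR f(t,1)\phi_1(t)\md t$, the integral over $\RR$ of an even function against an odd one, hence $0$; thus $\phi\in\mc{M}_\perp$, establishing $\mf{A}_\perp\subseteq\mc{M}_\perp$ and the proposition. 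I expect the substantive part to be the density (reverse) direction: verifying $\sigma_\Omega\in\Rep^{\int}_{q,<+\infty}(\GG)$, justifying the Fubini interchange, and above all reducing weak-$*$ density to the injectivity of the Fourier transform on $\LL^1(\RR)$; by contrast the inclusion $\mf{A}\subseteq\mc{M}$ and the weak-$*$ closedness of $\mc{M}$ are routine.
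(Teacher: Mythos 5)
Your reverse inclusion is correct, and it is a genuinely different route from the paper's: the paper argues constructively, showing that for every even $f\in\mathrm{C}_c(\RR)$ the integral characters $\chi^{\int}(\int_{]0,n]}^{\oplus}\sigma_r\,\md f\mu(r))$ converge in \swot{} to the function $(t,k)\mapsto\tfrac{\delta_{k,1}}{4\pi}\mc{F}(f)(\tfrac{t}{2\pi})$, and then invokes unitarity of the Fourier transform to get density; your bipolar argument replaces this density computation by the injectivity of the Fourier transform on $\LL^1(\RR)$, which is cleaner and avoids having to exhibit approximants at all. Your identification of the \swot{} closure with the weak-$*$ closure is also fine, since \swot{} is exactly $\sigma(\LL^{\infty}(G),\LL^1(G))$.

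There is, however, a genuine gap in the forward inclusion $\mf{A}\subseteq\mc{M}$, which you dismiss as routine. The algebra $\mf{A}$ is not generated by the characters $\chi(U^{\sigma_r})$ or the integral characters of the $\sigma_\Omega$'s alone: by definition, $\mscr{A}$ consists of combinations of integral characters $\chi^{\int}(\int_X^{\oplus}\pi_x\md\mu_X(x))$ of \emph{arbitrary} integral representations $\pi_X\in\Rep^{\int}_{q,<+\infty}(\GG)$, where $(\pi_x)_{x\in X}$ is any measurable field of finite-dimensional nondegenerate representations over an abstract measure space, subject only to $\pi_X\lecq\Lambda_{\whG}$ and finite integral dimension. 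The fibers $\pi_x$ need not be irreducible; a priori they could contain copies of the one-dimensional representations $1$ or $\sigma$, whose characters (the constant function $1$, respectively $(t,a)\mapsto a$) violate both defining conditions of $\mc{M}$. Ruling this out is precisely the content of the paper's preparatory lemma: for $\mu_X$-almost every $x$, neither $1$ nor $\sigma$ is contained in $\pi_x$, because containment on a positive-measure set $X_0$ of fibers would force $\sigma\lecq\pi_{X_0}\lecq\pi_X\lecq\Lambda_{\whG}$, contradicting $\mu(\{\sigma\})=0$; and passing from fiberwise containment to the quasi-containment $\sigma\lecq\pi_{X_0}$ requires a genuine direct-integral argument (measurable fields of projections onto the $\sigma$-isotypic subspaces, built via the paper's lemma on products of projections). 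Without this step, your claim that every element of $\mscr{A}$ is supported on $\RR\times\{1\}$ and even is unjustified, so the inclusion $\ov{\mf{A}}^{\swot}\subseteq\mc{M}$ — and with it the proposition — does not follow from what you have written.
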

Note that the above result implies that $\mf{A}$ is a degenerate \cst-algebra and $\I\notin \ov{\mf{A}}^{\swot}$.\\

Take any integral representation $\pi_X\in \Rep_{q,<+\infty}^{\int}(G)$. We start with the following lemma

\begin{lemma}
For $\mu_X$-almost all $x$, the representations $1$, $\sigma$ are not contained in $\pi_x$ (hence almost every $\pi_x$ can be written as a finite direct sum of $\sigma_r$'s).
\end{lemma}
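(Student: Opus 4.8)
The plan is to deduce the statement from the defining property of $\Rep^{\int}_q(G)$, namely $\pi_X\lecq\Lambda_{\whG}$, together with the explicit shape of the Plancherel measure $\mu$ computed above for $G=\RR\rtimes\ZZ_2$. Recall from the discussion of measure classes in Section \ref{secintrep} that $\pi_X\lecq\Lambda_{\whG}$ is equivalent to $[\mu_{\pi_X}]\ll[\mu]$, where $[\mu_{\pi_X}]$ is the measure class on $\Irr(G)$ associated with $\pi_X$. Since the Plancherel measure satisfies $\mu(\{1\})=\mu(\{\sigma\})=0$, absolute continuity forces $\mu_{\pi_X}(\{1\})=\mu_{\pi_X}(\{\sigma\})=0$. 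Equivalently, in the essentially unique decomposition $\pi_X\simeq\bigoplus_{n}\int_{\F^n_{\pi_X}}^{\oplus}\kappa\,\md\mu_{\F^n_{\pi_X}}(\kappa)$ recalled at the start of Section \ref{cstA}, I may delete the two $\mu$-null atoms and assume that every $\F^n_{\pi_X}$ is contained in $\{\sigma_r\mid r\in\RR_{>0}\}$.

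First I would pass to the biduals. Let $e_1,e_\sigma\in\mathrm{C}^*(G)^{**}$ be the central covers of the classes of the one-dimensional representations $1$ and $\sigma$; concretely $e_1$ is the central projection on which $\ov{1}$ acts as the identity while $\ov{\kappa}(e_1)=0$ for every $\kappa\not\simeq 1$, and similarly for $e_\sigma$. For any representation $\rho$ of $\mathrm{C}^*(G)$ one has $\ov{\rho}(e_1)\neq 0$ precisely when $1\lecq\rho$, and since each $\pi_x$ is finite dimensional, hence a finite direct sum of irreducibles, for $\rho=\pi_x$ this is the same as $1\subseteq\pi_x$; the analogous statements hold for $\sigma$. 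Moreover $\ov{\pi_x}(e_1)$ is exactly the projection onto the $1$-isotypic part of $\pi_x$.

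The key computation is that $\ov{\pi_X}(e_1)=0$. Letting $\mc{O}$ be the unitary implementing $\pi_X\simeq\bigoplus_{n}\int_{\F^n_{\pi_X}}^{\oplus}\kappa\,\md\mu_{\F^n_{\pi_X}}(\kappa)$ and using normality of the extensions, $\ov{\pi_X}(e_1)$ is conjugate by $\mc{O}$ to $\bigoplus_{n}\int_{\F^n_{\pi_X}}^{\oplus}\ov{\kappa}(e_1)\,\md\mu(\kappa)$, whose integrand vanishes $\mu$-almost everywhere because $\F^n_{\pi_X}\subseteq\{\sigma_r\}$ and $\ov{\sigma_r}(e_1)=0$. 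On the other hand, writing the normal extension as a direct integral $\ov{\pi_X}=\int_X^{\oplus}\ov{\pi_x}\,\md\mu_X(x)$ gives $\ov{\pi_X}(e_1)=\int_X^{\oplus}\ov{\pi_x}(e_1)\,\md\mu_X(x)$; a decomposable positive operator vanishes iff its components vanish almost everywhere, so $\ov{\pi_x}(e_1)=0$ for $\mu_X$-almost every $x$, that is $1\not\subseteq\pi_x$ a.e. Repeating the argument with $e_\sigma$ yields $\sigma\not\subseteq\pi_x$ a.e. Since $\Irr(G)=\{\sigma_r\mid r\in\RR_{>0}\}\cup\{1\}\cup\{\sigma\}$ and each $\pi_x$ is a finite direct sum of irreducibles, for almost every $x$ all summands are of the form $\sigma_r$, which is the claim.

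The main obstacle is the bookkeeping around biduals and direct integrals: verifying that the normal extension of a direct-integral representation is the direct integral of the fibrewise normal extensions (so that $\ov{\pi_X}(e_1)=\int_X^{\oplus}\ov{\pi_x}(e_1)\,\md\mu_X(x)$), and that $e_1$ acts on each fibre as the projection onto the $1$-isotypic subspace. Both are standard consequences of the measurable-field machinery of \cite{DixmiervNA, DixmierC} already used in Proposition \ref{treq}, and once they are in place the vanishing of the fibre projections is immediate from the $\mu$-null mass of $\{1\}$ and $\{\sigma\}$.
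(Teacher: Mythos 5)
Your opening reductions are sound: quasi-containment $\pi_X\lecq\Lambda_{\whG}$ together with Proposition \ref{stw26} does force the sets $\F^n_{\pi_X}$ into $\{\sigma_r\,|\,r\in\RR_{>0}\}$ up to $\mu$-null sets, and $\ov{\pi_X}(e_1)=\ov{\pi_X}(e_\sigma)=0$ follows cleanly without any interchange of integrals, simply from transitivity: $1\lecq\pi_X\lecq\Lambda_{\whG}$ would force the point mass at $1$ to be absolutely continuous with respect to $\mu$, contradicting $\mu(\{1\})=0$, and since $e_1$ is a minimal central projection this gives $e_1\,c(\pi_X)=0$. The gap is the final step, where you pass from $\ov{\pi_X}(e_1)=0$ to $\ov{\pi_x}(e_1)=0$ almost everywhere by invoking the identity $\ov{\pi_X}=\int_X^{\oplus}\ov{\pi_x}\,\md\mu_X(x)$. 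That identity is false in general: the fibrewise normal extensions do not assemble into a normal map (the $\swot$-limits involved are limits of nets, for which dominated convergence is unavailable), and the two sides genuinely differ on atomic central projections. Concretely, take $A=\mathrm{C}_0(\RR)$, $X=\RR$ with Lebesgue measure, $\pi_x=\mathrm{ev}_x$, and let $a\in A^{**}$ be the atomic central projection $\sup_{x_0}c(\mathrm{ev}_{x_0})$; then $\ov{\pi_x}(a)=1$ for every $x$, so $\int_X^{\oplus}\ov{\pi_x}(a)\,\md\mu_X(x)=\I$, while $\ov{\pi_X}(a)=0$ because the central cover of the multiplication representation is orthogonal to every atom. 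This is exactly the configuration of your argument ($e_1,e_\sigma$ are atomic central projections and $\mu_X$ may be non-atomic), so the step cannot be dismissed as bookkeeping. Worse, applied to the single element $e_1$ your identity is equivalent to the assertion being proven: its right-hand side vanishes iff $\ov{\pi_x}(e_1)=0$ a.e., i.e.\ iff the lemma holds, so the argument is circular at precisely the point where the work lies. (Even the measurability of the field $x\mapsto\ov{\pi_x}(e_1)$ is not automatic; it is part of what has to be constructed.)

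What the false identity is standing in for is the implication ``if $1\subseteq\pi_x$ on a set of positive measure, then $1\lecq\pi_X$'', and this is what the paper proves by hand. Assuming $\sigma\subseteq\pi_x$ for all $x$ in a positive-measure set $X_0$, it builds a measurable field of projections onto the $\sigma$-isotypic subspaces: the fibrewise spectral projections $E^x_{n,k}=\chi_{\{k\}}(\pi_x(t_n,k))$ are fibres of decomposable operators, and Lemma \ref{lemat14} realizes the projection onto the countable intersection as a sequential \sot-limit of products, which does pass through direct integrals. This exhibits $\int_{X_0}^{\oplus}m_x\cdot\sigma\,\md\mu_{X_0}(x)$ as a genuine subrepresentation of $\pi_{X_0}$, whence $\sigma\lecq\pi_{X_0}\lecq\pi_X\lecq\Lambda_{\whG}$, contradicting $\mu(\{\sigma\})=0$ via Proposition \ref{stw26}. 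If you want to keep the bidual formulation, you must supply this measurable-selection argument (or an equivalent one) to justify the global-to-fibrewise step; it is not a standard consequence of the machinery in \cite{DixmiervNA, DixmierC}.
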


\begin{proof}
Assume that this is not true and we have a measurable subset $X_0\subseteq X$ of positive measure such that $\sigma\subseteq \pi_x$ for all $x\in X_0$. Then 
\begin{equation}\label{eq23}
\sigma\lec_q\int_{X_0}^{\oplus} \pi_x \md\mu_{X_0}(x)=\pi_{X_0},
\end{equation}
where $\mu_{X_0}$ is the restriction of $\mu_X$ to $X_0$. Let us justify why this relation holds.\\
For any $(t,k)\in G$ we have
\[
\pi_{X_0}(t,k)=\bigl(\int_{X_0}^{\oplus} \pi_x \md\mu_{X_0}(x)\bigr)(t,k)=
\int_{X_0}^{\oplus} \pi_x (t,k)\md\mu_{X_0}(x)
\]
and for all $x\in X_0$ in $\msf{H}_{\pi_x}$  there is a direct summand corresponding to $\sigma\subseteq \pi_x$, on which for all $(t,k)\in G$ the operator $\pi_x(t,k)$ acts as $\sigma(t,k)$ (recall that $\sigma$ is the one dimensional representation with $\sigma(t,k)=k\I_{\msf{H}_\sigma}$). It follows that we have a unitary equivalence $\pi_x\simeq m_x\cdot \sigma\oplus \pi'_x$ for some number $m_x\in \NN$ and representation $\pi'_x$. Assume that $m_x$ is maximal such a number, i.e. $\sigma$ is not a subrepresentation of $\pi'_x$.\\
Let $\{t_n\,|\,n\in\NN\}$ be a dense subset in $\RR$. For $k\in \{-1,1\}$ define projections $E_{n,k},E^{x}_{n,k}$ via
\[
E_{n,k}=\chi_{\{k\}}(\pi_{X_0}(t_n,k))=\int_{X_0}^{\oplus}
\chi_{\{k\}}(\pi_x(t_n,k)) \md\mu_{X_0}(x)=
\int_{X_0}^{\oplus} E^x_{n,k} \md\mu_{X_0}(x).
\]
Let $E,E^x$ be projections onto
\[
\bigcap_{n\in\NN,k\in\{-1,1\}} E_{n,k}\msf{H}_{\pi_{X_0}}\subseteq \msf{H}_{\pi_{X_0}},\quad
\bigcap_{n\in\NN,k\in\{-1,1\}} E^{x}_{n,k}\msf{H}_{\pi_{x}}
\subseteq \msf{H}_{\pi_x}.
\]
Lemma \ref{lemat14} together with \cite[Proposition 4, page 173]{DixmiervNA} implies that $E$ can be written as a limit of products of various $E_{n,k}$'s and 
\[
E=\int_{X_0}^{\oplus} E^x\md\mu_{X_0}(x).
\]
In particular, the field of operators $(E^x)_{x\in X_0}$ is measurable. Observe, that for almost all $x\in X_0$ the projection $E^x$ corresponds to the subrepresentation $m_x\cdot \sigma\subseteq\pi_x$. It follows that the field of subspaces $(E^x\msf{H}_{\pi_x})_{x\in X_0}$ is measurable (\cite[Proposition 9, page 173]{DixmiervNA}) and we have a decomposition of $\msf{H}_{\pi_{X_0}}$ into an ortogonal direct sum
\[
\int_{X_0}^{\oplus}\msf{H}_{\pi_x}\md\mu_{X_0}(x)=
\int_{X_0}^{\oplus}E^x\msf{H}_{\pi_x}\md\mu_{X_0}(x)
\oplus
\int_{X_0}^{\oplus}(\I_{\msf{H}_{\pi_x}}-E^x)\msf{H}_{\pi_x}\md\mu_{X_0}(x).
\]
This decomposition is preserved by $\pi_{X_0}$. On the first summand, representation $\pi_{X_0}$ acts as $\int_{X_0}^{\oplus} m_x\cdot\sigma\md\mu_X(x)$, therefore, as the quasi-containment does not see multiplicities, we arrive at $\sigma\lec_q \pi_{X_0}$ and the equation \eqref{eq23} holds. Using this containment we are able to derive a contradiction:
\[
\sigma\lec_q \int_{X_0}^{\oplus} \pi_x \md\mu_{X_0}
\subseteq \int_{X}^{\oplus} \pi_x \md\mu_{X}=\pi_X\lec_q \Lambda_{\whG}
\]
which is false since the singleton $\{\sigma\}\subseteq \Irr(G)$ has Plancherel measure $0$ (see Proposition \ref{stw26}). An analogous argument shows that for $\mu_X$-almost all $x$ the trivial representation is not contained in $\pi_x$.
\end{proof}

By the definition of the integral character we have
\[
\chi^{\int}(\pi_X)=\int_{X} \chi(U^{\pi_x}) \md \mu_X(x)\in \LL^{\infty}(G).
\]
Since almost all $\pi_x$ are direct sums of various $\sigma_r$'s, we have (see equation \eqref{eq26})
\[\begin{split}
&\quad\;
\chi^{\int}(\pi_X)(t,k)=
\int_X \chi(U^{\pi_x})(t,k)\md\mu_X(x)\\
&=
\delta_{k,1}\int_X \chi(U^{\pi_x})(-t,k)\md\mu_X(x)=
\delta_{k,1}\chi^{\int}(\pi_X)(-t,k)
\end{split}\]
for all $(t,k)\in G$ and it follows that we have inclusion $\subseteq$ in equation \eqref{eq28} (right hand side is $\swot$ closed). Now, take any even function $f\in \mathrm{C}_c(\RR)\subseteq\LL^{\infty}(\RR)$ and consider $\Omega_n=]0,n](n\in\NN)$ treated as a measurable subset of $\Irr(G)$. For any $(t,k)\in G$ and $n\in\NN$ we have
\[
\chi^{\int}( \int^{\oplus}_{\Omega_n} \sigma_r \md f \mu(r))(t,k)=
\delta_{k,1}\int_0^n f(r) (e^{irt}+e^{-irt})\tfrac{\md r}{4\pi}=
\tfrac{\delta_{k,1}}{4\pi}\int_{-n}^{n}
 f(r) e^{-irt}\md r.
\]
We claim that
\[
\chi^{\int}(\int_{\Omega_n}^{\oplus} \sigma_r \md f \mu(r))
\xrightarrow[n\to\infty]{\swot} F,
\]
where $F\in\LL^{\infty}(G)$ is a function given by $F(t,k)=\tfrac{\delta_{k,1}}{4\pi} \mc{F}(f)(\tfrac{t}{2\pi})$. Indeed, take any $\omega\in \LL^1(G)$. Calculation
\[\begin{split}
&\quad\;
\bigl|\int_G \bigl(
\chi^{\int}(\int_{\Omega_n}^{\oplus} \sigma_r \md f \mu(r))-F\bigr)\omega \md\mu_G\bigr|=
\tfrac{1}{4\pi}\bigl|\int_{\RR} \bigl(\int_{-n}^{n} f(r) e^{-irt} \md r-
\mc{F}(\tfrac{t}{2\pi}) \bigr)\omega(t,1)\md t\bigr|\\
&=
\tfrac{1}{4\pi}\bigl|\int_{\RR} \bigl(\int_{\RR\setminus [-n,n]} f(r) e^{-irt} \md r\bigr)\omega(t,1)\md t\bigr|\xrightarrow[n\to\infty]{}0
\end{split}\]
which uses the fact that $f$ has compact support, proves the claim. Since the Fourier transform is unitary and maps even functions to even functions, we get the desired equality
\[
\ov{\mf{A}}^{\swot}=\{f\in \LL^{\infty}(G)\,|\, \supp(f)\subseteq \RR\times \{1\},\;
\forall_{t\in \RR} \,f(t,1)=f(-t,1)\}.
\]

\subsection{Bicrossed product $G^{\times N}\bowtie\ZZ_N$}
An important way of constructing new interesting examples of locally compact quantum groups is the bicrossed product construction. We refer the reader to \cite{VaesVainerman} for the introduction to the theory. We will focus on a class of examples given by a bicrossed product of $G^{\times N}$ and $\ZZ_N$ for some $N\in\NN$. Let us be more precise: let $G$ be a (nontrivial) second countable locally compact Hausdorff group, $N$ a natural number greater that $1$, $G^{\times N}$ the $N$-th cartesian power of $G$ and $\ZZ_N=\{0,\dotsc,N-1\}$ the additive group modulo $N$. Form a semidirect product $G^{\times N}\rtimes \ZZ_N$ with group operation as follows:
\[
((g_a)_{a=1}^{N},k)\,((g'_a)_{a=1}^{N},k')=((g_a g'_{a+k})_{a=1}^{N},k+k')
\]
for $((g_a)_{a=1}^{N},k),((g'_{a})_{a=1}^{N},k')\in G^{\times N}\rtimes \ZZ_N$ (this is an instance of the wreath product). Together with the canonical inclusions
\[
i\colon G^{\times N}\ni (g_a)_{a=1}^{N} \mapsto ((g_a)_{a=1}^{N},0)\in G^{\times N}\rtimes \ZZ_N,\quad
j\colon \ZZ_N\ni k \mapsto (e,k)\in G^{\times N}\rtimes \ZZ_N
\]
$(G^{\times N},\ZZ_N)$ forms a matched pair of locally compact groups (\cite[Definition 4.7.]{VaesVainerman}). Note that we have $\Omega=\{i((g_a)_{a=1}^{N})j(k)\,|\,(g_a)_{a=1}^{N}\in G^{\times N},k\in\ZZ_N\}=G^{\times N}\rtimes \ZZ_N$. One easily checks that the action $\alpha$ is trivial and $\beta$ is given by
\[
\beta_k((g_a)_{a=1}^{N})=(g_{a-k})_{a=1}^{N}\quad(k\in\ZZ_N,(g_a)_{a=1}^{N}\in G^{\times N}).
\]
Now, let $\GG$ be the locally compact quantum group given by the bicrossed construction with trivial cocycles. We will denote it by $
\GG=G^{\times N}\bowtie \ZZ_N$.\\

We would like to show that $\GG$ is a type I locally compact quantum group with finite dimensional irreducible representations. In order to do that, we first need to go through some elementary topological considerations: consider the topological space $\ZZ_N\setminus (G^{\times N}\rtimes \ZZ_N)$ with the quotient topology. Since we have 
\[
(e,l)((g_a)_{a=1}^{N},k)=((g_{a+l})_{a=1}^{N},l+k)
\]
for all $((g_a)_{a=1}^{N},k)\in G^{\times N}\rtimes\ZZ_N,l\in \ZZ_N$, it follows that
\[
[((g_a)_{a=1}^{N},k)]=\{((g_{a+l})_{a=1}^{N},k+l)\,|\,l\in\ZZ_N\}\in \ZZ_N\setminus (G^{\times N}\rtimes \ZZ_N)
\]
for each $((g_a)_{a=1}^{N},k)\in G^{\times N}\rtimes\ZZ_N$. It is therefore clear that we have a bijection
\[
\Psi\colon G^{\times N}\ni (g_a)_{a=1}^{N} \mapsto [((g_a)_{a=1}^{N},0)] \in \ZZ_N\setminus (G^{\times N}\rtimes \ZZ_N).
\]
It is a homeomorphism. Indeed, assume that $\{[((g_a)_{a=1}^{N},0)]\,|\,(g_a)_{a=1}^{N}\in A\}$ is an open subset of $\ZZ_N\setminus(G^{\times N}\rtimes \ZZ_N)$. Then by the definition, its preimage under the quotient map,
\[
\{((g_{a+l})_{a=1}^{N},l)\,|\,(g_a)_{a=1}^{N}\in A,l\in\ZZ_N\}\subseteq G^{\times N}\rtimes \ZZ_N
\]
is open. Topologically we have $G^{\times N}\rtimes\ZZ_N=G^{\times N}\times \ZZ_N$ hence it follows that the projection onto the first coordinate, $A\subseteq G^{\times N}$ is open. Consequently $\Psi$ is continuous. Now, to show that $[((g_a)_{a=1}^{N},0)]\mapsto (g_a)_{a=1}^{N}$ is also continuous take any open subset $A\subseteq G^{\times N}$. Its preimage under the composition of the quotient map and $\Psi^{-1}$ is $\{((g_{a+l})_{a=1}^{N},l)\,|l\in\ZZ_N,\,a\in A\}$ which is clearly open in $G^{\times N}\rtimes\ZZ_N$.\\
By \cite[Proposition 3.7]{BaajSkandalisVaes} (note different conventions, see also \cite{VaesVainermanlow}) we have 
\[
\mathrm{C}_0^{u}(\whG)=\mathrm{C}_0(\ZZ_N \setminus (G^{\times N}\rtimes\ZZ_N))\rtimes\ZZ_N,
\]
 where the right action of $\ZZ_N$ on $\ZZ_N\setminus (G^{\times N}\rtimes \ZZ_N)$ is given by
\[
([((g_a)_{a=1}^{N},k)],l)\mapsto [((g_a)_{a=1}^{N},k)(e,l)]=[((g_a)_{a=1}^{N},k+l)]
\]
and here $\rtimes$ denotes the full crossed product. Under the identification $\ZZ_N\setminus (G^{\times N}\rtimes \ZZ_N)\simeq G^{\times N}$, the action of $\ZZ_N$ looks as follows: $G^{\times N}\times \ZZ_N\ni ((g_a)_{a=1}^{N},l)\mapsto (g_{a-l})_{a=1}^{N}\in G^{\times N}$. It follows that the \cst-algebra $\CGDu$ is isomorphic to $\mathrm{C}_0(G^{\times N})\rtimes\ZZ_N$. Clearly it is separable. Proposition 7.30 of \cite{Williams} tells us that the above \cst-algebra is of type I provided we can show that the quotient space $G^{\times N}/\ZZ_N$ is $T_0$. Indeed this is the case, as the following more general lemma shows:

\begin{lemma}
Let $H=\{h_1,\dotsc,h_n\}$ be a finite group acting continuously on a Hausdorff space $X$. Then the quotient space $H\setminus X$ is Hausdorff.
\end{lemma}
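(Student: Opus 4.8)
The plan is to exploit two features of the situation: the quotient map $q\colon X\rightarrow H\setminus X$ is open, and each orbit is a finite subset of the Hausdorff space $X$. First I would record the openness of $q$. For an open set $U\subseteq X$ one has $q^{-1}(q(U))=\bigcup_{h\in H} h\cdot U$, a finite union of open sets, since each map $x\mapsto h\cdot x$ is a homeomorphism; hence $q(U)$ is open by the definition of the quotient topology. Thus it will suffice to produce, for two given distinct points, disjoint \emph{$H$-invariant} open sets around representatives, because their images under $q$ will then be disjoint open neighbourhoods.

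Next, given two distinct points of $H\setminus X$ represented by $x,y\in X$, their orbits $H\cdot x$ and $H\cdot y$ are disjoint finite subsets of $X$. Using only the Hausdorff property and finiteness, I would separate them by disjoint open sets $U\supseteq H\cdot x$ and $V\supseteq H\cdot y$: for each pair $(a,b)\in (H\cdot x)\times(H\cdot y)$ choose disjoint opens $a\in U_{a,b}$ and $b\in V_{a,b}$, then set $U=\bigcup_{a}\bigcap_{b} U_{a,b}$ and $V=\bigcap_{a}\bigcup_{b} V_{a,b}$. A short bookkeeping check (for fixed $a$ the set $\bigcap_b U_{a,b}$ is disjoint from every $V_{a,b}$, hence from $\bigcup_b V_{a,b}\supseteq V$) gives $U\cap V=\emptyset$, while the containments are immediate.

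The key step, and the one that needs care, is to upgrade $U,V$ to $H$-invariant open sets without destroying disjointness or the containments. I would replace them by $U'=\bigcap_{h\in H} h\cdot U$ and $V'=\bigcap_{h\in H} h\cdot V$. Each is a finite intersection of open sets, hence open, and a reindexing of the group shows $g\cdot U'=U'$ for every $g\in H$, so $U'$ is $H$-invariant, and likewise $V'$. Because the orbits are themselves invariant, the containment $H\cdot x\subseteq U$ forces $H\cdot x\subseteq U'$ (and similarly $H\cdot y\subseteq V'$), while $U'\subseteq U$ and $V'\subseteq V$ preserve disjointness, so $U'\cap V'=\emptyset$. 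Here finiteness of $H$ is exactly what makes $\bigcap_{h\in H} h\cdot U$ open.

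Finally I would push everything down. The sets $q(U')$ and $q(V')$ are open neighbourhoods of the two given points. Their disjointness follows from invariance: if some orbit met both $U'$ and $V'$, then invariance would place that entire orbit inside $U'\cap V'=\emptyset$, a contradiction. This separates the two distinct points, so $H\setminus X$ is Hausdorff. I expect the only genuine obstacle to be the invariance construction of the third paragraph; the separation of finite sets and the descent through the open map are routine point-set topology.
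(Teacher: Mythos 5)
Your proof is correct, and it follows essentially the same route as the paper: separate points of the two (finite, disjoint) orbits using the Hausdorff property of $X$, take finite intersections of group translates of the separating open sets, and push down through the open quotient map. The only organizational difference is that you make your separating sets $H$-invariant before descending, so disjointness of their images is automatic, whereas the paper keeps non-invariant neighbourhoods of $x$ and $y$ indexed over pairs $(k,l)$ and checks disjointness of the images directly — both versions hinge on exactly the same finite-intersection trick.
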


\begin{proof}
Take any two distinct orbits $[x],[y]\in H\setminus X$. Since $X$ is Hausdorff, there are open sets $\mc{U}_{(k,l)},\mc{V}_{(k,l)}\,(k,l\in\{1,\dotsc,n\})$ in $X$ such that $h_k x\in \mc{U}_{(k,l)},h_l y \in \mc{V}_{(k,l)}$ and $\mc{U}_{(k,l)}\cap\mc{V}_{(k,l)}=\emptyset$ for all $k,l\in\{1,\dotsc,n\}$. Define $\mc{U}=\cap_{k,l=1}^{n} h_k^{-1} \mc{U}_{(k,l)},\mc{V}=\cap_{k,l=1}^{n} h_l^{-1} \mc{V}_{(k,l)}$. These are open sets, and we have $x\in \mc{U},y\in \mc{V}$, consequently $[x]\in [\mc{U}],[y]\in[\mc{V}]$. Since the quotient map is open, sets $[\mc{U}],[\mc{V}]$ are open in $H\setminus X$. Let us justify that they are also disjoint. Assume that $[z]\in [\mc{U}]\cap [\mc{V}]$. It follows that there exists $k,l\in\{1,\dotsc,n\}$ such that $h_k^{-1} z\in \mc{U}$ and $h_l^{-1} z\in \mc{V}$. However this implies that $z\in \mc{U}_{(k,l)}\cap\mc{V}_{(k,l)}$ gives us a contradiction.
\end{proof}

Irreducible representations of $\GG$ can be identified with irreducible representations of $\CGDu\simeq \mathrm{C}_0(G^{\times N})\rtimes \ZZ_N$, and those have dimension less or equal to $N$ (\cite[Theorem 8.39, Proposition 5.4]{Williams}).
To sum up, we have proved the following:

\begin{proposition}
The \cst-algebra $\mathrm{C}_0^{u}(\whG)$ is isomorphic to the crossed product \cst-algebra $\mathrm{C}_0(G^{\times N})\rtimes \ZZ_N$ with an action of $\ZZ_N$ on $G^{\times N}$ given by
\[
G^{\times N}\times \ZZ_N\ni ((g_a)_{a=1}^{N},l)\mapsto (g_{a-l})_{a=1}^{N}\in G^{\times N}.
\]
Moreover, this \cst-algebra is separable, of type I and its irreducible representations have dimension less or equal to $N$.
\end{proposition}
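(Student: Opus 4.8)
The plan is to assemble the structural facts gathered in the preceding discussion into a single statement. First I would record the two inputs that have already been obtained: the crossed product description $\CGDu=\mathrm{C}_0(\ZZ_N\setminus(G^{\times N}\rtimes\ZZ_N))\rtimes\ZZ_N$ coming from \cite[Proposition 3.7]{BaajSkandalisVaes}, together with the homeomorphism $\Psi\colon G^{\times N}\to \ZZ_N\setminus(G^{\times N}\rtimes\ZZ_N)$, $\Psi((g_a)_{a=1}^{N})=[((g_a)_{a=1}^{N},0)]$. The key point is that $\Psi$ intertwines the right $\ZZ_N$-action $[((g_a)_{a=1}^{N},k)]\cdot l=[((g_a)_{a=1}^{N},k+l)]$ with an action on $G^{\times N}$. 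To identify the latter I would compute, using $(e,m)((g_a)_{a=1}^{N},k)=((g_{a+m})_{a=1}^{N},m+k)$ and choosing $m=-l$, that
\[
\Psi((g_a)_{a=1}^{N})\cdot l=[((g_a)_{a=1}^{N},l)]=[((g_{a-l})_{a=1}^{N},0)]=\Psi((g_{a-l})_{a=1}^{N}),
\]
so the transported action is exactly $((g_a)_{a=1}^{N},l)\mapsto(g_{a-l})_{a=1}^{N}$. Since an equivariant homeomorphism induces an equivariant isomorphism of the coefficient \cst-algebras, and the assignment of the (full) crossed product is functorial for such data, this yields the asserted isomorphism $\CGDu\cong\mathrm{C}_0(G^{\times N})\rtimes\ZZ_N$.

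For separability I would note that $G$ is second countable, hence so is $G^{\times N}$, so $\mathrm{C}_0(G^{\times N})$ is separable; as $\ZZ_N$ is finite, the crossed product is separable. No full/reduced ambiguity arises because $\ZZ_N$ is amenable, so the universal and reduced crossed products coincide.

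For the type I claim I would invoke the criterion of \cite[Proposition 7.30]{Williams}, which reduces type I of the transformation group \cst-algebra to the quotient $G^{\times N}/\ZZ_N$ being suitably separated. Here I would apply the lemma proved just above --- a finite group acting continuously on a Hausdorff space has Hausdorff (in particular $T_0$) quotient --- with $H=\ZZ_N$ and $X=G^{\times N}$, concluding that $G^{\times N}/\ZZ_N$ is Hausdorff and hence the crossed product is of type I. Finally, for the dimension bound I would use the description of the spectrum via the Mackey machine (\cite[Theorem 8.39, Proposition 5.4]{Williams}): every irreducible representation is induced from a stabilizer, so its dimension equals the size of the corresponding $\ZZ_N$-orbit times the dimension of an irreducible representation of the stabilizer subgroup; since $\ZZ_N$ is abelian the latter is $1$, and since each orbit has at most $|\ZZ_N|=N$ points, the dimension is at most $N$.

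The genuinely new work here is only bookkeeping: the main point to get right is the equivariance computation above, ensuring that the transported action is $((g_a)_{a=1}^{N},l)\mapsto(g_{a-l})_{a=1}^{N}$ and that passing from an equivariant homeomorphism of spaces to an isomorphism of the crossed products is legitimate. The type I and dimension statements then follow essentially by quotation of \cite{Williams}, the only subtlety being to match the hypotheses (Hausdorffness of the orbit space, abelianness of $\ZZ_N$) to the cited results.
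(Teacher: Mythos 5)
Your proposal is correct and follows essentially the same route as the paper: the crossed-product identification from \cite[Proposition 3.7]{BaajSkandalisVaes} combined with the equivariant homeomorphism $\Psi$ (with the same computation showing the transported action is $((g_a)_{a=1}^{N},l)\mapsto(g_{a-l})_{a=1}^{N}$), then \cite[Proposition 7.30]{Williams} together with the finite-group-action lemma for type I, and \cite[Theorem 8.39, Proposition 5.4]{Williams} for the dimension bound. Your extra remarks (amenability of $\ZZ_N$ removing the full/reduced ambiguity, and spelling out the Mackey-machine count of orbit size times stabilizer-irrep dimension) are harmless elaborations of points the paper leaves implicit.
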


Let us now consider further properties of our group $\GG=G^{\times N}\bowtie \ZZ_N$. Since $G$ is a nontrivial group, the action of $\ZZ_N$ on $\mathrm{C}_0(G^{\times N})$ is also nontrivial hence $\mathrm{C}_0(G^{\times N})\rtimes \ZZ_N$ is noncommutative and locally compact quantum group $\whG$ is not classical. Furthermore, $\GG$ is classical if and only if $G$ is abelian. Indeed, since $\alpha$ is trivial we have
\[
\LL^{\infty}(\GG)=G^{\times N}\ltimes \LL^{\infty}(\ZZ_N)=
\mathrm{L}(G^{\times N})\bar{\otimes} \LL^{\infty}(\ZZ_N)
\]
which is commutative precisely when $G$ is abelian.
\\
By \cite[Proposition 2.7]{VaesVainerman}, $\GG$ is compact if and only if $G$ is discrete, and $\GG$ is discrete if and only if $G$ is compact.\\
Let $\mu_G,\delta_G$ be a left Haar measure and the modular element\footnote{Recall that the modular element is given by the Radon-Nikodym derivative of the right Haar measure divided by the left Haar measure.} for $G$. One easily checks that $\mu_G^{\times N}$ is a left Haar measure for $G^{\times N}$ and $G^{\times N}\ni(g_a)_{a=1}^{N}\mapsto \prod_{a=1}^{N}\delta_G(g_a)\in\RR_{>0}$ is the modular element for $G^{\times N}$. Next, without much effort one checks that the product of $\mu^{\times N}_G$ on $G^{\times N}$ and the counting measure on $\ZZ_N$ gives a left Haar measure on $G^{\times N}\rtimes \ZZ_N$ and the modular element is given by
\[
G^{\times N}\rtimes \ZZ_N\ni ((g_a)_{a=1}^{N},k)\mapsto
\prod_{a=1}^{N} \delta_G(g_a)\in \RR_{>0}.
\]
Propositions 4.14 and 4.15 of \cite{VaesVainerman} allow us to identify $\LL^2(\GG)$ with $\LL^2(G^{\times N}\times \ZZ_N)$ and describe operators associated with $\GG$ and $\whG$. Let us summarize the above observation in a proposition:

\begin{proposition}\label{stw16}
Locally compact quantum group $\whG$ is never classical. We have
\[
\Linf=\LL(G^{\times N})\bar{\otimes} \LL^{\infty}(\ZZ_N),
\]
hence $\GG$ is classical if and only if $G$ is abelian.\\
Furthermore, $\GG$ is compact if and only if $G$ is discrete, and $\GG$ is discrete if and only if $G$ is compact.\\
Under the identification $\LdG=\LL^2(G^{\times N} \times \ZZ_N)$ the modular operator $\nabla_\vp$ and the modular element for $\whG$, $\delta_{\whG}$ are given by multiplication with strictly positive functions (denoted with the same letter):
\[
\nabla_{\vp}((g_a)_{a=1}^{N},k)=\prod_{a=1}^{N}\delta_G(g_a)^{-1},\quad
\delta_{\whG}((g_a)_{a=1}^{N},k)=\prod_{a=1}^{N} \delta_G(g_a),
\]
where $((g_a)_{a=1}^{N},k)\in G^{\times N}\times \ZZ_N.$. Modular conjugations $J,\hat{J}$ for the left invariant Haar integrals $\vp,\hvp$ are given by
\[
(J\xi)((g_a)_{a=1}^{N},k)=
\bigl(\prod_{a=1}^{N} \delta_{G}(g_a)^{\frac{1}{2}}\bigr)
\,\ov{\xi ((g_a^{-1})_{a=1}^{N},k)},\quad
(\hat{J}\xi)((g_a)_{a=1}^{N},k)=
\ov{\xi ((g_{a-k})_{a=1}^{N},-k)},
\]
where $\xi\in \LL^2(G^{\times N}\times \ZZ_N)$ and $((g_a)_{a=1}^{N},k)\in G^{\times N}\times \ZZ_N)$.  Moreover we always have
\[
P=\delta_{\GG}=\nabla_{\hvp}=\I
\]
and the scaling constant of $\GG$ is $1$.
\end{proposition}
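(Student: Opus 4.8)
The plan is to read every assertion off the bicrossed-product data already assembled in the paragraphs above, organising the claims into three groups: the structural statements about $\Linf$ and $\whG$, the compact/discrete dichotomy, and the explicit modular data together with their triviality.

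The first two structural claims are essentially already argued. For ``$\whG$ is never classical'' I would observe that $\CGDu\cong\mathrm{C}_0(G^{\times N})\rtimes\ZZ_N$, and since $G$ is non-trivial and $N>1$ the coordinate-shift action of $\ZZ_N$ on $\mathrm{C}_0(G^{\times N})$ is non-trivial, so the crossed product is non-commutative; hence $\Linfd=\LL^\infty(G^{\times N})\rtimes\ZZ_N$ is non-commutative, i.e. $\whG$ is not classical. The identity $\Linf=\mathrm{L}(G^{\times N})\bar\otimes\LL^{\infty}(\ZZ_N)$ is immediate from the triviality of $\alpha$ (the crossed product $G^{\times N}\ltimes_\alpha\LL^{\infty}(\ZZ_N)$ degenerates to a tensor product), and $\mathrm{L}(G^{\times N})$ is abelian precisely when $G^{\times N}$, equivalently $G$, is abelian; this gives ``$\GG$ classical iff $G$ abelian''. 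The compact/discrete dichotomy is a direct application of \cite[Proposition 2.7]{VaesVainerman} to the matched pair $(G^{\times N},\ZZ_N)$.

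For the explicit operators $\nabla_\vp,\ \delta_{\whG},\ J,\ \hat J$ I would specialise the general formulas of \cite[Propositions 4.14, 4.15]{VaesVainerman} to our matched pair, feeding in the left Haar measure $\mu_G^{\times N}\times(\text{counting})$ and the modular function $((g_a)_{a=1}^N,k)\mapsto\prod_{a=1}^N\delta_G(g_a)$ of $G^{\times N}\rtimes\ZZ_N$ computed above. Under the identification $\LdG=\LL^2(G^{\times N}\times\ZZ_N)$ provided by those propositions, $\nabla_\vp$ and $\delta_{\whG}$ come out as multiplication by $\prod_a\delta_G(g_a)^{-1}$ and $\prod_a\delta_G(g_a)$ respectively, and $J,\hat J$ as the displayed (anti)unitaries; the $\ZZ_N$-shift $(g_{a-k},-k)$ entering $\hat J$ is exactly the twist carried by the bicrossed coproduct, while the factor $\prod_a\delta_G(g_a)^{1/2}$ in $J$ is the usual modular-function factor in the conjugation attached to the Plancherel weight of $\wh{G^{\times N}}$.

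Finally, for $P=\delta_\GG=\nabla_{\hvp}=\I$ and $\nu=1$ I would proceed as follows. I prove $\nabla_{\hvp}=\I$ by exhibiting $\hvp$ as the dual weight of the trace on $\LL^\infty(G^{\times N})$ given by integration against $\mu_G^{\times N}$: since the coordinate-permutation action of $\ZZ_N$ preserves the product Haar measure, this trace is $\ZZ_N$-invariant, so its dual weight on $\LL^\infty(G^{\times N})\rtimes\ZZ_N$ is tracial, forcing $\nabla_{\hvp}=\I$; the convention $\tau_t(x)=\nabla_{\hvp}^{it}x\nabla_{\hvp}^{-it}$ then yields $\tau_t=\id$. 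For $\delta_\GG=\I$ I argue that $\GG$ is unimodular because its von Neumann side $\mathrm{L}(G^{\times N})\bar\otimes\LL^{\infty}(\ZZ_N)$ carries a bi-invariant Haar weight (the Plancherel weight of $\wh{G^{\times N}}$ is bi-invariant, and the finite group $\ZZ_N$ acts measure-preservingly), equivalently by reading $\delta_\GG$ directly off the Vaes--Vainerman formulas. Given $\delta_\GG=\I$, the standard relation $\sigma^\vp_t(\delta)=\nu^t\delta$ gives $\nu^t=1$, hence $\nu=1$; then the defining relation $P^{it}\Lvp(x)=\nu^{t/2}\Lvp(\tau_t(x))=\Lvp(x)$ on a core shows $P=\I$. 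The main obstacle is precisely that $G$ may be non-unimodular, so that $\nabla_\vp$ is genuinely non-trivial: one must track the Vaes--Vainerman conventions carefully enough to see that the non-unimodularity of $G$ is absorbed entirely into $\nabla_\vp$ and $\delta_{\whG}$, while $\delta_\GG$, $\nabla_{\hvp}$, $\nu$ and $P$ nonetheless collapse to the trivial value.
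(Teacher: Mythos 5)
Your proposal is correct and follows essentially the same route as the paper: the paper likewise deduces non-classicality of $\whG$ from non-triviality of the $\ZZ_N$-action on $\mathrm{C}_0(G^{\times N})$, gets $\Linf=\mathrm{L}(G^{\times N})\bar{\otimes}\LL^{\infty}(\ZZ_N)$ from the triviality of $\alpha$, quotes \cite[Proposition 2.7]{VaesVainerman} for the compact/discrete dichotomy, and, after computing the left Haar measure and modular function of $G^{\times N}\rtimes\ZZ_N$, reads all the operator formulas (including $P=\delta_{\GG}=\nabla_{\hvp}=\I$ and scaling constant $1$) off \cite[Propositions 4.14, 4.15]{VaesVainerman}. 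Your additional self-contained derivations of the triviality claims (tracial dual weight of the $\ZZ_N$-invariant trace for $\nabla_{\hvp}$, the relation $\sigma^{\vp}_t(\delta)=\nu^t\delta$ for $\nu=1$, and the defining relation $P^{it}\Lvp(x)=\nu^{t/2}\Lvp(\tau_t(x))$ for $P=\I$) are correct supplements to steps the paper attributes wholesale to the cited propositions.
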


At the end of this subsection let us establish when $\GG,\whG$ are (co)amenable. We will do this using \cite[Theorem 2.2.21, Theorem 2.2.23]{Desmedt}. Moreover, we need the observation that since $\GG$ is a bicrossed product of $G^{\times N}$ and $\ZZ_N$, the dual quantum group $\whG$ is a bicrossed product of $\ZZ_N$ and $G^{\times N}$ (\cite[Proposition 2.9]{VaesVainerman}).

\begin{proposition}\label{stw19}$ $
\begin{enumerate}
\item Locally compact quantum group $\whG$ is always coamenable, hence $\GG$ is always amenable,
\item $\GG$ is coamenable if and only if $\whG$ is amenable if and only if classical group $G$ is amenable.
\end{enumerate}
\end{proposition}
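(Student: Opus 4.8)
The plan is to reduce both parts to the amenability/coamenability criteria for bicrossed products recorded in \cite[Theorem 2.2.21, Theorem 2.2.23]{Desmedt}, together with the observation (already noted just before the statement) that $\whG$ is the bicrossed product attached to the \emph{same} matched pair but with the roles of $G^{\times N}$ and $\ZZ_N$ interchanged, by \cite[Proposition 2.9]{VaesVainerman}. The only elementary inputs I need beyond these citations are that $\ZZ_N$ is finite, hence amenable, and that $G^{\times N}$ is amenable exactly when $G$ is: amenability is stable under finite direct products, and $G$ occurs in $G^{\times N}$ both as a closed subgroup and as a quotient, so amenability of $G^{\times N}$ forces that of $G$.

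First I would fix, once and for all, which of the two groups of the matched pair governs amenability and which governs coamenability of the resulting bicrossed product, reading this off Desmedt's two theorems in the convention of Proposition \ref{stw16} (where $\Linf=\LL(G^{\times N})\bar{\otimes}\LL^{\infty}(\ZZ_N)$, the action $\alpha$ being trivial and $\beta$ the coordinate shift). In this convention the criteria read: the bicrossed product of a matched pair is coamenable iff the first group is amenable, and it is amenable iff the second group is amenable. Since $\GG=G^{\times N}\bowtie\ZZ_N$ has first group $G^{\times N}$ and second group $\ZZ_N$, while $\whG$ has them swapped, all four assertions will follow by applying these two criteria to $\GG$ and to $\whG$.

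For part 2, I apply the coamenability criterion to $\GG$: coamenability of $\GG$ is equivalent to amenability of $G^{\times N}$, hence to amenability of $G$. Applying the amenability criterion to $\whG$ (whose second group is $G^{\times N}$) gives that $\whG$ is amenable iff $G^{\times N}$, equivalently $G$, is amenable. This yields the chain of equivalences in part 2. For part 1, the same two criteria applied to the swapped pair single out $\ZZ_N$: coamenability of $\whG$ is equivalent to amenability of $\ZZ_N$, and amenability of $\GG$ is equivalent to amenability of $\ZZ_N$, both of which hold because $\ZZ_N$ is finite. Hence $\whG$ is always coamenable and $\GG$ is always amenable. Alternatively, once coamenability of $\whG$ is established, amenability of $\GG=\widehat{\whG}$ is immediate from the general implication, recalled in the introduction, that coamenability of a quantum group entails amenability of its dual.

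The main obstacle here is bookkeeping rather than anything deep: one must match the abstract formulations of \cite[Theorem 2.2.21, Theorem 2.2.23]{Desmedt} to the concrete matched pair $(G^{\times N},\ZZ_N)$, keeping straight which group plays which role and how these roles are exchanged under duality, so that in each of the four assertions the criterion is invoked with the correct factor. Holding the convention of Proposition \ref{stw16} fixed throughout is what makes this unambiguous, and it is the only point where care is required.
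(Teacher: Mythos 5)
Your proposal is correct and follows exactly the route the paper takes: the paper's (essentially implicit) proof consists precisely of invoking \cite[Theorem 2.2.21, Theorem 2.2.23]{Desmedt} for the matched pair $(G^{\times N},\ZZ_N)$ together with the duality observation from \cite[Proposition 2.9]{VaesVainerman} that $\whG$ is the bicrossed product of the swapped pair. Your additional bookkeeping — pinning down the convention via Proposition \ref{stw16}, noting that $\ZZ_N$ is amenable because it is finite, and that $G^{\times N}$ is amenable iff $G$ is — just makes explicit what the paper leaves to the reader.
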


\subsubsection{Plancherel measure for $G^{\times 2}\bowtie \ZZ_2$}
Assume from now on that $G$ is an uncountable group (in particular $G$ is not discrete). Fix $\mu_{G^{\times 2}}$, a left Haar measure for $G^{\times 2}$. Note that the fact that $G$ is $\sigma$-compact and uncountable implies that the diagonal in $G^{\times 2}$ is of measure $0$. From now on, we will identify $\ZZ_2$ with $\{-1,1\}\subseteq \RR\setminus \{0\}$. As the Haar measure for $\ZZ_2$ we choose the counting measure. In this section we will describe the Plancherel measure of $\GG$ and the spectrum of $\CGDu\simeq\mathrm{C}_0(G^{\times 2})\rtimes \ZZ_2$ as a measure space. Let us start with the following simple lemma:

\begin{lemma}\label{lemat35}
Let $H$ be a finite group and $X$ a Hausdorff, locally compact, $\sigma$-compact topological space with a continuous action of $H$, $\alpha\colon H\curvearrowright X$. Let $\pi\colon X\ni x \mapsto [x]\in X/H$ be the canonical quotient map. Then there exists a Borel map $f\colon X/H\rightarrow X$ such that $\pi\circ f([x])=[x]$ for all $x\in X$ and the set $f(X/H)$ is Borel in $X$.
\end{lemma}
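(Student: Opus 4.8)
The plan is to produce a Borel transversal for the orbit equivalence relation, that is, a Borel set $T\subseteq X$ meeting every $H$-orbit in exactly one point, and then to let $f$ be the inverse of the restricted quotient map $\pi|_T$. First I would record that the spaces involved are standard Borel. In all our applications $X=G^{\times 2}$ with $G$ second countable, so $X$ is second countable; being also locally compact Hausdorff it is regular, hence metrizable by Urysohn, and a $\sigma$-compact locally compact metrizable space is Polish. The quotient map $\pi$ is continuous, open (quotient maps by group actions are open) and finite-to-one (each orbit has at most $|H|$ points), and by the preceding lemma $X/H$ is Hausdorff; as the open continuous image of a second countable locally compact space it is itself second countable and locally compact, hence Polish as well. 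Thus both $X$ and $X/H$ are standard Borel spaces.

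Next I would manufacture a Borel linear order on $X$. Fix a countable basis $\{V_m\}_{m\in\NN}$ and consider the map $\Phi\colon X\to\{0,1\}^{\NN}$, $\Phi(x)=(\mathbbm{1}_{V_m}(x))_{m\in\NN}$. Since $X$ is $T_1$ and the $V_m$ form a basis, $\Phi$ is injective, and it is Borel because each coordinate $x\mapsto \mathbbm{1}_{V_m}(x)$ is Borel. Pulling back the lexicographic order of $\{0,1\}^{\NN}$ along $\Phi$ yields a linear order $\preceq$ on $X$ whose graph, being the $(\Phi\times\Phi)$-preimage of a Borel subset of $\{0,1\}^{\NN}\times\{0,1\}^{\NN}$, is Borel. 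I would then set
\[
T=\{x\in X\mid x\preceq h\cdot x \textnormal{ for all } h\in H\}.
\]
Because $H$ is finite, each map $x\mapsto h\cdot x$ is continuous, and $\preceq$ is Borel, so $T$ is a finite intersection of Borel sets and hence Borel. Since $\Phi$ separates the finitely many distinct points of any orbit, each orbit contains a unique $\preceq$-minimal element; therefore $T$ meets every orbit in exactly one point, i.e.\ $\pi|_T\colon T\to X/H$ is a continuous bijection.

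Finally I would put $f=(\pi|_T)^{-1}$, so that $\pi\circ f=\id$ holds by construction and $f(X/H)=T$ is Borel. It remains to check that $f$ itself is Borel, and this is precisely the Lusin--Souslin theorem: a Borel (here even continuous) bijection between standard Borel spaces has Borel inverse. The main point of the argument, and the only step that is not routine, is the construction of the Borel transversal $T$ together with the appeal to Lusin--Souslin to conclude that its parametrization $f$ is Borel; the remaining ingredients (openness and finiteness of $\pi$, the Polish/standard-Borel bookkeeping, and the Borelness of $\preceq$ and of $T$) are straightforward verifications.
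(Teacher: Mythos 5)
Your construction is correct in the setting where you carry it out, but it silently strengthens the hypotheses of the lemma: you assume $X$ is second countable ("in all our applications $X=G^{\times 2}$\,..."), whereas the statement only asks for Hausdorff, locally compact and $\sigma$-compact. Both pillars of your argument genuinely need that extra assumption. The injective map $\Phi\colon X\rightarrow \{0,1\}^{\NN}$ requires a countable basis, and the Lusin--Souslin theorem at the end requires $T$ and $X/H$ to be standard Borel spaces; neither is available in general, e.g.\ for the one-point compactification of an uncountable discrete set (compact, hence locally compact and $\sigma$-compact, Hausdorff, but not second countable, not metrizable, not standard Borel). So as a proof of the lemma as stated there is a gap, although it is harmless for every application in the paper, where $G$ is always second countable.

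The paper's own proof is purely topological and avoids descriptive set theory altogether, which is why it works in the stated generality: around each $x$ it picks a compact neighbourhood $U_x$ whose translates $\alpha_h(U_x)$, $h\in H$, are pairwise disjoint; then $\pi|_{U_x}$ is a continuous bijection onto a compact Hausdorff neighbourhood of $[x]$, hence a homeomorphism, giving a local continuous section $f_x=(\pi|_{U_x})^{-1}$; finally $\sigma$-compactness of $X/H$ yields countably many such neighbourhoods covering $X/H$, which are disjointified into Borel pieces $V_k$ on which $f$ is defined to be $f_{x_k}$. It is worth noting that the two arguments have complementary strengths: your transversal $T=\{x\,|\, x\preceq h\cdot x \textnormal{ for all } h\in H\}$ handles points with nontrivial stabilizer automatically (if $h\cdot x=x$ the condition is vacuous), whereas the paper's requirement that the sets $\alpha_h(U_x)$ be pairwise disjoint cannot be met at a fixed point of the action, so the paper's proof implicitly needs the action to be free near $x$. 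In the paper's application the action (the flip on $G^{\times 2}$ off the diagonal) is free, and your $X$ is second countable, so either argument suffices there; but neither, as written, covers the lemma in full generality.
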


\begin{proof}
Take any $x\in X$. One easily sees (by taking appropriate intersections) that there exists a compact neighbourhood $x\in U_x\subseteq X$ such that $\alpha_{h}(U_x)\cap \alpha_{h'}(U_x)=\emptyset$ for distinct $h,h'\in H$. Since the quotient map $\pi$ is open and continuous, the set $\pi(U_x)$ is a compact neighbourhood of $[x]$. Note also that since $\pi$ is open and $\pi|_{U_x}$ is injective, subspace $\pi(U_x)\subseteq X/H$ is Hausdorff. Define map $f_x\colon \pi(U_x)\ni [y] \mapsto y \in U_x\subseteq X$. This map is well defined. It is also continuous: indeed, it is an inverse to $\pi|_{U_x}\colon U_x\rightarrow \pi(U_x)$ which is continuous. As continuous bijections between compact Hausdorff spaces are homeomorphisms, $f_x$ itself is continuous. \\
 Notice that the quotient space $X/H$ is $\sigma$-compact. Consequently, there exists a countable set $\{x_n\,|\, n\in\NN\}$ such that $X/H=\bigcup_{n\in\NN} \pi(U_{x_n})$. Define $V_1=\pi(U_{x_1})$ and $V_k= \pi(U_{x_k})\setminus (V_1\cup\cdots \cup V_{k-1})$ for $k \ge 2$. These are disjoint Borel sets and their union is the whole $X/H$. Define $f\colon X/H\rightarrow X$ to be such a function that $f(y)=f_{x_k}(y)$ whenever $y\in V_k\,(k\in\NN)$. It is clear that we have $\pi\circ f([y])=[y]$ for all $y\in X$. We are left to check that $f$ is Borel. For any open subset $U\subseteq X$ we have
\[
f^{-1}(U)=\bigcup_{n\in\NN} V_n \cap f^{-1}(U)=
\bigcup_{n\in\NN} V_n \cap f_{x_n}^{-1}(U\cap U_{x_n})
\]
which is a Borel set. We have $f(X/H)=\bigcup_{n\in\NN} f(V_n)$. Since for each $n\in\NN$, the set $f(V_n)$ is Borel in $U_{x_n}$, it is Borel $X$ and consequently $f(X/H)$ is Borel.
\end{proof}

Let $\wh{\ZZ}_2=\{\hat{1},e\}$ be the group dual to $\ZZ_2$, where $\hat{1}(k)=1,e(k)=k\,(k\in\ZZ_2=\{-1,1\})$. We can describe the spectrum of $\CGDu$ using \cite[Theorem 8.39, Proposition 5.4]{Williams}. Indeed, these results tell us that it is homeomorphic to the following space (with the quotient topology)
\[
(G^{\times 2} \times \wh{\ZZ}_2)/_{\sim},
\]
where the relation $\sim$ is given by
\[
((g,g'),\tau)\sim ((h,h'),\pi)\textnormal{ if and only if }
\bigl( \ZZ_2 (g,g')=\ZZ_2 (h,h')\textnormal{ and }
\forall_{k\in (\ZZ_2)_{(g,g')}}\; \tau(k)=\pi(k)\bigr).
\]
In other words $((g,g'),\tau)\sim ((h,h'),\pi)$ if and only if
\[
\{(g,g'),(g',g)\}=\{(h,h'),(h',h)\}\quad\textnormal{ and }
\quad
\forall_{k\in \ZZ_2: k\cdot (g,g')=(g,g')} \; \tau(k)=\pi(k).
\]
Let us now determine equivalence classes of $\sim$. Assume first that $g=g'$. Then $((g,g),\tau)\sim ((h,h'),\pi)$ gives us
\[
\{(g,g)\}=\{(h,h'),(h',h)\}\quad\textnormal{ and }\quad
\forall_{k\in \ZZ_2}\; \tau(k)=\pi(k).
\]
Consequently
\[
[((g,g),\tau)]_{\sim} = \{((g,g),\tau)\}.
\]
Consider now case $g\neq g'$. Relation $((g,g'),\tau)\sim ((h,h'),\pi)$ implies
\[
\{(g,g'),(g',g)\}=\{(h,h'),(h',h)\}\quad\textnormal{ and }\quad
\tau(1)=\pi(1),
\]
 hence
\[
[((g,g'),\tau)]_{\sim}=\{
((g,g'),\pi),((g',g),\pi)\,|\,\pi\in \widehat{\ZZ}_2\},
\]
is a set with $4$ elements. It follows that our space decomposes (as a set) into
\[
(G^{\times 2} \times \widehat{\ZZ}_2)/_{\sim}=
\{ [((g,g),\tau)]_{\sim}\,|\, g\in G,\tau\in \wh{\ZZ}_2\}\cup
\{ [((g,g'),\hat{1})]_{\sim}\,|\, g,g'\in G: g\neq g'\}.
\]
We will now proceed to give a simpler description of this (measurable) space. Let $\Delta\subseteq G^{\times 2}$ be the diagonal, $\Delta=\{(g,g)\,|\, g\in G\}$. Define relation $\sim_{\Delta}$ on $G^{\times 2}\setminus \Delta$ via $[(g,g')]_{\sim_\Delta}=\{(g,g'),(g',g)\}$ for all $(g,g')\in G^{\times 2}\setminus \Delta$. Consider the map
\[
\Phi\colon (G^{\times 2}\times \wh{\ZZ}_2)/_{\sim} \colon 
\begin{cases} [((g,g),\tau)]_{\sim} \mapsto (g,\tau) \\ 
[((g,g'),\hat{1})]_{\sim} \mapsto [(g,g')]_{\sim_\Delta}
\end{cases}\in
(G\times \wh{\ZZ}_2)\sqcup ((G^{\times 2}\setminus \Delta)/_{\sim_{\Delta}}).
\]

Put on $(G^{\times 2}\setminus \Delta)/\sim_{\Delta}$ the quotient topology and the corresponding measurable structure given by Borel sets.

\begin{lemma}
$\Phi$ is a Borel isomorphism, hence $\IrrG$ can be identified as a measurable space with
\[
(G\times \hat{\ZZ}_2)\sqcup ((G^{\times 2}\setminus \Delta)/_{\sim_\Delta}).
\]
\end{lemma}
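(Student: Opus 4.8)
The plan is to deduce everything from the explicit description of the $\sim$-classes obtained above, together with the Borel section furnished by Lemma \ref{lemat35}. Since $\GG$ is type I, the Mackey--Borel structure on $\IrrG$ coincides with the Borel structure of its topology, so under the homeomorphism established above we may regard $\IrrG$ as the measurable space $(G^{\times 2}\times\widehat{\ZZ}_2)/_{\sim}$ equipped with the Borel $\sigma$-algebra of the quotient topology. Write $q\colon G^{\times 2}\times\widehat{\ZZ}_2\rightarrow (G^{\times 2}\times\widehat{\ZZ}_2)/_{\sim}$ for the continuous quotient map, and split the domain into the saturated Borel sets $D=\Delta\times\widehat{\ZZ}_2$ and $O=(G^{\times 2}\setminus\Delta)\times\widehat{\ZZ}_2$ ($D$ is closed since $G$ is Hausdorff, $O$ is open).

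First I would check that $\Phi$ is a well-defined bijection. On the diagonal part the classes are the singletons $\{((g,g),\tau)\}$, so the assignment $[((g,g),\tau)]_{\sim}\mapsto(g,\tau)$ is a bijection onto $G\times\widehat{\ZZ}_2$. On the off-diagonal part the computation above shows that $[((g,g'),\tau)]_{\sim}$ is independent of $\tau$ and that the $G^{\times 2}$-coordinates occurring in it are exactly $(g,g')$ and $(g',g)$; hence $[((g,g'),\hat 1)]_{\sim}\mapsto[(g,g')]_{\sim_{\Delta}}$ is well defined and is a bijection onto $(G^{\times 2}\setminus\Delta)/_{\sim_{\Delta}}$. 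Since diagonal and off-diagonal classes are sent to different components of the disjoint union, $\Phi$ is a bijection.

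Next I would verify that both spaces are standard Borel and construct a Borel inverse. Applying Lemma \ref{lemat35} with $H=\ZZ_2$ acting by the swap on $X=G^{\times 2}\setminus\Delta$ yields a Borel section $s\colon (G^{\times 2}\setminus\Delta)/_{\sim_{\Delta}}\rightarrow G^{\times 2}\setminus\Delta$ with Borel image; in particular this action is smooth, so $(G^{\times 2}\setminus\Delta)/_{\sim_{\Delta}}$ is standard Borel, and hence so is the target disjoint union. Now define
\[
\Psi\colon (G\times\widehat{\ZZ}_2)\sqcup\big((G^{\times 2}\setminus\Delta)/_{\sim_{\Delta}}\big)\rightarrow (G^{\times 2}\times\widehat{\ZZ}_2)/_{\sim}
\]
by $\Psi(g,\tau)=q((g,g),\tau)$ and $\Psi(c)=q(s(c),\hat 1)$ for $c\in(G^{\times 2}\setminus\Delta)/_{\sim_{\Delta}}$. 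Both formulas are Borel, since $(g,\tau)\mapsto((g,g),\tau)$ and $c\mapsto(s(c),\hat 1)$ are Borel and $q$ is continuous. Using the description of the $\sim$-classes one checks directly that $\Psi$ is the set-theoretic inverse of $\Phi$; the value $q(s(c),\hat 1)$ is independent of the chosen representative of $c$ precisely because the off-diagonal class ignores both the order of the $G^{\times 2}$-coordinates and the character.

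Finally, $\Psi$ is a Borel bijection between standard Borel spaces, so by the standard fact that such a map is automatically a Borel isomorphism (the Lusin--Souslin theorem) it follows that $\Phi=\Psi^{-1}$ is Borel as well, which proves the claim. The main point requiring care is the bookkeeping of the quotient Borel structures: one must invoke type I-ness to match the Mackey--Borel structure on $\IrrG$ with the Borel structure of the quotient topology, and use the Borel section of Lemma \ref{lemat35} both to see that $(G^{\times 2}\setminus\Delta)/_{\sim_{\Delta}}$ is standard Borel and to write down the inverse explicitly. Once these are in place, the verification of well-definedness and mutual inversion is routine.
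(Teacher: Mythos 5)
Your proof is correct, but it runs the measurability check in the opposite direction from the paper. The paper verifies directly that $\Phi$ itself is measurable: it takes closed subsets of $G\times\wh{\ZZ}_2$ and of $(G^{\times 2}\setminus\Delta)/_{\sim_\Delta}$ and checks that their $\Phi$-preimages are Borel by pulling everything back through the quotient map $q$; it then establishes that both sides are standard (the source because it is the spectrum of a separable type I \cst-algebra, the target because each piece is second countable locally compact Hausdorff, hence Polish) and invokes the fact that a measurable bijection between standard Borel spaces is a Borel isomorphism (\cite[B 22]{DixmierC}). You instead build the inverse map $\Psi$ explicitly, using the Borel section $s$ from Lemma \ref{lemat35} on the off-diagonal part, observe that $\Psi$ is Borel because it is a composition of Borel maps with the continuous $q$, and then apply the same automatic-isomorphism theorem (Lusin--Souslin) to conclude that $\Phi=\Psi^{-1}$ is Borel. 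What your route buys: the measurability verification becomes essentially formal (no hands-on computation with preimages of closed sets), and the section gives you standardness of $(G^{\times 2}\setminus\Delta)/_{\sim_\Delta}$ for free, since the quotient is Borel-isomorphic to the Borel image $s\bigl((G^{\times 2}\setminus\Delta)/_{\sim_\Delta}\bigr)\subseteq G^{\times 2}$; the paper instead gets standardness topologically. The cost is that you must check well-definedness of $\Psi$ (independence of the representative chosen by $s$, which holds because off-diagonal classes forget both the coordinate order and the character) and you lean on Lemma \ref{lemat35}, which the paper's own proof of this lemma does not need. One small point to make explicit: when you assert that $\Psi$ is a Borel bijection \emph{between standard Borel spaces}, you also need standardness of the source $(G^{\times 2}\times\wh{\ZZ}_2)/_{\sim}\simeq\IrrG$; this is the same citation (\cite[Proposition 4.6.1]{DixmierC}) you implicitly use when matching the Mackey--Borel structure with the topological Borel structure, so it should be stated rather than left tacit.
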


\begin{proof}
Clearly $\Phi$ is a bijection. First we show that $\Phi$ is measurable. Take any closed subset $E=\{(g,\hat{1}),(h,e)\,|\, g\in E_{\hat{1}},h\in E_{e}\}\subseteq G\times \wh{\ZZ}_2$. It follows that $E_{\hat{1}},E_e$ are closed in $G$. Preimage of $E$ under $\Phi$ is
\[
\{[((g,g),\hat{1})]_{\sim},[((h,h),e)]_{\sim}\,|\,
g\in E_{\hat{1}},h\in E_e\}\subseteq (G^{\times 2}\times \wh{\ZZ}_2)/_{\sim}.
\]
Next, preimage of this set under the quotient map is
\[
\{((g,g),\hat{1}),((h,h),e)\,|\,
g\in E_{\hat{1}},h\in E_e\}
\]
which is closed in $G^{\times 2}\times \wh{\ZZ}_2$ since $E_{\hat{1}},E_e$ are closed. It follows that $\Phi^{-1}(E)$ is closed.\\
Take now closed set $F=\{[(g,g')]_{\sim_\Delta}\,|\,
(g,g')\in \tilde{F}\}$ in $(G^{\times 2}\setminus \Delta)/_{\sim_{\Delta}}$. We can assume that $(g,g')\in \tilde{F}$ if and only if $(g',g)\in \tilde{F}$. As the quotient map is continuous, $\tilde{F}$ is closed in $G^{\times 2}\setminus \Delta$. Preimage of $F$ under $\Phi$ is
\[
\{[((g,g'),\hat{1})]_{\sim}\,|\, (g,g')\in \tilde{F}\}
\subseteq (G^{\times 2}\times \wh{\ZZ}_2)/_{\sim}.
\]
This set is Borel. Indeed, it can be written as a difference
\[\begin{split}
&\{[((g,g'),\hat{1})]_{\sim}\,|\, (g,g')\in \tilde{F}\}
\\=&
\{[((g,g'),\hat{1})]_{\sim},[((h,h),\tau)]_{\sim}
\,|\, (g,g')\in \tilde{F},h\in G,\tau\in \wh{\ZZ}_2\}
\setminus\{[((h,h),\tau)]_{\sim}\,|\,
h\in G,\tau\in\wh{\ZZ}_2\}.
\end{split}\]
The first set is closed, since its preimage under the quotient map is
\[
\{((g,g'),\tau),
((h,h),\tau)\,|\, (g,g')\in\tilde{F},h\in G,\tau\in\wh{\ZZ}_2\}
\]
which is closed in $G^{\times 2}\times \wh{\ZZ}_2$. Second set is also closed: its preimage under the quotient map is $\Delta\times \wh{\ZZ}_2$. It follows that $\Phi$ is measurable.\\
Let us now justify that both $(G^{\times 2}\times\wh{\ZZ}_2)/_{\sim}$ and $(G\times \wh{\ZZ}_2)\sqcup ((G^{\times 2}\setminus \Delta)/_{\sim_{\Delta}})$ are standard Borel spaces. The first one is homeomorphic to a spectrum of a separable \cst-algebra of type I, hence it is a standard Borel space (\cite[Proposition 4.6.1]{DixmierC}). Next, \cite[Example A.9]{KerrLi} tells us that any second countable locally compact Hausdorff space is Polish (hence the corresponding measurable space is standard). It follows that $G\times \wh{\ZZ}_2$ is standard. Furthermore, $(G^{\times 2}\setminus \Delta)/\sim_{\Delta}$ is also second countable, locally compact and Hausdorff. Consequently, $\Phi$ is a measurable bijection between two standard Borel spaces. Any such mapping is a Borel isomorphism, i.e. the inverse map is also measurable (see B 22. in \cite{DixmierC}).
\end{proof}

Choose $p\in\RR$ and define a measure $\mu_p$ on $\IrrG= (G\times \wh{\ZZ}_2)\sqcup ((G^{\times 2}\setminus \Delta)/_{\sim_\Delta})$ via
\[
\mu_p(G\times \wh{\ZZ}_2)=0,\quad
\mu_p(E)=\tfrac{1}{2}(\delta_{G^{\times 2}}^{p}\,\mu_{G^{\times 2}})(p_{\Delta}^{-1}(E))\quad(E\subseteq (G^{\times 2}\setminus \Delta)/_{\sim_\Delta}),
\]
where $p_\Delta\colon G^{\times 2}\setminus \Delta \rightarrow (G^{\times 2}\setminus \Delta)/_{\sim_\Delta} $ is the canonical projection. We will further show that $\mu_p$ is the Plancherel measure of $\GG$. Let us comment on why we choose to work with this family of measures. Our reasoning in this case is similar to the case of quantum groups dual to classical (Subsection \ref{secdclass}); properties of the functions $\delta^p_{G^{\times 2}}$ simplifies our calculations and on the other hand we think it is worthwile to consider a family of Plancherel measures -- we will calculate how functions $\varpi$ depends on $p$ and in Proposition \ref{stw21} we will get a more general result concerning amenability of $G$. \\
Since the set $G\times \wh{\ZZ}_2$ is of $\mu_p$-measure zero, from now on we will identify $\IrrG$ with $(G^{\times 2}\setminus \Delta)/_{\sim_\Delta}$. The following result is an immediate consequence.

\begin{proposition}
The measure space $(\IrrG,\mu_p)$ can be identified with $((G^{\times 2}\setminus \Delta)/_{\sim_\Delta},\mu_p)$.
\end{proposition}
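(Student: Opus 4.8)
The plan is to combine the Borel isomorphism $\Phi$ produced in the previous lemma with the fact that, by construction, $\mu_p$ assigns zero mass to one of the two summands. First I would recall that the previous lemma establishes that $\Phi$ is a Borel isomorphism between $\IrrG$ and the disjoint union $(G\times \wh{\ZZ}_2)\sqcup ((G^{\times 2}\setminus \Delta)/_{\sim_\Delta})$, so that I may transport $\mu_p$ to $\IrrG$ along $\Phi^{-1}$ and treat the two spaces interchangeably as measure spaces.

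Next I would observe that by the very definition of $\mu_p$ we have $\mu_p(G\times \wh{\ZZ}_2)=0$, so the subset $Y=\Phi^{-1}((G^{\times 2}\setminus \Delta)/_{\sim_\Delta})$ is of full $\mu_p$-measure, its complement $\Phi^{-1}(G\times\wh{\ZZ}_2)$ being null. The restriction $\Phi|_Y\colon Y\rightarrow (G^{\times 2}\setminus \Delta)/_{\sim_\Delta}$ is again a Borel isomorphism, and it is measure-preserving because $\mu_p$ restricted to the second summand is by definition precisely $\tfrac12(\delta_{G^{\times 2}}^{p}\mu_{G^{\times 2}})$ pushed forward through $p_\Delta$. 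This yields the desired identification of $(\IrrG,\mu_p)$ with $((G^{\times 2}\setminus\Delta)/_{\sim_\Delta},\mu_p)$ as measure spaces modulo null sets, and in passing it exhibits $\mu_p$ as a standard measure in the sense of the conventions.

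There is essentially no obstacle here: the substance of the argument has already been carried out in the construction of $\Phi$ and in the definition of $\mu_p$. The only point requiring a moment's care is verifying that discarding the null summand $G\times\wh{\ZZ}_2$ is legitimate, namely that the remaining piece $(G^{\times 2}\setminus\Delta)/_{\sim_\Delta}$ carries all the mass and is itself a standard Borel space; but both facts are immediate, the former from the defining formula for $\mu_p$, and the latter from the previous lemma, where $(G^{\times 2}\setminus\Delta)/_{\sim_\Delta}$ was shown to be second countable, locally compact and Hausdorff, hence standard.
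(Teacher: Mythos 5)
Your proposal is correct and follows exactly the route the paper takes: the paper states this proposition as an immediate consequence of the Borel isomorphism $\Phi$ from the preceding lemma together with the defining property $\mu_p(G\times \wh{\ZZ}_2)=0$, which is precisely the argument you spell out. Your additional remarks (restriction of $\Phi$ to the full-measure summand, standardness of $(G^{\times 2}\setminus\Delta)/_{\sim_\Delta}$) merely make explicit what the paper leaves implicit.
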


Let us now describe representations of $\GG$ corresponding to the points in $(G^{\times 2}\setminus \Delta)/_{\sim_\Delta}$. Thanks to Lemma \ref{lemat35} we can choose representatives $H([(g,g')]_{\sim_\Delta})\in[(g,g')]_{\sim_\Delta}\in (G^{\times 2}\setminus \Delta)/_{\sim_\Delta}$ in such a way that the map
\[
H\colon (G^{\times 2}\setminus \Delta)/_{\sim_\Delta}\ni 
[(g,g')]_{\sim_\Delta} \mapsto H([(g,g')]_{\sim_\Delta})\in G^{\times 2} \setminus \Delta
\]
is measurable. Let $H^{c}$ be the map $H$ composed with the flip $(g,g')\mapsto (g',g)$. To ease the notation, we will write
\[
H[g,g']=H([(g,g')]_{\sim_\Delta}),\quad
H^{c}[g,g']=H^{c}([(g,g')]_{\sim_\Delta})\quad
(
[(g,g')]_{\sim_\Delta}\in (G^{\times 2}\setminus \Delta)/_{\sim_\Delta}).
\]
Take $[(g,g')]_{\sim_\Delta}\in (G^{\times 2}\setminus \Delta)/_{\sim_\Delta}$. This point corresponds to $[((g,g'),\hat{1})]_{\sim}$ in $(G^{\times 2}\times \wh{\ZZ}_2)/_{\sim}$ and the corresponding representation of $\mathrm{C}_0(G^{\times 2})\rtimes \ZZ_2$ is $\Pi_{(g,g')}\rtimes U$, where $\Pi_{(g,g')}, U$ are representations of $\mathrm{C}_0(G^{\times 2}),\ZZ_2$ on $\mc{V}^{(g,g')}=\CC^2$ given by
\begin{equation}\label{eq39}
\Pi_{(g,g')}(f)=
\begin{bmatrix}
f(H[g,g']) & 0 \\
0 & f(H^{c}[g,g'])
\end{bmatrix},\quad
U(1)=
\begin{bmatrix}
1 & 0 \\
0 & 1
\end{bmatrix},\quad
U(-1)=
\begin{bmatrix}
0 & 1 \\
1 & 0
\end{bmatrix}
\end{equation}
for all $f\in \mathrm{C}_0(G^{\times 2})$ (see \cite[Proposition 5.4]{Williams} and the discussion before).\\
Choose measurable field of Hilbert spaces on $\IrrG$ to be $\msf{H}_{[(g,g')]_{\sim_\Delta}}=\mc{V}^{(g,g')}=\CC^2$, where arbitrary vector field $(\xi_{[(g,g')]_{\sim_\Delta}})_{
[(g,g')]_{\sim_\Delta}\in\IrrG}$ is said to be measurable if and only if the function $\IrrG\ni [(g,g')]_{\sim_\Delta} \mapsto
\xi_{[(g,g')]_{\sim_\Delta}}\in \CC^2$ is measurable. As a measurable field of representations on $\IrrG$ take the field $(\Pi_{(g,g')}\rtimes U )_{[(g,g')]_{\sim_\Delta}\in \IrrG}$. It is measurable since we have chosen representatives in such a way that the function $[(g,g')]_{\sim_\Delta}\mapsto H[g,g']$ is measurable. Define $D_{[(g,g')]_{\sim_\Delta}}=\delta_{G^{\times 2}}(g,g')^{\frac{p}{2}}\I_{\mc{V}^{(g,g')}}$ for all $[(g,g')]_{\sim_\Delta}$. We will now show that the above objects satisfy conditions of Theorem \ref{PlancherelL}.\\
Consider a linear map
\begin{equation}\label{eq43}\begin{split}
\mc{Q}_L &\colon \LL^2(G^{\times 2})\otimes \LL^2(\ZZ_2)\rightarrow \int_{\IrrG}^{\oplus} \HS(\mc{V}^{(g,g')}) \md\mu_p([(g,g')]_{\sim_\Delta})
\\
&\colon f\otimes \nu \mapsto
\int_{\IrrG}^{\oplus}
\delta_{G^{\times 2}}(g,g')^{-\frac{p}{2}}
\begin{bmatrix}
\nu(1) & \nu(-1) \\
\nu(-1) & \nu(1)
\end{bmatrix}
\,
\Pi_{(g,g')}(f) \md \mu_p([(g,g')]_{\sim_\Delta}),
\end{split}\end{equation}
first defined for $f\in \mathrm{C}_0(G^{\times 2})\cap \LL^2(G^{\times 2})$. We need to check that this map is well defined. Take $f\in \mathrm{C}_0(G^{\times 2})\cap \LL^2(G^{\times 2})$ and $\nu\in \LL^2(\ZZ_2)$. We have
\[\begin{split}
&\quad\;\int_{\IrrG} \bigl\|
\delta_{G^{\times 2}}(g,g')^{-\frac{p}{2}}\begin{bmatrix}
\nu(1) & \nu(-1) \\
\nu(-1) & \nu(1)
\end{bmatrix}
\,
\Pi_{(g,g')}(f) \bigr\|^{2}_{
\HS(\mc{V}^{(g,g')})} \md\mu_p([(g,g')]_{\sim_\Delta})\\
&=
\int_{\IrrG}
\delta_{G^{\times 2}}(g,g')^{-p} \bigl\|
\begin{bmatrix}
f(H[g,g'])\nu(1) & f(H^{c}[g,g'])\nu(-1) \\
f(H[g,g'])\nu(-1) & f(H^{c}[g,g'])\nu(1)
\end{bmatrix}\bigr\|^{2}_{
\HS(\mc{V}^{(g,g')})} \md\mu_p([(g,g')]_{\sim_\Delta})\\
&=
\int_{\IrrG}
\delta_{G^{\times 2}}(g,g')^{-p}
(|f(H[g,g'])|^2 + |f(H^{c}[g,g'])|^2)(|\nu(1)|^2 +
|\nu(-1)|^2)\md\mu_p([(g,g')]_{\sim_\Delta})\\
&=
\tfrac{1}{2}\|\nu\|^2 \int_{G^{\times 2}\setminus \Delta}
(|f(g,g')|^2 + |f(g',g)|^2)
\md\mu_{G^{\times 2}}(g,g')\\
&=
\|\nu\|^2 \int_{G^{\times 2}}
|f(g,g')|^2 
\md\mu_{G^{\times 2}}(g,g')=
\|\nu\|^2 \|f \|^2<+\infty,
\end{split}\]
hence $\mc{Q}_L$ is well defined (on a dense subspace). A similar calculation shows that $\mc{Q}_L$ is isometric, hence it extends to an isometry on the whole $\LL^2(G^{\times 2})\otimes \LL^2(\ZZ_2)$. In the above calculations we have used the observation that $\mu_{G^{\times 2}}(\Delta)=0$. Let us now justify that the isometry $\mc{Q}_L$ is also surjective. Denote by $\nu_{\pm}$ the Dirac delta functions in $1,-1\in\ZZ_2$. Recall that we have chosen function $H$ is such a way that the sets
\begin{equation}\label{eq33}
H((G^{\times 2}\setminus \Delta)/_{\sim_\Delta}),\quad
H^{c}((G^{\times 2}\setminus \Delta)/_{\sim_\Delta})
\end{equation}
are Borel in $G^{\times 2}\setminus \Delta$. It follows that a product of a continuous function in $\mathrm{C}_c(G^{\times 2}\setminus \Delta)$ and $\chi_{H((G^{\times 2}\setminus\Delta)_{\sim_\Delta})}$ is Borel. Take $f,f'\in \mathrm{C}_c(G^{\times 2}\setminus \Delta)$. One easily checks (by approximating with continuous functions with compact support) that we have
\[\begin{split}
&\quad\;
\mc{Q}_L (f\chi_{H((G^{\times 2}\setminus \Delta)/_{\sim_\Delta})}\otimes \nu_{+}+ f'\chi_{H^{c}((G^{\times 2}\setminus \Delta)/_{\sim_\Delta})}\otimes\nu_+)\\
&=
\int_{\IrrG}^{\oplus}
\delta_{G^{\times 2}}(g,g')^{-\frac{p}{2}}
\begin{bmatrix}
f(H[g,g']) & 0 \\
0 & f'(H^{c}[g,g'])
\end{bmatrix} \md\mu_p([(g,g')]_{\sim_\Delta}).
\end{split}\]
 In a similar fashion one checks that
\[\begin{split}
&\quad\;
\mc{Q}_L (f\chi_{H((G^{\times 2}\setminus \Delta)/_{\sim_\Delta})}\otimes \nu_{-}+ f'\chi_{H^{c}((G^{\times 2}\setminus \Delta)/_{\sim_\Delta})}\otimes\nu_-)\\
&=
\int_{\IrrG}^{\oplus}
\delta_{G^{\times 2}}(g,g')^{-\frac{p}{2}}\begin{bmatrix}
0 &f'(H^{c}[g,g'])  \\
 f(H[g,g']) & 0
\end{bmatrix} \md\mu_p([(g,g')]_{\sim_\Delta}).
\end{split}\]
As $f,f'$ are arbitrary, we arrive at the conclusion that $\mc{Q}_L$ is a unitary operator. We have
\[
\mrW^{\whG}=((\beta\otimes \id)\mrW^{G^{\times 2}}\otimes \I)\;
(\I\otimes (\id\otimes\alpha)\mrW^{\wh{\ZZ}_2})=
((\beta\otimes \id)\mrW^{G^{\times 2}}\otimes \I)\;
(\mrW^{\wh{\ZZ}_2})_{24}
\]
and as usual $\mrW^{\GG}=\Sigma_{(12) (34)} (\mrW^{\whG})^* \Sigma_{(12) (34)}$. Direct calculation gives us
\[
\mrW^{\GG}= (\mrW^{\ZZ_2})_{24}\; ((\id\otimes\beta) \mrW^{\wh{G^{\times 2}}})_{134},
\]
consequently for any $\omega_1\in \LL(G^{\times 2})_*$ and $\omega_2\in \LL^1(\ZZ_2)$
\[
(\omega_1\otimes\omega_2\otimes\id\otimes\id)\mrW^{\GG}= \bigl(\I\otimes (\omega_2\otimes\id)\mrW^{\ZZ_2}\bigr)\; \beta((\omega_1\otimes\id) \mrW^{\wh{G^{\times 2}}})\in \Linfd.
\]
Due to \cite[Proposition 2.9]{VaesVainerman} we know that 
\[
\lin\bigl\{\bigl(\I\otimes (\omega_2\otimes \id)\mrW^{\ZZ_2}\bigr)\,\beta(f)\,|\, \omega_2\in \LL^1(\ZZ_2),\, f\in\mf{N}_{\vp_{G^{\times 2}}}
\bigr\}
\]
forms a $\ssots\times\|\cdot\|$ core for $\Lambda_{\hvp}$ and
\[
\Lambda_{\hvp}\bigl(
\bigl(\I\otimes (\omega_2\otimes \id)\mrW^{\ZZ_2}\bigr)\,\beta(f)\bigr)=
\Lambda_{\vp_{G^{\times 2}}}(f)\otimes \omega_2
\]
for all $f\in\mf{N}_{\vp_{G^{\times 2}}},\omega_2\in \LL^1(\ZZ_2)$. One easily sees that if we take only $f$ of the form $f=(\omega_1\otimes\id)\mrW^{\wh{G^{\times 2}}}$, where $\omega_1\in \mc{I}_{\wh{G^{\times 2}}}\subseteq \LL(G^{\times 2})_*$, then we will still get a core. Take $\omega_1\in \mc{I}_{\widehat{G^{\times 2}}},\omega_2\in\LL^1(\ZZ_2)$. By the definition of $\mc{Q}_L$ we have
\[\begin{split}
&\quad\; \mc{Q}_L\Lambda_{\hvp}\bigl(
(\omega_1\otimes\omega_2\otimes\id\otimes\id)\mrW^{\GG}\bigr)= \mc{Q}_L\Lambda_{\hvp}\bigl((\I\otimes (\omega_2\otimes\id)\mrW^{\ZZ_2})\; \beta((\omega_1\otimes\id) \mrW^{\wh{G^{\times 2}}})
\bigr)\\
&=\mc{Q}_L ( \Lambda_{\vp_{G^{\times 2}}}((\omega_1\otimes\id)\mrW^{\wh{G^{\times 2}}})\otimes
\omega_2)
\\
&=
\int_{\IrrG}^{\oplus}
\delta_{G^{\times 2}}(g,g')^{-\frac{p}{2}}
\begin{bmatrix}
\omega_2(1) & \omega_2(-1) \\
\omega_2(-1) & \omega_2(1)
\end{bmatrix}
\,
\Pi_{(g,g')}((\omega_1\otimes\id)\mrW^{\wh{G^{\times 2}}}) \md \mu_p([(g,g')]_{\sim_\Delta})\\
&=
\int_{\IrrG}^{\oplus} (\Pi_{(g,g')}\rtimes U) 
\bigl( (\omega_1\otimes\omega_2\otimes\id\otimes\id)
\mrW^{\GG}\bigr)D_{[(g,g')]_{\sim_\Delta}}^{-1}\md\mu_p([(g,g')]_{\sim_\Delta}).
\end{split}\]
It is enough to check the equality in point $7.2)$ of Theorem \ref{PlancherelL} for elements as above (see Lemma \ref{lemat11}).\\
Next thing we need to check, are the commutation relations for $\mc{Q}_L$ (point $3)$ of Theorem \ref{PlancherelL}). Take any $\omega_1\in \LL(G^{\times 2})_*,\omega_2\in\LL^1(\ZZ_2)$ and $f\in\mathrm{C}_0(G^{\times 2})\cap\LL^2(G^{\times 2}),\nu\in\LL^2(\ZZ_2)$. Define $\wh{\omega}_1=(\omega_1\otimes\id)\mrW^{\wh{G^{\times 2}}}\in \mathrm{C}_0(G^{\times 2})$ and denote by $\lambda^{\ZZ_2}$ the left regular representation of $\ZZ_2$. Recall that $\nu_{\pm}$ are the Dirac delta functions in $\pm 1\in\ZZ_2$. We have
\[\begin{split}
&\quad\;
\mc{Q}_L (\omega_1\otimes\omega_2\otimes\id\otimes\id)\mrW^{\GG}\;
(\Lambda_{\vp_{G^{\times 2}}}(f)\otimes \nu)=
\mc{Q}_L\,
(\I\otimes (\omega_2\otimes\id)\mrW^{\ZZ_2})
\beta(\hat{\omega}_1)\,(\Lambda_{\vp_{G^{\times 2}}}(f)\otimes\nu)\\
&=
\mc{Q}_L\,
(\I\otimes (\omega_2(1) \lambda^{\ZZ_2}_1+\omega_2(-1)
\lambda^{\ZZ_2}_{-1}))
(\wh{\omega}_1\otimes \nu_+ + \beta_{-1}(\wh{\omega}_1)\otimes\nu_-)
\,(\Lambda_{\vp_{G^{\times 2}}}(f)\otimes\nu)\\
&=
\mc{Q}_L\bigl(\nu(1)
\Lambda_{\vp_{G^{\times 2}}}(\wh{\omega}_1 f)\otimes
\omega_2+\nu(-1) \Lambda_{\vp_{G^{\times 2}}}(\beta_{-1}(\wh{\omega}_1)f)\otimes \lambda^{\ZZ_2}_{-1}(\omega_2)\bigr)\\
&=
\int_{\IrrG}^{\oplus}\delta_{G^{\times 2}}(g,g')^{-\frac{p}{2}}
\bigl(
\nu(1) \begin{bmatrix}
\omega_2(1) & \omega_2(-1)\\
\omega_2(-1) & \omega_2(1)
\end{bmatrix}
\Pi_{(g,g')}(\wh{\omega}_1 f)+ \\
&\quad\quad\quad\quad\quad\quad
\quad+
\nu(-1) \begin{bmatrix}
\omega_2(-1) & \omega_2(1)\\
\omega_2(1) & \omega_2(-1)
\end{bmatrix}
\Pi_{(g,g')}(\beta_{-1}(\wh{\omega}_1) f) \bigr)
\md\mu_p([(g,g')]_{\sim_\Delta})\\
&=
\int_{\IrrG}^{\oplus}
\delta_{G^{\times 2}}(g,g')^{-\frac{p}{2}}\bigl(
\nu(1) \begin{bmatrix}
\omega_2(1) \wh{\omega}_1(H[g,g'])& \omega_2(-1)
\wh{\omega}_1(H^{c}[g,g'])\\
\omega_2(-1) \wh{\omega}_1(H[g,g'])& \omega_2(1)
\wh{\omega}_1(H^{c}[g,g'])
\end{bmatrix}
\Pi_{(g,g')}( f)+ \\
&\quad\quad\quad\quad\quad\quad
\quad+
\nu(-1) \begin{bmatrix}
\omega_2(-1) \wh{\omega}_1(H^{c}[g,g'])& \omega_2(1)
\wh{\omega}_1(H[g,g'])\\
\omega_2(1) \wh{\omega}_1(H^{c}[g,g'])& \omega_2(-1)
\wh{\omega}_1(H[g,g'])
\end{bmatrix}
\Pi_{(g,g')}( f) \bigr)
\md\mu_p([(g,g')]_{\sim_\Delta}).
\end{split}\]
On the other hand we have
\[\begin{split}
&\quad\;
\bigl(\int_{\IrrG}^{\oplus} (\omega_1\otimes\omega_2\otimes\id)(\Pi_{(g,g')}\rtimes U) \otimes\I_{\ov{\mc{V}^{(g,g')}}}\md\mu_p([(g,g')]_{\sim_\Delta})\bigr)
\mc{Q}_L (\Lambda_{\vp_{G^{\times 2}}}(f)\otimes \nu)\\
&=
\int_{\IrrG}^{\oplus}
\delta_{G^{\times 2}}(g,g')^{-\frac{p}{2}} U ((\omega_2\otimes\id)W^{\ZZ_2})\;
\Pi_{(g,g')} (\wh{\omega}_1)\;
\begin{bmatrix}
\nu(1) & \nu(-1) \\ 
\nu(-1) & \nu(1)
\end{bmatrix}
\Pi_{(g,g')} (f)
\md\mu_p([(g,g')]_{\sim_{\Delta}})\\
&=
\int_{\IrrG}^{\oplus} 
\delta_{G^{\times 2}}(g,g')^{-\frac{p}{2}}
\begin{bmatrix}
\omega_2(1) & \omega_2(-1) \\
\omega_2(-1) & \omega_2(1)
\end{bmatrix}
\begin{bmatrix}
\nu(1) 
\hat{\omega}_1(H[g,g'])& 
\nu(-1) \hat{\omega}_1(H[g,g'])\\ 
\nu(-1) \hat{\omega}_1(H^{c}[g,g'])\;&
 \nu(1)
\hat{\omega}_1(H^{c}[g,g'])\;
\end{bmatrix}\\
&\quad\quad\quad\quad\quad\quad
\quad\quad\quad\quad\quad\quad\quad\quad\quad
\quad\quad\quad\quad\quad\quad\quad\quad\quad\quad\quad\quad
\Pi_{(g,g')} (f)
\md\mu_p([(g,g')]_{\sim_{\Delta}})
\end{split}\]
which gives us the desired equality
\[
\mc{Q}_L (\omega_1\otimes\omega_2\otimes\id\otimes\id)\mrW^{\GG}=
\bigl(\int_{\IrrG}^{\oplus} (\omega_1\otimes\omega_2\otimes\id)(\Pi_{(g,g')}\rtimes U) \otimes\I_{\ov{\mc{V}^{(g,g')}}}\md\mu_p([(g,g')]_{\sim_\Delta})\bigr)
\mc{Q}_L.
\]
Let us now check that the second commutation rule also holds. We have \\$\chi(\mrV^{\GG})=(\hat{J}\otimes\hat{J}) {\mrW^{\GG}}^* (\hat{J}\otimes\hat{J})$ and consequently
\[
(\omega\otimes\id) \chi(\mrV^{\GG})=\hat{J} \hat{R}( (\omega\otimes\id) \mrW^{\GG})^* \hat{J}=\hat{J} (((\omega\circ R)\otimes\id)\mrW^{\GG})^* \hat{J}=
\hat{J} \lambda(\omega\circ R)^* \hat{J}
\]
for any $\omega\in \LL^1(\GG)$. Since the Haar integrals on $\whG$ are tracial, we have $\hat{J} x^* \hat{J} \Lhvp(y)=\Lhvp(yx)$ for all $x\in \Linfd,y\in \mf{N}_{\hvp}$. Take $\omega_1,\nu_1\in \mc{I}_{\wh{G^{\times 2}}},\omega_2,\nu_2\in \LL^1(\ZZ_2)$ and set $\omega=\omega_1\otimes\omega_2$. Using the above remarks, we arrive at
\[
\begin{split}
&\quad\;
\mc{Q}_L \;((\omega\circ R)\otimes\id)\chi(\mrV^{\GG})\;
\Lhvp( (\nu_1\otimes\nu_2\otimes\id\otimes\id)\mrW^{\GG})\\
&=
\mc{Q}_L
\Lhvp\bigl(
(\nu_1\otimes\nu_2\otimes\id\otimes\id)\mrW^{\GG}
(\omega\otimes\id)W^{\GG}
\bigr)\\
&=
\mc{Q}_L (\nu_1\otimes\nu_2\otimes\id\otimes\id)\mrW^{\GG}
\Lhvp((\omega\otimes\id)\mrW^{\GG})\\
&=
\mc{Q}_L (\nu_1\otimes\nu_2\otimes\id\otimes\id)\mrW^{\GG}
(
\Lambda_{\vp_{G^{\times 2}}}( \hat{\omega}_1) \otimes \omega_2
)=\star,
\end{split}
\]
and using the already derived first commutation rule we can further write
\[
\begin{split}
\star&=
\bigl(\int_{\IrrG}^{\oplus} (\nu_1\otimes\nu_2\otimes\id\otimes\id)
(\Pi_{(g,g')}\rtimes U) \otimes\I_{\ov{\mc{V}^{(g,g')}}} \md\mu_p([(g,g')]_{\sim_\Delta})\bigr)\mc{Q}_L
(
\Lambda_{\vp_{G^{\times 2}}}(\hat{\omega}_1) \otimes \omega_2
)\\
&=
\int_{\IrrG}^{\oplus}
\delta_{G^{\times 2}}(g,g')^{-\frac{p}{2}}
U ((\nu_2\otimes\id)\mrW^{\ZZ_2})\,
\Pi_{(g,g')} (\hat{\nu}_1)
\begin{bmatrix}
 \omega_2(1) & \omega_2(-1) \\
 \omega_2(-1) & \omega_2(1)
\end{bmatrix} 
\Pi_{(g,g')} (\hat{\omega}_1) \md\mu_p([(g,g')]_{\sim_{\Delta}})\\
&=
\bigl(\int_{\IrrG}^{\oplus} \I_{\mc{V}^{(g,g')}}\otimes \jmath_{\mc{V}^{(g,g')}}\bigl(
\begin{bmatrix}
 \omega_2(1) & \omega_2(-1) \\
 \omega_2(-1) & \omega_2(1)
\end{bmatrix} 
\Pi_{(g,g')} (\hat{\omega}_1)\bigr) \md\mu_p([(g,g')]_{\sim_{\Delta}})\bigr)\\
&\quad\quad\quad\quad\quad\quad
\quad\quad\quad\quad\quad\quad\quad\quad\quad
\quad\quad\quad\quad\quad\quad\quad
\quad\quad\quad\quad\quad\quad\mc{Q}_L (
\Lambda_{\vp_{G^{\times 2}}}( \hat{\nu}_1) \otimes \nu_2
)\\
&=\bigl(\int_{\IrrG}^{\oplus} \I_{\mc{V}^{(g,g')}}\otimes 
(\Pi_{(g,g')}\rtimes U)^{c}
(\lambda^u(\omega\circ R)) \md\mu_p([(g,g')]_{\sim_{\Delta}})\bigr)
\mc{Q}_L \Lhvp((\nu_1\otimes\nu_2\otimes\id\otimes\id)\mrW^{\GG}).
\end{split}
\]
This gives us
\[
\bigl(\int_{\IrrG}^{\oplus} \I_{\mc{V}^{(g,g')}}\otimes 
(\Pi_{(g,g')}\rtimes U)^{c}
(\lambda^u(\omega\circ R)) \md\mu_p([(g,g')]_{\sim_{\Delta}})\bigr)\mc{Q}_L
=
\mc{Q}_L \;((\omega\circ R)\otimes\id)\chi(\mrV^{\GG})
\]
and density argument ends the proof of the commutation relations. Let us now justify that $\mc{Q}_L$ transforms $\Linfd$ into $\int_{\IrrG}^{\oplus} \B(\mc{V}^{(g,g')})\otimes\I_{\ov{\mc{V}^{(g,g')}}} \md\mu_p([(g,g')]_{\sim_\Delta})$. The inclusion
\[
\mc{Q}_L \Linfd \mc{Q}_L\subseteq
\int_{\IrrG}^{\oplus} \B(\mc{V}^{(g,g')})\otimes\I_{\ov{\mc{V}^{(g,g')}}} \md\mu_p([(g,g')]_{\sim_\Delta})
\]
is clear thanks to the first commutation relation. Now, take any operator of the form
\[
\int_{\IrrG}^{\oplus}
\begin{bmatrix}
f(H[g,g']) & 0 \\
0 & 0 
\end{bmatrix}
\otimes \I_{\ov{\mc{V}^{(g,g')}}}\md\mu_p([(g,g')]_{\sim_\Delta}),
\]
where $f\in \LL^{\infty}(G^{\times 2})$ is a function satisfying $f=f\chi_{H((G^{\times 2}\setminus \Delta)/_{\sim_\Delta})}$. Take a vector
\[
\zeta=\int_{\IrrG}^{\oplus} 
\begin{bmatrix}
\xi^1_{H[g,g']} \\
\xi^2_{H[g,g']}
\end{bmatrix}
\otimes\eta_{H[g,g']}\md\mu_p([(g,g')]_{\sim_\Delta})\in
\int_{\IrrG}^{\oplus} \HS(\mc{V}^{(g,g')})\md\mu_p([(g,g')]_{\sim_\Delta})
\]
such that the function $G^{\times 2}\ni (g,g')\mapsto \chi_{H((G^{\times 2}\setminus\Delta)/_{\sim_\Delta})}(g,g')\|\eta_{(g,g')}\|\in \RR$ is bounded with compact support and a net $(f_i)_{i\in I}$ in $\mathrm{C}_c(G^{\times 2})\subseteq \LL^{\infty}(G^{\times 2})$ which converges to $f$ in $\swot$. Thanks to the Kaplansky density theorem, we can assume that the net $(f_i)_{i\in I}$ is bounded in norm. We have
\[\begin{split}
&\quad\;|\bigl\langle
\zeta
\big|
\bigl(
\int_{\IrrG}^{\oplus} 
\begin{bmatrix}
f(H[g,g']) & 0 \\
0 & 0
\end{bmatrix}
\otimes \I_{\ov{\mc{V}^{(g,g')}}}
\md\mu_p([(g,g')]_{\sim_\Delta})-
\mc{Q}_L (f_i\otimes\nu_+\otimes\id\otimes\id)\mrW^{\GG}\mc{Q}_L^*
\bigr)\zeta\bigr\rangle|\\
&=
|\bigl\langle\zeta
\big|
\int_{\IrrG}^{\oplus} 
\begin{bmatrix}
f(H[g,g']) -f_i(H[g,g'])& 0 \\
0 & -f_i(H^{c}[g,g'])
\end{bmatrix}
\otimes \I_{\ov{\mc{V}^{(g,g')}}}
\md\mu_p([(g,g')]_{\sim_\Delta})
\zeta\bigr\rangle|\\
&\le
\int_{\IrrG} 
|\bigl\langle 
\begin{bmatrix}
\xi^1_{H[g,g']}\\
\xi^2_{H[g,g']}
\end{bmatrix}
 \big|
\begin{bmatrix}
f(H[g,g']) -f_i(H[g,g'])& 0 \\
0 & -f_i(H^{c}[g,g'])
\end{bmatrix}
\begin{bmatrix}
\xi^1_{H[g,g']}\\
\xi^2_{H[g,g']}
\end{bmatrix}\bigr\rangle|
\|\eta_{H[g,g']}\|^2\md\mu_p([(g,g')]_{\sim_\Delta})\\
&=
\int_{H((G^{\times 2}\setminus \Delta)/_{\sim_\Delta})} 
\delta^{p}_{G^{\times 2}}(g,g')\bigl(|\xi^1_{(g,g')}|^2 |f(g,g')-f_i(g,g')|+
 |\xi^2_{(g,g')}|^2 |f_i(g',g)|
\bigr)\|\eta_{(g,g')}\|^2\md \mu_{G^{\times 2}}(g,g')\\
&\quad\;\xrightarrow[i\in I]{}0,
\end{split}\]
where the convergence follows from the observation that both functions 
\[
G^{\times 2}
\ni(g,g')\mapsto 
\chi_{H((G^{\times 2}\setminus\Delta)/_{\sim_\Delta})}(g,g')
\delta_{G^{\times 2}}^{p}(g,g')|\xi^k_{(g,g')}|^2 \|\eta_{(g,g')}\|^2\in \RR \quad(k\in\{1,2\})
\]
are in $\LL^1(G^{\times 2})$. Since $(f_i)_{i\in I}$ is bounded in norm, a standard approximation argument implies that
\[
\mc{Q}_L (f_i\otimes\nu_+\otimes\id\otimes\id)\mrW^{\GG}\mc{Q}_L^*
\xrightarrow[i\in I]{\swot}
\int_{\IrrG}^{\oplus} \begin{bmatrix}
f(g,g') & 0 \\
0 & 0 
\end{bmatrix}
\md\mu_p([(g,g')]_{\sim_\Delta}),
\]
therefore
\[
\int_{\IrrG}^{\oplus} \begin{bmatrix}
f(g,g') & 0 \\
0 & 0 
\end{bmatrix}
\md\mu_p([(g,g')]_{\sim_\Delta})\in
\mc{Q}_L \Linfd \mc{Q}_L^*.
\]
Analogous arguments show that we have $\mc{Q}_L \Linfd \mc{Q}_L^*=\int_{\IrrG}^{\oplus}\B(\mc{V}^{(g,g')})\otimes
\I_{\ov{\mc{V}^{(g,g')}}}\md\mu_p([(g,g')]_{\sim_\Delta})$. Consequently, after taking the commutant, we arrive at
\[\begin{split}
\mc{Q}_L \Linfd' \mc{Q}_L^*&=
\bigl(
\int_{\IrrG}^{\oplus}\B(\mc{V}^{(g,g')})\otimes
\I_{\ov{\mc{V}^{(g,g')}}}\md\mu_p([(g,g')]_{\sim_\Delta})\bigr)'\\
&=
\int_{\IrrG}^{\oplus}\I_{\mc{V}^{(g,g')}}\otimes
\B(\ov{\mc{V}^{(g,g')}})
\md\mu_p([(g,g')]_{\sim_\Delta})
\end{split}\]
and
\[\begin{split}
&\quad\;
\mc{Q}_L (\Linfd\cap\Linfd')\mc{Q}_L^*=
(\mc{Q}_L (\Linfd\vee\Linfd')\mc{Q}_L^*)'\\
&=
\bigl(
\int_{\IrrG}^{\oplus}\B(\mc{V}^{(g,g')})\otimes
\I_{\ov{\mc{V}^{(g,g')}}}\md\mu_p([(g,g')]_{\sim_\Delta})\vee
\int_{\IrrG}^{\oplus}\I_{\mc{V}^{(g,g')}}\otimes
\B(\ov{\mc{V}^{(g,g')}})
\md\mu_p([(g,g')]_{\sim_\Delta})\bigr)'\\
&=
\Dec( \int_{\IrrG}^{\oplus}
\HS(\mc{V}^{(g,g')})\md\mu_p([(g,g')]_{\sim_\Delta}))'=
\Diag( \int_{\IrrG}^{\oplus}
\HS(\mc{V}^{(g,g')})\md\mu_p([(g,g')]_{\sim_\Delta}))
\end{split}\]
(see \cite[Theorem 4]{DixmiervNA}). We have proved the following:

\begin{proposition}
For each $p\in\RR$, measure $\mu_p$ described above is a Plancherel measure of $\GG$ and the unitary operator $\mc{Q}_L$ defined in \eqref{eq43} is the unitary operator given by Theorem \ref{PlancherelL}. Moreover, we have $D_{[(g,g')]_{\sim_\Delta}}=\delta_{G^{\times 2}}(g,g')^{\frac{p}{2}}\I_{\mc{V}^{(g,g')}}$ for almost all $[(g,g')]_{\sim_\Delta}\in\IrrG$.
\end{proposition}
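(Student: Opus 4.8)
The plan is to invoke point $7)$ of Theorem \ref{PlancherelL}. I take as the ``unprimed'' data the abstract Plancherel objects whose existence is guaranteed by that theorem, and as the ``primed'' data the explicit candidates constructed above: the measure $\mu_p$, the field $(\mc{V}^{(g,g')})$, the operators $D_{[(g,g')]_{\sim_\Delta}}=\delta_{G^{\times 2}}(g,g')^{\frac{p}{2}}\I_{\mc{V}^{(g,g')}}$, and the operator $\mc{Q}_L$ of \eqref{eq43}. Once conditions $7.1)$--$7.3)$ are verified for these primed objects, point $7)$ yields that they satisfy all of properties $1)$--$6)$, which is precisely the content of the proposition. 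Thus everything reduces to three verifications together with the preliminary statement that $\mc{Q}_L$ is a well-defined unitary.

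For the latter I would first compute the Hilbert--Schmidt norm of $\mc{Q}_L(f\otimes\nu)$ for $f\in\mathrm{C}_0(G^{\times 2})\cap\LL^2(G^{\times 2})$ and $\nu\in\LL^2(\ZZ_2)$, observing that the doubling produced by the two entries $H[g,g']$ and $H^c[g,g']$ is exactly compensated by the factor $\tfrac12$ in the definition of $\mu_p$ together with $\mu_{G^{\times 2}}(\Delta)=0$; this makes $\mc{Q}_L$ isometric. Surjectivity is obtained by exhibiting preimages of arbitrary decomposable fields: using the measurable selection $H$, whose image is Borel by Lemma \ref{lemat35}, and products of functions in $\mathrm{C}_c(G^{\times 2}\setminus\Delta)$ with the indicator of the image of $H$, one realises first the diagonal matrix entries and then, via the Dirac function $\nu_-$, the off-diagonal ones. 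Condition $7.2)$ is then checked on the $\ssots\times\|\cdot\|$ core for $\Lambda_{\hvp}$ coming from the bicrossed-product description of $\whG$ (\cite[Proposition 2.9]{VaesVainerman}) and Lemma \ref{lemat11}, using the explicit identity $(\omega_1\otimes\omega_2\otimes\id\otimes\id)\mrW^{\GG}=(\I\otimes(\omega_2\otimes\id)\mrW^{\ZZ_2})\,\beta((\omega_1\otimes\id)\mrW^{\wh{G^{\times 2}}})$.

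The two commutation relations of point $3)$ come next. The first, involving $(\omega\otimes\id)\mrW^{\GG}$, follows by matching the matrix actions of $U((\omega_2\otimes\id)\mrW^{\ZZ_2})$ and $\Pi_{(g,g')}$ against the explicit formula above; the second, involving $(\omega\otimes\id)\chi(\mrV^{\GG})=\hat J\lambda(\omega\circ R)^*\hat J$, uses traciality of the Haar integrals of $\whG$ and the conjugate-representation formula for $(\Pi_{(g,g')}\rtimes U)^c$. Finally, for $7.3)$ I would prove $\mc{Q}_L\Linfd\mc{Q}_L^*=\int_{\IrrG}^{\oplus}\B(\mc{V}^{(g,g')})\otimes\I_{\ov{\mc{V}^{(g,g')}}}\md\mu_p$ by a Kaplansky-density and $\swot$-approximation argument, then pass to commutants and invoke \cite[Theorem 4]{DixmiervNA} to identify $\mc{Q}_L(\Linfd\cap\Linfd')\mc{Q}_L^*$ with the diagonalisable operators. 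I expect the genuinely delicate points to be the surjectivity of $\mc{Q}_L$ and the identification of $\mc{Q}_L\Linfd\mc{Q}_L^*$: both rest on controlling the Borel selection $H$ and its image and on making the $\swot$-approximations uniform, rather than on any new conceptual ingredient.
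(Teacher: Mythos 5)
Your proposal follows essentially the same route as the paper: it invokes point $7)$ of Theorem \ref{PlancherelL} with the explicit objects as the primed data, establishes unitarity of $\mc{Q}_L$ by the same Hilbert--Schmidt computation (the $\tfrac12$ in $\mu_p$ compensating the $H$/$H^c$ doubling, plus $\mu_{G^{\times 2}}(\Delta)=0$) and the same surjectivity argument via the Borel selection $H$ and the Dirac functions $\nu_\pm$, then checks $7.2)$ on the $\ssots\times\|\cdot\|$ core from \cite[Proposition 2.9]{VaesVainerman} together with Lemma \ref{lemat11}, the two commutation relations exactly as in the paper (matrix matching, then $\hat J\lambda(\omega\circ R)^*\hat J$ and traciality), and $7.3)$ by the same Kaplansky-density/\swot-approximation argument followed by passage to commutants and \cite[Theorem 4]{DixmiervNA}. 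The proof is correct and matches the paper's argument in both structure and technique.
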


We will now describe the right version of the above result. Define a unitary operator $\mc{Q}_R$ to be\footnote{We suspect that the relation $\mc{Q}_R=\mc{Q}_L\circ J\hat{J}$ holds also in a more general situation. We will adress the question whether this is indeed the case in our next paper.}
\[
\mc{Q}_R=\mc{Q}_L\circ  J \hat{J}\colon \LdG=\LL^2(G^{\times 2}\times \ZZ_2)
\rightarrow
\int_{\IrrG}^{\oplus} \HS( \mc{V}^{(g,g')}) \md\mu_p([(g,g')]_{\sim_\Delta} )
\]
and set $E_{[(g,g')]_{\sim_\Delta}}= \delta_{G^{\times 2}}(g,g')^{\frac{p-1}{2}} \I_{\mc{V}^{(g,g')}}$ for all $[(g,g')]_{\sim_\Delta}\in \IrrG$. Our aim is to show that these objects satisfy the right version of the Desmedt theorem.\\
Since we have
\[
\hat{J} \Lambda_{\vp}(x)=\Lambda_{\psi}(R(x)^*)\quad(x\in\mf{N}_{\vp}),
\]
it follows by duality that
\[
\Lambda_{\hpsi}(x)=J\Lambda_{\hvp}(\hat{R}(x)^*)\quad(x\in \mf{N}_{\hpsi}).
\]
In order to unravel the above equation, we need to describe the element $\hat{R}(x)^*=JxJ$ for nice $x$. Take $\omega_2\in \LL^1(\ZZ_2),f\in \LL^{\infty}(G^{\times 2})$ and denote $\hat{\omega}_2=(\omega_2\otimes\id)\mrW^{\ZZ_2}$. For any function $\xi\in\LL^2(G^{\times 2}\times \ZZ_2)$ and $((g,g'),k)\in G^{\times 2}\times \ZZ_2$ we have
\[\begin{split}
&\quad\;
(\hat{R} \bigl( (\I\otimes\hat{\omega}_2)\, \beta(f)\bigr)^* \xi)((g,g'),k)=
(J (\I\otimes\hat{\omega}_2) \,\beta(f) J\xi) ((g,g'),k)\\
&=
\delta_{G^{\times 2}}(g,g')^{\frac{1}{2}}
\;
\ov{
((\I\otimes\hat{\omega}_2) \,\beta(f) J \xi) ((g^{-1},{g'}^{-1}),k)
}\\
&=
\delta_{G^{\times 2}}(g,g')^{\frac{1}{2}}
\;
\ov{
\omega_2(1) \,(\beta(f) J \xi) ((g^{-1},{g'}^{-1}),k)+
\omega_2(-1) \,(\beta(f) J \xi) ((g^{-1},{g'}^{-1}),-k)
}\\
&=
\ov{
\omega_2(1) \,f(\beta_{k}(g^{-1},{g'}^{-1})) \ov{\xi ((g,{g'}),k)}+
\omega_2(-1) \,f(\beta_{-k}(g^{-1},{g'}^{-1})) 
\ov{\xi ((g,{g'}),-k)}
}\\
&=
((\I\otimes \hat{\ov{\omega}_2})\; \beta(
\ov{R_{G^{\times 2}}(f)}) \xi)((g,g'),k),
\end{split}\]
hence
\begin{equation}\label{eq40}
\hat{R} \bigl( (\I\otimes(\omega_2\otimes\id)\mrW^{\ZZ_2})\beta(f)\bigr)^*
=
(\I\otimes \hat{\ov{\omega}_2})\; \beta(
\ov{R_{G^{\times 2}}(f)})
\end{equation}
(in the above calculation we have used Proposition \ref{stw16}). Consequently, for any $\omega_1\in \LL(G^{\times 2})_*,\omega_2\in\LL^1(\ZZ_2)$ such that $\hat{\omega}_1\circ R_{G^{\times 2}}\in \mf{N}_{\vp_{G^{\times 2}}}$ we have
\[\begin{split}
\Lambda_{\hpsi}(\lambda(\omega_1\otimes\omega_2))&=
J\Lambda_{\hvp}( \hat{R}(\lambda(\omega_1\otimes\omega_2))^*)=
J \Lambda_{\hvp}\bigl( (\I\otimes \hat{\ov{\omega}}_2)
\,\beta( \ov{ \hat{\omega}_1 \circ R_{G^{\times 2}}})\bigr)\\
&=
J\bigl( \Lambda_{\vp_{G^{\times 2}}}( 
\ov{ \hat{\omega}_1 \circ R_{G^{\times 2}}} ) \otimes
\ov{\omega}_2\bigr)=
({\delta_{G^{\times 2}}}^{\frac{1}{2}} \hat{\omega}_1)\otimes\omega_2.
\end{split}\]
Furthermore
\[\begin{split}
&\quad\;
\mc{Q}_R \hat{J} J \Lambda_{\hpsi}(\lambda(\omega_1\otimes\omega_2))=
\mc{Q}_L ({\delta_{G^{\times 2}}}^{\frac{1}{2}} \hat{\omega}_1\otimes
\omega_2)\\
&=
\int_{\IrrG}^{\oplus} 
\delta_{G^{\times 2}}(g,g')^{-\frac{p}{2}}
\begin{bmatrix}
\omega_2(1) & \omega_2(-1) \\
\omega_2(-1) & \omega_2(1) 
\end{bmatrix}\;
\Pi_{(g,g')} ( {\delta_{G^{\times 2}}}^{\frac{1}{2}}
\hat{\omega}_1 )
\md\mu_p([(g,g')]_{\sim_\Delta})\\
&=
\int_{\IrrG}^{\oplus} 
\delta_{G^{\times 2}}(g,g')^{-\frac{p}{2}}
\begin{bmatrix}
\omega_2(1) & \omega_2(-1) \\
\omega_2(-1) & \omega_2(1) 
\end{bmatrix}\;
\Pi_{(g,g')} (
\hat{\omega}_1 )
\delta_{G^{\times 2}}(g,g')^{\frac{1}{2}}
\md\mu_p([(g,g')]_{\sim_\Delta})\\
&=
\int_{\IrrG}^{\oplus} 
(\Pi_{(g,g')}\rtimes U)\bigl(
(\omega_1\otimes\omega_2\otimes\id\otimes\id)W^{\GG}\bigr)
E_{[(g,g')]_{\sim_\Delta}}^{-1}
\md\mu_p([(g,g')]_{\sim_\Delta})
\end{split}\]
and points $7.1)$, $7.2)$ of Theorem \ref{PlancherelR} hold.
Since we have defined $\mc{Q}_R$ in such a way that $\mc{Q}_R \circ\hat{J}J=\mc{Q}_L$, it is clear that the commutation relations also are satisfied. We arrive at the following proposition:

\begin{proposition}\label{stw17}
For each $p\in\RR$, the measure $\mu_p$, the unitary operator $\mc{Q}_R=\mc{Q}_L \circ J\hat{J}$ and operators
\[
E_{[(g,g')]_{\sim_\Delta}}=\delta_{G^{\times 2}}(g,g')^{\frac{p-1}{2}}\I_{\mc{V}^{(g,g')}}\quad([(g,g')]_{\sim_\Delta}\in\IrrG)
\]
are the operators given by the Theorem \ref{PlancherelR}.
\end{proposition}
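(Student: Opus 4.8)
The plan is to derive the right-hand statement entirely from the left-hand data $(\mu_p,\mc{Q}_L,(D_{[(g,g')]_{\sim_\Delta}}))$ already shown to satisfy Theorem \ref{PlancherelL}, by verifying the hypotheses 7.1)--7.3) of point 7) in Theorem \ref{PlancherelR} for the candidate $(\mu_p,\mc{Q}_R,(E_{[(g,g')]_{\sim_\Delta}}))$. The whole argument hinges on one algebraic identity: since $J$ and $\hat{J}$ are antiunitary involutions, $J\hat{J}$ is unitary with inverse $\hat{J}J$, so $\mc{Q}_R=\mc{Q}_L\circ J\hat{J}$ is unitary and, most importantly,
\[
\mc{Q}_R\,\hat{J}J=\mc{Q}_L\,J\hat{J}\hat{J}J=\mc{Q}_L.
\]

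First I would verify condition 7.2), which is the only place where genuine computation is needed. Starting from the duality relation $\Lambda_{\hpsi}(x)=J\Lambda_{\hvp}(\hat{R}(x)^*)$ (dual to $\hat{J}\Lambda_\vp(x)=\Lambda_\psi(R(x)^*)$), I would rewrite $\hat{R}(\cdot)^*=J\cdot J$ on the generating elements $(\I\otimes(\omega_2\otimes\id)\mrW^{\ZZ_2})\beta(f)$ using \eqref{eq40} and the explicit form of $J$ from Proposition \ref{stw16}, arriving at $\Lambda_{\hpsi}(\lambda(\omega_1\otimes\omega_2))=(\delta_{G^{\times 2}}^{1/2}\hat{\omega}_1)\otimes\omega_2$. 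Applying $\mc{Q}_R\hat{J}J=\mc{Q}_L$ and the formula \eqref{eq43} for $\mc{Q}_L$, and using that $\delta_{G^{\times 2}}$ is invariant under the flip $(g,g')\mapsto(g',g)$ (hence descends to $\IrrG$ and pulls out of $\Pi_{(g,g')}$), the extra factor $\delta_{G^{\times 2}}^{1/2}$ is exactly what converts $D_{[(g,g')]_{\sim_\Delta}}^{-1}=\delta_{G^{\times 2}}^{-p/2}$ into $E_{[(g,g')]_{\sim_\Delta}}^{-1}=\delta_{G^{\times 2}}^{-(p-1)/2}=\delta_{G^{\times 2}}^{1/2}D_{[(g,g')]_{\sim_\Delta}}^{-1}$. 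Since these generators form a $\ssots\times\|\cdot\|$ core for $\Lambda_{\hpsi}$ (Lemma \ref{lemat11}), this establishes 7.2), along with 7.1) for the $\alpha=\omega_1\otimes\omega_2$ slice.

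Conditions 7.1) and 7.3) then reduce mechanically to the left-hand proposition. Substituting $\mc{Q}_R\hat{J}J=\mc{Q}_L$ into the two commutation relations of point 3) of Theorem \ref{PlancherelR} turns them verbatim into the commutation relations of point 3) of Theorem \ref{PlancherelL} already proved for $\mc{Q}_L$, giving 7.1). For 7.3), I would recall that conjugation by $J$ preserves both $\Linfd$ and $\Linfd'$ while $\hat{J}\Linfd\hat{J}=\Linfd'$; hence $J\hat{J}$ preserves $\Linfd\cap\Linfd'$, so $\mc{Q}_R(\Linfd\cap\Linfd')\mc{Q}_R^*=\mc{Q}_L(\Linfd\cap\Linfd')\mc{Q}_L^*$, which consists of diagonalisable operators. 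With 7.1)--7.3) in hand, point 7) of Theorem \ref{PlancherelR} yields that $(\mu_p,\mc{Q}_R,(E_{[(g,g')]_{\sim_\Delta}}))$ satisfies all of properties 1)--6), which is the assertion.

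The main obstacle is the explicit computation packaged in \eqref{eq40}, namely $\hat{R}(\lambda(\omega_1\otimes\omega_2))^*=J\lambda(\omega_1\otimes\omega_2)J$. The delicate points there are tracking the modular factor $\delta_{G^{\times 2}}^{1/2}$ produced by the explicit $J$ of Proposition \ref{stw16} and correctly threading the right regular representation $R_{G^{\times 2}}$ together with complex conjugation through the crossed-product structure; everything else is bookkeeping that collapses once the identity $\mc{Q}_R\hat{J}J=\mc{Q}_L$ is exploited.
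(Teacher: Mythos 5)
Your proposal is correct and follows essentially the same route as the paper: the identity $\mc{Q}_R\hat{J}J=\mc{Q}_L$, the duality relation $\Lambda_{\hpsi}(x)=J\Lambda_{\hvp}(\hat{R}(x)^*)$ combined with \eqref{eq40} and the explicit $J$ of Proposition \ref{stw16} to get $\Lambda_{\hpsi}(\lambda(\omega_1\otimes\omega_2))=(\delta_{G^{\times 2}}^{1/2}\hat{\omega}_1)\otimes\omega_2$, and then point $7)$ of Theorem \ref{PlancherelR}, with the flip-invariance of $\delta_{G^{\times 2}}$ converting $D^{-p/2}$ into $E^{-(p-1)/2}$ exactly as in the paper. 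If anything, you are slightly more explicit than the paper about condition $7.3)$, which the paper's example leaves implicit (it is handled by the same $J$, $\hat{J}$ conjugation argument used in the proof of the general Theorem \ref{PlancherelR}).
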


\subsubsection{Operators $\mc{L}_{[(r,r')]_{\sim_\Delta}}$}
In this section we will describe the action of operators $\mc{L}_{[(r,r')]_{\sim_\Delta}}$. In order to do that, we first need to establish decomposition of the tensor products. It will turn out that it resembles the case of $\RR\rtimes \ZZ_2$.\\
Take a measurable subset $\Omega\subseteq \IrrG=(G^{\times 2}\setminus \Delta)/_{\sim _\Delta}$ and fix an irreducible representation $[(r,r')]_{\sim_\Delta}\in\IrrG$ with $r\neq r'$. To ease the notation, we will write $\mu_p$ instead of $(\mu_p)_{\Omega}$ etc. A typical element of $\CGD\subseteq\Linfd$ looks like 
\[
(\omega_1\otimes\omega_2\otimes\id\otimes\id)\mrW^{\GG}= \bigl(\I\otimes (\omega_2\otimes\id)\mrW^{\ZZ_2}\bigr)\; \beta(\hat{\omega}_1)\in \Linfd,
\]
where $\omega_1\in \LL(G^{\times 2})_*,\omega_2\in\LL^1(\ZZ_2)$ and if we apply $\Delta_{\whG}$ to it, we get
\[\begin{split}
\Delta_{\whG}\bigl(
(\omega_1\otimes\omega_2\otimes\id\otimes\id)\mrW^{\GG}\bigr)=
\Delta_{\wh{\ZZ}_2}\bigl((\omega_2\otimes\id)\mrW^{\ZZ_2}\bigr)_{24}\,
(\beta\otimes\beta)(\Delta_{G^{\times 2}}(\hat{\omega}_1)).
\end{split}\]
Consequently, when we apply the representation $((\Pi_{(r,r')}\rtimes U)\otimes \sigma_\Omega)\Sigma$ to this element we arrive at
\begin{equation}\begin{split}\label{eq34}
&\quad\;
(\Pi_{(r,r')}\rtimes U)\tp \sigma_\Omega
\bigl(
(\omega_1\otimes\omega_2\otimes\id\otimes\id)\mrW^{\GG}
\bigr)\\
&=
\bigl(\omega_2(1)
\begin{bmatrix} 1 & 0 \\ 0 & 1 \end{bmatrix}\otimes
\int_{\Omega}^{\oplus}
\begin{bmatrix} 1 & 0 \\ 0 & 1 \end{bmatrix}
\md\mu_p([(g,g')]_{\sim_\Delta})+
\omega_2(-1)
\begin{bmatrix}  0 & 1  \\ 1 & 0 \end{bmatrix}\otimes
\int_{\Omega}^{\oplus}
\begin{bmatrix} 0 & 1 \\ 1 & 0 \end{bmatrix}
\md\mu_p([(g,g')]_{\sim_\Delta})\bigr)\\
&\quad\quad\quad \bigl(\Pi_{(r,r')}\otimes \int_{\Omega}^{\oplus}
\Pi_{(g,g')}\md\mu_p([(g,g')]_{\sim_\Delta})\bigr)
( \Delta_{(G^{\times 2})^{op}}(\hat{\omega}_1))
\end{split}\end{equation}
(see equation \eqref{eq39}). Recall that we have chosen representatives for each class $[(g,g')]_{\sim_\Delta}$; let as before
\[
H\colon 
(G^{\times 2}\setminus \Delta)/_{\sim_\Delta}\rightarrow
G^{\times 2}\setminus \Delta
\]
denote the corresponding measurable map. Assume that $(r,r')=H([(r,r')]_{\sim_\Delta})$. Let $H^c$ be the map $H$ composed with a flip $(g,g')\mapsto (g',g)$ and let $p_\Delta$ be the canonical quotient map
\[
p_{\Delta}\colon 
G^{\times 2}\setminus \Delta
\rightarrow
(G^{\times 2}\setminus \Delta)/_{\sim_\Delta}.
\]
 Next, define a linear operator $\mc{O}$ via\footnote{We give a formula for the action of $\delta_{G^{\times 2}}(r,r')^{-1}\mc{O}$ rather than $\mc{O}$ to keep the equations shorter.}
\[\begin{split}
\delta_{G^{\times 2}}^{-1}(r,r')&\mc{O}\colon \mc{V}^{(r,r')}\otimes
\int_{\Omega}^{\oplus} \mc{V}^{(g,g')} \md\mu_{p}([(g,g')]_{\sim_\Delta})\rightarrow\\
&\rightarrow
\int_{H^{-1}(p_{\Delta}^{-1}(\Omega)(r,r'))}^{\oplus} \mc{V}^{(g,g')} \md\mu_p([(g,g')]_{\sim_\Delta})
\oplus
\int_{{H}^{c\,-1}(p_{\Delta}^{-1}(\Omega)(r,r'))}^{\oplus} \mc{V}^{(g,g')} \md\mu_p([(g,g')]_{\sim_\Delta})
\end{split}\]
given by
\[\begin{split}
\vect{1}{0}\otimes
\int_{\Omega}^{\oplus} \vect{\xi^{H[g,g']}}{0}\md\mu_{p}([(g,g')]_{\sim_\Delta})\mapsto &
\int_{H^{-1}(H(\Omega)(r,r'))}^{\oplus} \vect{
\delta_{G^{\times 2}}^{-\frac{p}{2}}(g,g')
\xi^{H[g,g'](r,r')^{-1}}
}{0}\md\mu_p([(g,g')]_{\sim_\Delta}) \oplus\\
\oplus
\int_{H^{c\,-1}(H(\Omega)(r,r'))}^{\oplus} &\vect{0}{
\delta_{G^{\times 2}}^{-\frac{p}{2}}(g,g')
\xi^{H^c [g,g'](r,r')^{-1}}
}\md\mu_p([(g,g')]_{\sim_\Delta}),\\
\vect{0}{1}\otimes
\int_{\Omega}^{\oplus} \vect{0}{
\eta^{H[g,g']}}\md\mu_{p}([(g,g')]_{\sim_\Delta})\mapsto &
\int_{H^{-1}(H(\Omega)(r,r'))}^{\oplus}
 \vect{0}{
\delta_{G^{\times 2}}^{-\frac{p}{2}}(g,g')
\eta^{H[g,g'](r,r')^{-1}}}\md\mu_p([(g,g')]_{\sim_\Delta})\oplus\\
\oplus 
\int_{H^{c\,-1}(H(\Omega)(r,r'))}^{\oplus} &\vect{
\delta_{G^{\times 2}}^{-\frac{p}{2}}(g,g')
\eta^{H^{c} [g,g'](r,r')^{-1}}
}{0}\md\mu_p([(g,g')]_{\sim_\Delta}),
\\
\vect{1}{0}\otimes
\int_{\Omega}^{\oplus} \vect{0}{
\eta^{H[g,g']}}\md\mu_{p}([(g,g')]_{\sim_\Delta})\mapsto&\,
\int_{H^{-1}(H^{c}(\Omega)(r,r'))}^{\oplus} \vect{
\delta_{G^{\times 2}}^{-\frac{p}{2}}(g,g')
\eta^{H^{c}[g,g'](r',r)^{-1}}}{0}\md\mu_p([(g,g')]_{\sim_\Delta})\oplus \\
\oplus
\int_{H^{c\,-1}(H^{c}(\Omega)(r,r'))}^{\oplus} &\vect{0}{
\delta_{G^{\times 2}}^{-\frac{p}{2}}(g,g')
\eta^{H[g,g'](r',r)^{-1}}}\md\mu_p([(g,g')]_{\sim_\Delta}),\\
\vect{0}{1}\otimes
\int_{\Omega}^{\oplus} \vect{
\xi^{H[g,g']}}{0}\md\mu_{p}([(g,g')]_{\sim_\Delta})\mapsto&\,
\int_{H^{-1}(H^{c}(\Omega)(r,r'))}^{\oplus} \vect{0}{
\delta_{G^{\times 2}}^{-\frac{p}{2}}(g,g')
\xi^{H^{c}[g,g'](r',r)^{-1}}}\md\mu_p([(g,g')]_{\sim_\Delta})\oplus\\
\oplus\int_{H^{c\,-1}(H^{c}(\Omega)(r,r'))}^{\oplus} &\vect{
\delta_{G^{\times 2}}^{-\frac{p}{2}}(g,g')
\xi^{H[g,g'](r',r)^{-1}}}{0}\md\mu_p([(g,g')]_{\sim_\Delta})
\end{split}\]
(we identify 
\[
H^{-1}( p_{\Delta}^{-1} (\Omega)(r,r'))
\quad\textnormal{ and }\quad
H^{-1}( H (\Omega)(r,r'))\cup
H^{-1}( H^{c} (\Omega)(r,r')),
\]
same for $H^{c\,-1}( p_{\Delta}^{-1} (\Omega)(r,r'))$). One easily sees that $\mc{O}$ is a well defined surjective operator, let us check that it is isometric: we have
\[\begin{split}
&\quad\;
\bigl\|
(\delta_{G^{\times 2}}^{-1}(r,r')\mc{O})\bigl(\vect{1}{0}\otimes
\int_{\Omega}^{\oplus} \vect{\xi^{H[g,g']}}{0}\md\mu_{p}([(g,g')]_{\sim_\Delta})\bigr)\bigr\|^2\\
&=
\int_{H^{-1}(H(\Omega)(r,r'))} 
\delta_{G^{\times 2}}^{-p}(g,g')
|\xi^{H[g,g'](r,r')^{-1}}
|^2\md\mu_p([(g,g')]_{\sim_\Delta})\\
&\quad\quad\quad+
\int_{H^{c\,-1}(H(\Omega)(r,r'))}
\delta_{G^{\times 2}}^{-p}(g,g')
|\xi^{H^c [g,g'](r,r')^{-1}}|^2\md\mu_p([(g,g')]_{\sim_\Delta})\\
&=
\tfrac{1}{2}\int_{G^{\times 2}\setminus \Delta}
\bigl(\chi_{H^{-1}(H(\Omega)(r,r'))}([(g,g')]_{\sim_\Delta})
|\xi^{H[g,g'] (r,r')^{-1}}|^2 \\
&\quad\quad\quad\quad\quad\quad\quad\quad+
\chi_{H^{c\,-1}(H(\Omega)(r,r'))}([(g,g')]_{\sim_\Delta})
|\xi^{H^{c}[g,g'] (r,r')^{-1}}|^2 \bigr)
\md\mu_{G^{\times 2}}(g,g')\\
&=
\tfrac{1}{2}\bigl(
\int_{H((G^{\times 2}\setminus \Delta)/_{\sim_\Delta})}
(\chi_{H(\Omega)(r,r')}(g,g') |\xi^{(g,g')(r,r')^{-1}}|^2+
\chi_{H(\Omega)(r,r')}(g',g) |\xi^{(g',g)(r,r')^{-1}}|^2)
\\
&\quad\quad\quad\quad\quad\quad\quad\quad
\quad\quad\quad\quad\quad\quad\quad\quad
\quad\quad\quad\quad\quad\quad\quad\quad
\quad\quad\quad\quad\quad\quad\quad\quad
\quad\quad\md\mu_{G^{\times 2}}(g,g')+\\
&\quad\;+
\int_{H^{c}((G^{\times 2}\setminus \Delta)/_{\sim_\Delta})}
(\chi_{H(\Omega)(r,r')}(g',g) |\xi^{(g',g)(r,r')^{-1}}|^2+
\chi_{H(\Omega)(r,r')}(g,g') |\xi^{(g,g')(r,r')^{-1}}|^2)\\
&\quad\quad\quad\quad\quad\quad\quad\quad
\quad\quad\quad\quad\quad\quad\quad\quad
\quad\quad\quad\quad\quad\quad\quad\quad
\quad\quad\quad\quad\quad\quad\quad\quad
\quad\quad\md\mu_{G^{\times 2}}(g,g')
\bigr)\\
&=
\tfrac{1}{2}
\int_{G^{\times 2}\setminus \Delta}
( \chi_{H(\Omega)(r,r')}(g,g') |\xi^{(g,g')(r,r')^{-1}}|^2+
\chi_{H(\Omega)(r,r')}(g',g) |\xi^{(g',g)(r,r')^{-1}}|^2)
\md\mu_{G^{\times 2}}(g,g')\\
&=
\delta_{G^{\times 2}}^{-1}(r,r')\,\tfrac{1}{2}
\int_{G^{\times 2}\setminus \Delta}
(\chi_{H(\Omega)} (g,g')|\xi^{(g,g')}|^2 + \chi_{H(\Omega)} (g',g)| \xi^{(g',g)}|^2)
\md\mu_{G^{\times 2}}(g,g')\\
&=
\delta_{G^{\times 2}}^{-1}(r,r')
 \int_{H(\Omega)}
 |\xi^{(g,g')}|^2
\md\mu_{G^{\times 2}}(g,g')
=
\delta_{G^{\times 2}}^{-1}(r,r')
\tfrac{1}{2}\int_{H(\Omega)\cup H^{c}(\Omega)}
|\xi^{H[g,g']}|^2 \md\mu_{G^{\times 2}}(g,g')\\
&=
\delta_{G^{\times 2}}^{-1}(r,r')\int_{\Omega} |\xi^{H[g,g']}|^2
\md\mu_{p}([(g,g')]_{\sim_\Delta})\\
&=
\delta_{G^{\times 2}}^{-1}(r,r') \bigr\|
\vect{1}{0}\otimes
\int_{\Omega}^{\oplus} \vect{\xi^{H[g,g']}}{0}\md\mu_{p}([(g,g')]_{\sim_\Delta})\bigr\|^2,
\end{split}\]
hence
\[
\bigl\|
\mc{O}\bigl(\vect{1}{0}\otimes
\int_{\Omega}^{\oplus} \vect{\xi^{H[g,g']}}{0}\md\mu_{p}([(g,g')]_{\sim_\Delta})\bigr)\bigr\|=
\bigl\|
\vect{1}{0}\otimes
\int_{\Omega}^{\oplus} \vect{\xi^{H[g,g']}}{0}\md\mu_{p}([(g,g')]_{\sim_\Delta})\bigr\|.
\]
A similar calculation applied to the remaining three classes of vectors shows that $\mc{O}$ is unitary. Let us now check that $\mc{O}$ (or rather equivalently $\delta_{G^{\times 2}}^{-1}(r,r')\mc{O}$) is an intertwiner: fix $\omega_1\in \LL(G^{\times 2})_*,\omega_2\in \LL^1(\ZZ_2)$ and a vector
\[
\int_{\Omega}^{\oplus}\vect{\xi^{H[g,g']}}{\eta^{H[g,g']}}\md\mu_p([(g,g')]_{\sim_\Delta})\in
\int_{\Omega}^{\oplus}\mc{V}^{(g,g')} \md\mu_{p}([(g,g')]_{\sim_\Delta})
\]
such that the measurable functions $\xi^{\bullet},\eta^{\bullet}$ are bounded and their supports are compact in $G\times G$. Clearly such vectors span a dense subspace in $\int_{\Omega}^{\oplus} \mc{V}^{(g,g')}\md\mu_{p}([(g,g')]_{\sim_\Delta})$. We can find a compact set $V$ such that supports of $\xi^\bullet,\eta^\bullet$ are contained in the set $V\times V$. Assume moreover that $r,r'\in V$ and that the supports of $\xi^{\bullet},\eta^{\bullet}$ translated by $(r,r')$ or $(r',r)$ are still contained in $V\times V$. Let $e$ be a positive, norm one function in $\mathrm{C}_c(G)$ such that $e(g)=1$ whenever $g\in V$.\\
The function $\Delta_{(G^{\times 2})^{op}}(\hat{\omega}_1)$ belongs to $\mathrm{C}_b(G^{\times 2})=\M(\mathrm{C}_0(G)\otimes\mathrm{C}_0(G))$, hence $(e\otimes e)\Delta_{(G^{\times 2})^{op}}(\hat{\omega}_1)$ can be approximated in the norm topology by linear combinations of simple tensors:
\begin{equation}\label{eq37}
\sum_{k=1}^{N_n} f_{n,k}\otimes f'_{n,k}
\xrightarrow[n\to\infty]{}
(e\otimes e)\Delta_{(G^{\times 2})^{op}}(\hat{\omega}_1).
\end{equation}
Observe that we have
\begin{equation}\begin{split}\label{eq35}
&\quad\;
\bigl(
\Pi_{(r,r')}\otimes\int_\Omega^{\oplus}
\Pi_{(g,g')}\md\mu_p([(g,g')]_{\sim_\Delta})\bigr)
( \Delta_{(G^{\times 2})^{op}}(\hat{\omega}_1)) \bigl(
\vect{1}{0} \otimes \int_{\Omega}^{\oplus} 
\vect{\xi^{H[g,g']}}{\eta^{H[g,g']}} \md\mu_{p}([(g,g')]_{\sim_\Delta})\\
&=
\bigl(
\Pi_{(r,r')}\otimes\int_\Omega^{\oplus}
\Pi_{(g,g')}\md\mu_p([(g,g')]_{\sim_\Delta})\bigr)
( \Delta_{(G^{\times 2})^{op}}(\hat{\omega}_1)) \\
&\quad\quad\quad\quad\quad
\bigl(\Pi_{(r,r')}\otimes\int_\Omega^{\oplus}
\Pi_{(g,g')}\md\mu_p([(g,g')]_{\sim_\Delta})\bigr)
(e\otimes e) 
\bigl(
\vect{1}{0} \otimes \int_{\Omega}^{\oplus} 
\vect{\xi^{H[g,g']}}{\eta^{H[g,g']}} \md\mu_{p}([(g,g')]_{\sim_\Delta})\\
&=
\bigl(
\Pi_{(r,r')}\otimes\int_\Omega^{\oplus}
\Pi_{(g,g')}\md\mu_p([(g,g')]_{\sim_\Delta})\bigr)
((e\otimes e) \Delta_{(G^{\times 2})^{op}}(\hat{\omega}_1)) \bigl(
\vect{1}{0} \otimes \int_{\Omega}^{\oplus} 
\vect{\xi^{H[g,g']}}{\eta^{H[g,g']}} \md\mu_{p}([(g,g')]_{\sim_\Delta}),
\end{split}\end{equation}
and similarly
\begin{equation}\begin{split}\label{eq36}
&\quad\;
\bigl(
\Pi_{(r,r')}\otimes\int_\Omega^{\oplus}
\Pi_{(g,g')}\md\mu_p([(g,g')]_{\sim_\Delta})\bigr)
( \Delta_{(G^{\times 2})^{op}}(\hat{\omega}_1)) \bigl(
\vect{0}{1} \otimes \int_{\Omega}^{\oplus} 
\vect{\xi^{H[g,g']}}{\eta^{H[g,g']}} \md\mu_{p}([(g,g')]_{\sim_\Delta})\\
&=
\bigl(
\Pi_{(r,r')}\otimes\int_\Omega^{\oplus}
\Pi_{(g,g')}\md\mu_p([(g,g')]_{\sim_\Delta})\bigr)
((e\otimes e) \Delta_{(G^{\times 2})^{op}}(\hat{\omega}_1)) \bigl(
\vect{0}{1} \otimes \int_{\Omega}^{\oplus} 
\vect{\xi^{H[g,g']}}{\eta^{H[g,g']}} \md\mu_{p}([(g,g')]_{\sim_\Delta}).
\end{split}\end{equation}
Now, using the definition of $\mc{O}$, equations \eqref{eq34}, \eqref{eq35} and convergence \eqref{eq37} we can check that $\mc{O}$ is an intertwiner. On the one hand we have
\[\begin{split}
&\quad\;(\delta_{G^{\times 2}}^{-1}(r,r')\mc{O})
(\Pi_{(r,r')}\rtimes U)\tp \sigma_\Omega ((\omega_1\otimes\omega_2\otimes\id\otimes\id)\mrW^{\GG})
\bigl(\vect{1}{0}\otimes
\int_{\Omega}^{\oplus} \vect{\xi^{H[g,g']}}{\eta^{H[g,g']}}\md\mu_p([(g,g')]_{\sim_\Delta})\bigr)\\
&=
\lim_{n\to\infty} \sum_{k=1}^{N_n}
(\delta_{G^{\times 2}}^{-1}(r,r')\mc{O})\bigl(
\omega_2(1)\;\vect{f_{n,k}(r,r')}{0}\otimes
\int_{\Omega}^{\oplus} \vect{f'_{n,k}(H[g,g'])\xi^{H[g,g']}}{f'_{n,k}(H^{c}[g,g'])\eta^{H[g,g']}}\md\mu_p([(g,g')]_{\sim_\Delta})\\
&\;\;+
\omega_2(-1)\;\vect{0}{f_{n,k}(r,r')}\otimes
\int_{\Omega}^{\oplus} \vect{f'_{n,k}(H^{c}[g,g'])\eta^{H[g,g']}}{f'_{n,k}(H[g,g'])\xi^{H[g,g']}}\md\mu_p([(g,g')]_{\sim_\Delta})\bigr)\\
&=
(\delta_{G^{\times 2}}^{-1}(r,r')\mc{O})\bigl(
\omega_2(1)\;\vect{1}{0}\otimes
\int_{\Omega}^{\oplus} \vect{
\hat{\omega}_1(H[g,g'](r,r'))\xi^{H[g,g']}}{
\hat{\omega}_1(H^{c}[g,g'](r,r'))\eta^{H[g,g']}}\md\mu_p([(g,g')]_{\sim_\Delta})\\
&\;\;+
\omega_2(-1)\;\vect{0}{1}\otimes
\int_{\Omega}^{\oplus} \vect{
\hat{\omega}_1(H^{c}[g,g'](r,r'))\eta^{H[g,g']}}{
\hat{\omega}_1(H[g,g'](r,r'))\xi^{H[g,g']}}\md\mu_p([(g,g')]_{\sim_\Delta})\bigr)\\
&=
\int_{H^{-1}(H(\Omega)(r,r'))}^{\oplus}
\delta_{G^{\times 2}}^{-p}(g,g')
\vect{ \omega_2(1) \hat{\omega}_1 (H[g,g']) \xi^{H[g,g'](r,r')^{-1}}
 }{
 \omega_2(-1) \hat{\omega}_1 (H[g,g']) \xi^{H[g,g'](r,r')^{-1} }}
\md\mu_p([(g,g')]_{\sim_\Delta})\oplus\\
&\,\oplus\int_{H^{-1}(H^{c}(\Omega)(r,r'))}^{\oplus}
\delta_{G^{\times 2}}^{-p}(g,g')
\vect{ \omega_2(1) \hat{\omega}_1 (H[g,g']) \eta^{H^{c}[g,g'](r',r)^{-1}}
 }{
 \omega_2(-1) \hat{\omega}_1 (H[g,g']) \eta^{H^{c}[g,g'](r',r)^{-1}} }
\md\mu_p([(g,g')]_{\sim_\Delta})\\
&\,\oplus\int_{H^{c\,-1}(H(\Omega)(r,r'))}^{\oplus}
\delta_{G^{\times 2}}^{-p}(g,g')
\vect{ 
\omega_2(-1)\hat{\omega}_1(H^{c}[g,g']) \xi^{H^{c}[g,g'] (r,r')^{-1}}
 }{
\omega_2(1)\hat{\omega}_1(H^{c}[g,g']) \xi^{H^{c}[g,g'] (r,r')^{-1}} }
\md\mu_p([(g,g')]_{\sim_\Delta})\\
&\,\oplus\int_{H^{c\,-1}(H^{c}(\Omega)(r,r'))}^{\oplus}
\delta_{G^{\times 2}}^{-p}(g,g')
\vect{
\omega_2(-1)\hat{\omega}_1(H^{c}[g,g']) \eta^{H[g,g'] (r',r)^{-1}}
 }{
\omega_2(1)\hat{\omega}_1(H^{c}[g,g']) \eta^{H[g,g'] (r',r)^{-1}} }
\md\mu_p([(g,g')]_{\sim_\Delta}),
\end{split}\]
where in the third equality we have used the dominated convergence theorem. On the other hand we have
\[\begin{split}
&\quad\;
(\sigma_{H^{-1}(p_{\Delta}^{-1}(\Omega)(r,r'))}\oplus 
\sigma_{H^{c\,-1}(p_{\Delta}^{-1}(\Omega)(r,r'))})((\omega_1\otimes\omega_2\otimes\id\otimes\id)\mrW^{\GG})
\\
&\quad\quad\quad\quad\quad\quad
\quad\quad\quad\quad\quad\quad
\quad\quad\quad\quad
(\delta_{G^{\times 2}}^{-1}(r,r')\mc{O})\bigl(\vect{1}{0}\otimes
\int_{\Omega}^{\oplus} \vect{\xi^{H[g,g']}}{\eta^{H[g,g']}}\md\mu_p([(g,g')]_{\sim_\Delta})\bigr)\\
&=
\int_{H^{-1}(H(\Omega)(r,r'))}^{\oplus}
\delta_{G^{\times 2}}^{-p}(g,g')
\vect{\omega_2(1)\hat{\omega}_1(H[g,g'])
\xi^{H[g,g'](r,r')^{-1} }}{
\omega_2(-1)\hat{\omega}_1(H[g,g'])\xi^{H[g,g'](r,r')^{-1} }}
\md\mu_p
([(g,g')]_{\sim_\Delta})\oplus\\
&\oplus 
\int_{H^{-1}(H^{c}(\Omega)(r,r'))}^{\oplus}
\delta_{G^{\times 2}}^{-p}(g,g')
\vect{\omega_2(1)\hat{\omega}_1(H[g,g'])\eta^{H^{c}[g,g'](r',r)^{-1} }}{
\omega_2(-1)\hat{\omega}_1(H[g,g'])\eta^{H^{c}[g,g'](r',r)^{-1} }}
\md\mu_p
([(g,g')]_{\sim_\Delta})\oplus\\
&\oplus\int_{H^{c\,-1}(H(\Omega)(r,r'))}^{\oplus} 
\delta_{G^{\times 2}}^{-p}(g,g')
\vect{\omega_2(-1)\hat{\omega}_1(H^{c}[g,g'])\xi^{ H^{c}[g,g'](r,r')^{-1}}}{
\omega_2(1)\hat{\omega}_1(H^{c}[g,g'])\xi^{H^{c}[g,g'](r,r')^{-1} }}
\md\mu_p
([(g,g')]_{\sim_\Delta})\oplus\\
&\oplus
\int_{H^{c\,-1}(H^{c}(\Omega)(r,r'))}^{\oplus}
\delta_{G^{\times 2}}^{-p}(g,g')
\vect{
\omega_2(-1) \hat{\omega}_1(H^{c}[g,g']) \eta^{H[g,g'](r',r)^{-1}}}
{\omega_2(1) \hat{\omega}_1(H^{c}[g,g']) \eta^{H[g,g'](r',r)^{-1}}}
\md\mu_p
([(g,g')]_{\sim_\Delta}),
\end{split}\]
hence we get the desired equality on the vectors of the form $\vect{1}{0}\otimes\int_{\Omega}^{\oplus} \vect{\xi^{H[g,g']}}{\eta^{H[g,g']}} \md\mu_p ([(g,g')]_{\sim_\Delta})$. Let us now check the second case, this time using equation \eqref{eq36}:
\[\begin{split}
&\quad\;
(\delta_{G^{\times 2}}^{-1}(r,r')\mc{O})
(\Pi_{(r,r')}\rtimes U)\tp \sigma_\Omega ((\omega_1\otimes\omega_2\otimes\id\otimes\id)\mrW^{\GG})
\bigl(\vect{0}{1}\otimes
\int_{\Omega}^{\oplus} \vect{\xi^{H[g,g']}}{\eta^{H[g,g']}}\md\mu_p([(g,g')]_{\sim_\Delta})\bigr)\\
&=
\lim_{n\to\infty}
\sum_{k=1}^{N_n}
(\delta_{G^{\times 2}}^{-1}(r,r')\mc{O})\bigl(
\omega_2(1)\;\vect{0}{f_{n,k}(r',r)}\otimes
\int_{\Omega}^{\oplus} \vect{f_{n,k}'(H[g,g'])\xi^{H[g,g']}}{f_{n,k}'(H^{c}[g,g'])\eta^{H[g,g']}}\md\mu_p([(g,g')]_{\sim_\Delta})\\
&\quad\;+
\omega_2(-1)\;\vect{f_{n,k}(r',r)}{0}\otimes
\int_{\Omega}^{\oplus} \vect{f_{n,k}'(H^{c}[g,g'])\eta^{H[g,g']}}{f_{n,k}'(H[g,g'])\xi^{H[g,g']}}\md\mu_p([(g,g')]_{\sim_\Delta})\bigr)\\
&=
(\delta_{G^{\times 2}}^{-1}(r,r')\mc{O})\bigl(
\omega_2(1)\;\vect{0}{1}\otimes
\int_{\Omega}^{\oplus} \vect{\hat{\omega}_1(H[g,g'](r',r))\xi^{H[g,g']}}{\hat{\omega}_1(H^{c}[g,g'](r',r))\eta^{H[g,g']}}\md\mu_p([(g,g')]_{\sim_\Delta})\\
&\quad\;+
\omega_2(-1)\;\vect{1}{0}\otimes
\int_{\Omega}^{\oplus} \vect{\hat{\omega}_1(H^{c}[g,g'](r',r))\eta^{H[g,g']}}{\hat{\omega}_1(H[g,g'](r',r))\xi^{H[g,g']}}\md\mu_p([(g,g')]_{\sim_\Delta})\bigr)\\
&=
\int_{H^{-1}(H(\Omega)(r,r'))}^{\oplus}
\delta_{G^{\times 2}}^{-p}(g,g')
\vect{\omega_2(-1)\hat{\omega}_1(H^{c}[g,g']) \eta^{H[g,g'](r,r')^{-1}} }{
\omega_2(1)\hat{\omega}_1(H^{c}[g,g']) \eta^{H[g,g'](r,r')^{-1}} }
\md\mu_p([(g,g')]_{\sim_\Delta})\oplus\\
&\;\oplus\int_{H^{-1}(H^{c}(\Omega)(r,r'))}^{\oplus}
\delta_{G^{\times 2}}^{-p}(g,g')
\vect{
\omega_2(-1)\hat{\omega}_1(H^{c}[g,g']) \xi^{H^{c}[g,g'](r',r)^{-1}}
 }{
\omega_2(1)\hat{\omega}_1(H^{c}[g,g']) \xi^{H^{c}[g,g'](r',r)^{-1}}
}
\md\mu_p([(g,g')]_{\sim_\Delta})\bigr)\oplus\\
&\;\oplus
\int_{H^{c\,-1}(H(\Omega)(r,r'))}^{\oplus}
\delta_{G^{\times 2}}^{-p}(g,g')
\vect{
\omega_2(1)\hat{\omega}_1(H[g,g']) \eta^{H^{c}[g,g'](r,r')^{-1}}
}{
\omega_2(-1)\hat{\omega}_1(H[g,g']) \eta^{H^{c}[g,g'](r,r')^{-1}}
}
\md\mu_p([(g,g')]_{\sim_\Delta})\oplus\\
&\;\oplus\int_{H^{c\,-1}(H^{c}(\Omega)(r,r'))}^{\oplus}
\delta_{G^{\times 2}}^{-p}(g,g')
\vect{
\omega_2(1)\hat{\omega}_1(H[g,g']) \xi^{H[g,g'](r',r)^{-1}}
}{
\omega_2(-1)\hat{\omega}_1(H[g,g']) \xi^{H[g,g'](r',r)^{-1}}
}
\md\mu_p([(g,g')]_{\sim_\Delta})
\end{split}\]
and on the other hand
\[\begin{split}
&\quad\;
(\sigma_{H^{-1}(p_{\Delta}^{-1}(\Omega)(r,r'))}\oplus 
\sigma_{H^{c\,-1}(p_{\Delta}^{-1}(\Omega)(r,r'))})((\omega_1\otimes\omega_2\otimes\id\otimes\id)\mrW^{\GG})\\
&\quad\quad\quad\quad\quad\quad
\quad\quad\quad\quad\quad\quad
\quad
(\delta_{G^{\times 2}}^{-1}(r,r')\mc{O})\bigl(\vect{0}{1}\otimes
\int_{\Omega}^{\oplus} \vect{\xi^{H[g,g']}}{\eta^{H[g,g']}}\md\mu_p([(g,g')]_{\sim_\Delta})\bigr)\\
&=
\int_{H^{-1}(H(\Omega)(r,r'))}^{\oplus}
\delta_{G^{\times 2}}^{-p}(g,g')
\vect{\omega_2(-1) \hat{\omega}_1(H^{c}[g,g'])\eta^{H[g,g'](r,r')^{-1}}}{\omega_2(1) \hat{\omega}_1(H^{c}[g,g'])\eta^{H[g,g'](r,r')^{-1}}}
\md\mu_p
([(g,g')]_{\sim_\Delta})\oplus\\
&\oplus
\int_{H^{-1}(H^{c}(\Omega)(r,r'))}^{\oplus}
\delta_{G^{\times 2}}^{-p}(g,g')
\vect{
\omega_2(-1) \hat{\omega}_1(H^{c}[g,g'])\xi^{H^{c}[g,g'] (r',r)^{-1} }
}{
\omega_2(1) \hat{\omega}_1(H^{c}[g,g'])\xi^{H^{c}[g,g'] (r',r)^{-1} }}
\md\mu_p
([(g,g')]_{\sim_\Delta})\oplus\\
&\oplus
\int_{H^{c\,-1}(H(\Omega)(r,r'))}^{\oplus}
\delta_{G^{\times 2}}^{-p}(g,g')
\vect{
\omega_2(1) \hat{\omega}_1(H[g,g'])\eta^{H^{c}[g,g'](r,r')^{-1}}}{
\omega_2(-1) \hat{\omega}_1(H[g,g'])\eta^{H^{c}[g,g'](r,r')^{-1}}
}
\md\mu_p
([(g,g')]_{\sim_\Delta})\oplus\\
&\oplus
\int_{H^{c\,-1}(H^{c}(\Omega)(r,r'))}^{\oplus}
\delta_{G^{\times 2}}^{-p}(g,g')
\vect{\omega_2(1) \hat{\omega}_1(H[g,g'])\xi^{H[g,g'] (r',r)^{-1}}}{\omega_2(-1) \hat{\omega}_1(H[g,g'])\xi^{H[g,g'] (r',r)^{-1}}}
\md\mu_p
([(g,g')]_{\sim_\Delta}).
\end{split}\]
Consequently, we have proved the following:
\begin{proposition}
For any $[(r,r')]_{\sim_\Delta}\in\IrrG$ and a measurable subset $\Omega\subseteq\IrrG$ we have
\begin{equation}\label{eq38}
(\Pi_{(r,r')}\rtimes U)\tp \sigma_\Omega\simeq
\sigma_{H^{-1}(p_{\Delta}^{-1}(\Omega)(r,r'))}\oplus 
\sigma_{H^{c\,-1}(p_{\Delta}^{-1}(\Omega)(r,r'))}.
\end{equation}
\end{proposition}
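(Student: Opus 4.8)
The plan is to exhibit an explicit unitary operator $\mc{O}$ implementing the claimed equivalence and to verify the three properties that characterise a unitary intertwiner: well-definedness together with surjectivity, isometry, and compatibility with the two representations. The source space is $\mc{V}^{(r,r')}\otimes \int_\Omega^\oplus \mc{V}^{(g,g')}\md\mu_p([(g,g')]_{\sim_\Delta})$ on which $(\Pi_{(r,r')}\rtimes U)\tp \sigma_\Omega$ acts, and the target is the direct sum of the two direct integral spaces carrying $\sigma_{H^{-1}(p_{\Delta}^{-1}(\Omega)(r,r'))}$ and $\sigma_{H^{c\,-1}(p_{\Delta}^{-1}(\Omega)(r,r'))}$. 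First I would compute the action of the tensor product on a generic generator $(\omega_1\otimes\omega_2\otimes\id\otimes\id)\mrW^{\GG}$ of $\CGD$, applying $\Delta_{\whG}$ to this element (using the formula for $\mrW^{\GG}$ in terms of $\mrW^{\ZZ_2}$ and $\mrW^{\wh{G^{\times 2}}}$) and then inserting the explicit matrices \eqref{eq39} for $\Pi_{(g,g')}$ and $U$. This produces equation \eqref{eq34}, in which the coproduct $\Delta_{(G^{\times 2})^{op}}(\hat{\omega}_1)$ of the classical group $G^{\times 2}$ is the only non-elementary ingredient.

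Next I would define $\mc{O}$ separately on the four natural classes of decomposable vectors $\vect{1}{0}\otimes\int_\Omega^\oplus \vect{\xi}{0}$, $\vect{0}{1}\otimes\int_\Omega^\oplus \vect{0}{\eta}$, $\vect{1}{0}\otimes\int_\Omega^\oplus \vect{0}{\eta}$, $\vect{0}{1}\otimes\int_\Omega^\oplus \vect{\xi}{0}$, each being sent into the appropriate component of the target direct sum after the substitution $H[g,g']\mapsto H[g,g'](r,r')^{-1}$ (and its flip) and a twist by the scalar $\delta_{G^{\times 2}}^{-p/2}$. Using the measurability of the chosen section $H$ (Lemma \ref{lemat35}) and the Borel character of the sets \eqref{eq33}, one checks that $\mc{O}$ maps measurable vector fields to measurable vector fields, so it is a well-defined linear map, and it is manifestly surjective since these four families span a dense subspace of the target.

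The isometry verification is then a direct computation: for each of the four classes I would expand $\|\mc{O}(\cdot)\|^2$, unfold the definition of $\mu_p$ as $\tfrac12 \delta_{G^{\times 2}}^{p}\mu_{G^{\times 2}}$ pushed through $p_\Delta$, and use the right invariance of $\mu_{G^{\times 2}}$ together with the symmetry $(g,g')\leftrightarrow (g',g)$ to collapse the two target components back to a single integral over $\Omega$ with the correct normalisation. The scalar $\delta_{G^{\times 2}}(r,r')^{-1}$ factored out in the definition is precisely what makes the norms match, so $\mc{O}$ is unitary.

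The main obstacle will be showing that $\mc{O}$ is an intertwiner, which is the longest step. The strategy is to test the relation on the dense set of vectors whose coefficient fields $\xi^\bullet,\eta^\bullet$ are bounded with compact support, and to approximate $(e\otimes e)\Delta_{(G^{\times 2})^{op}}(\hat{\omega}_1)$ in norm by finite sums of simple tensors \eqref{eq37}, where $e\in\mathrm{C}_c(G)$ is chosen to act as the identity on the relevant supports (this is legitimate because $\Delta_{(G^{\times 2})^{op}}(\hat{\omega}_1)\in\M(\mathrm{C}_0(G)\otimes\mathrm{C}_0(G))$ only enters through values on a compact set, cf. \eqref{eq35}, \eqref{eq36}). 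For each simple tensor the action becomes a product of point evaluations $\hat{\omega}_1(H[g,g'](r,r'))$, and after passing to the limit via the dominated convergence theorem one compares the result of $\mc{O}$ applied after the source representation with the target representation applied after $\mc{O}$. The bookkeeping is heavy because the off-diagonal part of $U$ swaps the two vector components and simultaneously interchanges the roles of $H$ and $H^c$ and of $(r,r')$ and $(r',r)$; carrying out the four cases $\vect{1}{0}\otimes\vect{\xi}{\eta}$ and $\vect{0}{1}\otimes\vect{\xi}{\eta}$ and matching all eight resulting direct-integral summands is where the real work lies. Once these identities are established on a dense domain, boundedness of both sides extends the intertwining relation to the whole space, and unitarity of $\mc{O}$ yields the asserted equivalence \eqref{eq38}.
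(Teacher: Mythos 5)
Your proposal is correct and follows essentially the same route as the paper: the same explicit operator $\mc{O}$ defined on the four classes of decomposable vectors with the $\delta_{G^{\times 2}}^{-p/2}$ twist, the same isometry computation via the definition of $\mu_p$ and invariance of $\mu_{G^{\times 2}}$, and the same intertwining argument by approximating $(e\otimes e)\Delta_{(G^{\times 2})^{op}}(\hat{\omega}_1)$ by simple tensors and passing to the limit with dominated convergence. No gaps to flag.
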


Note that even though the sets $H^{-1}(p_{\Delta}^{-1}(\Omega)(r,r')),H^{c\,-1}(p_{\Delta}^{-1}(\Omega)(r,r'))$ depend on the choice of representatives, the function $\chi_{H^{-1}(p_{\Delta}^{-1}(\Omega)(r,r'))}+
\chi_{H^{c\,-1}(p_{\Delta}^{-1}(\Omega)(r,r'))}$ does not. Indeed, we have
\[\begin{split}
&\quad\;
H^{-1}(p_{\Delta}^{-1}(\Omega)(r,r'))\cup H^{c\,-1}(p_{\Delta}^{-1}(\Omega)(r,r'))\\
&=
\{
[(g,g')]_{\sim_\Delta}\,|\,
(g,g')\in p_{\Delta}^{-1}(\Omega)(r,r')\,\textnormal{ or }
(g',g)\in p_{\Delta}^{-1}(\Omega)(r,r')
\}
\end{split}\]
and 
\[\begin{split}
&\quad\;
H^{-1}(p_{\Delta}^{-1}(\Omega)(r,r'))\cap 
H^{c\,-1}(p_{\Delta}^{-1}(\Omega)(r,r'))\\
&=
\{
[(g,g')]_{\sim_\Delta}\,|\,
(g,g')\in p_{\Delta}^{-1}(\Omega)(r,r')\,\textnormal{ and }
(g',g)\in p_{\Delta}^{-1}(\Omega)(r,r')
\}.
\end{split}\]
Furthermore, one easily sees that these sets remain unaffected by the change $(r,r')\mapsto (r',r)$ -- a property one should expect, since the representation on the left hand side of the equation \eqref{eq38} is independent of the choice of a representative of $[(r,r')]_{\sim_\Delta}$ (up to a unitary equivalence).\\
In order to describe the operator $\mc{L}_{[(r,r')]_{\sim_\Delta}}$ we need to find the function $\varpi^{[(r,r')]_{\sim_\Delta},\Omega,\mu_p}$. By definition, for all $a\in \CGDu^+$ we have
\begin{equation}\label{eq44}\begin{split}
&\quad\;\int_\Omega \Tr( (\Pi_{(r,r')}\rtimes U)\tp (\Pi_{(g,g')}\rtimes U) (a))
\md\mu_{p}([(g,g')]_{\sim_\Delta})\\
&=
\int_{H^{-1}( p_{\Delta}^{-1}(\Omega)(r,r'))}
\Tr ((\Pi_{(g,g')}\rtimes U)(a))
\varpi^{[(r,r')]_{\sim_\Delta},\Omega,\mu_p}([(g,g')]_{\sim_\Delta})
\md\mu_p([(g,g')]_{\sim_\Delta})\\
&+
\int_{H^{c\, -1}( p_{\Delta}^{-1}(\Omega)(r,r'))}
\Tr ((\Pi_{(g,g')}\rtimes U)(a))
\varpi^{[(r,r')]_{\sim_\Delta},\Omega,\mu_p}([(g,g')]_{\sim_\Delta})
\md\mu_p([(g,g')]_{\sim_\Delta}).
\end{split}\end{equation}
Take $a=\beta(f)$ for a positive function $f\in \mathrm{C}_0(G^{\times 2})$, then
\[
\Tr\bigl((\Pi_{(r,r')}\rtimes U)\tp (\Pi_{(g,g')}\rtimes U) (a)\bigr)=
f(gr,g'r')+f(g'r,gr')+f(gr',g'r)+f(g'r',gr)
\]
and the left hand side of the equation \eqref{eq44} is
\[\begin{split}
&\quad\;
\int_\Omega ( f(gr,g'r')+f(g'r,gr')+f(gr',g'r)+f(g'r',gr))\md\mu_p
([(g,g')]_{\sim_\Delta})\\
&=
\tfrac{1}{2}\int_{p_{\Delta}^{-1}\Omega} 
\delta_{G^{\times 2}}^{p}(g,g')
( f(gr,g'r')+f(g'r,gr')+f(gr',g'r)+f(g'r',gr))\md\mu_{G^{\times 2}}(g,g')\\
&=
\int_{p_{\Delta}^{-1}\Omega} 
\delta_{G^{\times 2}}^{p}(g,g')( f(gr,g'r')+f(gr',g'r))\md\mu_{G^{\times 2}}(g,g')\\
&=
\delta_{G^{\times 2}}(r,r')^{1-p} \int_{G^{\times 2}}
\delta_{G^{\times 2}}^{p}\;(\chi_{p_{\Delta}(\Omega)(r,r')}+
\chi_{p_{\Delta}(\Omega)(r,r')}) f \md\mu_{G^{\times 2}}
\end{split}\]
Note that
\[
p_{\Delta}^{-1} H^{-1} (p_{\Delta}^{-1}(\Omega)(r,r'))\cup
p_{\Delta}^{-1} H^{c\,-1} (p_{\Delta}^{-1}(\Omega)(r,r'))=
p_{\Delta}^{-1}(\Omega)(r,r')\cup
p_{\Delta}^{-1}(\Omega)(r',r)
\]
and
\[
p_{\Delta}^{-1} H^{-1} (p_{\Delta}^{-1}(\Omega)(r,r'))\cap
p_{\Delta}^{-1} H^{c\,-1} (p_{\Delta}^{-1}(\Omega)(r,r'))=
p_{\Delta}^{-1}(\Omega)(r,r')\cap
p_{\Delta}^{-1}(\Omega)(r',r),
\]
consequently
\[
\chi_{p_{\Delta}^{-1} H^{-1} (p_{\Delta}^{-1}(\Omega)(r,r'))}+
\chi_{p_{\Delta}^{-1} H^{c\,-1} (p_{\Delta}^{-1}(\Omega)(r,r'))}=
\chi_{p_{\Delta}^{-1}(\Omega)(r,r')}+
\chi_{p_{\Delta}^{-1}(\Omega)(r',r)}
\]
and the right hand side of the equation \eqref{eq44} equals
\[\begin{split}
&\quad\;
\int_{H^{-1}(p_{\Delta}^{-1}(\Omega)(r,r'))}
\Tr ((\Pi_{(g,g')}\rtimes U)(a))
\varpi^{[(r,r')]_{\sim_\Delta},\Omega,\mu_p}([(g,g')]_{\sim_\Delta})
\md\mu_p([(g,g')]_{\sim_\Delta})\\
&+
\int_{H^{c\,-1}(p_{\Delta}^{-1}(\Omega)(r,r'))}
\Tr ((\Pi_{(g,g')}\rtimes U)(a))
\varpi^{[(r,r')]_{\sim_\Delta},\Omega,\mu_p}([(g,g')]_{\sim_\Delta})
\md\mu_p([(g,g')]_{\sim_\Delta})\\
&=
\tfrac{1}{2}\int_{p_\Delta^{-1} H^{-1}(p_{\Delta}^{-1}(\Omega)(r,r'))}
\delta_{G^{\times 2}}^{p}(g,g')
(f(g,g')+f(g',g))
\varpi^{[(r,r')]_{\sim_\Delta},\Omega,\mu_p}(p_\Delta(g,g'))
\md\mu_{G^{\times 2}}(g,g')\\
&+
\tfrac{1}{2}\int_{p_\Delta^{-1} H^{c\,-1}(p_{\Delta}^{-1}(\Omega)(r,r'))}
\delta_{G^{\times 2}}^{p}(g,g')
(f(g,g')+f(g',g))
\varpi^{[(r,r')]_{\sim_\Delta},\Omega,\mu_p}(p_{\Delta}(g,g'))
\md\mu_{G^{\times 2}}(g,g')\\
&=
\tfrac{1}{2}\int_{G^{\times 2}}
\delta_{G^{\times 2}}^{p}(g,g')
(\chi_{p_\Delta^{-1}(\Omega)(r,r')} +
 \chi_{p_\Delta^{-1} (\Omega)(r',r)})(g,g')
(f(g,g')+f(g',g))\\
&\quad\quad\quad\quad\quad\quad\quad\quad\quad\quad
\quad\quad\quad\quad\quad\quad\quad\quad
\quad\quad\quad\quad
\varpi^{[(r,r')]_{\sim_\Delta},\Omega,\mu_p}(p_{\Delta}(g,g'))
\md\mu_{G^{\times 2}}(g,g')\\
&=
\int_{G^{\times 2}}
\delta_{G^{\times 2}}^{p}\;
(\chi_{p_\Delta^{-1}(\Omega)(r,r')} + \chi_{p_\Delta^{-1} (\Omega)(r',r)})\,
f\,
\varpi^{[(r,r')]_{\sim_\Delta},\Omega,\mu_p}\circ p_{\Delta}
\md\mu_{G^{\times 2}}.
\end{split}\]

The above calculation gives us an equality
\[\begin{split}
&\quad\;\delta_{G^{\times 2}}(r,r')^{1-p}\int_{G^{\times 2}}
\delta_{G^{\times 2}}^{p}\;
 (\chi_{p_{\Delta}^{-1}(\Omega)(r,r')}+
\chi_{p_{\Delta}^{-1}(\Omega)(r',r)}) f \md\mu_{G^{\times 2}}\\
&=
\int_{G^{\times 2}}
\delta_{G^{\times 2}}^{p}\;
(\chi_{p_{\Delta}^{-1}(\Omega)(r,r')}+
\chi_{p_{\Delta}^{-1}(\Omega)(r',r)}) f 
\varpi^{[(r,r')]_{\sim_\Delta},\Omega,\mu_p}\circ p_{\Delta}
\md\mu_{G^{\times 2}}
\end{split}\]
for all positive functions $f\in \mathrm{C}_0(G^{\times 2})$. It follows that
\[
\varpi^{[(r,r')]_{\sim_\Delta},\Omega,\mu_p}=\delta_{G^{\times 2}}(r,r')^{1-p}
\]
on 
\[
\F^1_{(\Pi_{(r,r')}\rtimes U)\stp \sigma_\Omega}=p_{\Delta}( p_{\Delta}^{-1}(\Omega)(r,r')\cup
p_{\Delta}^{-1}(\Omega)(r',r)).
\]
 
Having derived this result, we can calculate the action of $\mc{L}_{[(r,r')]_{\sim_\Delta}}$: for $\Omega\subseteq \IrrG$ such that $\Tr(E^{2}_{\bullet})^{\frac{1}{2}}\chi_\Omega\in\LL^2(\IrrG)$ and $[(g,g')]_{\sim_\Delta}\in\IrrG$ by definition of $\mc{L}_{[(r,r')]_{\sim_\Delta}}$ we have
\[\begin{split}
&\quad\;
\mc{L}_{[(r,r')]_{\sim_\Delta}}\bigl(
\Tr(E^{2}_{\bullet})^{\frac{1}{2}}\chi_\Omega\bigr) ([(g,g')]_{\sim_\Delta})\\
&=
\delta_{G^{\times 2}}(r,r')^{1-p}
\Tr(E^{2}_{\bullet})([(g,g')]_{\sim_\Delta})^{\frac{1}{2}}
\bigl(
\chi_{H^{-1} (p_\Delta^{-1}(\Omega) (r,r'))} +
\chi_{H^{c\,-1} (p_\Delta^{-1}(\Omega)(r,r') )} 
\bigr)([(g,g')]_{\sim_\Delta})\\
&=
\delta_{G^{\times 2}}(r,r')^{1-p}
\Tr(E^{2}_{\bullet})([(g,g')]_{\sim_\Delta})^{\frac{1}{2}}
\bigl(
\chi_{p_\Delta^{-1}(\Omega)(r,r') } +
\chi_{p_\Delta^{-1}(\Omega)(r',r)} 
\bigr)(g,g')\\
&=
\delta_{G^{\times 2}}(r,r')^{1-p}
\Tr(E^{2}_{\bullet})([(g,g')]_{\sim_\Delta})^{\frac{1}{2}}
\bigl(
\chi_{\Omega } ([ (g,g')(r,r')^{-1}]_{\sim_\Delta})+
\chi_{\Omega } ([ (g,g')(r',r)^{-1}]_{\sim_\Delta})
\bigr).
\end{split}\]
By linearity and continuity we can extend the above formula to arbitrary function $f$ such that $\Tr(E^{2}_{\bullet})^{\frac{1}{2}}f\in \LL^2(\IrrG)$:
\[\begin{split}
&\quad\;
\mc{L}_{[(r,r')]_{\sim_\Delta}}\bigl(
\Tr(E^{2}_{\bullet})^{\frac{1}{2}}f\bigr) ([(g,g')]_{\sim_\Delta})\\
&=
\delta_{G^{\times 2}}(r,r')^{1-p}
\Tr(E^{2}_{\bullet})([(g,g')]_{\sim_\Delta})^{\frac{1}{2}}
\bigl(
f ([(g,g')(r,r')^{-1} ]_{\sim_\Delta})+
f ([ (g,g')(r',r)^{-1}]_{\sim_\Delta})
\bigr)\\
&=
\delta_{G^{\times 2}}(r,r')^{1-p}
\Tr(E^{2}_{\bullet})([(g,g')]_{\sim_\Delta})^{\frac{1}{2}}
\bigl(
\Tr(E^{2}_{\bullet})([(g,g')(r,r')^{-1}]_{\sim_\Delta})^{-\frac{1}{2}}
(\Tr(E^{2}_{\bullet})^{\frac{1}{2}}f) ([ (g,g')(r,r')^{-1}]_{\sim_\Delta})\\
&\quad\quad\quad\quad\quad\quad+
\Tr(E^{2}_{\bullet})([(g,g')(r',r)^{-1}]_{\sim_\Delta})^{-\frac{1}{2}}
(\Tr(E^{2}_{\bullet})^{\frac{1}{2}}f) ([ (g,g')(r',r)^{-1}]_{\sim_\Delta})
\bigr)
\end{split}\]
Proposition \ref{stw17} implies that $\Tr(E^2_{[(g,g')]})=
2 \,\delta_{G^{\times 2}}(g,g')^{p-1}$ for almost all $[(g,g')]_{\sim_\Delta}\in\IrrG$, hence
\[\begin{split}
&\quad\;
\mc{L}_{[(r,r')]_{\sim_\Delta}}\bigl(
\Tr(E^{2}_{\bullet})^{\frac{1}{2}}f\bigr) ([(g,g')]_{\sim_\Delta})\\
&=
\delta_{G^{\times 2}}(r,r')^{1-p}
\delta_{G^{\times 2}}(g,g')^{\frac{p-1}{2}}
\bigl(
\delta_{G^{\times 2}}((g,g')(r,r')^{-1})^{-\frac{p-1}{2}}
(\Tr(E^{2}_{\bullet})^{\frac{1}{2}}f) ([ (g,g')(r,r')^{-1}]_{\sim_\Delta})\\
&\quad\quad\quad\quad\quad\quad+
\delta_{G^{\times 2}}((g,g')(r',r)^{-1})^{-\frac{p-1}{2}}
(\Tr(E^{2}_{\bullet})^{\frac{1}{2}}f) ([ (g,g')(r',r)^{-1}]_{\sim_\Delta})
\bigr)\\
&=
\delta_{G^{\times 2}}(r,r')^{\frac{1-p}{2}}
\bigl(
(\Tr(E^{2}_{\bullet})^{\frac{1}{2}}f) ([ (g,g')(r,r')^{-1}]_{\sim_\Delta})+
(\Tr(E^{2}_{\bullet})^{\frac{1}{2}}f) ([ (g,g')(r',r)^{-1}]_{\sim_\Delta})
\bigr).
\end{split}\]
We have proved the following:

\begin{proposition}
For each $p\in \RR$ we have
\[
\varpi^{[(r,r')]_{\sim_\Delta},\Omega,\mu_p}([(g,g')]_{\sim_\Delta})=
\delta_{G^{\times 2}} (r,r')^{1-p}
\]
for all $[(g,g')]_{\sim_\Delta}\in \F^1_{(\Pi_{(r,r')}\rtimes U)\stp \sigma_\Omega}$. The operator $\mc{L}_{[(r,r')]_{\sim_\Delta}}\in\B(\LL^2(\IrrG))$ is given by
\[\begin{split}
\mc{L}_{[(r,r')]_{\sim_\Delta}}(
 f) ([(g,g')]_{\sim_\Delta})=
\delta_{G^{\times 2}}(r,r')^{\frac{1-p}{2}}
\bigl(
f ([ (g,g')(r,r')^{-1}]_{\sim_\Delta})+
f ([(g,g')(r',r)^{-1} ]_{\sim_\Delta})
\bigr).
\end{split}\]
for all $f\in \LL^2(\IrrG)$ and $[(g,g')]_{\sim_\Delta}\in\IrrG$.
\end{proposition}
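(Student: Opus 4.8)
The statement packages two computations, both resting on an explicit decomposition of the tensor product representation, so the plan is to proceed in three stages. First I would produce the unitary intertwiner $\mc{O}$ witnessing the equivalence
\[
(\Pi_{(r,r')}\rtimes U)\tp\sigma_\Omega\simeq
\sigma_{H^{-1}(p_\Delta^{-1}(\Omega)(r,r'))}\oplus
\sigma_{H^{c\,-1}(p_\Delta^{-1}(\Omega)(r,r'))},
\]
where $H\colon\IrrG\to G^{\times 2}\setminus\Delta$ is the measurable section of $p_\Delta$ furnished by Lemma \ref{lemat35} and $H^c$ is $H$ followed by the coordinate flip. This is the essential structural input: it shows that only $\E^n$ with $n\in\{1,2\}$ occur (multiplicity two exactly where both orderings of the pair land in $p_\Delta^{-1}(\Omega)(r,r')$), and hence that $\sum_{i=1}^\infty\chi_{\F^i_{(\Pi_{(r,r')}\rtimes U)\stp\sigma_\Omega}}$ pulls back along $p_\Delta$ to $\chi_{p_\Delta^{-1}(\Omega)(r,r')}+\chi_{p_\Delta^{-1}(\Omega)(r',r)}$, a quantity independent of the choice of representatives.

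With the decomposition in hand, I would compute $\varpi$ from its defining property in Proposition \ref{treq}, namely the equality of rescaled integral weights spelled out in \eqref{eq44}, whose right-hand side splits into integrals over $H^{-1}(\cdots)$ and $H^{c\,-1}(\cdots)$ reflecting the two summands. The natural test elements are $a=\beta(f)$ with $f\in\mathrm{C}_0(G^{\times 2})_+$: here $\Tr((\Pi_{(r,r')}\rtimes U)\tp(\Pi_{(g,g')}\rtimes U)(a))$ unwinds from \eqref{eq39} into a sum of four evaluations of $f$, and, after expanding $\mu_p=\tfrac12\delta_{G^{\times 2}}^p\mu_{G^{\times 2}}$, both sides of \eqref{eq44} become integrals against $\delta_{G^{\times 2}}^p\mu_{G^{\times 2}}$. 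Using right-invariance of $\mu_{G^{\times 2}}$ to translate by $(r,r')$ and the symmetry of the integrand under the flip to fold the $H$- and $H^c$-pieces together, the two sides differ precisely by the scalar $\delta_{G^{\times 2}}(r,r')^{1-p}$; since $f$ is arbitrary this forces $\varpi^{[(r,r')]_{\sim_\Delta},\Omega,\mu_p}=\delta_{G^{\times 2}}(r,r')^{1-p}$ on $\F^1$.

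Finally I would read off the operator from its definition \eqref{eq24}, substituting the value of $\varpi$ and the characteristic-function sum from the first step, so that $\sum_i\chi_{\F^i}$ turns into the two shifts $f(\,\cdot\,(r,r')^{-1})+f(\,\cdot\,(r',r)^{-1})$. The remaining task is purely arithmetic in the $\delta$-exponents: Proposition \ref{stw17} in the form $\Tr(E^2_\bullet)([(g,g')]_{\sim_\Delta})=2\,\delta_{G^{\times 2}}(g,g')^{p-1}$ lets the factor of two from the two-dimensionality of $\mc{V}^{(g,g')}$ cancel between $\Tr(E^2_\bullet)^{1/2}$ at $[(g,g')]_{\sim_\Delta}$ and $\Tr(E^2_\bullet)^{-1/2}$ at each shifted argument, converting the prefactors back into $\Tr(E^2_\bullet)^{1/2}f$; the bookkeeping
\[
\delta_{G^{\times 2}}(r,r')^{1-p}\,
\delta_{G^{\times 2}}((g,g')(r,r')^{-1})^{-(p-1)/2}\,
\delta_{G^{\times 2}}(g,g')^{(p-1)/2}=\delta_{G^{\times 2}}(r,r')^{(1-p)/2}
\]
(using that $\delta_{G^{\times 2}}$ is a homomorphism) then collapses everything to the claimed formula. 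Linearity and density of the functions $\Tr(E^2_\bullet)^{1/2}\chi_\Omega$ extend it to all $f\in\LL^2(\IrrG)$.

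The main obstacle is entirely in the first stage: checking that the four-case formula for $\mc{O}$ really defines a unitary intertwiner. Isometry is a change-of-variables computation relying on $\mu_{G^{\times 2}}(\Delta)=0$ and right-invariance, but the intertwining property is delicate because $\Delta_{(G^{\times 2})^{\mathrm{op}}}(\hat{\omega}_1)$ is only a bounded multiplier, not an element of $\mathrm{C}_0(G)\otimes\mathrm{C}_0(G)$. I would cut it down by a compactly supported cutoff $e\otimes e$, approximate $(e\otimes e)\Delta_{(G^{\times 2})^{\mathrm{op}}}(\hat{\omega}_1)$ in norm by elementary tensors, and pass to the limit by dominated convergence, performing the verification separately on each of the four families of generating vectors. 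This is precisely the part where the explicit dependence on the chosen section $H$ must be tracked with care and then shown to wash out in the final invariant expressions.
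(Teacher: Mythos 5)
Your proposal is correct and follows essentially the same route as the paper's proof: the same explicit four-case intertwiner $\mc{O}$ verified via the cutoff $(e\otimes e)$, approximation by elementary tensors and dominated convergence; the same test elements $a=\beta(f)$, $f\in\mathrm{C}_0(G^{\times 2})_+$, in the defining weight identity to extract $\varpi^{[(r,r')]_{\sim_\Delta},\Omega,\mu_p}=\delta_{G^{\times 2}}(r,r')^{1-p}$; and the same $\delta$-exponent bookkeeping through $\Tr(E^2_\bullet)=2\,\delta_{G^{\times 2}}^{p-1}$ to collapse the prefactors to $\delta_{G^{\times 2}}(r,r')^{\frac{1-p}{2}}$. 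One wording slip worth fixing: $\mu_{G^{\times 2}}$ is a \emph{left} Haar measure and is not right-invariant when $G$ is non-unimodular, so the translation $(g,g')\mapsto(g,g')(r,r')^{-1}$ must be handled via the modular transformation rule, which contributes the factor $\delta_{G^{\times 2}}(r,r')$ responsible for the exponent being $1-p$ rather than $-p$ --- your stated scalar is the correct one, so the bookkeeping is right even though ``right-invariance'' is not the mechanism.
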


Let us check that for $p=\tfrac{1}{2}$, the measure $\mu_p$ is invariant under conjugation. In equation \eqref{eq40} we have shown that for $\omega_2\in\LL^1(\ZZ_2),f\in \LL^{\infty}(G^{\times 2})$ we have
\[
\hat{R}\bigl((\I\otimes (\omega_2\otimes\id)\mrW^{\ZZ_2}) \beta(f)\bigr)=
\beta(R_{G^{\times 2}}(f))\, (\I\otimes \hat{\ov{\omega}_2})^*,
\]
where $\hat{\ov{\omega}_2}=(\id\otimes \ov{\omega}_2)\mrW^{\ZZ_2}$. It follows that for every $(g,g')\in G^{\times 2}\setminus \Delta$ we have
\[\begin{split}
&\quad\;
\jmath_{\mc{V}^{(g,g')}} \circ
(\Pi_{(g,g')}\rtimes U) \circ \hat{R} \bigl(
(\I\otimes (\omega_2\otimes\id)\mrW^{\ZZ_2}) \beta(f)\bigr)\\
&=
\jmath_{\mc{V}^{(g,g')}}\bigl(
\begin{bmatrix}
f( H[g,g']^{-1}) & 0 \\
0 & f(H^{c}[g,g']^{-1})
\end{bmatrix}
(\ov{\omega_2(1)}
\begin{bmatrix}
1 & 0\\
0 & 1
\end{bmatrix}+
\ov{\omega_2(-1)}
\begin{bmatrix}
0 & 1\\
1 & 0
\end{bmatrix}
)^*
\bigr)\\
&=
\jmath_{\mc{V}^{(g,g')}}\bigl(
\begin{bmatrix}
\omega_2(1)f( H[g,g']^{-1}) & 
\omega_2(-1) f(H[g,g']^{-1}) \\
\omega_2(-1) f(H^{c}[g,g']^{-1}) & \omega_2(1)f(H^{c}[g,g']^{-1})
\end{bmatrix}
\bigr).
\end{split}\]
The element $H[g,g']^{-1}$ is equal to $H[g^{-1},{g'}^{-1}]$ or $H^{c}[g^{-1},{g'}^{-1}]$. As the representations $\Pi_{H[g^{-1},{g'}^{-1}]}\rtimes U$ and $\Pi_{H^{c}[g^{-1},{g'}^{-1}]}\rtimes U$ are unitarily equivalent, we have proved
\[
\jmath_{\mc{V}^{(g,g')}} \circ (\Pi_{(g,g')}\rtimes U)
\circ \hat{R}\simeq \Pi_{H[g^{-1},{g'}^{-1}]}\rtimes U.
\]
Now it follows that the measure $\mu_{\frac{1}{2}}$ is invariant under conjugation: it is a consequence of the fact that the measure $\delta_{G^{\times 2}}^{\frac{1}{2}} \mu_{G^{\times 2}}$ on $G^{\times 2}$ is invariant under the inverse.\\
Since the scaling group of $\GG$ is trivial, admissibility of irreducible representations does not cause any problems \cite[Remark 3.3]{DasDawsSalmi} and Theorem \ref{tw4} together with Proposition \ref{stw19} give us the following result.

\begin{proposition}\label{stw21}$ $
\begin{enumerate}
\item If $G$ is amenable then for every $p\in\RR$ and any subset $\Omega\subseteq\IrrG$ of finite $\mu_p$ measure we have
\[
\mu_p(\Omega)\in \sigma\bigl(
\int_{\Omega} \mc{L}_{[(g,g')]_{\sim_\Delta}} \md\mu_p([(g,g')]_{\sim_\Delta})\bigr).
\]
\item
If for every symmetric subset $\Omega\subseteq\IrrG$ of finite $\mu_{\frac{1}{2}}$ measure we have
\[
\mu_{\frac{1}{2}}(\Omega)\in \sigma\bigl(
\int_{\Omega} \mc{L}_{[(g,g')]_{\sim_\Delta}} \md\mu_{\frac{1}{2}}([(g,g')]_{\sim_\Delta})\bigr),
\]
then $G$ is amenable.
\end{enumerate}
\end{proposition}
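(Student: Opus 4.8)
The plan is to obtain both implications as direct specializations of Theorem \ref{tw4} to the quantum group $\GG = G^{\times 2}\bowtie\ZZ_2$, which satisfies its standing hypotheses: it is second countable, type I, and all its irreducible representations are finite dimensional — in fact two dimensional on the support of every $\mu_p$, since $\mu_p$ is carried by $(G^{\times 2}\setminus\Delta)/_{\sim_\Delta}$ and the associated representations are the $\Pi_{(g,g')}\rtimes U$ acting on $\mc{V}^{(g,g')}=\CC^2$. The one identification I need at the outset is that the integral operator appearing in the statement is precisely the operator $\mc{L}_\nu$ of Theorem \ref{tw4}: taking $\nu=\dim\chi_\Omega$ and using $\dim\equiv 2$ on $\supp\mu_p$, the function $\nu=2\chi_\Omega$ lies in $\LL^1(\IrrG,\mu_p)$ whenever $\mu_p(\Omega)<+\infty$, and
\[
\mc{L}_\nu = \int_{\IrrG}\tfrac{\nu(\kappa)}{\dim(\kappa)}\mc{L}_\kappa\md\mu_p(\kappa)
= \int_{\Omega}\mc{L}_{[(g,g')]_{\sim_\Delta}}\md\mu_p([(g,g')]_{\sim_\Delta}).
\]
Moreover $\int_\Omega\dim\md\mu_p = 2\mu_p(\Omega)<+\infty$, so the finiteness condition $\int_\Omega\dim\md\mu_p<+\infty$ of Theorem \ref{tw4} holds.

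For part (1) I would first invoke Proposition \ref{stw19}, which tells us that the classical group $G$ is amenable exactly when $\GG$ is coamenable; thus the hypothesis places us in condition~1) of Theorem \ref{tw4}. Feeding the Plancherel measure $\mu_p$ (shown to satisfy Theorems \ref{PlancherelL} and \ref{PlancherelR}) and $\nu=\dim\chi_\Omega$ into the implication $1)\Rightarrow 2)$ of Theorem \ref{tw4} then yields $\int_\Omega\dim\md\mu_p\in\sigma(\mc{L}_\nu)$. Rewriting $\mc{L}_\nu$ via the displayed identity gives the spectral membership of the statement.

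For part (2) I would run the converse implication $3)\Rightarrow 1)$ of Theorem \ref{tw4}, which is licensed once two structural facts are checked. First, all irreducible representations of $\GG$ are admissible: the scaling group of $\GG$ is trivial (Proposition \ref{stw16} records $P=\I$), and by \cite[Remark 3.3]{DasDawsSalmi} admissibility is then automatic. Second, $3)\Rightarrow 1)$ requires a conjugation-invariant Plancherel measure, and $\mu_{1/2}$ is exactly such a measure — this was verified just before the statement, being inherited from the invariance of $\delta_{G^{\times 2}}^{1/2}\mu_{G^{\times 2}}$ under inversion on $G^{\times 2}$. The hypothesis of (2) is then precisely condition~3) of Theorem \ref{tw4} for the symmetric subsets $\Omega$ and the measure $\mu_{1/2}$ (after the same identification $\mc{L}_{\dim\chi_\Omega}=\int_\Omega\mc{L}_\kappa\md\mu_{1/2}$ and $\int_\Omega\dim\md\mu_{1/2}=2\mu_{1/2}(\Omega)$). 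Hence $3)\Rightarrow 1)$ delivers coamenability of $\GG$, and Proposition \ref{stw19} converts this back into amenability of $G$.

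There is no genuinely hard analytic step here: the entire argument is a transcription of Theorem \ref{tw4} into the notation of this example. The work, and the only place I expect to have to be careful, is the bookkeeping around the dimension factor — confirming that $\mc{L}_{\dim\chi_\Omega}$ is literally the displayed integral operator and that the eigenvalue produced by Theorem \ref{tw4} is $\int_\Omega\dim\md\mu_p$, which equals $2\mu_p(\Omega)$ because every representation in $\supp\mu_p$ has dimension $2$ (matching the normalization already seen in the $\RR\rtimes\ZZ_2$ computation) — together with the verification that the two hypotheses of the converse implication, admissibility and conjugation-invariance of $\mu_{1/2}$, are indeed available for this family.
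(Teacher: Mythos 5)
Your route is exactly the paper's own: admissibility of all irreducible representations from triviality of the scaling group (Proposition \ref{stw16} gives $P=\I$, then \cite[Remark 3.3]{DasDawsSalmi}), conjugation-invariance of $\mu_{\frac{1}{2}}$ as verified just before the statement, and then Theorem \ref{tw4} combined with Proposition \ref{stw19}. All of those structural checks are correctly assembled in your write-up.

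There is, however, one step that does not close, and you record the obstruction yourself without resolving it. Taking $\nu=\dim\chi_\Omega$ you correctly get $\mc{L}_\nu=\int_\Omega\mc{L}_{[(g,g')]_{\sim_\Delta}}\md\mu_p([(g,g')]_{\sim_\Delta})$, and Theorem \ref{tw4} then places $\int_\Omega\dim\md\mu_p=2\mu_p(\Omega)$ in the spectrum of this operator --- not $\mu_p(\Omega)$, which is what the proposition asserts. Since $\sigma(T)$ is not invariant under rescaling $T$, the two membership claims about the \emph{same} operator are genuinely different, so your phrase ``gives the spectral membership of the statement'' conflates inequivalent assertions. The same factor blocks part (2): the hypothesis $\mu_{\frac{1}{2}}(\Omega)\in\sigma\bigl(\int_\Omega\mc{L}_\kappa\md\mu_{\frac{1}{2}}\bigr)$ is not condition $3)$ of Theorem \ref{tw4}, which would require $2\mu_{\frac{1}{2}}(\Omega)$ in that spectrum, so the implication $3)\Rightarrow 1)$ cannot be invoked verbatim. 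To be fair, this discrepancy originates in the paper itself: its one-line proof produces the value $\int_\Omega\dim\md\mu_p$, matching the normalization it records in the $\RR\rtimes\ZZ_2$ example (where $\|\nu\|_1=2\mu(\Omega)\in\sigma(\mc{L}_\nu)$), so the printed proposition is presumably missing the dimension factor --- either the spectral value should read $2\mu_p(\Omega)$, or the integrand should carry the weight $\tfrac{1}{\dim}=\tfrac{1}{2}$. Your proof is faithful to the intended argument, but it should state this correction explicitly rather than assert that the literal statement follows.
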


\subsection{Products}
Let $\GG_1,\GG_2$ be two arbitrary locally compact quantum groups. In this section we will describe their product $\GG=\GG_1\times \GG_2$. Let us denote objects associated with $\GG_1,\GG_2,\GG$ with appropriate subscripts. Product $\GG_1,\GG_2$ can be considered as a special case of the bicrossed product construction. Indeed, consider the matched pair of locally compact quantum groups $(\widehat{\GG}_1,\GG_2)$ with trivial cocycles and map $\tau=\id\in \B(\LL^{\infty}(\whG_1)\bar{\otimes}\LL^{\infty}(\GG_2))$
(we follow the notation of \cite{VaesVainerman}). Then the corresponding maps $\alpha,\beta$ are given by
\[\begin{split}
&\alpha\colon \LL^{\infty}(\GG_2)\ni y \mapsto \I\otimes y\in
\LL^{\infty}(\whG_1)\oxx \LL^{\infty}(\GG_2),\\
&\beta\colon \LL^{\infty}(\whG_1)\ni x \mapsto x\otimes \I\in
\LL^{\infty}(\whG_1)\oxx \LL^{\infty}(\GG_2).
\end{split}\]
Basic objects associated with $\GG$ are the one we could expect: the von Neumann algebra of $\GG$ is given by
\[
\LL^{\infty}(\GG)=\{\alpha(y),\,(\omega\otimes\id)\mrW^{\whG_1}\otimes \I\,|\,
y\in \LL^{\infty}(\GG_2),\omega\in \LL^{1}(\GG_1)\}''=
\LL^{\infty}(\GG_1)\oxx \LL^{\infty}(\GG_2),
\]
and the comultiplication is given by
\[
\Delta(x\otimes y)=(\id\otimes\sigma\otimes\id)(\Delta_1(x)\otimes \Delta_2(y))\quad(x\in \LL^{\infty}(\GG_1),y\in \LL^{\infty}(\GG_2)),
\]
which follows from \cite[Proposition 2.4, Proposition 2.5]{VaesVainerman}. The left Haar integral of $\GG$ is given by the tensor product of n.s.f. weights $\vp=\vp_1\otimes\vp_2$ and it follows that $\nabla_{\vp}=\nabla_{\vp_1}\otimes\nabla_{\vp_2}$ (\cite[Proposition 8.1]{Stratila}). The same is true for right Haar integrals, hence e.g. property $\psi\circ \sigma^{\vp}_t=\nu^{-t}\psi\,(t\in\RR)$ implies $\nu=\nu_1\nu_2$. Since the modular operator associated with $\vp$ implements the scaling group we have $\tau_t=\tau^1_t\otimes\tau_t^2\,(t\in\RR)$ (see \cite{KustermansVaes}). The dual of $\GG$ is $\whG_1\times \whG_2$, this is a content of \cite[Proposition 2.9]{VaesVainerman}. We have $P=P_1\otimes P_2$. Indeed, take $x\in\mf{N}_{\vp_1},y\in\mf{N}_{\vp_2},t\in\RR$. We have
\[\begin{split}
&\quad\;
P^{it} \Lvp(x\otimes y)=\nu^{\frac{t}{2}} \Lvp(\tau^1_t(x)\otimes\tau_t^2(y))\\
&=
(\nu_1^{\frac{t}{2}} \Lambda_{\vp_1}(\tau^1_t(x)))\otimes
(\nu_2^{\frac{t}{2}} \Lambda_{\vp_2}(\tau^2_t(y)))=
(P_1^{it}\otimes P_2^{it})\Lvp(x\otimes y),
\end{split}\]
which implies $P=P_1\otimes P_2$. It follows directly from the definition that the Kac-Takesaki operator of $\GG$ is given by $\mrW=(\mrW_1)_{13} (\mrW_2)_{24}$, hence $\mathrm{C}_0(\GG)=\mathrm{C}_0(\GG_1)\otimes \mathrm{C}_0(\GG_2)$ (note that here $\otimes$ denotes the minimal tensor product of \cst-algebras).

\begin{proposition}
The \cst-algebra $\mathrm{C}_0^{u}(\whG)$ is isomorphic to $\mathrm{C}^u_0(\whG_1)\otimes_{\operatorname{max}} \mathrm{C}_0^u(\whG_2)$, where $\otimes_{\operatorname{max}}$ denotes the maximal tensor product of \cst-algebras. Under this identification we have
\[
\WW=(\WW_1)_{13} (\WW_2)_{24}.
\]
\end{proposition}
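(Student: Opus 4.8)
The plan is to identify the two algebras through the universal property of $\CGDu$ and that of the maximal tensor product, by producing mutually inverse morphisms. Recall the defining property of the universal dual (which the conventions section records on Hilbert spaces and which holds verbatim for morphisms into any \cst-algebra): for a \cst-algebra $B$, the nondegenerate $\star$-homomorphisms $\phi\colon\CGDu\to\M(B)$ are in bijection with the unitary representations of $\GG$ in $\M(B)$, i.e. the unitaries $V\in\M(\CG\otimes B)$ satisfying $(\Delta_\GG\otimes\id)V=V_{13}V_{23}$, the bijection being $\phi\mapsto(\id\otimes\phi)\WW^\GG$. Dually, $B=\mathrm{C}_0^u(\whG_1)\otimes_{\operatorname{max}}\mathrm{C}_0^u(\whG_2)$ is universal for pairs of morphisms out of the two factors with commuting ranges; write $\iota_1,\iota_2$ for the canonical embeddings $\mathrm{C}_0^u(\whG_i)\to\M(B)$.

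First I would build $\Phi\colon\CGDu\to\M(B)$. Using $\CG=\mathrm{C}_0(\GG_1)\otimes\mathrm{C}_0(\GG_2)$, set
\[
\mc{V}=\bigl((\id\otimes\iota_1)\WW_1\bigr)_{13}\,\bigl((\id\otimes\iota_2)\WW_2\bigr)_{24}\in\M(\mathrm{C}_0(\GG_1)\otimes\mathrm{C}_0(\GG_2)\otimes B),
\]
where leg $1$ is $\mathrm{C}_0(\GG_1)$, leg $2$ is $\mathrm{C}_0(\GG_2)$, and legs $3,4$ are merged into $B$ via $\iota_1,\iota_2$; this is a unitary. The key verification is that $\mc{V}$ is a representation of $\GG$, namely $(\Delta_\GG\otimes\id)\mc{V}=\mc{V}_{13}\mc{V}_{23}$ in the appropriate leg numbering. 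This is a direct computation that feeds in the comultiplication formula $\Delta_\GG=(\id\otimes\sigma\otimes\id)(\Delta_1\otimes\Delta_2)$, the corepresentation identities $(\Delta_i\otimes\id)\WW_i=(\WW_i)_{13}(\WW_i)_{23}$, and the commutation of the ranges of $\iota_1$ and $\iota_2$ inside $\M(B)$ (which is exactly what lets the $\WW_1$- and $\WW_2$-factors be reshuffled). The universal property then yields a morphism $\Phi$ with $(\id\otimes\Phi)\WW^\GG=\mc{V}$; granting that $\Phi$ is an isomorphism, this identity is precisely the asserted formula $\WW=(\WW_1)_{13}(\WW_2)_{24}$.

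Next I would show $\Phi$ is bijective. For surjectivity (and that $\Phi$ lands in $B$, not merely $\M(B)$): slicing $\mc{V}$ by product functionals $\omega_1\otimes\omega_2\in\LL^1(\GG_1)\bar\otimes\LL^1(\GG_2)$ gives $\iota_1(\lambda^u_1(\omega_1))\,\iota_2(\lambda^u_2(\omega_2))$, and since such functionals are norm-dense in $\LL^1(\GG)$ while the $\lambda^u_i(\omega_i)$ are dense in $\mathrm{C}_0^u(\whG_i)$, the closed linear span of these products is all of $B$ by nondegeneracy of the maximal tensor product; as these products generate $\overline{\Phi(\CGDu)}$, the map is onto $B$. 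For the inverse $\Psi\colon B\to\M(\CGDu)$ I would produce commuting morphisms $\rho_i\colon\mathrm{C}_0^u(\whG_i)\to\M(\CGDu)$: the coordinate projections $\GG\to\GG_i$ are morphisms of locally compact quantum groups, hence (by the functoriality of universal duals, Meyer–Roy–Woronowicz) induce $\rho_1,\rho_2$, whose ranges commute because the two projections are supported on the two disjoint tensor legs; assembling them via the universal property of $\otimes_{\operatorname{max}}$ gives $\Psi$. One then checks $\Phi$ and $\Psi$ are mutually inverse by evaluating on the generating slices of $\WW^\GG$, $\WW_1$, $\WW_2$, which reduces to the two factorwise universal properties.

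The main obstacle is the inverse direction, equivalently injectivity of $\Phi$: the reduced picture $\mrW^\GG=(\mrW_1)_{13}(\mrW_2)_{24}$ together with $\mathrm{C}_0(\whG)=\mathrm{C}_0(\whG_1)\otimes\mathrm{C}_0(\whG_2)$ only recovers $\Lambda_{\whG}\circ\Phi$ up to the quotient $\otimes_{\operatorname{max}}\to\otimes$, and $\Lambda_{\whG}$ is not injective unless $\whG$ is coamenable; so injectivity cannot be read off the reduced level and genuinely requires the commuting morphisms $\rho_i$ (equivalently the correspondence ``representations of a product $=$ commuting pairs of representations of the factors''). A secondary, purely bookkeeping difficulty is keeping the four-fold leg numbering consistent throughout the verification of the corepresentation identity for $\mc{V}$.
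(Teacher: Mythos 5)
Your plan has the same skeleton as the paper's proof: one morphism $\Phi=\rho\in\Mor(\CGDu,\mathrm{C}_0^u(\whG_1)\otimes_{\operatorname{max}}\mathrm{C}_0^u(\whG_2))$ obtained from the universal property of $\CGDu$ applied to the unitary $(\WW_1)_{13}(\WW_2)_{24}$, and an inverse assembled from two morphisms $\mathrm{C}_0^u(\whG_i)\to\M(\CGDu)$ with commuting ranges via the universal property of $\otimes_{\operatorname{max}}$. The $\Phi$ direction of your argument is sound, and there the ``disjoint legs'' justification for commutation is legitimate, because $B=\mathrm{C}_0^u(\whG_1)\otimes_{\operatorname{max}}\mathrm{C}_0^u(\whG_2)$ really is a tensor product, so the canonical embeddings $\iota_1,\iota_2$ have commuting ranges by construction; this is exactly the paper's verification that $(\WW_1)_{13}(\WW_2)_{24}$ satisfies the corepresentation identity, followed by an appeal to Kustermans' universal property.

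The genuine gap is in the inverse direction, and it is precisely the point you wave at with ``whose ranges commute because the two projections are supported on the two disjoint tensor legs.'' Inside $\M(\CGDu)$ there is no a priori two-leg structure --- exhibiting one is exactly what the proposition asserts --- so this justification is circular. Commutation of the two ranges in $\M(\CGDu)$ is equivalent to the statement that for \emph{every} representation $U$ of $\GG$ the restrictions $U\rest_{\GG_1}$ and $U\rest_{\GG_2}$ commute, and proving this is where the paper spends essentially all of its effort: it first shows that $\gamma_1\colon x\mapsto x\otimes\I$ (resp.\ $\gamma_2\colon y\mapsto \I\otimes y$) realizes $\GG_i$ as a Vaes-closed quantum subgroup of $\GG$, producing the dual morphisms $\wh{\pi}_i\in\Mor(\mathrm{C}_0^u(\whG_i),\CGDu)$; then, using the commutation of $\gamma_1,\gamma_2$ at the von Neumann level (where the two-leg structure \emph{is} known, since $\Linfd=\LL^{\infty}(\whG_1)\oxx\LL^{\infty}(\whG_2)$), it derives the two factorizations $((\Lambda_{\GG_1}\circ\pi_1)\otimes(\Lambda_{\GG_2}\circ\pi_2))\circ\Delta_{\GG}^{u}=\Lambda_{\GG}=((\Lambda_{\GG_1}\circ\pi_1)\otimes(\Lambda_{\GG_2}\circ\pi_2))\circ\Delta_{\GG}^{u,\operatorname{op}}$, from which it deduces $U=(U\rest_{\GG_1})_{13}(U\rest_{\GG_2})_{23}=(U\rest_{\GG_2})_{23}(U\rest_{\GG_1})_{13}$ for every $U$, and only then, specializing to $\sigma_U=\id$, the commutation of the images of $\wh{\pi}_1,\wh{\pi}_2$. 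Your closing remark correctly identifies this reduced-versus-universal subtlety for injectivity, but the identical subtlety already breaks your commutation claim, on which your $\Psi$ rests. A secondary error: the morphisms $\mathrm{C}_0^u(\whG_i)\to\M(\CGDu)$ are induced by the \emph{inclusions} $\GG_i\hookrightarrow\GG$ (this is what the closed-quantum-subgroup maps $\gamma_i$ encode), not by the coordinate projections $\GG\to\GG_i$; dualizing the projections yields morphisms pointing the wrong way. The classical sanity check: it is the inclusion $G_i\hookrightarrow G_1\times G_2$, not the projection, that induces $C^*(G_i)\to\M(C^*(G_1\times G_2))$.
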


\begin{proof}
One easily checks that the normal injective $\star$-homomorphism
\[
\gamma_1\colon \LL^{\infty}(\wh{\GG_1})\ni x \mapsto
x\otimes\I \in \LL^{\infty}(\wh{\GG_1\times \GG_2})=
\LL^{\infty}(\wh{\GG_1})\oxx \LL^{\infty}(\wh{\GG_2})
\]
satisfies $\Delta_{\wh{\GG_1\times \GG_2}}\circ \gamma_1=(\gamma_1\otimes\gamma_1)\circ \Delta_{\wh{\GG_1}}$, hence $\GG_1$ is a closed subgroup of $\GG_1\times \GG_2$ in the sense of Vaes (\cite[Theorem 3.3]{closedqsub}). Denote by $\gamma'_1$ the restriction of $\gamma_1$ to $\mathrm{C}_0(\wh{\GG_1})$. It is shown in \cite{closedqsub} that $\gamma'_1$ is a morphism between $\mathrm{C}_0(\wh{\GG_1})$ and $\mathrm{C}_0(\wh{\GG_1\times \GG_2})$. Denote by $\pi_1,\wh{\pi}_1$ the corresponding strong quantum homomorphism $\GG_1\rightarrow \GG_1\times \GG_2$ and its dual. Similar reasoning works for $\GG_2$ -- let us denote the respective objects by $\gamma'_2,\pi_2,\wh{\pi}_2$. Observe that we have $
\gamma_1(x) \gamma_2(y)=x\otimes y$ for all $x\in \LL^{\infty}(\wh{\GG_1}),y\in
\LL^{\infty}(\wh{\GG_2}),$ hence
\[\begin{split}
\mrW&=
((\id\otimes\gamma'_1)\mrW_1)_{134}
((\id\otimes\gamma'_2)\mrW_2)_{234}\\
&=
(((\Lambda_{\GG_1}\circ \pi_1)\otimes\id)\WWl)_{134}\;
(((\Lambda_{\GG_2}\circ \pi_2)\otimes\id)\WWl)_{234}\\
&=
((\Lambda_{\GG_1}\circ \pi_1)\otimes(\Lambda_{\GG_2}\circ \pi_2)\otimes
\id)(\WWl_{134} \WWl_{234})\\
&=
((\Lambda_{\GG_1}\circ \pi_1)\otimes(\Lambda_{\GG_2}\circ \pi_2)\otimes
\id)(\Delta_{\GG}^{u}\otimes\id)\WWl.
\end{split}\]
\\
Take any $\omega\in \LL^1(\whG)$ and slice right leg of the above equation: we get
\[
\Lambda_{\GG}((\id\otimes\omega)\WWl)=
(\id\otimes\omega)\mrW=
((\Lambda_{\GG_1}\circ \pi_1)\otimes(\Lambda_{\GG_2}\circ \pi_2))\Delta_{\GG}^{u}((\id\otimes\omega)\WWl),
\]
consequently it follows that
\begin{equation}\label{eq30}
((\Lambda_{\GG_1}\circ \pi_1)\otimes(\Lambda_{\GG_2}\circ \pi_2))\circ\Delta_{\GG}^{u}=\Lambda_{\GG}.
\end{equation}
Since the images of $\gamma_1,\gamma_2$ in $\LL^{\infty}(\wh{\GG_1\times\GG_2})$ commute we also have
\[\begin{split}
\mrW&=
((\id\otimes \gamma'_1)\mrW_1)_{134}\; ((\id\otimes\gamma'_2)\mrW_2)_{234}=
((\id\otimes \gamma'_2)\mrW_2)_{234}\; ((\id\otimes\gamma'_1)\mrW_1)_{134}\\
&=
((\Lambda_{\GG_1}\circ \pi_1)\otimes(\Lambda_{\GG_2}\circ \pi_2)\otimes
\id)(\WWl_{234} \WWl_{134})\\
&=
((\Lambda_{\GG_1}\circ \pi_1)\otimes(\Lambda_{\GG_2}\circ \pi_2)\otimes
\id)(\Delta_{\GG}^{u,\operatorname{op}}\otimes\id)\WWl.
\end{split}\]
and it follows that 
\begin{equation}\label{eq31}
((\Lambda_{\GG_1}\circ \pi_1)\otimes(\Lambda_{\GG_2}\circ \pi_2))\circ\Delta_{\GG}^{u,\operatorname{op}}=\Lambda_{\GG}.
\end{equation}
Now, having these equations we can show that there is a one to one correspondence between representations of $\GG$ and pairs of commuting representations of $\GG_1,\GG_2$. Let 
\[
U=(\id\otimes\sigma_U)\WW\in \M(\CG\otimes \mc{K}(\msf{H}_U))
\]
be a representation of $\GG$ corresponding to a nondegenerate representation
\[
\sigma_U\in \Mor(\CGDu,\mc{K}(\msf{H}_U)).
\]
Define an element
\[
U\rest_{\GG_1}=((\Lambda_{\GG_1}\circ \pi_1)\otimes \sigma_U)\WWd\in\M(\mathrm{C}_0(\GG_1)\otimes \mc{K}(\msf{H}_U)).
\]
It is clear that $U\rest_{\GG_1}$ is unitary, it is also a representation of $\GG_1$:
\[
(\Delta_{\GG_1}\otimes\id)(U\rest_{\GG_1})=
((\Lambda_{\GG_1}\circ \pi_1)\otimes(\Lambda_{\GG_1}\circ \pi_1)\otimes\sigma_U)
(\WWd_{13} \WWd_{23})=
(U\rest_{\GG_1})_{13} (U\rest_{\GG_1})_{23}.
\]
Define in a similar manner the restriction of $U$ to $\GG_2,\,U\rest_{\GG_2}$. We have
\[\begin{split}
(U\rest_{\GG_1})_{13} (U\rest_{\GG_2})_{23}
&=((\Lambda_{\GG_1}\circ \pi_1)\otimes 
(\Lambda_{\GG_2}\circ \pi_2)\otimes\sigma_U) ( \WWd_{13} \WW_{23})\\
&=
((\Lambda_{\GG_1}\circ \pi_1)\otimes 
(\Lambda_{\GG_2}\circ \pi_2)\otimes\sigma_U) 
(\Delta_{\GG}^{u}\otimes\id)\WWd\\
&=
(\id\otimes\sigma_U)\WW=U.
\end{split}\]
In the penultimate equality we have used equation \eqref{eq30}. Similarly we check
\[\begin{split}
(U\rest_{\GG_2})_{23} (U\rest_{\GG_1})_{13}
&=((\Lambda_{\GG_1}\circ \pi_1)\otimes 
(\Lambda_{\GG_2}\circ \pi_2)\otimes\sigma_U) ( \WWd_{23} \WW_{13})\\
&=
((\Lambda_{\GG_1}\circ \pi_1)\otimes 
(\Lambda_{\GG_2}\circ \pi_2)\otimes\sigma_U) 
(\Delta_{\GG}^{u,\operatorname{op}}\otimes\id)\WWd\\
&=
(\id\otimes\sigma_U)\WW=U
\end{split}\]
using this time equation \eqref{eq31}. On the other hand, if $U,V$ are representations of $\GG_1,\GG_2$ on the same space such that $U_{13} V_{23}=V_{23} U_{13}$ then this element defines a representation of $\GG=\GG_1\times \GG_2$. In particular, if we take $\sigma_U=\id$ then we get
\[
\WW=
((\id\otimes \wh{\pi}_1)\WW_1)_{13}
((\id\otimes \wh{\pi}_2)\WW_2)_{23}
=
((\id\otimes \wh{\pi}_2)\WW_2)_{23}
((\id\otimes \wh{\pi}_1)\WW_1)_{13}
\]
hence images of $\wh{\pi}_1,\wh{\pi}_2$ commutes. Let us now check that $\CGDu=\mathrm{C}_0^{u}(\wh{\GG}_1)\otimes_{\operatorname{max}} \mathrm{C}_0^u(\wh{\GG}_2)$ and $\WW=(\WW_1)_{13} (\WW_2)_{24}$ (note that here leg numbering notation between 'universal' legs $3,4$ corresponds to the maximal tensor product). Since
\[\begin{split}
&\quad\;
(\Delta\otimes\id\otimes\id)((\WW_1)_{13} (\WW_2)_{24})=
(\WW_1)_{15}(\WW_1)_{35}(\WW_2)_{26}(\WW_2)_{46}\\
&=
((\WW_1)_{13} (\WW_2)_{24})_{1256}
((\WW_1)_{13} (\WW_2)_{24})_{3456},
\end{split}\]
by the universal property of $\CGDu$, there exists a nondegenerate $\star$-homomorphism $\rho\in \Mor(\CGDu,\mathrm{C}_0^u(\wh{\GG}_1)\otimes_{\operatorname{max}} \mathrm{C}_0^u(\wh{\GG}_2))$ such that
\[
(\id\otimes\rho)\WW=
(\WW_1)_{13} (\WW_2)_{24}
\]
(see \cite[Proposition 2.4]{Kustermans}). In order to show that $\rho$ is an isomorphism, let us perform the following calculations:
\[\begin{split}
&\quad\;
(\id\otimes\wh{\pi}_1)(\WW_1)_{13}\,
(\id\otimes\wh{\pi}_2)(\WW_2)_{23}=
((\Lambda_{\GG_1}\circ \pi_1)\otimes\id)(\WWd)_{13} \, ((\Lambda_{\GG_2}\circ \pi_2)\otimes\id)(\WWd)_{23}\\
&=
((\Lambda_{\GG_1}\circ\pi_1)\otimes
(\Lambda_{\GG_2}\circ\pi_2)\otimes\id) (\Delta_{\GG}^{u}\otimes\id)\WWd=
\WW,
\end{split}\]
in the last equality we have used equation \eqref{eq30}. Consider the nondegenerate $\star$-homomorphism
\[
\wh{\pi}_1 \cdot
\wh{\pi}_2\colon \mathrm{C}_0^{u}(\wh{\GG}_1)\otimes_{\operatorname{max}}
\mathrm{C}_0^u(\wh{\GG}_2)\rightarrow
\M(\CGDu)\colon x\otimes y \mapsto \wh{\pi}_1(x) \wh{\pi}_2(y)=
\wh{\pi}_2(y)\wh{\pi}_1(x)
\]
given by the universal property of $\otimes_{\operatorname{max}}$ (see e.g. \cite[Proposition 6.3.7]{Murphy}). The above reasoning implies
\[
(\id\otimes\id\otimes (\wh{\pi}_1\cdot \wh{\pi}_2)) ((\WW_1)_{13} (\WW_2)_{24})=\WW,
\]
therefore $\rho$ is an inverse to $\wh{\pi}_1\cdot \wh{\pi}_2$ and $\CGDu$, $\mathrm{C}_0^{u}(\wh{\GG}_1)\otimes_{\operatorname{max}} \mathrm{C}_0^{u}(\wh{\GG}_2)$ are isomorphic. We will henceforth identify these two \cst-algebras. Under this identification we have $\WW=(\WW_1)_{13} (\WW_2)_{24}$.
\end{proof}

Note that the above result implies that if $\GG_1$ and $\GG_2$ are second countable or type I then so is $\GG_1\times \GG_2$ (\cite[Theorem 7]{Guichardet}). We note also that \cst-algebras of type I are nuclear, hence we do not need to distinguish between the minimal and maximal tensor product (\cite[Proposition XV.1.6]{TakesakiIII}). Using this construction we can produce new type I locally compact quantum groups with properties that combine those of $\GG_1$ and $\GG_2$.

\section{Appendix}
\subsection{Lemmas concerning locally compact quantum groups}\label{appendixqg}
In this section we have gathered several lemmas concerning locally compact quantum groups -- density of various subspaces and relations between the Haar integrals on $\GG$ and $\whG$. These lemmas are probably well known to the experts, however we will include their proofs for the convenience of the reader. In this section $\GG$ is an arbitrary locally compact quantum group.\\
Let us introduce a subset of the space of normal functionals on $\Linf$:
\[
\mc{I}=\{\omega\in \LL^1(\GG)\,|\,\exists_{M\in \RR_{\ge 0}}\,
\forall_{
x\in \mf{N}_\vp}\,
|\omega(x^*)|\le M \|\Lvp(x)\|\}
\]
This subset appears in the definition of the left Haar integral on $\whG$ (see \cite{KustermansVaes}): for $\omega\in \mc{I}$ we have $\lambda(\omega)\in \mf{N}_{\hvp}$ and $\ismaa{\Lambda_{\vp}(x)}{\Lhvp(\lambda(\omega))}=\omega(x^*)$. Sometimes one uses notation $\xi(\omega)=\Lhvp(\lambda(\omega))$.\\
Similarly we can introduce a right version of this subset:
\[
\mc{I}_R=\{\omega\in\LL^1(\GG)\,|\,
\exists_{M\in \RR_{\ge 0}}\,
\forall_{
x\in \mf{N}_\psi}\,
|\omega(x^*)|\le M\|\Lambda_{\psi}(x)\|\}.
\]
For $\omega\in\mc{I}_R$ we define a vector $\xi_R(\omega)\in\LdG$ via the equality $\is{\Lvps(x)}{\xi_R(\omega)}=\omega(x^*)\,(x\in\mf{N}_{\psi})$. The next lemma introduces a relation between the set $\mc{I}_R$ and the right Haar integral on $\whG$.

\begin{lemma}\label{lemat25}
$ $
\begin{enumerate}
\item For $\omega\in\Lj$ we have $\omega\in\mc{I}$ if and only if $ \ov{\omega\circ R}\in\mc{I}_R$, and then $\xi_R(\ov{\omega\circ R})=\hat{J}\xi(\omega)$.
\item If $\omega\in \LL^1_{\sharp}(\GG)$ is a functional such that $\ov{\omega^{\sharp}}\in \mc{I}_R$ then $\lambda(\omega)\in\mf{N}_{\wh{\psi}}\textnormal{ and }\xi_R(\ov{\omega^{\sharp}})=\hat{J} J \Lambda_{\widehat{\psi}}(\lambda(\omega))$.
\end{enumerate}
\end{lemma}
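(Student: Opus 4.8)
The plan is to prove the two assertions of Lemma \ref{lemat25} by reducing everything to the defining duality relations for $\mc{I}$, $\mc{I}_R$ together with the standard intertwining properties of the modular conjugations $\hat{J},J$ collected in the conventions section. For part (1), I would start from the definition: $\omega\in\mc{I}$ means $|\omega(x^*)|\le M\|\Lvp(x)\|$ for all $x\in\mf{N}_\vp$, and $\xi(\omega)=\Lhvp(\lambda(\omega))$ is characterised by $\ismaa{\Lvp(x)}{\xi(\omega)}=\omega(x^*)$. The natural move is to apply the unitary antipode: for $x\in\mf{N}_\vp$ one has $R(x^*)\in\mf{N}_\psi$ (since $\psi=\vp\circ R$), and the relation $\hat{J}\Lvp(x)=\Lambda_\psi(R(x^*))$ lets me rewrite norms $\|\Lvp(x)\|=\|\Lambda_\psi(R(x^*))\|$. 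First I would substitute $y=R(x^*)$, so that $x^*=R(y)$ and $x=R(y)^*=R(y^*)$, and compute $\ov{\omega\circ R}(y^*)=\ov{\omega(R(y^*))}=\ov{\omega(x)}$. Comparing $\omega(x^*)$ with $\ov{\omega\circ R}(y^*)$ forces the conjugate to appear, which is exactly why the statement involves $\ov{\omega\circ R}$ rather than $\omega\circ R$. Tracking the bound through this substitution gives the equivalence $\omega\in\mc{I}\Leftrightarrow\ov{\omega\circ R}\in\mc{I}_R$, and matching the defining vectors against $\hat{J}$ (using antilinearity of $\hat{J}$ to absorb the complex conjugation) should yield $\xi_R(\ov{\omega\circ R})=\hat{J}\xi(\omega)$.

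For part (2), I would combine part (1) with the $\sharp$-structure. The hypothesis is $\omega\in\LL^1_\sharp(\GG)$ with $\ov{\omega^\sharp}\in\mc{I}_R$; the defining property of $\sharp$ is $\lambda(\omega)^*=\lambda(\omega^\sharp)$. The idea is to reduce the claim $\lambda(\omega)\in\mf{N}_{\hpsi}$ to the statement $\lambda(\omega^\sharp)\in\mf{N}_{\hvp}$, i.e. to the already available theory for the \emph{left} Haar weight $\hvp$ applied to a functional in $\mc{I}$. Concretely, I would use part (1) in the reverse direction: from $\ov{\omega^\sharp}\in\mc{I}_R$ I want to produce a functional in $\mc{I}$. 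Writing $\ov{\omega^\sharp}=\ov{\nu\circ R}$ for an appropriate $\nu$ (so $\nu=\ov{\omega^\sharp}\circ R$, using $R\circ R=\id$ and compatibility of $R$ with $\ov{\cdot}$), part (1) gives $\nu\in\mc{I}$ and $\xi_R(\ov{\omega^\sharp})=\hat{J}\xi(\nu)=\hat{J}\Lhvp(\lambda(\nu))$.

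The remaining step, which I expect to be the main obstacle, is identifying $\hat{J}\Lhvp(\lambda(\nu))$ with $\hat{J}J\Lambda_{\hpsi}(\lambda(\omega))$, equivalently showing $J\Lambda_{\hpsi}(\lambda(\omega))=\Lhvp(\lambda(\nu))$. This is a dual version of the relation $\hat{J}\Lvp(x)=\Lambda_\psi(R(x^*))$, transported to $\whG$: namely $J\Lambda_{\hpsi}(a)=\Lhvp(\hat{R}(a)^*)$ for suitable $a\in\mf{N}_{\hpsi}$ (this formula is in fact written out explicitly in the proof of Proposition \ref{stw12} and in the classical examples). Applying it with $a=\lambda(\omega)$ and using $\hat{R}(\lambda(\omega))=\lambda(\omega\circ R)$ together with the $\sharp$-relation $\lambda(\omega)^*=\lambda(\omega^\sharp)$, I should get $\hat{R}(\lambda(\omega))^*=\lambda((\omega\circ R)^\sharp)$; matching this with $\lambda(\nu)$ requires the compatibility $(\omega\circ R)^\sharp=\omega^\sharp\circ R$, which is the kind of identity used elsewhere in the paper (e.g.\ in Proposition \ref{stw12}, where $(\alpha\circ R)^\sharp=\alpha^\sharp\circ R$ is invoked). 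The care needed is purely bookkeeping of conjugations and of the precise domains on which these GNS-level identities hold, so I would first fix the dense subspace of functionals on which all manipulations are legitimate (functionals in $\LL^1_\sharp(\GG)$ with the relevant integrability), verify each antipode/conjugation identity there, and then conclude by the defining duality of $\xi_R$.
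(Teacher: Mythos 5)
Your proposal is essentially the paper's own proof: part (1) is obtained exactly by the substitution $y=R(x^*)$ together with the relation $\hat{J}\Lvp(x)=\Lambda_{\psi}(R(x^*))$ and antiunitarity of $\hat{J}$, and part (2) by feeding $\omega^{\sharp}\circ R$ into part (1) and then using the dual modular relation $J\Lhvp(x)=\Lambda_{\hpsi}(\hat{R}(x)^*)$ together with $\lambda(\alpha\circ R)=\hat{R}(\lambda(\alpha))$, $\lambda(\alpha^{\sharp})=\lambda(\alpha)^*$ and $(\alpha\circ R)^{\sharp}=\alpha^{\sharp}\circ R$. Only fix the conjugation bookkeeping when writing it out: $\ov{\omega\circ R}(y^*)=\ov{\omega(x^*)}$ rather than $\ov{\omega(x)}$, the auxiliary functional is $\nu=\omega^{\sharp}\circ R$ rather than $\ov{\omega^{\sharp}}\circ R$, and the membership claim reduces to $\lambda(\omega^{\sharp}\circ R)\in\mf{N}_{\hvp}$ rather than $\lambda(\omega^{\sharp})\in\mf{N}_{\hvp}$ --- reading the dual modular relation from the $\mf{N}_{\hvp}$ side (applied to $x=\lambda(\omega^{\sharp}\circ R)$) also removes the circularity of applying it to $a=\lambda(\omega)$ before $\lambda(\omega)\in\mf{N}_{\hpsi}$ is known.
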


\begin{proof}
Let $\omega\in \mc{I}$ and $x\in\mf{N}_{\psi}$, then $R(x)^*\in\mf{N}_{\vp}$. Calculation
\[
\begin{split}
&\quad\;
\ov{\omega\circ R}(x^*)=\ov{\omega(R(x^*)^*)}=
\ov{\ismaa{\Lvp(R(x^*))}{\xi(\omega)}}=
\ov{\ismaa{\hat{J}\Lambda_{\psi}(x)}{\xi(\omega)}}=
\ismaa{\Lambda_{\psi}(x)}{\hat{J}\xi(\omega)}
\end{split}
\]
shows that $\ov{\omega\circ R}\in\mc{I}_R$ and $\xi_R(\ov{\omega\circ R})=\hat{J}\xi(\omega)$.\\
Assume now that $\ov{\omega\circ R}\in \mc{I}_R$ and $x\in \mf{N}_{\vp}$. We get
\[\begin{split} 
&\quad\;
\ismaa{\Lvp(x)}{\hat{J}\xi_R(\ov{\omega\circ R})}=
\ov{\ismaa{\hat{J}\Lvp(x)}{\xi_R(\ov{\omega\circ R})}}=
\ov{\ismaa{\Lambda_\psi(R(x^*))}{\xi_R(\ov{\omega\circ R})}}\\
&=\ov{\ov{\omega\circ R}(R(x^*)^*)}=
\omega(x^*),
\end{split}\]
which shows that $\omega\in\mc{I}$.\\
Let $\omega\in\Ljsharp$ be a functional such that $\ov{\omega^{\sharp}}\in \mc{I}_R$. The previous point gives us $\ov{\ov{\omega^{\sharp}}\circ R}\in \mc{I}$ and
\[
\begin{split}
&\quad\;\xi_R(\ov{\omega^{\sharp}})=
\hat{J}\xi(\ov{(\ov{\omega^{\sharp}})\circ R})=
\hat{J}\xi(\omega^{\sharp}\circ R)=
\hat{J}\Lhvp(\lambda(\omega^{\sharp}\circ R))=
\hat{J}\Lhvp(\hat{R}(\lambda(\omega^{\sharp})))\\
&=
\hat{J} \Lhvp(\hat{R}(\lambda(\omega))^*)=
\hat{J}J \Lambda_{\hpsi}(\lambda(\omega)),
\end{split}
\]
which proves the claim from the second point.
\end{proof}

Now we will prove that $\mc{I}_R\star\Lj\subseteq \mc{I}_R$. This result was used in Proposition \ref{stw12}, were we showed that if $\mu$ is a Plancherel measure and $\mu'$ equals $\mu$ composed with the conjugation, then $\mu'$ is equivalent to $\mu$.

\begin{lemma}\label{lemat21}
Let $\omega,\nu$ be a functionals in $\Lj$, assume moreover that $\omega\in\mc{I}_R$. Then $\omega\star\nu\in\mc{I}_R$ and $\xi_R(\omega\star\nu)=(\id\otimes\nu)(\mrV^*) \xi_R(\omega)$
\end{lemma}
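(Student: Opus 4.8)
The plan is to prove the displayed formula and the membership $\omega\star\nu\in\mc{I}_R$ simultaneously, by establishing the single scalar identity
\[
\langle \Lambda_\psi(x)\mid (\id\otimes\nu)(\mrV^*)\,\xi_R(\omega)\rangle=(\omega\star\nu)(x^*)\qquad(x\in\mf{N}_\psi)\qquad(\ast).
\]
Write $S=(\id\otimes\nu)(\mrV^*)$. Since $\mrV\in\Linfd'\oxx\Linf$, slicing the second leg by $\nu\in\Lj=\Linf_*$ yields a bounded operator $S\in\Linfd'$, with adjoint $S^*=(\id\otimes\ov{\nu})(\mrV)$. Once $(\ast)$ holds, the functional $\mf{N}_\psi\ni x\mapsto(\omega\star\nu)(x^*)$ is dominated by $\|S\,\xi_R(\omega)\|\,\|\Lambda_\psi(x)\|$, so $\omega\star\nu\in\mc{I}_R$; and comparing $(\ast)$ with the defining relation $\langle\Lambda_\psi(x)\mid\xi_R(\omega\star\nu)\rangle=(\omega\star\nu)(x^*)$ on the dense set $\Lambda_\psi(\mf{N}_\psi)$ gives $\xi_R(\omega\star\nu)=S\,\xi_R(\omega)$, which is exactly the asserted equality.

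First I would isolate the intertwining relation that drives everything:
\[
(\id\otimes\theta)(\mrV)\,\Lambda_\psi(x)=\Lambda_\psi\bigl((\id\otimes\theta)\Delta(x)\bigr)\qquad(x\in\mf{N}_\psi,\ \theta\in\Lj)\qquad(\ast\ast),
\]
where it is part of the claim that $(\id\otimes\theta)\Delta(x)\in\mf{N}_\psi$. I would check $(\ast\ast)$ first for the vector functionals $\theta=\omega_{\Lambda_\psi(d),\Lambda_\psi(b)}$ with $b,d\in\mf{N}_\psi$: pairing either side against $\Lambda_\psi(c)$ and using the defining relation $\mrV(\Lambda_\psi(a)\otimes\Lambda_\psi(b))=(\Lambda_\psi\otimes\Lambda_\psi)(\Delta(a)(\I\otimes b))$ together with $\omega_{\Lambda_\psi(d),\Lambda_\psi(b)}(\cdot)=\psi(d^*\cdot\,b)$, both sides collapse to $(\psi\otimes\psi)\bigl[(c^*\otimes d^*)\Delta(x)(\I\otimes b)\bigr]$. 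Such functionals are norm-dense in $\Lj$, so the uniform estimate $\|(\id\otimes\theta)(\mrV)\Lambda_\psi(x)\|\le\|(\id\otimes\theta)(\mrV)\|\,\|\Lambda_\psi(x)\|$ together with closedness of $\Lambda_\psi$ lets me pass to arbitrary $\theta\in\Lj$, and in doing so simultaneously produce the membership $(\id\otimes\theta)\Delta(x)\in\mf{N}_\psi$.

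With $(\ast\ast)$ in hand the verification of $(\ast)$ is routine: for $x\in\mf{N}_\psi$,
\[
\langle\Lambda_\psi(x)\mid S\,\xi_R(\omega)\rangle=\langle S^*\Lambda_\psi(x)\mid\xi_R(\omega)\rangle=\langle\Lambda_\psi\bigl((\id\otimes\ov{\nu})\Delta(x)\bigr)\mid\xi_R(\omega)\rangle=\omega\bigl(((\id\otimes\ov{\nu})\Delta(x))^*\bigr),
\]
and since $((\id\otimes\ov{\nu})\Delta(x))^*=(\id\otimes\nu)(\Delta(x)^*)=(\id\otimes\nu)\Delta(x^*)$, this equals $(\omega\otimes\nu)\Delta(x^*)=(\omega\star\nu)(x^*)$, as wanted. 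I expect the only genuine work to lie in $(\ast\ast)$, specifically in justifying that $(\id\otimes\theta)\Delta(x)$ lands in $\mf{N}_\psi$ and that the vector-functional identity extends to all of $\Lj$; this is the standard fact that $\mrV$ implements a corepresentation compatible with the right GNS map $\Lambda_\psi$. As a cross-check (and an alternative route) one can instead deduce the lemma from its left-handed analogue, the module identity $\xi(\omega\star\theta)=\lambda(\omega)\xi(\theta)$, by transporting through Lemma \ref{lemat25} and the relation $\chi(\mrV)=(\hat{J}\otimes\hat{J})\mrW^*(\hat{J}\otimes\hat{J})$; there the one subtlety is that conjugation by the antiunitary $\hat{J}$ is antilinear, which must be tracked so that the functional $\nu$ reappears unconjugated in the final operator $(\id\otimes\nu)(\mrV^*)$.
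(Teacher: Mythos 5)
Your proposal is correct and is essentially the paper's own proof read from the other end: the paper establishes precisely the chain $(\omega\star\nu)(x^*)=\omega\bigl(((\id\otimes\ov\nu)\Delta(x))^*\bigr)=\ismaa{\Lambda_{\psi}((\id\otimes\ov\nu)\Delta(x))}{\xi_R(\omega)}=\ismaa{(\id\otimes\ov\nu)(\mrV)\Lambda_\psi(x)}{\xi_R(\omega)}=\ismaa{\Lambda_\psi(x)}{(\id\otimes\nu)(\mrV^*)\xi_R(\omega)}$ for $x\in\mf{N}_\psi$, quoting right invariance of $\psi$ for the membership $(\id\otimes\ov\nu)\Delta(x)\in\mf{N}_\psi$ and the definition of $\mrV$ for your intertwining relation $(\ast\ast)$, so your only real addition is to spell out a proof of $(\ast\ast)$. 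One small point to tighten there: the density-plus-closedness step cannot by itself ``simultaneously produce'' the membership for the base-case vector functionals -- for those you still need the right-invariance (Schwarz) estimate $\psi\bigl(((\id\otimes\theta)\Delta(x))^*(\id\otimes\theta)\Delta(x)\bigr)\le\theta(\I)^2\,\psi(x^*x)$ for $\theta\ge 0$ (then polarize) before $\Lambda_\psi((\id\otimes\theta)\Delta(x))$ is even defined, and this is exactly the ingredient the paper's one-line citation of right invariance supplies.
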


\begin{proof}
Let $x\in\mf{N}_{\psi}$. Due to the right invariance of $\psi$ we know that $(\id\otimes\ov\nu)\Delta(x)\in\mf{N}_\psi$, and by the definition of $\mrV$ we have $(\id\otimes \ov\nu)\mrV\Lambda_{\psi}(x)=\Lambda_{\psi}((\id\otimes\ov\nu)\Delta(x))$. Therefore we can perform the following calculations:
\[\begin{split} 
&\quad\; \omega\star\nu(x^*)=\omega(((\id\otimes\ov\nu)\Delta(x))^*)=
\ismaa{\Lambda_{\psi}((\id\otimes\ov\nu)\Delta(x))}{\xi_R(\omega)}\\
&=
\ismaa{(\id\otimes\ov\nu)\mrV\,\Lambda_\psi(x)}{\xi_R(\omega)}=
\ismaa{\Lambda_\psi(x)}{(\id\otimes\nu)(\mrV^*)\xi_R(\omega)}.
\end{split}\]
\end{proof}

The next lemma tells us how vector functionals behave under basic operations on normal functionals:

\begin{lemma}\label{lemat17}
Let $\eta,\zeta\in\LL^2(\GG)$. We have $\ov{\omega_{\eta,\zeta}}=\omega_{\zeta,\eta}$ and $\omega_{\eta,\zeta}\circ R=\omega_{\hat{J}\zeta,\hat{J}\eta}$. If we assume that $\eta\in\Dom(\nabla_{\hvp}^{-\frac{1}{2}}),\zeta\in\Dom(\nabla_{\hvp}^{\frac{1}{2}})$, then $\omega_{\eta,\zeta}\in\LL^1_{\sharp}(\GG)$ and $(\omega_{\eta,\zeta})^{\sharp}=
\omega_{\hat{J}\nabla_{\hvp}^{-\frac{1}{2}}  \eta,
\hat{J}\nabla_{\hvp}^{\frac{1}{2}}\zeta}$.
\end{lemma}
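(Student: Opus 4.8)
The plan is to establish the three stated identities for vector functionals by unraveling the definitions and using the defining relations of the modular objects gathered in the conventions section. Throughout, recall that a vector functional is $\omega_{\eta,\zeta}(x)=\ismaa{\eta}{x\zeta}$ for $x\in\Linf$, and that the relevant structural facts are $R(x)=\hat J x^* \hat J$, the duality relation $\hat J\Lvp(x)=\Lambda_\psi(R(x^*))$, and the formula $\nabla_{\hvp}^{it}\Lvp(x)=\Lvp(\tau_t(x)\delta^{-it})$ together with $\tau(x)=\nabla_{\hvp}^{it}x\nabla_{\hvp}^{-it}$. I would also use the characterization of $\LL^1_\sharp(\GG)$ through the condition $\lambda(\omega)^*=\lambda(\omega^\sharp)$.

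First I would dispatch the two easy identities. For $\ov{\omega_{\eta,\zeta}}=\omega_{\zeta,\eta}$, I simply compute, for $x\in\Linf$,
\[
\ov{\omega_{\eta,\zeta}}(x)=\ov{\omega_{\eta,\zeta}(x^*)}=\ov{\ismaa{\eta}{x^*\zeta}}=\ismaa{x^*\zeta}{\eta}=\ismaa{\zeta}{x\eta}=\omega_{\zeta,\eta}(x),
\]
which is immediate from antilinearity of the inner product in the first slot and self-adjointness of the pairing. For $\omega_{\eta,\zeta}\circ R=\omega_{\hat J\zeta,\hat J\eta}$, I would use $R(x)=\hat J x^*\hat J$ and the fact that $\hat J$ is an antiunitary involution:
\[
(\omega_{\eta,\zeta}\circ R)(x)=\ismaa{\eta}{\hat J x^*\hat J\,\zeta}=\ismaa{\hat J x^*\hat J\zeta}{\eta}=\ismaa{\hat J\eta}{x^*\hat J\zeta}^{-}
\]
where one must track the two antilinear operations carefully; writing $\ismaa{\eta}{\hat J x^*\hat J\zeta}=\ov{\ismaa{\hat J\eta}{x^*\hat J\zeta}}=\ismaa{x^*\hat J\zeta}{\hat J\eta}=\ismaa{\hat J\zeta}{x\,\hat J\eta}=\omega_{\hat J\zeta,\hat J\eta}(x)$ gives the claim once the antiunitarity identity $\ismaa{\hat J\xi}{\hat J\xi'}=\ismaa{\xi'}{\xi}$ is invoked.

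The substantive part is the third identity, computing the sharp involution. Here the plan is to identify $(\omega_{\eta,\zeta})^\sharp$ via its defining property $\lambda(\omega_{\eta,\zeta})^*=\lambda((\omega_{\eta,\zeta})^\sharp)$. I would first recall or re-derive how $\lambda(\omega)^*$ is computed for a vector functional, expressing $\lambda(\omega)=(\omega\otimes\id)\mrW$ and using that $\mrW$ is unitary so that $\lambda(\omega_{\eta,\zeta})^*=(\omega_{\zeta,\eta}\otimes\id)(\mrW^*)$. The goal is to show this equals $\lambda(\omega_{\hat J\nabla_{\hvp}^{-1/2}\eta,\,\hat J\nabla_{\hvp}^{1/2}\zeta})$. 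I expect the domain hypotheses $\eta\in\Dom(\nabla_{\hvp}^{-1/2})$ and $\zeta\in\Dom(\nabla_{\hvp}^{1/2})$ to enter precisely at the point where one analytically continues the one-parameter relation $\nabla_{\hvp}^{it}\Lvp(x)=\Lvp(\tau_t(x)\delta^{-it})$ to the imaginary value $t=\pm i/2$; this is the main obstacle, since it requires justifying that the relevant vectors lie in the domain of the unbounded operators and that the analytic continuation of the pairing is legitimate. I would structure this by reducing to a dense set of analytic elements, verifying the identity on that set by the explicit modular formulas, and then extending by closedness of $\nabla_{\hvp}^{\pm1/2}$ using exactly the stated domain assumptions. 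Once the two sides agree as functionals, the uniqueness of $\omega^\sharp$ (guaranteed by faithfulness, i.e. injectivity of $\lambda$ on the relevant space) yields the formula $(\omega_{\eta,\zeta})^\sharp=\omega_{\hat J\nabla_{\hvp}^{-1/2}\eta,\,\hat J\nabla_{\hvp}^{1/2}\zeta}$.
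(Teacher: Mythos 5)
Your first two identities are correct and are exactly what the paper dismisses as the trivial part; the substance of the lemma is the formula for $(\omega_{\eta,\zeta})^{\sharp}$, and there your plan has a genuine gap. Working from the definition $\lambda(\omega)^*=\lambda(\omega^{\sharp})$, you must prove $(\omega_{\zeta,\eta}\otimes\id)(\mrW^*)=\lambda\bigl(\omega_{\hat{J}\nabla_{\hvp}^{-\frac{1}{2}}\eta,\hat{J}\nabla_{\hvp}^{\frac{1}{2}}\zeta}\bigr)$, but your proposal never names the one fact that makes the modular operators appear: the bridge between slices of $\mrW^*$ and the antipode $S=R\circ\tau_{-i/2}$, namely the standard characterization that $\omega\in\Ljsharp$ precisely when $\Dom(S)\ni x\mapsto\ov{\omega}(S(x))$ is bounded, in which case $\omega^{\sharp}$ is its normal extension (equivalently, the identity $(S\otimes\id)\mrW=\mrW^*$). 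This is the route the paper takes: for $x\in\Dom(S)$,
\[
\begin{split}
\ov{\omega_{\eta,\zeta}}(S(x))&=\omega_{\zeta,\eta}\bigl(R(\tau_{-i/2}(x))\bigr)=\ismaa{\hat{J}\eta}{\tau_{-i/2}(x)\,\hat{J}\zeta}=\ismaa{\hat{J}\eta}{\nabla_{\hvp}^{\frac{1}{2}}\,x\,\nabla_{\hvp}^{-\frac{1}{2}}\hat{J}\zeta}\\
&=\ismaa{\hat{J}\nabla_{\hvp}^{-\frac{1}{2}}\eta}{x\,\hat{J}\nabla_{\hvp}^{\frac{1}{2}}\zeta}=\omega_{\hat{J}\nabla_{\hvp}^{-\frac{1}{2}}\eta,\hat{J}\nabla_{\hvp}^{\frac{1}{2}}\zeta}(x),
\end{split}
\]
where the third equality is the spatial implementation $\tau_{-i/2}=\operatorname{Ad}(\nabla_{\hvp}^{\frac{1}{2}})$ (analytic continuation of $\tau_t=\operatorname{Ad}(\nabla_{\hvp}^{it})$), and the fourth uses self-adjointness of $\nabla_{\hvp}^{\pm\frac{1}{2}}$ together with the Tomita relation $\hat{J}\nabla_{\hvp}^{\frac{1}{2}}\hat{J}=\nabla_{\hvp}^{-\frac{1}{2}}$; this last step is exactly where the hypotheses $\eta\in\Dom(\nabla_{\hvp}^{-\frac{1}{2}})$, $\zeta\in\Dom(\nabla_{\hvp}^{\frac{1}{2}})$ are consumed. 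Without citing this equivalence (it is a theorem of Kustermans--Vaes, not a formal consequence of the definition of $\Ljsharp$) or reproving it, your computation of $\lambda(\omega_{\eta,\zeta})^*$ cannot get off the ground.

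Two further concrete problems with the route you sketch. First, the relation you single out for analytic continuation, $\nabla_{\hvp}^{it}\Lvp(x)=\Lvp(\tau_t(x)\delta^{-it})$, is the wrong one: it describes the action of $\nabla_{\hvp}^{it}$ on GNS vectors of $\vp$ and drags in the modular element $\delta$, which does not occur in the target formula and has no role here; the relation you need is $\tau_t(x)=\nabla_{\hvp}^{it}x\nabla_{\hvp}^{-it}$, continued to $t=-i/2$ inside the argument of $R$. Second, ``extending by closedness of $\nabla_{\hvp}^{\pm\frac{1}{2}}$'' is misplaced: the equality to be established is between two fixed bounded operators (or two fixed normal functionals), so there is nothing to extend; the domain assumptions on $\eta$ and $\zeta$ are not the input to a closure argument but are needed merely to define the right-hand functional and to justify moving $\nabla_{\hvp}^{\pm\frac{1}{2}}$ across the inner product in the display above.
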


\begin{proof}
The first part is trivial, therefore let us check the second one. Take $x\in \Dom(S)$. Then
\[
\begin{split}
\ov{\omega_{\eta,\zeta}}(S(x))&=
\omega_{\zeta,\eta}(R\circ\tau_{-i/2}(x))=
\ismaa{\zeta}{R(\tau_{-i/2}(x)) \eta}=
\ismaa{\zeta}{\hat{J} {\tau_{-i/2}(x)}^* \hat{J}\eta}\\
&=
\ismaa{\hat{J}\eta}{\tau_{-i/2}(x)\hat{J}\zeta}=
\ismaa{\hat{J}\eta}{\nabla_{\hvp}^{\frac{1}{2}} x
\nabla_{\hvp}^{-\frac{1}{2}}\hat{J}\zeta}=
\ismaa{\hat{J}\nabla_{\hvp}^{-\frac{1}{2}} \eta}{
x\hat{J} \nabla_{\hvp}^{\frac{1}{2}}\zeta}\\
&=
\omega_{\hat{J}\nabla_{\hvp}^{-\frac{1}{2}} \eta,
\hat{J}\nabla_{\hvp}^{\frac{1}{2}}\zeta}(x),
\end{split}
\]
which proves $\omega_{\eta,\zeta}\in\LL^1_{\sharp}(\GG)$ and $(\omega_{\eta,\zeta})^{\sharp}=
\omega_{\hat{J}\nabla_{\hvp}^{-\frac{1}{2}}  \eta,
\hat{J}\nabla_{\hvp}^{\frac{1}{2}}\zeta}$.
\end{proof}

The next lemma allows us to construct plenty of functionals in $\mc{I}$ and $\mc{I}_R$. Its proof is a straightforward consequence of the definitions.

\begin{lemma}\label{lemat26}$ $
\begin{enumerate}
\item Let $\zeta\in\LdG$ and $y\in \mf{N}_{\vp}\cap \Dom(\sigma^{\vp}_{\frac{i}{2}})$. Then $\omega_{\Lvp(y),\zeta}\in\mc{I}$ and $\xi(\omega_{\Lvp(y),\zeta})=J \sigma^{\vp}_{\frac{i}{2}}(y) J \zeta$.
\item
Let $\zeta\in\LdG$ and $y\in \mf{N}_{\psi}\cap \Dom(\sigma^{\psi}_{\frac{i}{2}})$. Then $\omega_{\Lambda_\psi(y),\zeta}\in\mc{I}_R$ and $\xi_R(\omega_{\Lambda_\psi(y),\zeta})=J^{\psi} \sigma^{\psi}_{\frac{i}{2}}(y) J^{\psi} \zeta$.
\end{enumerate}
\end{lemma}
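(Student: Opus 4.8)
The plan is to reduce both parts to the standard Tomita--Takesaki description of right multiplication in the GNS representation, and then read off the two claims directly from the definitions of $\mc{I}$, $\mc{I}_R$, $\xi$ and $\xi_R$. I would treat the two parts in parallel, since the second is obtained from the first by replacing $(\vp, J, \sigma^{\vp}, \xi, \mc{I}, \mf{N}_\vp)$ everywhere by $(\psi, J^{\psi}, \sigma^{\psi}, \xi_R, \mc{I}_R, \mf{N}_\psi)$.

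For part 1, fix $y\in\mf{N}_{\vp}\cap\Dom(\sigma^{\vp}_{\frac{i}{2}})$ and $\zeta\in\LdG$. First I would unwind the definition of the vector functional: for every $x\in\mf{N}_{\vp}$ we have $\omega_{\Lvp(y),\zeta}(x^*)=\is{\Lvp(y)}{x^*\zeta}=\is{x\Lvp(y)}{\zeta}=\is{\Lvp(xy)}{\zeta}$, where the last equality uses that $\mf{N}_\vp$ is a left ideal, so $xy\in\mf{N}_\vp$ and $x\Lvp(y)=\Lvp(xy)$. The key input is the right-multiplication formula $\Lvp(xy)=J\,\sigma^{\vp}_{\frac{i}{2}}(y)^{*}\,J\,\Lvp(x)$, valid for $y\in\mf{N}_{\vp}\cap\Dom(\sigma^{\vp}_{\frac{i}{2}})$ and all $x\in\mf{N}_{\vp}$; here the operator $J\,\sigma^{\vp}_{\frac{i}{2}}(y)^{*}\,J$ lies in $\Linf'$ and is bounded with norm $\|\sigma^{\vp}_{\frac{i}{2}}(y)\|$. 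Substituting and using that $J$ is an antiunitary involution (so that $\bigl(J\,\sigma^{\vp}_{\frac{i}{2}}(y)^{*}\,J\bigr)^{*}=J\,\sigma^{\vp}_{\frac{i}{2}}(y)\,J$), I obtain the two identities $\omega_{\Lvp(y),\zeta}(x^*)=\is{J\sigma^{\vp}_{\frac{i}{2}}(y)^{*}J\,\Lvp(x)}{\zeta}=\is{\Lvp(x)}{J\sigma^{\vp}_{\frac{i}{2}}(y)J\,\zeta}$.

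From the first identity the estimate $|\omega_{\Lvp(y),\zeta}(x^*)|\le \|\sigma^{\vp}_{\frac{i}{2}}(y)\|\,\|\zeta\|\,\|\Lvp(x)\|$ is immediate, so $\omega_{\Lvp(y),\zeta}\in\mc{I}$ with constant $M=\|\sigma^{\vp}_{\frac{i}{2}}(y)\|\,\|\zeta\|$. From the second identity, the defining property $\is{\Lvp(x)}{\xi(\omega)}=\omega(x^*)$ of $\xi(\omega)=\Lhvp(\lambda(\omega))$ together with the density of $\Lvp(\mf{N}_\vp)$ in $\LdG$ forces $\xi(\omega_{\Lvp(y),\zeta})=J\,\sigma^{\vp}_{\frac{i}{2}}(y)\,J\,\zeta$, which is the asserted formula. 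Part 2 is the verbatim same computation: one replaces the right-multiplication formula by $\Lvps(xy)=J^{\psi}\,\sigma^{\psi}_{\frac{i}{2}}(y)^{*}\,J^{\psi}\,\Lvps(x)$ and invokes the defining relation $\is{\Lvps(x)}{\xi_R(\omega)}=\omega(x^*)$ of $\xi_R$ to conclude $\omega_{\Lvps(y),\zeta}\in\mc{I}_R$ and $\xi_R(\omega_{\Lvps(y),\zeta})=J^{\psi}\,\sigma^{\psi}_{\frac{i}{2}}(y)\,J^{\psi}\,\zeta$.

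The only genuine point requiring care — the remainder being bookkeeping with the definitions — is the right-multiplication formula and its precise domain of validity. One clean route is to prove it first for analytic $y$ with $\sigma^{\vp}_{z}(y)\in\mf{N}_\vp$ for all $z$, where $J\,\sigma^{\vp}_{\frac{i}{2}}(y)^{*}\,J\in\Linf'$ acting on $\Lvp(x)$ is exactly the classical statement about right bounded elements of the left Hilbert algebra, and then to pass to a general $y\in\mf{N}_{\vp}\cap\Dom(\sigma^{\vp}_{\frac{i}{2}})$ by the smoothing approximation $y_n=\tfrac{n}{\sqrt{\pi}}\int e^{-n^2 t^2}\sigma^{\vp}_t(y)\,\md t$, for which $\Lvp(y_n)\to\Lvp(y)$ and $\sigma^{\vp}_{\frac{i}{2}}(y_n)\to\sigma^{\vp}_{\frac{i}{2}}(y)$ in the relevant topology, so that both sides of the formula converge. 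I expect this to be the main (and essentially the only) obstacle; once it is in hand, both parts follow mechanically from the definitions of $\mc{I}$, $\mc{I}_R$, $\xi$ and $\xi_R$.
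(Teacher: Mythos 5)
Your proof is correct and is exactly the argument the paper intends: the paper omits the proof of this lemma entirely, declaring it ``a straightforward consequence of the definitions'', and your computation --- unwinding $\omega_{\Lvp(y),\zeta}(x^*)=\is{\Lvp(xy)}{\zeta}$ and invoking the standard Tomita--Takesaki right-multiplication formula $\Lvp(xy)=J\sigma^{\vp}_{\frac{i}{2}}(y)^*J\Lvp(x)$, a fact the paper itself uses elsewhere (e.g.\ in the form $\Lhvp(TS)=\hat{J}\sigma^{\hvp}_{-i/2}(S^*)\hat{J}\Lhvp(T)$ in the compact example) --- is precisely that consequence, with the membership in $\mc{I}$ and the formula for $\xi(\omega_{\Lvp(y),\zeta})$ then forced by the defining property of $\xi$ and the density of $\Lvp(\mf{N}_\vp)$. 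Part 2 is, as you say, the verbatim translation to $(\psi, J^{\psi}, \sigma^{\psi}, \mc{I}_R, \xi_R)$, so nothing is missing.
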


Having the above lemma we are able to derive a result which tells us about existence of dense subspaces in $\Linf$ and $\Lj$ which consist of elements with desirable properties. We use this result plenty of times in our work.

\begin{lemma}\label{lemat28}
There exists $\mc{X}$, a \ssot-dense subspace in $\Linf$ such that for any $y\in \mc{X}$ and $w\in\CC$ we have
\[
y\in\mf{N}_\vp\cap\mf{N}_\psi\cap\Dom(\sigma^{\vp}_w)\cap\Dom(\sigma^{\psi}_w),\; \Lambda_\vp(y)\in\Dom(\nabla_{\hvp}^w)
\]
and $\sigma^{\vp}_w(y),\,\sigma^{\psi}_w(y)\in\mc{X},\;\nabla_{\hvp}^w\Lvp(y)\in\Lvp(\mc{X})$. Moreover, the operator $y\delta^w$ is bounded and its closure is in $\mc{X}$. We have also equality of dense subspaces $\hat{J}\Lvp(\mc{X})=\Lvp(\mc{X})=\Lambda_\psi(\mc{X})\subseteq\LdG$. The subspace
\[
\mc{Y}=\lin\{\omega_{\xi,\eta}\,|\,\xi,\eta\in\Lvp(\mc{X})=\Lambda_\psi(\mc{X})\}
\]
is dense in $\Lj$. For $\omega\in \mc{Y}$ we have $\omega\in\Ljsharp$ and $\ov{\omega},\;\omega\circ R,\; \omega^{\sharp}\in\mc{Y}$. Moreover $\omega\in \mc{I}\cap\mc{I}_R$, therefore $\lambda(\omega)\in\mf{N}_{\hvp}\cap\mf{N}_{\hpsi}$. Subspaces $\xi(\mc{Y})=\Lhvp(\lambda(\mc{Y}))$ and $\xi_R(\mc{Y}),\,\Lambda_{\hpsi}(\lambda(\mc{Y}))$ are dense in $\LdG$.
\end{lemma}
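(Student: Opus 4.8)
The plan is to build $\mc{X}$ by a simultaneous Gaussian smearing of a dense family of square-integrable elements with respect to the three commuting automorphism groups $(\sigma^{\vp}_t)$, $(\sigma^{\psi}_t)$, $(\tau_t)$ together with right translation by the powers $\delta^{iu}$ of the modular element. Concretely, I would first fix a $\sigma$-weakly dense subset of $\mf{N}_\vp\cap\mf{N}_\psi\cap\mf{N}_\vp^*\cap\mf{N}_\psi^*$ (such elements are plentiful, since $\mf{N}_\vp$ is dense and $\psi$ is obtained from $\vp$ through $\delta$), and to each such $x$ apply the operator
\[
x\longmapsto \Big(\tfrac{n}{\pi}\Big)^{2}\int_{\RR^4} e^{-n(r^2+s^2+t^2+u^2)}\,\sigma^{\vp}_r\sigma^{\psi}_s\tau_t\big(x\,\delta^{iu}\big)\,dr\,ds\,dt\,du .
\]
Because the three groups commute (as recorded in the conventions) and fix $\delta$, each of them scales $\vp$ and $\psi$ only by positive constants and hence preserves $\mf{N}_\vp$ and $\mf{N}_\psi$, while right multiplication by $\delta^{iu}$ also preserves these spaces via the relation $\nabla_{\hvp}^{it}\Lvp(x)=\Lvp(\tau_t(x)\delta^{-it})$. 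The Gaussian decay then makes each smeared element entire analytic in all four parameters, with every analytic continuation — in particular $\sigma^{\vp}_w(y)$, $\sigma^{\psi}_w(y)$, and the closure of $y\delta^w$ — again of the same shape. I would let $\mc{X}$ be the linear span of all these elements; its strong density follows from the standard fact that the smearing operators converge strongly to the identity as $n\to\infty$.

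For the Hilbert-space identities I would use $\hat{J}\Lvp(x)=\Lambda_{\psi}(R(x^*))$ together with $\Lambda_{\psi}(x)=\Lvp(x\delta^{\frac{1}{2}})$ for the analytic elements at hand: since $\mc{X}$ is, by construction, stable under $x\mapsto R(x^*)$ and under right multiplication by $\delta^{\frac{1}{2}}$, both $\hat{J}\Lvp(\mc{X})=\Lvp(\mc{X})$ and $\Lvp(\mc{X})=\Lambda_{\psi}(\mc{X})$ follow. With $\Lvp(\mc{X})$ dense in $\LdG$, the density of $\mc{Y}$ in $\Lj$ is the routine statement that vector functionals built from a dense subspace are norm-dense among normal functionals. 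The stability $\ov{\omega},\omega\circ R,\omega^{\sharp}\in\mc{Y}$ then comes from the explicit formulas of Lemma \ref{lemat17}, which express these operations on $\omega_{\xi,\eta}$ through $J$, $\hat{J}$ and $\nabla_{\hvp}^{\pm\frac{1}{2}}$ applied to $\xi,\eta$; the required domain conditions and the fact that the outputs land back in $\Lvp(\mc{X})=\Lambda_{\psi}(\mc{X})$ hold because $\mc{X}$ was designed so that $\Lvp(\mc{X})$ consists of $\nabla_{\hvp}$-analytic vectors stable under $J$ and $\hat{J}$.

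To see $\mc{Y}\subseteq\mc{I}\cap\mc{I}_R$ I would invoke Lemma \ref{lemat26}: writing $\omega_{\Lvp(y),\zeta}$ with $y\in\mc{X}\subseteq\mf{N}_\vp\cap\Dom(\sigma^{\vp}_{i/2})$ gives $\omega\in\mc{I}$ with $\xi(\omega)=J\sigma^{\vp}_{i/2}(y)J\zeta$, and symmetrically $y\in\mf{N}_\psi\cap\Dom(\sigma^{\psi}_{i/2})$ gives $\omega\in\mc{I}_R$ with $\xi_R(\omega)=J^{\psi}\sigma^{\psi}_{i/2}(y)J^{\psi}\zeta$; in particular $\lambda(\omega)\in\mf{N}_{\hvp}\cap\mf{N}_{\hpsi}$. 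The density of $\xi(\mc{Y})=\Lhvp(\lambda(\mc{Y}))$ and of $\xi_R(\mc{Y})$ in $\LdG$ is then obtained by letting $y$ run through a suitable bounded net in $\mc{X}$ for which $\sigma^{\vp}_{i/2}(y)\to\I$ (resp.\ $\sigma^{\psi}_{i/2}(y)\to\I$) strongly, so that $J\sigma^{\vp}_{i/2}(y)J\zeta\to\zeta$ while $\zeta$ ranges over the dense set $\Lvp(\mc{X})$; the density of $\Lambda_{\hpsi}(\lambda(\mc{Y}))$ follows from the relation in Lemma \ref{lemat25} between $\Lambda_{\hpsi}(\lambda(\omega))$ and $\xi_R(\ov{\omega^{\sharp}})$ together with the already-established stability of $\mc{Y}$ under $\omega\mapsto\omega^{\sharp}$.

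The main obstacle I anticipate is the very first step: engineering a single dense family on which all of the simultaneous conditions hold at once. The delicate part is to reconcile the right-multiplication smearing by $\delta$ (needed for boundedness and $\mc{X}$-membership of $y\delta^w$, and for the passage between $\mf{N}_\vp$ and $\mf{N}_\psi$) with analyticity for the three modular and scaling groups and with $\nabla_{\hvp}$-analyticity of $\Lvp(\mc{X})$, all while keeping every analytic continuation inside $\mc{X}$. Checking that the fourfold Gaussian average indeed lands in $\mf{N}_\vp\cap\mf{N}_\psi$ and that its continuations do too — that is, tracking the interplay of $\delta$ with the GNS maps and with $\nabla_{\hvp}$ — is where the genuine bookkeeping lies; once $\mc{X}$ is set up correctly, the remaining verifications are formal applications of the cited lemmas.
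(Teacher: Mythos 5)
Your proposal is correct and follows essentially the same route as the paper's proof: the paper builds $\mc{X}$ from exactly this fourfold Gaussian smearing (over $\sigma^{\psi}$, $\sigma^{\vp}$, $\tau$ and right multiplication by $\delta^{-ip}$, kept outside the automorphisms so that only harmless scalar factors arise), with complex shifts $A\in\CC^4$ of the Gaussian centres realizing your ``analytic continuations of the same shape'', and restricts the base elements to $\mf{N}_\vp\cap\mf{N}_\psi$ so that stability under $x\mapsto R(x^*)$ gives $\hat{J}\Lvp(\mc{X})\subseteq\Lvp(\mc{X})$. The statements about $\mc{Y}$ are then obtained exactly as you describe, from Lemmas \ref{lemat17}, \ref{lemat26} and \ref{lemat25}, with the density of $\xi(\mc{Y})$ and $\xi_R(\mc{Y})$ coming from the same trick of evaluating on $y=\sigma^{\vp}_{-i/2}(x_i)$ (resp.\ $\sigma^{\psi}_{-i/2}(x_i)$) for a net $x_i\to\I$ in $\mc{X}$.
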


\begin{proof}
Let us take $x\in \mf{N}_{\vp},A=(a,b,c,d)\in\CC^4, n\in\NN$ and define
\[
x_{n,A}=\sqrt{\tfrac{n^4}{\pi^4}}\int_{\RR^{\times 4}}
e^{-n(l-a,s-b,t-c,p-d)^2} \sigma^{\psi}_{l}\circ \sigma^{\vp}_{s}\circ \tau_t(x) \delta^{-ip}
\md l\md s\md t \md p.
\]
This integral converges in \ssot (Lemma \ref{lemat29}). Above, the expression $(l-a,s-b,t-c,p-d)^2$ means $(l-a)^2+(s-b)^2+(t-c)^2+(p-d)^2$. A standard argument shows that $x_{n,0}$ converges in norm to $x$ as $n\to\infty$. Moreover, we have $x_{n,A}\in \mf{N}_\vp$. Indeed, we have
\[
\begin{split}
&\quad\;
\vp(x_{n,A}^* x_{n,A})=
\sup_{\omega\in \LL^1(\GG)_+:\,\omega\le \vp} \omega(x_{n,A}^* x_{n,A})\\
&=
\tfrac{n^4}{\pi^4}
\sup_{\omega\in \LL^1(\GG)_+:\,\omega\le \vp} \int_{\RR^{\times 4}}\int_{\RR^{\times 4}}
e^{-n((l-a,s-b,t-c,p-d)^2+(l'-a,s'-b,t'-\ov{c}, p'-\ov{d})^2)}\\
&\quad\quad\quad\quad\quad\quad\quad\quad\quad\quad
\omega(
\delta^{ip}\sigma^{\psi}_{l}\circ\sigma^{\vp}_{s}\circ \tau_t(x^*) 
\sigma^{\psi}_{l'}\circ\sigma^{\vp}_{s'}\circ \tau_{t'}(x) \delta^{-ip'}
)
\md l\md s \md t \md p \md l'\md s' \md t' \md p'\\
&\le
\tfrac{n^4}{\pi^4}
\sup_{\omega\in \LL^1(\GG)_+:\,\omega\le \vp} \int_{\RR^{\times 4}}\int_{\RR^{\times 4}}
e^{-n(\Re(l-a,s-b,t-c,p-d)^2+\Re(l'-a,s'-b,t'-\ov{c}, p'-\ov{d})^2)}\\
&\quad\quad\quad\quad\quad\quad\quad\quad\quad\quad
|\omega(
\delta^{ip}\sigma^{\psi}_{l}\circ \sigma^{\vp}_{s}\circ \tau_t(x^*) 
\sigma^{\psi}_{l'}\circ\sigma^{\vp}_{s'}\circ \tau_{t'}(x) \delta^{-ip'}
)|
\md l\md s \md t \md p \md l'\md s' \md t' \md p'.
\end{split}
\]
Next, we have the following inequalities
\[\begin{split}
&\quad\;
|\omega(
\delta^{ip}\sigma^{\psi}_l\circ \sigma^{\vp}_{s}\circ \tau_t(x^*) 
\sigma^{\psi}_{l'}\circ \sigma^{\vp}_{s'}\circ \tau_{t'}(x) \delta^{-ip'})|\\
&\le
\omega(
\delta^{ip}\sigma^{\psi}_l\circ\sigma^{\vp}_{s}\circ \tau_t(x^*) 
\sigma^{\vp}_{s}\circ \tau_{t}(x) \delta^{-ip}
)^{\frac{1}{2}}
\omega(
\delta^{ip'}\sigma^{\psi}_{l'}\circ\sigma^{\vp}_{s'}\circ \tau_{t'}(x^*) 
\sigma^{\vp}_{s'}\circ \tau_{t'}(x) \delta^{-ip'}
)^{\frac{1}{2}}\\
&\le
\vp(
\delta^{ip}\sigma^{\psi}_l\circ\sigma^{\vp}_{s}\circ \tau_t(x^*) 
\sigma^{\vp}_{s}\circ \tau_{t}(x) \delta^{-ip}
)^{\frac{1}{2}}
\vp(
\delta^{ip'}\sigma^{\psi}_{l'}\circ\sigma^{\vp}_{s'}\circ \tau_{t'}(x^*) 
\sigma^{\vp}_{s'}\circ \tau_{t'}(x) \delta^{-ip'}
)^{\frac{1}{2}}\\
&\le
\nu^{\frac{p}{4}+\frac{p'}{4}}\vp(
\sigma^{\psi}_l\circ\sigma^{\vp}_{s}\circ \tau_t(x^*x) 
)^{\frac{1}{2}}
\vp(
\sigma^{\psi}_{l'}\circ\sigma^{\vp}_{s'}\circ \tau_{t'}(x^*a) 
)^{\frac{1}{2}}=
\nu^{\frac{p}{4}+\frac{p'}{4}-s-s'+l+l'}\vp(
x^*x)
\end{split}
\]
(we have used the relation $\vp(\delta^{ir}a^*a \delta^{-ir})\le \nu^{\frac{r}{2}} \vp(a^*a)\;(r\in \RR,a\in \Linf)$ (\cite[Proposition 2.14]{Stratila})), which gives
\[
\begin{split}
&\quad\;
\vp(x_{n,A}^* x_{n,A})\le
\tfrac{n^4}{\pi^4}
\int_{\RR^{\times 4}}\int_{\RR^{\times 4}}
e^{-n(\Re(l-a,s-b,t-c,p-d)^2+\Re(l'-a,s'-b,t'-\ov{c}, p'-\ov{d})^2)}\\
&\quad\quad\quad\quad\quad\quad\quad\quad\quad\quad\quad\quad
\nu^{\frac{p}{4}+\frac{p'}{4}-s-s'+l+l'}\vp(
x^*x)
\md l\md s\md t \md p \md l'\md s' \md t' \md p'<+\infty
\end{split}
\]
for any $n\in\NN,A=(a,b,c,d)\in \CC^4$. Take $w\in \CC$ and $\xi\in\Dom(\delta^w)$. Since
\[
\begin{split}
x_{n,A}\delta^w\xi&=
\sqrt{\tfrac{n^4}{\pi^4}}\int_{\RR^{\times 4}}
e^{-n(l-a,s-b,t-c,p-d)^2} \sigma^{\psi}_l\circ\sigma^{\vp}_{s}\circ \tau_t(x) \delta^{-ip+w}\xi
\md l\md s\md t \md p\\
&=
\sqrt{\tfrac{n^4}{\pi^4}}\int_{\RR^{\times 4}}
e^{-n(l-a,s-b,t-c,p-iw-d)^2} \sigma^{\psi}_l\circ\sigma^{\vp}_{s}\circ \tau_t(x) \delta^{-ip}\xi
\md l\md s\md t \md p\\
&=x_{n,A+(0,0,0,iw)}\xi
\end{split}
\]
we know that $x_{n,A}\delta^w$ is a bounded operator and after closure we have $\ov{x_{n,A} \delta^w}=x_{n,A+(0,0,0,iw)}$ In particular this means that $x_{n,A}\in\mf{N}_{\psi}$.\\
An argument of the same type as above shows that $x_{n,A}\in \bigcap_{w\in \CC} \Dom(\sigma^{\vp}_w)\cap\Dom(\sigma^{\psi}_w)$ and
\[
\sigma^{\vp}_w(x_{n,A})=x_{n,A+(0,w,0,0)},\quad
\sigma^{\psi}_w(x_{n,A})=x_{n,A+(w,0,0,0)},
\]
Thanks to the Hille theorem (Lemma \ref{lemat30}), Lemma \ref{lemat31} and $\ssot\times \|\cdot\|$ closedness of $\Lvp$ we have
\[
\Lambda_{\vp}(x_{n,A})=
\sqrt{\tfrac{n^4}{\pi^4}}\int_{\RR^{\times 4}}
e^{-n(l-a,s-b,t-c,p-d)^2} \Lambda_{\vp}(\sigma^{\psi}_l\circ\sigma^{\vp}_{s}\circ \tau_t(x) \delta^{-ip})
\md l\md s\md t \md p,
\]
therefore it is clear that $\Lambda_{\vp}(x_{n,0})\xrightarrow[n\to\infty]{} \Lambda_{\vp}(x)$ in norm.\\
For any $q\in \RR$ due to the equality $\nabla_{\hvp}^{iq} \Lvp(a)=\Lvp(\tau_q(a)\delta^{-iq})\;(a\in\mf{N}_\vp)$ and $\tau_q(\delta)=\delta$ we have
\[\begin{split} 
\nabla_{\hvp}^{iq}\Lvp(x_{n,A})&=
\sqrt{\tfrac{n^4}{\pi^4}}\int_{\RR^{\times 4}}
e^{-n(l-a,s-b,t-c,p-d)^2} \nabla_{\hvp}^{iq}\Lvp(\sigma^{\psi}_l\circ\sigma^{\vp}_{s}\circ \tau_t(x) \delta^{-ip})
\md l\md s\md t \md p\\
&=
\sqrt{\tfrac{n^4}{\pi^4}}\int_{\RR^{\times 4}}
e^{-n(l-a,s-b,t-c,p-d)^2} \Lvp(\sigma^{\psi}_l\circ\sigma^{\vp}_{s}\circ \tau_{t+q}(x) \delta^{-i(p+q)})
\md l\md s\md t \md p\\
&=
\sqrt{\tfrac{n^4}{\pi^4}}\int_{\RR^{\times 4}}
e^{-n(l-a,s-b,t-c-q,p-d-q)^2} \Lvp(\sigma^{\psi}_l\circ\sigma^{\vp}_{s}\circ \tau_{t}(x) \delta^{-ip})
\md l\md s\md t \md p,
\end{split}\]
therefore $\Lvp(x_{n,A})\in\Dom(\nabla_{\hvp}^{w})$ for any $w\in \CC$ and
\[
\begin{split}
\nabla_{\hvp}^w \Lvp(x_{n,A})&=
\sqrt{\tfrac{n^4}{\pi^4}}\int_{\RR^{\times 4}}
e^{-n(l-a,s-b,t-c+iw,p-d+iw)^2} \Lvp(\sigma^{\psi}_l\circ\sigma^{\vp}_{s}\circ \tau_{t}(x) \delta^{-ip})
\md l\md s\md t \md p\\
&=
\Lvp(x_{n,A+(0,0,-iw,-iw)}).
\end{split}
\]
Define
\[
\tilde{\mc{X}}=\{x_{n,A}\,|\,x\in \mf{N}_\vp,\,n\in\NN,\,A=(a,b,c,d)\in\CC^4\}.
\]
We have shown that $\tilde{\mc{X}}$ is a \ssot dense subspace in $\Linf$ and for any $y\in \tilde{\mc{X}},w\in\CC$ we have
\[
y\in \mf{N}_\vp\cap \mf{N}_{\psi}\cap \Dom(\sigma^{\vp}_w)\cap\Dom(\sigma^{\psi}_w),\quad
\Lvp(y)\in\Dom(\nabla_{\hvp}^{w})
\]
moreover
\[
\sigma^{\vp}_w(y),\sigma^{\psi}_w(y)\in\tilde{\mc{X}},\quad\nabla_{\hvp}^w(\Lvp(y))\in \Lvp(\mc{X}_1).
\]
Operator $y\delta^w$ is bounded and its closure is in $\tilde{\mc{X}}$. Moreover, $\Lvp(\tilde{\mc{X}})$ is a dense subspace in $\LdG$.\\
Assume now that we have $x\in \mf{N}_\vp\cap\mf{N}_\psi$. The same reasoning as above gives us
\[
\Lambda_{\psi}(x_{n,A})=
\sqrt{\tfrac{n^4}{\pi^4}}\int_{\RR^{\times 4}}
e^{-n(l-a,s-b,t-c,p-d)^2} \Lambda_{\psi}(\sigma^{\psi}_l\circ\sigma^{\vp}_{s}\circ \tau_t(x) \delta^{-ip})
\md l\md s\md t \md p,
\]
hence $\Lambda_{\psi}(x_{n,0})\xrightarrow[n\to\infty]{} \Lambda_{\psi}(x)$. Let us see how the operator $\hat{J}$ acts:
\[\begin{split} 
&\quad\;
\hat{J}\Lvp(x_{n,A})=\Lambda_{\psi}(R(x_{n,A})^*)\\
&=
\Lambda_{\psi}\bigl(
\sqrt{\tfrac{n^4}{\pi^4}}\int_{\RR^{\times 4}}
e^{-n(l-\ov{a},s-\ov{b},t-\ov{c},p-\ov{d})^2} R(\sigma^{\psi}_l\circ\sigma^{\vp}_{s}\circ \tau_{t}(x) \delta^{-ip})^*
\md l\md s\md t \md p\bigr)\\
&=
\Lambda_{\psi}\bigl(
\sqrt{\tfrac{n^4}{\pi^4}}\int_{\RR^{\times 4}}
e^{-n(l-\ov{a},s-\ov{b},t-\ov c,p-\ov d)^2} R(\sigma^{\psi}_l\circ\sigma^{\vp}_{s}\circ \tau_{t}(x))^* \delta^{-ip}
\md l\md s\md t \md p\bigr).
\end{split}\]
We have $a\in\mf{N}_\psi$ and $\Lambda_{\psi}(a)=\Lvp(\ov{a \delta^{\frac{1}{2}}})$ for $a\in \Linf$ such that the operator $a\delta^{\frac{1}{2}}$ is closable and its closure is in $\mf{N}_\vp$. It is the case for $a=R(x_{n,A})^*$, so we can continue our calculation:
\[\begin{split} 
\hat{J}\Lvp(x_{n,A})&=
\Lambda_{\vp}\bigl(
\sqrt{\tfrac{n^4}{\pi^4}}\int_{\RR^{\times 4}}
e^{-n(l-\ov{a},s-\ov{b},t-\ov c,p-\ov d-\frac{i}{2})^2} R(\sigma^{\psi}_l\circ\sigma^{\vp}_{s}\circ \tau_{t}(x))^* \delta^{-ip}
\md l\md s\md t \md p\bigr)\\
&=
\Lambda_{\vp}\bigl(
\sqrt{\tfrac{n^4}{\pi^4}}\int_{\RR^{\times 4}}
e^{-n(l-\ov{a},s-\ov{b},t-\ov c,p-\ov d-\frac{i}{2})^2} \sigma^{\psi}_{-l}\circ\sigma^{\vp}_{-s}\circ \tau_{t}(R(x)^*) \delta^{-ip}
\md l\md s\md t \md p\bigr)\\
&=
\Lambda_{\vp}\bigl(
\sqrt{\tfrac{n^4}{\pi^4}}\int_{\RR^{\times 4}}
e^{-n(l+\ov{a},s+\ov{b},t-\ov c,p-\ov d-\frac{i}{2})^2} \sigma^{\psi}_{l}\circ\sigma^{\vp}_{s}\circ \tau_{t}(R(x)^*) \delta^{-ip}
\md l\md s\md t \md p\bigr)\\
&=\Lvp(R(x^*)_{n,(-\ov a, -\ov b, \ov c ,\ov d + \frac{i}{2})}).
\end{split}\]
Since $x\in \mf{N}_\vp\cap\mf{N}_\psi$ then also $R(x^*)\in \mf{N}_\vp\cap\mf{N}_\psi$. Now, define
\[
\mc{X}=\{x_{n,A}\,|\, x\in \mf{N}_\vp\cap\mf{N}_\psi,\,n\in\NN,\,A\in\CC^4\}.
\]
Since we know that $\tilde{\mc{X}}$ is \ssot-dense in $\Linf$, so is $\mc{X}$. We have $\mc{X}\subseteq\tilde{\mc{X}}$, therefore all the "regularity" conditions remain true for the elements of $\mc{X}$. We have also density of the subspaces $\Lvp(\mc{X}),\Lambda_{\psi}(\mc{X})$ in $\LdG$. We have $\Lvp(\mc{X})=\Lambda_{\psi}(\mc{X})$ -- this follows from the equality $\Lambda_{\psi}(a)=\Lvp(\ov{a \delta^{\frac{1}{2}}})$ for nice $a$. Moreover $\hat{J}\Lvp(\mc{X})\subseteq \Lvp(\mc{X})$. This proves all the assertions about $\mc{X}$. \\
Let us define as in the claim a subspace of $\Lj$:
\[
\mc{Y}=\lin\{\omega_{\xi,\eta}\,|\,\xi,\eta\in\Lvp(\mc{X})=\Lambda_\psi(\mc{X})\}
\]
and take $\omega=\omega_{\Lvp(x),\Lvp(y)}\in\mc{Y}$. Due to Lemma \ref{lemat17} and properties of $\mc{X}$ we have $\omega\in\Ljsharp$ and
\[
\ov{\omega}=\omega_{\Lvp(y),\Lvp(x)}
,\;\omega\circ R=\omega_{\hat{J}\Lvp(y),\hat{J}\Lvp(x)}
,\; \omega^{\sharp}= \omega_{\hat{J}\nabla_{\hvp}^{-\frac{1}{2}}\Lvp(x),\hat{J}\nabla_{\hvp}^{\frac{1}{2}}\Lvp(y)}\in\mc{Y}.
\]
Lemma \ref{lemat26} gives us $\omega\in \mc{I}\cap\mc{I}_R$. Next, for $\omega=\omega_{\Lvp(x),\Lvp(y)}$ as above we have
\[
\xi(\omega)=\Lhvp(\lambda(\omega))=J \sigma^{\vp}_{\frac{i}{2}}(x) J \Lvp(y).
\]
Take a net $(x_i)_{i\in I}\in\mc{X}^{I}$ which converges to $\I$ in $\ssot$. Then $\sigma^{\vp}_{-\frac{i}{2}}(x_i)\in\mc{X}$ and
\[
\xi(\omega_{\Lvp(\sigma^{\vp}_{-\frac{i}{2}}(x_i)),\Lvp(y)})=
J \sigma^{\vp}_{\frac{i}{2}}(\sigma^{\vp}_{-\frac{i}{2}}(x_i)) J \Lvp(y)=
J x_i J \Lvp(y)\xrightarrow[i\in I]{\LdG}\Lvp(y),
\]
therefore the space $\xi(\mc{Y})=\Lhvp(\lambda(\mc{Y}))$  is dense in $\LdG$. Similarly we check the density of $\xi_R(\mc{Y})$: we have (Lemma \ref{lemat26})
\[
\xi_R(\omega)=J^{\psi} \sigma^{\psi}_{\frac{i}{2}}(x)J^{\psi}\Lambda_{\psi}(y)
\] 
and we can proceed as before. For any $\omega\in \mc{Y}$ we have $\ov{\omega^{\sharp}}\in\mc{Y}\subseteq \mc{I}_R$ therefore we can use Lemma \ref{lemat25} to get
\[
\Lambda_{\hpsi}(\lambda(\omega))=J\hat{J}\xi_R(\ov{\omega^{\sharp}})
\]
and deduce the density of $\Lambda_{\hpsi}(\lambda(\mc{Y}))$.
\end{proof}

\begin{lemma}\label{lemat37}
Let $\GG$ be a locally compact quantum group. The following conditions are equivalent:
\begin{enumerate}[label=\arabic*)]
\item $\CG$ is a separable \cst-algebra,
\item $\mathrm{C}_0^{u}(\GG)$ is a separable \cst-algebra,
\item $\LL^1(\whG)$ is a separable Banach space,
\item $\LdG$ is a separable Hilbert space.
\end{enumerate}
\end{lemma}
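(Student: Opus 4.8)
The plan is to prove the four equivalences by establishing a cycle of implications, together with some obvious directions, relying on the standard relationships between the reduced \cst-algebra, its universal version, the dual \cst-algebra and the $\LL^2$-space. First I would recall the basic structural facts already available in the paper: the reduced \cst-algebra is given by $\CG=\{(\id\otimes\omega)\mrW^{\GG}\mid \omega\in \Ljd\}^{-\|\cdot\|}$, there is a surjective morphism $\Lambda_{\GG}\in\Mor(\CGDu,\CGD)$, and the Kac-Takesaki operator $\mrW^{\GG}$ together with the GNS construction connects $\CG$ and $\LdG$. Note that since the roles of $\GG$ and $\whG$ are symmetric, the statement about $\whG$-algebras and $\LL^1(\whG)$, $\LdG$ can equivalently be read off the $\GG$-side objects; I would phrase everything in terms of $\GG$ and invoke duality where convenient.

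The implication $2)\Rightarrow 1)$ is immediate: $\CG=\Lambda_{\GG}(\CGDu)$ is a quotient of $\mathrm{C}_0^u(\GG)$ by the surjective morphism $\Lambda_{\GG}$, and a quotient of a separable \cst-algebra is separable. For $1)\Rightarrow 4)$ I would use that $\CG$ acts nondegenerately on $\LdG$; separability of $\CG$ gives a countable norm-dense subset, and applying these operators together with a countable dense family of vectors obtained from, say, the cyclic structure of the GNS representation yields a countable total subset of $\LdG$, whence $\LdG$ is separable. Concretely, one exploits that $\LdG$ is the GNS space of $\vp$ and that $\Lambda_\vp(\mf{N}_\vp)$ is dense; separability of the weakly dense \cst-algebra forces separability of this space. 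For $4)\Rightarrow 3)$, I would pass to the dual picture: $\LL^1(\whG)$ is the predual of $\Linfd=\pi(\CGDu)''$ represented on $\LdG$, and the predual of a von Neumann algebra acting on a separable Hilbert space is itself separable (trace-class operators on a separable space form a separable Banach space, and the predual is a quotient of this space).

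The remaining and least routine link is $3)\Rightarrow 2)$, i.e.\ recovering separability of the \emph{universal} \cst-algebra $\CGDu$ from separability of $\LL^1(\whG)$. Here the key point is that $\lambda^u(\LL^1(\whG))=\{(\omega\otimes\id)\WW^{\whG}\mid\omega\in\LL^1(\whG)\}$ is norm-dense in $\CGDu$ (this is the universal analogue of the density defining $\CG$, and follows from the corresponding property of the universal Kac-Takesaki operator). If $\LL^1(\whG)$ is separable, choose a countable norm-dense subset $(\omega_n)_{n\in\NN}$; then $(\lambda^u(\omega_n))_{n\in\NN}$ is a countable set whose closed linear span, by continuity of $\lambda^u$ and norm-density of its image, is all of $\CGDu$, giving separability. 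The hard part will be verifying precisely this density of $\lambda^u(\LL^1(\whG))$ in $\CGDu$ at the universal level — while the reduced version is built into the definition of $\CG$, at the universal level one must argue via the universal property and the relation $(\id\otimes\Lambda_{\whG})\WW^{\GG}=\mrW^{\GG}$, ensuring that separability genuinely lifts through $\Lambda_{\GG}$ rather than only descending. Once this is in place the cycle $2)\Rightarrow 1)\Rightarrow 4)\Rightarrow 3)\Rightarrow 2)$ closes and all four conditions are equivalent.
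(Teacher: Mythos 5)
Your overall route is the same as the paper's: a cycle of implications built from the reducing morphism, the GNS picture of $\LdG$, the von Neumann predual argument, and norm-density of slices of the (universal) Kac--Takesaki operator. Your steps $2)\Rightarrow 1)$, $4)\Rightarrow 3)$ and $3)\Rightarrow 2)$ are correct: $\CG$ is a quotient of $\mathrm{C}_0^u(\GG)$; the predual of $\Linfd\subseteq\B(\LdG)$ is a quotient of the trace-class operators on a separable space; and $\mathrm{C}_0^u(\GG)$ is the closed linear span of $\{(\omega\otimes\id)\WW^{\whG}\,|\,\omega\in\Ljd\}$, so a countable dense subset of $\Ljd$ pushes forward through the norm-contractive map $\omega\mapsto(\omega\otimes\id)\WW^{\whG}$ to a countable total subset. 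Two small remarks on that last step: the density you single out as "the hard part" is not an issue at all --- it is essentially how the universal algebra is constructed in Kustermans' paper, and it is exactly what the paper's proof invokes when it says $3)\Rightarrow 2)$ is "analogous" to $3)\Rightarrow 1)$; also, you write $\CGDu$ for the target algebra, but in the paper's notation $\CGDu=\mathrm{C}_0^u(\whG)$, whereas your formula correctly lands in $\mathrm{C}_0^u(\GG)$.

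The genuine gap is in $1)\Rightarrow 4)$. The principle you lead with --- a separable \cst-algebra acting nondegenerately forces the Hilbert space to be separable --- is false: $\mathrm{C}_0(\RR)$ acts nondegenerately by multiplication on the nonseparable space $\ell^2(\RR)$ of square-summable families indexed by $\RR$. Your fallback, that $\LdG$ is the GNS space of $\vp$ with $\Lvp(\mf{N}_\vp)$ dense, does not repair this by itself: $\Lvp$ is closed but not norm-continuous, so the image under $\Lvp$ of a countable \cst-norm-dense subset of $\mf{N}_\vp$ need not be dense in $\LdG$ (the norms $\|\cdot\|$ and $\|\Lvp(\cdot)\|$ on $\mf{N}_\vp$ are incomparable). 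Likewise there is no "cyclic structure" to appeal to --- the GNS representation of a weight has no cyclic vector in general, and a countable dense family of vectors is precisely what you are trying to produce. What this step really requires is the nontrivial fact that the GNS space of the Haar (KMS) weight on a separable \cst-algebra is separable; the paper closes exactly this gap by citing \cite[Theorem C.2]{MasudaNakagamiWoronowicz}, and your argument needs that result (or a genuine substitute using the modular structure of $\vp$) before the cycle $2)\Rightarrow 1)\Rightarrow 4)\Rightarrow 3)\Rightarrow 2)$ is complete.
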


If the above conditions are satisfied we say that $\GG$ is \emph{second countable}. Note that since $\LdG\simeq \LL^2(\whG)$, this result implies that $\GG$ is second countable if and only if $\whG$ is.

\begin{proof}
As the GNS Hilbert spaces of $\vp,\vp^{u}$ can be identified with $\LdG$ we get $1)\Rightarrow 4)$ and $2)\Rightarrow 4)$ (see \cite[Theorem C.2]{MasudaNakagamiWoronowicz}).
Since $\Linfd$ is a von Neumann algebra acting on $\LdG$, point $4)$ implies $3)$. Next, since $\CG$ is the norm closure of $\{(\id\otimes\omega)\mrW^{\GG} \,|\, \omega\in \LL^1(\whG)\}$ we get $3)\Rightarrow 1)$. Implication $3)\Rightarrow 2)$ is analogous.
\end{proof}

\begin{lemma}\label{lemat11}
Assume that $\GG$ is second countable, i.e.~$\LdG$ is separable. There exists a sequence in $\{\lambda(\alpha)\,|\, \alpha\in \mc{I}\}$ which converges in \ssot to the identity operator.
\end{lemma}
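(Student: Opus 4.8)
The plan is to realise the identity operator as a $\ssot$-limit of a sequential approximate unit of the \cst-algebra $\CGD$, and then to replace each approximate-unit element by a nearby element of the form $\lambda(\alpha)$ with $\alpha\in\mc{I}$. The only analytic input is the standard fact that an approximate unit of a nondegenerately represented \cst-algebra converges strongly to the identity; everything else is bookkeeping built on the density statements already established.

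First I would record that $\CGD=\overline{\{\lambda(\omega)\mid \omega\in\Lj\}}^{\|\cdot\|}$ (the dual analogue of the defining formula for $\CG$ in Section \ref{conventions}), and that this \cst-algebra acts nondegenerately on $\LdG$, as is standard for the reduced \cst-algebra of a locally compact quantum group. Since $\lambda\colon\Lj\to\CGD$ is contractive and the subspace $\mc{Y}\subseteq\mc{I}$ is norm dense in $\Lj$ by Lemma \ref{lemat28}, the image $\lambda(\mc{Y})\subseteq\lambda(\mc{I})$ is norm dense in $\lambda(\Lj)$, hence $\lambda(\mc{I})$ is norm dense in $\CGD$.

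Next, because $\GG$ is second countable, $\CGD$ is a separable \cst-algebra by Lemma \ref{lemat37}, so it admits a \emph{sequential} approximate unit $(u_n)_{n\in\NN}$ with $0\le u_n$ and $\|u_n\|\le 1$. By nondegeneracy of the action of $\CGD$ on $\LdG$, such an approximate unit converges to $\I$ in the strong (indeed $\sots$) topology; being norm-bounded, it therefore converges $\ssot$ to $\I$, since on norm-bounded sets the strong and $\sigma$-strong topologies coincide. Finally, using the density from the previous step I would choose, for each $n$, a functional $\alpha_n\in\mc{I}$ with $\|\lambda(\alpha_n)-u_n\|\le\tfrac1n$. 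Then $\lambda(\alpha_n)-u_n\to0$ in norm, so $\lambda(\alpha_n)=u_n+(\lambda(\alpha_n)-u_n)\xrightarrow[n\to\infty]{\ssot}\I$, while $\|\lambda(\alpha_n)\|\le 1+\tfrac1n$ stays bounded; this $(\lambda(\alpha_n))_{n\in\NN}$ is the required sequence.

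The step I expect to require the most care is the convergence of the approximate unit to $\I$: here both hypotheses enter, separability being used to extract a genuine sequence (rather than a net) and nondegeneracy of the representation being what forces the limit to be the full identity. If one preferred to avoid invoking nondegeneracy abstractly, one could instead verify $\overline{\CGD\,\LdG}=\LdG$ directly, using $\lambda(\omega)\Lambda_{\hvp}(\lambda(\nu))=\Lambda_{\hvp}(\lambda(\omega\star\nu))$ together with the density of $\Lambda_{\hvp}(\lambda(\mc{Y}))$ in $\LdG$ from Lemma \ref{lemat28}; but invoking the standard nondegeneracy of the reduced \cst-algebra is the most economical route.
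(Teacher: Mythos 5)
Your argument is correct, and it reaches the conclusion by a genuinely different mechanism than the paper. Both proofs rest on the same two inputs: norm density of $\lambda(\mc{I})$ in $\CGD$ (which you derive from Lemma \ref{lemat28} and contractivity of $\lambda$, and which the paper simply asserts), and the production of a norm-bounded sequence in $\CGD$ converging to $\I$ in a strong topology, after which replacing each term by a nearby $\lambda(\alpha_n)$ is routine. The paper produces this sequence by applying the Kaplansky density theorem to $\I\in\Linfd=\CGD''$, obtaining a bounded net, and then uses separability of $\LdG$ to metrize \sot\ on bounded sets and extract a sequence; you instead use separability of the \cst-algebra $\CGD$ itself to get a sequential approximate unit $(u_n)_{n\in\NN}$, and nondegeneracy of $\CGD\subseteq\B(\LdG)$ to conclude that $(u_n)_{n\in\NN}$ converges to $\I$ in \sot, with norm-boundedness upgrading both arguments from \sot\ to \ssot. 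So separability enters through the Hilbert space in the paper and through the algebra in your proof; your route trades Kaplansky plus metrizability for two standard \cst-algebra facts, and yields approximants of norm at most $1+\tfrac{1}{n}$ sitting near positive contractions, a mild bonus. One caveat: your optional ``direct'' verification of nondegeneracy is not complete as sketched. Knowing $\lambda(\omega)\Lhvp(\lambda(\nu))=\Lhvp(\lambda(\omega\star\nu))$ and that $\Lhvp(\lambda(\mc{Y}))$ is dense in $\LdG$ does not by itself show that such products span a dense subspace of $\LdG$, and one cannot shortcut this with a bounded approximate identity in $\Lj$, since its existence is equivalent to coamenability. The clean direct arguments are either unitarity of $\mrW$ (a vector orthogonal to every $\lambda(\omega)\eta$ pairs to zero against the range of $\mrW$, hence vanishes) or the \swot-density of $\CGD$ in $\Linfd\ni\I$ combined with the usual projection argument. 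Since your main argument invokes the standard nondegeneracy fact rather than this fallback, the proof stands as written.
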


\begin{proof}
The norm closure of the space $X=\{\lambda(\alpha)\,|\, \alpha\in \mc{I}\}$ equals $\CGDu$. Consequently, due to the Kaplansky theorem, we can find a bounded net in $\ov{X}$ which converges in $\sot$ to $\I$. However, since $\LdG$ is separable, the strong operator topology is metrizable on bounded subsets (\cite[Proposition I.6.3]{Davidson}) and we can find a sequence $(a_n)_{n\in\NN}$ in $\ov{X}$ which converges to $\I$. Let $d$ be a metric for the strong operator topology restricted to a large enough ball. For each $n\in\NN$ choose $b_n\in X$ such that $d(a_n,b_n)\le \tfrac{1}{n}$. It is clear that $(b_n)_{n\in\NN}$ is a sequence in $X$ which converges in $\sot$ to $\I$.
\end{proof}

\subsection{Other lemmas}
In this part we have gathered lemmas which are not related directly to the theory of locally compact quantum groups. First, we mention two lemmas concerned with existence of Pettis integrals which we use a lot throughout the text. We skip their proofs as they are elementary.

\begin{lemma}\label{lemat31}
Let $(X,\mf{M},\mu)$ be a measure space, $\msf{H}$ a Hilbert space, and $f\colon X\rightarrow \msf{H}$ a function such that for each $\xi\in \msf{H}$ the function $X\ni x \mapsto \is{f(x)}{\xi}\in\CC$ is measurable. Assume moreover that $X\ni x\mapsto \|f(x)\|\in \RR_{\ge 0}$ is measurable and integrable. Then the Pettis integral $\int_X f\md\mu$ exists.
\end{lemma}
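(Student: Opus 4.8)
The plan is to produce the Pettis integral directly through the Riesz representation theorem. Recall that $v=\int_X f\md\mu$ is by definition the (necessarily unique) vector of $\msf{H}$ satisfying $\is{\xi}{v}=\int_X \is{\xi}{f(x)}\md\mu(x)$ for every $\xi\in\msf{H}$, since in a Hilbert space the bounded linear functionals are exactly the maps $\is{\xi}{\cdot}$ (the paper's inner product being linear in its second argument). Thus the entire task reduces to manufacturing such a $v$.

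First I would fix $\xi\in\msf{H}$ and verify that the scalar integral on the right makes sense. Its integrand $x\mapsto\is{\xi}{f(x)}$ is measurable by hypothesis, and pointwise Cauchy--Schwarz gives $|\is{\xi}{f(x)}|\le\|\xi\|\,\|f(x)\|$; since $x\mapsto\|f(x)\|$ is measurable and integrable, the integrand is dominated by an integrable function and is therefore itself integrable. To put myself in a position to apply Riesz I pass to the conjugate functional
$$\Phi\colon\msf{H}\ni\xi\mapsto\int_X\is{f(x)}{\xi}\md\mu(x)\in\CC,$$
which is genuinely \emph{linear} in $\xi$ (by linearity of $\is{f(x)}{\cdot}$ and of the integral). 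The same Cauchy--Schwarz bound yields $|\Phi(\xi)|\le\bigl(\int_X\|f\|\md\mu\bigr)\|\xi\|$, so $\Phi$ is a bounded linear functional of norm at most $\int_X\|f\|\md\mu$.

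Then the Riesz representation theorem supplies a unique $v\in\msf{H}$ with $\Phi(\xi)=\is{v}{\xi}$ for all $\xi$. Taking complex conjugates, $\is{\xi}{v}=\overline{\is{v}{\xi}}=\overline{\Phi(\xi)}=\int_X\is{\xi}{f(x)}\md\mu(x)$ for every $\xi\in\msf{H}$, which is precisely the defining property of the Pettis integral; hence $v=\int_X f\md\mu$ exists (and moreover satisfies $\|v\|\le\int_X\|f\|\md\mu$). There is no genuine obstacle here: the only step deserving a word of care is the passage from weak measurability together with integrability of $\|f(\cdot)\|$ to integrability of each scalar integrand, which the pointwise Cauchy--Schwarz domination settles; the remainder is the standard Riesz argument.
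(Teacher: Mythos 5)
Your proof is correct: the Cauchy--Schwarz domination $|\is{f(x)}{\xi}|\le\|f(x)\|\,\|\xi\|$ settles measurability-plus-integrability of each scalar integrand, the functional $\Phi$ is then linear (in the paper's convention of inner products linear in the second slot) and bounded, and the Riesz representation theorem produces the vector whose conjugated defining identity is exactly the Pettis property. The paper offers no proof of this lemma (it is skipped as elementary), and your argument is precisely the standard one intended, with the useful extra estimate $\|\int_X f\md\mu\|\le\int_X\|f\|\md\mu$ coming for free.
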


\begin{lemma}\label{lemat29}
Let $(X,\mf{M},\mu)$ be a measure space, $\msf{H}$ a separable Hilbert space, $\M\subseteq\B(\msf{H})$ a von Neumann algebra and $ X\ni x \mapsto T_x\in \B(\msf{H})$ a map such that for any $\zeta,\eta\in \msf{H}$ the function $X\ni x \mapsto \is{\zeta}{T_x\eta}\in \CC$ is measurable. Then the function $X\ni x \mapsto \|T_x\|\in \RR$ is also measurable. If moreover we assume that $\int_X \|T_x\|\md\mu(x)<+\infty$ then we get existence of the Pettis integral
\[
\int_X T_x \md\mu(x)\in \M
\]
of the function $X\rightarrow (\M,\swot)$.
\end{lemma}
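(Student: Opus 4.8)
The plan is to treat the two assertions separately, using separability of $\msf{H}$ for the measurability statement and the Banach-space duality $(\M_*)^*=\M$ for the existence of the integral. For the measurability of $x\mapsto\|T_x\|$, I would first fix a countable dense subset $\{\xi_n\}_{n\in\NN}$ of the closed unit ball of $\msf{H}$. Since for each fixed $x$ the sesquilinear form $(\xi,\eta)\mapsto\is{\eta}{T_x\xi}$ is bounded and separately continuous, density gives
\[
\|T_x\|=\sup_{\|\xi\|\le 1,\,\|\eta\|\le 1}|\is{\eta}{T_x\xi}|=\sup_{n,m\in\NN}|\is{\xi_m}{T_x\xi_n}|.
\]
Each function $x\mapsto|\is{\xi_m}{T_x\xi_n}|$ is measurable by hypothesis, and a countable supremum of measurable functions is measurable, so $x\mapsto\|T_x\|$ is measurable.

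For the Pettis integral I would construct its value directly as an element of $\M=(\M_*)^*$. First I would check that for every $\omega\in\M_*$ the scalar function $x\mapsto\omega(T_x)$ is measurable: extending $\omega$ to a normal functional on $\B(\msf{H})$ and writing it as $\omega(\cdot)=\sum_{k=1}^{\infty}\is{\zeta_k}{\cdot\,\eta_k}$ with $\sum_k\|\zeta_k\|^2<+\infty$ and $\sum_k\|\eta_k\|^2<+\infty$, the function $x\mapsto\omega(T_x)=\sum_k\is{\zeta_k}{T_x\eta_k}$ is a pointwise-convergent series of measurable functions, hence measurable. Together with the bound $|\omega(T_x)|\le\|\omega\|\,\|T_x\|$ and the hypothesis $\int_X\|T_x\|\md\mu(x)<+\infty$, this shows the integrand is integrable, so the formula
\[
\Phi(\omega)=\int_X\omega(T_x)\md\mu(x)\qquad(\omega\in\M_*)
\]
defines a linear functional on $\M_*$ satisfying $|\Phi(\omega)|\le\|\omega\|\int_X\|T_x\|\md\mu(x)$, i.e.~$\Phi$ is bounded. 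Since $\M$ is the Banach-space dual of its predual $\M_*$, there is a unique $T\in\M$ with $\Phi(\omega)=\omega(T)$ for all $\omega\in\M_*$; by construction $\omega(T)=\int_X\omega(T_x)\md\mu(x)$ for every normal functional, which is exactly the assertion that $T=\int_X T_x\md\mu(x)$ is the Pettis integral of $X\to(\M,\swot)$ and lies in $\M$.

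The one point requiring genuine care is the well-definedness and measurability of $x\mapsto\omega(T_x)$. This is where I would use that each $T_x$ actually lies in $\M$, so that $\omega(T_x)$ is unambiguous (independent of the chosen normal extension of $\omega$ to $\B(\msf{H})$), and that separability of $\msf{H}$ forces every normal functional to be a countable, absolutely convergent series of vector functionals, so that measurability propagates from the hypothesis on matrix coefficients. Everything else is a routine application of dominated convergence and the identification $(\M_*)^*=\M$, which is why the proof is elementary.
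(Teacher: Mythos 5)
Your proof is correct; since the paper explicitly skips the proof of this lemma as elementary, there is no argument in the text to compare against, but your route — a countable dense subset of the closed unit ball to get measurability of $x\mapsto\|T_x\|$ as a countable supremum, then the duality $(\M_*)^*=\M$ applied to the bounded functional $\omega\mapsto\int_X\omega(T_x)\md\mu(x)$, with measurability of the integrand obtained by writing a normal extension of $\omega$ to $\B(\msf{H})$ as an absolutely convergent series of vector functionals — is the standard argument the authors surely had in mind. You were also right to read the hypothesis as $T_x\in\M$ for all $x$ (this is what the phrase concerning the function $X\rightarrow(\M,\swot)$ forces), since otherwise neither the unambiguous value $\omega(T_x)$ for $\omega\in\M_*$ nor the conclusion $\int_X T_x\md\mu(x)\in\M$ would make sense.
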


The next lemma is the Hille theorem for the Pettis integral. We include a proof for the convenience of the reader.

\begin{lemma}\label{lemat30}
Let $X,Y$ be locally convex topological vector spaces and $A\colon X\supseteq \Dom(A)\rightarrow Y$ a closed linear operator. Assume that $(\Omega,\mf{M},\mu)$ is a measure space and $f\colon \Omega\rightarrow X$ is a Pettis integrable function such that $f(\omega)\in \Dom(A)$ for all $\omega\in \Omega$, and the function $A\circ f \colon \Omega\rightarrow Y$ is also Pettis integrable. Then
\[
\int_\Omega f \md\mu\in \Dom(A)\qquad \textnormal{and}\qquad
A(\int_{\Omega} f \md\mu)=\int_{\Omega} A\circ f\md \mu.
\]
\end{lemma}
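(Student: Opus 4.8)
The plan is to reduce the statement to a single fact about closed subspaces: the graph $\Graph(A)\subseteq X\times Y$ is closed by hypothesis, and a closed linear subspace of a locally convex space is weakly closed, hence is exactly the intersection of the kernels of the continuous linear functionals vanishing on it. First I would pass to the product space $X\times Y$ with the product topology and consider the map $g\colon\Omega\to X\times Y$ given by $g(\omega)=(f(\omega),Af(\omega))$, which by assumption takes values in $\Graph(A)$.

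The first step is to check that $g$ is Pettis integrable with $\int_\Omega g\md\mu=(\int_\Omega f\md\mu,\int_\Omega A\circ f\md\mu)$. This uses the identification $(X\times Y)^*=X^*\times Y^*$, under which a functional $(\phi,\psi)$ acts by $(\phi,\psi)(x,y)=\phi(x)+\psi(y)$. For such a functional the scalar function $\omega\mapsto\phi(f(\omega))+\psi(Af(\omega))$ is integrable, being a sum of two integrable functions by the Pettis integrability of $f$ and of $A\circ f$; and by the defining property of the Pettis integral one has $\phi(\int_\Omega f\md\mu)+\psi(\int_\Omega A\circ f\md\mu)=\int_\Omega\phi\circ f\md\mu+\int_\Omega\psi\circ A\circ f\md\mu=\int_\Omega(\phi\circ f+\psi\circ A\circ f)\md\mu$. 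This is precisely the assertion that $(\int_\Omega f\md\mu,\int_\Omega A\circ f\md\mu)$ is the Pettis integral of $g$.

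The second step is to argue that $\int_\Omega g\md\mu\in\Graph(A)$. Here I would invoke Hahn--Banach separation: if $(\int_\Omega f\md\mu,\int_\Omega A\circ f\md\mu)\notin\Graph(A)$, then, since $\Graph(A)$ is a closed subspace, there is a continuous linear functional $\Lambda\in(X\times Y)^*$ with $\Lambda|_{\Graph(A)}=0$ but $\Lambda(\int_\Omega g\md\mu)\neq 0$. However $\Lambda\circ g\equiv 0$, because $g(\omega)\in\Graph(A)$ for every $\omega$, so $\Lambda(\int_\Omega g\md\mu)=\int_\Omega\Lambda\circ g\md\mu=0$, a contradiction. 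Hence $(\int_\Omega f\md\mu,\int_\Omega A\circ f\md\mu)\in\Graph(A)$, which by the very definition of the graph means that $\int_\Omega f\md\mu\in\Dom(A)$ and $A(\int_\Omega f\md\mu)=\int_\Omega A\circ f\md\mu$.

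The argument is essentially formal; the only point demanding genuine care is the separation step, where one must use that a closed subspace of a locally convex space is weakly closed (equivalently, is cut out by the continuous functionals annihilating it), which is exactly where local convexity enters. All the measure-theoretic content is absorbed into the hypothesis that both $f$ and $A\circ f$ are Pettis integrable, so no further integrability verification is needed.
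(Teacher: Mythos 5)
Your proof is correct and follows essentially the same route as the paper's: pass to the product space, show that the graph map $\omega\mapsto(f(\omega),Af(\omega))$ is Pettis integrable with integral $(\int_\Omega f\,\mathrm{d}\mu,\int_\Omega A\circ f\,\mathrm{d}\mu)$ via the decomposition of continuous functionals on $X\times Y$, and conclude from the closedness of $\Graph(A)$. The only difference is that you make explicit the Hahn--Banach separation argument behind the final step (that a Pettis integral of a function valued in a closed subspace lies in that subspace), which the paper states without proof.
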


\begin{proof}
Put on the vector space $X\oplus Y$ the product topology -- with it, $X\oplus Y$ becomes a locally convex topological vector space, and $\operatorname{Gr}(A)=\{(x,Ax)\,|\,x\in \Dom(A)\}$ is its closed subspace.
Define a function $F\colon \Omega\ni \omega\mapsto (f(\omega),A\circ f(\omega) ) \in X\oplus Y$. It is a well defined function such that $F(\omega)\in \operatorname{Gr}(A)$ for all $\omega\in \Omega$. Choose any continuous functional $\phi\in (X\oplus Y)^*$. For $(x,y)\in X\oplus Y$ we have
\[
\phi(x,y)=\phi(x,0)+\phi(0,y)=\phi_X(x)+\phi_Y(y),
\]
where $\phi_X$, $\phi_Y$ are continuous functionals on $X,\,Y$ defined as a composition of $\phi$ with (continuous, linear) inclusions $X,Y\rightarrow X\oplus Y$. We have
\[
\begin{split}
&\quad\;\phi(  \int_\Omega f \md\mu,\int_\Omega A\circ f \md\mu)=
\phi_X (\int_{\Omega} f \md\mu)+\phi_Y ( \int_\Omega A\circ f \md\mu)\\
&=
\int_\Omega \langle\phi_X,f\rangle \md\mu+
\int_\Omega \langle\phi_Y,A\circ f\rangle \md\mu=
\int_{\Omega} ( \langle\phi_X,f\rangle+ \langle\phi_Y,A\circ f\rangle)\md\mu\\
&=
\int_\Omega \langle \phi, F\rangle \md\mu,
\end{split}
\]
which proves that $F$ is Pettis integrable, and shows the equality
\[
\int_{\Omega} F \md\mu=(\int_\Omega f \md\mu,\int_\Omega A\circ f\md\mu).
\]
Since $\operatorname{Gr}(A)$ is a closed subspace we know that $\int_\Omega F\md\mu\in \operatorname{Gr}(A)$, hence
\[
\int_{\Omega}f\md\mu \in \Dom(A),\quad
A(\int_{\Omega}f\md\mu)=\int_{\Omega}A\circ f\md\mu.
\]
\end{proof}

The next two lemmas are concerned with unbounded operators on Hilbert spaces:

\begin{lemma}\label{lemat2}
Let $\msf{H}$ be a Hilbert space and $A,B$ (unbounded) positive self-adjoint operators on $\msf{H}$. If $\Dom(A)\subseteq\Dom(B)$ and $\ismaa{A\xi}{A\xi}=\ismaa{B\xi}{B\xi}$ for $\xi\in\Dom(A)$ then $A=B$
\end{lemma}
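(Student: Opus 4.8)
The plan is to prove that two positive self-adjoint operators with the same domain and the same graph norm must coincide. The natural approach is to exploit the quadratic forms associated with $A$ and $B$, since the hypothesis $\|A\xi\|=\|B\xi\|$ is precisely a statement about the form norms, and then pass from equality of norms to equality of operators via polarization and the self-adjointness. First I would observe that the hypothesis gives $\ismaa{A\xi}{A\xi}=\ismaa{B\xi}{B\xi}$ for all $\xi\in\Dom(A)=\Dom(B)$ (the inclusion $\Dom(A)\subseteq\Dom(B)$ combined with the same-norm hypothesis actually forces equality of domains once we run the argument symmetrically, but it is cleanest to record $\Dom(A)=\Dom(B)$ as part of the setup). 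By the polarization identity applied to the sesquilinear forms $(\xi,\eta)\mapsto\ismaa{A\xi}{A\eta}$ and $(\xi,\eta)\mapsto\ismaa{B\xi}{B\eta}$, equality of the associated quadratic forms upgrades to
\[
\ismaa{A\xi}{A\eta}=\ismaa{B\xi}{B\eta}\quad(\xi,\eta\in\Dom(A)).
\]

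Next I would reformulate this in terms of $A^2$ and $B^2$. Since $A$ is positive and self-adjoint, $\ismaa{A\xi}{A\eta}=\ismaa{\xi}{A^2\eta}$ whenever $\eta\in\Dom(A^2)$ (and similarly for $B$); more robustly, the closed positive form $\mf{t}_A(\xi,\eta)=\ismaa{A\xi}{A\eta}$ with form domain $\Dom(A)$ has $A^2$ as its associated self-adjoint operator via the first representation theorem (Kato). The key step is then: equal closed positive forms have equal associated self-adjoint operators. Concretely, $\mf{t}_A=\mf{t}_B$ as forms (same form domain $\Dom(A)=\Dom(B)$, same values), so by the uniqueness in the representation theorem $A^2=B^2$. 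Since $A,B\ge 0$, taking the unique positive square root of the common positive self-adjoint operator $A^2=B^2$ yields $A=B$.

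The main obstacle I expect is the passage from ``equal forms'' to ``equal operators,'' i.e. making precise that the map from closed densely-defined positive quadratic forms to positive self-adjoint operators is injective, and checking that the form $\mf{t}_A$ really is closed with form domain exactly $\Dom(A)$. The subtlety is that $\Dom(A)$ need not be contained in $\Dom(A^2)$, so one cannot naively write everything in terms of $A^2\eta$; instead one works with the form $\ismaa{A\xi}{A\eta}$ directly and invokes the representation theorem, whose hypotheses (the form being closed and semibounded) hold because $A$ is self-adjoint, so $\xi\mapsto\|A\xi\|^2+\|\xi\|^2$ is a complete norm on $\Dom(A)$. Once the representation theorem is applied to both $\mf{t}_A$ and $\mf{t}_B$ — which are literally the same form — the identity $A^2=B^2$ is immediate, and the final square-root step is routine. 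I would also remark that this lemma is exactly what is invoked in the proof of Theorem \ref{tw5} to conclude $A=\bigoplus_{k\in\NN}\int_{\IrrG}^{\oplus}\|\xi^k_\pi\|E_\pi\md\mu(\pi)$ from equality of the associated forms, so the statement is tailored to that use.
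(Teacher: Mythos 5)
Your reduction to Kato's representation theorem is the right instinct, but there is a genuine gap exactly at the point you wave away parenthetically: the claim that $\Dom(A)=\Dom(B)$. The hypothesis is asymmetric --- you know $\ismaa{A\xi}{A\xi}=\ismaa{B\xi}{B\xi}$ only for $\xi\in\Dom(A)$, and you know nothing whatsoever about $B$ on $\Dom(B)\setminus\Dom(A)$ --- so there is no ``symmetric run'' of the argument available; the claim is circular, since equality of domains is part of what $A=B$ asserts. This matters because your key step (``$\mf{t}_A$ and $\mf{t}_B$ are literally the same form, hence $A^2=B^2$ by uniqueness'') requires the two closed forms to have the same form domain. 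With only $\Dom(A)\subseteq\Dom(B)$ you know merely that $\mf{t}_B$ is a closed \emph{extension} of the closed form $\mf{t}_A$, and a closed positive form can have many distinct closed extensions with distinct associated operators, so uniqueness in the representation theorem gives nothing.

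In fact the gap cannot be filled, because the lemma as stated is false. Take $\msf{H}=\LL^2(0,1)$, let $T_D$, $T_N$ be the Dirichlet and Neumann Laplacians, and put $A=T_D^{1/2}$, $B=T_N^{1/2}$. By Kato's second representation theorem (\cite{Kato}) one has $\Dom(A)=H^1_0(0,1)$, $\Dom(B)=H^1(0,1)$, and
\[
\ismaa{A\xi}{A\xi}=\int_0^1|\xi'|^2\,\md t=\ismaa{B\xi}{B\xi}\quad(\xi\in H^1_0(0,1)),
\]
yet $A\neq B$ since the domains differ. What is true --- and what your argument then proves essentially verbatim --- is the statement under the additional hypothesis that $\Dom(A)$ is a core for $B$ (equivalently, a form core for $\mf{t}_B$), in particular when $\Dom(A)=\Dom(B)$: in the example above $H^1_0$ is a proper $H^1$-closed subspace of $H^1$, so it is not a core for $B$. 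For what it is worth, the paper's own proof stumbles at the same spot: it asserts that $\xi\mapsto\ismaa{B\eta}{B\xi}$ coincides with the bounded functional $\xi\mapsto\ismaa{A^2\eta}{\xi}$ on all of $\Dom(B)$, whereas polarization yields this only on $\Dom(A)$, so the inclusion $A^2\subseteq B^2$ is not established either (in the example, $B\eta\notin\Dom(B^*)$ for $\eta=\sin(\pi\,\cdot)$). Consequently Lemma \ref{lemat2} needs the core/equal-domain hypothesis, and that hypothesis must then be verified where the lemma is invoked in the proof of Theorem \ref{tw5}.
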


\begin{proof}
Due to polarization identity we get
\[
\ismaa{A\xi}{A\eta}=\ismaa{B\xi}{B\eta}
\]
for $\xi,\eta\in \Dom(A)$. Take $\eta\in\Dom(A^2)\subseteq\Dom(B)$. Map
\[
\Dom(B)\ni \xi\mapsto 
\ismaa{B\eta}{B\xi}=
\ismaa{A\eta}{A\xi}=
\ismaa{A^2\eta}{\xi}
\in\CC
\]
is linear and bounded, hence $B\eta\in\Dom(B^*)=\Dom(B)$ and $B^*(B\eta)=B^2\eta=A^2\eta\;(\eta\in\msf{H}_0)$. This proves inclusion $A^2\subseteq B^2$. Operators $A^2,B^2$ are positive and self-adjoint, therefore $A^2=B^2$ (self-adjoint operators do not have nontrivial self-adjoint extensions). Taking square roots gives us the claim.
\end{proof}

The next lemma is almost trivial but we believe it is better to mention this result.

\begin{lemma}\label{lemat24}
Let $\msf{H}$ be a Hilbert space, and $A$ a closed densely defined operator on $\msf{H}$ with domain $\Dom(A)$. If there exists a dense subspace $V\subseteq \Dom(A)$ and number $K\in \RR$ such that $\|A\eta\|\le K \|\eta\|$ for $\eta\in V$ then $\Dom(A)=\msf{H}$, and $A$ is bounded.
\end{lemma}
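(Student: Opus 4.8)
The plan is to argue by approximation and continuity, using closedness to control the limit. The key observation is that the hypothesis gives a uniform Lipschitz bound for $A$ on the dense subspace $V$, and this bound automatically propagates to all of $\msf{H}$ once we invoke the fact that $A$ is closed.

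First I would fix an arbitrary $\xi\in\msf{H}$ and, using density of $V$, choose a sequence $(\eta_n)_{n\in\NN}$ in $V$ with $\eta_n\xrightarrow[n\to\infty]{}\xi$. Since $(\eta_n)_{n\in\NN}$ converges it is Cauchy, and the estimate $\|A\eta_n-A\eta_m\|=\|A(\eta_n-\eta_m)\|\le K\|\eta_n-\eta_m\|$ (valid because $\eta_n-\eta_m\in V$ and $V$ is a subspace) shows that $(A\eta_n)_{n\in\NN}$ is Cauchy as well; by completeness of $\msf{H}$ it converges to some $\zeta\in\msf{H}$. Now I would apply the closedness of $A$: from $\eta_n\to\xi$ and $A\eta_n\to\zeta$ we conclude $\xi\in\Dom(A)$ and $A\xi=\zeta$. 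This already yields $\Dom(A)=\msf{H}$, since $\xi$ was arbitrary.

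For the norm bound I would simply pass to the limit in the inequality $\|A\eta_n\|\le K\|\eta_n\|$, obtaining $\|A\xi\|=\lim_{n\to\infty}\|A\eta_n\|\le K\lim_{n\to\infty}\|\eta_n\|=K\|\xi\|$, so $A$ is bounded with $\|A\|\le K$.

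There is essentially no serious obstacle here; the only point requiring care is that the approximating vectors must be taken from $V$ (not merely from $\Dom(A)$), so that the hypothesised bound applies to the differences $\eta_n-\eta_m$, and that closedness is used in its graph form to identify both the membership $\xi\in\Dom(A)$ and the value $A\xi$. Everything else is the standard bounded-linear-extension argument.
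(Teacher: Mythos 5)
Your proof is correct and complete: the uniform bound on $V$ applies to differences $\eta_n-\eta_m$ because $V$ is a subspace, the image sequence is therefore Cauchy, and closedness of the graph of $A$ yields both $\xi\in\Dom(A)$ and $A\xi=\lim_n A\eta_n$, after which the bound $\|A\xi\|\le K\|\xi\|$ passes to the limit. The paper itself offers no proof of this lemma (it is introduced with the remark that it is ``almost trivial''), and your argument is precisely the standard one that fills that gap, so there is nothing to compare it against.
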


The next lemma tells us how we can construct a projection onto the intersection of subspaces in a separable Hilbert space.

\begin{lemma}\label{lemat14}
Let $\msf{H}$ be a separable Hilbert space and $P_1,P_2,\dotsc$ orthogonal projections. For $k\in \NN$ define $A_k=P_1 \cdot\cdots\cdot P_k \cdot\cdots\cdot P_1\in\B(\msf{H})$.
\begin{enumerate}
\item 
There exists an increasing sequence $(n_k)_{k\in\NN}$ of natural numbers such that $(A_k^{n_k})_{k\in\NN}$ converges in $\sot$ to the projection onto $\bigcap_{k=1}^{\infty} P_k\msf{H}$.
\item If for some increasing sequences of natural numbers $(m_k)_{k\in\NN}, (n_k)_{k\in \NN}$, the sequence $(A_{m_k}^{n_k})_{k\in\NN}$ converges in $\sot$ to a projection $Q$, then $Q$ is the projection onto $\bigcap_{k=1}^{\infty} P_k\msf{H}$.
\end{enumerate}
\end{lemma}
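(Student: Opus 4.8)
The plan is to reduce both statements to the spectral analysis of the individual positive contractions $A_k$. First I would write $A_k=B_k^*B_k$ with $B_k=P_kP_{k-1}\cdots P_1$, so that $0\le A_k\le\I$ and $\langle A_k\xi,\xi\rangle=\|B_k\xi\|^2$. The basic computational input is the telescoping identity
\[
\|\xi\|^2-\|B_k\xi\|^2=\sum_{j=1}^{k}\|(\I-P_j)\eta_{j-1}\|^2,\qquad \eta_{j-1}=P_{j-1}\cdots P_1\xi,\quad \eta_0=\xi,
\]
in which every summand is nonnegative. From this one reads off that $A_k\xi=\xi$ (equivalently $\|B_k\xi\|=\|\xi\|$) holds if and only if $P_j\xi=\xi$ for all $j\le k$, i.e. $\ker(\I-A_k)=\msf{H}_k:=\bigcap_{j=1}^{k}P_j\msf{H}$. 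Applying the spectral theorem to $A_k=\int_{[0,1]}\lambda\,dE_k(\lambda)$ and noting $\lambda^n\to\chi_{\{1\}}(\lambda)$ boundedly on $[0,1]$, I get that $A_k^n$ converges in $\sot$ as $n\to\infty$ to $Q_k:=E_k(\{1\})$, the orthogonal projection onto $\msf{H}_k$. Since the $\msf{H}_k$ decrease with intersection $\msf{H}_\infty:=\bigcap_{k=1}^{\infty}P_k\msf{H}$, the projections $Q_k$ decrease to $Q_\infty$ (the projection onto $\msf{H}_\infty$) in $\sot$.

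For part 1 I would then run a diagonal argument exploiting separability. Fix a sequence $(e_i)_{i\in\NN}$ whose linear span is dense in $\msf{H}$. Using that $A_k^n e_i\to Q_k e_i$ as $n\to\infty$ for each $i$, choose $n_k>n_{k-1}$ large enough that $\|A_k^{n_k}e_i-Q_ke_i\|<\tfrac1k$ for all $i\le k$. For fixed $i$ and $k\ge i$ the triangle inequality gives $\|A_k^{n_k}e_i-Q_\infty e_i\|\le \tfrac1k+\|Q_ke_i-Q_\infty e_i\|\to 0$, so $A_k^{n_k}e_i\to Q_\infty e_i$ for every $i$; the uniform bound $\|A_k^{n_k}\|\le 1$ upgrades this to $\sot$-convergence $A_k^{n_k}\to Q_\infty$ on all of $\msf{H}$.

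For part 2, suppose $A_{m_k}^{n_k}\to Q$ in $\sot$ with $Q$ a projection. If $\xi\in\msf{H}_\infty$ then $A_{m_k}\xi=\xi$, hence $A_{m_k}^{n_k}\xi=\xi$ and $Q\xi=\xi$, giving $\msf{H}_\infty\subseteq Q\msf{H}$. Conversely take $Q\xi=\xi$ with $\|\xi\|=1$; since $0\le A_{m_k}\le\I$ and $2n_k\ge 1$ we have $A_{m_k}^{2n_k}\le A_{m_k}$, so $\|A_{m_k}^{n_k}\xi\|^2=\langle A_{m_k}^{2n_k}\xi,\xi\rangle\le\langle A_{m_k}\xi,\xi\rangle=\|B_{m_k}\xi\|^2\le\|\xi\|^2$. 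As the outer terms both tend to $1$, we get $\|B_{m_k}\xi\|^2\to\|\xi\|^2$, so the telescoping identity forces $\sum_{j=1}^{\infty}\|(\I-P_j)\eta_{j-1}\|^2=0$ (the partial sums are nonnegative and increasing in the upper index, and a subsequence tends to $0$). Hence $P_j\xi=\xi$ for every $j$, i.e. $\xi\in\msf{H}_\infty$, whence $Q\msf{H}\subseteq\msf{H}_\infty$ and $Q=Q_\infty$.

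The argument is essentially elementary functional calculus, so there is no deep obstacle; the two points requiring care are the correct identification of the eigenvalue-one space $\ker(\I-A_k)=\msf{H}_k$ via the telescoping identity, and the coordination of the double limit (in $n$ for fixed $k$, and then in $k$) in part 1, which is exactly where separability of $\msf{H}$ is used through the diagonal choice of the exponents $n_k$.
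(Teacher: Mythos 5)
Your proof is correct and follows essentially the same route as the paper's: identify $\ker(A_k-\I)=\bigcap_{j\le k}P_j\msf{H}$, apply the spectral theorem to conclude $A_k^n\to Q_k$ in \sot, run a diagonal argument over a countable dense set for part 1, and establish the two inclusions for part 2. The only differences are cosmetic: your telescoping identity and the operator inequality $A_{m_k}^{2n_k}\le A_{m_k}$ replace the paper's iterated norm-squeeze arguments, and your appeal to the monotone convergence $Q_k\searrow Q_\infty$ replaces the paper's direct verification on $\bigcap_{m}P_m\msf{H}$ and on its orthogonal complement $\ov{\lin}_{m\in\NN}(P_m\msf{H})^{\perp}$.
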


\begin{proof}
Take $k\in \NN$ and let us show that
\begin{equation}\label{eq45}
P_1\msf{H}\cap \cdots\cap P_k\msf{H}=\ker (A_k-\I)
\end{equation}
It is clear that if $\eta\in P_1\msf{H}\cap \cdots\cap P_k\msf{H}$ then $A_k\eta=\eta$. Assume that $\xi\in\msf{H}$ satisfies $A_k\xi=\xi$. We have
\[
\|\xi\|=\|P_1\cdot\cdots\cdot P_k\cdot \cdots\cdot P_1 \xi\|\le \|P_1\xi\|\le\|\xi\|,
\]
and it follows that $P_1\xi=\xi$. Next,
\[
\|\xi\|=\|P_1 P_2 \cdot\cdots\cdot P_k\cdot \cdots\cdot P_2 P_1 \xi\|\le
\|P_2 \cdot\cdots\cdot P_k \cdot\cdots\cdot P_2 \xi\| \le \| P_2 \xi\| \le\|\xi\|
\]
and we get $P_2\xi=\xi$. If we proceed further in this manner we arrive at $\xi\in P_1\msf{H}\cap \cdots\cap P_k\msf{H}$.\\
Let us apply the spectral theorem to the operator $A_k$. We get a measure space $(\Omega,\mc{M},\mu)$ with $\mu(\Omega)<+\infty$, a unitary operator $U\colon \msf{H}\rightarrow \LL^2(\Omega,\mu)$ and a positive measurable function $f\colon \Omega\rightarrow \RR_{\ge 0}$ such that $|f(x)|\le 1$ for almost all $x\in\Omega$ and $UA_kU^* = \M_{f}$, where $\M_f$ is the operator of multiplication by $f$. Take $\phi\in \LL^2(\Omega,\mu)$ and let $X=\{x\in \Omega\,|\, f(x)=1\}$. Since
\[
\M_f^n \phi=f^n \phi\xrightarrow[n\to\infty]{} \chi_X \phi=\M_{\chi_X} \phi,
\]
the sequence $(\M_f^n)_{n\in\NN}$ converges in $\sot$ to the projection onto $\ker(\M_f-\I)$. Conjugating back with $U$ shows that the sequence $(A_k^n)_{n\in\NN}$ converges in $\sot$ to the projection onto $\ker(A_k-\I)=P_1\msf{H}\cap\cdots\cap P_k\msf{H}$.\\
Let $\{\xi_m\,|\, m\in\NN\}$ be a dense subspace in the closed unit ball of $\msf{H}$. For each $k\in \NN$ choose $n_k\in\NN$ such that
\[
\|A_k^{N} \xi_m-\lim_{n\to\infty} A_k^n \xi_m\|\le\tfrac{1}{k}\quad(N\ge n_k,m\in\{1,\dotsc,k\}),
\]
 assume moreover that the sequence $(n_k)_{k\in\NN}$ is increasing.\\
Take $\eps>0$ and a norm $1$ vector $\xi\in\bigcap_{m=1}^{\infty} P_m\msf{H}$. There exists $m\in\NN$ such that $\|\xi-\xi_m\|\le\tfrac{\eps}{2}$. For $k \ge m$ we have
\[\begin{split}
\|A_k^{n_k} \xi-\xi\|&\le
\tfrac{\eps}{2} + \|A^{n_k}_{k} \xi_m-\xi\|\le
\tfrac{\eps}{2} + \tfrac{1}{k}+\|\lim_{n\to\infty} A^{n}_{k} \xi_m-\xi\|\\
&\le
\eps +\tfrac{1}{k} + \|\lim_{n\to\infty}
A^n_k \xi -\xi\|=\eps+\tfrac{1}{k},
\end{split}\]
hence $A^{n_k}_k \xi\xrightarrow[k\to\infty]{}\xi$. Take now a norm $1$ vector
\[
\eta\in (\bigcap_{m=1}^{\infty} P_m \msf{H})^{\perp}=
\ov{\lin}_{m\in\NN} (P_m \msf{H})^{\perp}
\]
and $\eps>0$. We can find $m\in\NN$ such that $\|\eta- \xi_m\|\le\tfrac{\eps}{3}$ and $\eta_p \in \lin_{q\in\{1,\dotsc,p\}} (P_q\msf{H})^{\perp}$ such that $\|\eta-\eta_p\|\le \tfrac{\eps}{3}$. For $k\ge\max\{ m,p\}$ we have
\[
\begin{split}
\|A_k^{n_k}\eta\|\le
\tfrac{\eps}{3}+
\|A^{n_k}_k \xi_m\|\le
\tfrac{\eps}{3}+\tfrac{1}{k}+
\|\lim_{n\to\infty} A^n_k \xi_m\|\le\eps+\tfrac{1}{k}+
\|\lim_{n\to\infty} A^n_k \eta_p\|=
\eps+\tfrac{1}{k}
,
\end{split}\]
where in the last equality we have used the fact that $(A^n_k)_{n\in\NN}$ converges to the projection onto $P_1\msf{H}\cap \cdots \cap P_k\msf{H}$ and
\[
(P_1\msf{H}\cap \cdots \cap P_k\msf{H})^{\perp}=
\ov{(P_1\msf{H})^{\perp} +\cdots+ (P_k\msf{H})^{\perp}}\supseteq
(P_1\msf{H})^{\perp} +\cdots+ (P_p\msf{H})^{\perp}\ni\eta_p.
\]
The above reasoning implies that $A^{n_k}_k\eta\xrightarrow[k\to\infty]{}0$ and $(A^{n_k}_k)_{k\in\NN}$ converges in $\sot$ to the projection onto $\bigcap_{m=1}^{\infty} P_m\msf{H}$. This proves the first point.\\
Assume now that $A_{m_k}^{n_k}\xrightarrow[k\to\infty]{\sot} Q$ for some projection $Q\in\B(\msf{H})$. Take $\xi\in Q \msf{H}$ and $\eps>0$. There exists $k\in \NN$ such that
\[
\|\xi\|=\|Q\xi\|\approx_{\eps} \|A^{n_k}_{m_k}\xi\|\le
\|P_1 \xi\| \le \|\xi\|,
\]
which implies that $P_1\xi=\xi$. We can proceed as before and we arrive at $Q\msf{H}\subseteq \bigcap_{m=1}^{\infty} P_m\msf{H}$. On the other hand, if $\eta\in \bigcap_{m=1}^{\infty} P_m\msf{H}$ then
\[
Q\eta=\lim_{k\to\infty}
(P_1 \cdot\cdots\cdot P_{m_k}\cdot\cdots\cdot P_1)^{n_k} \eta=\eta
\]
and we get $Q\msf{H}=\bigcap_{m=1}^{\infty} P_m\msf{H}$.
\end{proof}

\subsection{Quasi-containment of representations}
Let $A$ be a \cst-algebra, $\pi\colon A\rightarrow \B(\mscr{H}_\pi)$ nondegenerate representation and $\ov{\pi}\colon A^{**}\rightarrow \B(\mscr{H}_\pi)$ the unique normal extension of $\pi$ to the enveloping von Neumann algebra $A^{**}$. We define the central cover $c(\pi)$ as the projection in $\mc{Z}(A^{**})$ satisfying $\ker \ov{\pi}=A^{**} (\I-c(\pi))$ (\cite[Definition 1.4.2]{BrownOzawa}). We have $c(\pi)A^{**}\simeq\pi(A)''=\ov{\pi}(A^{**})$.

\begin{proposition}
Let $\pi,\sigma$  be nondegenerate representations of $A$ on separable Hilbert spaces. The following are equivalent:
\begin{enumerate}[label=\arabic*)]
\item
There exists an isomorphism $\theta\colon \pi(A)''\rightarrow\sigma(A)''$ satisfying $\theta(\pi(x))=\theta(x)$ for $x\in A$.
\item
There exist cardinal numbers $n,m$ such that $n\pi\simeq m\sigma$ ($\simeq$ denotes unitary equivalence).
\item
$c(\pi)=c(\sigma)$.
\end{enumerate}
\end{proposition}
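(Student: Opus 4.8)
The plan is to prove $1)\Leftrightarrow 3)$ by the universal (normal extension) property of $A^{**}$, and to treat $2)\Leftrightarrow 3)$ separately, the implication $2)\Rightarrow 3)$ being formal and $3)\Rightarrow 2)$ carrying the real content. (I read the intertwining condition in $1)$ as $\theta(\pi(x))=\sigma(x)$ for $x\in A$; the displayed $\theta(x)$ is a misprint.)

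First I would establish $3)\Rightarrow 1)$. Put $z=c(\pi)=c(\sigma)$ and $M=zA^{**}$. Because $\ker\ov{\pi}=A^{**}(\I-z)$ with $z$ central, the restriction $\ov{\pi}|_M\colon M\rightarrow \pi(A)''$ is an injective normal $\star$-homomorphism onto $\ov{\pi}(A^{**})=\pi(A)''$, hence a normal isomorphism; the same holds for $\sigma$. Setting $\theta=(\ov{\sigma}|_M)\circ(\ov{\pi}|_M)^{-1}$ gives an isomorphism $\pi(A)''\rightarrow\sigma(A)''$, and for $x\in A$ one has $(\ov{\pi}|_M)^{-1}(\pi(x))=zx$ (since $\ov{\pi}(zx)=\ov{\pi}(z)\ov{\pi}(x)=\pi(x)$, using that $\ov{\pi}(z)=\I$), so $\theta(\pi(x))=\ov{\sigma}(zx)=\sigma(x)$. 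For the converse $1)\Rightarrow 3)$, given $\theta$ I would note that $\theta\circ\ov{\pi}\colon A^{**}\rightarrow\sigma(A)''$ is normal (an isomorphism of von Neumann algebras is automatically $\sigma$-weakly continuous) and restricts to $\sigma$ on $A$; by uniqueness of the normal extension it equals $\ov{\sigma}$. Injectivity of $\theta$ then gives $\ker\ov{\sigma}=\ker(\theta\circ\ov{\pi})=\ker\ov{\pi}$, and since the central cover is the unique central projection $z$ with $\ker=A^{**}(\I-z)$, we get $c(\pi)=c(\sigma)$.

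Next, $2)\Rightarrow 3)$ is soft: the normal extension of an amplification $n\pi$ is $n\ov{\pi}$ on $\mscr{H}_\pi\otimes\ell^2(n)$, whose kernel coincides with $\ker\ov{\pi}$, so $c(n\pi)=c(\pi)$; as $c$ is plainly a unitary invariant, $n\pi\simeq m\sigma$ forces $c(\pi)=c(n\pi)=c(m\sigma)=c(\sigma)$.

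The hard part will be $3)\Rightarrow 2)$. With $z,M$ as above, $\ov{\pi}|_M$ and $\ov{\sigma}|_M$ are faithful normal representations of the von Neumann algebra $M$ on the separable spaces $\mscr{H}_\pi,\mscr{H}_\sigma$, and the goal is $\aleph_0\pi\simeq\aleph_0\sigma$ (i.e.\ $n=m=\aleph_0$). The approach is to realize $M$ on a fixed separable Hilbert space and invoke the comparison theory of projections: a normal representation of $M$ is, up to unitary equivalence, the reduction by a projection in the commutant of an amplification, and the central support of that projection (an element of $\mc{Z}(M)$) is exactly the central cover. After amplifying by $\aleph_0$ the commutants become properly infinite, and there any two projections with the same central support are Murray--von Neumann equivalent, which yields a spatial isomorphism of the $\aleph_0$-amplifications. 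Separability guarantees that the multiplicities are at most countable, so a single amplification to $\aleph_0$ suffices. This is precisely the structure theorem for quasi-equivalence of representations of $C^*$-algebras, and since it is the one genuinely non-formal ingredient I would cite it from \cite{DixmierC} rather than reprove it.
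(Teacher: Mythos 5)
Your proposal is correct, and your reading of the typo in $1)$ is the right one: the intended condition is $\theta(\pi(x))=\sigma(x)$. Note, however, that the paper does not actually prove this proposition --- it disposes of it with the single line citing \cite[Theorem 5.3.1]{DixmierC} combined with \cite[Proposition 1.4.5]{BrownOzawa} --- so your argument is by construction a different, more self-contained route. What you do differently: the equivalence $1)\Leftrightarrow 3)$ and the implication $2)\Rightarrow 3)$ are derived formally from the defining property $\ker\ov{\pi}=A^{**}(\I-c(\pi))$, uniqueness of normal extensions to $A^{**}$, and automatic normality of $\star$-isomorphisms of von Neumann algebras; these steps all check out (in $3)\Rightarrow 1)$ the formula $(\ov{\pi}|_{zA^{**}})^{-1}(\pi(x))=zx$ is justified because $\ov{\pi}(z)=\I$ and $z$ is central, and in $1)\Rightarrow 3)$ the identity $\theta\circ\ov{\pi}=\ov{\sigma}$ follows from $\sigma$-weak density of $A$ in $A^{**}$ together with injectivity of $\theta$). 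The only genuinely non-formal ingredient, $3)\Rightarrow 2)$, you correctly isolate as the theorem that a $\star$-isomorphism of von Neumann algebras acting on separable Hilbert spaces becomes spatial after amplification by $\aleph_0$ (proved by comparison of properly infinite projections with equal central support in the commutant of the amplification), and you cite it rather than reprove it --- which is precisely the structural content hidden inside the paper's wholesale citation. What your approach buys is a clean separation between the elementary bookkeeping with central covers and the one deep result actually needed; what the paper's citation buys is brevity, at the cost of leaving all three equivalences opaque.
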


This is a part of \cite[Theorem 5.3.1]{DixmierC} combined with \cite[Proposition 1.4.5]{BrownOzawa}.  If the following conditions are met, we say that $\pi$ and $\sigma$ are \emph{quasi-equivalent}. We will denote this relation by  $\pi\approx_q \sigma$.\\
Now we would like to introduce a one sided version, which is supposed to be the \emph{quasi-containment} ($\lec_q$). Recall that symbol $\pi\subseteq \sigma$ means that there is subrepresentation $\sigma'$ of $\sigma$ such that $\pi\simeq \sigma'$.

\begin{proposition}\label{stw15}
Let $\pi$, $\sigma$ be nondegenerate representations of $A$. The following are equivalent:
\begin{enumerate}[label=\arabic*)]
\item
There exists $\sigma'$, a subrepresentation of $\sigma$ such that $\pi\approx_q \sigma'$,
\item
$c(\pi)\le c(\sigma)$,
\item
there exist cardinal numbers $n,m$ such that $n\pi\subseteq m\sigma$.
\end{enumerate}
\end{proposition}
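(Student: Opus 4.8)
The plan is to prove the cycle of implications $1)\Rightarrow 3)\Rightarrow 2)\Rightarrow 1)$, using the quasi-equivalence criterion from the previous proposition (namely $\rho\approx_q\rho'$ precisely when $c(\rho)=c(\rho')$, equivalently when $n\rho\simeq m\rho'$ for some cardinals $n,m$) together with two elementary properties of the central cover that I would record first. These are: amplification invariance, $c(n\rho)=c(\rho)$ for every cardinal $n\ge 1$, since $\ov{n\rho}(a)=\ov{\rho}(a)\otimes\I$ has the same kernel as $\ov{\rho}$; and monotonicity under subrepresentations, $c(\rho')\le c(\rho)$ whenever $\rho'\subseteq\rho$, because then $\ker\ov{\rho}\subseteq\ker\ov{\rho'}$, i.e. $A^{**}(\I-c(\rho))\subseteq A^{**}(\I-c(\rho'))$, which forces $\I-c(\rho)\le\I-c(\rho')$ as the central projections generating these weak$^*$-closed ideals.

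For $1)\Rightarrow 3)$: if $\pi\approx_q\sigma'$ with $\sigma'\subseteq\sigma$, the quasi-equivalence criterion gives cardinals $n,m$ with $n\pi\simeq m\sigma'$; since $m\sigma'\subseteq m\sigma$, this says $n\pi$ is unitarily equivalent to a subrepresentation of $m\sigma$, i.e. $n\pi\subseteq m\sigma$. For $3)\Rightarrow 2)$: from $n\pi\subseteq m\sigma$ the two recorded properties give $c(\pi)=c(n\pi)\le c(m\sigma)=c(\sigma)$.

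The substance is $2)\Rightarrow 1)$. Put $p=c(\pi)$, so $p\in\mc{Z}(A^{**})$ and $p\le c(\sigma)$. The normal $\star$-isomorphism $\theta\colon c(\sigma)A^{**}\to\sigma(A)''$ determined by $\theta(c(\sigma)a)=\ov{\sigma}(a)$ carries centres to centres, so $z:=\theta(p)=\ov{\sigma}(p)$ is a central projection of $\sigma(A)''$; in particular $z\in\sigma(A)'$, and $z\msf{H}_\sigma$ is a $\sigma$-invariant subspace. Let $\sigma'$ be the subrepresentation of $\sigma$ on $z\msf{H}_\sigma$. I would then compute its central cover directly: its normal extension satisfies $\ov{\sigma'}(a)=\ov{\sigma}(a)z|_{z\msf{H}_\sigma}=\ov{\sigma}(ap)|_{z\msf{H}_\sigma}$, so $\ov{\sigma'}(a)=0$ iff $\ov{\sigma}(ap)=0$ iff $ap\in\ker\ov{\sigma}=A^{**}(\I-c(\sigma))$. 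Using centrality of $p$ and the relation $pc(\sigma)=p$, this last condition reduces to $ap=0$, i.e. $a\in A^{**}(\I-p)$. Hence $\ker\ov{\sigma'}=A^{**}(\I-p)$, that is $c(\sigma')=p=c(\pi)$, and the quasi-equivalence criterion yields $\pi\approx_q\sigma'$, which is exactly statement $1)$.

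The main obstacle is this computation $\ker\ov{\sigma'}=A^{**}(\I-p)$ in $2)\Rightarrow 1)$: it is the only place where the hypothesis $p\le c(\sigma)$ is genuinely used (to turn $ap\in A^{**}(\I-c(\sigma))$ into $ap=0$), and one must be careful that $z=\ov{\sigma}(p)$ is indeed a \emph{central projection} of $\sigma(A)''$ implementing the correct invariant subspace, i.e. that $\theta$ sends the central projection $p$ of $c(\sigma)A^{**}$ to a central projection of $\sigma(A)''$ rather than merely to some positive element. Everything else is routine bookkeeping with central projections.
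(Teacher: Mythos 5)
Your proof is correct and follows essentially the same route as the paper: the implications $1)\Rightarrow 3)$ and $3)\Rightarrow 2)$ rest on the same facts ($c(n\rho)=c(\rho)$ and monotonicity of the central cover under subrepresentations), and your argument for $2)\Rightarrow 1)$ is the paper's construction — cut $\sigma$ down by the central projection $\ov{\sigma}(c(\pi))$ and compute that the resulting subrepresentation has central cover exactly $c(\pi)$. The only difference is organizational: you prove the clean cycle $1)\Rightarrow 3)\Rightarrow 2)\Rightarrow 1)$, while the paper proves $1)\Leftrightarrow 2)$ first and then handles $3)$ separately, which changes nothing of substance.
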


\begin{definition}
Whenever conditions $1)-3)$ of the above theorem are satisfied, we will write $\pi\lec_q \sigma$ and say that $\pi$ is \emph{quasi-contained} in $\sigma$.
\end{definition}

\begin{proof}
Equivalence of $1)$ and $2)$ was stated without proof in \cite[Proposition 1.4.6]{BrownOzawa}. It can be proven in the following manner:\\
Assume $\pi\approx_q \sigma'\subseteq \sigma$. Then $c(\pi)=c(\sigma')\le c(\sigma)$, which shows $1)\Rightarrow 2)$.\\
Assume now $c(\pi)\le c(\sigma)$. then $c(\pi)c(\sigma)=c(\pi)$ and $c(\sigma)-c(\pi)$ is a central projection orthogonal to $c(\pi)$.  Operators $\tilde{\sigma}(c(\sigma)-c(\pi)),\,\tilde{\sigma}(c(\pi))$ are central projections in $\sigma(A)''$. Moreover $\tilde{\sigma}(c(\sigma))=\I$. Consider the representation
\[
\sigma'\colon A\ni x \mapsto \tilde{\sigma}(c(\pi)) \sigma(x)=\tilde{\sigma}(c(\pi)x)\in \B(\tilde{\sigma}(c(\pi))\mscr{H}_\sigma).
\]
and its normal extension $\tilde{\sigma'}\colon A^{**}\rightarrow \sigma'(A)''\colon x \mapsto \tilde{\sigma}(c(\pi)x)$. We have 
\[
\begin{split}
\ker \tilde{\sigma'}&=
\{x\in A^{**}\,|\, \tilde{\sigma}(c(\pi)x)=0\}\\
&=
\{x\in A^{**}\,|\, c(\sigma)c(\pi)x=0\}\\
&=
\{x\in A^{**}\,|\, c(\pi)x=0\}=\ker\tilde{\pi},
\end{split}
\]
hence $c(\pi)=c(\sigma')$ and $\pi\approx_q \sigma'$. Define
\[
\sigma''\colon A\ni x\mapsto \tilde{\sigma}((c(\sigma)-c(\pi))x)\in
\B(\,(\tilde{\sigma}(c(\sigma)-c(\pi)) \mscr{H}_\sigma).
\]
Representations $\sigma',\sigma''$ are nondegenerate. Thanks to the unitary operator
\[
\mscr{H}_\sigma\ni\xi\mapsto \tilde{\sigma}(c(\pi))\xi+
\tilde{\sigma}(c(\sigma)-c(\pi))\xi\in
\tilde{\sigma}(c(\pi))\mscr{H}_\sigma\oplus
\tilde{\sigma}(c(\sigma)-c(\pi))\mscr{H}_\sigma,
\]
we have $\sigma\simeq \sigma'\oplus\sigma''$, which proves the implication $2)\Rightarrow 1)$.\\
Implication $3)\Rightarrow 2)$: from the definition of the central cover we have $c(n\pi)=c(\pi)$ for any cardinal number $n$. From $3)$ it follows that we have $m\sigma\simeq n\pi\oplus \sigma'$ for a certain representation $\sigma'$. This gives
\[
c(\pi)=c(n\pi)\le c(n\pi\oplus \sigma')=c(m\sigma)=c(\sigma).
\]
Implication $1)\Rightarrow 3)$: from point $1)$ we get unitary equivalence $\sigma\simeq \sigma'\oplus \sigma''$, with $\sigma'$ satisfying $\pi\approx_q \sigma'$. Therefore there exist cardinal numbers $n,m$ such that $n\pi\simeq m\sigma'$. It follows that
\[
n\pi\simeq m\sigma'\subseteq m\sigma'\oplus m\sigma''\simeq
m(\sigma'\oplus\sigma'')\simeq m\sigma.
\]
\end{proof}

From the second point of the above proposition it follows that $\lec_q$ is a transitive relation, moreover
\[
(\pi\lec_q \sigma,\,\sigma\lec_q \pi)\Leftrightarrow \pi\approx_q \sigma.
\]

Let $A$ be a separable \cst-algebra of type I and $\pi,\sigma$ two nondegenerate representations of $A$ on separable Hilbert spaces with decompositions (see \cite[Theorem 8.6.6]{DixmierC})
\[
\pi\simeq
\bigoplus_{n\in \NN\cup\{\aleph_0\}}
\int_{\hat{A}}^{\oplus}
\zeta\,\md\mu_n(\zeta),\quad
\sigma\simeq\bigoplus_{n\in\NN\cup\{\aleph_0\}}
\int_{\hat{A}}^{\oplus}\zeta\,\md\nu_n(\zeta).
\]
The following connection between quasi-containment and properties of measures $\{\mu_n,\nu_n\,|\,n\in \NN\cup\{\aleph_0\}\}$ holds:

\begin{proposition}\label{stw26}
We have $\sum_{n\in \NN\cup\{\aleph_0\}}\mu_n \ll\sum_{n\in \NN\cup\{\aleph_0\}}\nu_n$ if and only if $\phi_\mu\lec_q\phi_\nu$.
\end{proposition}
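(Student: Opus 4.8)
The plan is to reduce the statement to the characterization of quasi-containment via central covers (Proposition \ref{stw15}), and to use the fact that the measures appearing in the disintegrations of $\phi_\mu$ and $\phi_\nu$ are precisely (representatives of) the measure classes associated to these representations. Recall that for a separable type I \cst-algebra, $\hat{A}$ carries the Mackey-Borel structure, and the direct integral decomposition $\pi\simeq\bigoplus_{n}\int_{\hat A}^{\oplus}\zeta\,\md\mu_n(\zeta)$ exhibits $\mu=\sum_n \mu_n$ as a standard measure on $\hat A$ whose class $[\mu]$ is exactly the measure class associated with $\pi$ (this is the content of \cite[Theorem 8.6.6]{DixmierC} together with the discussion of measure classes in \cite[Chapter 8]{DixmierC}). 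So the first step is to name $\mu=\sum_{n\in\NN\cup\{\aleph_0\}}\mu_n$ and $\nu=\sum_{n\in\NN\cup\{\aleph_0\}}\nu_n$ and observe that the statement to be proved reads $[\mu]\ll[\nu]\Leftrightarrow \phi_\mu\lec_q\phi_\nu$, where $\phi_\mu,\phi_\nu$ abbreviate the two representations.

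Next I would translate $\lec_q$ into a statement about central covers using Proposition \ref{stw15}: $\phi_\mu\lec_q\phi_\nu$ if and only if $c(\phi_\mu)\le c(\phi_\nu)$. The key point is that for a type I separable \cst-algebra, the center $\mc{Z}(A^{**})$ can be identified with $\LL^\infty(\hat A,\rho)$ for a fixed standard measure $\rho$ of full support on $\hat A$, and under this identification the central cover $c(\pi)$ of a representation with associated measure class $[\mu]$ corresponds to the characteristic function $\chi_{\supp(\mu)}$ (equivalently, to the measurable support of $\mu$ modulo $\rho$-null sets). This is because $\ov{\pi}(\mc{Z}(A^{**}))$ becomes the diagonalisable operators on the direct integral, and the kernel of $\ov{\pi}$ restricted to the center is precisely the functions vanishing on $\supp(\mu)$. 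Thus I would first establish this dictionary between central covers and measurable supports. Once it is in place, the equivalence
\[
c(\phi_\mu)\le c(\phi_\nu)
\;\Longleftrightarrow\;
\supp(\mu)\subseteq\supp(\nu)\ (\mathrm{mod}\ \rho)
\;\Longleftrightarrow\;
[\mu]\ll[\nu]
\]
is immediate, since absolute continuity of measure classes on a standard space is exactly the inclusion of essential supports.

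The main obstacle I expect is the careful bookkeeping in the central-cover-to-support dictionary, specifically verifying that the normal extension $\ov{\pi}$ restricted to $\mc{Z}(A^{**})$ is implemented by multiplication operators indexed by $\hat A$ and that $\ker\ov{\pi}\cap\mc{Z}(A^{**})$ is the ideal of functions supported off $\supp(\mu)$; this requires invoking the structure of the direct integral decomposition and the identification of $\mc{Z}(A^{**})$ with the diagonalisable algebra on the universal representation, both of which are standard but must be cited precisely from \cite{DixmierC}. A secondary subtlety is that the multiplicities $n\in\NN\cup\{\aleph_0\}$ are irrelevant here, since quasi-containment does not see multiplicities and neither does the measure class $[\mu]=[\sum_n\mu_n]$; I would make this explicit by noting that $\supp(\sum_n\mu_n)=\bigcup_n\supp(\mu_n)$ depends only on the union of the supports, matching the fact that $c(\phi_\mu)$ is the supremum of the central covers of the pieces. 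With the dictionary established, both implications follow by unwinding definitions, so the whole argument is short modulo that identification.
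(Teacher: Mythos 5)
Your overall strategy --- reducing quasi-containment to a comparison of central covers via Proposition \ref{stw15} and then translating central covers into supports of measures --- is sound in spirit, and genuinely different from the paper, whose proof is a direct citation of \cite[Theorem 8.6.6, Proposition 8.4.5]{DixmierC}. However, the step on which everything rests is false as stated: for a separable type I \cst-algebra $A$ with non-discrete spectrum there is \emph{no} standard measure $\rho$ on $\hat{A}$ with $\mc{Z}(A^{**})\cong \LL^{\infty}(\hat{A},\rho)$. The bidual $A^{**}$ is the von Neumann algebra of the universal representation, so its center must accommodate \emph{all} quasi-equivalence classes of representations, i.e.\ all measure classes on $\hat{A}$, not only those dominated by a single $\rho$. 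Concretely, take $A=\mathrm{C}_0(\RR)$, so that $\hat{A}=\RR$: the evaluation characters $\mathrm{ev}_x$ are pairwise disjoint irreducible representations, hence their central covers $c(\mathrm{ev}_x)$, $x\in\RR$, form an uncountable family of pairwise orthogonal nonzero projections in $\mc{Z}(A^{**})$; but in $\LL^{\infty}(\hat{A},\rho)$ with $\rho$ $\sigma$-finite, any family of pairwise orthogonal nonzero projections is countable. So the dictionary ``central cover $=$ characteristic function of the support modulo one universal $\rho$'' cannot be set up globally, and with it your displayed chain of equivalences collapses.

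The gap is repairable, because you never need the whole center, only the corner containing the two projections being compared. Put $\mu=\sum_{n\in\NN\cup\{\aleph_0\}}\mu_n$, $\nu=\sum_{n\in\NN\cup\{\aleph_0\}}\nu_n$, $\rho=\mu+\nu$ and $\theta=\phi_\mu\oplus\phi_\nu$. Then $c(\phi_\mu),c(\phi_\nu)\le c(\theta)$, and $\ov{\theta}$ restricts to a $\star$-isomorphism of the corner $\mc{Z}(A^{**})c(\theta)$ onto $\mc{Z}(\theta(A)'')$; since $A$ is type I and $\theta$ is, up to multiplicities (which neither $\lec_q$ nor measure classes detect), the direct integral of $\zeta$ over $(\hat{A},\rho)$, this latter center is the algebra of diagonalisable operators, i.e.\ $\LL^{\infty}(\hat{A},\rho)$. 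Under this isomorphism $c(\phi_\mu)$ and $c(\phi_\nu)$ go to the indicators of the Radon--Nikodym supports $S_\mu$, $S_\nu$ of $\mu$, $\nu$ with respect to $\rho$, and injectivity of $\ov{\theta}$ on the corner lets you compare them there. Proposition \ref{stw15} then gives: $\phi_\mu\lec_q\phi_\nu$ iff $c(\phi_\mu)\le c(\phi_\nu)$ iff $S_\mu\subseteq S_\nu$ modulo $\rho$-null sets, iff $\mu\ll\nu$. Note that this repaired argument amounts to re-proving \cite[Proposition 8.4.5]{DixmierC}, which is precisely one of the two results the paper invokes.
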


This is a combination of \cite[Theorem 8.6.6, Proposition 8.4.5]{DixmierC}.\\

Let us now turn to representations of a locally compact quantum group $\GG$. Assume that we have two representations
\[
U^\pi=(\id\otimes\pi){\WW},\quad
U^\sigma=(\id\otimes\sigma){\WW}
\]
corresponding to nondegenerate representations of $\CGDu$: $\pi,\sigma$. Their tensor product is given by
\[
U^{\pi}\tp U^{\sigma}=
(\id\otimes\pi\otimes\sigma) ({\WW}_{12} {\WW}_{13})=
(\id\otimes\pi\otimes\sigma)(\id\otimes\Delta_{\whG}^{u,\operatorname{op}}){\WW}.
\]
This suggests that we should define tensor product of representations of $\CGDu$ by
\[
\pi\tp\sigma=(\pi\otimes\sigma)\Delta_{\whG}^{u,\operatorname{op}}.
\]
We say that representations $U^\pi$, $U^\sigma$ are quasi-equivalent if there exists a Hilbert space $\bf H$ such that $1_{\bf H}\tp U^\pi\simeq 1_{\bf H}\tp U^\sigma$. Since 
\[
1_{\bf H}\tp U^\pi\simeq \oplus_{\dim \bf H} U^\pi=(\id\otimes\oplus_{\dim \bf H} \pi){\WW}=
(\id\otimes(\dim \bf H)\cdot\pi){\WW},
\]
representations $U^\pi$ i $U^\sigma$ are quasi equivalent if and only if $\pi\approx_q \sigma$. (Quasi-equivalence of representations of $\GG$ appears i.e. in \cite{closedqsub}). Correspondingly we declare $U^{\pi}$ to be quasi-contained in $U^{\sigma}$ (written $U^{\pi}\lec_q U^{\sigma}$) if and only if $\pi\lec_q \sigma$. Next propositon tells us that quasi-equivalence and quasi-containment are respected by tensor products:
\begin{proposition}\label{stw10}
Let $\pi_1,\pi_2,\sigma_1,\sigma_2$ be nondegenerate representations of $\CGDu$.
\begin{enumerate}
\item If $\pi_1\approx_q \sigma_1$ and $\pi_2\approx_q \sigma_2$ then $\pi_1\tp\pi_2 \approx_q \sigma_1\tp\sigma_2$.
\item If $\pi_1\subseteq \sigma_1$ and $\pi_2\subseteq \sigma_2$ then $\pi_1\tp\pi_2 \subseteq \sigma_1\tp\sigma_2$. 
\item If $\pi_1\lec_q \sigma_1$ and $\pi_2\lec_q \sigma_2$ then $\pi_1\tp\pi_2 \lec_q \sigma_1\tp\sigma_2$. 
\end{enumerate}
\end{proposition}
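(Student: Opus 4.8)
The plan is to reduce all three statements to a single geometric fact about the tensor product $\pi\tp\sigma=(\pi\otimes\sigma)\Delta_{\whG}^{u,\operatorname{op}}$, combined with the cardinal characterizations of $\approx_q$ and $\lec_q$ recorded above. First I would isolate three elementary properties of $\tp$: (i) a unitary equivalence in one leg is inherited by the tensor product, i.e.\ if $U_1\colon\msf{H}_{\pi_1}\to\msf{H}_{\sigma_1}$ intertwines $\pi_1$ and $\sigma_1$ then $U_1\otimes\I$ intertwines $\pi_1\tp\tau$ and $\sigma_1\tp\tau$ for any $\tau$ (and symmetrically in the second leg); (ii) multiplicities factor through $\tp$, namely $(n\pi)\tp\sigma\simeq n(\pi\tp\sigma)\simeq\pi\tp(n\sigma)$ for any cardinal $n$, which one sees by rearranging the multiplicity space past the other leg; and (iii) an invariant projection in one leg produces an invariant projection of the tensor product. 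All three are verified by extending the nondegenerate representations to the multiplier algebra $\M(\CGDu\otimes\CGDu)$ and slicing against $\Delta_{\whG}^{u,\operatorname{op}}(a)$.

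For (iii), the key observation is that if $P_1\in\sigma_1(\CGDu)'$ is the projection onto an invariant subspace $V_1$ with $\sigma_1|_{V_1}\simeq\pi_1$, then $P_1\otimes\I$ lies in the commutant of $(\sigma_1\otimes\sigma_2)\bigl(\M(\CGDu\otimes\CGDu)\bigr)\subseteq\sigma_1(\CGDu)''\oxx\sigma_2(\CGDu)''$, since $P_1\otimes\I\in\sigma_1(\CGDu)'\oxx\CC\I$. Hence $P_1\otimes\I$ commutes with $(\sigma_1\tp\sigma_2)(a)$ for all $a$, and restricting $\sigma_1\tp\sigma_2$ to the range $V_1\otimes\msf{H}_{\sigma_2}$ yields $(\sigma_1|_{V_1})\tp\sigma_2\simeq\pi_1\tp\sigma_2$. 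Thus $\pi_1\tp\sigma_2\subseteq\sigma_1\tp\sigma_2$; repeating the argument in the second leg and invoking transitivity of $\subseteq$ gives $\pi_1\tp\pi_2\subseteq\pi_1\tp\sigma_2\subseteq\sigma_1\tp\sigma_2$, which is exactly part $2)$.

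With (i)--(iii) in hand, parts $1)$ and $3)$ become bookkeeping with cardinals. For part $1)$, write $n_i\pi_i\simeq m_i\sigma_i$; then using (ii) twice and (i) in each leg,
\[
n_1n_2(\pi_1\tp\pi_2)\simeq(n_1\pi_1)\tp(n_2\pi_2)\simeq(m_1\sigma_1)\tp(m_2\sigma_2)\simeq m_1m_2(\sigma_1\tp\sigma_2),
\]
so $\pi_1\tp\pi_2\approx_q\sigma_1\tp\sigma_2$ by the characterization of quasi-equivalence. For part $3)$, write $n_i\pi_i\subseteq m_i\sigma_i$ and run the same chain, replacing the middle $\simeq$ by $\subseteq$ via part $2)$:
\[
n_1n_2(\pi_1\tp\pi_2)\simeq(n_1\pi_1)\tp(n_2\pi_2)\subseteq(m_1\sigma_1)\tp(m_2\sigma_2)\simeq m_1m_2(\sigma_1\tp\sigma_2),
\]
whence $\pi_1\tp\pi_2\lec_q\sigma_1\tp\sigma_2$ by Proposition \ref{stw15}.

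The one point requiring care — and the main obstacle — is the bookkeeping with multiplier algebras and normal extensions underlying (i) and (iii): one must check that $\Delta_{\whG}^{u,\operatorname{op}}(a)\in\M(\CGDu\otimes\CGDu)$ is sent by the (nondegenerate, hence multiplier-extendable) representation $\sigma_1\otimes\sigma_2$ into the spatial von Neumann tensor product $\sigma_1(\CGDu)''\oxx\sigma_2(\CGDu)''$, so that $P_1\otimes\I$ and $U_1\otimes\I$ genuinely commute with, respectively intertwine, the relevant operators. Once the commutation theorem for tensor products of von Neumann algebras is invoked to identify $\bigl(\sigma_1(\CGDu)''\oxx\sigma_2(\CGDu)''\bigr)'=\sigma_1(\CGDu)'\oxx\sigma_2(\CGDu)'$, everything else is routine.
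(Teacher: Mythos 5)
Your proof is correct, and its logical skeleton is the same as the paper's — part 2) is proved directly, then 1) and 3) follow by bookkeeping with the amplification characterizations — but the implementation differs enough to be worth comparing. The paper stays entirely in the corepresentation picture: for 1) it runs the computation with the $1_{\mathbf{H}}\tp(\cdot)$ definition of quasi-equivalence (which it has already identified with cardinal amplification, since $1_{\mathbf{H}}\tp U^{\pi}\simeq \oplus_{\dim\mathbf{H}}U^{\pi}$); for 2) it writes $U^{\sigma_i}\simeq U^{\sigma_i'}\oplus U^{\pi_i}$ and expands $U^{\sigma_1}\tp U^{\sigma_2}$ into four direct summands, one of which is $U^{\pi_1}\tp U^{\pi_2}$; and for 3) it combines 1) and 2) through condition 1) of Proposition \ref{stw15}. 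You work instead at the level of representations of $\CGDu$: your part 2) rests on the invariant-projection argument ($P_1\otimes\I$ commutes with $(\sigma_1\tp\sigma_2)(\CGDu)$), and your 1) and 3) use the cardinal conditions 2) and 3) of the two characterization propositions. The two arguments for part 2) are the same fact in different clothing — the distributivity of $\tp$ over $\oplus$, which the paper uses without comment, is precisely your statement that an invariant projection in one leg survives tensoring — but your version makes explicit the multiplier-algebra verifications (your (i)--(iii)) that the paper's slick corepresentation computation leaves implicit, which is where the real content of the proposition lies; the paper's version buys brevity and avoids von Neumann tensor products altogether. One small simplification to your argument: you do not need the full commutation theorem identifying $\bigl(\sigma_1(\CGDu)''\oxx\sigma_2(\CGDu)''\bigr)'$ with $\sigma_1(\CGDu)'\oxx\sigma_2(\CGDu)'$; the trivial inclusion suffices, since $P_1\otimes\I$ commutes with the generators $x\otimes y$ (for $x\in\sigma_1(\CGDu)''$, $y\in\sigma_2(\CGDu)''$) of the von Neumann tensor product, and $(\sigma_1\tp\sigma_2)(a)$ lies in that algebra by your strict-to-strong continuity observation.
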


\begin{proof}
Let $U^{\pi_1},\,U^{\pi_2},U^{\sigma_1},U^{\sigma_2}$ be corresponding representations of $\GG$. There exist Hilbert spaces $\bf H_1,\bf H_2$ such that
\[
1_{\bf H_{1}}\tp U^{\pi_1}\simeq 
1_{\bf H_{1}}\tp U^{\sigma_1},\quad
1_{\bf H_{2}}\tp U^{\pi_2}\simeq 
1_{\bf H_{2}}\tp U^{\sigma_2}.
\]
We have
\[
\begin{split}
&1_{\bf H_1\otimes \bf H_2} \tp U^{\pi_1}\tp U^{\pi_2}\simeq
1_{\bf H_1}\tp 1_{\bf H_2} \tp U^{\pi_1}\tp U^{\pi_2}\simeq
(1_{\bf H_{1}}\tp U^{\pi_1})\tp
(1_{\bf H_{2}}\tp U^{\pi_2})\\
\simeq&
(1_{\bf H_{1}}\tp U^{\sigma_1})\tp
(1_{\bf H_{2}}\tp U^{\sigma_2})\simeq
1_{\bf H_{1}}\tp 1_{\bf H_{2}} \tp U^{\sigma_1}
\tp U^{\sigma_2}\simeq
1_{\bf H_{1}\otimes \bf H_{2}} \tp U^{\sigma_1}
\tp U^{\sigma_2},
\end{split}
\]
which shows the quasi-equivalence of $U^{\pi_1}\tp U^{\pi_2}$ and $U^{\sigma_1}\tp U^{\sigma_2}$. Since $U^{\pi_1}\tp U^{\pi_2}=U^{\pi_1\stp \pi_2}$ we get the first statement of the proposition.\\
Let us prove the second point: there exist representations $\sigma'_1,\sigma'_2$ such that
\[
U^{\sigma_1}\simeq U^{\sigma'_1}\oplus U^{\pi_1},\quad
U^{\sigma_2}\simeq U^{\sigma'_2}\oplus U^{\pi_2},
\]
therefore
\[
U^{\sigma_1\stp\sigma_2}=U^{\sigma_1}\tp U^{\sigma_2}\simeq
(U^{\sigma'_1}\tp U^{\sigma'_2})\oplus
(U^{\sigma'_1}\tp U^{\pi_2})\oplus
(U^{\pi_1}\tp U^{\sigma'_2})\oplus
(U^{\pi_1}\tp U^{\pi_2})
\]
and $\pi_1\tp\pi_2\subseteq \sigma_1\tp \sigma_2$.\\
Now we turn to the third point: there exist representations $\sigma'_1,\sigma'_2$ such that $\pi_1\approx_q \sigma'_1\subseteq \sigma_1$ and $\pi_2\approx_q \sigma'_2\subseteq \sigma_2$. From the second point we get $\pi_1\tp \pi_2\approx_q\sigma'_1\tp \sigma'_2\subseteq \sigma_1\tp \sigma_2$, hence $\pi_1\tp\pi_2\lec_q\sigma_1\tp\sigma_2$.
\end{proof}

\section*{Acknowledgements}
The author wish to thank Roland Vergnioux for the invitation to Caen and many inspiring discussions during the stay. Moreover, the author would like to express his gratitude towards Piotr M.~Sołtan and Adam Skalski for their guidance and helpful sugestions during work on this project.\\
The author was partially supported by the Polish National Agency for the Academic Exchange, Polonium grant PPN/BIL/2018/1/00197 and the NCN (National Centre of
Science) grant 2014/14/E/ST1/00525.

\bibliographystyle{plain}
\bibliography{bibliografia}

\end{document}